\numberwithin{equation}{section}
\title{Rough Collisions \thanks{Mathematics Subject Classification: 70E18; 37C83; 70L99. Keywords: rigid body, contact dynamics, frictional collisions, stochastic billiards, invariant measure}}
\author{Peter Rudzis \thanks{Email: \url{prudzis@uw.edu}}}
\date{March 9, 2022}
\newtheorem{theorem}{Theorem}[section]
\newtheorem{lemma}[theorem]{Lemma}
\newtheorem{corollary}[theorem]{Corollary}
\newtheorem{proposition}[theorem]{Proposition}
\newtheorem{remark}[theorem]{Remark}
\newtheorem{definition}[theorem]{Definition}
\newenvironment{claim}[1]{%
  \manualtheoreminner
}{\endmanualtheoreminner}
\newenvironment{subproof}[1][\proofname]{%
  \begin{proof}[#1]%
}{%
  \end{proof}%
}
\DeclareMathOperator{\cyl}{cyl}
\DeclareMathOperator{\Int}{Int}
\DeclareMathOperator{\supp}{supp}
\DeclareMathOperator{\Lip}{Lip}
\DeclareMathOperator{\reg}{reg}
\DeclareMathOperator{\roll}{roll}
\DeclareMathOperator{\Img}{Im}
\DeclareMathOperator{\dist}{dist}
\DeclareMathOperator{\fin}{fin}
\DeclareMathOperator{\argmin}{argmin}
\DeclareMathOperator{\CES}{CES}
\begin{document}

\maketitle

\begin{abstract}
A rough collision law describes the limiting contact dynamics of a pair of rough rigid bodies, as the scale of the rough features (asperities) on the surface of each body goes to zero. The class of rough collision laws is quite large and includes random elements. Our main results characterize the rough collision laws for a freely moving rough disk and a fixed rough wall in dimension 2. Any collision law which (i) is symmetric with respect to a certain well-known invariant measure from billiards theory, and (ii) conserves the projection of the phase space velocity onto the ``rolling velocity'' is a rough collision law. We also provide a method for explicitly constructing rough collision laws for a broad range of choices of microstructure on the disk and wall. In our introduction, we review past work in billiards, including characterizations of other rough billiard systems, which our results build upon.
\end{abstract}

\section{Introduction} \label{sec_intro}

\subsection{Motivation and main results}

\subsubsection{Frictional collisions}

Frictional forces between colliding physical bodies arise from a combination of electrical forces and asperities (microscopic rough features) on the surface of each body. Most mathematical models for friction are phenomenological, in the sense that they do not reduce to more fundamental physical principles and typically contain basic parameters (e.g. the coefficient of friction) depending on the physical materials in play, which must be determined through empirical measurement. Models for frictional collisions can lead to paradoxical results, and there is no single model which describes friction well in all scenarios (see the review [\ref{PainleveParadox_review}]). The statistical mechanical point of view has been taken much more rarely, and the relationship between the microscopic surface features on each body and the macroscopic contact dynamics is only vaguely understood.

This monograph is a mathematical work concerned with an idealized statistical model for frictional collisions. We derive dynamics under the following assumptions: (1) the frictional forces arise only from rigid asperities on the surfaces of each body (and not from electrical forces); and (2) the kinetic energy of the colliding bodies is conserved. 

These postulates allow us to frame our objective in the language of mathematical billiards. Consider two rigid bodies whose surfaces are endowed with small geometric features -- bumps, crevices, etc. Associated with the two bodies is a ``collision law'' which governs the dynamics when the two bodies collide. The physical assumptions of our model imply that a collision may be represented by a point particle undergoing specular (mirror) reflection from the boundary of the configuration space. A \textit{rough collision law} will be defined as a limit of a sequence of collision laws as the scale of the asperities on each body goes to zero. The limiting collision law may in general have a random ``noise'' component, and thus an appropriate sense of convergence must be defined to capture the full breadth of possible limiting behavior. Our goal is to describe the kinds of collision laws which may arise from such a limiting procedure.

\subsubsection{Rigid body collisions}

The mathematical literature on rigid body interactions falls into two categories. On the one hand, we find extensive literature on hard sphere models, where the particle-to-particle interactions are simple to describe. On the other hand, the literature on colliding rigid bodies of more general shape is much more restricted in scope, being mainly concerned with foundational issues (well-posedness) and describing the local (in space and time) contact dynamics. 

Problems about interacting rigid bodies become an order of magnitude harder when one passes from spherical to non-spherical bodies. In the latter case, the configuration space can contain complicated singularities, and the dynamical evolution may not be well-defined for a small set of initial conditions, even for smooth bodies. This leads to paradoxes. The authors of [\ref{impactparadox}], for example, construct convex non-spherical rigid bodies which, for certain initial conditions, must either interpenetrate upon collision or violate the classical balance laws of rigid body mechanics. Cox, Feres, and Ward have developed a theory of rigid body collisions from a differential geometric point of view [\ref{diffgeo_rbcollisions}]. To avoid issues with singularities, these authors assume that the difference in the shape operators on the boundaries of the two bodies, expressed in a certain common frame, are non-singular. One can also consider weak solutions to the dynamical equations governing rigid body interactions. Ballard has developed an existence theory along these lines [\ref{Ballard}]. Wilkinson shows that typically such systems are underdetermined in the weak sense [\ref{Wilk_nonunique}]. A rare case in which a well-known hard sphere model has been extended to the non-spherical setting is Saint-Raymond and Wilkinson's study of the Boltzmann equation [\ref{SrW_colinv}]. The challenges described by these authors in their introduction exemplify the general difficulty of working outside of the hard sphere paradigm.

Rough collisions have the potential to provide a kind of mean between well-understood questions in the hard sphere setting, and their corresponding generalizations to rigid bodies of more arbitrary shape. In the rough collisions setting, one can choose the microscopic features to be quite complicated, even fractal-like, while keeping the macroscopic shape of each body relatively simple (e.g. a sphere). In the limit as the scale of the rough features goes to zero, the complicated singularities in the configuration space become invisible, but some information about the rough features is still preserved in the limiting rough collision law.

\subsubsection{Model and main results: informal description} \label{sssec_minformal}

Our main results characterize collisions between a freely moving rough disk and a fixed rough wall. This characterization provides a way to explicitly construct the collision dynamics for various choices of microstructure on the disk and the wall. We give a mathematically rigorous description of our model in \S\ref{ssec_main_results}, stating our main results in \S\ref{sssec_main}. Here we limit ourselves to an informal description of our model and results.

Consider a disk $D$ with unit radius, moving freely in two-dimensional space and colliding with a fixed wall $W$ lying in the lower half-plane $\{(x_1,x_2) \in \mathbb{R}^2 : x_2 \leq 0\}$. The surfaces of the disk and the wall are covered in small asperities. We allow the asperities on the wall to be fairly arbitrary in shape, while requiring the asperities of the disk to be of a quite specific type, namely ``geostationary satellites,'' as illustrated in Figure \ref{disk_plus_wall_fig}. The satellites should be spaced far enough apart that the event that multiple satellites interact with the wall during a single collision is rare. During a collision event, a single satellite may hit the wall multiple times however. The limiting (possibly random) collision dynamics, obtained as the scale of the roughness on $D$ and $W$ goes to zero, are governed by a \textit{rough collision law}, which specifies the post-collision linear and angular velocities of the disk after it leaves the wall. 

The somewhat unrealistic surface structure on $D$ is necessary to avoid some of the difficulties encountered in rigid body mechanics, described above. If the satellites are too close together, then the boundary of the configuration space will be too singular to derive the kinds of estimates needed to prove our main results. The model is nonetheless ``universal,'' in a sense to be described shortly.

\begin{figure}
    \centering
    \includegraphics[width = \linewidth]{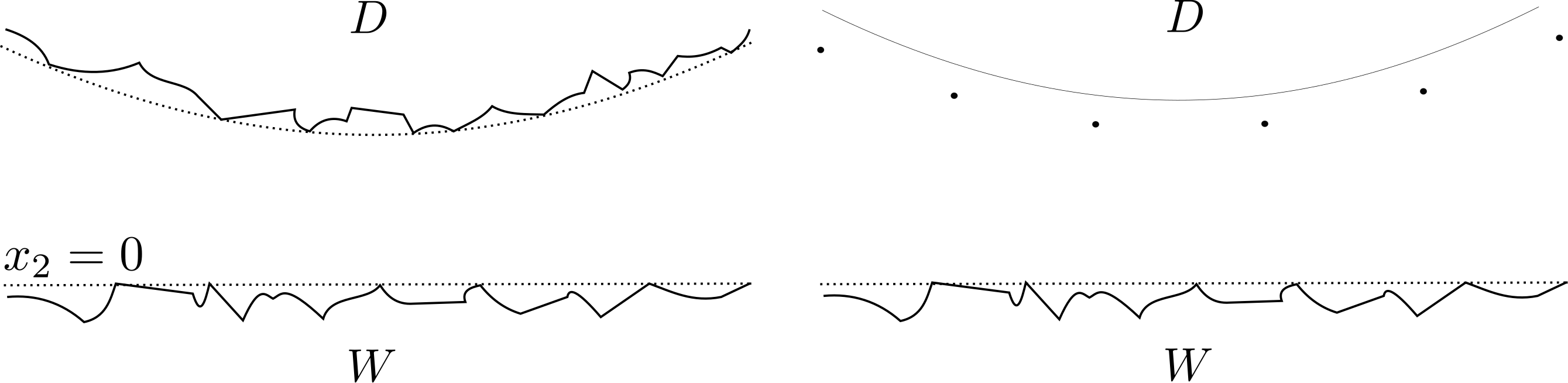}
    \caption{The collision dynamics are too complicated to describe when we put arbitrarily shaped asperities on both $D$ and $W$ (left). The problem becomes more tractable if we assume the asperities on $D$ are well-spaced ``geostationary satellites'' (right).}
    \label{disk_plus_wall_fig}
\end{figure}

We represent the state of the system  by a sextuple $(x_1,x_2,\alpha, v_1, v_2, \omega)$, where $(x_1,x_2)$ is the center of mass of $D$ in $\mathbb{R}^2$ and $\alpha$ its angular orientation, and $(v_1,v_2)$ is the linear velocity of the disk and $\omega$ its angular velocity. We assume in our analysis that the mass density of the disk is rotationally symmetric, and that the kinetic energy of the system is conserved.

There are three properties which we expect the disk and wall system with rough collision dynamics to satisfy.

\begin{enumerate}[label = (\Roman*)]
    \item Liouville measure on the phase space $\dd x_1 \dd x_2 \dd\alpha \dd v_1 \dd v_2 \dd\omega$ should be preserved.
    
    \item The collision dynamics should be ``time-reversible,'' in the sense that the evolution of the system will look the same from a statistical point of view, whether time is run forward or backward.
    
    \item The quantity 
    \begin{equation} \label{eq1.1'}
        - m v_1 + J \omega,
    \end{equation}
    where $m$ is the mass of the disk and $J$ is the moment of inertia of the disk about its center of mass, should be conserved.
\end{enumerate}

The basis for properties (I) and (II) comes from billiards theory. It is well-known that the dynamics of classical billiard systems preserve Liouville measure and are time-reversible. Consequently, the rough collision dynamics, obtained in the weak limit, should also preserve Liouville measure and be time-reversible.

The intuition behind (III) is that the quantity (\ref{eq1.1'}) is the projection (with respect to an inner product coming from kinetic energy) of the phase space velocity onto the ``rolling velocity'' $\chi = (-1,0,1)$. If the disk comes into contact with the wall with velocity $(v_1,v_2,\omega) \approx (-1,0,1)$, then the disk will ``roll'' along the wall. The relative velocity of the wall and the point of contact on the disk will be negligible. Consequently the impact will be negligible, and the disk will continue rolling indefinitely with approximately the same velocity as before. In other words, translation in the direction $\chi$ should be a ``symmetry'' of the system.

Modulo some technical assumptions, our main results (Theorems \ref{thm_pure_scaling}, \ref{thm_dichotomy}, and \ref{thm_classification}) say that a rough collision law not only satisfies properties (I)-(III), but these properties characterize the class of rough collision laws for the disk and wall system described above. That is, any collision law which produces dynamics satisfying (I)-(III) may be approximated by a deterministic collision law obtained by equipping $W$ with small, appropriately shaped asperities.

Note that the heuristic justification for properties (I)-(III) does not depend on the special surface structure imposed on $D$. Thus the range of dynamics manifested in our model is much broader than the setup suggests.

Contained in our results is a way to construct rough collision laws. A rough collision law is described by a Markov kernel $\mathbb{K}(y_1, y_3, \theta, \psi; \dd y_1' \dd y_3' \dd\theta' \dd\psi')$, where $(y_1,y_3)$ are a certain choice of coordinates on the $x_1\alpha$-plane in the configuration space, and $(\theta, \psi)$ are spherical coordinates on the velocity space. We will see that rough collision laws always take the form 
\begin{equation}
    \mathbb{K}(y_1, y_3, \theta, \psi; \dd y_1' \dd y_3' \dd\theta' \dd\psi') = \delta_{(y_1,y_3)}(\dd y_1' \dd y_3') \widetilde{\mathbb{P}}(\theta, \dd\theta') \delta_{\pi - \psi}(\dd\psi').
\end{equation}
The single non-trivial factor $\widetilde{\mathbb{P}}$ describes the way a point particle reflects from a rough wall $\widetilde{W}$, obtained by foreshortening the original wall $W$ in one direction by a factor of $(1 + m/J)^{1/2}$. In many cases, the Markov kernel $\widetilde{\mathbb{P}}$ can be computed explicitly.

The proof of our main results depends on a characterization of rough reflection laws, discovered independently by Plakhov and by Angel, Burdzy, and Sheffield (see \S\ref{sssec_rfrefchar}. and \S\ref{sssec_Plakhov}). The main novelty in this work  -- as well as the main technical challenge -- is to prove rigorously that the quantity (\ref{eq1.1'}) is conserved.

The results of this book relate to the work of R. Feres and collaborators on two separate fronts -- first, in relation to \textit{rough reflections} (see \S\ref{sssec_Feres}), and second, in relation to \textit{no-slip collisions}, a type of idealized, deterministic frictional collision (see \S \ref{sssec_sm_ns}). Our results imply that no-slip collisions belong to the class of rough collisions; thus the dynamics of a freely moving disk and fixed wall undergoing no-slip collisions can be approximated by a pair of bodies undergoing classical non-frictional collisions.

\subsubsection{Organization of book}

Rough collisions belong to a subbranch of stochastic billiards which we refer to as \textit{rough billiards}. An introduction to past work in this subject area may be found in \S\ref{ssec_rough_bill}. A more rigorous description of our model and main results is given in \S\ref{ssec_main_results}.

The purpose of \S\ref{sec_examples} is to apply our main results to construct a number of examples of rough reflection laws and rough collision laws, for various choices of microstructure on the wall $W$.

A collision between two rigid bodies may be represented by a point mass reflecting specularly from the boundary of the configuration space. This fact allows us to apply techniques from billiards theory to analyze our model. In \S\ref{sec_specularreflection} we derive from physical principles in rigid body mechanics the specular reflection law for our model.

\S\ref{sec_config} is devoted to preliminaries for the proof of our main results. First comes a careful description of the elementary properties of the configuration space of the disk and wall system. Subsequent sections provide a rigorous definition of the collision law associated with the system, and introduce two auxiliary collision laws which play a role in our proofs.

Our main results are proved in \S\ref{sec_main_results}. For a high-level summary of our arguments, see also \S\ref{sssec_prfoutline}.

\S\ref{sec_genref} is concerned with the ``abstract theory'' of rough billiards. Both the rough reflections described in \S\ref{ssec_rough_bill} and the rough collision laws introduced in \S\ref{ssec_main_results} are special cases of the rough reflections defined in \S\ref{sec_genref}. We will refer to results proved in this section a number of times throughout the book.

\subsubsection{Acknowledgments}

I would like to sincerely thank my advisor Krzysztof Burdzy, who has been an invaluable source of help and insight from start to finish. This book owes much to his patience and unabating encouragement. I am also grateful to David Clancy and Robin Graham for their helpful comments on the draft of this work.

\subsection{Rough billiards} \label{ssec_rough_bill}

The following section serves two purposes: first, to introduce results concerning rough billiards in the upper half-plane, upon which the main results of this work depend; and second, to provide a general survey of past work in rough billiards. Consequently, we are careful about giving technically accurate statements in \S\S\ref{sssec_uphalf}-\ref{sssec_rfrefchar}, whereas the style of \S\S\ref{sssec_Feres}-\ref{sssec_Plakhov} is a bit more informal. 

\subsubsection{Notation and other conventions} \label{sssec_noteconv}

We will use the following notation throughout the book. If $X$ is a topological space and $Y \subset X$, then $\Int Y$ and $\overline Y$ denote, respectively, the topological interior and closure of $Y$ relative to $X$. The notation $\partial Y := \overline{Y} \cap \overline{Y^c}$ always refers to the topological boundary of $Y$ relative to $X$, and should not be confused with the boundary of a manifold. Context will be sufficient to distinguish the ambient space $X$ in most cases (usually $X = \mathbb{R}^d$ for some $d \geq 1$).

We let $C_c(X)$ denote the space of compactly supported functions on $X$. If $X$ has the structure of a differentiable manifold and $k \geq 0$, then $C^k(X)$ denotes the space of $k$-times continuously differentiable functions on $X$, and $C^\infty(X)$ denotes the space of infinitely differentiable functions on $X$. We let $C_c^k(X) = C_c(X) \cap C^k(X)$ and $C_c^\infty(X) = C_c(X) \cap C^\infty(X)$.

If $(X,\mu)$ is a measure space and $1 \leq p \leq \infty$, then $L^p_\mu(X)$ denotes the space of $p$-integrable functions on $(X,\mu)$. We denote the $L^p$-norm on this space by $||\cdot||_{L^p_\mu(X)}$. We suppress $X$ and $\mu$ from our notation if they are clear from the context.

If $f, g : \mathbb{R} \to \mathbb{R}_+$ are real functions, we write $f = O(g)$ if $\limsup_{t \to 0} f(t)/g(t) < \infty$, and we write $f = o(g)$ if $\lim_{t \to 0} f(t)/g(t) = 0$.

If $B$ is a subset of $\mathbb{R}^d$, and $v$ is a vector in $\mathbb{R}^d$, then we denote the translate of $B$ by $v$ as follows: $B + v := \{p \in \mathbb{R}^d : p - v \in B\}$.

Here and throughout the book, the term \textit{billiard} refers to any dynamical system in which a point particle moves linearly in the complement of a closed subset $W \subset \mathbb{R}^d$ and reflects from the boundary of $W$ (specularly or according to some other rule). The subset $W$ is called the \textit{wall}, and is usually assumed to have a piecewise smooth boundary (where the meaning of ``piecewise smooth'' is made precise in more specific contexts). The complement of the interior of $W$ is referred to as either the \textit{billiard table} or \textit{billiard domain}. The piecewise linear curve traced out by the point particle for some choice of initial conditions is called the \textit{billiard trajectory}. For more background on mathematical billiards, we refer the reader to [\ref{Chernov&Markarian}] and [\ref{Tabachnikov}].

\subsubsection{Rough billiards in the upper half-plane} \label{sssec_uphalf}

We begin by describing the construction of rough billiards in the upper half-plane. Our approach is essentially the same as that of [\ref{ABS_refl}], and similar to that of [\ref{feres_rw}].

The billiard table we initially consider is the complement of a closed set $W \subset \mathbb{R}^2$ satisfying the following assumptions:
\begin{enumerate}[label = A\arabic*.]
    \item $W$ is the closure of its interior in $\mathbb{R}^2$.
    \item $\mathbb{R}^2 \smallsetminus W$ is path-connected.
    \item The following inclusions hold: $\mathbb{R} \times (-\infty,-1] \subset W \subset \mathbb{R} \times (-\infty,0]$; thus $\partial W \subset \mathbb{R} \times [-1,0]$.
    \item $\partial W = \bigcup_{i = 1}^\infty \Gamma_i$, where $\{\Gamma_i\}_{i \geq 1}$ is some collection of compact curve segments satisfying the following conditions: 
    \begin{enumerate}[label = (\roman*)]
        \item The collection $\{\Gamma_i\}_{i \geq 1}$ is locally finite, in the sense that any bounded set intersects only finitely many of the curve segments $\Gamma_i$; 
        \item each $\Gamma_i$ is the image of an injective $C^2$ map $\gamma_i : [0,1] \to \mathbb{R}^2$ with nonvanishing left and right-hand derivatives (where $C^2$ means that $\gamma_i$ has a $C^2$ extension to an open interval containing $[0,1]$);
        \item the curves $\Gamma_i$ are allowed to intersect each other only at their endpoints; and
        \item for each $i$, if $\Gamma_i$ intersects the line $\{(x_1,x_2) : x_2 = 0\}$ at a point other than one of its two endpoints, then $\Gamma_i \subset \{(x_1,x_2) : x_2 = 0\}$.
    \end{enumerate}
\end{enumerate}
In condition A4, the decomposition of $\partial W$ into curve segments $\Gamma_i$ is not unique. We shall refer more generally to a curve $\Gamma$ with a decomposition $\Gamma = \bigcup_{i \geq 1} \Gamma_i$ such that conditions (i)-(iii) are satisfied as a \textit{piecewise $C^2$ curve}. Condition (iv) lets us avoid certain pathological situations when defining the macro-reflection law below (we would like to avoid the situation where some $\Gamma_i$ intersects the line $x_2 = 0$ in a ``fat Cantor set'' for example). In most typical situations, it will be easy to choose a decomposition of $\partial W$ such that (iv) holds.

Let $\mathbb{R}^2_{\pm} = \{(x_1,x_2) \in \mathbb{R}^2 : \pm x_2 > 0\}$. Consider a point particle moving freely in $W^c$ and reflecting \textit{specularly} (angle of incidence equals angle of reflection) from $\partial W$. When the point particle leaves the upper half-plane $\mathbb{R}^2_+$, the particle may hit $\partial W$ multiple times before returning to the upper half-plane, as illustrated in Figure \ref{reflection1_fig}. The limiting behavior of this interaction as $\epsilon \to 0$ will be described by a rough reflection law.

The \textit{kinetic energy} of a point particle with velocity $v = (v_1,v_2)$ is the quantity $\frac{1}{2}v_1^2 + \frac{1}{2}v_2^2$. We assume that kinetic energy is conserved for all time. Without loss generality we take the velocity of the point particle to be restricted to the Euclidean unit circle $\mathbb{S}^1 \subset \mathbb{R}^2$ for all time. We identify points in $\mathbb{S}^1$ with angles $\theta$ in the interval $[0,2\pi)$, and we let $\mathbb{S}^1_+ = \mathbb{S}^1 \cap \mathbb{R}^2_+ = (0,\pi)$ and $\mathbb{S}^1_- = \mathbb{S}^1 \cap \mathbb{R}^2_- = (\pi,2\pi)$.

The \textit{macro-reflection law associated with $W$} is the map $P^W : \mathbb{R} \times \mathbb{S}^1_+ \to \mathbb{R} \times \mathbb{S}^1_+$ defined as follows. As shown in Figure \ref{reflection1_fig}, if initially the point particle starts on the $x_1$-axis with velocity pointing into the lower half-plane $\mathbb{R}^2_-$, its state may be represented by a pair $(x,\theta) \in \mathbb{R} \times \mathbb{S}^1_+$, where $x$ is first coordinate of the particle on the $x_1$-axis, and $\pi + \theta \in \mathbb{S}^1_-$ is its velocity. After reflecting from the boundary $\partial W$ a certain number of times, the particle returns to the $x_1$-axis at a position $x' \in \mathbb{R}$ and with velocity $\theta' \in \mathbb{S}^1_+$. We define
\begin{equation}
    P^W(x,\theta) = (x',\theta').
\end{equation}
The term ``macro-reflection law'' should be understood in contradistinction to the specular reflection law which describes the ``micro'' reflection of the trajectory from $\partial W$ at a single instant in time.

\begin{figure}
    \centering
    \includegraphics[width = 0.7\linewidth]{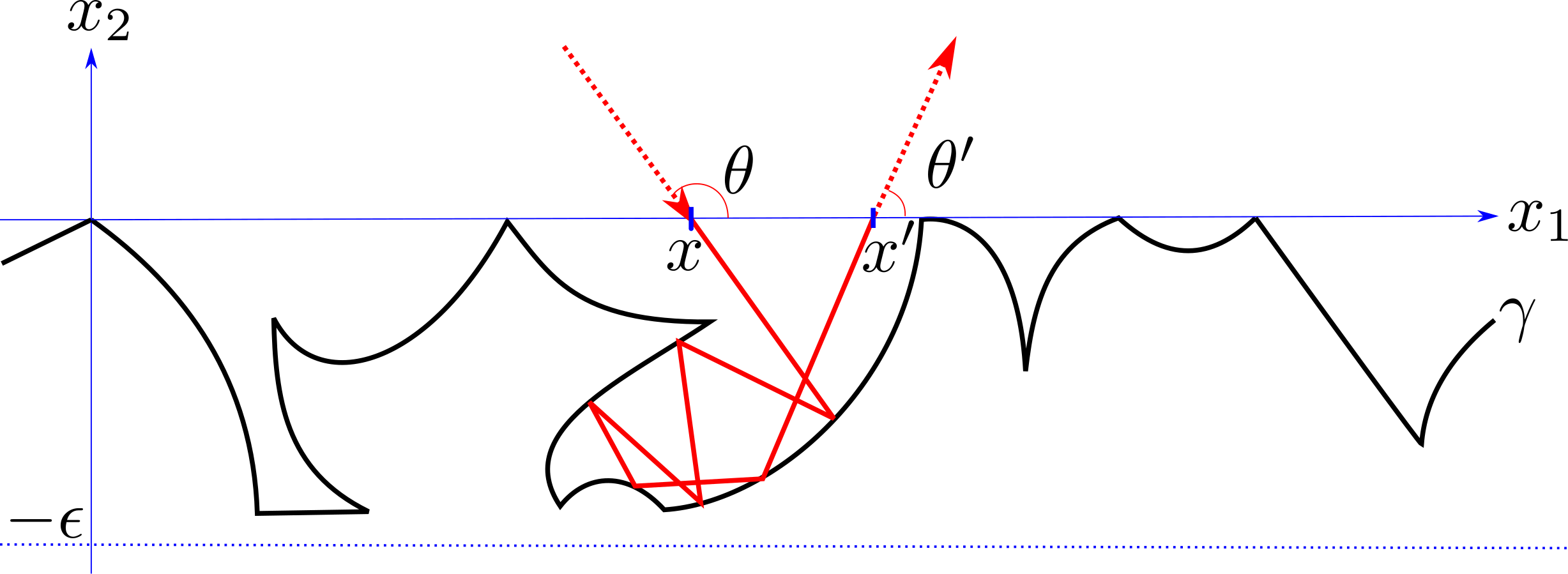}
    \caption{The macro-reflection law maps the initial state $(x,\theta)$ to the state $(x',\theta')$.}
    \label{reflection1_fig}
\end{figure}

The map $P^W$ may fail to be defined at pairs $(x,\theta)$ such that the billiard trajectory hits the boundary $\partial W$ tangentially or at a ``corner'' or never returns to the $x_1$-axis. Thus we impose the following additional assumption.
\begin{enumerate}
    \item[A5.] For almost every $(x,\theta) \in \mathbb{R} \times \mathbb{S}^1_+$, the billiard trajectory starting from $(x,\theta)$ is well-defined for all time, and returns to the $x_1$-axis after only finitely many collisions with $W$.
\end{enumerate}
This condition is implied by conditions A1-A4 together with either one of the following conditions:
\begin{enumerate}
    \item[A5a.] There exists a countable collection $\{R_j\}_{j \geq 1}$ of disjoint bounded open subsets of $\mathbb{R}^2$ such that $\mathbb{R}^2_- \smallsetminus W = \bigcup_{j = 1}^\infty R_j$.
    \item[A5b.] The wall $W$ is $\epsilon$-periodic in the $x_1$-coordinate, in the sense that
    \begin{equation} \label{eq1.16}
       W = \{(x_1,x_2) : (x_1 - \epsilon, x_2) \in W\} = W + \epsilon e_1, \quad \text{ where }e_1 = (1,0).
    \end{equation}
\end{enumerate}
The first condition means that the point particle will always become trapped in some bounded region when it interacts with the wall. Both conditions allow us to apply the Poincar\'{e} Recurrence Theorem to obtain A5. For more details, we refer the reader to \S\ref{sec_genref}, where we define macro-reflection laws in a more general setting. See specifically the discussion of upper half-space billiards in \S\ref{sssec_special}.

Define a measure $\Lambda^1$ on $\mathbb{R} \times \mathbb{S}^1_+$ by 
\begin{equation} \label{eq1.5}
    \Lambda^1(\dd x \dd\theta) = \sin\theta \dd x \dd\theta.
\end{equation}
The most important elementary properties of $P^W$ are summed up in the following proposition.

\begin{proposition} \label{prop_det_ref_properties}
(i) The map $P^W$ is involutive in the sense that $P^W \circ P^W(x,\theta) = (x,\theta)$ whenever the left-hand side is defined.

(ii) The map $P^W$ preserves the measure $\Lambda^1$ in the sense that, for any measurable set $A \subset \mathbb{R} \times \mathbb{S}^1_+$, 
\begin{equation}
    \Lambda^1((P^W)^{-1}(A)) = \Lambda^1(A).
\end{equation}
\end{proposition}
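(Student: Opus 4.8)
The plan is to handle (i) and (ii) separately, reducing each to a standard fact about planar billiards — reversibility of specular reflection for (i), and the measure‑preserving property of the billiard flow (equivalently, the symplecticity of a single reflection) for (ii) — so that the only real work is the bookkeeping needed to assemble these one‑step facts into a statement about $P^W$.

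For (i), the key observation is that specular reflection is invariant under time reversal. If $\gamma$ is a billiard trajectory on $[0,T]$ reflecting transversally (away from corners) at its finitely many contact times, then $t\mapsto\gamma(T-t)$ is again a billiard trajectory: on the linear pieces this is clear, and at a contact point with inward unit normal $n$ one checks directly that if $v^+ = v^- - 2(v^-\cdot n)n$ then $-v^- = -v^+ - 2((-v^+)\cdot n)n$, so the reflection condition is symmetric under $(v^-,v^+)\mapsto(-v^+,-v^-)$. Now suppose $P^W\circ P^W(x,\theta)$ is defined; write $P^W(x,\theta)=(x',\theta')$ and let $\gamma$ realize it, so $\gamma(0)=(x,0)$ with downward velocity at angle $\pi+\theta$, $\gamma(T)=(x',0)$ with upward velocity at angle $\theta'$, and $\gamma$ has strictly negative $x_2$‑coordinate on $(0,T)$. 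Then $\tilde\gamma(t):=\gamma(T-t)$ leaves $(x',0)$ with downward velocity at angle $\pi+\theta'$, stays strictly below the $x_1$‑axis on $(0,T)$, and first returns to it at time $T$ at the point $(x,0)$ with upward velocity at angle $\theta$; hence $P^W(x',\theta')=(x,\theta)$, i.e.\ $P^W\circ P^W(x,\theta)=(x,\theta)$. (This argument in fact shows $P^W(x',\theta')$ is automatically defined once $P^W(x,\theta)$ is.)

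For (ii), I would realize $P^W$ as (a restriction of) the first‑return map of the billiard flow to the cross‑section $\{x_2=0\}$. The billiard flow on $W^c\times\mathbb{S}^1$ — unit‑speed motion in direction $\theta$, with specular reflection upon transversal non‑corner contact with $\partial W$ — preserves $\dd x_1\,\dd x_2\,\dd\theta$: free motion has unit Jacobian, and each specular reflection, read on the boundary cross‑section in arclength–angle coordinates $(s,\varphi)$, preserves $\sin\varphi\,\dd s\,\dd\varphi$, the classical fact underlying the symplecticity of billiard maps (provable by a direct Jacobian computation or via the chord‑length generating function). The flux measure a volume‑preserving flow induces on a transverse hypersurface is invariant under the first‑return map, and for the vector field $(\cos\theta,\sin\theta,0)$ and the surface $\{x_2=0\}$ this flux measure is $\lvert\sin\theta\rvert\,\dd x_1\,\dd\theta$. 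Under the identifications that represent an element of $\mathbb{R}\times\mathbb{S}^1_+$ as a downward‑pointing state (velocity angle $\pi+\theta$) on the incoming side and as an upward‑pointing state (velocity angle $\theta'$) on the outgoing side, this flux measure pulls back to $\Lambda^1$ in both cases, since $\lvert\sin(\pi+\theta)\rvert=\lvert\sin\theta\rvert=\sin\theta$ for $\theta\in(0,\pi)$. By A5, for a.e.\ $(x,\theta)$ the trajectory returns to the $x_1$‑axis after finitely many reflections, all transversal non‑corner hits, so on that full‑measure set the first‑return map is well‑defined and coincides with $P^W$; therefore $P^W$ preserves $\Lambda^1$.

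I expect the main obstacle to be technical bookkeeping rather than anything conceptual: one must verify that the billiard flow is genuinely well‑defined and measure‑preserving on the manifold‑with‑corners $W^c\times\mathbb{S}^1$ under A1--A5 — in particular that the trajectories hitting $\partial W$ tangentially or at an endpoint of some $\Gamma_i$, or failing to return, form a $\Lambda^1$‑null set, and that the possibly large number of reflections between successive crossings of $\{x_2=0\}$ causes no trouble for the Poincar\'e argument. This is exactly the foundational material developed in \S\ref{sec_genref} (see the discussion of upper half‑space billiards in \S\ref{sssec_special}), which I would either invoke directly or, for a self‑contained proof, replace by partitioning $\mathbb{R}\times\mathbb{S}^1_+$ into countably many cells on which the combinatorial type of the trajectory (the finite ordered list of curve segments $\Gamma_i$ it meets) is constant, so that $P^W$ restricts to a fixed finite composition of free flights and single reflections on each cell, and then summing the per‑cell invariance.
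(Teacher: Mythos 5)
Your proposal is correct and follows essentially the same route as the paper: involutivity via time reversal of specular reflection, and invariance of $\Lambda^1$ via Liouville invariance of the billiard flow together with the flux/cross-section argument, with the tangential/corner/non-return exceptional sets handled on a null set by A5. The paper packages exactly these ingredients in the general framework of \S\ref{sec_genref} (Lemma \ref{lem_bilinvar} and Proposition \ref{prop_reflproperties}, applied to the upper half-space case), where the flux is read on the boundary measure $\mu_1$ rather than directly on the section $\{x_2=0\}$, a difference of bookkeeping only.
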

To understand (i), note that specular reflection is involutive; so ``running the evolution backward'' from $(x',\theta') := P^W(x,\theta)$, the trajectory is guaranteed to return to the $x_1$-axis in state $(x,\theta)$. Part (ii) is a corollary of a well-known theorem in billiards theory (see Lemma \ref{lem_bilinvar}). If we accept that the continuous billiard evolution should preserve Liouville measure $\dd x_1 \dd x_2 \dd\theta$ on the phase space, then Figure \ref{invariant_fig} should make property (ii) quite believable. A more general version of Proposition \ref{prop_det_ref_properties} is proved in \S\ref{sec_genref} (see Proposition \ref{prop_reflproperties}).

The macro-reflection law $P^W$ is naturally associated with a deterministic Markov kernel on $\mathbb{R} \times \mathbb{S}^1_+$, defined by 
\begin{equation}
    \mathbb{P}^W(x,\theta; \dd x' \dd\theta') = \delta_{P^W(x,\theta)}(\dd x' \dd\theta').
\end{equation}

\begin{figure}
    \centering
    \includegraphics[width = 0.6\linewidth]{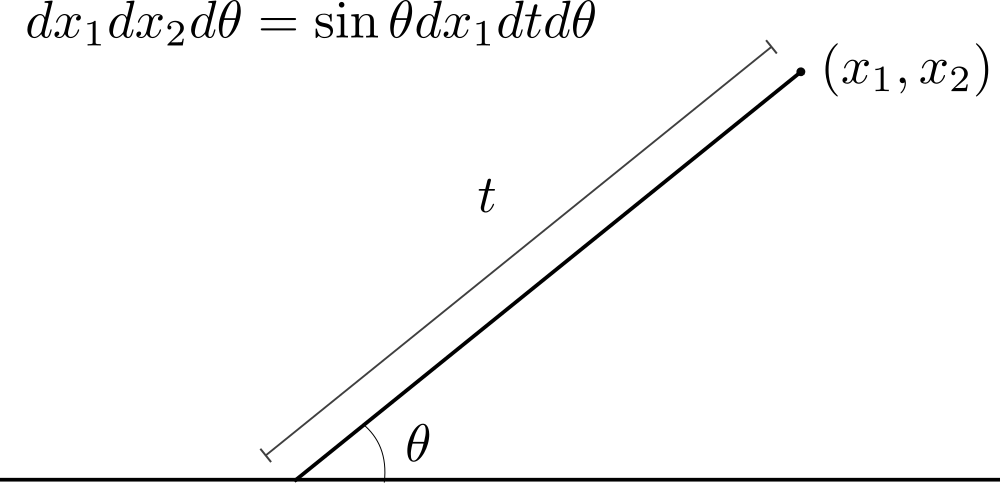}
    \caption{If $(x_1,x_2,\theta)$ is the state of the point particle, then $x_2 = t \sin\theta$, where $t$ is the time to hit the $x_1$-axis. Liouville measure in these new coordinates is $\sin\theta \dd x_1 \dd t \dd\theta$.}
    \label{invariant_fig}
\end{figure}

\noindent In what follows, by ``wall'' we mean a subset $W \subset \mathbb{R}^2$ satisfying conditions A1-A5.

\begin{definition} \normalfont \label{roughref_def}
We call a Markov kernel $\mathbb{P}$ on $\mathbb{R} \times \mathbb{S}^1_+$ a \textit{rough reflection law} in the upper half-plane $\mathbb{R}^2_+$ if there exists a sequence of positive numbers $\epsilon_n \to 0$ and a sequence of walls $W_n$ such that $\partial W_n \subset \{(x_1,x_2) : -\epsilon_n \leq x_2 \leq 0\}$ and
\begin{equation} \label{eq1.6}
    \mathbb{P}^{W_n}(x,\theta; \dd x' \dd\theta') \Lambda^1(\dd x \dd\theta) \to  \mathbb{P}(x,\theta; \dd x' \dd\theta') \Lambda^1(\dd x \dd\theta)
\end{equation}
weakly in the space of measures on $\mathbb{R} \times \mathbb{S}^2_+$.
\end{definition}

The two properties of macro-reflection laws $P^W$ described in Proposition \ref{prop_det_ref_properties} carry over to rough reflection laws in the following sense.

\begin{proposition} \label{prop_dualref}
Let $\mathbb{P}(x,\theta; \dd x' \dd\theta')$ be a rough reflection law. The Markov kernel $\mathbb{P}$ is symmetric with respect to the measure $\Lambda^1$, in the sense that, for any $f \in C_c((\mathbb{R} \times \mathbb{S}^1_+)^2)$, 
\begin{equation} \label{eq1.14}
\begin{split}
    & \int_{(\mathbb{R} \times \mathbb{S}^1_+)^2} f(x,\theta, x', \theta')\mathbb{P}(x,\theta; \dd x' \dd\theta') \Lambda^1(\dd x \dd\theta) \\
    & \quad\quad = \int_{(\mathbb{R} \times \mathbb{S}^1_+)^2} f(x',\theta', x, \theta)\mathbb{P}(x,\theta; \dd x' \dd\theta')\Lambda^1(\dd x \dd\theta).
\end{split}
\end{equation}
\end{proposition}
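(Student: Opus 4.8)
The plan is to deduce the symmetry of the limiting kernel $\mathbb{P}$ from the two properties of the pre-limit macro-reflection laws $P^{W_n}$ recorded in Proposition \ref{prop_det_ref_properties}, together with the weak convergence \eqref{eq1.6}. The key observation is that for a \emph{deterministic} kernel $\mathbb{P}^{W}(x,\theta;\dd x'\dd\theta')=\delta_{P^W(x,\theta)}(\dd x'\dd\theta')$, the identity \eqref{eq1.14} holds exactly: given $f\in C_c((\mathbb{R}\times\mathbb{S}^1_+)^2)$, the left-hand side equals $\int f(x,\theta,P^W(x,\theta))\,\Lambda^1(\dd x\,\dd\theta)$, and performing the change of variables $(x,\theta)\mapsto (x',\theta')=P^W(x,\theta)$ — which by Proposition \ref{prop_det_ref_properties}(ii) preserves $\Lambda^1$ and by (i) has inverse $P^W$ itself — turns this into $\int f(P^W(x',\theta'),x',\theta')\,\Lambda^1(\dd x'\,\dd\theta')$, which is exactly the right-hand side of \eqref{eq1.14} written with the deterministic kernel $\mathbb{P}^W$. (One should be slightly careful that $P^W$ is only defined $\Lambda^1$-a.e., but by A5 this is a full-measure statement and the change of variables goes through on the set where both $P^W$ and $P^W\circ P^W$ are defined.)

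Next I would pass to the limit. Fix $f\in C_c((\mathbb{R}\times\mathbb{S}^1_+)^2)$. Define the ``swap'' map $S(x,\theta,x',\theta')=(x',\theta',x,\theta)$; then $f\circ S$ is again in $C_c((\mathbb{R}\times\mathbb{S}^1_+)^2)$. The measures $\mu_n(\dd x\,\dd\theta\,\dd x'\,\dd\theta'):=\mathbb{P}^{W_n}(x,\theta;\dd x'\dd\theta')\Lambda^1(\dd x\,\dd\theta)$ converge weakly to $\mu(\dd x\,\dd\theta\,\dd x'\,\dd\theta'):=\mathbb{P}(x,\theta;\dd x'\dd\theta')\Lambda^1(\dd x\,\dd\theta)$ by \eqref{eq1.6}. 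Applying weak convergence to the two test functions $f$ and $f\circ S$ gives
\[
\int f\,\dd\mu=\lim_n\int f\,\dd\mu_n,\qquad \int (f\circ S)\,\dd\mu=\lim_n\int (f\circ S)\,\dd\mu_n.
\]
By the exact symmetry of each $\mu_n$ established in the previous paragraph, $\int f\,\dd\mu_n=\int (f\circ S)\,\dd\mu_n$ for every $n$; taking $n\to\infty$ yields $\int f\,\dd\mu=\int(f\circ S)\,\dd\mu$, which is precisely \eqref{eq1.14}. Since $f$ was arbitrary in $C_c((\mathbb{R}\times\mathbb{S}^1_+)^2)$, $\mathbb{P}$ is symmetric with respect to $\Lambda^1$.

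The main technical obstacle is purely a matter of care with the weak convergence: $\mathbb{R}\times\mathbb{S}^1_+$ is non-compact, the measures $\mu_n$ are infinite (being fibered over the infinite measure $\Lambda^1$), and a priori $\mathbb{S}^1_+=(0,\pi)$ is open so one must make sure no mass escapes to the boundary $\theta\in\{0,\pi\}$ or $\theta'\in\{0,\pi\}$; the hypothesis that convergence in Definition \ref{roughref_def} is tested against $C_c$ functions handles this, but one should confirm that $f\circ S$ genuinely has compact support in $(\mathbb{R}\times\mathbb{S}^1_+)^2$ (it does, since $S$ is a homeomorphism of that space) so that it is a legitimate test function. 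A secondary point to dispatch cleanly is the a.e.\ definedness of $P^{W_n}$ and the validity of the change-of-variables step there; this is where Proposition \ref{prop_det_ref_properties} and assumption A5 do all the work, so the argument reduces to citing them correctly. No new estimates are needed — the content is entirely in transporting the deterministic identity through the limit.
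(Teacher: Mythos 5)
Your argument is correct and is essentially the paper's own: Proposition \ref{prop_dualref} is proved in the paper as a special case of Proposition \ref{prop_gensymm}, whose proof likewise establishes the exact symmetry of each deterministic kernel $\mathbb{P}^{W_n}$ by the change of variables $(x',\theta')=P^{W_n}(x,\theta)$ (using involutivity and $\Lambda^1$-invariance from Proposition \ref{prop_det_ref_properties}) and then passes to the weak limit against $C_c$ test functions. Your remarks on a.e.\ definedness and on $f\circ S$ being a legitimate test function are the same bookkeeping the paper relies on, so no gap remains.
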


Symmetry generalizes time-reversibility in the sense that the left-hand side of (\ref{eq1.14}) is transformed into the right-hand side by interchanging the pre- and post-reflection variables $(x,\theta)$ and $(x',\theta')$. Symmetry also implies that $\mathbb{P}$ preserves $\Lambda^1$. Indeed, by letting $f(x,\theta,x',\theta') \uparrow g(x',\theta') \in C_c(\mathbb{R} \times \mathbb{S}^2_+)$ in (\ref{eq1.14}), we obtain 
\begin{equation}
    \int_{(\mathbb{R} \times \mathbb{S}^1_+)^2} g(x',\theta')\mathbb{P}(x,\theta; \dd x' \dd\theta')\Lambda^1(\dd x \dd\theta) = \int_{\mathbb{R} \times \mathbb{S}^1_+} g(x,\theta)\Lambda^1(\dd x \dd\theta).
\end{equation}
Proposition \ref{prop_dualref} is a special case of Proposition \ref{prop_gensymm}, proved in \S\ref{sec_genref}.

\begin{remark} \label{rem_replacelambda1} \normalfont
In Definition \ref{roughref_def}, it is equivalent to replace $\Lambda^1$ with any measure which is mutually absolutely continuous with respect to Lebesgue measure on $\mathbb{R} \times \mathbb{S}^1_+$. The measure $\Lambda^1$ happens to be a convenient choice, due to Proposition \ref{prop_dualref}.
\end{remark}

\begin{remark} \label{rem_verify1} \normalfont
The convergence (\ref{eq1.6}) means that, for any $h(x,\theta,x',\theta') \in C_c((\mathbb{R} \times \mathbb{S}^1_+)^2)$, 
\begin{equation} \label{eq1.11}
\begin{split}
    &\lim_{n \to \infty}\int_{(\mathbb{R} \times \mathbb{S}^1_+)^2} h(x,\theta,x',\theta') \mathbb{P}^{W_n}(y,w; \dd y' \dd w') \Lambda^2(\dd x \dd\theta) \\
    & \quad\quad = \int_{(\mathbb{R} \times \mathbb{S}^1_+)^2} h(x,\theta,x',\theta') \mathbb{P}(y,w; \dd y' \dd w') \Lambda^2(\dd x \dd\theta).
\end{split}
\end{equation}
Since the tensor product $C_c^{\infty}(\mathbb{R} \times \mathbb{S}^1_+) \otimes C_c^{\infty}(\mathbb{R} \times \mathbb{S}^1_+)$ is dense in $C_c((\mathbb{R} \times \mathbb{S}^1_+)^2)$, it is sufficient to verify (\ref{eq1.11}) for functions for form $h(y,w,y',w') = f(y,w)g(y',w')$, where $f,g \in C_c^{\infty}(\mathbb{R} \times \mathbb{S}^1_+)$.
\end{remark}

\begin{remark} \label{rem_convsense0} \normalfont
There is a sense in which $\mathbb{P}^{W_n}$ converges to $\mathbb{P}$ as a limit with respect to a pseudometric topology on the space of Markov kernels on $\mathbb{R} \times \mathbb{S}^1_+$. This topology is described in \S \ref{sssec_pseudo}. 

Some care must be taken when working with this sense of convergence, because limits may not be unique. With respect to the pseudometric, the distance between two Markov kernels $\mathbb{P}$ and $\mathbb{P}'$ is zero if and only if $\mathbb{P}(\cdot; \dd x \dd\theta)$ and $\mathbb{P}'(\cdot; \dd x \dd\theta)$ agree on a $\Lambda^1$-full measure subset of $\mathbb{R} \times \mathbb{S}^1_+$. If we identify Markov kernels which agree on a $\Lambda^1$-full measure subset of $\mathbb{R} \times \mathbb{S}^1_+$, then limits will be unique and the pseudometric will be a metric.
\end{remark}

From this point on, we will write $\lim_{i \to \infty} \mathbb{P}^{W_n} = \mathbb{P}$ to indicate that (\ref{eq1.6}) holds.

\subsubsection{Simple example: the rectangular teeth microstructure}

A simple example of a rough reflection law may be obtained by considering a sequence of walls $W_n$ with periodic boundary structure consisting of ``rectangular teeth.'' That is, we first define real functions
\begin{equation}
    t_n(x) = \begin{cases}
    0 & \text{ if } 2k \epsilon_n \leq x \leq (2k + 1) \epsilon_n, \\
    -r\epsilon_n & \text{ if } (2k + 1) \epsilon_n < x < (2k+2) \epsilon_n  
    \end{cases} \quad \text{ for } k \in \mathbb{Z}.
\end{equation}
See Figure \ref{square_teeth_fig} in \S\ref{sec_examples}. The quantity $r > 0$ is a fixed parameter representing the ratio of the height of the teeth to the width. We define 
\begin{equation}
    W_n = \{(x_1,x_2) : x_2 \leq t_n(x_1)\}.
\end{equation}

If $v^- = (v_1^-, v_2^-)$ is the incoming velocity of a point particle, then after hitting $W_n$ a certain number of times, the particle will return to the upper half-plane with velocity either $v^+ = (v_1^-, - v_2^-)$ or $v^+ = (-v_1^-, -v_2^-) = -v^-$. The first of these velocities corresponds to a specular reflection, while the second corresponds to a \textit{retroreflection} -- i.e. a reflection in which the outgoing trajectory of the point particle goes in the opposite direction as the incoming trajectory. Thus, as $\epsilon_n \to 0$, we expect the limiting rough reflection law to randomly select between specular reflection and  retroreflection. 

In \S\ref{sec_examples}, we derive explicit formulas for rough reflections from several different types of microstructures, including the rectangular teeth microstructure described above. 

\subsubsection{Periodic case} \label{sssec_periodic}

We now comment on the special case where the wall $W$ satisfies the periodicity condition A5b. In this setting, it is useful to abstract the \textit{shape} of the wall from the \textit{scale}. In the limit, as the scale of the wall goes to zero, we expect at least some information about the shape of the wall to be preserved, and we would like to be able to talk about the shape of the wall independently of the scale. 

To accomplish this, we observe that a periodic wall is determined uniquely by a pair $(\Sigma,\epsilon)$, where $\Sigma$ is a subset of $\mathbb{S}^1 \times (-\infty,0]$ with $\partial \Sigma \subset \mathbb{S}^1 \times [-1,0]$, and $\epsilon > 0$. In particular, $W$ is the unique wall satisfying periodicity condition (\ref{eq1.16}) such that the image of $W$ under the covering map 
\begin{equation}
    (x_1,x_2) \mapsto (e^{2\pi i x_1/\epsilon}, \epsilon^{-1}x_2) : \mathbb{R} \times [-\epsilon,0] \to \mathbb{S}^1 \times [-1,0]
\end{equation}
is $\Sigma$. We denote the wall so determined by $W(\Sigma,\epsilon)$. Drawing on terminology from Feres [\ref{feres_rw}], we refer to $\Sigma$ as the \textit{cell} and we refer to $\epsilon > 0$ as the \textit{roughness scale}.

If the wall $W = W(\Sigma, \epsilon)$ arises from a cell and roughness scale, as described above, then we will denote the corresponding macro-reflection law by $\mathbb{P}^{\Sigma,\epsilon}$.

To illustrate the use of this concept, consider rough reflections from a ``fractal microstructure.'' In general, fractals do not have well-defined normal vectors at most boundary points, so we cannot define specular reflection on such a surface directly. But sense can be made of this in the case of rough reflections. For example, we might take $\Sigma_n$ to be a sequence of sets generating a fractal whose boundary is a Koch curve -- see Figure \ref{Koch_reflections_fig}. A rough reflection ``from a Koch curve microstructure'' can then be defined as a rough reflection obtained as the limit (in the sense of (\ref{eq1.6})) of a sequence of deterministic Markov kernels $\mathbb{P}^{\Sigma_n,\epsilon_n}$, where $\epsilon_n$ is a sequence of positive numbers converging to zero. The paper [\ref{LapNiem}] carries out numerical experiments for a related model.

\begin{figure}
    \centering
    \includegraphics[width = 0.8\linewidth]{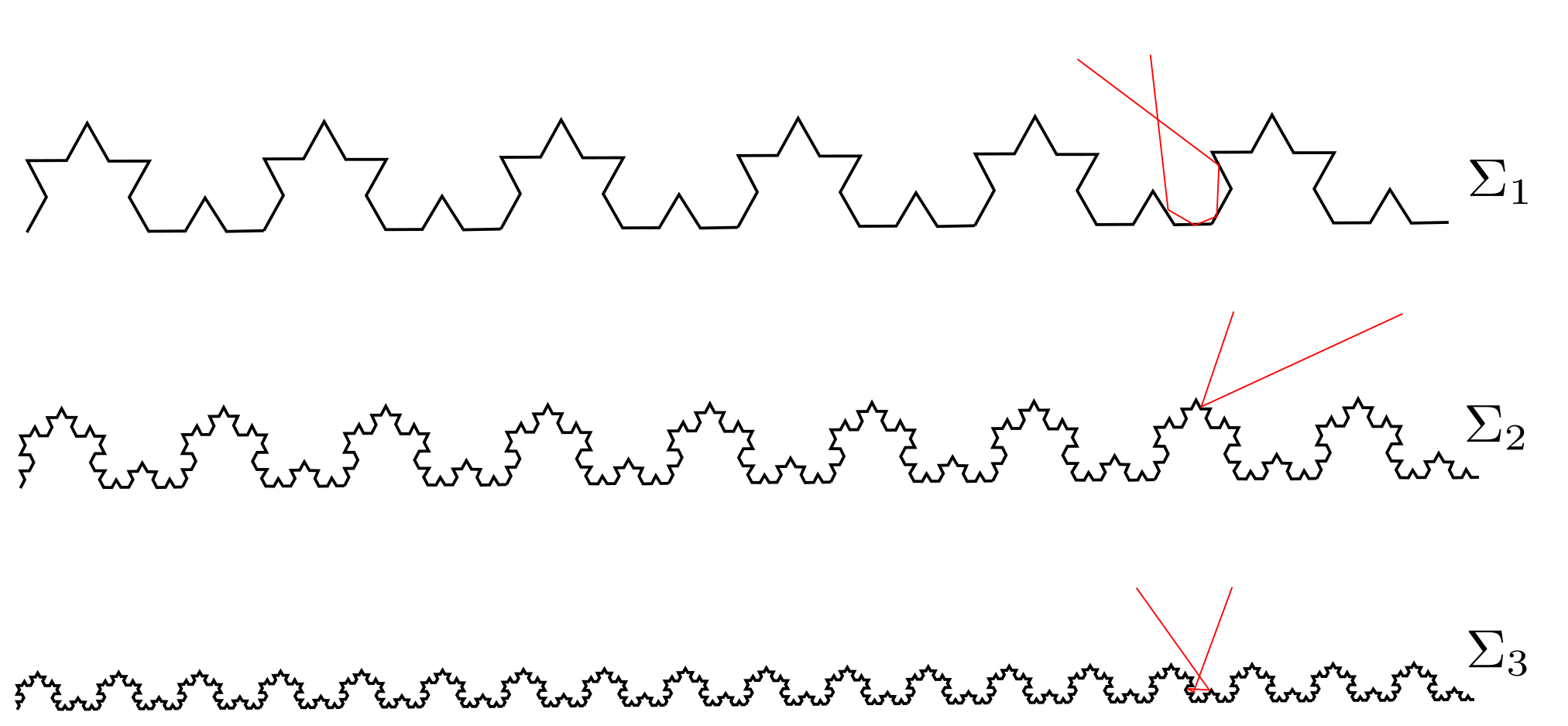}
    \caption{Reflections from a Koch curve.}
    \label{Koch_reflections_fig}
\end{figure}

Conditions on $\Sigma$ which are sufficient to guarantee that the wall $W = W(\Sigma,\epsilon)$ satisfies conditions A1-A5 are the following:
\begin{enumerate}[label = B\arabic*.]
    \item $\Sigma$ is the closure of its interior in $\mathbb{S}^1 \times \mathbb{R}$.
    \item $\mathbb{S}^1 \times \mathbb{R} \smallsetminus \Sigma$ is connected.
    \item The following inclusions hold: $\mathbb{S}^1 \times (-\infty,-1] \subset \Sigma \subset \mathbb{S}^1 \times (-\infty,0]$; thus $\partial \Sigma \subset \mathbb{S}^1 \times [-1,0]$.
    \item There exists a finite collection of compact $C^2$ curve segments $\{\widetilde{\Gamma}_i\}_{i = 1}^m$ such that $\partial \Sigma = \bigcup_{i = 1}^m \widetilde{\Gamma}_i$. The curve segments $\widetilde{\Gamma}_i$ satisfy conditions A4(i)-(iv), with the obvious modifications.
\end{enumerate}
We of course get the periodicity condition A5b for free. 

One condition which is sufficient to guarantee that A5a holds is:
\begin{enumerate}
    \item [B5.] There exists a non-trivial loop $\gamma \subset \mathbb{S}^1 \times \mathbb{R}$, starting and ending at the point $(1,0)$, which lies entirely in $\Sigma$.
\end{enumerate}
Here ``non-trivial'' means that $\gamma$ cannot be contracted in $\mathbb{S}^1 \times \mathbb{R}$ to a point. This condition implies the points $(\epsilon k,0) \in \mathbb{R}^2$, where $k \in \mathbb{Z}$, all lie in a single connected component of $W = W(\Sigma,\epsilon)$. Condition B5 will always be satisfied if $\Sigma$ is connected, satisfies conditions B1-B4, and contains the point $(1,0) \in \mathbb{S}^1 \times \mathbb{R}$.

The main reason we would want to impose the condition B5 is the following. Under generic circumstances, we expect a rough collision law obtained from a sequence of periodic walls to take the form
\begin{equation} \label{eq1.17'}
    \mathbb{P}(x,\theta; \dd x' \dd\theta') = \delta_x(\dd x') \widetilde{\mathbb{P}}(\theta, \dd\theta').
\end{equation}
The intuition behind this is that the point particle should leave a rough wall at approximately the same spatial position that it hits. Moreover, if the microstructure on the wall is periodic, then the distribution of the angle of reflection $\theta'$ should only depend on the angle of incidence $\theta$, and not on the position where the particle hits the wall. Lemma \ref{lem_construct} from \S\ref{sec_examples} implies that if $\mathbb{P} = \lim_{n \to \infty} \mathbb{P}^{\Sigma_n,\epsilon_n}$, where the cells $\Sigma_n$ satisfy B1-B5, then $\mathbb{P}$ takes the form (\ref{eq1.17'}). The assumption B5 guarantees that the billiard trajectory will get trapped in small ``hollow'' within a single period of the wall $W$, and consequently the distance between the points where the trajectory first hits and returns to the $x_1$-axis will be of order $\epsilon$ apart. 

Although we do not know of a specific counter-example, it is most likely not possible to obtain (\ref{eq1.17'}) if we just assume conditions B1-B4. One can imagine constructing a periodic wall with a large ``asteroid field'' of connected components, such that the point particle will be forced to travel a great distance, reflecting from the various components many times, before eventually leaving the wall. If each wall $W_n$ is constructed in this way, the limiting reflection law might not satisfy (\ref{eq1.17'}).

\begin{remark} \normalfont
We can, however, weaken condition B5 as follows, and (\ref{eq1.17'}) will still hold:
\begin{enumerate}
\item[B5'.] There exists $a \in [-1,0]$ and $u \in \mathbb{S}^1$ such that 
\begin{equation}
    \mathbb{S}^1 \times (-\infty,-1] \subset \Sigma \subset \mathbb{S}^1 \times (-\infty,a],
\end{equation}
and there exists a nontrivial loop $\gamma \subset \mathbb{S}^1 \times \mathbb{R}$ starting and ending at the point $(u,a)$, which lies entirely in $\Sigma$.
\end{enumerate}
Under this assumption, the proof of Lemma \ref{lem_construct} in \S\ref{sec_examples} goes through with only minor modifications.
\end{remark}

\begin{remark} \normalfont
The factor $\widetilde{\mathbb{P}}(\theta, \dd\theta')$ is a Markov kernel on $\mathbb{S}^1_+$. When $\mathbb{P}$ takes the form (\ref{eq1.17'}), the symmetry property (\ref{eq1.14}) reduces to the following: for any $f \in C_c(\mathbb{S}^1_+ \times \mathbb{S}^1_+)$, 
\begin{equation} \label{eq1.18'}
    \int_{\mathbb{S}^1_+ \times \mathbb{S}^1_+} f(\theta,\theta')\widetilde{\mathbb{P}}(\theta, \dd\theta') \sin\theta \dd\theta = \int_{\mathbb{S}^1_+ \times \mathbb{S}^1_+} f(\theta', \theta)\widetilde{\mathbb{P}}(\theta, \dd\theta')\sin\theta \dd\theta.
\end{equation}
\end{remark}

For more information on the situation when $W$ is periodic, see \S\ref{ssec_lemconstruct}.

\subsubsection{Characterization of rough reflections laws in upper half-plane} \label{sssec_rfrefchar}

Rough reflections were originally characterized by Plakhov in the context of optimization problems in aerodynamics. This author's setting is quite general and at least superficially different from the one above, considering the \textit{scattering law} on a bounded convex body in $\mathbb{R}^d$, instead of the rough reflection law in the upper half-plane. Angel, Burdzy, and Sheffield independently obtained a characterization of the rough reflection laws as we have defined them above. We state this characterization first, since it is the one most closely related to the main results obtained in this work. In \S\ref{sssec_Plakhov} we discuss Plakhov's ideas and how they are related to the result which we state presently.

\begin{theorem}[\ref{ABS_refl}, Theorem 2.3] \label{thm_rough_ref_char} Suppose $\mathbb{P}(x,\theta; \dd x' \dd\theta') = \delta_x(\dd x')\widetilde{\mathbb{P}}(x,\theta; \dd\theta')$ and is a symmetric with respect to the measure $\Lambda^1$ in the sense of (\ref{eq1.14}). Then there exists a sequence of walls $W_n$ with piecewise analytic boundaries $\partial W_n \subset \{(x_1,x_2) : -1/n < x_2 \leq 0\}$ such that 
\begin{equation}
    \mathbb{P}^{W_n}(x,\theta; \dd x' \dd\theta')\Lambda^1(\dd x \dd\theta) \to \mathbb{P}(x,\theta; \dd x' \dd\theta')\Lambda^1(\dd x \dd\theta),
\end{equation}
weakly on the space of measures on $\mathbb{R} \times \mathbb{S}^1_+$.
\end{theorem}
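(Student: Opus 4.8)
\textit{Proof proposal.} The plan is to construct the walls $W_n$ by a two-scale procedure: a macroscopic scale on which the position $x$ and the target kernel $\widetilde{\mathbb{P}}(x,\theta;\cdot)$ are resolved, and a much finer scale $\epsilon_n$ on which a deterministic, almost-periodic microstructure is laid down so that — once the billiard trajectory equidistributes its entry position modulo $\epsilon_n$ — the macro-reflection law self-averages to the prescribed \emph{random} kernel. The rectangular-teeth microstructure (see \S\ref{sec_examples}) is the prototype: there a single deterministic periodic wall produces in the limit a random choice between specular reflection and retroreflection, with weights equal to the fractions of a period occupied by ``tooth top'' and ``well.'' By Remark \ref{rem_verify1} it suffices to verify the convergence (\ref{eq1.11}) against product test functions $h(x,\theta,x',\theta') = f(x,\theta)g(x',\theta')$ with $f,g \in C_c^\infty(\mathbb{R}\times\mathbb{S}^1_+)$, and it is precisely this reduction to product test functions that makes the self-averaging argument work.

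\textbf{Step 1: reduction to a finitary target.} First I would show that the class of rough reflection laws is closed under the weak limits of (\ref{eq1.6}): by the pseudometric of Remark \ref{rem_convsense0} (a genuine metric after identifying kernels agreeing $\Lambda^1$-a.e.) and a diagonal argument — choosing the approximating wall for the $k$-th kernel with boundary in a strip of height $< 1/k$ — a weak limit of rough reflection laws is again a rough reflection law. Hence it is enough to approximate an arbitrary symmetric $\mathbb{P}(x,\theta;\dd x'\dd\theta')=\delta_x(\dd x')\widetilde{\mathbb{P}}(x,\theta;\dd\theta')$ by ``finitary'' symmetric kernels. Passing to the coordinate $u=-\cos\theta$ (under which $\Lambda^1$ becomes Lebesgue measure $\dd x\,\dd u$ on $\mathbb{R}\times(-1,1)$), partition $\mathbb{R}$ into bounded windows and $(-1,1)$ into $N$ equal subintervals $I_1,\dots,I_N$, and replace $\widetilde{\mathbb{P}}$ by its conditional average over the product partition. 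On each window this yields a symmetric bistochastic matrix $M=(p_{jk})$, $p_{jk}=p_{kj}$, $\sum_k p_{jk}=1$; symmetry of $\mathbb{P}$ with respect to $\Lambda^1$ is exactly what forces symmetry of $M$. A martingale/mollification estimate shows the finitary kernels converge back to $\mathbb{P}$ in the weak sense of (\ref{eq1.6}) as the windows shrink and $N\to\infty$, so it remains to realize each finitary symmetric target by a small-depth wall.

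\textbf{Step 2: the geometric realization lemma (main obstacle).} The heart of the proof is a construction: for every $\delta>0$, every $N$, and every symmetric bistochastic $M$, there is an $\epsilon$-periodic wall $W$ with piecewise-linear (hence piecewise-analytic) boundary contained in $\{-\delta<x_2\le 0\}$, satisfying A1--A5, whose macro-reflection law $\mathbb{P}^W$, restricted to a fixed window and written in the coordinate $u$, is within $\delta$ (against Lipschitz test functions) of $\delta_x(\dd x')\sum_k p_{jk}\,\mathrm{Unif}(I_k)(\dd u')$ for $u\in I_j$. One builds, inside a single period of width $\epsilon$, a ``hollow'' subdivided into narrow vertical channels, each channel being a thin polygonal groove engineered through a finite sequence of specular reflections to rotate an incoming trajectory by a prescribed amount, so that trajectories entering the portion of the period reserved for the $(j,k)$-interaction exit in block $I_k$; a nontrivial loop inside the cell (condition B5) traps the trajectory within one period, so the exit position lies within $O(\epsilon)$ of the entry, producing the $\delta_x$ factor in the limit. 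The subtlety — which is the main obstacle — is that a single deterministic channel sends a given incoming angle to a single outgoing angle, whereas $M$ need not be a permutation matrix: one resolves this by taking $N$ large, so the required output angle varies by only $O(1/N)$ across a block, and by designing the profile so that the input-to-output angle map is itself a measure-preserving involution of the cell (necessarily so, by Proposition \ref{prop_det_ref_properties}), then checking that the family of realizable involutions is rich enough to match any symmetric bistochastic $M$ up to $O(1/N)$ error. Ensuring A5 requires controlling the exceptional set of incoming data whose trajectory grazes a corner or hits $\partial W$ tangentially; this is handled by a small perturbation of the channel vertices and the observation that such bad sets are lower-dimensional.

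\textbf{Step 3: assembly and convergence.} I would then patch the per-window walls into a single $W_n$: take window-partitions with mesh $\to 0$, realize the finitary target on each window via Step 2 with its own period $\epsilon_n\to 0$ chosen small relative to the mesh and to $1/n$, and with $N=N_n\to\infty$; the result has piecewise-analytic boundary in $\{-1/n<x_2\le 0\}$ and satisfies A1--A5. To verify (\ref{eq1.11}) for $h=f\otimes g$, decompose the integral over the windows: on each window $f$ is nearly constant (mesh $\to 0$), and since the window contains of order $(\text{mesh})/\epsilon_n\to\infty$ periods, the entry position equidistributes modulo $\epsilon_n$, so the window's contribution converges to the corresponding piece of $\int f\,g\,\widetilde{\mathbb{P}}\,\dd\Lambda^1$; summing over windows and passing to the limit gives the claim. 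No mass escapes: each $\mathbb{P}^{W_n}$ preserves $\Lambda^1$ by Proposition \ref{prop_det_ref_properties}(ii), so all first marginals equal $\Lambda^1$, and the trapping property forces $x'\in[x-\epsilon_n,x+\epsilon_n]$. This yields $\mathbb{P}^{W_n}\to\mathbb{P}$. The decisive and most delicate ingredient is the geometric realization lemma of Step 2.
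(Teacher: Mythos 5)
Before comparing: the paper does not prove this theorem at all. It is imported verbatim from Angel--Burdzy--Sheffield [\ref{ABS_refl}, Theorem 2.3]; the only information the paper gives about its proof is the remark following the statement (regularity of the walls $W_n$: analytic arcs of one-signed curvature or line segments, no cusps, condition A5a) and the remark after Corollary \ref{cor_rough_ref_char}, namely that the construction builds local reflectors beneath the intervals $[k/n,(k+1)/n]$ and pieces them together. So there is no internal proof to measure you against; your sketch can only be judged on its own terms and against that description. Your overall architecture -- pass to the coordinate $u=-\cos\theta$, reduce to finitary symmetric targets, realize each by a shallow periodic microstructure whose randomness arises from self-averaging of the entry position over a period, then assemble window-by-window -- is the right shape and is consistent with the ``local construction plus assembly'' strategy the paper attributes to [\ref{ABS_refl}]; Steps 1 and 3 are essentially routine (symmetry is preserved by conditional averaging over product partitions, the $\delta_x$ factor and measure preservation come from trapping within a period and Proposition \ref{prop_det_ref_properties}, and the pseudometric of Remark \ref{rem_convsense0} makes the diagonal argument legitimate).

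The genuine gap is Step 2, which is the entire content of the theorem, and the specific resolution you offer for the ``main obstacle'' does not work as stated. With narrow vertical channels, which channel a particle enters is determined by its horizontal entry position essentially independently of its incoming angle, and each deterministic groove sends a whole angle block $I_j$ to a single block $I_k$. The limiting block kernel produced by such a design is therefore a convex combination $\sum_t \lambda_t P_{\pi_t}$ of involutive (i.e.\ symmetric) permutation matrices, where $\lambda_t$ is the fraction of the period occupied by channels of type $t$. This class is strictly smaller than the symmetric bistochastic matrices: for $N=3$, the matrix with zero diagonal and all off-diagonal entries $1/2$ is symmetric bistochastic, but writing it as $a I + b_1 T_{12} + b_2 T_{13} + b_3 T_{23}$ forces $b_1=b_2=b_3=1/2$, so the weights sum to $3/2>1$. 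Hence ``the family of realizable involutions is rich enough'' fails if the involutions act on angle blocks alone; to realize a general symmetric $M$ the exit angle must depend jointly on the incoming angle and on the fine entry position within the period, i.e.\ you must realize, approximately and by actual billiard geometry in a strip of depth $o(1)$, an essentially arbitrary measure-preserving involution of the two-dimensional cell (position mod $\epsilon$) $\times$ (angle), while also controlling grazing and corner trajectories so that A5 holds. That is exactly the delicate mirror construction of [\ref{ABS_refl}] (whose walls, per the remark in the paper, use analytic arcs of nonvanishing curvature rather than only polygonal channels), and it is asserted rather than supplied in your proposal. So you have correctly located where the difficulty lives, but the key realization lemma is unproved and the concrete mechanism you propose cannot produce all symmetric targets.
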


\begin{remark} \normalfont
The actual walls $W_n$ constructed in [\ref{ABS_refl}, Theorem 2.3] have boundaries $\partial W_n$ which satisfy the following conditions: (1) the boundary is composed of a locally finite collection of compact, analytic curve segments (where analytic on a compact interval means having an analytic extension to an open interval); (2) the curve segments intersect only at their endpoints and do not form cusps at their intersection points (i.e. the angle between two intersecting curve segments at their endpoints is not zero); (3) each curve segment either has non-vanishing curvature of one sign or is a line segment; and (4) the condition A5a (see above) is satisfied. Thus the walls satisfy the same hypotheses as those of [\ref{Chernov&Markarian}, Chapter 2] for example.
\end{remark}

To appreciate the significance of Theorem \ref{thm_rough_ref_char}, consider the following two Markov kernels which are easily shown to be symmetric with respect to the measure $\Lambda^1$.
\begin{itemize}
    \item retroreflection: $\mathbb{P}(x,\theta; \dd x' \dd\theta') = \delta_{(x,\theta)}(\dd x' \dd\theta')$.
    
    \item Lambertian reflection: $\mathbb{P}(x,\theta; \dd x' \dd\theta') = \frac{1}{2}\delta_x(\dd x') \sin\theta' \dd\theta'$. 
\end{itemize}
The first of these reflection laws is deterministic. Examples of approximate retro-reflectors in real life include cat's eyes and street signs with reflective paint [\ref{retroref_wiki}]. The second reflection law is random. It was introduced by Lambert in 1760 to model light reflecting from a matte surface [\ref{Lambert_history}]. There are many other examples of Markov kernels which preserve the measure $\Lambda^1$ and have a trivial spatial factor $\delta_x(\dd x')$ (the collection of deterministic reflection laws alone is isomorphic to the space of measure preserving transformations of $(0,1)$ with Lebesgue measure -- see Remark \ref{rem_deterministic}). Theorem \ref{thm_rough_ref_char} says that each of these is a rough reflection law; that is, each may be approximated by a deterministic reflection from a surface with a geometric microstructure.

In the case where $\mathbb{P}(x,\theta; \dd x' \dd\theta') = \delta_x(\dd x')\widetilde{\mathbb{P}}(\theta, \dd\theta')$, the sequence of approximating reflectors may be taken to be periodic. 

\begin{corollary} \label{cor_rough_ref_char}
Suppose $\mathbb{P}(x,\theta; \dd x' \dd\theta') = \delta_x(\dd x') \widetilde{\mathbb{P}}(\theta, \dd\theta')$, where $\widetilde{\mathbb{P}}$ is a Markov kernel which is symmetric with respect to the measure $\sin\theta \dd\theta$ in the sense of (\ref{eq1.18'}). Then there exists a sequence of cells $\{\Sigma_n\}_{n \geq 1}$ (satisfying conditions B1-B5) and positive numbers $\epsilon_n \to 0$ such that 
\begin{equation}
    \mathbb{P}^{\Sigma_n,\epsilon_n}(x,\theta; \dd x' \dd\theta')\Lambda^1(\dd x \dd\theta) \to \mathbb{P}(x,\theta; \dd x' \dd\theta')\Lambda^1(\dd x \dd\theta),
\end{equation}
weakly on the space of measures on $\mathbb{R} \times \mathbb{S}^1_+$.
\end{corollary}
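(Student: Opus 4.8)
The plan is to deduce Corollary~\ref{cor_rough_ref_char} from Theorem~\ref{thm_rough_ref_char} together with the structural facts about periodic walls discussed in \S\ref{sssec_periodic}. The key point is that Theorem~\ref{thm_rough_ref_char} already hands us a sequence of walls $W_n$ with $\partial W_n \subset \{-1/n < x_2 \le 0\}$ approximating $\mathbb{P}$; the only thing left to arrange is \emph{periodicity} of the approximating walls. I would proceed in three steps.

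\emph{Step 1: Reduce to approximating $\widetilde{\mathbb{P}}$ on $\mathbb{S}^1_+$ by periodic-wall macro-reflections.} By Lemma~\ref{lem_construct} (cited in \S\ref{sssec_periodic}), any limit $\lim_n \mathbb{P}^{\Sigma_n,\epsilon_n}$ of macro-reflection laws of cells satisfying B1--B5 automatically takes the product form $\delta_x(\dd x')\widetilde{\mathbb{Q}}(\theta,\dd\theta')$; conversely, to realize the given $\mathbb{P}(x,\theta;\dd x'\dd\theta') = \delta_x(\dd x')\widetilde{\mathbb{P}}(\theta,\dd\theta')$ as such a limit it suffices to produce cells $\Sigma_n$ (satisfying B1--B5) and scales $\epsilon_n \to 0$ so that the \emph{angular} factor of $\mathbb{P}^{\Sigma_n,\epsilon_n}$ converges to $\widetilde{\mathbb{P}}$ against test functions on $\mathbb{S}^1_+ \times \mathbb{S}^1_+$ weighted by $\sin\theta\,\dd\theta$ — this is the reduction encoded in equation~(\ref{eq1.18'}) and Remark~\ref{rem_verify1}. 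Because the periodicity condition A5b forces, via Lemma~\ref{lem_construct}, that the spatial displacement of the trajectory within one collision is $O(\epsilon_n)$, the spatial factor converges to $\delta_x(\dd x')$ for free, so the whole of the spatial bookkeeping in Definition~\ref{roughref_def} reduces to the angular statement.

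\emph{Step 2: Produce periodic approximants whose angular factor is close to $\widetilde{\mathbb{P}}$.} Here I would invoke Theorem~\ref{thm_rough_ref_char} in the special case $\widetilde{\mathbb{P}}(x,\theta;\dd\theta') = \widetilde{\mathbb{P}}(\theta,\dd\theta')$, i.e. the spatially homogeneous case: it yields walls $W_n$, with $\partial W_n$ a locally finite union of compact analytic arcs meeting only at non-cusp endpoints and satisfying A5a, such that $\mathbb{P}^{W_n}(x,\theta;\cdot)\Lambda^1 \to \delta_x(\dd x')\widetilde{\mathbb{P}}(\theta,\dd\theta')\Lambda^1$. The issue is that $W_n$ need not be $\epsilon_n$-periodic. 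I would fix this by a \emph{truncate-and-tile} argument: since the limiting angular factor does not depend on $x$, and by Fubini/averaging the convergence against $f(x,\theta)g(x',\theta')$ can be tested on $x$ in a large bounded window $I_n = [0,L_n]$ with $L_n \to \infty$ slowly, one extracts from $W_n$ the restriction $W_n \cap (I_n \times \mathbb{R})$, rescales it into a single cell $\Sigma_n' \subset \mathbb{S}^1\times(-\infty,0]$, and then defines the periodic wall $W(\Sigma_n', \epsilon_n)$ for a suitably small $\epsilon_n$. The condition A5a on $W_n$ localizes every trajectory in a bounded hollow, so for $L_n$ large enough all but a $\Lambda^1$-negligible set of trajectories starting in a unit window interact with the wall inside a single period and never see the tiling seams; hence the macro-reflection law of $W(\Sigma_n',\epsilon_n)$ differs from that of $W_n$ (localized) by something $\Lambda^1$-negligible, and one checks B1--B4 are inherited from the analytic structure of $\partial W_n$, while B5 is arranged by adding, if necessary, a thin connecting strut at height slightly below $0$ joining the translates of the hollow (using the B5$'$ variant if a literal loop through $(1,0)$ is awkward). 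Passing to a diagonal subsequence in $n$ gives the desired $\Sigma_n$ and $\epsilon_n$.

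\emph{Step 3: Verify the conditions and the convergence.} Finally I would check that the $\Sigma_n$ constructed in Step~2 satisfy B1--B5 (so that $W(\Sigma_n,\epsilon_n)$ satisfies A1--A5, by the implication stated in \S\ref{sssec_periodic}), and that $\mathbb{P}^{\Sigma_n,\epsilon_n}\Lambda^1 \to \mathbb{P}\,\Lambda^1$ weakly: by Remark~\ref{rem_verify1} it is enough to test on $h(x,\theta,x',\theta')=f(x,\theta)g(x',\theta')$ with $f,g \in C_c^\infty$, and the three estimates — spatial factor close to $\delta_x$ (from periodicity + Lemma~\ref{lem_construct}), angular factor close to $\widetilde{\mathbb{P}}$ (from Step~2 plus the $x$-averaging), and the negligibility of boundary-seam trajectories — combine to give the limit.

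\emph{Main obstacle.} The genuine difficulty is Step~2: converting the \emph{a priori} non-periodic walls of Theorem~\ref{thm_rough_ref_char} into periodic ones without changing the limit. Everything hinges on the fact that the target angular factor is $x$-independent, which lets one ``forget'' the large-scale horizontal structure of $W_n$ and retile a single window; but one must be careful that (a) the cut-and-tile operation does not create new tangencies, cusps, or trajectories that escape to infinity (controlled by A5a and by inserting the B5 strut at a height strictly below the line $x_2=0$ so condition A4(iv) is respected), and (b) the fraction of initial conditions whose trajectory crosses a tiling seam is $o(1)$ as $\epsilon_n \to 0$ — this is exactly where condition B5 (trajectories trapped in a bounded hollow of diameter $O(\epsilon_n)$) does the work. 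If Lemma~\ref{lem_construct} is already stated in sufficient generality, much of this may reduce to a citation; otherwise the seam-negligibility estimate is the one computation I would expect to carry out in detail.
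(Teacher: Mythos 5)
Your overall strategy differs from the paper's: the paper does not periodize the output of Theorem~\ref{thm_rough_ref_char} after the fact, but observes (this is why the statement is called a corollary \emph{of the proof}) that in the construction of [\ref{ABS_refl}] the reflectors are built locally beneath the intervals $[k/n,(k+1)/n]$ and then concatenated, so that when $\widetilde{\mathbb{P}}$ does not depend on $x$ one may simply use the \emph{same} local reflector in every interval, and the approximating walls $W_n$ are periodic from the start, with the period $1/n$ playing the role of $\epsilon_n$. Your route treats Theorem~\ref{thm_rough_ref_char} as a black box and tries to recover periodicity by truncating a window of $W_n$ and tiling it, and this is where the argument has a genuine gap.

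The gap is the seam-negligibility claim in your Step~2. Condition A5a tells you each hollow of $\mathbb{R}^2_-\smallsetminus W_n$ is bounded, but neither the statement of Theorem~\ref{thm_rough_ref_char} nor the weak convergence $\mathbb{P}^{W_n}\Lambda^1\to\delta_x(\dd x')\widetilde{\mathbb{P}}(\theta,\dd\theta')\Lambda^1$ gives any quantitative bound on the horizontal extent of those hollows relative to your window $[0,L_n]$. A single hollow of width comparable to $L_n$ that straddles the cut changes the billiard dynamics for a set of initial conditions of non-negligible $\Lambda^1$-measure, and the limiting spatial factor $\delta_x(\dd x')$ does not rule this out: a trajectory can wander through a wide hollow and still return close to its starting point, so ``$|x'-x|$ small with high probability'' tells you nothing about how many trajectories see the seam. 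Moreover, your two requirements on $L_n$ pull in opposite directions: the convergence of the $x$-averaged law is only guaranteed, via the weak convergence, on windows of fixed size (so $L_n$ can grow only as slowly as a diagonal argument permits), while outrunning the hollow widths would require $L_n$ to grow faster than a quantity over which you have no control. Flagging the estimate as ``the one computation to carry out'' does not resolve this; without opening the ABS construction (where the hollows have width $\le 1/n$ by design, and indeed can be taken identical), the periodization step cannot be completed from the theorem statement alone. Your Step~1 reduction via Lemma~\ref{lem_construct} and (\ref{eq1.18'}) is fine, and the secondary issues you mention (avoiding cusps at the seam, arranging B5 or B5$'$) are repairable, but the missing quantitative localization is essential.
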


\begin{remark} \normalfont
We call this a ``corollary'' because it follows from the proof of Theorem 2.3 in [\ref{ABS_refl}]. In this proof, the sequence of approximating reflectors $W_n$ is obtained by locally constructing reflectors beneath the intervals $[k/n, (k+1)/n]$ in the $x_1$-axis for $k \in \mathbb{Z}$, and then piecing the reflectors together. When $\widetilde{\mathbb{P}}$ does not depend on $x$, these local reflectors can be taken to be of the same type on each interval, and consequently $W_n$ is periodic.
\end{remark}

\begin{remark} \label{rem_indep} \normalfont
It will follow from Lemma \ref{lem_construct} that the limit of $\mathbb{P}^{\Sigma_n,\epsilon_n}$ does not depend on the choice of positive numbers $\epsilon_n \to 0$, but only on the sequence of cells $\{\Sigma_n\}$.
\end{remark}

\subsubsection{Operators on a Hilbert space} \label{sssec_Feres}

What information about the microgeometry of the rough surface can be recovered from the rough reflection law $\mathbb{P}$? In a series of papers, Feres and collaborators have sought to address this and related questions.

These authors are motivated in part by a problem in gas kinematics. Suppose that an inert gas at low pressure is released from a long but finite cylindrical chamber with rough interior walls. How is the microgeometry of the interior walls related to the time of escape of the gas particles? Questions of this nature go back to Knudsen's studies of gas kinematics in 1907 (see [\ref{Knudsen}], [\ref{FeresYab}]). Knudsen assumed, based on physical heuristics, that the angle of reflection of a gas particle from the interior wall is independent of the angle of incidence, and hence the distribution of the angle of reflection is $\frac{1}{2}\sin\theta \dd\theta$ (the same distribution introduced by Lambert in optical studies, as noted above). This is of course directly related to the fact that $\widetilde{\mathbb{P}}$ preserves the measure $\sin\theta \dd\theta$.

Consider the case of a periodic wall where the sequence of cells $\Sigma_n = \Sigma$ is constant. Recall that in this case, rough reflection laws take the form (\ref{eq1.17'}). It is proved in [\ref{feres_rw}] that $\widetilde{\mathbb{P}}$ is a bounded self-adjoint operator on the Hilbert space $L^2(\mathbb{S}^1_+, \sin\theta \dd\theta)$. Self-adjointness is a direct consequence of the time-reversibility property mentioned above. Moreover, under additional assumptions (more or less, the sides of $\Sigma$ should be dispersing or Sinai), $\widetilde{\mathbb{P}}$ becomes a Hilbert-Schmidt operator.

The time for a gas particle to exit an open-ended cylindrical chamber of length $L$ and radius $r$ is related in [\ref{feres_rw}] to the Markov kernel $\widetilde{\mathbb{P}}$ as follows. If the gas particle repeatedly hits the interior boundary of the cylinder with angle of incidence $\theta_n$ and angle of reflection $\theta_n'$, then we must have 
\begin{equation}
    \theta_n' \sim \widetilde{\mathbb{P}}(\theta_n, \dd\theta') \quad\quad \text{ and } \quad\quad \theta_{n+1} = \theta_n', \quad\quad n = 0, 1, 2, ....
\end{equation}
The sequence of random angles $\theta_n'$ together with the constant speed $v$ of the gas particle determine a continuous, piecewise linear process $X_t$ on the real line which is the projection of the position of the gas particle at time $t$ onto the cylindrical axis. The exit time for the gas particle from the cylinder is then 
\begin{equation}
    \tau_{L} = \inf\{t \geq 0 : X_t > L\}.
\end{equation}
By a central limit theorem argument, under restrictive assumptions on the stationary distribution of the angle process $\theta_n'$, the scaling limit of the process $\{\xi X_{t/\xi}, t \geq 0\}$ as $\xi \to 0$ is Brownian motion with variance depending on the spectrum of $\widetilde{\mathbb{P}}$. The assumptions in [\ref{feres_rw}] under which this is proved exclude the case where the process $\theta_n'$ is ergodic (i.e. the unique stationary distribution is $\sin\theta \dd\theta$). In the ergodic case, the linear increments of the process $X_t$ have infinite variance, and to obtain a Brownian motion scaling limit, one must instead use the scaling $\{\xi X_{t|\log \xi|/\xi}, t \geq 0\}$, $\xi \to 0$. The analysis of the latter case is carried out in [\ref{feres_rw}] for one example.

With this motivation, the spectrum of $\widetilde{\mathbb{P}}$ is further analyzed in [\ref{FeresZhang_spectrum1}] and [\ref{FeresZhang_spectrum2}]. The first of these papers examines $\widetilde{\mathbb{P}}$ for a special class of cells $\Sigma$ whose sides consist of dispersive circular arcs. By analyzing the moments of $\widetilde{\mathbb{P}}$, a relationship to spherical harmonics emerges. Namely, for smooth functions $u$ on $\mathbb{S}^2$ which are rotationally invariant about the vertical axis, 
\begin{equation}
    \widetilde{\mathbb{P}} u - u = \frac{k_2}{6}\Delta_{\mathbb{S}^2} u + O(K^3),
\end{equation}
where $K$ is a scale invariant curvature parameter depending on $\Sigma$, and $\Delta_{\mathbb{S}^2}$ is the spherical Laplacian on $\mathbb{S}^2$. (The meaning of $\widetilde{\mathbb{P}} u$ is as follows: If we give $\mathbb{S}^2$ spherical coordinates $(\phi,\psi)$ where $\psi \in \mathbb{S}^1_+$ is the angle from vertical axis, then $u(\phi,\psi) = \widetilde{u}(\psi)$ for some $\widetilde{u}$ by rotational invariance, and $\widetilde{\mathbb{P}} u := \widetilde{\mathbb{P}}\widetilde{u}$.)

The paper [\ref{FeresZhang_spectrum2}] examines the spectrum of $\widetilde{\mathbb{P}}$ for more general types of cells, and the authors obtain explicit bounds on the spectral gap of $\widetilde{\mathbb{P}}$. These bounds depend on the curvature of the ``most exposed'' parts of the boundary of the cell $\Sigma$. These results are applied to estimate the rate of convergence of the Markov process determined by $\widetilde{\mathbb{P}}$ to stationary distribution.

The paper [\ref{CookFeres_temp}] considers similar questions but in an even more general setting, where the wall $W$ is allowed to have moving parts and energy can be exchanged between the wall and the point particle.

\subsubsection{Rough scattering laws on bounded convex bodies} \label{sssec_Plakhov}

In the context of optimization problems in aerodynamics, Plakhov has considered rough reflections on general bounded convex bodies. Here the main object of interest is the scattering law, which describes the equilibrium distribution of the incoming and outgoing particle velocities and the normal direction at the point of contact. The scattering law does not contain precisely the same information as the rough reflection law, but the important ideas are similar. The concepts and results summarized here originally appeared in [\ref{Plakhov1}, \ref{Plakhov2}, \ref{Plakhov3}, \ref{Plakhov4}] and were later assembled in a book [\ref{Plakhov5}, Chapter 4].

Given a bounded convex body $C \subset \mathbb{R}^d$ and a body $B \subset C$ with piecewise smooth boundary, define subspaces
\begin{equation}
    (\partial C \times \mathbb{S}^{d-1})_\pm = \{(x,u) \in \partial C \times \mathbb{S}^{d-1} : \pm u \cdot n(x) > 0\},
\end{equation}
where $n(x)$ is the outward-pointing unit normal vector at $x \in \partial C$, and $\cdot$ is the Euclidean dot product. Define a measure $\Lambda_C$ on $(\partial C \times \mathbb{S}^{d-1})_{+}$ by 
\begin{equation}
    \Lambda_C(\dd x \dd u) = (u \cdot n(x))_+ \dd x \dd u, 
\end{equation}
where $\dd x$ and $\dd u$ are Lebesgue measure on $\partial C$ and $\mathbb{S}^{d-1}$ respectively. The macro-reflection law $P^{B,C}$ is defined in the same way as before. That is, if $(x, -u) \in (\partial C \times \mathbb{S}^{d-1})_+$ is the initial state of a point particle, then the particle will enter the $C$ and hit the body $B$ some number (possibly zero) times before returning to the boundary $\partial C$ in state $(x',u') = (x'(x,u),u'(x,u)) \in (\partial C \times \mathbb{S}^{d-1})_+$. The \textit{macro-reflection law determined by $B$} is the map $P^{B,C} : (\partial C \times \mathbb{S}^{d-1})_+ \to (\partial C \times \mathbb{S}^{d-1})_+$ such that
\begin{equation}
    P^{B,C}(x,u) = (x',u').
\end{equation}
The map $P^{B,C}$ may not be defined on a measure zero subset of $(\partial C \times \mathbb{S}^{d-1})_+$. Like in the case of the upper half-plane, $P^{B,C}$ is an involution and preserves the measure $\Lambda_C$.

Let $\nu_{B,C}$ be the measure on $\mathbb{S}^{d-1} \times \mathbb{S}^{d-1} \times \mathbb{S}^{d-1} = (\mathbb{S}^{d-1})^3$ giving the equilibrium joint distribution of the incoming velocity $-u$, the outgoing velocity $u'$, and the unit normal vector $n(x)$ at the point of contact $x \in \partial C$. That is, $\nu_{B,C}$ is defined by  
\begin{equation}
    \int_{(\mathbb{S}^{d-1})^3} f(u,u',n)\nu_{B,C}(\dd u \dd u' \dd n) = \int_{(\partial C \times \mathbb{S}^{d-1})_+} f(u, u'(x,u), n(x)) \Lambda_C(\dd x \dd u).
\end{equation}

A measure $\nu$ on $(\mathbb{S}^{d-1})^{3}$ is called a \textit{rough scattering law} on $C$ if there exists a sequence of bodies $B_n \subset C$ such that 
\begin{enumerate}  [label = (\roman*)]
    \item $\text{Vol}(C \smallsetminus B_n) \to 0$ as $n \to \infty$, and
    
    \item the sequence of measures $\nu_{_{B_n, C}}$ converges weakly to the measure $\nu$.
\end{enumerate}
\begin{remark} \normalfont
This terminology departs slightly from the terminology in [\ref{Plakhov5}, Chapter 4]. Here one considers equivalence classes $\mathcal{B}$ of sequences of bodies $B_n$ satisfying (i) such that the sequences of measures $\{\nu_{B_n,C}\}$ have the same weak limit $\nu_{\mathcal{B}}$. One says that $\mathcal{B}$ is a \textit{rough body} obtained by \textit{grooving} $C$, and the measure $\nu_{\mathcal{B}}$ is given no special name.
\end{remark}

\begin{remark} \normalfont
Condition (i) looks different from the corresponding condition in the definition of a rough reflection law, where the wall boundaries $\Gamma_n$ approach the line $x_2 = 0$ uniformly. Nonetheless, an equivalent definition of a rough scattering law is obtained by replacing (i) with the more restrictive condition:
\begin{enumerate}
    \item[(i)'] $\sup\{\dist(x, \partial C) : x \in B_n\} \to 0$ as $n \to \infty$.
\end{enumerate}
This is not immediate, but it follows from the characterization of rough scattering laws (Theorem \ref{thm_scattering_char} -- below) and its proof. The sequence of bodies $B_n$ constructed in the proof of the characterization theorem [\ref{Plakhov5}, Thm 4.5] can in fact be taken to satisfy (i)'. 
\end{remark}

Define a measure $\tau_C$ on $(\mathbb{S}^{d-1})^2$ by 
\begin{equation}
    \int_{(\mathbb{S}^{d-1})^2} g(u,n)\tau_C(\dd u \dd n) = \int_{\partial C \times \mathbb{S}^{d-1}} g(u,n(x)) \Lambda_C(\dd x \dd u).
\end{equation}
Let $\pi_{u,n} : (u,u',n) \mapsto (u,n)$ and $\pi_{u',n} : (u,u',n) \mapsto (u',n)$ be the natural projections. Let $\text{Adj}(u,u',n) = (u',u,n)$. The most important elementary properties of a rough scattering law are summarized in the following proposition.

\begin{proposition} A rough scattering law $\nu$ on $C$ has the following properties:

(i) $\pi_{u,n}^{\#} \nu = \tau_{C} = \pi_{u',n}^{\#} \nu$.

(ii) $\text{Adj}^{\#}\nu = \nu$.
\end{proposition}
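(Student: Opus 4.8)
The plan is to establish both properties of a rough scattering law $\nu$ by first verifying them for the macro-reflection measures $\nu_{B,C}$ associated to an actual grooved body $B \subset C$, and then passing to the weak limit. Both properties are preserved under weak convergence: property (i) asserts the equality of two pushforward measures under continuous maps (the projections $\pi_{u,n}$ and $\pi_{u',n}$ restricted to the compact space $(\mathbb{S}^{d-1})^3$), and property (ii) asserts invariance under the continuous involution $\mathrm{Adj}$; in both cases, if the identity holds for every $\nu_{B_n,C}$ and $\nu_{B_n,C} \to \nu$ weakly, then testing against $f \in C_c((\mathbb{S}^{d-1})^3)$ and using $f \circ \pi$, $f \circ \mathrm{Adj} \in C_c((\mathbb{S}^{d-1})^3)$ shows the identity holds for $\nu$. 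So the real content is the finite-$B$ statement.

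For the finite-$B$ statement, I would argue as follows. For property (i), unwind the definitions: for $g \in C_c((\mathbb{S}^{d-1})^2)$,
\begin{equation*}
\int g(u,n)\, (\pi_{u,n}^\# \nu_{B,C})(\dd u\, \dd n) = \int_{(\partial C \times \mathbb{S}^{d-1})_+} g(u, n(x))\, \Lambda_C(\dd x\, \dd u) = \int g(u,n)\, \tau_C(\dd u\, \dd n),
\end{equation*}
which is exactly the definition of $\tau_C$; this gives $\pi_{u,n}^\# \nu_{B,C} = \tau_C$ with no work. The identity $\pi_{u',n}^\# \nu_{B,C} = \tau_C$ is the substantive half: here I would use that $P^{B,C}$ is an involution preserving $\Lambda_C$ (stated in the excerpt). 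Writing $P^{B,C}(x,u) = (x'(x,u), u'(x,u))$, the key geometric observation is that the normal at the \emph{exit} point, expressed as a function of the exit state, is just $n(x')$ — so when we push forward along $(x,u) \mapsto (u'(x,u), n(x))$ and change variables via the measure-preserving involution $(x,u) \mapsto (x',u') = P^{B,C}(x,u)$, the outgoing velocity $u'$ becomes the incoming velocity of the reversed trajectory and $n(x)$ becomes $n(x')$, the normal at the new starting point. Concretely,
\begin{equation*}
\int_{(\partial C \times \mathbb{S}^{d-1})_+} g(u'(x,u), n(x))\, \Lambda_C(\dd x\, \dd u) = \int_{(\partial C \times \mathbb{S}^{d-1})_+} g(u, n(x))\, \Lambda_C(\dd x\, \dd u) = \int g(u,n)\, \tau_C(\dd u\, \dd n),
\end{equation*}
using the substitution and $\Lambda_C$-invariance. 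Property (ii) is nearly the same computation: $\mathrm{Adj}^\#\nu_{B,C} = \nu_{B,C}$ follows by testing against $f(u,u',n)$, writing the integral over $(\partial C \times \mathbb{S}^{d-1})_+$ of $f(u, u'(x,u), n(x))$, and substituting $(x,u) \mapsto P^{B,C}(x,u)$; since $P^{B,C}$ is an involution that swaps the roles of incoming and outgoing velocity and fixes the contact point (the reversed trajectory exits where the forward one entered, with the forward incoming velocity now outgoing), one gets $\int f(u'(x,u), u, n(x))\, \Lambda_C(\dd x\, \dd u)$, which is $\mathrm{Adj}^\#\nu_{B,C}$ applied to $f$, hence equal to the original.

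The main obstacle I anticipate is being careful about the contact-point bookkeeping in the change of variables — specifically, checking that under the involution $P^{B,C}$ the point $x \in \partial C$ where the forward trajectory \emph{enters} is exactly the point where the time-reversed trajectory \emph{exits}, so that "the normal at the contact point" is a well-defined function on $(\mathbb{S}^{d-1})^3$ consistent between the forward and reversed descriptions. This is intuitively clear from time-reversibility of specular reflection (the entire billiard path is retraced), but making it precise requires one to confirm that $x'(x',u') = x$ and $u'(x',u') = u$ where $(x',u') = P^{B,C}(x,u)$, i.e. the full involution identity at the level of both coordinates, and that this is compatible with the fact that in Plakhov's setup the particle may pass through a \emph{different} region of $\partial C$ in between — but since only the entry/exit data enter $\nu_{B,C}$, and both entry and exit lie on $\partial C$ with the stated normals, the argument closes. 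A secondary minor point is justifying that the relevant integrals are finite / that one may restrict to a full-measure set where $P^{B,C}$ is defined, which is immediate since $\Lambda_C$ is a finite measure when $\partial C$ is compact and $P^{B,C}$ is defined $\Lambda_C$-a.e.
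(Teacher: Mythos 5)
Your overall strategy --- establish (i) and (ii) for each $\nu_{B,C}$ and pass to the weak limit --- is the same route the paper indicates, and the weak-limit step (continuity of the projections and of $\mathrm{Adj}$ on the compact space $(\mathbb{S}^{d-1})^3$) is fine. The gap is in the finite-$B$ step. After the substitution $(x,u)\mapsto (y,v)=P^{B,C}(x,u)$, involutivity gives $u'(y,v)=u$ and $x'(y,v)=x$, so your integrand $g(u'(x,u),n(x))$ becomes $g(v,n(x'(y,v)))$: the normal that appears is the one at the \emph{exit} point of the new trajectory, whereas $\tau_C$ (and $\nu_{B,C}$ itself, in the proof of (ii)) carries the normal at the \emph{entry} point. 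Involutivity plus $\Lambda_C$-invariance therefore only show that $\pi_{u',n}^{\#}\nu_{B,C}$ is the joint law of the pair (recorded entry velocity, exit-point normal), and this is not $\tau_C$ for a general $B\subset C$. Concretely, let $C$ be the unit disk and $B$ a tiny ball at its center: most particles miss $B$, so $u'(x,u)=-u$ and $u'\cdot n(x)<0$ on a set of positive $\Lambda_C$-measure, hence $\pi_{u',n}^{\#}\nu_{B,C}$ charges $\{u'\cdot n<0\}$ while $\tau_C$ is carried by $\{u\cdot n>0\}$; the same example defeats exact $\mathrm{Adj}$-invariance of $\nu_{B,C}$. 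So your displayed ``concrete'' identity, and the claim that the bookkeeping closes because ``only the entry/exit data enter $\nu_{B,C}$,'' are precisely where the argument fails: only the \emph{entry} normal enters $\nu_{B,C}$.

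What saves the proposition is the defining condition on the approximating sequence, $\mathrm{Vol}(C\smallsetminus B_n)\to 0$ (equivalently the shell condition (i)'), which your proposal never uses. By a Santal\'{o}/Kac-type identity for the return map to $\partial C$, $\int \overline{t}\,\dd\Lambda_C$ is proportional to $\mathrm{Vol}(C\smallsetminus B_n)$, so the $\Lambda_C$-measure of entry states whose path length --- hence whose entry-to-exit distance, the speed being $1$ --- exceeds any fixed $\delta$ tends to zero; off this small set $n(x')$ is close to $n(x)$ wherever $n$ is continuous on $\partial C$. Thus (i) and (ii) hold for $\nu_{B_n,C}$ only up to errors vanishing as $n\to\infty$, and it is these approximate identities, not exact finite-$B$ ones, that one passes to the weak limit. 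In fairness, the paper's own two-sentence justification elides the same point, so your outline does match the paper's; but as written your finite-$B$ step asserts something false, and a complete proof requires this additional estimate (or an explicit restriction to bodies for which entry and exit points agree up to $o(1)$).
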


\noindent (Here if $g : X \to Y$ is a map between measurable spaces, and $\mu$ is a measure on $X$, then $g^\#\mu$ is the \textit{pushforward measure} on $Y$ defined by $g^\#\mu(A) = \mu(g^{-1}(A))$.)

The proposition above follows from the fact that any measure $\nu_{B,C}$ must satisfy (i) and (ii), and these properties are preserved in weak limits. For $\nu_{B,C}$, property (i) is a consequence of the fact that $P^{B,C}$ preserves the measure $\Lambda_C$, while property (ii) follows from involutivity of $P^{B,C}$.

In fact, the properties (i) and (ii) completely characterize the rough scattering laws.

\begin{theorem}[\ref{Plakhov5}, Theorem 4.5] \label{thm_scattering_char}
A measure $\nu$ on $\mathbb{S}^3$ is a rough scattering law on $C$ if and only if $\nu$ satisfies properties (i) and (ii).
\end{theorem}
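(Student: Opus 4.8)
\emph{The ``only if'' direction} is the preceding proposition: for every admissible $B\subset C$, the macro-reflection map $P^{B,C}$ preserves $\Lambda_C$ and is an involution, so $\nu_{B,C}$ satisfies (i) and (ii), and both properties pass to weak limits. I would dispatch this in a sentence and concentrate on the converse: given $\nu$ on $(\mathbb{S}^{d-1})^3$ satisfying (i) and (ii), produce bodies $B_n\subset C$ with $\text{Vol}(C\smallsetminus B_n)\to0$ and $\nu_{B_n,C}\to\nu$ weakly.

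\emph{Disintegration and localization.} First I would disintegrate $\nu$ over the normal direction. Let $\sigma$ be the $n$-marginal of $\tau_C$; by (i) it is also the $n$-marginal of $\nu$, so $\nu(\dd u\,\dd u'\,\dd n)=\int\nu^{n}(\dd u\,\dd u')\,\sigma(\dd n)$ with each $\nu^{n}$ concentrated on $\{(u,u'):u\cdot n>0,\ u'\cdot n>0\}$. Property (i) forces each one-dimensional marginal of $\nu^{n}$ to be the cosine measure $(u\cdot n)_+\,\dd u$ on the hemisphere $\{u\cdot n>0\}$, and property (ii) makes $\nu^{n}$ symmetric under $(u,u')\mapsto(u',u)$; thus for $\sigma$-a.e.\ $n$, $\nu^{n}$ is precisely a symmetric (with respect to cosine measure) conditional reflection law for a \emph{flat} wall with normal $n$. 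Next I would partition $\partial C$ into small patches $S_1,\dots,S_k$ on which the outward normal stays within $\delta$ of a representative $n_i$, with $k\to\infty$, $\max_i\mathrm{diam}\,S_i\to0$, $\delta\to0$, and near each $S_i$ carve a thin microstructure of depth $\eta\to0$ designed to scatter an entering particle according to (approximately) $\nu^{n_i}(\,\cdot\mid u)$, independently of the entry point. Since each microstructure lies within depth $\eta$ of $\partial C$ over total base area at most $\text{area}(\partial C)$, this yields $\text{Vol}(C\smallsetminus B_n)=O(\eta)\to0$ automatically.

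\emph{The crux: flat-wall realization.} The key input is the \emph{flat-wall characterization} --- every symmetric (with respect to cosine measure) conditional reflection law on a flat wall with normal $n$ is a rough reflection law there, i.e.\ a weak limit of deterministic reflections from microstructures supported in a vanishing neighborhood of the wall. In dimension two this is exactly Theorem \ref{thm_rough_ref_char} (and Corollary \ref{cor_rough_ref_char} even furnishes spatially periodic, hence position-independent, approximants); in general dimension it is Plakhov's, proved by explicit ``groove'' constructions built from wedges, retroreflecting corners, and their compositions, with mixtures realized by tiling sub-patches in prescribed area-proportions --- legitimate because $\Lambda_C$ restricted to $S_i$ has spatial marginal constant in $x$ to within $\delta$, so the macroscopic entry position is uniform on $S_i$. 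I would take this as the main lemma. Granting it, I pick for each $i$ a thin deterministic reflector realizing $\nu^{n_i}$ up to error $\varepsilon_i\to0$; because such a reflector is built from a locally finite family of small cavities laid along the wall, scaled copies pack along $S_i$ with negligible added volume. I would also insert thin ``collars'' along the patch seams $\partial S_i$ and make all cavity walls meet $\partial C$ transversally, so that billiard trajectories neither leak between patches nor fail to return to $\partial C$ --- the latter holding on a $\Lambda_C$-full set by Poincar\'{e} recurrence, exactly as in the discussion of assumption A5.

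\emph{Assembly, limit, and main obstacle.} Let $B_n$ be the body obtained by removing all these microstructures (its boundary piecewise smooth by construction). Then $\nu_{B_n,C}\to\nu$ weakly is a Riemann-sum estimate: for $f\in C((\mathbb{S}^{d-1})^3)$, split $\int f\,\dd\nu_{B_n,C}=\sum_i\int_{(S_i\times\mathbb{S}^{d-1})_+}f(u,u'(x,u),n(x))\,\Lambda_C(\dd x\,\dd u)$, replace $n(x)$ by $n_i$ (error $O(\delta\|f\|_\infty)$) and the conditional law of $u'(x,\cdot)$ by $\nu^{n_i}$ (error $o(1)$ by the previous step) to reach $\sum_i\sigma(S_i)\int f(u,u',n_i)\,\nu^{n_i}(\dd u\,\dd u')+o(1)$, a Riemann sum for $\int f\,\dd\nu$. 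Everything hinges on the flat-wall step when $d \geq 3$: building a catalogue of thin-cavity reflectors rich enough to realize an arbitrary symmetric conditional scattering law up to $\varepsilon$, and coordinating the three small parameters (patch diameter, normal oscillation $\delta$, cavity depth $\eta$) so that $P^{B_n,C}$ stays defined $\Lambda_C$-almost everywhere and the errors above are controlled uniformly; the non-trapping and no-leakage verifications for the concrete cavities, glued across curved patches, are the remaining delicate points. The disintegration, the localization, and the final limit are routine once the flat-wall realization is in hand.
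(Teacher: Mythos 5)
A point of context first: the paper does not prove this statement at all --- it is quoted from [\ref{Plakhov5}, Theorem 4.5], and the paper's only commentary on its proof is the remark that one first constructs local reflectors redirecting an incoming particle in a prescribed way and then assembles them. Your outline is consistent with that strategy, and your preliminary steps are sound: the ``only if'' direction does follow because (i) and (ii) hold for every $\nu_{B,C}$ (invariance of $\Lambda_C$ and involutivity of $P^{B,C}$) and pass to weak limits, and the disintegration is correct --- conditioning $\nu$ on the normal $n$, property (i) forces both marginals of $\nu^{n}$ to be the cosine measure on the hemisphere $\{u\cdot n>0\}$ and (ii) makes $\nu^{n}$ symmetric, since $\tau_C(\dd u\,\dd n)=(u\cdot n)_+\dd u\,\mu_C(\dd n)$ with $\mu_C$ the surface-area measure of $C$.

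The genuine gap is at the step you yourself identify as the crux. The ``flat-wall realization'' --- that every conditional law on a hemisphere which is symmetric with respect to the cosine measure is a weak limit of deterministic reflections from thin microstructures --- is precisely the main technical content of Plakhov's theorem when $d\ge 3$; it is not an independent citable input, so invoking ``Plakhov's groove constructions'' to prove Plakhov's theorem is circular, and gesturing at wedges, retroreflecting corners and tilings does not establish the key approximation fact that mixtures of such elementary laws are weakly dense in the class of \emph{all} symmetric laws (that density argument is the heart of [\ref{Plakhov5}, Chapter 4]). In dimension two you could legitimately import Theorem \ref{thm_rough_ref_char} (Angel--Burdzy--Sheffield) for the flat-wall step, and your disintegration-plus-patching scheme would then be a plausible alternative route for $d=2$, modulo the gluing details you only assert (transversality and non-leakage at patch seams on a curved boundary, and the uniform coordination of patch diameter, normal oscillation $\delta$ and cavity depth $\eta$ so that $P^{B_n,C}$ stays defined $\Lambda_C$-a.e.\ and the Riemann-sum errors are controlled). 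As written, however, the proposal assumes rather than proves the central construction, so it does not amount to a proof of the stated theorem.
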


The original motivation for considering the scattering law and its characterization is its relationship to certain \textit{resistance functionals} of the form
\begin{equation}
    R_{\chi}[P^{B,C}] := \int_{(\partial C \times \mathbb{S}^{d-1})_+} c(u,u'(x,u))(u \cdot n(x)) \dd x \widetilde{\chi}(\dd u),
\end{equation}
where $c$ is a ``cost function'' on $(\mathbb{S}^{d-1})^2$, and $\chi$ is a Borel measure on $\mathbb{S}
^{d-1}$. When $\chi$ is Lebesgue measure, the above functional may be expressed as 
\begin{equation}
\begin{split}
    R[P^{B,C}] & = \int_{(\partial C \times \mathbb{S}^{d-1})_+} c(u,u'(x,u))\Lambda_C(\dd x \dd u) \\
    & = \int_{(\mathbb{S}^{d-1})^3} c(u,u') u \cdot n \ \nu_{B,C}(\dd u \dd u' \dd n).
\end{split}
\end{equation}
By taking weak limits, we can extend the definition of resistance functionals to rough scattering laws:
\begin{equation}
    R[\nu] := \int_{(\mathbb{S}^{d-1})^3} c(u,u') u \cdot n \ \nu(\dd u \dd u' \dd n).
\end{equation}
With the characterization given by Theorem \ref{thm_scattering_char}, one can thus reduce problems of minimizing air resistance of rough convex bodies to problems in mass optimal transport. This leads to some counterintuitive results. It is possible, for example, to actually \textit{decrease} the air resistance of a convex body by appropriately roughening its surface. In fact, one can construct nonconvex bodies which have arbitrarily small air resistance in one direction, as well as bodies which are invisible in one direction. By contrast, the air resistance of a smooth convex body has long been known to have a strictly positive lower bound [\ref{Plakhov5}, Chapters 5 and 8].

Let us now describe the connection between rough reflections and rough scattering laws. Just as in the upper half-plane case, we can define deterministic Markov kernels on $(\partial C \times \mathbb{S}^{d-1})_+$ by
\begin{equation}
    \mathbb{P}^{B,C}(x,u; \dd x' \dd u') = \delta_{P^{B,C}(x,u)} \dd x' \dd u'.
\end{equation}
We say that Markov kernel $\mathbb{P}$ is a \textit{rough reflection law on $C$} if there exists a sequence of bodies $B_n \subset C$ such that $\text{Vol}(C \smallsetminus B_n) \to 0$, and 
\begin{equation}
    \mathbb{P}^{B_n,C}(x,u; \dd x' \dd u') \Lambda_C(\dd x \dd u) \to \mathbb{P}(x,u; \dd x' \dd u') \Lambda_C(\dd x \dd u) 
\end{equation}
weakly in the space of measures on $(\partial C \times \mathbb{S}^{d-1})_+ \times (\partial C \times \mathbb{S}^{d-1})_+$. A measure $\nu$ is a rough scattering law if and only if there exists a rough reflection law $\mathbb{P}$ such that
\begin{equation} \label{eq1.41}
\begin{split}
    &\int_{(\mathbb{S}^{d-1})^3} f(u,u',n)\nu(\dd u \dd u' \dd n) \\
    & \quad\quad = \int_{((\partial C \times \mathbb{S}^{d-1})_+)^2} f(u,u',n(x)) \mathbb{P}(x,u; \dd x' \dd u') \Lambda_C(\dd x \dd u).
\end{split}
\end{equation}

Rough scattering laws and rough reflection laws are not in one-to-one correspondence. One way to see this is to consider a convex body $C$ with a flat side $V \subset \partial C$. The rough scattering law will not distinguish pointwise variation in reflection from $V$, because the unit normal vector at each point in $V$ is the same.

As noted above, the characterization of rough scattering laws predates the characterization of rough reflection laws. Although Theorem \ref{thm_scattering_char} does not imply Theorem \ref{thm_rough_ref_char}, the proof of the former can almost certainly be modified to obtain the latter. In fact, higher dimensional versions of Theorem \ref{thm_rough_ref_char} can probably be proved by an appropriate modification of the arguments in [\ref{Plakhov5}, Chapter 4]. The proofs of both theorems begin with a local construction of reflectors which redirect a point particle hitting the boundary in a prescribed way. It is then a matter of appropriately assembling the local reflectors to produce the desired rough scattering law or rough reflection law.

\subsection{Disk and wall model} \label{ssec_main_results}

\subsubsection{Rigid body system} \label{sssec_rigid}

The model we study consists of a fixed wall in the lower half-plane of $\mathbb{R}^2$, together with a disk which is given some rotationally symmetric mass density and allowed to move freely in the complement of the wall. Each body is furnished with asperities (microscopic structures) on its surface. Aside from a periodicity requirement, the asperities on the surface of the wall are allowed to be fairly arbitrary in shape. On the other hand, the asperities on the disk are of a specific type, namely ``geostationary satellites'' spaced in such a way as to guarantee that non-local interactions between the two bodies are rare.

The wall $W$ will be built from a cell $\Sigma$ and a roughness scale $\epsilon > 0$, in the same way as we have done in \S\ref{sssec_uphalf}. The cell $\Sigma$ is assumed to satisfy conditions B1-B5, stated in \S\ref{sssec_periodic}. The definition of the wall $W = W(\Sigma,\epsilon)$ is slightly modified as follows: $W(\Sigma,\epsilon)$ is the unique subset of $\mathbb{R}^2$ which is $\epsilon$-periodic in the $x_1$-direction, and whose image under the covering map 
\begin{equation} \label{eq1.35}
    (x_1,x_2) \mapsto (e^{2\pi i x_1/\epsilon}, \epsilon^{-1}(x_2 + 1))
\end{equation}
is $\Sigma$. Such a wall will satisfy conditions A1-A4 and A5a and A5b, stated in \S\ref{sssec_uphalf}, except that condition A3 must be modified as follows:
\begin{enumerate}
    \item[A3'.] The following inclusions hold: $\mathbb{R} \times (-\infty, -1 - \epsilon] \subset W \subset \mathbb{R} \times (-\infty, - 1]$; thus $\partial W \subset [-1 - \epsilon, -1] \times \mathbb{R}$.
\end{enumerate}
The advantage of having $W$ located below the line $x_2 = -1$, instead of $x_2 = 0$, will become apparent later.

By assumption, the cell boundary $\partial \Sigma$ decomposes into a finite collection of closed $C^2$ curve segments $\{\widetilde{\Gamma}_i\}_{i = 0}^{m-1}$. Such a decomposition determines a decomposition of $\partial W$ into a countable, locally finite collection $\{\Gamma_i\}_{i \in \mathbb{Z}}$ of $C^2$ curve segments such that distinct pairs $\Gamma_i$ and $\Gamma_j$ can intersect only at their endpoints, and $\Gamma_i$ maps to $\widetilde{\Gamma}_{i \mod m}$ via the covering map (\ref{eq1.35}) defined above. 

\begin{remark} \label{rem_finkappa} \normalfont
Recall what it means for a closed curve segment $\Gamma_i$ to be $C^2$: There an open interval $I \supset [0,1]$ and a $C^2$ map $\gamma_i : I \to \mathbb{R}^2$ such that $\gamma_i([0,1]) = \Gamma_i$. Consequently, by compactness, each of the curve segments $\Gamma_i$ has bounded curvature.
\end{remark}

The decomposition of $\partial \Sigma$ into $C^2$ curve segments $\widetilde{\Gamma}_i$ is not unique, and correspondingly the decomposition of $\partial W$ into $C^2$ curve segments $\Gamma_i$ is not unique. To ensure that terms introduced below are well-defined, we assume from this point on that some decomposition $\{\widetilde{\Gamma}_i\}_{0 \leq i \leq m-1}$ of $\partial \Sigma$ and correspondingly $\{\Gamma_i\}_{i \in \mathbb{Z}}$ of $\partial W$ has been fixed, and we shall refer to these as the \textit{given decompositions} of $\partial \Sigma$ and $\partial W$ respectively.

We denote the relative interior of a curve segment $\Gamma_i$ by $\Int \Gamma_i$. We call a point $p \in \partial \Sigma$ \textit{regular} if there is some $C^2$ curve segment $\Gamma \subset \partial \Sigma$ (not necessarily coming from the given decomposition) such that $p \in \Int \Gamma$. We denote the set of regular points in $\partial \Sigma$ by $\partial_{\reg} \Sigma$, and we denote the set of regular points in $\partial W$ by $\partial_{\reg} W$. We let $\partial_s \Sigma = \partial \Sigma \smallsetminus \partial_{\reg} \Sigma$, and we let $\partial_s W = \partial W \smallsetminus \partial_{\reg} W$. We refer to points in $\partial_s \Sigma$ and $\partial_s W$ as \textit{singular points} of $\partial \Sigma$ and $\partial W$ respectively.  The sets $\partial_s W$ and $\partial_s \Sigma$ are measure zero subsets of $\partial \Sigma$ and $\partial W$ respectively.

For $p \in \partial_{\reg} W$, we let $\kappa(p)$ denote the unsigned curvature of $\partial W$ at $p$. We define
\begin{equation}
    \kappa_{\max} = \sup\{\kappa(p) : p \in \partial_{\reg} W(\Sigma, 1)\}.
\end{equation}
The quantity above depends only on the cell $\Sigma$. Per Remark \ref{rem_finkappa} and periodicity of $\partial W$, $\kappa_{\max}$ is finite.

We take $\rho(\epsilon)$ to be some positive, non-decreasing function of $\epsilon$ such that 
\begin{equation}
    \rho(\epsilon) \to 0 \quad \text{ and } \frac{\epsilon^{1/2}}{\rho(\epsilon)} \to 0 \quad \text{ as } \epsilon \to 0.
\end{equation}
In addition, we assume that $2\pi/\rho(\epsilon)$ is an integer.

The freely moving body in our system is a ``disk with satellites.'' Namely, we first define a \textit{reference body} 
\begin{equation} \label{eq1.2}
D = D(\epsilon) = \left(\bigcup_{k = 0}^{N-1} S_k \right) \cup D_0 \subset \mathbb{R}^2,
\end{equation}
where
\begin{equation} \label{eq1.38}
    N = \frac{2\pi}{\rho}, \quad\quad S_k = (\sin k\rho(\epsilon), -\cos k\rho(\epsilon)),
\end{equation}
and 
\begin{equation} \label{eq1.34}
    D_0 \subset \{(x_1,x_2) : x_1^2 + x_2^2 \leq (1 - 2\rho(\epsilon)^2)^2\}.
\end{equation}
See Figure \ref{disk_plus_wall00_fig}. The quantity $N$ is the number of satellites, and $S_k$ is the position of the $k$'th satellite in the reference body.

\begin{figure}
    \centering
    \includegraphics[width = 0.6\linewidth]{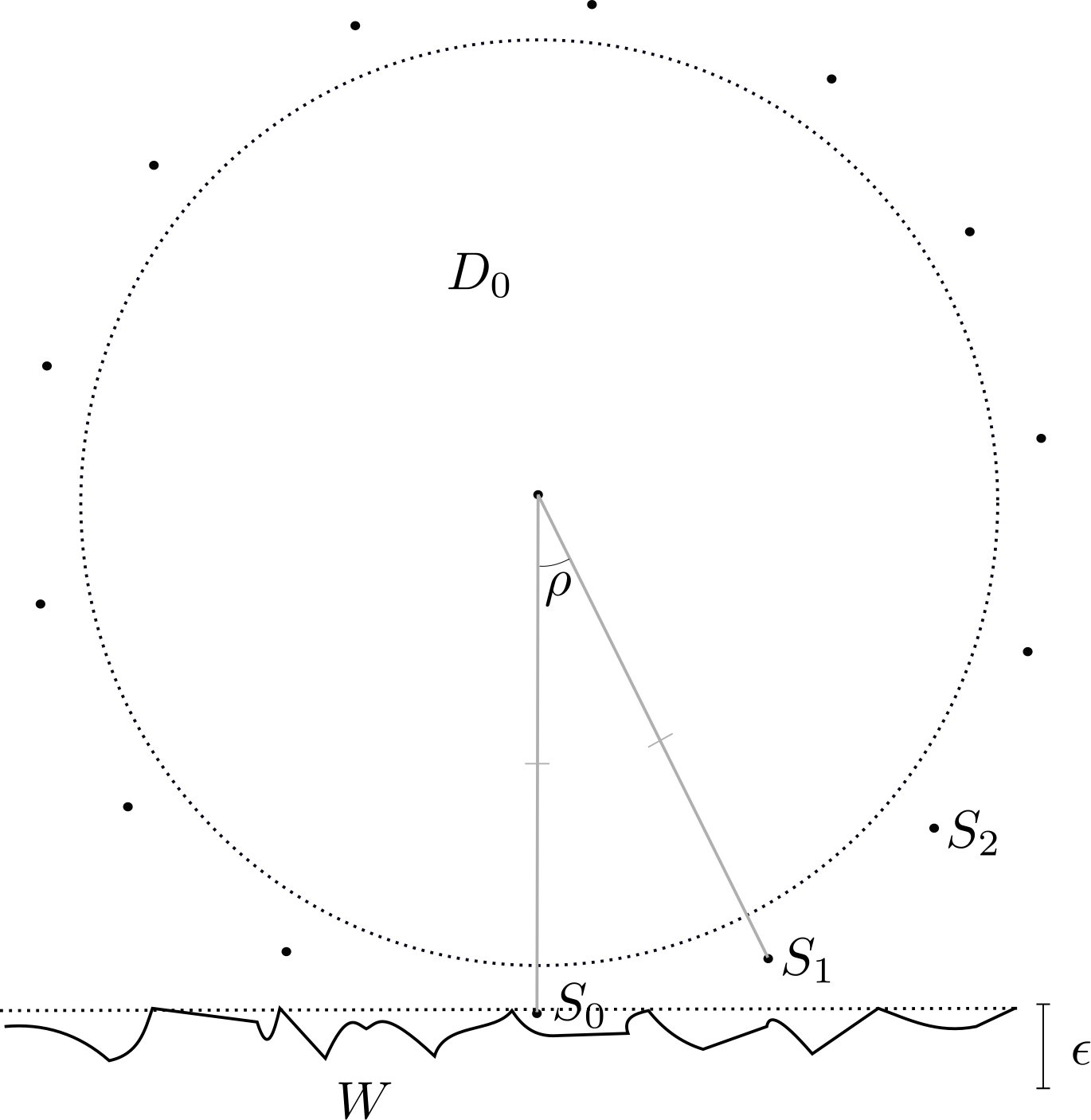}
    \caption{A freely moving disk with satellites, and a fixed wall.}
    \label{disk_plus_wall00_fig}
\end{figure}

For $x = (x_1,x_2) \in \mathbb{R}^2$ and $\alpha \in \mathbb{R}$, we let $D(x,\alpha)$ denote the subset of the plane occupied by the reference body after rotating it counterclockwise about the origin by an angle $\alpha$ and translating it by $x$ from the reference configuration (\ref{eq1.2}). We will sometimes abuse terminology by using $D$ to denote both the reference body (\ref{eq1.2}) (a fixed subset of the plane) and the freely moving physical body which it represents.

We assume that $D$ has some mass density $\lambda$ which is rotationally symmetric about the origin when $D$ is in reference configuration (\ref{eq1.2}), and we introduce parameters: 
\begin{equation}
    m = \int_D \lambda(\dd x), \quad\quad J = \int_D |x|^2 \lambda(\dd x)
\end{equation}
-- the \textit{mass} and \textit{moment of inertia}, respectively, of the disk. We assume that $\lambda$ does not depend on the parameter $\epsilon > 0$. 

In the analysis of our model, we will see that there is no loss of generality in assuming that $m = J = 1$; however, we will not introduce this simplification until later. 

\begin{remark} \label{rem_no_contact} \normalfont
Additional motivation for our choice of the body $D$ can be gained from the following remarks.
\begin{itemize}
    \item The first member of the union (\ref{eq1.2}) is a discrete set of points $S_k$ (the ``satellites'') which are disconnected from the rest of the body and evenly spaced at angles of $\rho(\epsilon)$ along the unit circle. We have stipulated that $\rho(\epsilon)$ should converge to zero, but at a rate more slowly than $\epsilon^{1/2}$. This will guarantee that interactions between multiple satellites during a single collision event are rare as the roughness scale goes to zero.
    
    \item The subset $D_0$ is of no mathematical significance in the analysis of the disk and wall model, but serves only to persuade the reader of the physical generality of the model. One may imagine that $D_0$ carries the mass of the body $D$. The inclusion (\ref{eq1.34}) guarantees that $D_0$ can never come into contact with the wall $W$. Only the satellites $S_k$ can come into contact with $W$.
    
    \item Choosing the ``roughness'' on $D$, as we have above, so that only isolated points can interact with the wall greatly simplifies the description of the configuration space of the system. Unfortunately, the choice of a disconnected body is physically unrealistic. One way to reconcile this with our intuition is to think of $D$ as a ``hockey puck'' with additional features in the third (vertical) dimension. In the same spirit as the analysis of polygonal chains in [\ref{DemaineORourke}, \S 5.3], we obtain a physical system which is equivalent to the one above by joining each satellite to the inner body $D_0$ by a curved rod which extends into the third dimension. 
    
    \item Whether theorems analogous to the ones we state in \S\ref{sssec_main} can be proved for a connected body $D$ is an open question. On the other hand, we expect the range of possible dynamics for a system consisting of a freely moving rough disk and fixed rough wall to be exhausted by our model. See the discussion of our main results in \S\ref{sssec_minformal}.  
\end{itemize}
\end{remark}

\subsubsection{Configuration space} \label{sssec_config}

The \textit{configuration space} of the disk and wall system is the topological closure in $\mathbb{R}^3$ of the set of configurations $y = (x_1,x_2,\alpha)$ of the disk such that the disk and the wall are disjoint, i.e. 
\begin{equation} \label{eq1.37}
\mathcal{M} = \overline{\{(x_1,x_2,\alpha) : D(x_1,x_2,\alpha) \cap W = \emptyset\}}.
\end{equation}
Note that points in $\mathcal{M}$ which differ only in their angular coordinates by multiples of $2\pi$ represent the same physical configuration. It is in fact easy to see that $\mathcal{M}$ is doubly periodic: Let 
\begin{equation}
    e_1 = (1,0,0), \quad e_2 = (0,1,0), \quad e_3 = (0,0,1)
\end{equation}
be the standard basis for $\mathbb{R}^3$. Then
\begin{equation}
\begin{split}
    &\mathcal{M} + \epsilon e_1 = \{(x_1,x_2,\alpha) : (x_1 - \epsilon, x_2, \alpha) \in \mathcal{M}\} = \mathcal{M}, \quad \text{ and } \\
    &\mathcal{M} + \rho e_3 = \{(x_1,x_2,\alpha) : (x_1, x_2, \alpha - \rho) \in \mathcal{M}\} = \mathcal{M}.
\end{split}
\end{equation}
Indeed, $\mathcal{M}$ is $\epsilon$-periodic in the $x_1$-coordinate because $W$ is $\epsilon$-periodic in the $x_1$-coordinate, and $\mathcal{M}$ is $\rho$-periodic in the $\alpha$-coordinate because, as noted above, only the satellites of $D$ can come into contact with the wall, and rotating the disk about its center of mass by an angle of $\rho(\epsilon)$ maps the set of satellites onto itself. 

We can alternatively study the configuration space $\mathcal{M}/\sim$, where $\sim$ is the equivalence relation which identifies points $y$ and $y'$ such that $y' = y + i\epsilon e_1 + j\rho e_3$, for some $i,j \in \mathbb{Z}$. The space $\mathcal{M}/\sim$ may be regarded as a subspace of $\mathbb{S}^1 \times \mathbb{R} \times \mathbb{S}^1$. This configuration space will be useful for applying dynamical results which require compactness, but otherwise we will mainly just work with $\mathcal{M}$.

The topological boundary $\partial \mathcal{M}$ of the configuration space corresponds to the set of \textit{collision configurations} of the system, i.e. the set of configurations in which the rough disk and the wall are in contact. The boundary of $\mathcal{M}$ lies just below the plane 
\begin{equation}
    \mathbf{P} := \{(x_1,x_2,\alpha) \in \mathbb{R}^3 : x_2 = 0\}.
\end{equation}

The space $\mathcal{M}$ does not fit neatly into a well-studied class of manifolds. It is probably not even a manifold with corners for many choices of the wall $W$. The space $\mathcal{M}$ may loosely be described as a $C^2$ manifold with boundary and ``singularities.'' We will see that there exists a closed subset $\mathcal{S} \subset \partial \mathcal{M}$ such that the 2-dimensional Hausdorff measure of $\mathcal{S}$ is zero, and $\mathcal{M}_{\reg} := \mathcal{M} \smallsetminus \mathcal{S}$ is an embedded $C^2$ submanifold of $\mathbb{R}^3$ with boundary. In particular, $\partial_{\reg} \mathcal{M} := \partial \mathcal{M} \smallsetminus \mathcal{S}$ is a full-measure subset of the boundary on which there exists a $C^1$ field of inward-pointing unit normal vectors $n(q), q \in \partial_{\reg} \mathcal{M}$.

A more detailed description of the configuration space $\mathcal{M}$ is given in \S\ref{ssec_configprops}.  In \S\ref{sec_genref}, we reprove a number of standard results from billiards for a very general class of manifolds to which $\mathcal{M}$ may be shown to belong.

\subsubsection{Cylindrical approximation} \label{sssec_cylapprox}

Some additional understanding of the structure of $\mathcal{M}$ can be gained by imagining the following physical situation. Suppose the configuration of the disk $D$ is initially $y = (x_1,x_2,0)$. Then at most one satellite of $D$ can be in contact with $\partial W$, and this satellite is $S_0$ (this follows from Proposition \ref{prop_Mreg} for example). In this configuration, the coordinates of $S_0$ are $(x_1,x_2 - 1)$. All other satellites must lie in $W^c$. Consequently, we may rotate the disk counterclockwise about the satellite $S_0$ by a small angle $\Delta\alpha$, and the resulting configuration $y'$ will still lie in $\mathcal{M}$. Moreover, if $y \in \partial \mathcal{M}$ then $y' \in \partial \mathcal{M}$. We compute explicitly:
\begin{equation} \label{eq1.40}
    y' = (x_1 - \sin\Delta \alpha, x_2 - 1 + \cos \Delta\alpha, \Delta \alpha) \approx (x_1 - \Delta \alpha, x_2, \Delta \alpha).
\end{equation}
Let 
\begin{equation}
    \mathbf{Q}_0 = \{(x_1,x_2,\alpha) : \alpha = 0\}.
\end{equation}
We will see that $\mathcal{M} \cap \mathbf{Q}_0 = \overline{W^c} + e_2$. Thus (\ref{eq1.40}) suggests that, in a neighborhood of the plane $\mathbf{Q}_0$, the space $\mathcal{M}$ is well modeled by the cylinder
\begin{equation}
    \mathcal{M}_{\cyl} := \{(x_1,x_2,\alpha) : (x_1 + \alpha, \alpha) \in \overline{W^c} + e_2\}.
\end{equation}
This is the cylinder with base
\begin{equation}
    \widehat{B} := \overline{W^c} + e_2
\end{equation}
and axis
\begin{equation} \label{eq1.42}
    \widetilde{\chi} := (-1,0,1).
\end{equation}
The vector $\widetilde{\chi}$ is the ``rolling velocity'' described in the introduction.

More generally, we will see in \S\ref{ssec_configprops} that $\mathcal{M}$ may be decomposed into large ``cylindrical regions'' and small ``gap regions.'' The ``gap regions'' are difficult to describe explicitly and correspond to configurations where multiple satellites can come into contact with $\partial W$. Near any point in one of the ``cylindrical regions'' $\mathcal{M}$ is well modeled by a cylinder whose base is an order $\epsilon$ translate of $\widehat{B}$ and whose axis is $\widetilde{\chi}$. 

The main results of this work were originally derived informally by assuming that we can replace $\mathcal{M}$ with $\mathcal{M}_{\cyl}$ in our arguments.

\subsubsection{Phase space and dynamics} \label{sssec_phasespace}

The dynamical state of the system is completely specified by a pair $(y,w)$, where $y = (x,\alpha) = (x_1,x_2,\alpha) \in \mathcal{M}$ and $w = (v,\omega) = (v_1,v_2,\omega) \in \mathbb{R}^3$. Here $x = (x_1,x_2)$ is the center of mass of the disk, and $\alpha$ is the angular orientation of the disk, as above; $v = (v_1,v_2)$ is the linear velocity of the center of mass of the disk, and $\omega$ is the angular velocity. 

The \textit{phase space} is the set of all possible states $(y,w)$ of the system:
\begin{equation}
    T\mathcal{M} \cong \mathcal{M} \times \mathbb{R}^3.
\end{equation}
The notation $T\mathcal{M}$ refers to the fact that geometrically we regard the phase space as the tangent space of the configuration space. For technical purposes, the phase space can be taken simply as the Cartesian product of the configuration space $\mathcal{M}$ and the ``velocity space'' $\mathbb{R}^3$; however it will occasionally be convenient to use the above geometric language, for example when referring to the tangent space $T_y\mathcal{M} \subset T\mathcal{M}$ at a point $y$ in $\mathcal{M}$, which in this context is just the subset $\{y\} \times \mathbb{R}^3$. 

We equip $\mathbb{R}^3$ with the inner product 
\begin{equation} \label{eq1.49'}
\langle w,w' \rangle = m v_1v_1' + m v_2v_2' + J\omega\omega', \quad \text{ for } \quad w = (v_1,v_2,\omega), \quad w' = (v_1',v_2',\omega'),
\end{equation}
and we denote the corresponding norm by $||\cdot||$. If the disk moves with velocity $w = (v_1,v_2,\omega)$, then the kinetic energy of the disk is
\begin{equation}
    \frac{1}{2}m v_1^2 + \frac{1}{2}m v_2^2 + \frac{1}{2}J\omega^2 = \frac{1}{2}||w||^2.
\end{equation}
The sum of the first two terms is the kinetic energy arising from the linear motion of the disk, while the third term is the kinetic energy arising from rotational motion. For this reason we will often refer to the above inner product as the \textit{kinetic energy inner product}.

Unless specified otherwise, \textit{distances, angles, etc. in $\mathcal{M}$ are assumed to be with respect to the inner product $\langle \cdot,\cdot \rangle$ defined above.}

The main physical assumptions which govern the dynamics of the model are: 
\begin{enumerate}[label = P\arabic*.]
    \item The bodies $D$ and $W$ do not interpenetrate.
    \item The system is subject to Euler's laws of rigid body motion (see \S\ref{sec_specularreflection}).
    \item When not in contact, the net force applied to each body is zero, and upon contact, a single impulsive force is applied to the disk at the point of contact and directed parallel to the unit normal vector on the wall.
    \item The kinetic energy of the disk is conserved for all time.
\end{enumerate}
These assumptions are discussed in greater detail and given a more precise mathematical form in \S\ref{sec_specularreflection}.

Assumptions P1-P3 are standard for rigid body interactions. Assumption P4 is the limiting case for a system of bodies of finite mass in which the total kinetic energy, linear momentum, and angular momentum are conserved. After letting the mass and moment of inertia of one body diverge to infinity, the kinetic energy of the other body is conserved in the limit. (Note that the wall $W$ may be regarded as having infinite mass and infinite moment of inertia.) For more details, see Proposition \ref{prop_energylimit}.

The dynamics of the disk and wall system may be represented by a moving point mass in $\mathcal{M}$, where the position and velocity of the point mass is given by the state $(y,w)$ of the disk and wall system. It follows from the assumptions P3 and P4 that the point mass moves linearly with constant velocity in the interior of $\mathcal{M}$. When the point mass comes into contact with the boundary at a point $q \in \partial \mathcal{M}$, the velocity should be redirected back into $\mathcal{M}$ by a mapping $R_{q} : T_{q}\mathcal{M} \to T_{q}\mathcal{M}$. The dynamical description of our system is completed by the following proposition.

\begin{proposition} \label{prop_spec}
Let $q \in \partial_{\reg} \mathcal{M}$ and let $n(q)$ denote the inward-pointing unit normal vector at $q$. The unique map $R_{q} : T_{q}\mathcal{M} \to T_{q}\mathcal{M}$ for which assumptions P1-P4 are satisfied is specular reflection with respect to the kinetic energy inner product:
\begin{equation}
R_{q}(w) = w - 2\langle w,n(q) \rangle n(q), \quad\quad w \in T_{q}\mathcal{M}.
\end{equation}
\end{proposition}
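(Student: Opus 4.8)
The plan is to reduce assumptions P1--P4 to linear-algebraic constraints on $R_q$ inside $T_q\mathcal M$ and then solve them. The one substantive geometric input is the identification of the inward unit normal $n(q)$ to $\partial\mathcal M$ at $q$, taken with respect to the kinetic-energy inner product \eqref{eq1.49'}, with the direction of the physical normal impulse delivered during the collision; once this is in hand the rest is bookkeeping.

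First I would fix $q=(x,\alpha)\in\partial_{\reg}\mathcal M$ and describe $\partial\mathcal M$ near $q$ (see \S\ref{ssec_configprops}): exactly one satellite $S_k$ of $D(x,\alpha)$ touches $\partial W$, at a single point $p\in\partial_{\reg}W$ with outward (into $W^c$) unit normal $n_W=(n_1,n_2)$, and $\partial\mathcal M$ is a $C^1$ hypersurface there. A configuration velocity $w'=(v_1',v_2',\omega')$ drags the material point of the disk at $p$ with velocity $v'+\omega'(p-x)^{\perp}$, where $(a,b)^{\perp}:=(-b,a)$; non-interpenetration to first order says its $n_W$-component is nonnegative, so $T_q(\partial\mathcal M)$ is the kernel of $\ell(w'):=n_1 v_1'+n_2 v_2'+\tau\omega'$, where $\tau:=(p-x)_1 n_2-(p-x)_2 n_1$ is both the Euclidean dot product $(p-x)^{\perp}\cdot n_W$ and the torque about $x$ of a unit force along $n_W$ applied at $p$. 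Pairing against \eqref{eq1.49'} then shows the inward unit normal is a positive multiple $n(q)=\|N\|^{-1}N$ of $N:=(n_1/m,\,n_2/m,\,\tau/J)$.

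Next I would bring in the mechanics. Integrating Euler's laws (P2) across the infinitesimal collision, a single impulse $\mathbf I=\lambda n_W$ applied at $p$ --- the form dictated by P3, with $\lambda\in\mathbb R$ --- changes the phase velocity by $(\Delta v_1,\Delta v_2,\Delta\omega)=(\lambda n_1/m,\,\lambda n_2/m,\,\lambda\tau/J)=\lambda N$, so $R_q(w)-w$ is parallel to $n(q)$ by the previous paragraph, and we may write $R_q(w)=w+\mu\,n(q)$ for a scalar $\mu$. Energy conservation (P4) forces $\|w+\mu n(q)\|^2=\|w\|^2$, i.e.\ $\mu\big(\mu+2\langle w,n(q)\rangle\big)=0$ since $\|n(q)\|=1$, so $\mu=0$ or $\mu=-2\langle w,n(q)\rangle$. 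For an incoming velocity --- one with $\langle w,n(q)\rangle\le 0$, which is the relevant case at an actual collision --- P1 requires $\langle R_q(w),n(q)\rangle\ge 0$; the root $\mu=0$ would give $\langle R_q(w),n(q)\rangle=\langle w,n(q)\rangle$, which is negative unless $w$ is grazing, so $\mu=-2\langle w,n(q)\rangle$ in every case. This yields $R_q(w)=w-2\langle w,n(q)\rangle\,n(q)$, and a direct check confirms this map satisfies P1--P4, giving both existence and uniqueness.

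The routine parts above are the two coordinate computations in the second paragraph. The step I expect to be the real obstacle, and the one warranting the most care, is the normal identification: that the inward $\langle\cdot,\cdot\rangle$-normal to $\partial\mathcal M$ and the direction of the collision impulse are one and the same. Making this rigorous requires an honest description of $\partial\mathcal M$ near a regular point --- its defining gap function and the gradient thereof --- together with a check that the contact point $p$, the wall normal $n_W$, and the torque arm $\tau$ enter the mechanical side (Euler's laws) and the geometric side (the functional cutting out $T_q\partial\mathcal M$) in exactly matching ways. Morally this is the assertion that \eqref{eq1.49'} is precisely the metric turning the constrained rigid-body motion into geodesic flow with mirror reflection off $\partial\mathcal M$, and I would expect most of \S\ref{sec_specularreflection} to be devoted to establishing it.
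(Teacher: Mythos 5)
Your proposal is correct and follows essentially the same route as the paper: identify the kinetic-energy inward normal $n(q)$ with the impulse direction $(m^{-1}k_1,m^{-1}k_2,J^{-1}(k_1\cos\alpha+k_2\sin\alpha))$ (your $\tau$ equals the paper's $\sin\beta$, and your kernel-of-$\ell$ description of $T_q\partial\mathcal{M}$ is the same computation as Remark \ref{rem_normal} via the boundary parametrization of Proposition \ref{prop_Mreg}), then use the impulsive form of Euler's laws to force $R_q(w)-w\parallel n(q)$, energy conservation to get the two roots $\mu=0$ or $\mu=-2\langle w,n(q)\rangle$, and non-interpenetration to discard $\mu=0$. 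The only difference is packaging: the paper embeds this argument in the local well-posedness statement (Proposition \ref{prop_specvolution}) and imports the normal formula from \S\ref{sec_config}, whereas you state it as a pointwise linear-algebra argument — but the substance is identical.
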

Proposition \ref{prop_spec} is proved in \S\ref{sec_specularreflection}. The main takeaway is that the system under consideration has the same dynamical evolution as a classical billiard: free motion in the interior of the billiard domain and specular reflection on the boundary. 

\begin{remark} \normalfont
The inner product $\langle \cdot,\cdot \rangle$ makes $\mathbb{R}^3$ into a geodesically complete Riemannian manifold, and thus results proved in \S\ref{sec_genref} for billiards in general Riemannian manifolds will apply in this case. To avoid confusion, we will refer to $\langle \cdot,\cdot \rangle$ as an ``inner product'' in \S\S\ref{sec_intro}-\ref{sec_main_results}, reserving the word ``metric'' to refer to either (a) the distance functional on a metric space, or (b), in \S\ref{sec_genref}, a Riemannian metric on a general geodesically complete Riemannian manifold. Context will be sufficient to distinguish senses (a) and (b).
\end{remark}

\subsubsection{Collision laws} \label{sssec_collaws}

The configuration space $\mathcal{M}$ contains the plane $\mathbf{P} := \{(x_1,x_2,\alpha) : x_2 = 0\} \subset \mathbb{R}^3$. The boundary $\partial \mathcal{M}$ lies just below $\mathbf{P}$ within distance $O(\rho(\epsilon)^2)$ from the plane (this is proved in Proposition \ref{prop_Melem}). 

Let $\mathbb{R}^3_{\pm} := \{(x_1,x_2,\alpha) : x_2 \geq 0\}$. Let $\mathbb{S}^2$ denote the unit sphere in $\mathbb{R}^3$ with respect to the kinetic energy norm $||\cdot||$ defined above. By conservation of energy, the velocity of a point particle representing the system may be taken to lie in the unit sphere $\mathbb{S}^2$ for all time. Let $\mathbb{S}^2_\pm = \mathbb{S}^2 \cap \mathbb{R}^3_{\pm} = \{(v_1,v_2,\omega) \in \mathbb{S}^2 : \pm v_2 > 0\}$. 

Let $(y,w) \in \mathbf{P} \times \mathbb{S}^2_+$, and suppose the initial state of the point particle is $(y,-w)$. The velocity $-w$ is directed toward the boundary $\partial \mathcal{M}$. Therefore, the point particle will reflect from $\partial \mathcal{M}$ specularly a certain number of times before eventually returning to a point $y'$ in the plane $\mathbf{P}$ with some velocity $w' \in \mathbb{S}^2_+$. The \textit{collision law} is the map $\mathbf{P} \times \mathbb{S}^2_+ \to \mathbf{P} \times \mathbb{S}^2_+$, defined by
\begin{equation}
K^{\Sigma,\epsilon}(y,w) = (y',w').
\end{equation}
The set of inputs $(y,w)$ such that the billiard trajectory starting from $(y,-w)$ cannot be continued for all time (say because it hits a singular point of $\partial \mathcal{M}$) or never returns to $\mathbf{P}$ constitutes a measure zero subset of $\mathbf{P} \times \mathbb{S}^2_+$, and thus $K^{\Sigma,\epsilon}$ is well-defined up to null sets. This fact is rigorously proved in \S\ref{ssec_collaws}.

\begin{remark} \normalfont
The reader should be careful not to confuse the terms ``collision law'' and ``reflection law.'' In our usage, the latter expression is a generic term for any rule whereby the particle trajectory is redirected into the billiard domain at the instant in time when it hits the wall (e.g. the specular reflection law). The collision law, by contrast, is an analogue of the macro-reflection law defined in \S\ref{sssec_uphalf}. In fact, we will see that the collision law is a special case of the general macro-reflection laws defined in \S\ref{sssec_detref}.
\end{remark}

Just as in the random reflections case, the map $K^{\Sigma,\epsilon}$ is naturally associated with the deterministic Markov kernel on $\mathbf{P} \times \mathbb{S}^2_+$,
\begin{equation} \label{eq1.17}
    \mathbb{K}^{\Sigma,\epsilon}(y, w; \dd y'\dd w') := \delta_{K^{\Sigma,\epsilon}(y,w)}(y',w') \dd y'\dd w',
\end{equation}
We equip $\mathbf{P} \times \mathbb{S}^2_+$ with the well-known measure from billiards theory,
\begin{equation} \label{eq1.18}
    \Lambda^2(\dd y\dd w) := \langle w, e_2 \rangle \dd y\sigma(\dd w),
\end{equation}
where $\dd y$ denotes Lebesgue measure on $\mathbf{P}$ and $\sigma(\dd w)$ denotes surface measure on $\mathbb{S}^2$. This is the 3-dimensional analogue of the Lambertian measure $\Lambda^1$, defined by (\ref{eq1.5}).

\begin{definition} \normalfont
We call a Markov kernel $\mathbb{K}$ on $\mathbf{P} \times \mathbb{S}^2_+$ a \textit{rough collision law} if there exists a sequence of cells $\Sigma_i$ (satisfying conditions B1-B5 of \S\ref{sssec_periodic}) and a sequence of positive numbers $\epsilon_i \to 0$ such that, 
\begin{equation} \label{eq1.49}
    \mathbb{K}^{\Sigma_i,\epsilon_i}(y,w; \dd y' \dd w') \Lambda^2(\dd y \dd w) \to \mathbb{K}(y,w; \dd y' \dd w') \Lambda^2(\dd y \dd w)
\end{equation}
weakly in the space of measures on $\mathbf{P} \times \mathbb{S}^2_+$.
\end{definition}

\begin{remark} \normalfont
Similar comments to Remarks \ref{rem_replacelambda1} and \ref{rem_verify1} apply. In the above definition, it is equivalent to replace $\Lambda^2$ with any measure which is mutually absolutely continuous with respect to surface measure on $\mathbf{P} \times \mathbb{S}^2_+$. The convergence (\ref{eq1.49}) is in duality to functions $h(y,w,y',w') \in C_c((\mathbf{P} \times \mathbb{S}^2_+)^2)$, but by a density argument it is sufficient to verify the convergence in duality to functions of form $h(y,w,y',w') = f(y,w)g(y',w')$ where $f,g \in C_c^{\infty}(\mathbf{P} \times \mathbb{S}^2_+)$.
\end{remark}

\begin{remark} \label{rem_conv_sense} \normalfont
In (\ref{eq1.49}), there is a sense in which $\mathbb{K}^{\Sigma_i,\epsilon_i}$ converges to $\mathbb{K}$ as a limit with respect to a pseudometric topology on the space of Markov kernels on $\mathbf{P} \times \mathbb{S}^2_+$. This topology is described in \S\ref{sssec_pseudo}. 
\end{remark}

From this point on, we will write $\lim_{i \to \infty} \mathbb{K}^{\Sigma_i,\epsilon_i} = \mathbb{K}$ to indicate that (\ref{eq1.49}) holds.

The most important elementary properties of collision laws are summarized in the following proposition. (Compare with Propositions \ref{prop_det_ref_properties} and \ref{prop_dualref}.)

\begin{proposition} \label{prop_dualityetc}
(i) There exists a full measure open set $\mathcal{F} \subset \mathbf{P} \times \mathbb{S}^2_+$ such that $K^{\Sigma,\epsilon} : \mathcal{F} \to \mathcal{F}$ is a $C^2$ diffeomorphism and $K^{\Sigma,\epsilon} \circ K^{\Sigma,\epsilon} = \text{Id}_{\mathcal{F}}$ on $\mathcal{F}$. 

Moreover, $K^{\Sigma,\epsilon}$ preserves the measure $\Lambda^2$, in the sense that for any set $A \subset \mathbf{P} \times \mathbb{S}^2_+$, $\Lambda^2((K^{\Sigma,\epsilon})^{-1}(A)) = \Lambda^2(A)$.

(ii) A rough collision law $\mathbb{K}$ is symmetric with respect to $\Lambda^2$, in the sense that for any $f,g \in C_c(\mathbf{P} \times \mathbb{S}^2_+)$, 
\begin{equation} \label{eq1.20}
\begin{split}
    \int_{(\mathbf{P} \times \mathbb{S}^2_+)^2} & g(y',w')\mathbb{K}(y,w; \dd y' \dd w')f(y,w)\Lambda^2(\dd y \dd w) \\
    & = \int_{(\mathbf{P} \times \mathbb{S}^2_+)^2} f(y',w')\mathbb{K}(y,w; \dd y' \dd w')g(y,w)\Lambda^2(\dd y \dd w).
\end{split}
\end{equation}
Consequently, $\mathbb{K}$ preserves $\Lambda^2$, in the sense that 
\begin{equation} \label{eq1.21}
\int_{(\mathbf{P} \times \mathbb{S}^2_+)^2} g(y',w')\mathbb{K}(y,w; \dd y' \dd w')\Lambda^2(\dd y \dd w) \\
 = \int_{\mathbf{P} \times \mathbb{S}^2_+} g(y,w)\Lambda^2(\dd y \dd w).  
\end{equation}
\end{proposition}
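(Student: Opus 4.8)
The plan is to handle Proposition~\ref{prop_dualityetc} in exact parallel with Propositions~\ref{prop_det_ref_properties} and~\ref{prop_dualref}: reduce part~(i) to the abstract theory of macro-reflection laws developed in \S\ref{sec_genref}, and then derive part~(ii) from part~(i) by a soft weak-limit argument together with an approximation argument for the conservation statement.

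\textbf{Part (i).} By Proposition~\ref{prop_spec} the disk--wall dynamics is the specular billiard in $\mathcal{M}$, and $K^{\Sigma,\epsilon}$ is by construction the macro-reflection law of this billiard with ``entrance/exit hyperplane'' $\mathbf{P}$ — equivalently, the macro-reflection law of the billiard in the compact quotient $\mathcal{M}/\sim \subset \mathbb{S}^1\times\mathbb{R}\times\mathbb{S}^1$. This is precisely the situation abstracted in \S\ref{sssec_detref}. So the first task is to verify that $\mathcal{M}$ (or $\mathcal{M}/\sim$) falls within the class of manifolds covered there, using the structure theory of \S\ref{ssec_configprops}: the singular set $\mathcal{S}\subset\partial\mathcal{M}$ has vanishing $2$-dimensional Hausdorff measure, $\partial_{\reg}\mathcal{M}$ is an embedded $C^2$ submanifold carrying a $C^1$ inward normal field $n(q)$, and — via periodicity together with condition~B5, as in \S\ref{sssec_uphalf}, plus Poincar\'e recurrence on the compact quotient — the billiard trajectory from $(y,-w)$ is defined for all time and returns to $\mathbf{P}$ for $\Lambda^2$-a.e.\ $(y,w)$; this last point is exactly the fact cited to \S\ref{ssec_collaws}. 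Granting these inputs, the general results of \S\ref{sec_genref} (Proposition~\ref{prop_reflproperties}, itself resting on the billiard invariant-measure lemma~\ref{lem_bilinvar}) give that $K^{\Sigma,\epsilon}$ preserves $\Lambda^2$ and is an involution wherever defined. To upgrade this to a genuine $C^2$ diffeomorphism of a full-measure \emph{open} set $\mathcal{F}$, I would intersect the domains of $K^{\Sigma,\epsilon}$ and of its time-reversed counterpart, delete the (closed, $\Lambda^2$-null) set of initial data whose trajectory meets $\mathcal{S}$ or touches $\partial_{\reg}\mathcal{M}$ tangentially, and invoke smooth dependence of the billiard reflection map on initial conditions away from tangencies — standard once $\partial_{\reg}\mathcal{M}$ is known to be $C^2$.

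\textbf{Part (ii).} For the approximating deterministic kernels $\mathbb{K}^{\Sigma_i,\epsilon_i}=\delta_{K^{\Sigma_i,\epsilon_i}}$, symmetry with respect to $\Lambda^2$ in the sense of~(\ref{eq1.20}) follows at once from part~(i): if $T$ is a $\mu$-preserving involution then, substituting $x\mapsto T(x)$, \[ \int g(Tx)f(x)\,\mu(dx)=\int g(x)f(Tx)\,\mu(dx), \] which is~(\ref{eq1.20}) for $\delta_T$. To pass to the limit, apply~(\ref{eq1.20}) (for $\mathbb{K}^{\Sigma_i,\epsilon_i}$) to the test function $(y,w,y',w')\mapsto f(y,w)g(y',w') \in C_c((\mathbf{P}\times\mathbb{S}^2_+)^2)$ and to its ``adjoint'' $(y,w,y',w')\mapsto g(y,w)f(y',w')$, both compactly supported and continuous; the weak convergence~(\ref{eq1.49}) then forces both sides to converge to the corresponding integrals against $\mathbb{K}(y,w;dy'dw')\,\Lambda^2(dy\,dw)$, so~(\ref{eq1.20}) holds for $\mathbb{K}$. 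Finally~(\ref{eq1.21}) follows from~(\ref{eq1.20}) by the approximation used after Proposition~\ref{prop_dualref}: take $g\geq 0$ in $C_c(\mathbf{P}\times\mathbb{S}^2_+)$, let $f=f_n\uparrow 1$ through compactly supported functions, and apply monotone convergence to both sides, using on the right-hand side that $\mathbb{K}(y,w;\mathbf{P}\times\mathbb{S}^2_+)=1$.

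\textbf{Main obstacle.} Essentially all the real content lies in part~(i), and specifically in certifying that $\mathcal{M}$ genuinely belongs to the framework of \S\ref{sec_genref}: controlling the singular set $\mathcal{S}$ (cf.\ the description near Propositions~\ref{prop_Mreg} and~\ref{prop_Melem}), proving $\Lambda^2$-a.e.\ return to $\mathbf{P}$ via Poincar\'e recurrence on $\mathcal{M}/\sim$ (which is why passing to the compact quotient is convenient and why the periodicity and B5 hypotheses are needed), and establishing $C^2$ regularity of $\partial_{\reg}\mathcal{M}$ — all deferred to \S\S\ref{ssec_configprops}, \ref{ssec_collaws} and \ref{sec_genref}. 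Once those are in hand, part~(ii) is purely formal; the only points requiring care are that the test functions entering the weak limit are honestly in $C_c$ and that the infinite (but $\sigma$-finite and periodic) measure $\Lambda^2$ causes no difficulty, which it does not since everything descends to the compact quotient.
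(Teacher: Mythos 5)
Your proposal follows essentially the same route as the paper: part (i) is the paper's Proposition \ref{prop_detcol}, proved by verifying conditions D1, D2, D3', D4, D5 of \S\ref{sssec_detref} (D3' from Proposition \ref{prop_Melem}(vii), D4 from the confinement of $\mathcal{N}$ between two planes, D5 from Poincar\'e recurrence on the doubly periodic compact quotient) and invoking Proposition \ref{prop_reflproperties}, while part (ii) is Proposition \ref{prop_rcol_symm}, i.e.\ the involution-plus-invariance change of variables for the deterministic kernels followed by the weak limit (Proposition \ref{prop_gensymm}) and the monotone approximation $f \uparrow 1$ for (\ref{eq1.21}). One caveat on regularity: the general machinery (and the paper's Proposition \ref{prop_detcol}) only delivers a $C^1$ diffeomorphism, since the normal field on the $C^2$ boundary is merely $C^1$, so your claimed upgrade to $C^2$ is not actually furnished by the cited results (a mismatch already present between the paper's statement and its own proof).
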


This result is proved in \S\ref{ssec_collaws}.

\subsubsection{Coordinates on $\mathbf{P} \times \mathbb{S}^2_+$} \label{sssec_tilted}

Recall the cylindrical axis $\widetilde{\chi}$, defined by (\ref{eq1.42}). We equip $\mathbb{R}^3 \times \mathbb{S}^2$ with coordinates $(y_1,y_2, y_3,\theta,\psi)$ defined as follows. Let 
\begin{equation}
\chi = ||\widetilde{\chi}||^{-1}\widetilde{\chi} = (m + J)^{-1/2}(1,0,-1), \quad\quad \chi^\perp = (m^{-1} + J^{-1})^{-1/2}(m^{-1},0,J^{-1}).
\end{equation}
Also let
\begin{equation}
    \widehat{e}_2 = m^{-1/2}e_2 = (0,m^{-1/2},0).
\end{equation}
Then $(\chi^\perp, \widehat{e}_2, \chi)$ is an orthonormal basis for $\mathbb{R}^3$ (with respect to the inner product $\langle \cdot,\cdot \rangle$). The two vectors $\chi$ and $\chi^\perp$ span $\mathbf{P}$. We define a coordinate map $G : \mathbb{R}^3 \times (0,\pi)^2 \to \mathbb{R}^3 \times \mathbb{S}^2$ by 
\begin{equation}
    G : (y_1,y_2,y_3,\theta,\psi) \mapsto (y,w),
\end{equation}
where 
\begin{equation} \label{eq1.27'}
    \begin{split}
        y & = y_1 \chi^\perp + y_2 \widehat{e}_2 + y_3\chi, \\
        w & = (\cos\theta \sin\psi) \chi^\perp + (\sin\theta\sin\psi) \widehat{e}_2 + (\cos\psi) \chi.
    \end{split}
\end{equation}
In other words, the spatial coordinates $(y_1,y_2,y_3)$ are obtained by rotating (with respect to the kinetic energy inner product) the original coordinates $(x_1,x_2,\alpha)$ so that the vertical axis coincides with $\chi$. The velocity coordinates $(\theta,\psi)$ are just spherical coordinates with the ``north pole'' at $\chi$.

In these coordinates, $\mathbf{P} = \{(y_1,y_2,y_3) : y_2 = 0\}$. The invariant measure $\Lambda^2$ has the following coordinate representation:
\begin{equation} \label{eq1.28'}
    \Lambda^2(\dd y_1 \dd y_3 \dd\theta \dd\psi) = \sin\theta\sin^2\psi \dd y_1 \dd y_3 \dd\theta \dd\psi. 
\end{equation}
To see this, note that $\langle w, e_2 \rangle = \sin\theta\sin\psi$ and the spherical volume form is $\sin\psi \dd\theta \dd\psi$.

\subsubsection{Main results} \label{sssec_main}

The first of our main results concerns the case in which the rough collision law is obtained through \textit{pure scaling}. This means that the sequence of cells $\Sigma_i = \Sigma$ is constant.

\begin{theorem} \label{thm_pure_scaling}
Consider a constant sequence of cells $\Sigma_i = \Sigma$ for $i \geq 1$. There exists a Markov kernel $\mathbb{K}$ such that for any sequence of positive numbers $\epsilon_i \to 0$, the limit $\lim_{i \to \infty} \mathbb{K}^{\Sigma_i,\epsilon_i}$ exists and is equal to $\mathbb{K}$. Moreover, $\mathbb{K}$ takes the form
\begin{equation} \label{rough_col0}
\mathbb{K}(y_1,y_3,\theta,\psi ; \dd y_1' \dd y_3' \dd\theta' \dd\psi') = \delta_{(y_1,y_3)}(\dd y_1'\dd y_3') \widetilde{\mathbb{P}}(\theta,\dd\theta') \delta_{\pi - \psi}(\dd\psi'),
\end{equation}
where $\widetilde{\mathbb{P}}$ is a Markov kernel on $\mathbb{S}^1_+$ satisfying the following properties:
\begin{enumerate}[label = \roman*.]
    \item $\widetilde{\mathbb{P}}$ is symmetric with respect to the measure $\sin\theta \dd\theta$, in the sense of (\ref{eq1.18'})
    \item Let 
    \begin{equation}
    \begin{split}
        &\widetilde{\Sigma} = \{(x_1,x_2) \in \mathbb{S}^1 \times \mathbb{R} : (x_1,(1 + mJ^{-1})^{-1/2}x_2) \in \Sigma\}, \\
        &\widetilde{\epsilon}_i = (1 + mJ^{-1})^{-1/2}\epsilon_i, \\
        &\mathbb{P} = \delta_y(\dd y') \widetilde{\mathbb{P}}(\theta, \dd\theta').
    \end{split}
    \end{equation}
    Then 
    \begin{equation}
        \mathbb{P} = \lim_{i \to \infty} \mathbb{P}^{\widetilde{\Sigma},\widetilde{\epsilon}_i}.
    \end{equation}
\end{enumerate}
Consequently, $\mathbb{K}$ is symmetric with respect to the measure $\Lambda^2$.
\end{theorem}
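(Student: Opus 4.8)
The plan is to reduce the disk-and-wall system to a point-particle-reflecting-from-a-rough-wall problem in the upper half-plane, via the cylindrical approximation of \S\ref{sssec_cylapprox}, and then invoke the existing machinery for rough reflection laws. First I would work in the tilted coordinates $(y_1,y_2,y_3,\theta,\psi)$ of \S\ref{sssec_tilted}, in which the rolling direction $\widetilde\chi$ becomes the $y_3$-axis and the plane $\mathbf P$ is $\{y_2=0\}$. The key geometric input is that near $\mathbf Q_0$ the configuration space $\mathcal M$ is well-modeled by the cylinder $\mathcal M_{\cyl}$ with axis $\widetilde\chi$ and base $\widehat B = \overline{W^c}+e_2$; since $\widehat B$ is a vertical cylinder over $\partial W$ (which is itself a rough wall in the sense of A1--A5), specular reflection in $\mathcal M_{\cyl}$ decouples: the $y_3$-component of velocity (i.e. the $\widetilde\chi$-component, which is exactly the quantity $-mv_1+J\omega$ up to normalization) is conserved, and the $(y_1,y_2)$-component evolves as a two-dimensional billiard off $\partial W$ rescaled in one direction. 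This rescaling is where the foreshortened cell $\widetilde\Sigma$ and scale $\widetilde\epsilon_i = (1+mJ^{-1})^{-1/2}\epsilon_i$ come from: projecting $\partial W$ onto the $(y_1,y_2)$-plane along $\widetilde\chi$ compresses the $x_2$-extent by the factor $\|\widehat e_2\|/\|\text{(component)}\|$, which one computes to be $(1+mJ^{-1})^{1/2}$ — hence $\Sigma$ becomes $\widetilde\Sigma$. So on $\mathcal M_{\cyl}$ the collision law is literally $\delta_{(y_1,y_3)}\otimes \mathbb P^{\widetilde\Sigma,\widetilde\epsilon}(\theta;d\theta')\otimes\delta_{\pi-\psi}$, modulo identifying the angular variable: the incidence/reflection angle in the $(y_1,y_2)$ two-dimensional sub-billiard is $\theta$, and $\psi\mapsto\pi-\psi$ is just the statement that the $\widehat e_2$-component flips sign while the $\widetilde\chi$-component is fixed.

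Second, I would make the cylindrical approximation rigorous: the genuine configuration space $\mathcal M$ differs from $\mathcal M_{\cyl}$ only in the ``gap regions'' where two satellites can touch the wall, and by the choice $\epsilon^{1/2}/\rho(\epsilon)\to 0$ these regions have asymptotically negligible $\Lambda^2$-measure, so trajectories that enter them form a vanishing-measure set as $\epsilon_i\to0$. More precisely, I would show that for $\Lambda^2$-a.e. initial state the billiard trajectory under $K^{\Sigma,\epsilon_i}$ stays, after rescaling, within $o(1)$ of the corresponding trajectory in $\mathcal M_{\cyl}$ over the finite number of reflections needed to return to $\mathbf P$ (finiteness comes from condition B5 / A5a, which traps the trajectory in a single hollow of width $O(\epsilon_i)$). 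Passing to the limit then gives $\mathbb K^{\Sigma,\epsilon_i}\Lambda^2 \to (\delta_{(y_1,y_3)}\otimes\widetilde{\mathbb P}\otimes\delta_{\pi-\psi})\Lambda^2$ where $\widetilde{\mathbb P} = \lim_i \widetilde{\mathbb P}$ coming from $\lim_i\mathbb P^{\widetilde\Sigma,\widetilde\epsilon_i}$; the existence of this latter limit, and its independence of the sequence $\widetilde\epsilon_i$, is exactly the content alluded to in Remark \ref{rem_indep} (via Lemma \ref{lem_construct}), applied to the cell $\widetilde\Sigma$ — one checks $\widetilde\Sigma$ inherits B1--B5 from $\Sigma$ under the linear rescaling. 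Property (i), symmetry of $\widetilde{\mathbb P}$ with respect to $\sin\theta\,d\theta$, is then immediate from Proposition \ref{prop_dualref} (or directly from Proposition \ref{prop_dualityetc}(ii) combined with the product structure and the fact that $\delta_{(y_1,y_3)}$ and $\delta_{\pi-\psi}$ are trivially symmetric). Property (ii) is the identification just described. Finally, symmetry of $\mathbb K$ with respect to $\Lambda^2$ follows formally: in coordinates $\Lambda^2 = \sin\theta\sin^2\psi\,dy_1 dy_3 d\theta d\psi$ factors as a product measure, $\mathbb K$ is a product kernel, each factor is symmetric with respect to the corresponding factor of the measure ($\delta_{(y_1,y_3)}$ w.r.t. $dy_1dy_3$; $\widetilde{\mathbb P}$ w.r.t. $\sin\theta\,d\theta$ by (i); $\delta_{\pi-\psi}$ w.r.t. $\sin^2\psi\,d\psi$ since $\psi\mapsto\pi-\psi$ preserves $\sin^2\psi$), so the product is symmetric — alternatively, this is just Proposition \ref{prop_dualityetc}(ii) again, now known to hold for the limiting kernel because symmetry passes to weak limits.

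The main obstacle is the rigorous control of the cylindrical approximation — showing that the true dynamics on $\mathcal M$ converge (after rescaling) to the product dynamics on $\mathcal M_{\cyl}$, uniformly enough in the initial condition to survive weak convergence. Two difficulties must be handled: (a) quantifying how much a trajectory in $\mathcal M$ can drift away from its $\mathcal M_{\cyl}$-shadow per reflection — here the $C^2$ bound on $\partial W$ with curvature $\le\kappa_{\max}$ and the estimate $\partial\mathcal M$ lies within $O(\rho(\epsilon)^2)$ of $\mathbf P$ (Proposition \ref{prop_Melem}) should give that each satellite-wall reflection agrees with the planar model up to $O(\rho^2)$ corrections, which vanish; and (b) ensuring that the bad set of initial conditions whose trajectories wander into a gap region (where the dynamics is genuinely non-cylindrical, involving two satellites) has $\Lambda^2$-measure $o(1)$, which reduces to a transversality/counting estimate controlled by $\epsilon^{1/2}/\rho(\epsilon)\to0$. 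Everything downstream — the algebraic identification of $\widetilde\Sigma$ and $\widetilde\epsilon$, the verification of B1--B5 for $\widetilde\Sigma$, and the three symmetry checks — is routine once (a) and (b) are in hand.
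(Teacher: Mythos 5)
Your skeleton is the paper's: decouple the dynamics on $\mathcal{M}_{\cyl}$ to identify the foreshortened wall $\widetilde{W}$ and the product form (this is Theorem \ref{thm_cylindrical}, Steps 1a--1d), invoke Lemma \ref{lem_construct}(iii) for existence of $\widetilde{\mathbb{P}}$ and its independence of the sequence $\epsilon_i$, and obtain the symmetry statements from Propositions \ref{prop_dualref} and \ref{prop_dualityetc}. The gap is in the step you label (a) and then declare essentially routine: the comparison of $K^{\Sigma,\epsilon}$ with $K^{\Sigma,\epsilon}_{\cyl}$, i.e.\ Lemma \ref{lem_comparison}, which is the bulk of \S\ref{ssec_pure_scaling}--\S\ref{ssec_zoom}. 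Your proposed mechanism --- ``each satellite--wall reflection agrees with the planar model up to $O(\rho^2)$ corrections, which vanish'' --- does not work, because the comparison that determines the outgoing angle $\theta'$ takes place at the scale of the microstructure, $\epsilon$, and by hypothesis $\epsilon = o(\rho^2)$, so $\rho^2/\epsilon \to \infty$. The rolling displacement between $\mathcal{M}$ and $\mathcal{M}_{\cyl}$ (the map $H_1 - \mathrm{Id}$, of size $O(\rho^2)$ by (\ref{eq3.6})) is therefore huge compared to the wall's features: launched from the same point of $\mathbf{P}$, the true trajectory and its cylindrical shadow land on the wall at points separated by many periods of the microstructure and can strike entirely different asperities, so absolute $O(\rho^2)$ closeness yields nothing about $\theta'$. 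This is exactly why the paper never estimates $K^{\Sigma,\epsilon} - K^{\Sigma,\epsilon}_{\cyl}$ pointwise: it inserts the modified law $\widetilde{K}^{\Sigma,\epsilon} = \eta^{-1} \circ K^{\Sigma,\epsilon} \circ \eta$ of \S\ref{sssec_modcol}, handles $K^{\Sigma,\epsilon}$ versus $\widetilde{K}^{\Sigma,\epsilon}$ only in the weak (integrated) sense using $\eta \to \mathrm{Id}$ uniformly with Jacobian $U \to 1$, and compares $\widetilde{K}^{\Sigma,\epsilon}$ with $K^{\Sigma,\epsilon}_{\cyl}$ in zoomed coordinates through the base projections $G_\epsilon$ and $G_0$, whose differentials differ only by $O(\epsilon\overline{\alpha}^*) = O(\rho)$ (Lemma \ref{dGlem}); it must also correct for the mismatch of periodicity lattices ($\rho e_3$ for $K^{\Sigma,\epsilon}$ versus $(1+mJ^{-1})^{1/2}\rho\,\chi$ for $K^{\Sigma,\epsilon}_{\cyl}$), which your sketch does not mention.

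Even once the comparison is set up correctly, per-reflection errors do not simply vanish: each reflection off a curved piece of $\partial\widehat{B}$ amplifies position and velocity discrepancies by factors of order $1/\cos\theta_j$ and $1/\sin\nu$, and these compound over the $N$ reflections of an excursion (Lemmas \ref{lem_control} and \ref{lem_control2}). The paper therefore restricts to the good set $\Omega$ on which $N \leq \log(1/\rho)/4\log\log(1/\rho)$, the radii of transversality and the quantities $\cos\theta_j$, $\sin\nu$ are bounded below by powers of $1/\log(1/\rho)$, and the angular coordinate stays clear of the gap regions; it proves $\Omega$ has asymptotically full measure, and the resulting uniform bound in Lemma \ref{lem_mod} is only $O(\log(1/\rho)^{-3})$, not a power of $\rho$. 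Your bad set addresses only excursions into gap regions (multi-satellite contact); the near-tangential reflections, long excursions, and velocities nearly parallel to $\chi$ are equally essential and are absent from your proposal. So you have located the difficulty correctly, but the mechanism you offer to resolve it would fail, and what you call routine is where most of the proof's work actually lies.
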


Our next two results concern more general rough collision laws. For an arbitrary sequence of cells $\{\Sigma_i\}_{i \geq 1}$, there is no guarantee that the limit of $\mathbb{K}^{\Sigma_i,\epsilon_i}$ exists or is uniquely determined by the sequence of cells $\Sigma_i$. Nonetheless, if $\epsilon_i \to 0$ sufficiently fast (where the rate depends on the sequence of cells $\Sigma_i$), a strict dichotomy holds.

\begin{theorem} \label{thm_dichotomy}
For any sequence of cells $\{\Sigma_i\}_{i \geq 1}$, there exist numbers $b_1 \geq b_2 \geq b_3 \geq \cdots > 0$ such that exactly one of the following is true:
\begin{enumerate}[label = (\Alph*)]
    \item There exists a unique Markov kernel $\mathbb{K}$ such that for any sequence of positive numbers $\epsilon_i \leq b_i$ with $\epsilon_i \to 0$, 
    \begin{equation} \label{eq1.27}
        \lim_{i \to \infty} \mathbb{K}^{\Sigma_i,\epsilon_i} = \mathbb{K}.
    \end{equation}
    \item For any sequence of positive numbers $\epsilon_i \leq b_i$ with $\epsilon_i \to 0$, $\lim_{i \to \infty} \mathbb{K}^{\Sigma_i,\epsilon_i}$ does not exist.
\end{enumerate}
\end{theorem}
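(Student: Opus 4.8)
The plan is to reduce the dichotomy entirely to the pure-scaling theorem, Theorem~\ref{thm_pure_scaling}, together with the fact that the convergence in (\ref{eq1.49}) is precisely convergence in the pseudometric $d$ on Markov kernels over $\mathbf{P}\times\mathbb{S}^2_+$ introduced in \S\ref{sssec_pseudo} (Remark~\ref{rem_conv_sense}), for which limits are unique modulo $\Lambda^2$-null sets (cf.\ Remark~\ref{rem_convsense0}). For each fixed index $j$, applying Theorem~\ref{thm_pure_scaling} to the constant sequence of cells $\Sigma_i\equiv\Sigma_j$ yields a Markov kernel $\mathbb{K}^{\Sigma_j}:=\lim_{\epsilon\to 0}\mathbb{K}^{\Sigma_j,\epsilon}$, which does not depend on the approximating sequence $\epsilon\to 0$. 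The main point is that, once the $\epsilon_i$ are small, the behavior of $\mathbb{K}^{\Sigma_i,\epsilon_i}$ is governed by the single sequence of kernels $(\mathbb{K}^{\Sigma_i})_{i\ge 1}$ --- which carries no dependence on the $\epsilon_i$ --- so that the dichotomy should amount to nothing more than the tautology ``the sequence $(\mathbb{K}^{\Sigma_i})_i$ converges in $d$, or it does not.''

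First I would upgrade the along-sequences statement of Theorem~\ref{thm_pure_scaling} to a genuine limit: for fixed $j$, $d(\mathbb{K}^{\Sigma_j,\epsilon},\mathbb{K}^{\Sigma_j})\to 0$ as $\epsilon\downarrow 0$, since otherwise one could extract $\epsilon_n\downarrow 0$ along which $d(\mathbb{K}^{\Sigma_j,\epsilon_n},\mathbb{K}^{\Sigma_j})$ stays bounded away from $0$, contradicting Theorem~\ref{thm_pure_scaling}. Hence there is $b_j'>0$ with $d(\mathbb{K}^{\Sigma_j,\epsilon},\mathbb{K}^{\Sigma_j})<2^{-j}$ for all $\epsilon\in(0,b_j']$. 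I would then set $b_j:=\min\{b_1',\dots,b_j',1/j\}$, which is positive and non-increasing with $b_j\to 0$; for any sequence with $0<\epsilon_i\le b_i$ one automatically has $\epsilon_i\to 0$ and $d(\mathbb{K}^{\Sigma_i,\epsilon_i},\mathbb{K}^{\Sigma_i})<2^{-i}$.

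With this choice of $(b_i)$, the remaining step is a triangle-inequality argument. If $(\mathbb{K}^{\Sigma_i})_i$ converges in $d$ to a kernel $\mathbb{K}$, then for every admissible sequence $\epsilon_i\le b_i$ one has $d(\mathbb{K}^{\Sigma_i,\epsilon_i},\mathbb{K})\le d(\mathbb{K}^{\Sigma_i,\epsilon_i},\mathbb{K}^{\Sigma_i})+d(\mathbb{K}^{\Sigma_i},\mathbb{K})\to 0$, so (\ref{eq1.27}) holds with this same $\mathbb{K}$; uniqueness (modulo $\Lambda^2$-null sets) follows by comparing any two candidate limits through the $\mathbb{K}^{\Sigma_i,\epsilon_i}$. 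This is alternative~(A). If instead $(\mathbb{K}^{\Sigma_i})_i$ does not converge in $d$, then for any admissible sequence a hypothetical limit $\mathbb{K}$ of $\mathbb{K}^{\Sigma_i,\epsilon_i}$ would force $\mathbb{K}^{\Sigma_i}\to\mathbb{K}$ by the same inequality, a contradiction; so the limit never exists, which is alternative~(B). The two cases are exhaustive and, since the set of admissible sequences is non-empty (e.g.\ $\epsilon_i=b_i$), mutually exclusive. One may additionally observe that Theorem~\ref{thm_pure_scaling} identifies $\mathbb{K}^{\Sigma_i}$ with a single Markov kernel $\widetilde{\mathbb{P}}_i$ on $\mathbb{S}^1_+$ symmetric with respect to $\sin\theta\,\dd\theta$, so the dichotomy is literally about convergence of $(\widetilde{\mathbb{P}}_i)$ in the compact metrizable space of a.e.-equivalence classes of such kernels.

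I do not expect a serious obstacle here: the genuine analytic content --- the derivation of the pure-scaling limit, and in particular the conservation of $-mv_1+J\omega$ --- has already been spent in Theorem~\ref{thm_pure_scaling}. The points that need a little care are only the upgrade from sequence-convergence to continuum-convergence used to pick $b_j'$, and keeping the $\Lambda^2$-a.e.\ identification of Markov kernels explicit so that ``there exists a unique $\mathbb{K}$'' in alternative~(A) is meaningful.
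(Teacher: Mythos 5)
Your proof is correct, but it runs along a different axis than the paper's. The paper does not pivot on the pure-scaling kernels $\mathbb{K}^{\Sigma_i}:=\lim_{\epsilon\to 0}\mathbb{K}^{\Sigma_i,\epsilon}$ at all: its proof of this theorem chooses $b_i$ by combining (a) Lemma \ref{lem_comparison}, which gives $d_{\mathcal{G}}^{\Lambda^2}(\mathbb{K}^{\Sigma_i,\epsilon}_{\cyl},\mathbb{K}^{\Sigma_i,\epsilon})\to 0$ as $\epsilon\to 0$ for each fixed $i$, with (b) the $b_i$ furnished by the cylindrical dichotomy (Theorem \ref{thm_cylindrical}), and then runs the triangle inequality through the \emph{cylindrical kernels at the same scale} $\mathbb{K}^{\Sigma_i,\epsilon_i}_{\cyl}$. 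You instead take Theorem \ref{thm_pure_scaling} as a black box, upgrade it (legitimately, by the subsequence extraction you describe) to continuum convergence $d_{\mathcal{G}}^{\Lambda^2}(\mathbb{K}^{\Sigma_j,\epsilon},\mathbb{K}^{\Sigma_j})\to 0$, and run the triangle inequality through the $\epsilon$-independent sequence $(\mathbb{K}^{\Sigma_i})$, so the dichotomy becomes the tautology ``$(\mathbb{K}^{\Sigma_i})$ converges in the pseudometric or it does not.'' This is logically sound and non-circular, since Theorem \ref{thm_pure_scaling} is proved before the dichotomy; it is also arguably cleaner, as it makes the dichotomy a purely formal consequence of the pure-scaling result plus the pseudometric formalism (and, like the paper, it heeds the remark that test-function-wise convergence alone would not give $b_i$ independent of $f,g$). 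What the paper's route buys in exchange is reuse: its $b_i$ simultaneously force closeness to the cylindrical kernels, and exactly this estimate (the inequalities (\ref{eq4.62})--(\ref{eq4.63})) is recycled verbatim in the proof of Theorem \ref{thm_classification} and in the identification of the limit's product form via Theorem \ref{thm_cylindrical}; with your $b_i$ one would need a further argument (or a return to the cylindrical comparison) to see that a limit in case (A) has the form (\ref{rough_col}). Two small points: your closing observation about compactness of the space of symmetric kernels is unnecessary for the argument and would itself need justification, and, as you note, ``unique'' in (A) must be read modulo $\Lambda^2$-a.e.\ identification of kernels, which matches the paper's convention.
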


\begin{remark} \normalfont
Both possibilities in the dichotomy are realized. In  \S\ref{sec_examples} we will construct a few different examples of rough collision laws. Denote two of these by $\mathbb{K} = \lim \mathbb{K}^{\Sigma,\epsilon_i}$ and $\mathbb{K}' = \lim \mathbb{K}^{\Sigma',\epsilon_i}$ where $\Sigma$ and $\Sigma'$ are fixed cells and $\mathbb{K} \neq \mathbb{K}'$ (where inequality means that $\mathbb{K}(\cdot ; \dd y' \dd w')$ and $\mathbb{K}'(\cdot ; \dd y' \dd w')$ disagree on a non-null set). Note that by Theorem \ref{thm_pure_scaling}, these limits always exist and do not depend on the sequence $\epsilon_i \to 0$, and thus we may take $b_i = \infty$ for example. Hence we see that (A) is realized. Define a sequence of cells by $\Sigma_{2i - 1} = \Sigma$ and $\Sigma_{2i} = \Sigma'$ for $i \geq 1$. Then the sequence $\mathbb{K}^{\Sigma_i,\epsilon_i}$ has two distinct limit points $\mathbb{K}$ and $\mathbb{K}'$ for any sequence $\epsilon_i \to 0$, and thus (B) is realized.
\end{remark}

\begin{remark} \normalfont
The proof of this theorem depends on the Poincar\'{e} Recurrence Theorem. As such, it does not yield quantitative estimates for the numbers $b_i$. 
\end{remark}

We let $\mathcal{A}_0$ denote the set of all Markov kernels $\mathbb{K}$ obtained as a limit of form $\lim_{i \to \infty} \mathbb{K}^{\Sigma_i, \epsilon_i}$, where $\epsilon_i \leq b_i$ for all $i$, and the $b_i$ are chosen as in Theorem \ref{thm_dichotomy}. The following theorem says that the collision laws in $\mathcal{A}_0$ have essentially the same properties as those stated in Theorem \ref{thm_pure_scaling}. Moreover, these properties completely characterize the members of $\mathcal{A}_0$.

\begin{theorem} \label{thm_classification}
Let $\mathbb{K} \in \mathcal{A}_0$. In the coordinates $(y_1,y_3,\theta,\psi)$, $\mathbb{K}$ takes the form 
\begin{equation} \label{rough_col}
\mathbb{K}(y_1,y_3,\theta,\psi ; \dd y_1' \dd y_3' \dd\theta' \dd\psi') = \delta_{(y_1,y_3)}(\dd y_1'\dd y_3')\widetilde{\mathbb{P}}(\theta,\dd\theta')\delta_{\pi - \psi}(\dd\psi'),
\end{equation}
where $\widetilde{\mathbb{P}}$ is a Markov kernel on $\mathbb{S}^1_+$ satisfying the following properties:
\begin{enumerate}[label = \roman*.]
    \item $\widetilde{\mathbb{P}}$ is symmetric with respect to the measure $\sin\theta \dd\theta$ on $\mathbb{S}^1_+$, in the sense of (\ref{eq1.18'}).
    \item Suppose $\{\Sigma_i\}$ is a sequence of cells such that for any $\epsilon_i \leq b_i$ with $\epsilon_i \to 0$, $\mathbb{K}^{\Sigma_i,\epsilon_i} \to \mathbb{K}$ in the sense of Theorem \ref{thm_dichotomy}(A). Let 
    \begin{equation} \label{eq1.66}
    \begin{split}
        &\widetilde{\Sigma}_i = \{(x_1,x_2) \in \mathbb{S}^1 \times \mathbb{R} : (x_1,(1 + mJ^{-1})^{-1/2}x_2) \in \Sigma_i\}, \\
        &\widetilde{\epsilon}_i = (1 + mJ^{-1})^{-1/2}\epsilon_i, \\
        &\mathbb{P} = \delta_y(\dd y') \widetilde{\mathbb{P}}(\theta, \dd\theta').
    \end{split}
    \end{equation}
    Then 
    \begin{equation}
        \mathbb{P} = \lim_{i \to \infty} \mathbb{P}^{\widetilde{\Sigma}_i,\widetilde{\epsilon}_i}.
    \end{equation}
\end{enumerate}
Consequently, $\mathbb{K}$ is symmetric with respect to the measure $\Lambda^2$.

Conversely, if $\mathbb{K}$ is a Markov kernel on $\mathbf{P} \times \mathbb{S}^2_+$ of form (\ref{rough_col}) such that $\widetilde{\mathbb{P}}$ is symmetric with respect to the measure $\sin\theta \dd\theta$ on $\mathbb{S}^1_+$, then $\mathbb{K} \in \mathcal{A}_0$.
\end{theorem}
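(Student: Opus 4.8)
The plan is to read the forward implication as a ``parametric'' extension of Theorem~\ref{thm_pure_scaling}: the argument proving that theorem should apply almost verbatim once the constant cell is replaced by a sequence $\{\Sigma_i\}$ and the informal requirement ``$\epsilon$ small'' is replaced by ``$\epsilon_i \le b_i$,'' the thresholds $b_i$ being exactly what restores the uniformity that constancy of the cell previously supplied for free. The converse is the one place the external characterization enters: I un-foreshorten a sequence of \emph{periodic} approximating reflectors produced by Corollary~\ref{cor_rough_ref_char}. Throughout I take for granted the structural description of $\mathcal{M}$ from \S\ref{ssec_configprops} and \S\ref{sssec_cylapprox} --- in particular that $\mathcal{M}$ decomposes into large ``cylindrical regions,'' on which the billiard is free motion along $\widetilde{\chi}$ crossed with a two-dimensional billiard in a cross-section, up to errors vanishing with $\epsilon$, together with negligible ``gap regions'' --- and the fact (contained in the proofs of Theorems~\ref{thm_pure_scaling} and \ref{thm_dichotomy}) that the cross-sectional billiard, read off in the kinetic-energy-orthonormal frame $(\chi^\perp,\widehat{e}_2)$, is precisely the billiard in $W(\widetilde{\Sigma},\widetilde{\epsilon})$, the aspect-ratio distortion introduced by the kinetic-energy metric being responsible for the foreshortening factor $(1+mJ^{-1})^{-1/2}$.

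\textbf{Forward implication.} Let $\mathbb{K} = \lim_i \mathbb{K}^{\Sigma_i,\epsilon_i} \in \mathcal{A}_0$; existence and uniqueness of the limit over all admissible $\epsilon_i$ are Theorem~\ref{thm_dichotomy}(A). First I would re-run the cylindrical approximation underlying the proof of Theorem~\ref{thm_pure_scaling}. For a collision started from $\mathbf{P}$, the trajectory stays inside a single cylindrical region except for initial data of measure tending to $0$; on such a region the $\widetilde{\chi}$-component of velocity is conserved, so in the limit $\cos\psi' = -\cos\psi$ (this produces $\delta_{\pi-\psi}(\dd\psi')$ and is the rigorous form of conservation of $-mv_1 + J\omega$); the $\chi^\perp$-component of position returns near its starting value, while the $\chi$-component drifts by at most the collision duration times $\cos\psi$, which tends to $0$, producing $\delta_{(y_1,y_3)}(\dd y_1'\dd y_3')$; and $\epsilon$-periodicity in $x_1$ together with invariance along $\chi$ forces the residual angular kernel to depend on $\theta$ only, which is the present analogue of Lemma~\ref{lem_construct}. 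This yields the form (\ref{rough_col}). Identifying the cross-sectional billiard with that in $W(\widetilde{\Sigma}_i,\widetilde{\epsilon}_i)$ and passing to the limit gives $\mathbb{P} = \lim_i \mathbb{P}^{\widetilde{\Sigma}_i,\widetilde{\epsilon}_i}$, i.e.\ property~ii. For property~i I would insert test functions of product form into the symmetry identity (\ref{eq1.20}) for $\mathbb{K}$ (Proposition~\ref{prop_dualityetc}(ii)), use the coordinate expression $\Lambda^2 = \sin\theta\sin^2\psi\,\dd y_1\dd y_3\dd\theta\dd\psi$ from (\ref{eq1.28'}) together with the product form (\ref{rough_col}), and integrate out $y_1$, $y_3$, $\psi$: what remains is exactly (\ref{eq1.18'}). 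The closing assertion that $\mathbb{K}$ is $\Lambda^2$-symmetric then follows, either directly from (\ref{eq1.20}) or by reassembling (\ref{eq1.18'}).

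\textbf{Converse.} Suppose $\mathbb{K}$ has the form (\ref{rough_col}) with $\widetilde{\mathbb{P}}$ symmetric with respect to $\sin\theta\,\dd\theta$. By Corollary~\ref{cor_rough_ref_char} there are cells $\widetilde{\Sigma}_i$ obeying B1--B5 and numbers $\widetilde{\epsilon}_i \to 0$ with $\mathbb{P}^{\widetilde{\Sigma}_i,\widetilde{\epsilon}_i} \to \delta_y(\dd y')\widetilde{\mathbb{P}}(\theta,\dd\theta') =: \mathbb{P}$. I would then set $\Sigma_i := \{(x_1,x_2) : (x_1,(1+mJ^{-1})^{1/2}x_2) \in \widetilde{\Sigma}_i\}$, which inverts the foreshortening in (\ref{eq1.66}); since $(1+mJ^{-1})^{1/2} > 1$ this contracts the vertical extent of each cell, so $\partial\Sigma_i \subset \mathbb{S}^1 \times [-(1+mJ^{-1})^{-1/2},0] \subset \mathbb{S}^1\times[-1,0]$ and $\Sigma_i$ inherits B1--B5 from $\widetilde{\Sigma}_i$. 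Let $b_i$ be the thresholds attached to $\{\Sigma_i\}$ by Theorem~\ref{thm_dichotomy}. Because the limit $\lim_i \mathbb{P}^{\widetilde{\Sigma}_i,\widetilde{\epsilon}_i}$ depends only on the cells and not on the chosen null sequence (Remark~\ref{rem_indep}), I may reselect $\epsilon_i := \min(b_i, i^{-1})$ and $\widetilde{\epsilon}_i := (1+mJ^{-1})^{-1/2}\epsilon_i$, keeping $\epsilon_i \le b_i$, $\epsilon_i \to 0$, and $\mathbb{P}^{\widetilde{\Sigma}_i,\widetilde{\epsilon}_i} \to \mathbb{P}$. Finally, the cylindrical approximation used in the forward implication, run in reverse, shows that convergence of $\mathbb{P}^{\widetilde{\Sigma}_i,\widetilde{\epsilon}_i}$ forces convergence of $\mathbb{K}^{\Sigma_i,\epsilon_i}$; hence $\{\Sigma_i\}$ falls under case (A) of the dichotomy, the limit equals $\delta_{(y_1,y_3)}(\dd y_1'\dd y_3')\widetilde{\mathbb{P}}(\theta,\dd\theta')\delta_{\pi-\psi}(\dd\psi') = \mathbb{K}$, and therefore $\mathbb{K} \in \mathcal{A}_0$.

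\textbf{Expected main obstacle.} The real work lies not in Theorem~\ref{thm_classification} but in the cylindrical approximation it rests on: bounding the contribution of the gap regions and, above all, obtaining estimates uniform as the cells $\Sigma_i$ vary --- which is exactly what forces the thresholds $b_i$ and the restriction to $\mathcal{A}_0$. Granting that input, the only delicate points internal to this proof are in the converse: checking that un-foreshortening preserves B1--B5, decoupling the two scale-sequences through Remark~\ref{rem_indep}, and ensuring the construction lands in case (A) rather than case (B) of the dichotomy, the last being automatic once one knows that convergence of the two-dimensional reflection laws propagates to the three-dimensional collision laws.
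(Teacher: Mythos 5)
Your proposal is correct and follows essentially the same route as the paper: the forward direction is exactly the paper's reduction to the cylindrical classification (Theorem \ref{thm_cylindrical}) combined with the true-vs-cylindrical comparison encoded in the thresholds $b_i$ (Lemma \ref{lem_comparison} together with the pseudometric triangle inequality from the proof of Theorem \ref{thm_dichotomy}), and your converse is the paper's as well, i.e.\ Corollary \ref{cor_rough_ref_char} plus un-foreshortening of the cells and an appeal to the dichotomy to land in $\mathcal{A}_0$. The only cosmetic deviation is that you obtain property i by inserting the product form (\ref{rough_col}) into the $\Lambda^2$-symmetry (\ref{eq1.20}) of $\mathbb{K}$, whereas the paper gets the symmetry of $\widetilde{\mathbb{P}}$ from the half-plane theory (Proposition \ref{prop_dualref} via Corollary \ref{cor_rough_ref_char}); both are valid.
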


Theorems \ref{thm_pure_scaling} and \ref{thm_classification} establish a one-to-one correspondence between rough collision laws and rough reflection laws in the upper half-plane $\{(x_1,x_2) : x_2 \geq 0\}$, indicated schematically as follows:
\begin{equation} \label{eq1.68}
\begin{tikzcd}
\{(\Sigma_i,\epsilon_i)\}_{i \geq 1} \arrow[r] \arrow[d] & \{(\widetilde{\Sigma}_i, \widetilde{\epsilon_i})\}_{i \geq 1} \arrow[l] \arrow[d] \\
\mathbb{K} \arrow[r] & \mathbb{P} \arrow[l]
\end{tikzcd}
\end{equation}
The correspondence at the top is between the sequences of cell-roughness scale pairs giving rise to the walls $W_i = W(\Sigma_i,\epsilon_i)$ and foreshortened walls $\widetilde{W}_i = W(\widetilde{\Sigma}_i,\widetilde{\epsilon}_i)$ respectively. The correspondence on the bottom is given by (\ref{rough_col}). The downward arrows map the sequence $\{(\Sigma_i,\epsilon_i)\}$ (resp. $\{(\widetilde{\Sigma}_i,\widetilde{\epsilon}_i)\}$) to $\lim_{i \to \infty} \mathbb{K}^{\Sigma_i,\epsilon_i}$ (resp. $\lim_{i \to \infty} \mathbb{P}^{\widetilde{\Sigma}_i,\widetilde{\epsilon}_i}$). Provided $\epsilon_i \to 0$ sufficiently fast, the limit on the left exists if and only if the limit on the right exists. We will take advantage of this correspondence in \S\ref{sec_examples} for building examples of rough collision laws.

The reader may wish to compare the statement of Theorem \ref{thm_classification}, above, to the informal description of our main results in \S\ref{sssec_minformal}. The fact that $\mathbb{K}$ preserves the measure $\Lambda^2$ is equivalent to the fact that the rough collision dynamics preserve the Liouville measure on the phase space (one approach for proving this is suggested by Figure \ref{invariant_fig}), while symmetry with respect to $\Lambda^2$ makes precise the notion of ``time-reversibility.'' The product decomposition (\ref{rough_col}) implies that projection of the phase space velocity onto $\chi$ is a conserved quantity.

\subsubsection{Outline of proof of main results} \label{sssec_prfoutline}

The proofs of our main results are given in \S\ref{sec_main_results}. Here we provide a high level synopsis of our arguments.

\textit{Step 1.} The first step will be to show that versions of Theorems \ref{thm_pure_scaling}, \ref{thm_dichotomy}, \ref{thm_classification} hold if we replace the configuration space $\mathcal{M}$ with its cylindrical approximation $\mathcal{M}_{\cyl}$. (More precisely, see Theorem \ref{thm_cylindrical}.)

Consider a point particle moving freely in $\mathcal{M}_{\cyl}$ and reflecting specularly from $\partial \mathcal{M}_{\cyl}$. The boundary of $\mathcal{M}_{\cyl}$ lies just below the plane $\mathbf{P}$, within $\epsilon$ distance from the plane. In analogy to the collision law defined for the space $\mathcal{M}$, we define the \textit{cylindrical collision law} $K_{\cyl}^{\Sigma,\epsilon} : \mathbf{P} \times\mathbb{S}^2_+ \to \mathbf{P} \times \mathbb{S}^2_+$ by
\begin{equation}
    K_{\cyl}^{\Sigma,\epsilon} : (y,w) \mapsto (y',w,'),
\end{equation}
where $(y',w')$ is the state of the freely moving point particle upon its first return to $\mathbf{P}$, after reflecting from $\partial \mathcal{M}_{\cyl}$ some number of times. We then consider Markov kernels $\mathbb{K}$ on $\mathbf{P} \times \mathbb{S}^2_+$ such that for some sequence of cells $\{\Sigma_i\}$ and positive numbers $\epsilon_i \to 0$,
\begin{equation} \label{eq1.72}
    \delta_{K_{\cyl}^{\Sigma_i,\epsilon_i}(y,w)}(\dd y' \dd w') \Lambda^2(\dd y' \dd w') \to \mathbb{K}(y,w; \dd y' \dd w')\Lambda^2(\dd y' \dd w'),
\end{equation}
weakly in the space of measures on $(\mathbf{P} \times \mathbb{S}^2_+)^2$. 

To prove that such a Markov kernel $\mathbb{K}$ takes the form (\ref{rough_col}), the key observation is that the billiard trajectory $y(t)$ in $\mathcal{M}_{\cyl}$ decouples into two independent evolutions: $y(t) = (y_{1,2}(t), y_3(t))$, where the evolution $y_3(t)$ is the projection of $y(t)$ onto the cylindrical axis $\chi$, and the evolution $y_{1,2}(t)$ is the projection of $y(t)$ onto the orthogonal complement of $\chi$, $\mathbf{Q}_1 := \{y \in \mathbb{R}^3 : \langle y, \chi \rangle = 0\}$. By virtue of the cylindrical structure of $\mathcal{M}_{\cyl}$, $y_3(t)$ evolves linearly, with constant velocity for all time, while $y_{1,2}(t)$ follows the trajectory of a point particle moving freely in $\mathbf{Q}_1 \smallsetminus \mathcal{M}_{\cyl}$ and reflecting specularly from the boundary $\mathbf{Q}_1 \cap \partial \mathcal{M}_{\cyl}$. After appropriately identifying $\mathbf{Q}_1$ with $\mathbb{R}^2$, this billiard domain in $\mathbf{Q}_1$ may be shown to coincide with the complement of the foreshortened wall $\widetilde{W}_i = W(\widetilde{\Sigma}_i, \widetilde{\epsilon}_i)$. This accounts for the non-trivial factor $\widetilde{\mathbb{P}}$ in (\ref{rough_col}).

On the other hand, if $\mathbb{K}$ is a Markov kernel on $\mathbf{P} \times \mathbb{S}^2_+$ of form (\ref{rough_col}), then one may show that for some sequence of cells $\{\Sigma_i\}$ and positive numbers $\epsilon_i \to 0$, a limit of form (\ref{eq1.72}) holds. The idea is to apply the characterization of rough reflection laws in the upper half-plane given by Theorem \ref{thm_rough_ref_char} to show that there exist $\Sigma_i$ and $\epsilon_i$ such that the limiting rough reflection law on $\widetilde{W}_i = W(\widetilde{\Sigma}_i, \widetilde{\epsilon}_i)$ is $\widetilde{\mathbb{P}}$ in (\ref{rough_col}). 

For more details on the cylindrical configuration space and the cylindrical collision law, see \S\ref{ssec_cylconfig} and \S\ref{sssec_cylcol}. The argument in this step is presented in \S\ref{ssec_cyl}.

\textit{Step 2.} Most of the work involved with proving our main results is concerned with the case of pure scaling -- that is, the case where the sequence of cells $\Sigma_i = \Sigma$ is constant. We will prove that, for any $f, g \in C_c^\infty(\mathbf{P} \times \mathbb{S}^2_+)$,  
\begin{equation} \label{eq1.73}
    \int_{\mathbf{P} \times \mathbb{S}^2_+} g(y,w) [f \circ K^{\Sigma,\epsilon}(y,w) - f \circ K^{\Sigma,\epsilon}_{\cyl}(y,w)] \dd\Lambda^2(\dd y \dd w) \to 0 \quad \text{ as } \epsilon \to 0.
\end{equation}
This is the statement of Lemma \ref{lem_comparison}. The above limit together with the previous step can be used to prove Theorem \ref{thm_pure_scaling}.

To obtain (\ref{eq1.73}), we introduce a \textit{modified collision law} $\widetilde{K}^{\Sigma,\epsilon}$ to which we may compare both $K^{\Sigma,\epsilon}$ and $K^{\Sigma,\epsilon}_{\cyl}$. The modified collision law is defined on a large (but not full measure) open subset $\widetilde{\mathcal{F}} \subset \mathbf{P} \times \mathbb{S}^2_+$, and satisfies
\begin{equation}
    \widetilde{K}^{\Sigma,\epsilon} = \eta^{-1} \circ K^{\Sigma,\epsilon} \circ \eta,
\end{equation}
where $\eta : \widetilde{\mathcal{F}} \to \widetilde{\mathcal{F}}$ is a smooth perturbation which ``corrects'' for the large-scale spherical shape of the body $D$. The map $\eta$ is defined in \S\ref{sssec_modcol}. We will see that $\eta$ converges to $\text{Id}$ in $C^1(\widetilde{\mathcal{F}})$ as $\epsilon \to 0$. The comparison between $\widetilde{K}^{\Sigma,\epsilon}$ and $K^{\Sigma,\epsilon}$ will be carried out in \S\ref{ssec_pure_scaling} by making estimates on the differential of $\eta$.

The comparison between $\widetilde{K}^{\Sigma,\epsilon}$ and $K_{\cyl}^{\Sigma,\epsilon}$ will be carried out in \S\ref{ssec_zoom} via a ``zooming argument.'' By double-periodicity of the configuration spaces $\mathcal{M}$ and $\mathcal{M}_{\cyl}$, it is enough to compare the behavior of  $\widetilde{K}^{\Sigma,\epsilon}$ and $K_{\cyl}^{\Sigma,\epsilon}$ on a single parallelogram
\begin{equation}
    R_\epsilon = \{(x_1,\alpha) \in \mathbf{P} : 0 \leq x_1 + \alpha \leq \epsilon, -\rho/2 \leq x_2 \leq \rho/2\}.
\end{equation}
We will see that there is a large subset $\Omega \subset \mathbf{P} \times \mathbb{S}^2_+$ such that 
\begin{equation} \label{eq1.76}
    \sup\left\{||\widetilde{K}^{\Sigma,\epsilon}(y,w) - K_{\cyl}^{\Sigma,\epsilon}(y,w)|| : (y,w) \in \Omega \cap (R_\epsilon \times \mathbb{S}^2_+)\right\} \to 0, \text{ as } \epsilon \to 0.
\end{equation}
This is the content of Lemma \ref{lem_mod}. The comparison is best carried out in ``zoomed'' coordinates. That is, we let
\begin{equation}
    \sigma_{\epsilon^{-1}}(y,w) = (\epsilon^{-1}y,w), \quad\quad (y,w) \in \mathbf{P} \times \mathbb{S}^2_+,
\end{equation}
and we compare the two maps 
\begin{equation}
    \widetilde{K}^*: = \sigma_{\epsilon^{-1}} \circ \widetilde{K}^{\Sigma,\epsilon} \circ \sigma_{\epsilon}, \quad\quad K_{\cyl}^*: = \sigma_{\epsilon^{-1}} \circ K_{\cyl}^{\Sigma,\epsilon} \circ \sigma_{\epsilon}.
\end{equation}
The advantage of this point of view can be seen by observing that the scaled cylindrical configuration space $\mathcal{M}^*_{\cyl} := \epsilon^{-1}\mathcal{M}_{\cyl}$ is simply the cylinder with base $\overline{W(\Sigma,1)^c} + e_2$ and axis $\chi$. Thus $\mathcal{M}_{\cyl}^*$ does not depend on $\epsilon$. This will allow us to control the billiard trajectories in the zoomed spaces $\mathcal{M}^* := \epsilon^{-1}\mathcal{M}$ and $\mathcal{M}^*_{\cyl}$ in terms of properties of their ``projections'' onto the fixed cylindrical base $\overline{W(\Sigma,1)^c} + e_2$. This idea is fleshed out in Lemmas \ref{lem_control} and \ref{lem_control2}.

\textit{Step 3.} Finally, to prove Theorems \ref{thm_dichotomy} and \ref{thm_classification}, we will apply the previous two steps and take advantage of the fact that the convergence (\ref{eq1.49}) comes from a pseudometric $d_{\mathcal{G}}^{\Lambda^2}$ on the space of Markov kernels on $\mathbf{P} \times \mathbb{S}^2_+$. (For the definition of this pseudometric, see \S\ref{sssec_pseudo}.) For any $\Sigma_i$ and $\epsilon_i$, the triangle inequality gives us
\begin{equation} \label{eq1.74}
    d_{\mathcal{G}}^{\Lambda^2}(\mathbb{K}, \mathbb{K}^{\Sigma_i,\epsilon_i}) \leq d_{\mathcal{G}}^{\Lambda^2}(\mathbb{K}, \mathbb{K}^{\Sigma_i,\epsilon_i}_{\cyl}) + d_{\mathcal{G}}^{\Lambda^2}(\mathbb{K}^{\Sigma_i,\epsilon_i}_{\cyl}, \mathbb{K}^{\Sigma_i,\epsilon_i}),
\end{equation}
where $\mathbb{K}^{\Sigma_i,\epsilon_i}_{\cyl}(y,w; \dd y' \dd w') := \delta_{K^{\Sigma_i,\epsilon_i}_{\cyl}(y,w)}(\dd y' \dd w')$. The convergence (\ref{eq1.73}) implies that there exist positive numbers $b_i^{(0)}$ such that if $\epsilon_i \leq b_i^{(0)}$ for all $i$, then the second term in the right-hand side of (\ref{eq1.74}) is negligible. Consequently, Theorems \ref{thm_dichotomy} and \ref{thm_classification} will follow by applying the corresponding results obtained for the cylindrical configuration space in Step 1.

\section{Examples} \label{sec_examples}

Here we construct examples of rough reflection laws and rough collision laws. Most of the work goes into building rough reflection laws. For each rough reflection law we construct, the correspondence (\ref{eq1.68}) gives us a rough collision law ``for free.''

\subsection{Lemma for constructing rough reflections} \label{ssec_lemconstruct}

Throughout this section, we assume that the cells $\Sigma$ satisfy conditions B1-B5 of \S\ref{sssec_periodic}. 

Fix a cell $\Sigma$ and a positive number $\epsilon > 0$. Suppose that $(x,\theta) \in \mathbb{R} \times \mathbb{S}^1_+$, and let $(x',\theta')$ be the random variable in $\mathbb{R} \times \mathbb{S}^1_+$ whose law is given by $\mathbb{P}^{\Sigma,\epsilon}(x,\theta, \dd x' \dd\theta')$. We define $\widetilde{\mathbb{P}}^{\Sigma,\epsilon}(\theta,\dd\theta')$ to be the law of $\theta'$ as $x$ varies uniformly in the period $[0,\epsilon]$ and $\theta$ stays fixed. In other words, for any $f \in C_c(\mathbb{S}^1_+)$,
\begin{equation}
    \int_{\mathbb{S}^1_+} f(\theta') \widetilde{\mathbb{P}}^{\Sigma,\epsilon}(\theta,\dd\theta') := \frac{1}{\epsilon}\int_0^\epsilon \int_{\mathbb{R} \times \mathbb{S}^1_+} f(\theta')\mathbb{P}^{\Sigma,\epsilon}(x,\theta, \dd x' \dd\theta') \dd x.
\end{equation}

The following lemma gives us a way to construct rough reflection laws from a periodic microstructure.

\begin{lemma} \label{lem_construct}
(i) If the limit 
\begin{equation}
    \lim_{i \to \infty} \mathbb{P}^{\Sigma_i,\epsilon_i}(x,\theta; \dd x' \dd\theta')
\end{equation} 
exists, then the limit 
\begin{equation}
    \lim_{i \to \infty}\widetilde{\mathbb{P}}^{\Sigma_i,\epsilon_i}(\theta, \dd\theta')
\end{equation} 
exists. 

(ii) If the limit 
\begin{equation}
    \widetilde{\mathbb{P}}(\theta, \dd\theta') := \lim_{i \to \infty}\widetilde{\mathbb{P}}^{\Sigma_i,\epsilon_i}(\theta, \dd\theta')
\end{equation}
exists, then the limit 
\begin{equation}
    \lim_{i \to \infty} \mathbb{P}^{\Sigma_i,\epsilon_i}(x,\theta; \dd x' \dd\theta')
\end{equation}
exists and is equal to $\delta_{x}(\dd x')\widetilde{\mathbb{P}}(\theta, \dd\theta')$, where ``equal'' means the two Markov kernels belong to the same equivalence class (see Remark \ref{rem_convsense0}).

(iii) Suppose that $\Sigma_i = \Sigma$ is constant. Then for any sequence $\epsilon_i \to 0$, the limit $\lim_{i \to \infty} \mathbb{P}^{\Sigma, \epsilon_i}(x,\theta,\dd x' \dd\theta')$ exists and is equal to $\delta_x(\dd x') \widetilde{\mathbb{P}}(\theta, \dd\theta')$, where 
\begin{equation}
    \widetilde{\mathbb{P}}(\theta, \dd\theta') = \widetilde{\mathbb{P}}^{\Sigma, 1}(\theta, \dd\theta').
\end{equation}
\end{lemma}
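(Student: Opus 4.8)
The plan is to deduce all three parts from two structural facts about periodic walls together with a homogenization (rapid-oscillation) argument. Write $P^{\Sigma_i,\epsilon_i}(x,\theta)=(x_i'(x,\theta),\theta_i'(x,\theta))$ wherever it is defined; by condition A5a (which B5 guarantees) this is a full-measure subset of $\mathbb{R}\times\mathbb{S}^1_+$. The two facts are: \textbf{(a)} since $W(\Sigma_i,\epsilon_i)$ is $\epsilon_i$-periodic in $x_1$ (see (\ref{eq1.16})), one has $P^{\Sigma_i,\epsilon_i}(x+\epsilon_i,\theta)=P^{\Sigma_i,\epsilon_i}(x,\theta)+(\epsilon_i,0)$, so $\theta_i'(\cdot,\theta)$ is $\epsilon_i$-periodic and $\widetilde{\mathbb{P}}^{\Sigma_i,\epsilon_i}(\theta,\cdot)$ is the law of $\theta_i'(x,\theta)$ with $x$ uniform over \emph{any} period; and \textbf{(b)} condition B5 confines each bounded hollow of $\mathbb{R}^2_-\smallsetminus W(\Sigma_i,\epsilon_i)$ to an $x_1$-interval of length $O(\epsilon_i)$ --- a Jordan-curve argument exploiting that the nontrivial loop $\gamma\subset\Sigma_i$ through $(1,0)$ lifts to a path in $W(\Sigma_i,\epsilon_i)$ joining consecutive ``ceiling points'' $(\epsilon_i k,0)$, which fences off the hollow --- so that $|x_i'(x,\theta)-x|=O(\epsilon_i)$ for a.e.\ $(x,\theta)$. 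A third, elementary fact is needed only for (iii): billiard dynamics are scale-invariant and $W(\Sigma,\epsilon)=\epsilon\,W(\Sigma,1)$, so $x_\epsilon'(x,\theta)=\epsilon\,x_1'(x/\epsilon,\theta)$ and $\theta_\epsilon'(x,\theta)=\theta_1'(x/\epsilon,\theta)$, whence $\widetilde{\mathbb{P}}^{\Sigma,\epsilon}=\widetilde{\mathbb{P}}^{\Sigma,1}$ for every $\epsilon>0$.

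For (i), assume $\mathbb{P}^{\Sigma_i,\epsilon_i}\Lambda^1\to\mathbb{P}\Lambda^1$ weakly. By Remark \ref{rem_verify1} it suffices to test against $h(x,\theta,x',\theta')=f(x)\phi(\theta)g(\theta')$ with $f\in C_c^\infty(\mathbb{R})$ (take $\int f=1$) and $\phi,g\in C_c^\infty(\mathbb{S}^1_+)$; the point is that this $h$ does not see $x'$. Fixing $\theta$, partitioning $\supp f$ into periods of length $\epsilon_i$, and using fact (a) together with uniform continuity of $f$,
\[
\int_{\mathbb{R}} f(x)\,g(\theta_i'(x,\theta))\,\dd x=\Bigl(\textstyle\sum_k f(\epsilon_i k)\epsilon_i\Bigr)\int_{\mathbb{S}^1_+} g(\theta')\,\widetilde{\mathbb{P}}^{\Sigma_i,\epsilon_i}(\theta,\dd\theta')+o(1),
\]
with the error uniform in $\theta$ and $\sum_k f(\epsilon_i k)\epsilon_i\to1$. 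Multiplying by $\phi(\theta)\sin\theta$ and integrating, the pairing of the left side of (\ref{eq1.6}) with $h$ equals $\int\phi(\theta)\bigl[\int g\,\widetilde{\mathbb{P}}^{\Sigma_i,\epsilon_i}(\theta,\dd\theta')\bigr]\sin\theta\,\dd\theta+o(1)$; since the former converges, so does the latter, for all $\phi,g$. Thus $\widetilde{\mathbb{P}}^{\Sigma_i,\epsilon_i}(\theta,\dd\theta')\sin\theta\,\dd\theta$ converges vaguely on $\mathbb{S}^1_+\times\mathbb{S}^1_+$. Tightness at $\partial\mathbb{S}^1_+$ holds because $\widetilde{\mathbb{P}}^{\Sigma_i,\epsilon_i}$ inherits $\sin\theta\,\dd\theta$-symmetry from the $\Lambda^1$-symmetry of $P^{\Sigma_i,\epsilon_i}$ (Proposition \ref{prop_det_ref_properties}), so the mass near $\{\theta'\approx 0\}$ or $\{\theta'\approx\pi\}$ is controlled uniformly in $i$ by the integral of $\sin\theta$ over a small $\theta$-interval; hence the vague limit disintegrates as $\widetilde{\mathbb{P}}(\theta,\dd\theta')\sin\theta\,\dd\theta$ for a genuine Markov kernel $\widetilde{\mathbb{P}}$, i.e.\ $\lim_i\widetilde{\mathbb{P}}^{\Sigma_i,\epsilon_i}=\widetilde{\mathbb{P}}$.

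For (ii), assume $\widetilde{\mathbb{P}}^{\Sigma_i,\epsilon_i}(\theta,\dd\theta')\to\widetilde{\mathbb{P}}(\theta,\dd\theta')$. Test $\mathbb{P}^{\Sigma_i,\epsilon_i}$ against $h=f(x,\theta)g(x',\theta')$, $f,g\in C_c^\infty(\mathbb{R}\times\mathbb{S}^1_+)$. By fact (b) and Lipschitz continuity of $g$, $g(x_i'(x,\theta),\theta_i'(x,\theta))=g(x,\theta_i'(x,\theta))+O(\epsilon_i)$ uniformly; homogenizing in $x$ over periods of length $\epsilon_i$ (fact (a) and uniform continuity of $f,g$), the pairing equals
\[
\sum_k\epsilon_i\int_{\mathbb{S}^1_+}f(\epsilon_i k,\theta)\Bigl[\int_{\mathbb{S}^1_+}g(\epsilon_i k,\theta')\,\widetilde{\mathbb{P}}^{\Sigma_i,\epsilon_i}(\theta,\dd\theta')\Bigr]\sin\theta\,\dd\theta+o(1),
\]
which is a Riemann sum (spacing $\epsilon_i$) of the function $x\mapsto\int f(x,\theta)\bigl[\int g(x,\theta')\widetilde{\mathbb{P}}^{\Sigma_i,\epsilon_i}(\theta,\dd\theta')\bigr]\sin\theta\,\dd\theta$. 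Since $\widetilde{\mathbb{P}}^{\Sigma_i,\epsilon_i}\to\widetilde{\mathbb{P}}$ and these functions are uniformly bounded, equicontinuous in $x$, and supported in a fixed compact set, they converge uniformly and the Riemann sums converge to $\int f(x,\theta)\bigl[\int g(x,\theta')\widetilde{\mathbb{P}}(\theta,\dd\theta')\bigr]\sin\theta\,\dd x\,\dd\theta$ --- exactly the pairing of $\delta_x(\dd x')\widetilde{\mathbb{P}}(\theta,\dd\theta')\Lambda^1(\dd x\,\dd\theta)$ with $h$. Hence $\lim_i\mathbb{P}^{\Sigma_i,\epsilon_i}=\delta_x(\dd x')\widetilde{\mathbb{P}}(\theta,\dd\theta')$ in the sense of (\ref{eq1.6}), up to $\Lambda^1$-null sets (Remark \ref{rem_convsense0}). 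Part (iii) is then immediate: when $\Sigma_i\equiv\Sigma$, scale invariance gives $\widetilde{\mathbb{P}}^{\Sigma,\epsilon_i}=\widetilde{\mathbb{P}}^{\Sigma,1}$ for all $i$, so the hypothesis of (ii) holds with $\widetilde{\mathbb{P}}=\widetilde{\mathbb{P}}^{\Sigma,1}$, and the conclusion follows.

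I expect the main obstacle to be fact (b): turning the purely topological content of B5 (existence of a nontrivial loop in $\Sigma$ through $(1,0)$) into the quantitative confinement $|x_i'(x,\theta)-x|=O(\epsilon_i)$, including the care needed to make the implied constant behave as the cells vary. Everything else --- the two-scale/homogenization estimates and the tightness at $\partial\mathbb{S}^1_+$ --- is routine once fact (b) and the $\Lambda^1$-symmetry of Proposition \ref{prop_det_ref_properties} are available.
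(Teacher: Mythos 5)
Your proposal follows essentially the same route as the paper: the two pillars are exactly the paper's, namely $\epsilon_i$-periodicity of $P^{\Sigma_i,\epsilon_i}$ and the B5-based confinement estimate, and part (iii) is handled by the same scale-invariance identity $P^{\Sigma,\epsilon}=\sigma_\epsilon\circ P^{\Sigma,1}\circ\sigma_\epsilon^{-1}$, giving $\widetilde{\mathbb{P}}^{\Sigma,\epsilon}=\widetilde{\mathbb{P}}^{\Sigma,1}$, followed by an appeal to (i)--(ii). Two remarks. First, your main worry about fact (b) is unnecessary: the confinement is not merely $O(\epsilon_i)$ with a cell-dependent constant, but $|x'-x|\le\epsilon_i$ with constant $1$, because B5 forces the points $(k\epsilon_i,0)$ to lie in one connected component of $W$, and the hollow a trajectory enters can only meet the line $x_2=0$ within a single period $[k\epsilon_i,(k+1)\epsilon_i]$; this is precisely the estimate (\ref{eq2.7'}) the paper isolates at the start of its proof, and it holds uniformly over the cells.

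Second, there is one step in your part (i) that, as written, is not justified: you test the assumed weak convergence against $h(x,\theta,x',\theta')=f(x)\phi(\theta)g(\theta')$, but this $h$ is not compactly supported in $x'$, so the hypothesis (convergence in duality with $C_c((\mathbb{R}\times\mathbb{S}^1_+)^2)$, cf.\ Remark \ref{rem_verify1}) does not directly give convergence of that pairing. The fix is the device the paper uses: insert a cutoff $\eta_\delta(x')\in C_c(\mathbb{R})$ equal to $1$ on a fixed enlargement of $\supp f$; by the confinement bound $|x'-x|\le\epsilon_i$ the factor $\eta_\delta(x')$ is identically $1$ wherever $f(x)\neq 0$ (for large $i$), so the pairing is unchanged while the test function becomes admissible. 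With that repair your argument goes through. Beyond this, your conclusion of (i) by vague convergence plus a symmetry-based tightness/disintegration argument differs cosmetically from the paper, which instead identifies the limit explicitly as the Markov kernel $\widetilde{\mathbb{P}}_0(\theta,\dd\theta')$ obtained by averaging $\mathbb{P}(x,\theta;\dd x'\dd\theta')$ over $x\in[0,1]$ — this is a bit cleaner since it avoids having to verify the $\sin\theta\,\dd\theta$-symmetry of $\widetilde{\mathbb{P}}^{\Sigma_i,\epsilon_i}$ and the no-mass-escape step, but both conclusions are sound; likewise your Arzel\`a--Ascoli/Riemann-sum treatment of (ii) with general $f(x,\theta),g(x',\theta')$ is a legitimate variant of the paper's product-form reduction with simple-function approximation.
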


The lemma is proved in \S\ref{ssec_proof}. The benefit of the lemma is that the law $\widetilde{\mathbb{P}}^{\Sigma_i, \epsilon_i}(\theta, \dd\theta')$ is usually much easier to compute than $\mathbb{P}^{\Sigma_i,\epsilon_i}(x,\theta; \dd x' \dd\theta')$, owing to the fact that in the former case we do not have to worry about the spatial variable $x'$.

\subsection{Examples of rough reflection laws}

\subsubsection{Rectangular teeth}

First we consider a microstructure of ``rectangular teeth.'' That is, we define real functions
\begin{equation}
    t_n(x) = \begin{cases}
    0 & \text{ if } 2k \epsilon_n \leq x \leq (2k + 1) \epsilon_n, \\
    -r\epsilon_n & \text{ if } (2k + 1) \epsilon_n < x < (2k+2)\epsilon_n  
    \end{cases} \quad \text{ for } k \in \mathbb{Z}.
\end{equation}
The quantity $r > 0$ is a fixed parameter representing the ratio of the height of the teeth to the width. Define $\epsilon_n$-periodic walls
\begin{equation}
    W_n = W(\Sigma,\epsilon_n) = \{(x_1,x_2) : x_2 \leq t_n(x_1)\}, 
\end{equation}

To find the macro-reflection law, let $X$ be uniform in $[0,1]$, and let $\theta \in (0,\pi)$ be fixed. Let $(X'_{\theta},\Theta'_{\theta}) = P^{\Sigma,1}(X,\theta)$. By Lemma \ref{lem_construct}(iii), the limiting reflection law $\widetilde{\mathbb{P}}(\theta, \dd\theta')$ is the law of the random variable $\Theta'_{\theta}$. 

As illustrated in Figure \ref{square_teeth_fig}, $\Theta'_{\theta}$ is equal to either $\pi - \theta$ (specular reflection) or $\theta$ (retroreflection). With probability 1/2 the starting point $X$ of the point particle is on top of a tooth, in which case $\Theta_\theta' = \pi - \theta$. Otherwise, $X$ will be in the interval above the crevice between two teeth. 

Conditioned on the latter event, the probability that $\Theta_\theta' = \pi - \theta$ may be determined by ``unfolding'' the rectangular billiard between the two teeth, as shown in Figure \ref{square_teeth3_fig}. By inspecting this figure, we see that if $\lfloor 2 r |\cot\theta| \rfloor$ is even, then the probability of specular reflection is $\{2 r |\cot\theta|\}$. If on the other hand $\lfloor 2 r |\cot\theta| \rfloor$ is odd, then the probability of specular reflection is $1 - \{2 r |\cot\theta|\}$.

Putting these observations together, we conclude that 
\begin{equation} \label{eq2.5}
    \Theta'_\theta = \begin{cases}
    \pi - \theta & \text{ w.p. } p_r(\theta), \\
    \theta & \text{ w.p. } 1 - p_r(\theta),
    \end{cases}
\end{equation}
where 
\begin{equation}
    p_r(\theta) = \begin{cases}
    \frac{1}{2} + \frac{1}{2}\{2r|\cot\theta|\} & \text{ if } \lfloor 2 r |\cot\theta| \rfloor \text{ is even}, \\
    1 - \frac{1}{2}\{2r|\cot\theta|\} & \text{ if } \lfloor 2 r |\cot\theta| \rfloor \text{ is odd}.
    \end{cases}
\end{equation}
Thus the limiting rough reflection law is $\mathbb{P}(x,\theta,\dd x' \dd\theta') = \delta_x(\dd x') \widetilde{\mathbb{P}}(\theta, \dd\theta')$, where $\widetilde{\mathbb{P}}(\theta, \dd\theta')$ is the law of $\Theta'_{\theta}$.

\begin{figure}
    \centering
    \includegraphics[width = 0.4\linewidth]{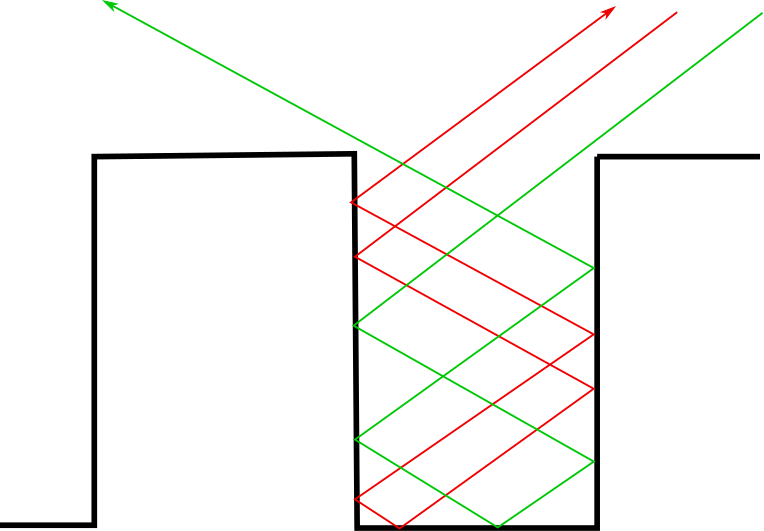}
    \caption{Reflection from rectangular teeth. The post-reflection angle will be either $\theta' = \pi - \theta$ (specular reflection -- green) or $\theta' = \theta$ (retroreflection -- red).}
    \label{square_teeth_fig}
\end{figure}

\begin{figure}
    \centering
    \includegraphics[width = 0.7\linewidth]{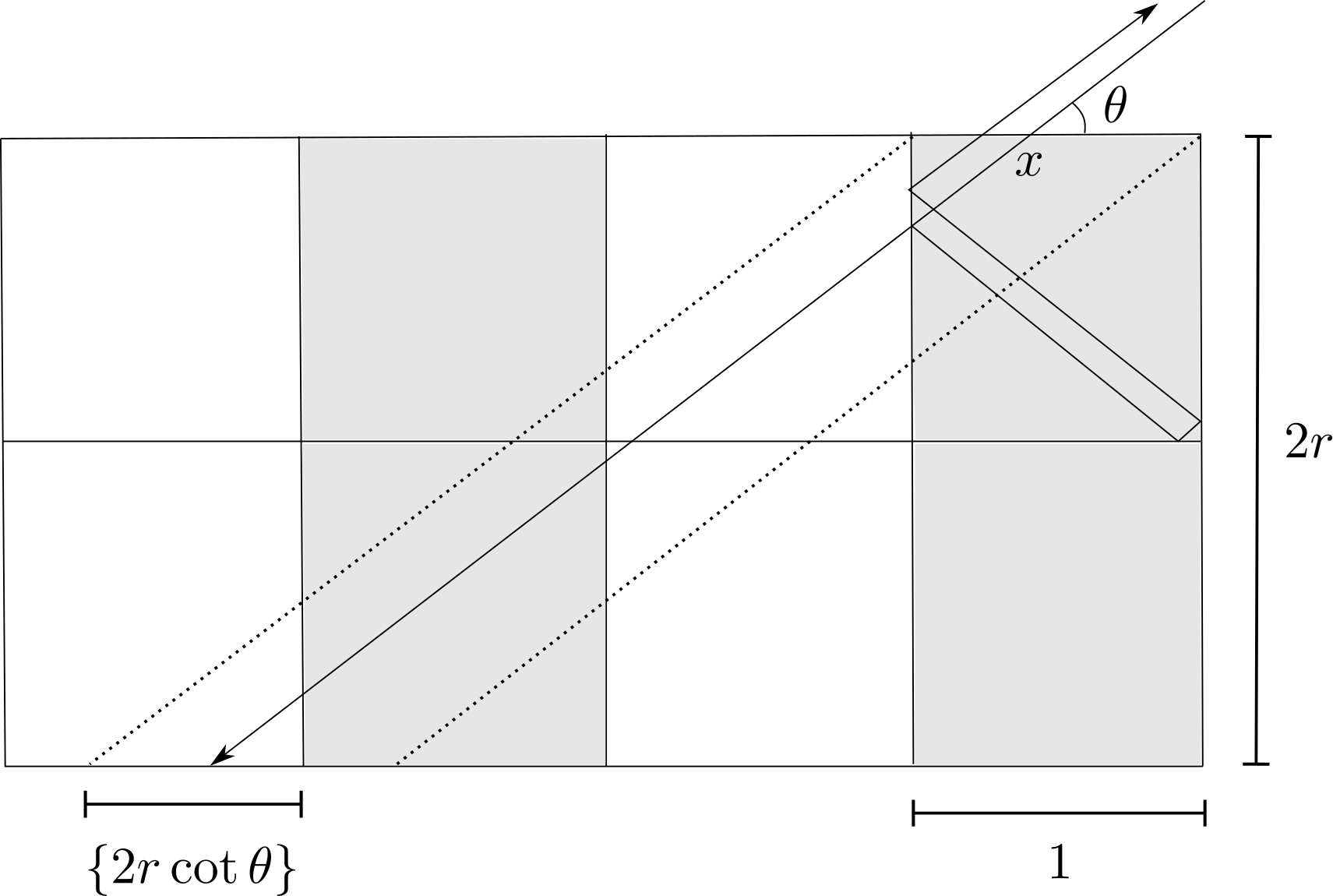}
    \caption{If the end of the trajectory of the point particle in the unfolded rectangular billiard falls in a gray region, then the reflection will be specular. Otherwise it will be a retroreflection.}
    \label{square_teeth3_fig}
\end{figure}

\subsubsection{Triangular teeth}

We can similarly construct reflections from triangular teeth. Let $\psi \in (0,\pi)$. Define a ``tooth function''
\begin{equation}
    t_n(x) = \begin{cases}
    -\cot(\frac{\psi}{2})x & \epsilon_n k \leq x < \epsilon_n(\frac{2k+1}{2}), \\
    \cot(\frac{\psi}{2})(x - 1) & \epsilon_n(\frac{2k+1}{2}) \leq x < \epsilon_n(k+1)
    \end{cases} \quad\quad \text{ for } k \in \mathbb{Z}.
\end{equation}
Then define $\epsilon_n$-periodic walls
\begin{equation}
    W_n = W(\Sigma_n,\epsilon_n) = \{(x_1,x_2) : x_2 \leq t_n(x_1)\}.
\end{equation}
Each period of the wall is an isosceles triangle such that each peak and valley spans an angle of $\psi$. See Figure \ref{triangle_teeth1_fig}. 

\begin{figure}
    \centering
    \includegraphics[width = 0.5\linewidth]{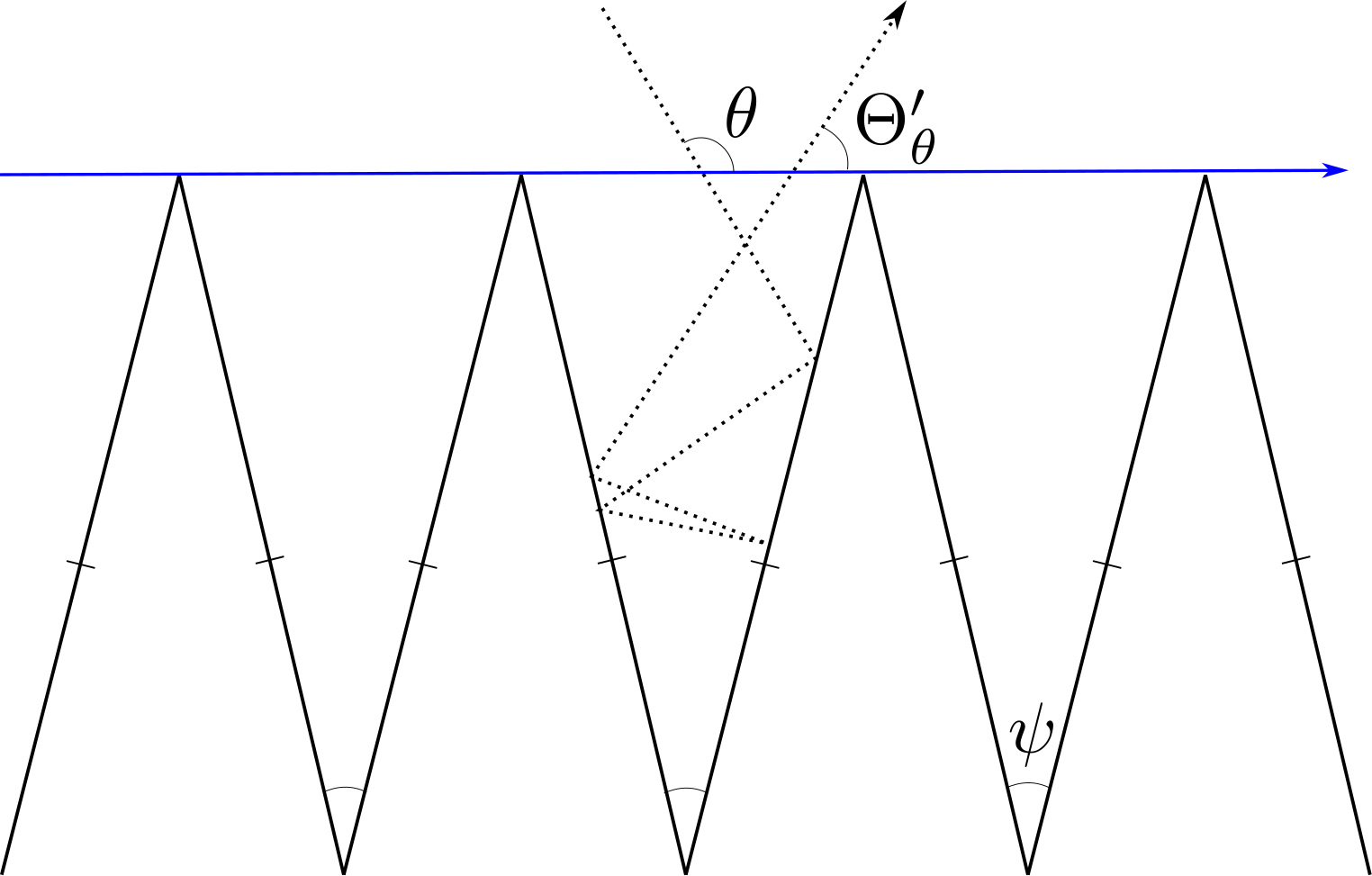}
    \caption{Triangular teeth}
    \label{triangle_teeth1_fig}
\end{figure}

As before, let $X$ be uniform in $[0,1]$, let $\theta \in (0,\pi)$ be fixed, and let $(X_\theta',\Theta_\theta') = P^{\Sigma,1}(X,\theta)$. The limiting reflection law $\widetilde{\mathbb{P}}(\theta, \dd\theta')$ is the law of $\Theta_\theta'$.

\begin{figure}
    \centering
    \includegraphics[width = \linewidth]{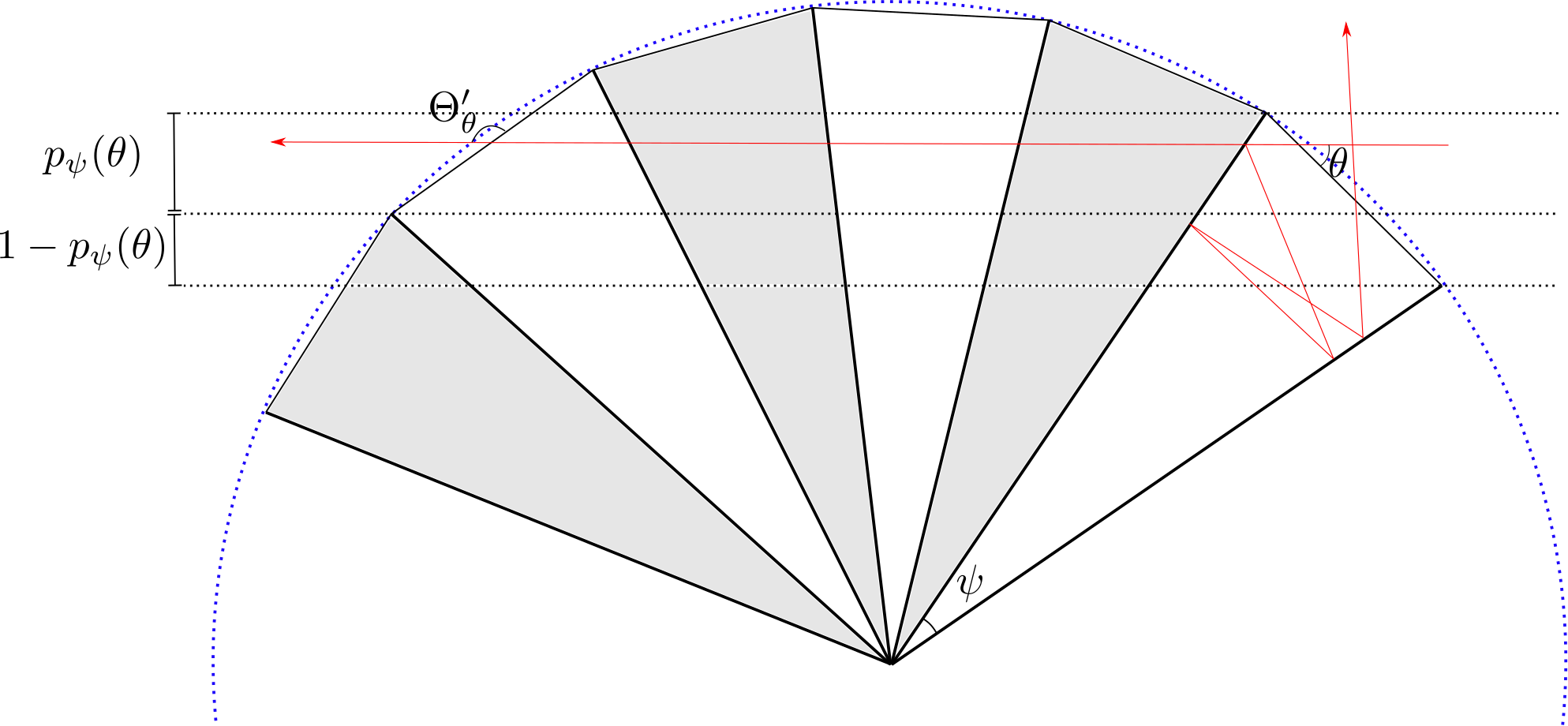}
    \caption{Unfolding of the billiard between two teeth.}
    \label{triangle_teeth2_fig}
\end{figure}

Similarly to the case of rectangular teeth, we can deduce the distribution of $\Theta_\theta'$ by considering the unfolding of the region between two triangular teeth. See Figure \ref{triangle_teeth2_fig}. In the figure, the number of triangles crossed by the top of the horizontal strip is
\begin{equation}
    N = \left\lceil \frac{2\theta}{\psi} \right\rceil.
\end{equation}
The angle of the outgoing trajectory depends on whether $N$ is even or odd and whether the unfolded trajectory exits the unfolded billiard in a white region or a gray region. The trajectory leaves in a white region with probability $p_\psi(\theta)$ and leaves in gray region with probability $1 - p_\psi(\theta)$.

It is tedious but elementary to compute the angle of exit in each case, and the probability $p_\psi(\theta)$. One obtains that 
\begin{equation} \label{eq2.10}
    \begin{split}
\text{if $\left\lceil \frac{2\theta}{\psi} \right\rceil$ is even, } \quad & \Theta_\theta' = \begin{cases}
\pi + \theta - \psi \lceil \frac{2\theta}{\psi} \rceil & \text{w.p. } p_\psi(\theta), \\
\psi \left(\lceil \frac{2\theta}{\psi} \rceil + 1\right) - \theta & \text{w.p. } 1 - p_\psi(\theta),
\end{cases} \\
\text{and if $\left\lceil \frac{2\theta}{\psi} \right\rceil$ is odd, } \quad & \Theta_\theta' = \begin{cases}
\psi \lceil \frac{2\theta}{\psi} \rceil - \theta & \text{w.p. } p_\psi(\theta), \\
\pi + \theta - \psi \left(\lceil \frac{2\theta}{\psi} \rceil + 1\right) - \theta & \text{w.p. } 1 - p_\psi(\theta),
\end{cases}
\end{split}
\end{equation}
where
\begin{equation}
    p_{\psi}(\theta) = \frac{\left[\cos(\theta - \frac{\psi}{2}) - \cos(\frac{\psi}{2}\lceil\frac{2\theta}{\psi}\rceil )\right]_+}{\cos(\theta - \frac{\psi}{2}) - \cos(\theta + \frac{\psi}{2})}.
\end{equation}
Here $[x]_+ = x$ if $x \geq 0$ and zero otherwise. The limiting reflection law is $\mathbb{P}(x,\theta;\dd x' \dd\theta') = \delta_{x}(\dd x')\widetilde{\mathbb{P}}(\theta, \dd\theta')$, where $\widetilde{\mathbb{P}}(\theta, \dd\theta')$ is the law of $\Theta_{\theta}'$.

\subsubsection{Focusing circular arcs} \label{sssec_circarc}

The discrete distribution of both of the previous reflection laws is an artifact of the polygonal boundary of the walls. When the boundary of the wall contains curve segments with non-zero curvature, we expect the distribution to be non-singular in general.

As an example of this, consider the wall whose periods consist of focusing (i.e. concave-up) circular arcs. Let $0 < \xi \leq \pi/2$. We define
\begin{equation}
    t_n(x) = \frac{\epsilon_n}{2}\cot\xi - \sqrt{\frac{\epsilon_n^2}{4}\csc^2\xi - \left(x - \frac{\epsilon_n}{2}\right)^2} \quad\quad \text{ for } x \in [0, \epsilon_n],
\end{equation}
and extend $t_n$ to be $\epsilon_n$-periodic on the line $\mathbb{R}$. The wall is 
\begin{equation}
    W_n = W(\Sigma, \epsilon_n) = \{(x_1,x_2) : x_2 \leq t_n(x_1)\}.
\end{equation}
The arc forming one period of the wall spans an angle of $2\xi$.

Consider the wall $W(\Sigma,1)$, depicted in Figure \ref{circ_arc_fig}. Let $X$ be uniform in $[0,1]$, and let $\theta \in (0,\pi)$ be fixed. Let $(X_\theta',\Theta_\theta') = P^{\Sigma,1}(X,\theta)$. Let $C$ denote the center of the circle whose arc forms the period of the wall below $[0,1]$, and let $R$ denote the radius. In coordinates,
\begin{equation}
    C = \left(\frac{1}{2}, \frac{1}{2}\cot\xi\right), \quad\quad R = \frac{1}{2}\csc\xi.
\end{equation}
Let $P$ be the first point in the circular arc hit by the billiard trajectory starting from $(X,\theta)$, and let $\gamma$ be the signed angle measured counterclockwise from the vector $-e_2$ to the ray $\overrightarrow{CP}$. (Thus if $X = 0$ then $\gamma = -\xi$ and if $X = 1$ then $\gamma = \xi$.)

\begin{figure}
    \centering
    \includegraphics[width = \linewidth]{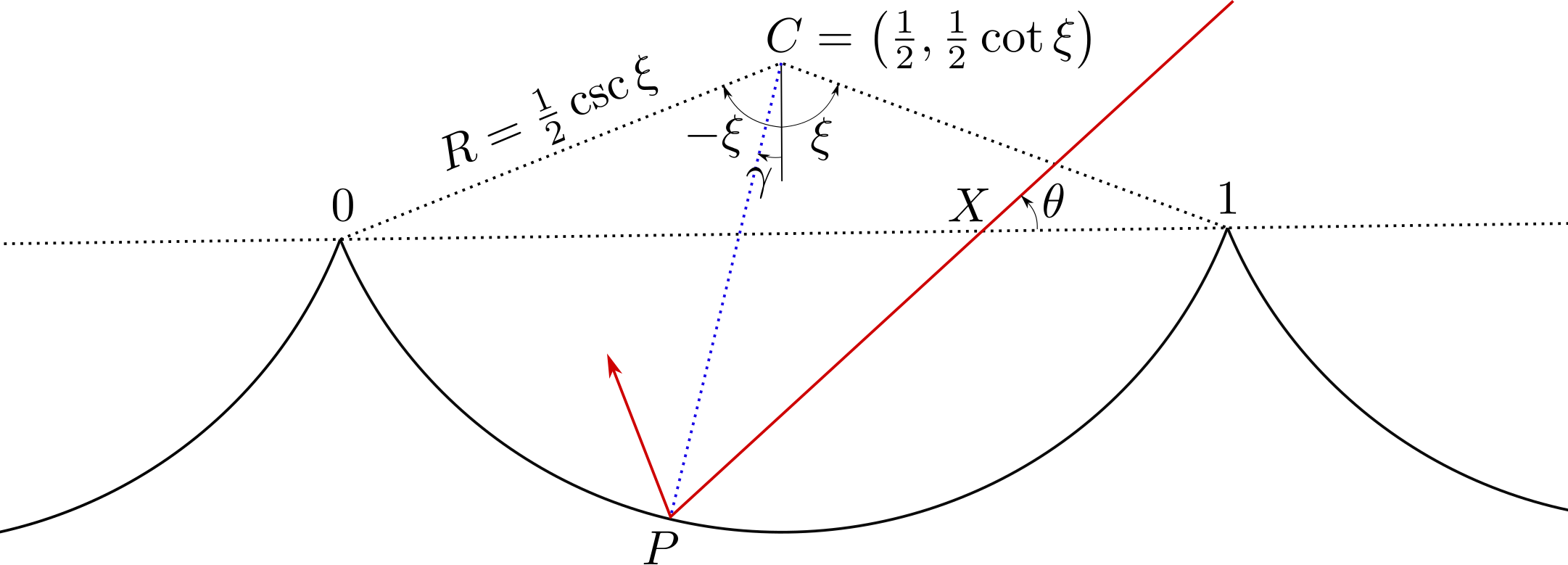}
    \caption{A wall formed from circular arcs.}
    \label{circ_arc_fig}
\end{figure}

It turns out to be more convenient to express $\Theta_\theta'$ in terms of $\gamma$, rather than $X$. 

Let $N$ denote the number of times that the point particle hits the circular arc before leaving the wall. Referring to Figure \ref{circ_arc2_fig}, let $P_0 = P$, and suppose that the point particle subsequently hits the wall at points $P_1, P_2, \dots, P_{N-1}$. The sequence of points $P_i$ will progress along the arc in the counterclockwise direction if $\theta \geq \frac{\pi}{2} + \gamma$, and in the clockwise direction if $\theta < \frac{\pi}{2} + \gamma$ (strictly speaking, the case where $\theta = \frac{\pi}{2} + \gamma$ is vacuous since the trajectory will hit the boundary only once). The signed angle from $P_{i-1}$ to $P_i$ is 
\begin{equation}
    \Delta \gamma = \begin{cases}
    2(\pi + \gamma - \theta) & \text{ if }  \theta \geq \frac{\pi}{2} + \gamma, \\
    2(\gamma - \theta) & \text{ if } \theta < \frac{\pi}{2} + \gamma.
    \end{cases}
\end{equation}
The number of times which the billiard trajectory hits the arc is 
\begin{equation}
    N = \begin{cases}
    \lceil \frac{\xi - \gamma}{|\Delta \gamma|} \rceil & \text{ if } \theta \geq \frac{\pi}{2} + \gamma, \\
    \lceil \frac{\xi + \gamma}{|\Delta \gamma|} \rceil & \text{ if } \theta < \frac{\pi}{2} + \gamma. 
    \end{cases}
\end{equation}
Each time the point particle hits the circular arc, the angle which the billiard trajectory makes with the $+x_1$-axis increments by $\Delta \gamma$. Thus, 
\begin{equation} \label{eq2.17}
    \Theta_\theta' = \theta + N\Delta\gamma = \begin{cases}
    \theta + \lceil \frac{\xi - \gamma}{2|\pi + \gamma - \theta|} \rceil 2(\pi + \gamma - \theta) & \text{ if } \theta \geq \frac{\pi}{2} + \gamma, \\
    \theta + \lceil \frac{\xi + \gamma}{2|\gamma - \theta|} \rceil 2(\gamma - \theta) & \text{ if } \theta < \frac{\pi}{2} + \gamma.
    \end{cases}
\end{equation}

\begin{figure}
    \centering
    \includegraphics[width = 0.8\linewidth]{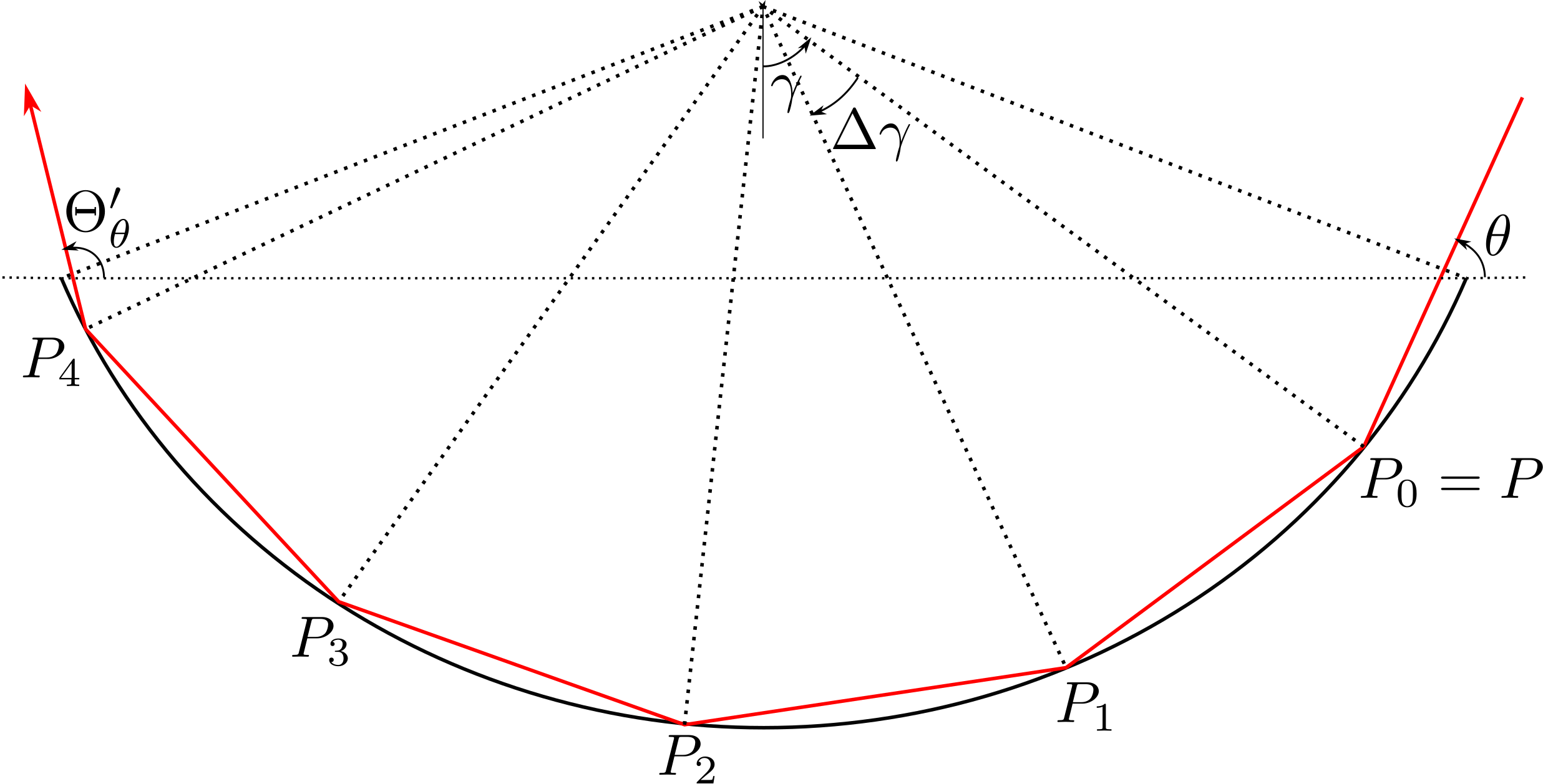}
    \caption{Multiple reflections from a circular arc.}
    \label{circ_arc2_fig}
\end{figure}

We will now find an explicit formula for $\gamma$ in terms of $X$ and $\theta$. This, together with (\ref{eq2.17}), will give us the distribution of $\Theta_\theta'$.

An elementary calculation shows that
\begin{equation} \label{eq2.18}
\begin{split}
    X & = \frac{\cot\theta\cos\gamma}{2\sin\xi} + \frac{\sin\gamma}{2\sin\xi} + \frac{1}{2} - \frac{1}{2}\cot\theta\cot\xi \\
    & = \frac{\cos(\theta + \gamma) - \cos(\theta + \xi)}{\cos(\theta - \xi) - \cos(\theta + \xi)}.
\end{split}
\end{equation}
Geometrically, it is clear that $X$ is uniquely determined by $\gamma$ and $\theta$ and that $X$ should increase with $\gamma$. Note that $-\pi/2 \leq -\xi \leq \theta + \gamma < \pi + \xi \leq 3\pi/2$. On the interval $[-\pi/2, 3\pi/2)$, the cosine function increases on the intervals $[-\pi/2,0]$ and $[\pi,3\pi/2)$. Thus $\theta + \gamma \in [-\pi/2,0] \cup [\pi,3\pi/2)$. Thus if we define
\begin{equation}
    \text{Arccos}(x) = \begin{cases}
        2\pi - \arccos(x) & \text{ if } -1 \leq x < 0, \\
        -\arccos(x) & \text{ if } 0 \leq x \leq 1,
    \end{cases}
\end{equation}
then $\text{Arccos}(\cos(\theta + \gamma)) = \theta + \gamma$. Hence, solving (\ref{eq2.18}) gives us
\begin{equation}
    \gamma = -\theta + \text{Arccos}\left(X\cos(\theta - \xi) + (1 - X)\cos(\theta + \xi) \right).
\end{equation}

\subsubsection{Retroreflection} \label{sssec_retro}

Consider the retroreflection law
\begin{equation}
    \mathbb{P}(x,\theta; \dd x' \dd\theta') = \delta_{(x,\theta)}(\dd x' \dd\theta').
\end{equation}
It follows from Theorem \ref{thm_rough_ref_char} that there is a sequence of walls $W_i$ such that $\mathbb{P}^{W_i} \to \mathbb{P}$ as $i \to \infty$.

We obtain a sequence of walls which generates retroreflection by considering the fractal of mushroom billiards depicted in Figure \ref{mushrooms_fig}. The ``cap'' of each mushroom is a semicircular arc. The walls consist of partial iterations of the mushroom fractal. To each $W_i$ is added an additional row of ``mushroom hallows,'' while the scale of the wall converges to zero as $u \to \infty$. The ratio of the width of the stem to the width of the cap of each mushroom is taken to converge to zero as $i \to \infty$, but at not too fast a rate. The horizontal segments at the top the boundary of $W_i$ form a partial iteration of a Cantor set. Provided the width of the stems does not converge to zero too quickly, this set will become negligible as $i \to \infty$.

\begin{figure}
    \centering
    \includegraphics[width = 0.8\linewidth]{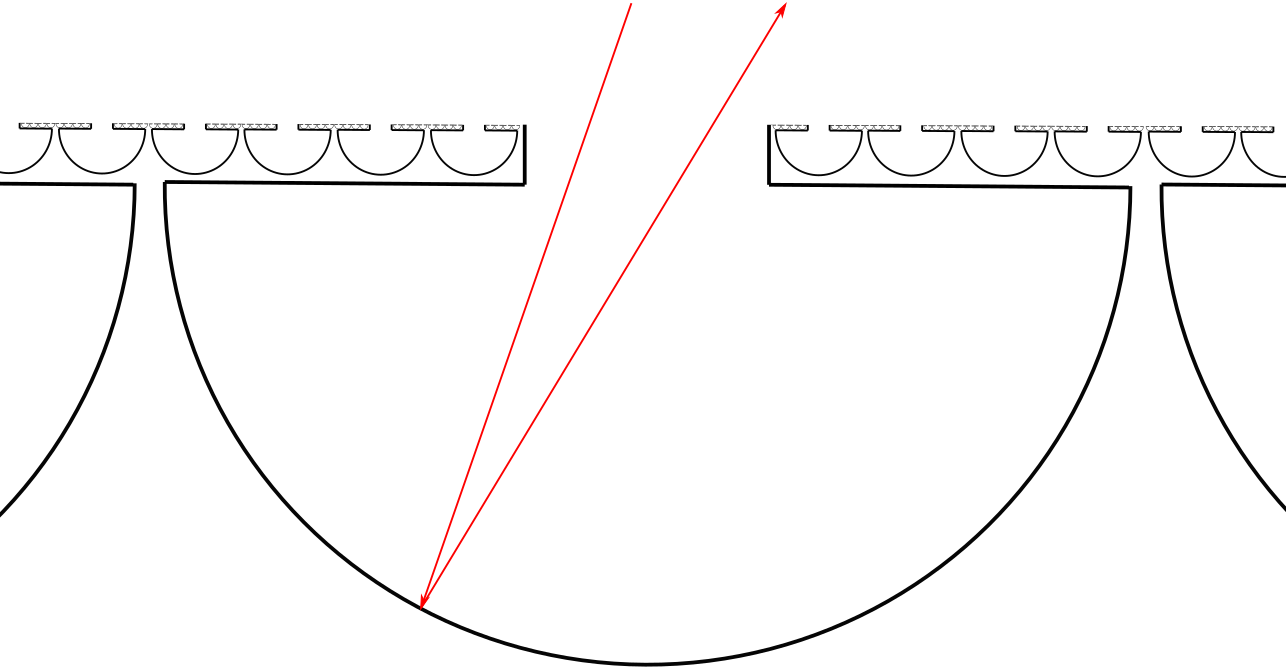}
    \caption{Retroreflection from a fractal of mushroom billiards.}
    \label{mushrooms_fig}
\end{figure}

For additional examples and discussion of walls which generate retroreflection, see Chapter 9 of [\ref{Plakhov5}].

\subsection{Examples of rough collision laws}

Given a sequence of periodic walls, $W_i = W(\Sigma, \epsilon_i)$, let $\widetilde{W}_i = W(\widetilde{\Sigma}, \widetilde{\epsilon}_i)$ denote the foreshortened wall (where $\widetilde{\Sigma}$ and $\widetilde{\epsilon}_i$ are defined by (\ref{eq1.66})). Let $X$ be uniform in $[0,1]$, and let 
\begin{equation}
    (X_\theta',\Theta_\theta') = P^{\widetilde{\Sigma},1}(X,\theta),
\end{equation}
The angle $\Theta_\theta'$ is the angle of exit of the trajectory hitting the foreshortened wall, as $X$ varies uniformly over one period. Theorem \ref{thm_classification} tells us that the limiting collision law for the disk and wall system $\mathbb{K} = \lim_{i \to \infty} \mathbb{K}^{W_i}$ is given by 
\begin{equation} \label{eq2.23}
    \mathbb{K}(y_1,y_3,\theta,\psi; \dd y_1' \dd y_3' \dd\theta' \dd\xi') = \delta_{(y_1,y_3)}(\dd y_1' \dd y_3') \widetilde{\mathbb{P}}(\theta, \dd\theta') \delta_{\pi - \psi}(\dd\psi'),
\end{equation}
where $\widetilde{\mathbb{P}}(\theta, \dd\theta')$ is the law of the random angle of exit $\Theta_\theta'$.

We obtain examples of rough collision laws by choosing walls $W_i$, such that the limiting reflection law for the sequence of foreshortened walls $\widetilde{W}_i$ is known.

\subsubsection{Rectangular teeth}

Consider walls $W_i = W(\Sigma,\epsilon_i)$ consisting of rectangular teeth with parameter $r$ (the ratio of the width of the teeth to the height). For this choice of walls, we denote the angle of exit by $\Theta_\theta'(r)$ to indicate the dependence of $\Theta_\theta'$ on $r$ in the formula (\ref{eq2.5}).  

The class of walls with rectangular teeth is invariant under foreshortening. After foreshortening by a factor $(1 + m/J)^{1/2}$, the new wall $\widetilde{W}_i$ is composed of rectangular teeth with parameter
\begin{equation}
    \widetilde{r} = (1 + m/J)^{-1/2}r.
\end{equation}
Thus the rough collision law $\mathbb{K} = \lim_{i \to \infty} \mathbb{K}^{W_i}$ is given by (\ref{eq2.23}), where $\widetilde{\mathbb{P}}(\theta, \dd\theta')$ is the law of $\Theta_\theta'(\widetilde{r})$.

Using (\ref{eq1.27'}) to convert back to the original coordinates, we obtain that the collision law $\mathbb{K}$ maps the incoming velocity $w = (v_1,v_2,\omega)$ to $A_{\text{smooth}}w$ with probability $p_{\widetilde{r}}(\theta)$ and to $A_{\text{no-slip}}w$ with probability $1 - p_{\widetilde{r}}(\theta)$, where $A_{\text{smooth}}$ and $A_{\text{no-slip}}$ are the matrices defined as follows: 
\begin{equation} \label{eq2.25}
    A_{\text{smooth}} = \begin{bmatrix}
    1 & 0 & 0 \\
    0 & - 1 & 0 \\
    0 & 0 & 1
    \end{bmatrix}, \quad\quad A_{\text{no-slip}} = \begin{bmatrix}
    \frac{m - J}{m + J} & 0 & \frac{-2J}{m + J} \\
    0 & - 1 & 0 \\
    \frac{-2m}{m + J} & 0 & \frac{-m + J}{m + J}
    \end{bmatrix}.
\end{equation}
The significance of the notation $A_{\text{smooth}}$ and $A_{\text{no-slip}}$ is explained in \S\ref{sssec_sm_ns}.

\subsubsection{Triangular teeth}

Consider walls $W_i = W(\Sigma,\epsilon_i)$, consisting of triangular teeth with parameter $\psi$. We denote the random angle or reflection by $\Theta_\theta'(\psi)$, to indicate the dependence of $\Theta_\theta'$ on $\psi$ in the formula (\ref{eq2.10})

For each $i$, the foreshortened wall $\widetilde{W}_i$ consists of triangular teeth with parameter 
\begin{equation}
\widetilde{\psi} = 2\arctan\left( \left(1 + \frac{m}{J}\right)^{-1/2}\tan(\frac{\psi}{2}) \right).
\end{equation}
The rough collision law $\mathbb{K} = \lim_{i \to \infty} \mathbb{K}^{W_i}$ is given by (\ref{eq2.23}), where $\widetilde{\mathbb{P}}(\theta, \dd\theta')$ is the law of $\Theta_\theta'(\widetilde{\psi})$.

\begin{sloppypar}
In principle, one can use (\ref{eq1.27'}) to express $\mathbb{K}$ in the original coordinates $(x_1,x_2,\alpha, v_1, v_2, \theta)$, although the formula is not as simple as in the previous case.
\end{sloppypar}

\subsubsection{Focusing elliptical arcs}

\begin{figure}
    \centering
    \includegraphics[width = 0.8\linewidth]{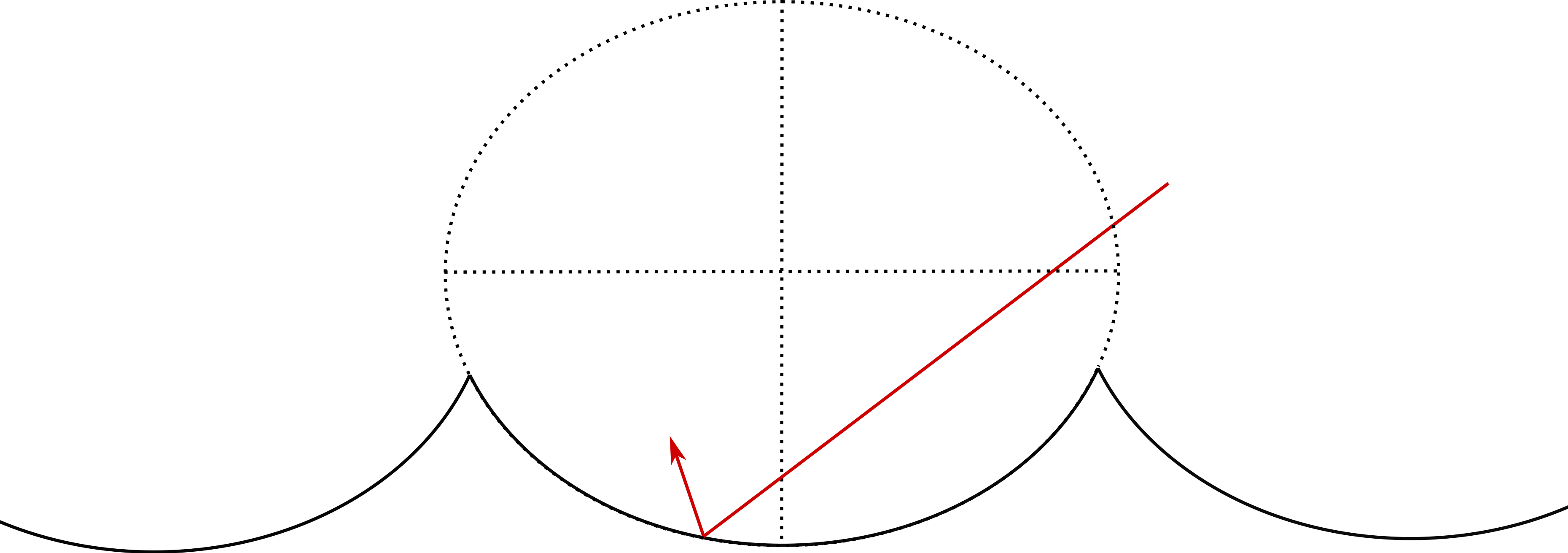}
    \caption{Focusing elliptical arcs}
    \label{elliptical_arc_fig}
\end{figure}

Consider walls $W_i = W(\Sigma,\epsilon_i)$ constructed of focusing (concave-up) elliptical arcs, where the ratio of the horizontal to vertical axes is $(1 + m/J)^{1/2}$. In other words, we define 
\begin{equation}
    t_n(x) = \frac{\epsilon_n}{2}\cot\xi - (1+m/J)^{-1/2}\sqrt{\frac{\epsilon_n^2}{4}\csc^2\xi - \left(x - \frac{\epsilon_n}{2}\right)^2} \quad\quad \text{ for } x \in [0, \epsilon_n],
\end{equation}
and extend $t_n$ to be periodic on $\mathbb{R}$. We define
\begin{equation}
    W_n = W(\Sigma, \epsilon_n) = \{(x_1,x_2) : x_2 \leq t_n(x_1)\}.
\end{equation}
See Figure \ref{elliptical_arc_fig}

The foreshortened wall $\widetilde{W}_i$ is precisely the wall of circular arcs with parameter $\xi$, discussed in \S\ref{sssec_circarc}. Therefore, the rough collision law $\mathbb{K} = \lim_{i \to \infty} \mathbb{K}^{W_i}$ is given by (\ref{eq2.23}), where $\widetilde{\mathbb{P}}(\theta,\dd\theta')$ is the law of $\Theta_\theta' = \Theta_\theta'(\xi)$, given by the formula (\ref{eq2.17}).

\subsubsection{Smooth and no-slip collisions} \label{sssec_sm_ns}

As we have seen, there are two basic examples of deterministic rough reflection laws: specular reflection 
\begin{equation}
    \mathbb{P}_{\text{spec}}(x,\theta; \dd x' \dd\theta') := \delta_{(x,\pi - \theta)}(\dd x' \dd\theta')
\end{equation}
and retroreflection
\begin{equation}
    \mathbb{P}_{\text{retro}}(x,\theta; \dd x' \dd\theta') := \delta_{(x,\theta)}(\dd x' \dd\theta').
\end{equation}
Trivially, $\mathbb{P}_{\text{spec}}$ is the limit of the reflection laws on the constant sequence of flat walls $W_i = \{(x_1,x_2) : x_2 \leq 0\}$. Under the correspondence (\ref{eq1.68}), $\mathbb{P}_{\text{spec}}$ corresponds to 
\begin{equation}
    \mathbb{K}_{\text{smooth}}(y_1,y_3,\theta,\psi; \dd y_1' \dd y_3' \dd\theta' \dd\psi') = \delta_{(y_1,y_3,\pi - \theta,\pi - \psi)}(\dd y_1', \dd y_3' \dd\theta' \dd\psi').
\end{equation}
This is just the classical collision law describing a frictionless collision between a hard disk and fixed wall in the plane.

Under the correspondence (\ref{eq1.68}), $\mathbb{P}_{\text{retro}}$ corresponds to 
\begin{equation}
    \mathbb{K}_{\text{no-slip}}(y_1,y_3,\theta,\psi; \dd y_1' \dd y_3' \dd\theta' \dd\psi') = \delta_{(y_1,y_3,\theta,\pi - \psi)}(\dd y_1', \dd y_3' \dd\theta' \dd\psi').
\end{equation}
This type of deterministic collision is known as a \textit{no-slip collision}. 

Using (\ref{eq1.27'}) to convert back to our original coordinates $(x_1,x_2,\alpha,v_1,v_2,\beta)$, one may show that the collision law $\mathbb{K}_{\text{smooth}}$ maps the velocity $w = (v_1,v_2,\omega)$ to $A_{\text{smooth}}w$, and the collision law $\mathbb{K}_{\text{no-slip}}$ maps $w$ to $A_{\text{no-slip}}w$, where $A_{\text{smooth}}$ and $A_{\text{no-slip}}$ are the matrices defined by (\ref{eq2.25}).

Let $W_i$ be a sequence of walls such that $\mathbb{P}_{\text{spec}} = \lim_{i \to \infty} \mathbb{P}^{W_i}$ (for example, the walls of \S\ref{sssec_retro}). Let $\widehat{W}_i = \{(x_1,x_2) : (x_1,(1 + m/J)^{-1/2}x_2) \in W_i\}$. That is, we choose walls $\widehat{W}_i$ whose foreshortening is $W_i$. Theorem \ref{thm_classification} tells us that $\mathbb{K}_{\text{no-slip}} = \lim_{i \to \infty} \mathbb{K}^{\widehat{W}_i}$.

No-slip collisions were first introduced by Broomhead and Gutkin in [\ref{GutkinBroomhead}] and have been further investigated Cox, Feres, and Ward [\ref{diffgeo_rbcollisions}, \ref{NoSlip}].

\begin{remark} \label{rem_deterministic} \normalfont
It turns out that $\mathbb{P}_{\text{spec}}$ and $\mathbb{P}_{\text{retro}}$ are the only two deterministic rough reflection laws which act continuously on $\mathbb{R} \times \mathbb{S}^1_+$. Thus, by the correspondence (\ref{eq1.68}), $\mathbb{K}_{\text{smooth}}$ and $\mathbb{K}_{\text{no-slip}}$ are the only two rough collision laws which act continuously on $\mathbf{P} \times \mathbb{S}^2_+$. The latter statement is also closely related to Corollary 2.2 in [\ref{diffgeo_rbcollisions}], which classifies deterministic collision laws for more general rigid bodies.

To see why the first statement is true, let $\mu(\dd\theta) = \sin\theta \dd\theta$, and let $m$ denote Lebesgue measure on the line. Recall that by Proposition \ref{prop_reflproperties}, if $\mathbb{P}(x,\theta; \dd x' \dd\theta') = \delta_x(\dd x') \widetilde{\mathbb{P}}(\theta, \dd\theta')$ is a rough reflection law, then $\widetilde{\mathbb{P}}(\theta, \dd\theta')$ preserves the measure $\mu$. The measure space $((0,\pi),\mu)$ is isomorphic to $((-1,1),m)$, via the mapping $f(\theta) = -\cos\theta$. The only continuous transformations $(-1,1) \to (-1,1)$ which preserve Lebesgue measure are $T(x) = x$ and $T(x) = -x$, and these transformations pull back via $f$ to retroreflection and specular reflection respectively.

By considering transformations which are not continuous, one can obtain a much larger class of deterministic reflection laws and collision laws. For example, any involutive interval exchange transformation of $(-1,1)$ corresponds to some deterministic reflection law and to some deterministic collision law.
\end{remark}

\subsection{Proof of the lemma for constructing random reflections} \label{ssec_proof}

\begin{proof}[Proof of Lemma \ref{lem_construct}]

For $(x,\theta) \in \mathbb{R} \times \mathbb{S}^1_+$, let $(x',\theta') = P^{\Sigma_i,\epsilon_i}(x,\theta)$. By condition B5 on the cells $\Sigma_i$ (see \S\ref{sssec_periodic}), the boundary of the wall $W(\Sigma_i,\epsilon_i)$ is periodic and has maximums at integer multiples of $\epsilon_i$ along the line $\{(x_1,x_2) : x_2 = 0\}$. Moreover, if $x \in [k\epsilon_i, (k+1)\epsilon_i]$ for $k \in \mathbb{Z}$, then the billiard trajectory will remain in the hollow bounded between the two maximums and will return to line $x_2 = 0$ at $x' \in [k\epsilon_i, (k+1)\epsilon_i]$. Therefore, 
\begin{equation} \label{eq2.7'}
    |x - x'| \leq \epsilon_i.
\end{equation}
We will use this estimate several times in the argument below.

\textit{Proof of (i)} Suppose that $\mathbb{P}(x,\theta; \dd x' \dd\theta') = \lim_{i \to \infty} \mathbb{P}^{\Sigma_i,\epsilon_i}(x,\theta; \dd x' \dd\theta')$ exists. Let $f, g \in C_c(\mathbb{S}^1_+)$. We have
\begin{equation} \label{eq2.8}
\begin{split}
    &\int_{\mathbb{S}^1_+} g(\theta) \int_{\mathbb{S}^1_+} f(\theta') \widetilde{\mathbb{P}}^{\Sigma_i,\epsilon_i}(\theta, \dd\theta')\sin\theta \dd\theta \\
    & = \int_{\mathbb{S}^1_+} g(\theta) \frac{1}{\epsilon_i}\int_0^{\epsilon_i} \int_{\mathbb{R} \times \mathbb{S}^1_+} f(\theta')\mathbb{P}^{\Sigma_i,\epsilon_i}(x,\theta; \dd x' \dd\theta')\dd x \sin\theta \dd\theta \\
    & = \int_{\mathbb{S}^1_+} g(\theta) \frac{1}{\epsilon_i\lceil 1/\epsilon_i \rceil}\int_0^{\epsilon_i\lceil 1/\epsilon_i \rceil} \int_{\mathbb{R} \times \mathbb{S}^1_+} f(\theta')\mathbb{P}^{\Sigma_i,\epsilon_i}(x,\theta; \dd x' \dd\theta')\dd x \sin\theta \dd\theta, \\
\end{split}
\end{equation}
where the last line uses the fact that $x \mapsto \int_{\mathbb{R} \times \mathbb{S}^1_+} f(\theta')\mathbb{P}^{\Sigma_i,\epsilon_i}(x,\theta; \dd x' \dd\theta')$ is $\epsilon_i$-periodic. Noting that $\mathbb{P}^{\Sigma_i,\epsilon_i}$ is a probability measure, we also have the following bound:
\begin{equation} \label{eq2.9'}
    \int_{\mathbb{R} \times \mathbb{S}^1_+} f(\theta')\mathbb{P}^{\Sigma_i,\epsilon_i}(x,\theta; \dd x' \dd\theta') \leq ||f||_{L^\infty}.
\end{equation}
Using this bound, we see that the last line in (\ref{eq2.8}) is equal to 
\begin{equation} \label{eq2.10'}
    \int_{\mathbb{S}^1_+} g(\theta)  \int_0^1 \int_{\mathbb{R} \times \mathbb{S}^1_+} f(\theta')\mathbb{P}^{\Sigma_i,\epsilon_i}(x,\theta; \dd x' \dd\theta')\dd x \sin\theta \dd\theta + O(\epsilon_i),
\end{equation}
where the error term may depend on $f$ and $g$. Let $\phi_\delta \in C_c(\mathbb{R})$ be chosen so that $0 \leq \phi_\delta \leq 1$ and $\supp\phi_\delta \subset [0,1]$ and $\int|\phi_\delta - \mathbf{1}_{[0,1]}|\dd x < \delta$. Let $\eta_\delta \in C_c(\mathbb{R})$ be chosen so that $0 \leq \eta_\delta \leq 1$ and $\eta_\delta = 1$ on the interval $[-\delta^{-1}, 1 + \delta^{-1}]$. We see that (\ref{eq2.10'}) is equal to 
\begin{equation} \label{eq2.11'}
\begin{split}
    & \int_{\mathbb{R} \times \mathbb{S}^1_+} \phi_\delta(x)g(\theta) \int_{\mathbb{R} \times \mathbb{S}^1_+} f(\theta')\mathbb{P}^{\Sigma_i,\epsilon_i}(x,\theta; \dd x' \dd\theta')\Lambda^1(\dd x \dd\theta) + O(\delta + \epsilon_i) \\
    & = \int_{\mathbb{R} \times \mathbb{S}^1_+} \phi_\delta(x)g(\theta) \int_{\mathbb{R} \times \mathbb{S}^1_+} \eta_\delta(x')f(\theta')\mathbb{P}^{\Sigma_i,\epsilon_i}(x,\theta; \dd x' \dd\theta')\Lambda^1(\dd x \dd\theta) + O(\delta + \epsilon_i),
\end{split}
\end{equation}
noting that by the bound (\ref{eq2.7'}), $x' \in [-\delta^{-1},1 + \delta^{-1}]$ (for $i$ sufficiently large) whenever $x \in [0,1]$; thus $\eta_\delta(x') = 1$ whenever $\phi_\delta(x) \neq 0$. Taking the limit as $i \to \infty$, (\ref{eq2.11'}) converges to 
\begin{equation} \label{eq2.13}
     \int_{\mathbb{R} \times \mathbb{S}^1_+} \phi_\delta(x)g(\theta) \int_{\mathbb{R} \times \mathbb{S}^1_+} \eta_\delta(x')f(\theta')\mathbb{P}(x,\theta; \dd x' \dd\theta')\Lambda^1(\dd x \dd\theta) + O(\delta).
\end{equation}
Letting $\delta \to 0$, by boundedness and pointwise convergence of $\phi_\delta$ and $\eta_\delta$, dominated convergence implies (\ref{eq2.13}) converges to 
\begin{equation} \label{eq2.14'}
\begin{split}
    &\int_{[0,1] \times \mathbb{S}^1_+} g(\theta) \int_{\mathbb{R} \times \mathbb{S}^1_+} f(\theta') \mathbb{P}(x,\theta; \dd x' \dd\theta') \dd x \sin\theta \dd\theta \\
    & \quad\quad = \int_{\mathbb{S}^1_+} g(\theta) \int_{\mathbb{S}^1_+} f(\theta') \widetilde{\mathbb{P}}_0(\theta, \dd\theta') \sin\theta \dd\theta,
\end{split}
\end{equation}
where $\widetilde{\mathbb{P}}_0$ is the Markov kernel on $\mathbb{S}^1_+$ defined by 
\begin{equation}
    \int_{\mathbb{S}^1_+} h(\theta')\widetilde{\mathbb{P}}_0(\theta, \dd\theta') = \int_{[0,1]} \int_{\mathbb{R} \times \mathbb{S}^1_+} h(\theta') \mathbb{P}(x,\theta; \dd x' \dd\theta').
\end{equation}
Putting all this together, we see that (\ref{eq2.8}) converges to (\ref{eq2.14'}) as $i \to \infty$, as desired.

\begin{sloppypar}
\textit{Proof of (ii).} Conversely, suppose that $\widetilde{\mathbb{P}} = \lim_{i \to \infty} \widetilde{\mathbb{P}}^{\Sigma_i,\epsilon_i}$ exists. Given $h(x,\theta, x',\theta') = f(x,\theta)g(x',\theta')$, where $f,g \in C_c^\infty(\mathbb{R} \times \mathbb{S}^1_+)$, we want to show that a limit of form (\ref{eq1.11}) holds. Since the tensor product $C_c^\infty(\mathbb{R}) \otimes C_c^\infty(\mathbb{S}^1_+)$ is dense in $C_c^\infty(\mathbb{R} \times \mathbb{S}^1_+)$, we may assume without loss of generality that $f(x,\theta) = f_1(x)f_2(\theta)$ and $g(x,\theta) = g_1(x)g_2(\theta)$ for some $f_1, g_1 \in C_c^\infty(\mathbb{R})$ and $f_2, g_2 \in C_c^\infty(\mathbb{S}^1_+)$. We have 
\end{sloppypar}
\begin{equation} \label{eq2.15}
\begin{split}
    &\int_{\mathbb{R} \times \mathbb{S}^1_+} g_1(x)g_2(\theta) \int_{\mathbb{R} \times \mathbb{S}^1_+} f_1(x')f_2(\theta') \mathbb{P}^{\Sigma_i,\epsilon_i}(x,\theta; \dd x' \dd\theta')\Lambda^1(\dd x \dd\theta) \\
    & = \int_{\mathbb{R} \times \mathbb{S}^1_+} g_1(x)g_2(\theta) \int_{\mathbb{R} \times \mathbb{S}^1_+} f_1(x)f_2(\theta') \mathbb{P}^{\Sigma_i,\epsilon_i}(x,\theta; \dd x' \dd\theta')\Lambda^1(\dd x \dd\theta) \\
    & \quad + \int_{\mathbb{R} \times \mathbb{S}^2_+} g_1(x)g_2(\theta) \int_{\mathbb{R} \times \mathbb{S}^1_+} [f_1(x') - f_1(x)]f_2(\theta') \mathbb{P}^{\Sigma_i,\epsilon_i}(x,\theta; \dd x' \dd\theta')\Lambda^1(\dd x \dd\theta) \\
    & \hspace{2.5in} + O(\delta^2) \\ 
    & = \int_{\mathbb{R} \times \mathbb{S}^1_+} g_1(x)f_1(x)g_2(\theta) \int_{\mathbb{R} \times \mathbb{S}^1_+} f_2(\theta') \mathbb{P}^{\Sigma_i,\epsilon_i}(x,\theta; \dd x' \dd\theta')\Lambda^1(\dd x \dd\theta) + O(E_i),
\end{split}
\end{equation}
where 
\begin{equation}
    E_i := \sup\{|f_1(x') - f_1(x)| : x \in \mathbb{R}\}.
\end{equation}
By the bound (\ref{eq2.7'}) and the fact that $f_1$ is continuous and compactly supported, we see that $E_i$ converges to zero as $i \to \infty$. By approximation by simple functions, we may write 
\begin{equation}
    g_1(x)f_1(x) = \sum_{k \in \mathbb{Z}} g_1(\epsilon_i k)f_1(\epsilon_i k) \mathbf{1}_{[\epsilon_i k,\epsilon_i(k+1)]}(x) + h_i(x),
\end{equation}
where $h_i \to 0$ in $L^1(\mathbb{R})$. Also, observing that the function
\begin{equation}
    x \mapsto \int_{\mathbb{R} \times \mathbb{S}^1_+} f_2(\theta') \mathbb{P}^{\Sigma_i,\epsilon_i}(x,\theta; \dd x' \dd\theta')
\end{equation}
is $\epsilon_i$-periodic, we may write the last quantity in (\ref{eq2.15}) as 
\begin{equation}
\begin{split}
    &\sum_{k \in \mathbb{Z}} g_1(\epsilon_i k)f_1(\epsilon_i k) \int_k^{k+\epsilon_i} \int_{\mathbb{S}^1_+} g_2(\theta) \int_{\mathbb{R} \times \mathbb{S}^1_+} f_2(\theta') \mathbb{P}^{\Sigma_i,\epsilon_i}(x,\theta; \dd x' \dd\theta') \sin\theta \dd\theta \dd x \\
    & \hspace{2.5in} + O(||h_i||_{L^1} + E_i) \\
    & = \left( \sum_{k \in \mathbb{Z}} g_1(\epsilon_i k)f_1(\epsilon_i k) \epsilon_i \right)\left(\int_{\mathbb{S}^1_+} g_2(\theta) \frac{1}{\epsilon_i} \int_0^{\epsilon_i} \int_{\mathbb{R} \times \mathbb{S}^1_+} f_2(\theta') \mathbb{P}^{\Sigma_i,\epsilon_i}(x,\theta; \dd x' \dd\theta') \dd x \sin\theta \dd\theta \right) \\
    & \hspace{2.5in} + O(||h_i||_{L^1} + E_i). \\
    & = \left( \sum_{k \in \mathbb{Z}} g_1(\epsilon_i k)f_1(\epsilon_i k) \epsilon_i \right)\left(\int_{\mathbb{S}^1_+} g_2(\theta) \int_{\mathbb{S}^1_+} f_2(\theta') \widetilde{\mathbb{P}}^{\Sigma_i,\epsilon_i}(\theta; \dd\theta') \sin\theta \dd\theta \right) \\
    & \hspace{2.5in} + O(||h_i||_{L^1} + E_i).
\end{split}
\end{equation}
Taking the limit first as $i \to \infty$, and noting that the first factor is a Riemann sum, the quantity above converges to 
\begin{equation}
\begin{split}
    &\left(\int_{\mathbb{R}} f_1(x)g_1(x)\dd x \right)\left( \int_{\mathbb{S}^1_+} g_2(\theta) \int_{\mathbb{S}^1_+} f_2(\theta') \widetilde{\mathbb{P}}(\theta; \dd\theta') \sin\theta \dd\theta \right) \\
    & = \int_{\mathbb{R} \times \mathbb{S}^1_+} g_1(x) g_2(\theta) \int_{\mathbb{S}^1_+} f_1(x)f_2(\theta') \widetilde{\mathbb{P}}(\theta; \dd\theta') \Lambda^1(\dd x \dd\theta) \\
    & = \int_{\mathbb{R} \times \mathbb{S}^1_+} g_1(x) g_2(\theta) \int_{\mathbb{R} \times \mathbb{S}^1_+} f_1(x')f_2(\theta') \delta_x(\dd x')\widetilde{\mathbb{P}}(\theta; \dd\theta') \Lambda^1(\dd x \dd\theta). 
\end{split}
\end{equation}
This proves that $\lim_{i \to \infty} \mathbb{P}^{\Sigma_i,\epsilon_i}$ exists and is equal to $\delta_x \times \widetilde{\mathbb{P}}$.

\textit{Proof of (iii).} Take $\Sigma_i = \Sigma$ to be constant. For all $\epsilon_i$, the macro-reflection map $P^{\Sigma, \epsilon_i}$ is just the macro-reflection map $P^{\Sigma,1}$ re-expressed after scaling the spatial coordinates by $\epsilon_i$, that is 
\begin{equation}
    P^{\Sigma,\epsilon_i} = \sigma_{\epsilon_i} \circ P^{\Sigma,1} \circ \sigma_{\epsilon_i}^{-1}, \quad \text{ where } \sigma_{\epsilon_i}(x,\theta) = (\epsilon_i x, \theta).
\end{equation}
Consequently, 
\begin{equation} \label{eq2.20}
    \mathbb{P}^{\Sigma,\epsilon_i}(x,\theta; \dd x' \dd\theta') = (\sigma_{\epsilon_i})_{\#}\mathbb{P}^{\Sigma,1}(\sigma_{\epsilon_i}^{-1}(x,\theta); \dd x' \dd\theta').
\end{equation}
Thus, for $f, g \in C_c(\mathbb{S}^1_+)$, 
\begin{equation}
\begin{split}
    &\int_{\mathbb{S}^1_+} g(\theta) \int_{\mathbb{S}^1_+} f(\theta')\widetilde{\mathbb{P}}^{\Sigma,\epsilon_i}(\theta, \dd\theta') \sin\theta \dd\theta \\
    & = \int_{\mathbb{S}^1_+} g(\theta) \frac{1}{\epsilon_i}\int_0^{\epsilon_i}\int_{\mathbb{R} \times \mathbb{S}^1_+} f(\theta')\mathbb{P}^{\Sigma,\epsilon_i}(x,\theta; \dd x' \dd\theta') \dd x \sin\theta \dd\theta  \\
    & = \int_{\mathbb{S}^1_+} g(\theta) \frac{1}{\epsilon_i}\int_0^{\epsilon_i}\int_{\mathbb{R} \times \mathbb{S}^1_+} f(\theta')(\sigma_{\epsilon_i})_{\#}\mathbb{P}^{\Sigma,1}(\sigma_{\epsilon_i}^{-1}(x,\theta); \dd x' \dd\theta') \dd x \sin\theta \dd\theta \\
    & = \int_{\mathbb{S}^1_+} g(\theta) \int_0^1\int_{\mathbb{R} \times \mathbb{S}^1_+} f(\theta')\mathbb{P}^{\Sigma,1}(x,\theta; \dd x' \dd\theta') \dd x \sin\theta \dd\theta \\
    & = \int_{\mathbb{S}^1_+} g(\theta) \int_{\mathbb{S}^1_+} f(\theta')\widetilde{\mathbb{P}}^{\Sigma,1}(\theta, \dd\theta') \sin\theta \dd\theta.
\end{split}
\end{equation}
Here the second equality follows from (\ref{eq2.20}) and the third equality follows by making the changes of variables $(x',\theta') \mapsto \sigma_{\epsilon_i}(x',\theta')$ and $(x,\theta) \mapsto \sigma_{\epsilon_i}(x,\theta)$. This shows that the sequence $\widetilde{\mathbb{P}}^{\Sigma,\epsilon_i}$ is constant and equal to $\widetilde{\mathbb{P}}^{\Sigma,1}$. Thus (iii) follows from (i) and (ii).
\end{proof}

\section{Specular Reflection Law} \label{sec_specularreflection}

\begin{sloppypar}
The dynamics of the disk and wall system are described by an evolution $t \mapsto (y(t),w(t))$ in the phase space $T\mathcal{M} \cong \mathcal{M} \times \mathbb{R}^3$. This evolution should satisfy, and ideally be uniquely determined by, accepted physical laws governing rigid body interactions. We assume that:
\end{sloppypar}
\begin{enumerate}[label = P\arabic*.]
    \item The bodies $D$ and $W$ do not interpenetrate. 
    \item The system is subject to Euler's laws of rigid body motion (see equations (\ref{eq3.1}) and (\ref{eq3.2})). 
    \item When not in contact, the net force applied to each body is zero, and upon contact, a single impulsive force is applied to the disk at the point of contact and directed parallel to the unit normal vector on the wall.  
    \item The kinetic energy of the disk is conserved for all time. 
\end{enumerate}
The first three assumptions are standard for rigid body interactions in which no friction is present. The fourth is the limiting case for a system of bodies of finite mass in which the total kinetic energy, linear momentum, and angular momentum are conserved. After letting the mass and moment of inertia of one body diverge to infinity, the kinetic energy of the other body is conserved in the limit (see in Proposition  \ref{prop_energylimit}). 

The material in this section does not depend on results from the other sections, with the exception of two results proved in \S\ref{sec_config}, namely (i) that there is a full measure open subset $\partial_{\reg} \mathcal{M} \subset \partial \mathcal{M}$ on which a $C^1$ field of unit normal vectors is defined, and (ii) a formula for the unit normal vector to the surface. See Proposition \ref{prop_Mreg} and Remark \ref{rem_normal}. The material on the geometric properties of the configuration space does not depend on this section, so there is no circularity.

Let us give a more precise mathematical formulation to the physical assumptions P1-P4. 

The assumption that the bodies do not interpenetrate means that the configuration of the system $y$ is confined to the set $\mathcal{M} \subset \mathbb{R}^3$ for all time. The boundary $\partial \mathcal{M}$ corresponds to collision configurations for the system. The collision dynamics cannot be defined when multiple satellites simultaneously make contact with the wall $W$, or when a satellite makes contact with a singular point of $W$. We therefore only derive the collision law for when $y$ lies in $\partial_{\reg} \mathcal{M}$, which is a full measure subset of $\partial \mathcal{M}$ (see Proposition \ref{prop_Mreg}). 

We use the following notation: for $y \in \partial_{\reg} \mathcal{M}$, we let $\sigma(y) \in \partial W$ denote the position in $\mathbb{R}^2$ of the unique satellite of $D(y)$ which lies in $\partial W$. As usual, we write $y = (x,\alpha)$ and $w = (v,\omega)$ where $x$ is the center of mass of the disk, $\alpha$ is the angular configuration, $v$ is the linear velocity of the center of mass, and $\omega$ is the angular velocity of the disk.

When $y \in \Int \mathcal{M}$, the net force on $D$ is zero. Thus Euler's equations of motion say that
\begin{equation} \label{eq3.1}
    m\dv{v}{t} = 0, \quad\text{ and } \quad J\dv{\omega}{t} = 0 \quad \text{ whenever } y \in \Int \mathcal{M},
\end{equation}
here recalling that the total mass of the disk is $m$ and the moment of inertia of the disk about its center of mass is $J$. In other words, $D$ moves freely in the complement of $W$.

Euler's laws must be given an impulsive interpretation when the disk makes contact with the wall. For $p \in \partial_{\reg} W$, let $k(p)$ denote the outward pointing unit normal vector to the wall at the point $p$. Our assumptions say that the impulse when the disk makes contact with the wall in configuration $y \in \partial_{\reg} \mathcal{M}$ is $\lambda k(\sigma(y))$ for some $\lambda \in \mathbb{R}$. Since each satellite lies at unit distance from the center of mass, the induced impulsive torque at the point of contact is then $\lambda \sin\beta$ where $\beta = \beta(y) \in (-\pi/2,\pi/2)$ is the signed angle measured counterclockwise from $k(\sigma(y))$ to $x - \sigma(y)$. In impulsive form, Euler's laws say that 
\begin{equation} \label{eq3.2}
    m(v^+ - v^-)= \lambda k(\sigma(y)), \quad\quad J(\omega^+ - \omega^-) = \lambda \sin\beta(y) \quad \text{ whenever } y \in \partial_{\reg} \mathcal{M}, 
\end{equation}
where 
\begin{equation}
    v^\pm(t) := \lim_{s \to t\pm} v(t), \quad \text{ and } \quad \omega^\pm := \lim_{s \to t\pm} \omega(t).
\end{equation}

The kinetic energy of the disk in state $(y,w)$ is 
\begin{equation}
    \frac{1}{2}mv_1^2 + \frac{1}{2}mv_2^2 + \frac{1}{2}J \omega^2 = \frac{1}{2} ||w||^2.
\end{equation}
By conservation of the kinetic energy of the disk, up to a change of units, we may assume that $||w||^2 = 1$ for all time.

Summarizing, we wish to solve the following initial value problem: for $(y^0, w^0) \in \Int \mathcal{M}$, to find an evolution $t \mapsto (y(t),w(t)) \in \mathbb{R}^3 \times \mathbb{R}^3$ which satisfies
\begin{equation} \label{eq6.15}
\begin{cases}
y \in \mathcal{M}, \\
y \in \Int \mathcal{M} \quad \Rightarrow \quad \dv{w}{t} = 0, \\
y \in \partial_{\reg} \mathcal{M} \quad \Rightarrow \quad (\exists \lambda  \in \mathbb{R}) : w^+ - w^- = \begin{pmatrix} m^{-1}\lambda k(\sigma(y)) \\ J^{-1}\lambda \sin\beta(y) \end{pmatrix}, \\
||w|| = 1, \\
(y(0),w(0)) = (y^0, w^0).
\end{cases}
\end{equation}
Contained in these conditions is the assumption that the derivative $\dv{w}{t}$ exists when $y \in \Int \mathcal{M}$ and that the limits $w^-$ and $w^+$ exist when $y \in \partial_{\reg} \mathcal{M}$.

\begin{remark} \normalfont
Our approach to formulating the above problem aims to obtain a precise mathematical statement quickly while keeping the amount of technical machinery to a minimum. One drawback to our approach is that it requires taking the distinct forms of Euler's laws for impulsive and non-impulsive interactions as basic. The reader may wonder whether there is a unified framework in which to handle these different types of interactions. In addition, while Proposition \ref{prop_specvolution} shows that the problem (\ref{eq6.15}) is well-posed, it is not so clear how well the approach would generalize to other rigid body settings.

Much work has been done in the last 40 years to put physical systems with impulsive interactions within a rigorous and unified mathematical framework. One such approach is based on \textit{differential inclusions}. The idea is to solve a differential relation of form 
\begin{equation}
    F(q,t) - \ddot{q} \in \partial I_{V(q)},
\end{equation}
where $q$ is the position in the configuration space $\mathcal{M}$, $F$ is the external force acting on the system, $V(q)$ is the tangent cone in $\mathcal{M}$ at the point $q$, $I_{V(q)}(r) = 0$ if $r \in V(q)$ and $I_{V(q)}(r) = \infty$ otherwise, and $\partial I_{V(q)}$ is the subdifferential of $I_{V(q)}$ in the convex analysis sense. Some early work to formulate and solve rigid body problems within this setting may be found in [\ref{Marq1}, \ref{Mor1}, \ref{Mor3}, \ref{PaoSch1}, \ref{PaoSch2}, \ref{Marq2}, \ref{Mab1}].

Another approach is to formulate impulse problems as variational problems in Lagrangian mechanics. This leads to geometric integration algorithms for systems with impacts, as detailed in [\ref{FMOW}].
\end{remark}

We now solve the problem (\ref{eq6.15}).

Recall that $n$ denotes the field of inward-pointing unit normal vectors on $\partial_{\reg} \mathcal{M}$. Define the subbundles of the tangent space
\begin{equation}
\begin{split}
    V_{\text{in}} & = \{(y,w) \in T\mathcal{M} : y \in \partial_{\reg} \mathcal{M} \text{ and } \langle w, n(y) \rangle > 0\}, \\
    V_{\text{out}} & = -V_{\text{in}} = \{(y,w) \in T\mathcal{M} : y \in \partial_{\reg} \mathcal{M} \text{ and } \langle w, n(y) \rangle < 0\}.
\end{split}
\end{equation}
Given $(y,w) \in \mathcal{M} \times \mathbb{R}^3$, let
\begin{equation}
    \overline{t}(y,w) = \inf\{t > 0 : y + tw \in \partial \mathcal{M}\}.
\end{equation}
If $y \in \Int \mathcal{M}$, or $(y,w) \in V_{\text{in}}$, then by compactness and continuity, if $\overline{t}(y,w) < \infty$ then it is a minimum.

\begin{proposition} \label{prop_specvolution}
Let $(y^0, w^0) \in \Int \mathcal{M} \times \mathbb{R}^3$, and assume that $\overline{t}_1 := \overline{t}(y^0,w^0) < \infty$. Let $y^1 = y^0 + \overline{t}_1w^0$, and assume that $(y^1,w^0) \in V_{\text{out}}$. There exists $\delta > 0$ and a unique evolution $t \mapsto (y(t),w(t))$, $t \in [0,\overline{t}_1 + \delta)$ such that $y$ is continuous, $w$ is right-continuous, and the conditions (\ref{eq6.15}) hold. Explicitly, the evolution is given by linear motion in the interior of $\mathcal{M}$ and specular reflection on the boundary:
\begin{equation} \label{eq6.16'}
    (y(t),w(t)) = \begin{cases}
    (y^0 + tw^0, w^0) \quad & \text{ if } 0 \leq t < \overline{t}_1, \\
    (y^1, w^0 - 2\langle w^0, n(y^1) \rangle n(y^1) ) & \text{ if } t = \overline{t}_1, \\
   (y^1 + (t - \overline{t}_1)w(\overline{t}_1), w(\overline{t}_1)) & \text{ if } \overline{t}_1 < t < \overline{t}_1+ \delta.
    \end{cases}
\end{equation}
\end{proposition}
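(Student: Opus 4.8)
The plan is to verify that the piecewise-linear evolution displayed in (\ref{eq6.16'}) satisfies (\ref{eq6.15}), and then to show it is the only such evolution, handling in turn the interval before the collision, the collision instant, and the interval after. Throughout, recall that the trajectory $y(t)$ moves with velocity $w(t)$, and that $\|w^0\|=1$ is forced by the constraint $\|w\|=1$.

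\emph{Free motion before the collision.} Because $y^0\in\Int\mathcal M$ and $\overline t_1$ is the first time the ray $t\mapsto y^0+tw^0$ meets $\partial\mathcal M$, the open segment $\{\,y^0+tw^0 : 0\le t<\overline t_1\,\}$ lies in $\Int\mathcal M$. On any subinterval of $[0,\overline t_1)$ along which a candidate solution remains in $\Int\mathcal M$, the relation $\dv{w}{t}=0$ makes $w$ continuous with vanishing derivative, hence constant; with the initial condition this gives $w\equiv w^0$ and $y(t)=y^0+tw^0$. A routine continuation argument — using that $y^0+tw^0$ is interior for \emph{every} $t<\overline t_1$ — upgrades this to all of $[0,\overline t_1)$, so by continuity $y(\overline t_1)=y^1$ and $w^-(\overline t_1)=w^0$.

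\emph{The collision instant.} By hypothesis $(y^1,w^0)\in V_{\text{out}}$, so $y^1\in\partial_{\reg}\mathcal M$ and $\langle w^0,n(y^1)\rangle<0$. The impulsive form of Euler's laws (third line of (\ref{eq6.15})) yields a scalar $\lambda$ with $w^+-w^0=\lambda\bigl(m^{-1}k(\sigma(y^1)),\,J^{-1}\sin\beta(y^1)\bigr)$. The key point is that this impulse direction is precisely the inward unit normal $n(y^1)$ with respect to the kinetic energy inner product: this is exactly what the normal-vector formula of \S\ref{sec_config} says (see Remark \ref{rem_normal} and Proposition \ref{prop_Mreg}), and it is obtained by differentiating the contact (gap) function of the satellite $\sigma(y^1)$ in the coordinates $(x_1,x_2,\alpha)$ and raising the index with the metric $\mathrm{diag}(m,m,J)$. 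Hence $w^+=w^0+c\,n(y^1)$ for some $c\in\mathbb R$, and energy conservation $\|w^+\|=\|w^0\|$ forces either $c=0$ or $c=-2\langle w^0,n(y^1)\rangle$. The choice $c=0$ is impossible: it would leave $\langle w^+,n(y^1)\rangle=\langle w^0,n(y^1)\rangle<0$, so $y^1+sw^+\notin\mathcal M$ for small $s>0$, violating $y(t)\in\mathcal M$. Therefore $w(\overline t_1)=w^+=w^0-2\langle w^0,n(y^1)\rangle n(y^1)$, the specular reflection of $w^0$.

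\emph{Free motion after the collision, and conclusion.} Since $\langle w^+,n(y^1)\rangle=-\langle w^0,n(y^1)\rangle>0$ we have $(y^1,w^+)\in V_{\text{in}}$, and the local $C^2$ manifold-with-boundary structure of $\mathcal M$ at the regular point $y^1$ shows the trajectory enters $\Int\mathcal M$ immediately; we may take $\delta:=\overline t(y^1,w^+)>0$ (allowing $\delta=\infty$). On $(\overline t_1,\overline t_1+\delta)$ the interior argument of the first step applies verbatim, giving $w\equiv w^+$ and $y(t)=y^1+(t-\overline t_1)w^+$, with $\|w^+\|=\|w^0\|=1$ since specular reflection is a $\langle\cdot,\cdot\rangle$-isometry. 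Assembling the three regions in time gives both existence and uniqueness. The one step carrying real content is the identification, at the collision, of the admissible impulse direction with the kinetic-energy normal $n(y^1)$; once the normal-vector formula is granted this is a short computation, and everything else is the routine free-motion/continuation analysis together with elementary energy bookkeeping.
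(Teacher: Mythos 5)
Your proposal is correct and follows essentially the same route as the paper: free motion before impact, the impulsive Euler law combined with the normal-vector formula of Remark \ref{rem_normal} and energy conservation to force either a trivial impulse or the specular jump, exclusion of the trivial case by non-penetration, and free motion afterward with $\delta=\overline{t}(y^1,w^+)$. The only step you assert rather than carry out is the trigonometric identity $\sin\beta(y^1)=k_1\cos\alpha^1+k_2\sin\alpha^1$ identifying the impulse direction with a positive multiple of $n(y^1)$, which the paper computes explicitly but which is indeed the short calculation you describe.
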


\begin{proof}[Proof of Proposition \ref{prop_specvolution}]
Let $w^1 = w^0 - 2\langle w^0, n(y^1) \rangle n(y^1)$. It follows from $(y^1,w^0) \in V_{\text{out}}$ that $(y^1,w^1) \in V_{\text{in}}$, and consequently $\overline{t}_2 := \overline{t}(y^1,w^1) > 0$. We choose $\delta = \overline{t}_2$.

Assume that $t \mapsto (y(t), w(t))$ is right-continuous and satisfies the conditions (\ref{eq6.15}) for $0 \leq t \leq \overline{t}_1 + \delta$. To prove uniqueness, it is enough to show that (\ref{eq6.16'}) holds. Since the motion is free in the interior of $\mathcal{M}$, it is clear that $(y(t),w(t)) = (y^0 + tw^0, w^0)$ for $0 \leq t < \overline{t}_1$, and in particular $y^1 = y(\overline{t}_1)$. Write $y^1 = (x^1,\alpha^1) \in \partial_{\reg} \mathcal{M}$. We have $w^- = (v^-,\omega^-) = \lim_{t \to \overline{t}_1-} w(t) = w^0$, and $w^+ = (v^-,\omega^-) = \lim_{t \to \overline{t}_1+} w(t) = w(\overline{t}_1)$, where the last equality follows by right-continuity. By assumption, we have 
\begin{equation}
    w^+ - w^- = w(\overline{t}_1) - w^0 = \begin{pmatrix} m^{-1}\lambda k(\sigma(y^1)) \\ J^{-1}\lambda \sin\beta(y^1) \end{pmatrix}.
\end{equation}
Write $k = k(\sigma(y^1)) = (k_1,k_2) \in \mathbb{R}^2$, and write $y^1 = (x^1,\alpha^1)$. The vector $x^1 - \sigma(y^1)$ is equal to $(-\sin\alpha^1, \cos\alpha^1)$, and consequently $\frac{\pi}{2} - \beta(y^1)$ is the angle measured counterclockwise from the vector $(\cos\alpha^1,\sin\alpha^1)$ to the normal $k(y^1)$. We therefore have 
\begin{equation}
    \sin\beta = \cos(\frac{\pi}{2} - \beta) = k \cdot (\cos\alpha^1, \sin\alpha^1) = k_1\cos\alpha^1 + k_2\sin\alpha^1.
\end{equation}
Thus
\begin{equation} \label{eq6.19}
    w(\overline{t}_1) - w^0 =  \lambda \begin{pmatrix} m^{-1}k_1 \\ m^{-1}k_2 \\ J^{-1}(k_1\cos\alpha + k_2\sin\alpha)
    \end{pmatrix} = R\lambda n,
\end{equation}
where $R = \left(m^{-1} + J^{-1}(k_1\cos\alpha^1 + k_2\sin\alpha^1)^2 \right)^{1/2}$, here using the formula (\ref{eq3.26}) for the unit normal $n$ in the configuration space given in Remark \ref{rem_normal}. By conservation of energy and the above, we have
\begin{equation} \label{eq6.9}
\begin{split}
    ||w^0||^2 = ||w(\overline{t}_1)||^2 & = ||w^0 + R \lambda n||^2 \\
    & = ||w^0||^2 + 2R\lambda\langle w^0,n \rangle + (R\lambda)^2 \\
    \Rightarrow  \quad 0 & = 2R\lambda\langle w^0,n \rangle + (R\lambda)^2 \\
    \Rightarrow \quad R\lambda & = -2\langle w^0, n \rangle \quad \text{ or } \quad R\lambda = 0.
\end{split}
\end{equation}
As $(y^1,w^0) \in V_{\text{out}}$, we see that $\langle w^0, n \rangle \neq 0$. By (\ref{eq6.19}) and (\ref{eq6.9}) either $w(\overline{t}_1) = w^0$ or $w(\overline{t}_1) = w^0 - 2\langle w^0, n \rangle n$. But we cannot have $w(\overline{t}_1) = w^0$ since this would imply by right-continuity that the point mass enters $\mathcal{M}^c$.

It remains to show that the motion is linear for $\overline{t}_1 < t < \delta$. From the above, $(y^1,w(\overline{t}_1)) \in V_{\text{in}}$, and thus by continuity there exists a maximal (possibly infinite) $\eta > 0$ such that $y(t) \in \Int \mathcal{M}$ for $\overline{t}_1 < t < \overline{t}_1 + \eta$. By free motion in the interior of $\mathcal{M}$, it follows that 
\begin{equation} \label{eq6.21}
(y(t),w(t)) = (y^1 + (t - \overline{t}_1)w(\overline{t}_1), w(\overline{t}_1)) \quad \text{ for } \quad \overline{t}_1 < t < \overline{t}_1 + \eta.
\end{equation}
It is easy to see that $\eta = \delta$. For (\ref{eq6.21}) and the definition of $\eta$ imply that $\eta \leq \overline{t}_2 = \delta$. On the other hand, if $\eta < \overline{t}_2$, then $y(\overline{t}_1 + \eta) \in \Int \mathcal{M}$ by continuity of the trajectory, and thus there exists $\eta' > \eta$ such that $y(t) \in \Int \mathcal{M}$ for $\overline{t}_1 < t < \overline{t}_1 + \eta'$, contradicting maximality of $\eta$. We conclude uniqueness.

To prove existence, we take $(y(t),w(t))$ to be defined by (\ref{eq6.16'}) for $0 \leq t < \overline{t}_1 + \delta$. Then $y(t)$ is continuous and $w(t)$ is right-continuous. Further, the only condition in (\ref{eq6.15}) which is not trivial to verify is the third. For this, note that we may take $\lambda = -2R^{-1}\langle w^0, n \rangle$, where $R$ is defined as above. It follows from the expression (\ref{eq3.26}) for the normal vector $n$ that the third condition in (\ref{eq6.15}) holds.
\end{proof}

We close by providing some support for the assumption that the energy of the disk is conserved for all time.  

\begin{proposition} \label{prop_energylimit}
Consider a physical system consisting of two bodies $B_1$ and $B_2$ in the plane, of mass $M_1$ and $M_2$ respectively and with moments of inertia $J_1$ and $J_2$ respectively about their centers of mass. Assume that the kinetic energy, linear momentum, and angular momentum of the system are conserved for all time. Let $t < t'$, let ${v^i}$, ${v^i}'$ be the linear velocity of the center of mass of $B_i$ at times $t, t'$ respectively, let ${\omega^i}, {\omega^i}'$ be the angular velocity of $B_i$ at times $t, t'$ respectively, and assume that ${v^2} = {\omega^2} = 0$. Then, keeping $M_1$, $J_1$, ${v^1}$, and ${\omega^1}$ fixed, the following limit holds:
\begin{equation} \label{eq3.15}
    \lim_{M_2, J_2 \to \infty} M_1||{{v^1}}'||^2 + J_1|{\omega^1}'|^2 = M_1||{v^1}||^2 + J_1|{\omega^1}|^2.
\end{equation}
\end{proposition}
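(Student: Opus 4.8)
The plan is to extract everything from the three conservation laws, using the energy identity twice: once to bound the unknown outgoing data of $B_1$, and once to force the outgoing velocities of the heavy body $B_2$ to be small. Write $E_0 := M_1\|v^1\|^2 + J_1|\omega^1|^2$, so that the conserved total kinetic energy equals $\tfrac12 E_0$ ($B_2$ contributing nothing at time $t$ since $v^2 = \omega^2 = 0$). Conservation of energy at time $t'$ gives
\[
M_1\|{v^1}'\|^2 + J_1|{\omega^1}'|^2 + M_2\|{v^2}'\|^2 + J_2|{\omega^2}'|^2 = E_0 ,
\]
and since each summand is nonnegative we read off the uniform bounds $\|{v^1}'\| \le (E_0/M_1)^{1/2}$, $J_1|{\omega^1}'|^2 \le E_0$, $M_2\|{v^2}'\|^2 \le E_0$, and $J_2|{\omega^2}'|^2 \le E_0$, none depending on $M_2$ or $J_2$. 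Conservation of linear momentum together with $v^2 = 0$ gives $M_2{v^2}' = M_1(v^1 - {v^1}')$, whence $M_2\|{v^2}'\|^2 = M_1^2\|v^1 - {v^1}'\|^2/M_2$; since $\|v^1 - {v^1}'\| \le \|v^1\| + (E_0/M_1)^{1/2}$ is bounded uniformly in $M_2,J_2$, we conclude $M_2\|{v^2}'\|^2 \to 0$ as $M_2 \to \infty$.

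It remains to show $J_2|{\omega^2}'|^2 \to 0$, which is the step that uses conservation of angular momentum and which I expect to be the main obstacle. Fix a reference point (say the origin; the choice is immaterial since total linear momentum is conserved) and write the conserved total angular momentum as the sum of the spin terms $J_i\omega^i$ and the orbital terms $M_i\,(x_i \times v^i)$, where $x_i$ is the center of mass of $B_i$ and $\times$ is the scalar cross product in the plane. Since $B_2$ contributes $0$ at time $t$, conservation gives
\[
J_2{\omega^2}' = J_1\omega^1 + M_1(x_1\times v^1) - J_1{\omega^1}' - M_1(x_1'\times {v^1}') - M_2(x_2'\times {v^2}') .
\]
We bound the right-hand side uniformly. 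The terms $J_1\omega^1$ and $M_1(x_1\times v^1)$ are fixed; $|J_1{\omega^1}'| \le (J_1E_0)^{1/2}$ by the bound above; and $|M_2(x_2'\times {v^2}')| \le \|x_2'\|\cdot M_2\|{v^2}'\| = \|x_2'\|\cdot M_1\|v^1 - {v^1}'\|$. The one genuinely delicate point is that the orbital factors involve the positions at time $t'$, and a priori $M_2\|{v^2}'\|$ does \emph{not} vanish, so we must rule out $x_2'$ (and $x_1'$) escaping to infinity. For this, observe that energy conservation also controls the instantaneous speeds: at every $s\in[t,t']$ one has $\tfrac12 M_i\|v^i(s)\|^2 \le \tfrac12 E_0$, i.e.\ $\|v^i(s)\| \le (E_0/M_i)^{1/2}$, and since the center of mass is continuous with $\dot x_i = v^i$ we get $\|x_i' - x_i\| \le (t'-t)(E_0/M_i)^{1/2}$. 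Holding the configuration at time $t$ fixed as $M_2,J_2$ vary (which we may, after a translation of coordinates), this makes $\|x_1'\|$ and $\|x_2'\|$ bounded uniformly for $M_2 \ge 1$; together with $\|{v^1}'\| \le (E_0/M_1)^{1/2}$ this bounds $|M_1(x_1'\times{v^1}')|$ and $|M_2(x_2'\times{v^2}')|$ uniformly, and hence $|J_2{\omega^2}'| \le C$ for some $C$ independent of $M_2\ge 1$ and $J_2$. Therefore $J_2|{\omega^2}'|^2 = |J_2{\omega^2}'|^2/J_2 \le C^2/J_2 \to 0$ as $J_2 \to \infty$.

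Combining the two estimates, $M_2\|{v^2}'\|^2 + J_2|{\omega^2}'|^2 \to 0$ as $M_2,J_2 \to \infty$, and rearranging the energy identity gives $M_1\|{v^1}'\|^2 + J_1|{\omega^1}'|^2 = E_0 - M_2\|{v^2}'\|^2 - J_2|{\omega^2}'|^2 \to E_0$, which is the assertion. Two remarks on the hypotheses: the argument tacitly assumes the motion on $[t,t']$ is regular enough for the conservation laws and the identity $\dot x_i = v^i$ to be applied (finitely many collisions, or at least velocities integrable in time), which is part of the ambient physical setup; and it is precisely the orbital angular-momentum term, rather than the spin term, that forces one to invoke the a priori speed bound from energy conservation — without it, conservation of angular momentum alone would not pin down ${\omega^2}'$.
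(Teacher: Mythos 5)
Your proof is correct, but it takes a genuinely different route from the paper's at the angular step. The paper reads ``conservation of angular momentum'' as conservation of the spin sum alone, $J_1{\omega^1}' + J_2{\omega^2}' = J_1{\omega^1}$, so the angular estimate becomes exactly parallel to the linear one: with $E_0 := M_1||{v^1}||^2 + J_1|{\omega^1}|^2$, one gets $J_2^2|{\omega^2}'|^2 \leq 2J_1^2|{\omega^1}'|^2 + 2J_1^2|{\omega^1}|^2$, which together with the a priori bound $M_1||{v^1}'||^2 + J_1|{\omega^1}'|^2 \leq E_0$ from energy conservation yields $M_2||{v^2}'||^2 + J_2|{\omega^2}'|^2 \leq \max\{4M_1/M_2,\, 4J_1/J_2\}\, E_0 \to 0$; no positions and no time interval ever enter. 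You instead work with the full spin-plus-orbital angular momentum about a fixed origin, which is the standard conserved quantity for impulsive interactions, and you must then tame the orbital terms $M_i(x_i' \times {v^i}')$ by the a priori speed bounds on $[t,t']$ and by holding the time-$t$ configuration fixed; this is exactly where your extra hypotheses (finite $t'-t$, bounded initial positions, $\dot{x}_i = v^i$ in an appropriate sense) come from, none of which the paper needs. The trade-off is real: the paper's argument is shorter and uses only velocity data, but its spin-only conservation law is itself a modeling assumption (for a general impulsive collision only the total angular momentum, orbital terms included, is conserved), whereas your version is faithful to the usual meaning of angular momentum at the cost of the mild additional assumptions you already flagged. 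Both proofs ultimately exploit the same mechanism: $M_2{v^2}'$ and $J_2{\omega^2}'$ stay bounded, so the corresponding kinetic energies carry factors $1/M_2$ and $1/J_2$ and vanish in the limit, after which the energy identity gives the claim.
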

\begin{proof}[Proof of Proposition \ref{prop_energylimit}]
Since we assume ${v^2} = 0$ and ${\omega^2} = 0$,  conservation of linear and angular momentum give us
\begin{equation}
    M_1{{v^1}}' + M_2{v^2}' = M_1{v^1},
\end{equation}
and 
\begin{equation}
    J_1{\omega^1}' + J_2{\omega^2}' = J_1{\omega^1}.
\end{equation}
We obtain from these, 
\begin{equation}
    M_2^2||{v^2}'||^2 \leq 2M_1^2||{v^1}'||^2 + 2M_1^2||{v^1}||^2, \quad\quad J_2^2|{\omega^2}'|^2 \leq 2J_1^2|{\omega^1}'|^2 + 2J_1^2|{\omega^1}|^2,
\end{equation}
and hence 
\begin{equation} \label{eq6.4}
\begin{split}
    M_2||{v^2}'||^2 & + J_2|{\omega^2}'|^2 \leq \frac{2M_1^2}{M_2}(||{v^1}'||^2 + ||{v^1}||^2) + \frac{2J_1^2}{J_2}(|{\omega^1}'|^2 + |{\omega^1}|^2) \\
    & \leq \max\left\{\frac{2M_1}{M_2}, \frac{2J_1}{J_2}\right\}(M_1||{v^1}'||^2 + J_1|{\omega^1}'|^2 + M_1||{v^1}||^2 + J_1|{\omega^1}|^2).
\end{split}
\end{equation}
By conservation of energy, we also have
\begin{equation} \label{eq6.5}
    M_1||{v^1}'||^2 + J_1|{\omega^1}'|^2 + M_2||{v^2}'||^2 + J_2|{\omega^2}'|^2 =  M_1||{v^1}||^2 + J_1|{\omega^1}|^2.
\end{equation}
Therefore, 
\begin{equation} \label{eq6.6}
\begin{split}
    M_1||{v^1}'||^2 + J_1|{\omega^1}'|^2 \leq M_1||{v^1}||^2 + J_1 |{\omega^1}|^2. 
\end{split}
\end{equation}
From (\ref{eq6.6}) and (\ref{eq6.4}) we obtain that 
\begin{equation}
    M_2||{v^2}'||^2 + J_2|{\omega^2}'|^2 \leq \max\left\{\frac{4M_1}{M_2}, \frac{4J_1}{J_2}\right\}(M_1||{v^1}||^2 + J_1|{\omega^1}|^2).
\end{equation}
From this we see that 
\begin{equation}
    M_2||{v^2}'||^2 + J_2|{\omega^2}'|^2 \to 0 \quad \text{ as } M_2, J_2 \to \infty.
\end{equation}
Hence, by conservation of energy (\ref{eq6.5}), the limit (\ref{eq3.15}) holds.
\end{proof}

\section{Elementary Properties of the Billiard System} \label{sec_config}

The goals for this section are first to describe the basic properties of the configuration space $\mathcal{M}$ and its cylindrical approximation $\mathcal{M}_{\cyl}$, and second to provide a rigorous definition for the collision law described in the introduction. \S\ref{ssec_configprops} and \ref{ssec_cylconfig} are concerned exclusively with the geometry of $\mathcal{M}$ and $\mathcal{M}_{\cyl}$, while \S\ref{ssec_collaws} introduces dynamics. 

The collision law turns out to be a special case of the general macro-reflection laws, described in \S\ref{sec_genref}. Results proved in \S\ref{sec_genref} can be applied to show, for example, that the collision law is symmetric with respect to the measure $\Lambda^2$.

In addition to describing the collision law in $\mathcal{M}$, we will also define two auxiliary collision laws, the \textit{cylindrical collision law} and the \textit{modified collision law}, which will play important roles in the proofs of our main results.

\subsection{Properties of the configuration space} \label{ssec_configprops}

Throughout this subsection, the cell $\Sigma$ -- assumed to satisfy conditions B1-B5 of \S\ref{sssec_periodic} -- and the roughness scale $\epsilon > 0$ are fixed. As described in \S\ref{ssec_main_results}, the cell and roughness scale give rise to a fixed wall $W = W(\Sigma,\epsilon)$ and a freely moving disk with satellites $D = D(\epsilon)$, where the satellites are spaced at angles $\rho(\epsilon)$ apart. This physical system has a configuration space $\mathcal{M} = \mathcal{M}(\Sigma,\epsilon) \subset \mathbb{R}^3$, defined by the expression (\ref{eq1.37}). 

We first describe some elementary properties of the configuration space $\mathcal{M}$. In what follows, $\mathcal{H}^2$ denotes 2-dimensional Hausdorff measure on $\mathbb{R}^3$.

\begin{proposition} \label{prop_Melem}
\begin{enumerate} [label = \roman*.]
    \item For all $j,k \in \mathbb{Z}$, $\mathcal{M} + j\epsilon e_1 + k \rho e_3 = \mathcal{M}$.
    
    \item $\{(x_1,x_2,\alpha) : x_2 \geq 0\} \subset \mathcal{M} \subset \{y \in \mathbb{R}^3 : D(y) \cap \Int W = \emptyset\}$.
    
    \item $\Int\mathcal{M} = \{y \in \mathbb{R}^3 : D(y) \cap W = \emptyset\}$; consequently, $\mathcal{M} = \overline{\Int \mathcal{M}}$.
    
    \item $\partial \mathcal{M} \subset \{y \in \mathbb{R}^3 : D(y) \cap \Int W = \emptyset, \text{ and for some $0 \leq k \leq N$, } S_k(y) \in \partial W\}$
    
    \item $\partial \mathcal{M} \subset \{(x_1,x_2,\alpha) : -\epsilon - \frac{1}{8}\rho^2 \leq x_2 \leq 0\}$.
    
    \item Assume $\epsilon$ is sufficiently small. For any $y \in \mathcal{M}$, there are at most two satellites of $D(y)$ which lie on $\partial W$. If two satellites of $D(y)$ lie in $\partial W$, then the satellites are adjacent to each other on the disk.
    
    \item $\mathcal{H}^2(\{(x_1,x_2,\alpha) : x_2 \geq 0\} \cap \overline{\mathcal{M}^c}) = 0$.
\end{enumerate}
\end{proposition}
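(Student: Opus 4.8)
The plan is to show that the set $\{(x_1,x_2,\alpha) : x_2 \geq 0\} \cap \overline{\mathcal{M}^c}$ is contained in a lower-dimensional set, and in fact is confined to the plane $\mathbf{P} = \{x_2 = 0\}$. By part (ii), the open half-space $\{x_2 > 0\}$ is contained in $\mathcal{M}$, and since $\mathcal{M} = \overline{\Int\mathcal{M}}$ by part (iii), the set $\{x_2 > 0\}$ is in fact contained in $\Int\mathcal{M}$ (every point with $x_2 > 0$ has a neighborhood still in $\{x_2 > 0\} \subset \mathcal{M}$, hence lies in the interior). Consequently $\{x_2 > 0\} \cap \overline{\mathcal{M}^c} = \emptyset$, because $\Int\mathcal{M}$ is disjoint from the closure of its complement. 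Therefore
\[
\{(x_1,x_2,\alpha) : x_2 \geq 0\} \cap \overline{\mathcal{M}^c} \subset \mathbf{P} = \{(x_1,x_2,\alpha) : x_2 = 0\},
\]
which is a 2-plane in $\mathbb{R}^3$ and hence has $\mathcal{H}^2$ locally finite but not zero. So the containment in $\mathbf{P}$ alone is not enough; I would need to show the set is actually $\mathcal{H}^2$-null within $\mathbf{P}$.

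To get this, I would examine which points of $\mathbf{P}$ can lie in $\overline{\mathcal{M}^c}$. A point $y = (x_1,0,\alpha) \in \mathbf{P}$ lies in $\overline{\mathcal{M}^c}$ only if it is approached by configurations in which the disk and wall interpenetrate, i.e. $D(y')\cap \Int W \neq\emptyset$ for $y'$ arbitrarily close to $y$. Since $\partial W \subset \mathbb{R}\times[-1-\epsilon,-1]$ (condition A3') and the satellites $S_k(y)$ of a disk in configuration $y$ lie on the circle of radius $1$ about its center $(x_1,x_2)$, when $x_2 = 0$ each satellite $S_k(y)$ has height at least $-1$; moreover, the only satellites that can come within distance $O(\rho^2)$ of being below height $-1$ are those near the bottom of the disk — cf. Proposition \ref{prop_Melem}(v) which localizes $\partial\mathcal{M}$ to $\{-\epsilon - \frac18\rho^2 \le x_2 \le 0\}$. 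For $y'$ near $y \in \mathbf{P}$ to have $D(y')\cap\Int W\neq\emptyset$ forces $x_2'$ to be small and the relevant satellite's angular position to align it against $\partial W$; by the inclusion $D_0 \subset \{|x|\le (1-2\rho^2)^2\}$ (see (\ref{eq1.34})), the core $D_0$ never meets $W$, so only satellite contact matters. Thus $y \in \mathbf{P}\cap\overline{\mathcal{M}^c}$ requires that for some $k$, the satellite $S_k(y)$ sits exactly on $\partial W$ (height exactly $-1$ is forced at the limit, so the satellite is at the topmost point of $\partial W$). Since $\partial W$ meets the line $\{x_2=-1\}$ in a set of measure zero in the $x_1$-direction (this is essentially condition A4(iv), ruling out "fat Cantor set" contact along the top line, together with the fact that the curve segments $\Gamma_i$ not contained in $\{x_2 = -1\}$ meet that line in finitely many points), the set of admissible $(x_1 - \sin(k\rho),\, \alpha)$ values is $\mathcal{H}^1$-null in the $x_1$-direction for each fixed $\alpha$ and each $k$; summing over the finitely many relevant $k$ and applying Fubini on $\mathbf{P}$, the set is $\mathcal{H}^2$-null.

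The main obstacle I expect is the careful bookkeeping in the middle step: precisely characterizing which $y = (x_1,0,\alpha)$ are limits of interpenetrating configurations, and verifying that this forces a satellite to rest at the very top of $\partial W$ along the line $x_2 = -1$ (rather than merely being confined to a thin slab). This requires combining the geometry of the disk's satellites (all at unit distance from the center) with the inclusion $\partial W \subset \mathbb{R}\times[-1-\epsilon,-1]$, and then invoking the regularity of $\partial W$ — specifically that $\partial W \cap \{x_2 = -1\}$ has measure zero in the horizontal coordinate — to conclude the null-set claim. Once that reduction is in place, Fubini's theorem on $\mathbf{P}$ finishes the argument immediately. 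An alternative, cleaner route that sidesteps some of this: argue that $\partial\mathcal{M} \cap \mathbf{P} = \mathbf{P}\cap\overline{\mathcal{M}^c}$ is contained in the image under a fixed locally Lipschitz map of the $\mathcal{H}^1$-null set $\bigcup_i (\partial W \cap \{x_2 = -1\})$-type data, hence is $\mathcal{H}^2$-null; this is morally the same but packages the Fubini step geometrically.
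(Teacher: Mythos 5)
Your proposal only addresses part (vii); I will comment on that part. The reduction to the plane $\{x_2=0\}$ is fine, and you correctly observe that a satellite of a disk centered at height $0$ can only meet $W$ at height exactly $-1$. But the step you then lean on is false: condition A4(iv) does \emph{not} say that $\partial W$ meets the top line $\{x_2=-1\}$ in a horizontally null set — it says that any curve segment meeting that line at a non-endpoint must lie \emph{entirely} in the line, i.e.\ it explicitly permits boundary segments of positive length along the top. The rectangular-teeth microstructure is exactly such a case: a positive fraction of each period of $\partial W$ lies on $\{x_2=-1\}$. Consequently your claim that "for each fixed $\alpha$ and each $k$ the admissible $x_1$-set is $\mathcal{H}^1$-null" fails for the relevant angles, and the Fubini argument as you framed it collapses.

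What saves the statement — and is the paper's actual argument — is angular rigidity, which you stopped just short of using. If the center is at height $x_2=0$ and a satellite lies in $W\subset\{x_2\le -1\}$, then that satellite, being at unit distance from the center, must be the lowest point of the unit circle, which forces $\cos(\alpha+k\rho)=1$, i.e.\ $\alpha\in\rho\mathbb{Z}$ (using that $2\pi/\rho$ is an integer). Hence
\begin{equation*}
\{(x_1,x_2,\alpha):x_2\ge 0\}\cap\overline{\mathcal{M}^c}\ \subset\ \{(x_1,0,k\rho):x_1\in\mathbb{R},\ k\in\mathbb{Z}\},
\end{equation*}
a countable union of lines, which is $\mathcal{H}^2$-null with no input whatsoever about the structure of $\partial W\cap\{x_2=-1\}$. (The paper gets "at least one satellite lies in $W$" directly from $\overline{\mathcal{M}^c}=(\Int\mathcal{M})^c$ and part (iii), avoiding your discussion of limits of interpenetrating configurations.) If you prefer to keep a Fubini phrasing, fix $\alpha$ rather than $x_1$: the $x_1$-slice is empty unless $\alpha\in\rho\mathbb{Z}$, a null set of angles, and that is already enough.
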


Next, we will describe the regularity properties of $\mathcal{M}$, and introduce a large subset of $\mathcal{M}$ which has a simple parametrization. This requires some additional notation. We define a disjoint union of open intervals in the line:
\begin{equation} \label{eq4.1}
\mathcal{Z} = \mathcal{Z}(\epsilon) = \bigcup_{k \in \mathbb{Z}} \left(\frac{2k-1}{2}\rho(\epsilon) + \delta_0(\epsilon), \frac{2k + 1}{2}\rho(\epsilon) - \delta_0(\epsilon) \right)
\end{equation}
where 
\begin{equation}
    \delta_0 =\delta_0(\epsilon) = \arcsin\left(\frac{\epsilon}{2\sin(\rho(\epsilon)/2)}\right).
\end{equation}
We also let
\begin{equation}
    \widehat{\mathcal{Z}} = \mathbb{R}^2 \times \mathcal{Z}.
\end{equation}
One may easily check that, for any $R > 0$, the Lebesgue measure of $[-R, R] \smallsetminus \mathcal{Z}$ is of order $O(\delta_0) = O(\epsilon/\rho) = o(\epsilon^{1/2})$. Thus $\widehat{\mathcal{Z}}$ is a large subset of $\mathbb{R}^3$ in the sense that for any bounded subset $B \subset \mathbb{R}^3$, the Lebesgue measure of $B \smallsetminus \widehat{\mathcal{Z}}$ converges to zero as $\epsilon \to 0$. We define 
\begin{equation}
    \mathcal{M}_{\roll} = \mathcal{M} \cap \widehat{\mathcal{Z}},
\end{equation}
\begin{equation}
    \Gamma_{\roll} = \partial \mathcal{M} \cap \widehat{\mathcal{Z}}.
\end{equation}
We have

\begin{proposition} \label{prop_Mreg}
Assume $\epsilon$ is sufficiently small.
\begin{enumerate}[label = \roman*.]
    \item There exists a closed subset $\mathcal{S} \subset \partial \mathcal{M}$ such that 
    \begin{enumerate}
        \item[i.1.] $\mathcal{H}^2(\mathcal{S}) = 0$; and
        
        \item[i.2.] for every $q \in \partial \mathcal{M} \smallsetminus \mathcal{S}$, there exists a neighborhood $U \subset \mathbb{R}^3$ of $q$ and a $C^2$ diffeomorphism $\phi : U \to\mathbb{R}^3$ such that $\phi(q) = 0$ and $\phi(U \cap \mathcal{M}) = \mathbb{H}^3 := \{(x_1,x_2,x_3) : x_3 \geq 0\}$.
    \end{enumerate}
    
    \item For any $y \in \mathcal{M}_{\roll}$, $D(y) \cap W$ has cardinality at most 1.
    
    For any $y \in \Gamma_{\roll}$, the set $D(y) \cap W$ has cardinality 1, and consists of a single satellite of $D$.
    
    \item $\mathcal{M}_{\roll}$ is parametrized by the function
    $F : \overline{W^c} \times \mathcal{Z} \to \mathbb{R}^3$ defined by 
    \begin{equation} \label{eq4.4}
        F(x_1,x_2,\alpha) = (x_1 - \sin\overline{\alpha}, x_2 + \cos\overline{\alpha}, \alpha),
    \end{equation}
    where $\overline{\alpha} := \alpha - \overline{k}\rho$ and $\overline{k} := \argmin\{|\alpha - k\rho| : k \in \mathbb{Z}\}$.
    
    Moreover, the restriction of $F$ to $\partial W \times \mathcal{Z}$ parametrizes $\Gamma_{\roll}$.
\end{enumerate}
\end{proposition}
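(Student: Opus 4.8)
The plan is to isolate the combinatorial--geometric content in a single claim about configurations whose angular coordinate lies in $\mathcal{Z}$, deduce (ii) and (iii) from it essentially formally, and treat (i) by a separate local-model argument. Write $S_j(y)=(X_1+\sin(j\rho+\alpha),\,X_2-\cos(j\rho+\alpha))$ for the position of the $j$-th satellite of $D(y)$, where $y=(X_1,X_2,\alpha)$. If $\alpha\in\mathcal{Z}$, then $\overline k=\argmin_k|\alpha-k\rho|$ is unambiguous, $\overline\alpha=\alpha-\overline k\rho$ satisfies $|\overline\alpha|<\tfrac{\rho}{2}-\delta_0$, and the ``lowest'' satellite is $S_{-\overline k}(y)$ (indices mod $N$), located at $(X_1+\sin\overline\alpha,\,X_2-\cos\overline\alpha)$. \emph{Claim:} if $S_{-\overline k}(y)\notin\Int W$ then $D(y)\cap W=\{S_{-\overline k}(y)\}\cap W$. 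Indeed, from $\mathbb{R}\times(-\infty,-1-\epsilon]\subseteq W\subseteq\mathbb{R}\times(-\infty,-1]$ and $W=\overline{\Int W}$ one gets $\mathbb{R}\times(-\infty,-1-\epsilon)\subseteq\Int W$, so $S_{-\overline k}(y)\notin\Int W$ forces $X_2-\cos\overline\alpha\geq-1-\epsilon$; and for $j\not\equiv-\overline k\pmod N$ the angle $(j+\overline k)\rho+\overline\alpha$ has distance $\geq\rho-|\overline\alpha|\geq\tfrac{\rho}{2}+\delta_0$ from $2\pi\mathbb{Z}$, so the height of $S_j(y)$ satisfies
\[
X_2-\cos\big((j+\overline k)\rho+\overline\alpha\big)\;\geq\;-1-\epsilon+\cos\overline\alpha-\cos\!\big(\tfrac{\rho}{2}+\delta_0\big)\;>\;-1-\epsilon+\cos\!\big(\tfrac{\rho}{2}-\delta_0\big)-\cos\!\big(\tfrac{\rho}{2}+\delta_0\big)\;=\;-1,
\]
the last step being the identity $\cos(\tfrac{\rho}{2}-\delta_0)-\cos(\tfrac{\rho}{2}+\delta_0)=2\sin\tfrac{\rho}{2}\sin\delta_0=\epsilon$, which is precisely how $\delta_0$ was chosen. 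Hence $S_j(y)\notin W$; the same bound, with $D_0(y)\subseteq\{|x-(X_1,X_2)|\leq1-2\rho^2\}$ and $\epsilon=o(\rho^2)$, gives $D_0(y)\cap W=\emptyset$ for small $\epsilon$.

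\textbf{Parts (ii) and (iii).} If $y\in\mathcal{M}_{\roll}=\mathcal{M}\cap\widehat{\mathcal{Z}}$ then $\alpha(y)\in\mathcal{Z}$ and $S_{-\overline k}(y)\notin\Int W$ (else, $\Int W$ being open, a neighbourhood of $y$ would miss $\Int\mathcal{M}$, contradicting $y\in\overline{\Int\mathcal{M}}=\mathcal{M}$; Proposition \ref{prop_Melem}.iii), so $D(y)\cap W=\{S_{-\overline k}(y)\}\cap W$ has cardinality $\leq1$; if moreover $y\in\Gamma_{\roll}=\partial\mathcal{M}\cap\widehat{\mathcal{Z}}$ then $D(y)\cap W\neq\emptyset$ (Proposition \ref{prop_Melem}.iv), so it equals $\{S_{-\overline k}(y)\}$, a single satellite. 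This is (ii). For (iii): on each connected component of $\mathbb{R}^2\times\mathcal{Z}$ (where $\overline k$ is constant) $F$ is the restriction of the global $C^\infty$ diffeomorphism $(x_1,x_2,\alpha)\mapsto(x_1-\sin(\alpha-\overline k\rho),\,x_2+\cos(\alpha-\overline k\rho),\,\alpha)$ of $\mathbb{R}^3$, and $F$ is injective overall; one checks directly that the satellite $S_{-\overline k}$ of $D(F(x_1,x_2,\alpha))$ sits exactly at $(x_1,x_2)$. Combining this with the claim and Proposition \ref{prop_Melem}.iii: for $(x_1,x_2)\in\overline{W^c}=\mathbb{R}^2\setminus\Int W$ and $\alpha\in\mathcal{Z}$ one has $D(F(x_1,x_2,\alpha))\cap W=\{(x_1,x_2)\}\cap W$, empty when $(x_1,x_2)\in W^c$ (so $F(x_1,x_2,\alpha)\in\Int\mathcal{M}\cap\widehat{\mathcal{Z}}$) and a single point when $(x_1,x_2)\in\partial W$ (so $F(x_1,x_2,\alpha)\in\overline{\Int\mathcal{M}}\setminus\Int\mathcal{M}=\partial\mathcal{M}$, approximating along $\partial W\subseteq\overline{W^c}$); conversely every $y\in\mathcal{M}_{\roll}$ equals $F(S_{-\overline k}(y),\alpha(y))$ with $S_{-\overline k}(y)\in\overline{W^c}$. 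Thus $F$ maps $W^c\times\mathcal{Z}$ onto $\Int\mathcal{M}\cap\widehat{\mathcal{Z}}$ and $\partial W\times\mathcal{Z}$ onto $\Gamma_{\roll}$, hence $\overline{W^c}\times\mathcal{Z}$ homeomorphically (and $C^\infty$-diffeomorphically over $\partial_{\reg}W$) onto $\mathcal{M}_{\roll}$.

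\textbf{Part (i) and the main obstacle.} Let $\mathcal{S}=\mathcal{S}_1\cup\mathcal{S}_2$, $\mathcal{S}_1=\{q\in\partial\mathcal{M}:S_j(q)\in\partial_s W\text{ for some }j\}$, $\mathcal{S}_2=\{q\in\partial\mathcal{M}:\#(D(q)\cap\partial W)\geq2\}$; each is a locally finite union of preimages $S_j^{-1}(\partial_s W)$, resp. $S_j^{-1}(\partial W)\cap S_{j'}^{-1}(\partial W)$, intersected with $\partial\mathcal{M}$ --- local finiteness since on $\partial\mathcal{M}$ one has $-\epsilon-\tfrac{1}{8}\rho^2\leq x_2\leq0$ (Proposition \ref{prop_Melem}.v), so only $O(1)$ satellites can reach the strip $\mathbb{R}\times[-1-\epsilon,-1]\supseteq\partial W$ --- whence $\mathcal{S}$ is closed. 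For $q\in\partial\mathcal{M}\setminus\mathcal{S}$: by Proposition \ref{prop_Melem}.iv,vi exactly one satellite $S_{j_0}(q)$ lies on $\partial W$, it lies in $\partial_{\reg}W$, and all other satellites and $D_0(q)$ lie in the open set $W^c$; so on a small neighbourhood $U$ of $q$, $\Int\mathcal{M}=\{y:S_{j_0}(y)\in W^c\}=\{g\circ S_{j_0}>0\}$ with $W=\{g\leq0\}$ a $C^2$ local defining function near $S_{j_0}(q)$, $\nabla g\neq0$; since $DS_{j_0}$ has rank $2$, $h:=g\circ S_{j_0}$ is $C^2$ with $\nabla h\neq0$ on $U$, so $\mathcal{M}\cap U=\{h\geq0\}\cap U$ is a $C^2$ manifold with boundary and straightening $h$ yields $\phi:U\to\mathbb{R}^3$ with $\phi(q)=0$, $\phi(U\cap\mathcal{M})=\mathbb{H}^3$. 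Finally $\mathcal{H}^2(\mathcal{S})=0$: $\partial_s W$ is discrete and $S_j^{-1}(\text{point})$ is a curve, giving $\mathcal{H}^2(\mathcal{S}_1)=0$; for $\mathcal{S}_2$ one parametrizes $\{q:S_j(q),S_{j+1}(q)\in\partial W\}$ by the pairs $(p,\alpha)$ with $p\in\partial W$ and $p+\delta(\alpha)\in\partial W$, where $\delta(\alpha):=S_{j+1}(q)-S_j(q)$ has constant length $2\sin\tfrac{\rho}{2}$ and direction depending monotonically on $\alpha$; for each pair of $C^2$ arcs of $\partial W$ the constraint $|p'-p|=2\sin\tfrac{\rho}{2}$ cuts a $1$-dimensional set in the arc-length parameters (it cannot be $2$-dimensional, as a nonconstant $C^2$ arc cannot lie on circles of one fixed radius about every point of a second arc) and $\alpha$ is then determined, so $\mathcal{S}_2$ is $1$-dimensional and $\mathcal{H}^2$-null. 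The crux of the whole argument is the exact choice of $\delta_0$ that makes the displayed estimate close at $-1$; the genuinely delicate remaining point is this last transversality/dimension count for $\mathcal{S}_2$.
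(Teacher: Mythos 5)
Your overall architecture matches the paper's: the same singular set $\mathcal{S}$ (satellite at a singular boundary point, or two satellites simultaneously on $\partial W$), the same local straightening via $h = g\circ S_{j_0}$ for (i.2), and for (ii)--(iii) a quantitative version of the paper's isosceles-triangle computation. Your key claim is correct: the identity $\cos(\tfrac{\rho}{2}-\delta_0)-\cos(\tfrac{\rho}{2}+\delta_0)=2\sin\tfrac{\rho}{2}\sin\delta_0=\epsilon$ is exactly how $\delta_0$ is calibrated, and the deduction of (ii) and of the parametrization statement (iii) from it, together with $\Int\mathcal{M}=\{y: D(y)\cap W=\emptyset\}$ and $\mathcal{M}=\overline{\Int\mathcal{M}}$, is sound (and a bit more streamlined than the paper, which argues component by component in $\mathcal{Z}$).

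The genuine gap is in the very step you flag as delicate: $\mathcal{H}^2(\mathcal{S}_2)=0$. Your justification — the coincidence set $Z=\{(s,t):|\gamma(s)-\gamma'(t)|=2\sin\tfrac{\rho}{2}\}$ in the arc-length square ``cannot be $2$-dimensional, as a nonconstant $C^2$ arc cannot lie on circles of one fixed radius about every point of a second arc'' — does not prove that $Z$ is $\mathcal{H}^2$-null. Ruling out that a whole subarc of $\gamma'$ lies on such a circle only excludes $Z$ containing a product of intervals; a closed set can have positive planar measure while every slice $Z_s=\{t:\gamma'(t)\in C(2\sin\tfrac{\rho}{2},\gamma(s))\}$ is nowhere dense (a $C^2$ curve can meet a fixed circle in a fat-Cantor set of positive length without containing an arc of it), so the ``$1$-dimensional'' conclusion is a non sequitur as stated. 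The paper closes exactly this hole with a countability argument rather than transversality: for each $p\in\partial W$ the set $G_p=S_1(g(p,F_p))$ lies on the circle of radius $2\sin\tfrac{\rho}{2}$ centered at $p$ and inside one bounded period of $\partial W$; circles of the same radius about distinct centers meet in at most two points, so the $G_p$ are pairwise $\mathcal{H}^1$-null-overlapping subsets of a set of finite length, whence only countably many $p$ can have $\mathcal{H}^1(F_p)>0$, and Fubini then gives $\mathcal{H}^2=0$. You would need to insert this (or an equivalent) argument; with it, the rest of your proof goes through.
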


\begin{remark} \normalfont
The quantity $\overline{k}$ is uniquely defined because $\mathcal{Z}$ does not contain points of form $(\frac{2j+1}{2})\rho$, $j \in \mathbb{Z}$.
\end{remark}

\begin{remark} \normalfont
The parametrization (\ref{eq4.4}) explains the notation $\mathcal{M}_{\roll}$ and $\Gamma_{\roll}$. Suppose that the disk initially has the configuration $(x_1,x_2, k\rho)$ where $k \in \mathbb{Z}$, and imagine rotating the center of the disk counterclockwise about the satellite $S_k$ by a small angle $\Delta \alpha = \alpha - k\rho$, keeping the position of $S_k$ fixed. After ``rolling'' the disk in this way, the final configuration is given by $F(x_1,x_2,\alpha)$.
\end{remark}

We call a point in $\mathcal{M}$ a \textit{regular point} if it belongs to the set 
\begin{equation}
    \mathcal{M}_{\reg} := \mathcal{M} \smallsetminus \mathcal{S}. 
\end{equation}
To avoid ambiguity, we will assume that $\mathcal{S}$ is the minimal subset of $\partial \mathcal{M}$ (with respect to inclusion) such that $i.1$ and $i.2$ in Proposition \ref{prop_Mreg} hold.

In \S\ref{sss_sing}, we define the class of billiard domains $\CES^2_0(\mathbb{R}^3)$. The class is sufficiently large to encompass the kinds of spaces dealt with in this work, while narrow enough that standard results from billiards theory still apply. The acronym $\CES$ stands for ``closed and embedded, with singularities.'' The subscript $0$ means that the codimension in $\mathbb{R}^3$ of a submanifold in this class is zero, and the superscript $2$ indicates that the submanifold is twice differentiable. An immediate consequence of Proposition \ref{prop_Mreg}(i) is the following fact.
 
\begin{corollary} \label{cor_classy}
$\mathcal{M}$ belongs to the class $\CES_0^2(\mathbb{R}^3)$.
\end{corollary}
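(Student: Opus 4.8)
The plan is to unwind the definition of the class $\CES_0^2(\mathbb{R}^3)$ introduced in \S\ref{sss_sing} and to verify each of its defining clauses for $\mathcal{M}=\mathcal{M}(\Sigma,\epsilon)$, drawing almost entirely on Propositions \ref{prop_Melem} and \ref{prop_Mreg}. The clauses to be checked are, roughly: (a) $\mathcal{M}$ is a closed subset of $\mathbb{R}^3$; (b) $\mathcal{M}$ coincides with the closure of its interior, i.e. $\mathcal{M}=\overline{\Int\mathcal{M}}$, so the boundary carries no lower-dimensional ``flaps''; (c) there is a closed exceptional set $\mathcal{S}\subset\partial\mathcal{M}$ with $\mathcal{H}^2(\mathcal{S})=0$ off of which $\mathcal{M}$ is, locally, $C^2$-diffeomorphic to the half-space $\mathbb{H}^3$; and possibly (d) an ancillary local-finiteness / bounded-geometry requirement on $\partial\mathcal{M}$ ensuring that the standard billiard machinery of \S\ref{sec_genref} applies.

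First, (a) is immediate: by its very definition (\ref{eq1.37}), $\mathcal{M}$ is a topological closure in $\mathbb{R}^3$, hence closed. Next, (b) is exactly Proposition \ref{prop_Melem}(iii), which identifies $\Int\mathcal{M}$ with $\{y : D(y)\cap W=\emptyset\}$ and records $\mathcal{M}=\overline{\Int\mathcal{M}}$. For (c), take $\mathcal{S}$ to be the set furnished by Proposition \ref{prop_Mreg}(i): it is closed in $\partial\mathcal{M}$, satisfies $\mathcal{H}^2(\mathcal{S})=0$ (clause i.1), and by clause i.2 every $q\in\partial\mathcal{M}\smallsetminus\mathcal{S}$ has a neighborhood $U$ and a $C^2$ diffeomorphism $\phi:U\to\mathbb{R}^3$ with $\phi(q)=0$ and $\phi(U\cap\mathcal{M})=\mathbb{H}^3$. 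Together with the trivial observation that every interior point of $\mathcal{M}$ already admits a chart (the identity map on a small ball contained in $\Int\mathcal{M}$), this exhibits $\mathcal{M}_{\reg}=\mathcal{M}\smallsetminus\mathcal{S}$ as an embedded $C^2$ manifold-with-boundary of full dimension in $\mathbb{R}^3$, with boundary $\partial_{\reg}\mathcal{M}$ carrying a $C^1$ inward unit normal field. Should the definition of $\CES_0^2(\mathbb{R}^3)$ additionally demand a bounded-geometry or local-finiteness condition on $\partial\mathcal{M}$ --- e.g. that $\partial\mathcal{M}$ decompose into locally finitely many $C^2$ pieces of uniformly bounded curvature --- this follows from the given decomposition of $\partial W$ into the locally finite family $\{\Gamma_i\}$, from Remark \ref{rem_finkappa} and periodicity (which yield $\kappa_{\max}<\infty$), from the explicit parametrization $F$ of $\mathcal{M}_{\roll}$ in Proposition \ref{prop_Mreg}(iii) away from the ``gap regions,'' and from a direct inspection of the finitely many gap regions within one period.

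The only point requiring genuine care --- and the step I expect to be the main obstacle --- is matching the precise hypotheses of $\CES_0^2(\mathbb{R}^3)$ as stated in \S\ref{sss_sing}: in particular, whether that definition asks anything quantitative about $\mathcal{S}$ beyond $\mathcal{H}^2(\mathcal{S})=0$ (for instance that $\mathcal{S}$ be nowhere dense or lie in a countable union of $C^2$ hypersurfaces, both visible from the construction of $\mathcal{S}$ in the proof of Proposition \ref{prop_Mreg}), and whether it requires a clause guaranteeing that the billiard trajectory is defined for almost every initial condition and almost surely avoids $\mathcal{S}$ for all time. The latter, if needed, is delivered by the Poincar\'e-recurrence argument underlying conditions A5a/A5b (available here via Proposition \ref{prop_Melem}(i) and the fact that $\mathcal{M}/\!\sim$ is compact) and is in any case part of the general framework of \S\ref{sec_genref}. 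Modulo this bookkeeping, the corollary is a direct transcription of Propositions \ref{prop_Melem} and \ref{prop_Mreg}.
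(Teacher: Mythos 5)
Your proof is correct and follows essentially the same route as the paper: take $\mathcal{S}$ from Proposition \ref{prop_Mreg}(i), note that i.1 gives condition C1 and i.2 gives condition C2 at regular boundary points, with interior points handled trivially. The extra hedged clauses you raise (closure of the interior, local finiteness, bounded curvature, almost-sure well-definedness of trajectories) are not part of the definition of $\CES_0^2(\mathbb{R}^3)$ in \S\ref{sss_sing}, which consists only of closedness of $\mathcal{M}$ together with C1 and C2, so that bookkeeping is unnecessary.
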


To prove the propositions, we introduce the following notation. Recall that $N$ denotes the number of satellites of the disk. For $y = (x,\alpha) \in \mathbb{R}^3$ and $0 \leq k \leq N-1$, let $S_k(y)$ denote the position of the satellite $S_k$ after rotating the disk in reference configuration (\ref{eq1.2}) about its center of mass counterclockwise by an angle of $\alpha$ and translating by $x$. Explicitly, 
\begin{equation}
    S_k(x_1,x_2,\alpha) = (x_1 + \sin(\alpha + k\rho), x_2 - \cos(\alpha + k\rho)).
\end{equation}
The map $y \mapsto S_k(y): \mathbb{R}^3 \to \mathbb{R}^2$ is smooth, and for each fixed $\alpha$, the map $x \mapsto S_k(x,\alpha)$ is a translation in the plane $\mathbb{R}^2$. 

As noted in Remark \ref{rem_no_contact}, the inner body $D_0$ cannot come into contact with the wall $W$. Thus, for the purposes of describing the geometry of $\mathcal{M}$, there is no loss of generality if we assume $D_0 = \emptyset$. We use this fact without comment in the proofs below.

\begin{proof}[Proof of Proposition \ref{prop_Melem}]
(i) We must show that $\mathcal{M}$ is invariant under translation in $\mathbb{R}^3$ by $\epsilon e_1$ and by $\rho e_3$. The first of these follows because the wall $W = W(\Sigma,\epsilon)$ is invariant under translation by $\epsilon e_1$. The second follows by noting that a rotation of the disk through an angle of $\rho$ about its center of mass maps the disk onto itself.

(ii) Since the satellites of $D$ lie at unit distance from the center, and $W \subset \{(x_1,x_2) : x_2 \leq -1\}$, it follows that $\{(x_1,x_2,\alpha) : x_2 > 0\} \subset \mathcal{M}$. Since $\mathcal{M}$ is closed, the first inclusion follows. 

By construction, $D \cap \Int W = \emptyset$ if and only if none of the satellites of $D$ lie in $\Int W$. That is,
\begin{equation}
    \{y : D(y) \cap \Int W = \emptyset\} = \{y : \text{for } 0 \leq k \leq N-1, S_k(y) \notin \Int W \}.
\end{equation} 
By continuity of the $S_k$'s, this set is closed. The second inclusion then follows by the definition (\ref{eq1.37}) of $\mathcal{M}$.

(iii) Let $\mathcal{M}_0 = \{y : D(y) \cap W = \emptyset\}$. Equivalently,  
\begin{equation}
    \mathcal{M}_0 = \{y : \text{for } 0 \leq k \leq N-1, S_k(y) \notin W \}.
\end{equation} 
By continuity of the $S_k$'s, this is an open set, and it is also evident that $\mathcal{M}_0 \subset \mathcal{M}$. Thus $\mathcal{M}_0 \subset \Int \mathcal{M}$. For the reverse inclusion, suppose that $y \notin \mathcal{M}_0$. Then for some $k$, $S_k(y) \in W$. Let $V \subset \mathbb{R}^3$ be any neighborhood of $y$. Since $W$ is the closure of its interior  by condition A1 in \S\ref{sssec_uphalf}, it follows that $V \cap \Int W \neq \emptyset$. Therefore, by (ii) $V$ intersects $\mathcal{M}^c$. Hence $y \notin \Int \mathcal{M}$, and this proves $\mathcal{M}_0 \supset \Int\mathcal{M}$.

(iv) It follows from (ii) and (iii) that 
\begin{equation}
    \partial \mathcal{M} \subset \{y \in \mathbb{R}^3 : D(y) \cap \Int W = \emptyset \text{ and } D(y) \cap \partial W \neq \emptyset\}.
\end{equation}
If $D$ intersects $W$, then one of its satellites lies in $W$, so the result follows.

(v) By definition of $W = W(\Sigma,\epsilon)$,
\begin{equation} \label{eq4.9'}
    \{(x_1,x_2) : x_2 \leq -1 - \epsilon\} \subset W \subset \{(x_1,x_2) : x_2 \leq -1\}. 
\end{equation}
Consequently, since the satellites lie at unit distance from the center of mass of $D$, if $y =(x_1,x_2,\alpha) \in \partial \mathcal{M}$, then $x_2 \leq 0$; otherwise $D(y)$ could not intersect $W$. On the other hand, a strict lower bound is obtained by putting $D$ in a configuration such that two adjacent satellites lie on the line $x_2 = -1 - \epsilon$. See Figure \ref{closest_fig}. Trigonometry yields that the $x_2$-coordinate of the center of the disk in this configuration is 
\begin{equation} \label{eq4.9}
    -1 - \epsilon + \cos(\rho/2) \geq -\epsilon - \frac{1}{8}\rho^2.
\end{equation}

\begin{figure}
    \centering
    \includegraphics[width = 0.9\linewidth]{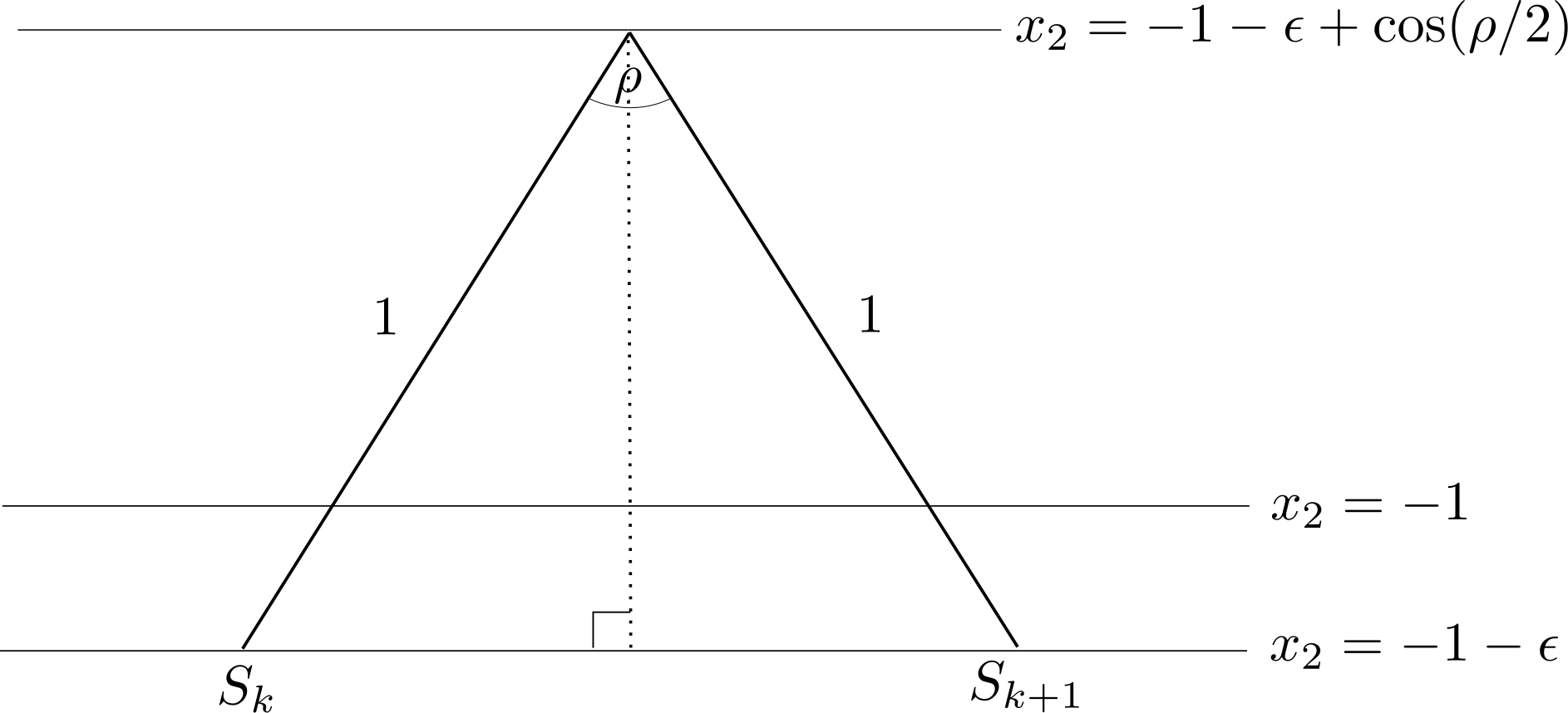}
    \caption{Two satellites lying on the line $x_2 = -1-\epsilon$.}
    \label{closest_fig}
\end{figure}

(vi) Let $y = (x_1,x_2,\alpha) \in \mathcal{M}$, and without loss of generality, suppose that $S_0(y)$, $S_1(y)$, and $S_2(y)$ all lie in $W$. An upper bound for $x_2$ is attained in the case where $S_0(y)$ and $S_2(y)$ both lie on the line $x_2 = -1$. In this case, $x_2 = -1 + \cos\rho \leq -\frac{1}{4}\rho^2$ for $\rho$ sufficiently small. Since $\rho(\epsilon) \to 0$ and $\epsilon/\rho(\epsilon)^2 \to 0$, this contradicts the bound (\ref{eq4.9}) if $\epsilon$ is sufficiently small.

(vii) Suppose $y = (x_1,x_2,\alpha) \in \overline{\mathcal{M}^c} = (\Int \mathcal{M}^c)$, and $x_2 \geq 0$. Then at least one satellite of $D(y)$ lies in $W$. By (\ref{eq4.9'}) and the fact that each satellite lies at unit distance from the center of mass of $D$, this implies that $x_2 = 0$, and $\alpha = k\rho$ for some $k \in \mathbb{Z}$. That is, 
\begin{equation}
    \{(x_1,x_2,\alpha) : x_2 \geq 0\} \cap \overline{\mathcal{M}^c} \subset \{(x_1,x_2,\alpha) : x_2 = 0 \text{ and } \alpha = k\rho \text{ for some } k \in \mathbb{Z}\},
\end{equation}
and the set on the right is $\mathcal{H}^2$-null.
\end{proof}

\begin{proof}[Proof of Proposition \ref{prop_Mreg}]
(i) We begin by introducing the mapping $g : \mathbb{R}^2 \times \mathbb{R} \to \mathbb{R}^3$ defined by 
\begin{equation}
    g(p_1,p_2,\alpha) = (p_1 - \sin\alpha, p_2 + \cos\alpha, \alpha).
\end{equation}
This is the mapping which takes $(p_1,p_2,\alpha)$ to the unique configuration $y$ such that $S_0(y) = (p_1,p_2)$ and the angular orientation of the disk is $\alpha$. 
One may easily check that $g$ is a diffeomorphism. Consequently, $g$ takes $\mathcal{H}^2$-null sets to $\mathcal{H}^2$-null sets.

Consider the following subsets of $\mathcal{M}$:
\begin{equation}
    A = \{y \in \mathcal{M} : \text{$D(y) \cap W$ contains a singular point of the boundary $\partial W$}\},
\end{equation}
\begin{equation}
    B = \{y \in \mathcal{M} : \text{$D(y) \cap W$ contains more than one satellite of $D$}\}. \hspace{0.6in}
\end{equation}
We define
\begin{equation}
    \mathcal{S} = A \cup B.
\end{equation}
By continuity and the fact that $W$ is closed, it is easy to see that $A$ and $B$ are closed; therefore $\mathcal{S}$ is closed.

To prove (i.1) holds, we must show that $A$ and $B$ are $\mathcal{H}^2$-null. To show that $\mathcal{H}^2(A) = 0$, by symmetry, it is enough to show that $\mathcal{H}^2(\{y : S_0(y) \in \partial_s W\}) = 0$. To prove this, it is enough to show that $g^{-1}\{y : S_0(y) \in \partial_s W\}$ is an $\mathcal{H}^2$-null set. But this set is equal to $\partial_s W \times \mathbb{R}$, and the result follows since $\partial_s W$ is a discrete set of points.

It takes more effort to show that $\mathcal{H}^2(B) = 0$. By Proposition \ref{prop_Melem}(ii) and (vi) and symmetry, it is enough to show that 
\begin{equation}
    \{y \in \mathbb{R}^3 : S_0(y) \in \partial W \text{ and } S_1(y) \in \partial W\}
\end{equation}
is $\mathcal{H}^2$-null. By pulling back with respect to $g$, it is equivalent to show that 
\begin{equation}
    K := \{(p,\alpha) \in \partial W \times \mathbb{R} : S_1(g(p,\alpha)) \in \partial W\}
\end{equation}
is $\mathcal{H}^2$-null. Furthermore, by periodicity, it is enough to show that 
\begin{equation}
    K_1 := \{(p,\alpha) \in \partial W_1 \times [0,2\pi) : S_1(g(p,\alpha)) \in \partial W\}
\end{equation}
is $\mathcal{H}^2$-null, where $\partial W_1 := \partial W \cap [0,\epsilon) \times \mathbb{R}$ is a single period of the boundary of the wall $W$. 

Note that the restriction of $\mathcal{H}^2$ to $\partial W_1 \times [0,2\pi)$ is just the product measure induced by Lebesgue measure on each factor. We denote Lebesgue measure on both factors by $m$. For each $p \in \partial W_1$, let 
\begin{equation}
    F_p = \{\alpha \in [0,2\pi) : S_1(g(p,\alpha)) \in \partial W\},
\end{equation}
and let
\begin{equation}
    E = \{p \in \partial W_1 : m(F_p) > 0\}.
\end{equation}
For $r > 0$ and $p \in \mathbb{R}^2$, let $C(r,p)$ denote the circle of radius $r$ centered at $p$. Trigonometry shows that the distance between any two satellites is 
\begin{equation}
    2\sin(\rho/2).
\end{equation}
Therefore for each $p \in \partial W_1$, the mapping $\alpha \mapsto S_1(g(p,\alpha))$ takes $\alpha$ to the point on the circle $C(2\sin(\rho/2), p)$ making an angle of $\alpha$ counterclockwise from the point $S_1(g(p,0))$ in the same circle. Therefore, for any $U \subset [0,2\pi)$, 
\begin{equation}
    S_1(g(p,U)) \subset C(2\sin(\rho/2), p),
\end{equation}
and
\begin{equation} \label{eq4.23}
    m(S_1(g(p,U))) = 2\sin(\rho/2)m(U).
\end{equation}
For each $p \in \partial W_1$, let $G_p = S_1(g(p,F_p))$. If $p \in E$, then 
\begin{equation} \label{eq4.24}
    G_p \subset \partial W_2
\end{equation}
where $\partial W_2 := \partial W \cap [-1, \epsilon + 1]$, a bounded set. Moreover, since each $G_p$ is a subset of a circle centered at $p$, if $p \neq p'$, then $G_p$ and $G_{p'}$ can intersect in at most two points. In particular $m(G_p \cap G_{p'}) = 0$. Applying this and (\ref{eq4.23}) and (\ref{eq4.24}), we obtain
\begin{equation}
    \sum_{p \in E} 2\sin(\rho/2)m(F_p) = \sum_{p \in E} m(G_p) = m\left(\bigcup_{p \in E} G_p\right) \leq m(\partial W_2) < \infty.
\end{equation}
Since $m(F_p) > 0$ for each $p \in E$, it follows that $E$ is countable. Therefore, 
\begin{equation}
    \mathcal{H}^2(K_1) = \int_E m(F_p) m(\dd p) = 0,
\end{equation}
and this concludes the proof that $\mathcal{H}^2(B) = 0$.

It remains to show that (i.2) holds. Let $q = (p_1,p_2,\beta) \in \partial \mathcal{M} \smallsetminus \mathcal{S}$. Then exactly one satellite of $D(q)$ lies in $\partial_{\reg} W$, and all the other satellites lie in $W^c$. Without loss of generality, we may suppose $S_0(q) \in \partial_{\reg} W$. Let $V \subset \mathbb{R}^2$ be a neighborhood of $S_0(q)$ chosen small enough that there is some diffeomorphism $\psi : V \to \mathbb{R}^3$ such that $\psi(S_0(q)) = 0$, and $\psi(V \cap \overline{W^c}) = \{(x_1,x_2) : x_2 \geq 0\}$. Let 
\begin{equation}
    \widehat{U} = S_0^{-1}(V) = g(V \times \mathbb{R}),
\end{equation}
and let $\phi : \widehat{U} \to \mathbb{R}^3$ be defined by
\begin{equation}
    \phi(x_1,x_2,\alpha) = (\psi(S_0(x_1,x_2,\alpha)),\alpha - \beta).
\end{equation}
This is easily seen to be a diffeomorphism, with inverse given by $\phi^{-1}(x_1,x_2,\alpha) = g(\psi^{-1}(x_1,x_2),\alpha + \beta)$ and with $\phi(q) = 0$. We may find a neighborhood $U \subset \widehat{U}$ of $q$ such that $S_k(y) \in W^c$ for all $y \in U$ and $1 \leq k \leq N-1$. Then 
\begin{equation}
\begin{split}
    \phi(U \cap \mathcal{M}) & = \phi(U \cap \overline{\{y \in \mathbb{R}^3 : S_0(y) \in W^c\}}) \\
    & = \phi(U \cap \{y \in \mathbb{R}^3 : S_0(y) \in \overline{W^c}\}) \\
    & = \phi(U) \cap \psi(V \cap \overline{W^c}) \times \mathbb{R} \\
    & = \phi(U) \cap \{(x_1,x_2,x_3) : x_2 \geq 0\}.
\end{split}
\end{equation}
Thus $\phi : U \to \phi(U)$ gives us the desired map.

(ii) Since $\Gamma_{\roll} \subset \partial \mathcal{M}$, it is clear that for any $y \in \Gamma_{\roll}$, at least one satellite of $D(y)$ lies in $W$. Since $\Gamma_{\roll} \subset \mathcal{M}_{\roll} \subset \mathbb{R} \times [-\epsilon,\infty) \times \mathcal{Z}$, (ii) will follow from:

\begin{claim}{4.2.1} \label{claim4.2.1}
Suppose $y = (x_1,x_2,\alpha) \in \mathbb{R} \times [-\epsilon,\infty) \times \mathcal{Z}$. Then at most one satellite of $D(y)$ lies in $W$.
\end{claim}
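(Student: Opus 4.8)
The plan is to turn the claim into a counting statement about the angular positions of the satellites on the circle $\mathbb{R}/2\pi\mathbb{Z}$. Recall that $D$ has $N=2\pi/\rho$ satellites and that, for $y=(x_1,x_2,\alpha)$, the $k$-th satellite sits at $S_k(y)=(x_1+\sin(\alpha+k\rho),\,x_2-\cos(\alpha+k\rho))$. The first step is the observation that membership in $W$ constrains only the height of a satellite: by condition A3$'$ we have $W\subset\mathbb{R}\times(-\infty,-1]$, so if $S_k(y)\in W$ then its second coordinate is at most $-1$, i.e.\ $x_2-\cos(\alpha+k\rho)\le -1$; combined with the hypothesis $x_2\ge-\epsilon$ this gives the necessary condition $\cos(\alpha+k\rho)\ge 1-\epsilon$. (The hypothesis $\alpha\in\mathcal{Z}$ is not needed for this particular claim.)

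Next I would bound the number of indices $k\in\{0,\dots,N-1\}$ for which $\cos(\alpha+k\rho)\ge 1-\epsilon$. Since $2\pi/\rho=N$ is an integer, the angles $\alpha+k\rho$, reduced mod $2\pi$, are pairwise distinct, and any two of them lie at circular distance at least $\rho$ apart: the difference of two such angles is $m\rho\bmod 2\pi$ for some $m$ with $1\le m\le N-1$, whose distance to $0$ on the circle is $\min(m\rho,(N-m)\rho)\ge\rho$. On the other hand, $\{\phi\in\mathbb{R}/2\pi\mathbb{Z}:\cos\phi\ge 1-\epsilon\}$ is a single arc of length $2\arccos(1-\epsilon)$. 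Hence, provided $2\arccos(1-\epsilon)<\rho$, this arc contains at most one of the satellite angles, and so at most one satellite of $D(y)$ can lie in $W$.

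Finally I would check $2\arccos(1-\epsilon)<\rho(\epsilon)$ for $\epsilon$ small, which is the regime already assumed in Proposition~\ref{prop_Mreg}. Writing $\arccos(1-\epsilon)=2\arcsin\sqrt{\epsilon/2}$ and using $\arcsin t\le\frac{\pi}{2}t$ for $0\le t\le 1$ gives $2\arccos(1-\epsilon)\le\pi\sqrt{2\epsilon}$; since the standing hypothesis on $\rho$ is that $\epsilon^{1/2}/\rho(\epsilon)\to 0$ as $\epsilon\to 0$, we get $\pi\sqrt{2\epsilon}<\rho(\epsilon)$ once $\epsilon$ is sufficiently small. I do not expect any real obstacle in this argument; the only points needing a little care are the clean statement of the ``an arc of length $<\rho$ meets an evenly $\rho$-spaced set in at most one point'' step and the elementary bound on $\arccos(1-\epsilon)$, both of which are routine.
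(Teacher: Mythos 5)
Your argument is correct as a proof of the claim exactly as stated, and it takes a genuinely different route from the paper's. The paper uses $x_2\ge-\epsilon$ only to place any satellite lying in $W$ inside the strip $\{-1-\epsilon\le x_2\le -1\}$, then invokes Proposition \ref{prop_Melem}(vi) to reduce to a pair of \emph{adjacent} satellites, and finishes with the isosceles-triangle computation: the chord joining adjacent satellites has length $2\sin(\rho/2)$ and makes angle $\rho/2-|\alpha|$ with the horizontal, so a height difference of at most $\epsilon$ forces $|\alpha|\ge\rho/2-\delta_0$, contradicting $\alpha\in\mathcal{Z}$ (this is exactly where the definition of $\delta_0$ comes from). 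You instead use the full strength of $x_2\ge-\epsilon$ satellite by satellite: any satellite in $W$ must satisfy $\cos(\alpha+k\rho)\ge 1-\epsilon$, hence point within $\arccos(1-\epsilon)=O(\epsilon^{1/2})$ of straight down, and since the satellite angles are $\rho$-separated with $\epsilon^{1/2}=o(\rho)$, at most one can do so; your elementary estimates ($\arccos(1-\epsilon)=2\arcsin\sqrt{\epsilon/2}\le\tfrac{\pi}{2}\sqrt{2\epsilon}$) are fine, and the ``$\epsilon$ sufficiently small'' regime is already assumed in Proposition \ref{prop_Mreg}. What each approach buys: yours is shorter, needs neither Proposition \ref{prop_Melem}(vi) nor the triangle geometry, and shows the hypothesis $\alpha\in\mathcal{Z}$ is redundant for the statement as literally written. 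The paper's proof, on the other hand, only uses the weaker fact that two satellites lying in $W$ have heights differing by at most $\epsilon$ --- which holds for \emph{every} $y\in\mathcal{M}$, since satellites can never lie below the line $x_2=-1-\epsilon$ --- so it proves the conclusion on all of $\mathcal{M}\cap\widehat{\mathcal{Z}}$, where Proposition \ref{prop_Melem}(v) only guarantees $x_2\ge-\epsilon-\tfrac18\rho^2$, not $x_2\ge-\epsilon$; that is the setting in which the claim is actually invoked for $\mathcal{M}_{\roll}$ and $\Gamma_{\roll}$. Your pigeonhole argument does not survive that weakening: knowing only that two satellites lie in the strip, they could sit symmetrically at angles $\pm\rho/2$ with equal heights, and then it is precisely the $\alpha\in\mathcal{Z}$ hypothesis that rules this out. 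So your proof is a correct and cleaner proof of the literal claim, while the paper's $\mathcal{Z}$-based route is the one that remains valid for the slightly larger set of configurations it is applied to.
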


To prove this, note that if $y \in \mathbb{R} \times (0,\infty) \times \mathcal{Z}$, then none of the satellites can lie in $W$. Thus, to prove the claim, we may suppose that $y \in \mathbb{R} \times [-\epsilon,0] \times \mathcal{Z}$. It is then enough to show that, for such a $y$, at most one satellite of $D(y)$ lies in the strip $\{(x_1,x_2) : -1 - \epsilon < x_2 \leq -1\}$. Suppose for a contradiction that two satellites lie in the strip. By (vi) the two satellites are adjacent. By symmetry, we may without loss of generality take the adjacent pair  to be $S_0$ and $S_1$. Up to symmetry, the situation is as depicted in Figure \ref{two_sat_fig}. We let $\epsilon_1 \geq 0$ be the magnitude of the difference in the $x_2$-coordinates of $S_0$ and $S_1$, and we let $\delta$ denote the unsigned angle between the ray $\overrightarrow {S_0 S_1}$ and the horizontal line through $S_0$. The center of $D$ and the two satellites $S_0$ and $S_1$ form an isosceles triangle. Elementary triangle geometry shows that 
\begin{equation}
    \delta = \frac{\rho}{2} - |\alpha|, \quad\quad\quad 2\sin\left(\frac{\rho}{2}\right)\sin\delta = \epsilon_1.
\end{equation}
Therefore, 
\begin{equation} \label{eq2.14}
\begin{split}
    \frac{\rho}{2} \geq |\alpha| = \frac{\rho}{2} - \delta & = \frac{\rho}{2} - \arcsin\left(\frac{\epsilon_1}{2\sin(\rho/2)}\right) \\
    & \geq \frac{\rho}{2} - \arcsin\left(\frac{\epsilon}{2\sin(\rho/2)}\right) = \frac{\rho}{2} - \delta_0,
\end{split}
\end{equation}
here using the fact that $\epsilon_1 \leq \epsilon$. Hence $\alpha$ is not in $\mathcal{Z}$, giving us the desired contradiction.

\begin{figure}
    \centering
    \includegraphics[width = 0.9\linewidth]{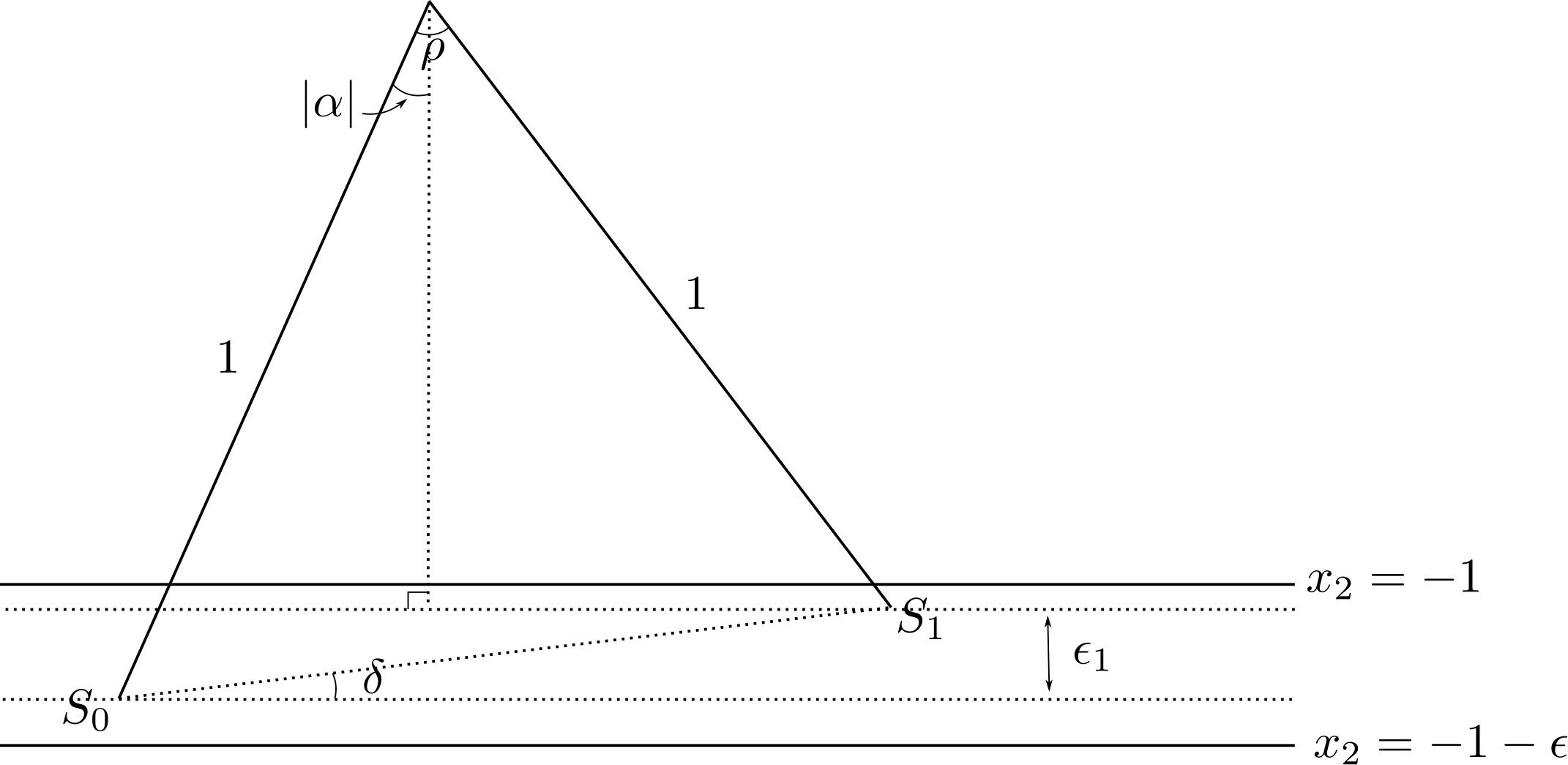}
    \caption{Isosceles triangle formed by the center of the disk and two satellites.}
    \label{two_sat_fig}
\end{figure}

(iii) Let 
\begin{equation} \label{eq4.34}
    \mathcal{Z}^0 = \left(-\frac{1}{2}\rho + \delta_0, \frac{1}{2}\rho - \delta_0 \right), \quad\quad \widehat{\mathcal{Z}}^0 = \mathbb{R}^2 \times \mathcal{Z}^0.
\end{equation}
The set $\mathcal{Z}^0$ is the member of the union (\ref{eq4.1}) corresponding to $k = 0$. Let $F^0$ denote the restriction of $F$ to $\overline{W^c} \times \mathcal{Z}^0$; then 
\begin{equation}
    F^0(x_1,x_2,\alpha) = (x_1 - \sin\alpha, x_2 + \cos\alpha, \alpha), \quad\quad (x_1,x_2,\alpha) \in \overline{W^c} \times \mathcal{Z}^0.
\end{equation}
Let $\mathcal{M}_{\roll}^0 = \mathcal{M} \cap \widehat{\mathcal{Z}}^0$, and $\Gamma_{\roll}^0 = \partial \mathcal{M} \cap\widehat{\mathcal{Z}}^0 \subset \Gamma_{\roll}$. As $\mathcal{M}$ is invariant under translation by $\rho e_3$, so is $\mathcal{M}_{\roll}$ and $\Gamma_{\roll}$. Consequently, it is enough to show that $F^0(\overline{W^c} \times \mathcal{Z}^0) = \mathcal{M}_{\roll}^0$ and $F^0(\partial W \times \mathcal{Z}^0) = \Gamma_{\roll}^0$.

We observe that $F^0$ is just the restriction of $g$ to $\overline{W^c} \times \mathcal{Z}^0$. In particular, $F^0$ maps a point $(p_1,p_2,\alpha) \in \overline{W^c} \times \mathcal{Z}^0$ to a configuration $y = (x_1,x_2,\alpha)$ such that $S_0(y) = (p_1,p_2)$. It follows from Claim \ref{claim4.2.1} in the proof of (ii) that at most one satellite of $D(y)$ can lie in the strip $\mathbb{R} \times [-1 - \epsilon, -1]$ and all other satellites lie in $\mathbb{R} \times (0,\infty)$. If one satellite lies in the strip, then as $S_0(y)$ has the minimal $x_2$-coordinate of all the satellites of $D(y)$, it follows that this satellite must be $S_0(y)$. As $S_0(y) = (p_1,p_2) \in \overline{W^c}$, it follows that $y \in \mathcal{M}$. This proves $F^0(\overline{W^c} \times \mathcal{Z}^0) \subset \mathcal{M}_{\roll}^0$.

Moreover, if we suppose that $(p_1,p_2,\alpha) \in \partial W \times \mathcal{Z}^0$, then the same reasoning goes through, but $S_0(y) \in \partial W$. Hence $y \in \Gamma_{\roll}^0$, and this proves $F^0(\partial W \times \mathcal{Z}^0) \subset\Gamma_{\roll}^0$.

For the reverse inclusions, note that if $y = (x_1,x_2,\alpha) \in \mathcal{M}_{\roll}^0$, then $(p_1,p_2) := S_0(y) \in \overline{W^c}$, so $F^0(p_1,p_2,\alpha) = y$, and this proves $F^0(\overline{W^c} \times \mathcal{Z}^0) \supset \mathcal{M}_{\roll}^0$.

Moreover, if we suppose that $y \in \Gamma_{\roll}^0$, then exactly one satellite must lie in $\partial W$, by (ii). Since $\alpha \in \mathcal{Z}^0$, this satellite is $S_0$. Thus if we take $(p_1,p_2) := S_0(x_1,x_2,\alpha)$, then $F^0(p_1,p_2,\alpha) = (x_1,x_2,\alpha)$, and this proves $F^0(\partial W \times \mathcal{Z}^0) \supset \Gamma_{\roll}^0$.
\end{proof}

\begin{proof}[Proof of Corollary \ref{cor_classy}]
We verify the definition of the class $\CES_0^2(\mathbb{R}^3)$, given in \S\ref{sss_sing}. Taking $\mathcal{S} = \mathcal{S}$ as in Proposition \ref{prop_Melem}(i), it is immediate that condition C1 of the definition holds. To see that condition C2 holds, let $y \in \mathcal{M} \smallsetminus \mathcal{S}$. If $y \in \Int \mathcal{M}$, then trivially there exists a neighborhood of $y$ which is $C^2$-diffeomorphic to $\mathbb{R}^3$. On the other hand, if $y \in \partial \mathcal{M} \smallsetminus \mathcal{S}$, then we apply Proposition \ref{prop_Melem}(i).
\end{proof}

\begin{remark} \label{rem_normal} \normalfont
One can extract from the proof of Proposition \ref{prop_Mreg} a formula for the unit normal vector to the configuration space boundary at a point $q = (p_1,p_2,\alpha) \in \partial \mathcal{M} \smallsetminus \mathcal{S}$. Indeed, for such a $q$, there exists a unique satellite of $D(q)$ which lies in $\partial W$. Suppose that $S_0(q) \in \partial W$. The proof of (i) tells us that in a neighborhood of $q$, for some $\delta > 0$, the boundary $\partial \mathcal{M}$ is parametrized by 
\begin{equation}
    \widetilde{g}(t,\beta) = g(p(t),\beta), \quad\quad (t,\beta) \in (-\delta, \delta) \times (\alpha - \delta, \alpha + \delta),
\end{equation}
where $p : (-\delta, \delta) \to \partial_{\reg} W$ is a parametrization of an open subset of the regular part of $\partial W$, chosen so that $p(0) = p = (p_1,p_2)$. Let $k = (k_1,k_2)$ denote the unit normal to $\partial W$ at the point $p$, with respect to the Euclidean inner product on $\mathbb{R}^2$. Let 
\begin{equation}
    n(q) = \left(m^{-1} + J^{-1}(k_1 \cos\alpha + k_2 \sin\alpha)^2\right)^{-1/2}\begin{pmatrix} m^{-1}k_1 \\ m^{-1}k_2 \\ J^{-1}(k_1 \cos\alpha + k_2 \sin\alpha) \end{pmatrix}.
\end{equation}
One may easily check that $||n|| = 1$, and that $\langle \partial_t \widetilde{g}(0,\alpha), n \rangle = \langle \partial_\alpha \widetilde{g}(0,\alpha), n \rangle = 0$. Consequently, $n$ is the unit normal to $\partial \mathcal{M}$ at the point $q$.

By symmetry, if more generally $q = (p_1,p_2,\alpha)$ is a point in $\partial \mathcal{M} \smallsetminus \mathcal{S}$ such that $S_j(q) \in \partial W$ and $k = (k_1,k_2)$ is the unit normal to $\partial W$ at the point $S_j(q)$, then the unit normal to $\partial \mathcal{M}$ at $q$ is given by the formula 
\begin{equation} \label{eq3.26}
\begin{split}
     n(q) = R^{-1}\begin{pmatrix} m^{-1}k_1 \\ m^{-1}k_2 \\ J^{-1}\left(k_1 \cos(\alpha - j\rho) + k_2 \sin(\alpha - j\rho)\right) \end{pmatrix},
\end{split}
\end{equation}
where $R := \left(m^{-1} + J^{-1}(k_1 \cos(\alpha - j\rho) + k_2 \sin(\alpha - j\rho)^2\right)^{1/2}$.
\end{remark}

\subsection{Cylindrical configuration space} \label{ssec_cylconfig}

For a fixed wall $W = W(\Sigma,\epsilon)$, the \textit{cylindrical configuration space} is defined by
\begin{equation}
    \mathcal{M}_{\cyl} = \{(x_1,x_2,\alpha) : (x_1 + \alpha, x_2 - 1) \in \overline{W^c}\}.
\end{equation}
This is the cylinder with base 
\begin{equation}
    \widehat{B} := \overline{W^c} + e_2 = \mathcal{M} \cap \{(x_1,x_2,\alpha) : \alpha = 0\}
\end{equation}
and axis
\begin{equation}
    \chi := (m + J)^{-1/2}(1,0,-1) \in \mathbb{S}^2.
\end{equation}
The space $\mathcal{M}_{\cyl}$ is much simpler than $\mathcal{M}$. It is parametrized by
\begin{equation}
    f_{\text{lin}}\begin{pmatrix} x_1 \\ x_2 \\ \alpha \end{pmatrix} = \begin{pmatrix} x_1 - \alpha \\ x_2 + 1 \\ \alpha \end{pmatrix}, \quad\quad (x_1,x_2,\alpha) \in \overline{W^c} \times \mathbb{R}.
\end{equation}
The restriction of $F_{\text{lin}}$ to $\overline{W^c} \times \mathbb{R}$ is just the linearization about $\alpha = 0$ of the parametrization $F$ for $\mathcal{M}_{\roll}$, defined by (\ref{eq4.4}).

Recall the definition of $\mathcal{Z}^0$ and $\widehat{\mathcal{Z}}^0$ (\ref{eq4.34}). The subset of the configuration space $\mathcal{M} \cap \widehat{\mathcal{Z}}^0$ can be expressed as a smooth perturbation of $\mathcal{M}_{\cyl} \cap \widehat{\mathcal{Z}}^0$. To make this statement precise, let $H_1 : \widehat{\mathcal{Z}} \to \widehat{\mathcal{Z}}$ be defined by
\begin{equation} \label{eq4.43}
H_1\left(\begin{matrix} x_1 \\ x_2 \\ \alpha \end{matrix}\right) =\begin{pmatrix}
x_1 + \overline{\alpha} - \sin(\overline{\alpha}) \\
x_2 - 1 + \cos(\overline{\alpha}) \\
\alpha
\end{pmatrix},
\end{equation}
where $\overline{\alpha} := \alpha - \overline{k}\rho$ and $\overline{k} := \text{argmin}\{|\alpha - k\rho| : k \in \mathbb{Z}\}$. (Note that $\overline{k}$ is uniquely determined since $\mathcal{Z}$ does not contain points of the form $\rho/2 + k\rho$.) The map $H_1$ is a diffeomorphism with inverse given by 
\begin{equation}
    H_1^{-1}\left(\begin{matrix} x_1 \\ x_2 \\ \alpha \end{matrix}\right) =\begin{pmatrix}
x_1 - \overline{\alpha} + \sin(\overline{\alpha}) \\
x_2 + 1 - \cos(\overline{\alpha}) \\
\alpha
\end{pmatrix}.
\end{equation}
The following identity holds:
\begin{equation}
    H_1 \circ f_{\text{lin}} = f \quad \text{ on } \overline{W^c} \times \mathcal{Z}^0.
\end{equation}
Consequently, by Proposition \ref{prop_Mreg}(iii),
\begin{equation} \label{eq4.46}
\begin{split}
   & H_1(\mathcal{M}_{\cyl} \cap \widehat{\mathcal{Z}}^0) = \mathcal{M} \cap \widehat{\mathcal{Z}}^0, \quad \text{ and } \\
   &H_1(\partial \mathcal{M}_{\cyl} \cap \widehat{\mathcal{Z}}^0) = \partial \mathcal{M} \cap \widehat{\mathcal{Z}}^0.
\end{split}
\end{equation}
Notice that $|\alpha| \leq \rho(\epsilon)/2$ in $\mathcal{Z}^0$. Consequently 
\begin{equation} \label{eq3.6}
\begin{split}
\sup_{\widehat{\mathcal{Z}}^0} ||H_1 - \text{Id}_{\widehat{\mathcal{Z}}^0}|| & \leq |\sin(\rho/2) - \rho/2| + |-1 + \cos(\rho/2)| \\
& = O(\rho^2) = o(1).
\end{split}
\end{equation}

To define the cylindrical collision law in the space $\mathcal{M}_{\cyl}$, we will need the following result.

\begin{lemma} \label{lem_classy0}
$\mathcal{M}_{\cyl}$ belongs to the class $\CES_0^2(\mathbb{R}^3)$.
\end{lemma}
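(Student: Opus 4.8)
The plan is to verify directly that $\mathcal{M}_{\cyl}$ satisfies the defining conditions C1 and C2 of the class $\CES_0^2(\mathbb{R}^3)$ from \S\ref{sss_sing}, just as was done for $\mathcal{M}$ in Corollary \ref{cor_classy}. The essential input is Corollary \ref{cor_classy} itself (or rather the fact underlying it, namely Proposition \ref{prop_Mreg}(i) applied to the wall $W$) together with the observation that $\mathcal{M}_{\cyl}$ is, by construction, a genuine $C^2$ cylinder over the base $\widehat{B} = \overline{W^c} + e_2$, which inherits its regularity from the wall boundary $\partial W$.

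First I would write down the singular set. Let $\partial_s W$ denote the (discrete, hence $\mathcal{H}^1$-null) set of singular points of $\partial W$, as in \S\ref{sssec_rigid}. Define
\begin{equation}
\mathcal{S}_{\cyl} = \{(x_1,x_2,\alpha) \in \mathcal{M}_{\cyl} : (x_1 + \alpha, x_2 - 1) \in \partial_s W\}.
\end{equation}
This is closed in $\mathcal{M}_{\cyl}$ because $\partial_s W$ is closed (it is discrete and locally finite) and the map $(x_1,x_2,\alpha) \mapsto (x_1 + \alpha, x_2 - 1)$ is continuous; this gives condition C1. To see it is $\mathcal{H}^2$-null, note that $\mathcal{S}_{\cyl}$ is contained in the union over $p \in \partial_s W$ of the lines $\{(p_1 - \alpha, p_2 + 1, \alpha) : \alpha \in \mathbb{R}\}$, which is a countable union of lines and therefore has zero $2$-dimensional Hausdorff measure. (Alternatively one can observe directly that $\mathcal{S}_{\cyl}$ is the image of $\partial_s W \times \mathbb{R}$ under the diffeomorphism $f_{\text{lin}}^{-1}$, composed with the obvious chart, and diffeomorphisms preserve $\mathcal{H}^2$-null sets.)

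Next I would check condition C2: that $\mathcal{M}_{\cyl} \smallsetminus \mathcal{S}_{\cyl}$ is locally $C^2$-diffeomorphic to either $\mathbb{R}^3$ (interior points) or the half-space $\mathbb{H}^3$ (boundary points). For interior points this is trivial since $\mathcal{M}_{\cyl}$ is open near them. For a boundary point $q = (x_1, x_2, \alpha) \in \partial \mathcal{M}_{\cyl} \smallsetminus \mathcal{S}_{\cyl}$, the point $p := (x_1 + \alpha, x_2 - 1)$ lies in $\partial_{\reg} W$, so there is a neighborhood $V$ of $p$ in $\mathbb{R}^2$ and a $C^2$ diffeomorphism $\psi : V \to \mathbb{R}^2$ with $\psi(p) = 0$ and $\psi(V \cap \overline{W^c}) = \{(x_1,x_2) : x_2 \geq 0\}$, exactly as used in the proof of Proposition \ref{prop_Mreg}(i). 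Since $f_{\text{lin}}$ is a global $C^2$ diffeomorphism carrying $\overline{W^c} \times \mathbb{R}$ onto $\mathcal{M}_{\cyl}$ and $\partial W \times \mathbb{R}$ onto $\partial \mathcal{M}_{\cyl}$, I would transport this chart through $f_{\text{lin}}^{-1}$ and then apply $\psi \times \mathrm{id}$ to the first two and last coordinates respectively, exactly mimicking the construction of $\phi$ in the proof of Proposition \ref{prop_Mreg}(i). This produces a $C^2$ diffeomorphism from a neighborhood $U$ of $q$ onto its image intersected with $\mathbb{H}^3 = \{x_3 \geq 0\}$ (up to a trivial permutation of coordinates to put the half-space in the standard position required by the definition in \S\ref{sss_sing}).

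I do not expect any serious obstacle here — the result is essentially immediate from the cylindrical structure and the work already done for $\mathcal{M}$; the only mild care needed is bookkeeping with the change of coordinates $f_{\text{lin}}$ and matching the exact normalization of $\mathbb{H}^3$ demanded by the definition of $\CES_0^2(\mathbb{R}^3)$ in \S\ref{sss_sing}. If one prefers to avoid recopying the chart construction, an even shorter route is available: since $f_{\text{lin}} : \overline{W^c} \times \mathbb{R} \to \mathcal{M}_{\cyl}$ is a global $C^2$ diffeomorphism and the class $\CES_0^2(\mathbb{R}^3)$ is preserved under $C^2$ diffeomorphisms of $\mathbb{R}^3$ (this should follow from the definition in \S\ref{sss_sing}, or from the general machinery of \S\ref{sec_genref}), it suffices to note that $\overline{W^c} \times \mathbb{R} = (\overline{W^c}) \times \mathbb{R}$ belongs to $\CES_0^2(\mathbb{R}^3)$, which in turn follows from the fact that $\overline{W^c} \subset \mathbb{R}^2$ is a closed embedded $C^2$ domain with discrete singular set (conditions A1–A4 on $W$), taking a product with $\mathbb{R}$.
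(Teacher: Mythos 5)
Your proof is correct and follows essentially the same route as the paper: take the singular set to be the family of lines in the $\chi$-direction lying over the bad points of $\partial W + e_2$ (closed, locally finite, hence $\mathcal{H}^2$-null), and at regular boundary points build the $\mathbb{H}^3$-chart as a planar flattening chart for $\overline{W^c}$ times the identity in the cylinder direction. The only cosmetic difference is that the paper discards the lines over \emph{all} endpoints of the fixed decomposition $\{\widehat{\Gamma}_i\}$ rather than just over $\partial_s W$, which avoids having to invoke the flattening chart at every point of $\partial_{\reg} W$ (a fact the paper itself nonetheless uses in the proof of Proposition \ref{prop_Mreg}(i), so your variant is consistent with the paper's standard).
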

\begin{proof}
Recall the given decomposition of $\partial W$ into countably many compact $C^2$ curve segments: $\partial W = \bigcup_{i \in \mathbb{Z}} \Gamma_i$. For each $i \in \mathbb{Z}$, let $\widehat{\Gamma}_i = \Gamma_i + e_2$. For each $i$, let $p_i$ and $p_i'$ denote the two endpoints of $\widehat{\Gamma}_i$ and let $\Int \widehat{\Gamma}_i = \widehat{\Gamma}_i \smallsetminus \{p_i, p_i'\}$. Let
\begin{equation}
    \mathcal{V}_i =  \{(x_1,x_2,\alpha) : (x_1 + \alpha, x_2 - 1) \in \Int\widehat{\Gamma}_i\},
\end{equation}
\begin{equation}
    \mathcal{S} = \left\{(x_1,x_2,\alpha) : (x_1 + \alpha, x_2 - 1) \in \bigcup_{i \in \mathbb{Z}} \{p_i, p_i'\} \right\}.
\end{equation}
Then the boundary of $\mathcal{M}_{\cyl}$ may be written as the disjoint union
\begin{equation}
    \partial \mathcal{M}_{\cyl} = \mathcal{S} \cup \bigcup_{i \in \mathbb{Z}} \mathcal{V}_i
\end{equation}
The set $\mathcal{S}$ is a countable, locally finite union of parallel lines. Thus $\mathcal{S}$ is closed, and the 2-dimensional Hausdorff measure of $\mathcal{S}$ is zero. 

Suppose $y = (x_1,x_2,\alpha) \in \mathcal{M}_{\cyl} \smallsetminus \mathcal{S}$. If $y \in \Int \mathcal{M}_{\cyl}$, then trivially there exists a neighborhood $U$ of $y$ which is entirely contained in $\mathcal{M}_{\cyl}$ and which is $C^2$-diffeomorphic to $\mathbb{R}^3$. If $y \in \mathcal{V}_i$ for some $i$, then 
\begin{equation}
    p := (x_1 + \alpha, x_2 - 1) \in \Int \widehat{\Gamma}_i.
\end{equation}
It follows from the assumptions on the curve segments $\Gamma_i$ that there exists a neighborhood $\widetilde{U} \subset \mathbb{R}^2$ of $p$ and a diffeomorphism $\widetilde{\phi} : \widetilde{U} \to \mathbb{R}^2$ such that $\widetilde{\phi}(p) = 0$ and $\widetilde{\phi}(\widehat{B} \cap \widetilde{U}) = \mathbb{H}^2 := \mathbb{R} \times [0,\infty)$. Let 
\begin{equation}
    U = \{(x_1,x_2,\alpha) : (x_1 + \alpha, x_2 - 1) \in \widetilde{U}\},
\end{equation}
and define $\phi : U \to \mathbb{R}^3$ by 
\begin{equation}
    \phi(x_1,x_2,\alpha) = (\widetilde{\phi}(x_1 + \alpha, x_2 - 1), \alpha).
\end{equation}
This is a diffeomorphism, and moreover $\phi(\mathcal{M}_{\cyl} \cap U) = \mathbb{R} \times [0,\infty) \times \mathbb{R} \cong \mathbb{H}^3$. This completes the proof.
\end{proof}

\subsection{Collision laws} \label{ssec_collaws}

\subsubsection{Definition and basic properties} \label{sssec_coldef}

As before, let $\Sigma$ be a fixed cell, and let $\epsilon > 0$. Recall the plane $\mathbf{P} = \{(x_1,x_2,\alpha) : x_2 = 0\}$, and the hemispheres $\mathbb{S}^2_{\pm} = \mathbb{S}^2 \cap \{(v_1,v_2,\omega) : \pm v_2 > 0\}$. 

In the introduction, we defined the collision law $K^{\Sigma, \epsilon}$ as follows: Let $(y,w) \in \mathbf{P} \times \mathbb{S}^2_+$, and consider the billiard trajectory starting in initial state $(y,-w)$. Suppose the trajectory hits the boundary $\partial \mathcal{M}$ and reflects specularly a certain number of times before returning to the plane $\mathbf{P}$ in a state $(y',w') \in \mathbf{P} \times \mathbb{S}^2_+$. The \textit{collision law (associated with the cell $\Sigma$ and scale $\epsilon$)} is the mapping 
\begin{equation}
    K^{\Sigma,\epsilon}(y,w) = (y',w').
\end{equation}
The domain of this mapping consists of pairs $(y,w)$ such that the billiard trajectory starting from $(y,-w)$ is well defined for all time and returns to the plane $\mathbf{P}$ after only finitely many reflections at regular points of $\partial \mathcal{M}$. For certain initial conditions, the trajectory may not be well-defined for all time. In particular, the trajectory may not in general be continued beyond a point where it hits a singular point of $\partial \mathcal{M}$ (i.e. the set $\mathcal{S}$ defined above) or where it hits the boundary tangentially.

Useful facts about the collision are obtained by observing that the mapping $K^{\Sigma,\epsilon}$ is a special case of the general macro-reflection laws defined in \S\ref{sec_genref}, and applying the results derived in that section. 

Recall the measure on $\mathbf{P} \times \mathbb{S}^2_+$,
\begin{equation}
    \Lambda^2(\dd y \dd w) = \langle w, e_2 \rangle \dd y \sigma(\dd w),
\end{equation}
where $\dd y$ is Lebesgue measure on $\mathbf{P}$ and $\sigma(\dd w)$ is surface measure on $\mathbb{S}^2$. This is just a special case of the measure $\Lambda$ defined in \S\ref{sssec_detref}.

\begin{proposition} \label{prop_detcol}
There exists a full measure open set $\mathcal{F} \subset \mathbf{P} \times \mathbb{S}^2$ such that:

(i) $K^{\Sigma,\epsilon} : \mathcal{F} \to \mathbf{P} \times \mathbb{S}^2_+$ is a well-defined $C^1$ mapping.

(ii) $K^{\Sigma,\epsilon}$ maps $\mathcal{F}$ into $\mathcal{F}$ and is an involution in the sense that $K^{\Sigma,\epsilon} \circ K^{\Sigma,\epsilon} = \text{Id}_{\mathcal{F}}$. Consequently, $K^{\Sigma,\epsilon} : \mathcal{F} \to \mathcal{F}$ is a $C^1$ diffeomorphism.

(iii) $K^{\Sigma,\epsilon}$ preserves the measure $\Lambda^2$.
\end{proposition}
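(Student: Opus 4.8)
The plan is to deduce Proposition \ref{prop_detcol} from the general theory of macro-reflection laws developed in \S\ref{sec_genref}, rather than to argue from scratch. First I would observe that $K^{\Sigma,\epsilon}$ is exactly the macro-reflection law attached to the billiard in $\mathcal{M}$ (with the specular reflection rule justified in \S\ref{sec_specularreflection}) relative to the reference hypersurface $\mathbf{P}$: a state in $\mathbf{P}\times\mathbb{S}^2_+$ is mapped, via backward free flight and specular bounces off $\partial_{\reg}\mathcal{M}$, to the state at its first return to $\mathbf{P}$. By Corollary \ref{cor_classy}, $\mathcal{M}\in\CES_0^2(\mathbb{R}^3)$, and $\mathbb{R}^3$ with the kinetic energy inner product is a geodesically complete Riemannian manifold, so the standing hypotheses of \S\ref{sec_genref} hold. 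Then Proposition \ref{prop_reflproperties} (the $\CES_0^2$ analogue of Proposition \ref{prop_det_ref_properties}) applies and gives (i), the involutivity and diffeomorphism statement in (ii), and the measure preservation in (iii), once a single hypothesis — the analogue of condition A5 from \S\ref{sssec_uphalf} — is verified: that for $\Lambda^2$-almost every $(y,w)\in\mathbf{P}\times\mathbb{S}^2_+$, the billiard trajectory from $(y,-w)$ is defined for all time and returns to $\mathbf{P}$ after finitely many collisions with $\partial_{\reg}\mathcal{M}$. The set $\mathcal{F}$ is then taken to be the full-measure open set of states whose forward and backward trajectories are both of this good type.

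The bulk of the work is verifying that A5-analogue. That a.e. trajectory is defined for all time — i.e. avoids the singular set $\mathcal{S}$, meets $\partial\mathcal{M}$ only transversally, and has non-accumulating collision times — is exactly the sort of statement the $\CES_0^2$ framework of \S\ref{sec_genref} is built to supply, so I would cite it. For the finiteness of the return, the key geometric inputs are double periodicity of $\mathcal{M}$ (Proposition \ref{prop_Melem}(i)) and the slab bound $\partial\mathcal{M}\subset\{-\epsilon-\tfrac18\rho^2\le x_2\le 0\}$ (Proposition \ref{prop_Melem}(v)): together these imply that the part of $\mathcal{M}$ lying weakly below $\mathbf{P}$, pushed to the quotient $\mathcal{M}/\sim$ inside $\mathbb{S}^1\times\mathbb{R}\times\mathbb{S}^1$, is contained in a compact slab. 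Adjoining $\mathbf{P}$ as an extra reflecting face and applying the Poincar\'e Recurrence Theorem to the resulting billiard flow on that compact region (it preserves the finite Liouville measure there), $\Lambda^2$-a.e. trajectory issuing downward from $\mathbf{P}$ returns near its starting state, hence crosses $\mathbf{P}$ upward; a trajectory that returns does so in finite time, and since collisions do not accumulate in finite time for a.e. trajectory, only finitely many occur before the return. This is precisely the mechanism used for conditions A5a/A5b in \S\ref{sssec_uphalf}, transplanted to $\mathcal{M}$.

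For completeness, part (iii) additionally rests on the classical invariance of measures of the form $\dd y\,\sigma(\dd w)$ under billiard maps (Lemma \ref{lem_bilinvar}), which is packaged into Proposition \ref{prop_reflproperties}; and involutivity in (ii) is just time-reversibility: specular reflection is an involution, so running the trajectory backward from $(y',w')=K^{\Sigma,\epsilon}(y,w)$ retraces it and recovers $(y,w)$, provided $(y',w')$ again lies in the good set — which is why $\mathcal{F}$ is defined using both time directions. I expect the main obstacle to be the recurrence argument: one must set it up in a genuinely compact phase space (this is where passing to the doubly periodic quotient and using Proposition \ref{prop_Melem}(v) is essential — the unquotiented $\mathcal{M}$ is noncompact and recurrence would fail) and rule out, for a.e. initial condition, both grazing collisions and infinite collision sequences in bounded time, which is the content borrowed from \S\ref{sec_genref}.
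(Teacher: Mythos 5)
Your proposal follows essentially the same route as the paper: the paper's proof realizes $K^{\Sigma,\epsilon}$ as the macro-reflection law $P^{\mathcal{M},\mathcal{M}_0}$ of \S\ref{sec_genref} with $\mathcal{M}_0=\{(x_1,x_2,\alpha):x_2\ge 0\}$, checks the hypotheses using Corollary \ref{cor_classy}, the slab bound of Proposition \ref{prop_Melem}(v), and the Poincar\'e Recurrence Theorem on the doubly periodic compact quotient, and then invokes Proposition \ref{prop_reflproperties} — exactly as you outline. The only point you gloss over is condition D3' of Remark \ref{rem_extend} (that $A_2=\mathbf{P}\cap(\overline{\mathcal{M}\smallsetminus\mathcal{M}_0})\smallsetminus\Int\mathcal{M}$ is $\mathcal{H}^2$-null, needed because $\partial\mathcal{M}$ can touch $\mathbf{P}$), which is a fact about $\mathcal{M}$ itself rather than something the general framework supplies; the paper verifies it via Proposition \ref{prop_Melem}(vii), and it is a minor addition to your argument.
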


\begin{proof}
We apply the results of \S\ref{sssec_detref} with $\mathcal{R} = \mathbb{R}^3$, $\mathcal{M}_1 = \mathcal{M}$, $\mathcal{M}_0 = \{(x_1,x_2,\alpha) : x_2 \geq 0\}$, and $\mathcal{N} = \overline{\mathcal{M}_1 \smallsetminus \mathcal{M}_0}$. For these results to hold, we need to check that conditions D1, D2, D3', D4, and D5 from \S\ref{sssec_detref} hold (for the statement of D3', see Remark \ref{rem_extend}). If they do, then in the notation of \S\ref{sssec_detref}
\begin{equation}
    K^{\Sigma,\epsilon} = P^{\mathcal{M},\mathcal{M}_0},
\end{equation}
and Proposition \ref{prop_detcol} is just a special case of Proposition \ref{prop_reflproperties}.

Condition D1 is obvious, and condition D2 follows from Corollary \ref{cor_classy}. 

By Proposition \ref{prop_Melem}(ii), $\mathcal{M}_0 \subset \mathcal{M}$. In the notation of Remark \ref{rem_extend}, 
\begin{equation}
    A_2 := \mathbf{P} \cap (\overline{\mathcal{M} \smallsetminus \mathcal{M}_0}) \smallsetminus \Int \mathcal{M} \subset \{(x_1,x_2,\alpha) : x_2 \geq 0\} \cap \mathcal{M},
\end{equation}
and by Proposition \ref{prop_Melem}(vii), the two-dimensional Hausdorff measure of the set on the right is zero. Therefore, $\mathcal{H}^2(A_2) = 0$, and condition D3' holds.

To verify condition D4, note that the boundary of $\mathcal{N}$ is contained in $\partial \mathcal{M} \cup \mathbf{P}$. It follows from Proposition \ref{prop_Melem}(v) that $\mathcal{N}$ is bounded between the planes $x_2 = 0$ and $x_2 = -\epsilon - \frac{1}{2}\rho^2$. Thus the only way for a billiard trajectory in $\mathcal{N}$ starting from a state $(y,w)$ to not return to the boundary is if the trajectory is parallel to the two planes. Hence $\mathcal{U}^+ \smallsetminus \mathcal{U}_{\fin} \subset \{(y,w) \in \mathcal{U}^+ : \langle w, e_2 \rangle = 0\}$, and this is a measure zero subset of $\mathcal{U}^+$.

To verify condition D5, we use the double periodicity of $\mathcal{M}$. Let $\widetilde{\mathcal{N}}$ be the space obtained by identifying points in $\mathcal{N}$ which are translates of each other by $i \epsilon e_1 + j \rho e_3$, $i,j \in \mathbb{Z}$. Since $\mathcal{N}$ is bounded between the planes $x_2 = 0$ and $x_2 = -\epsilon - \frac{1}{2}\rho^2$, the reduced billiard domain $\widetilde{\mathcal{N}}$ is compact, and the induced invariant measure on $\widetilde{\mathcal{N}}$ is finite. Thus the Poincar\'{e} Recurrence Theorem implies that, except on a null set of initial conditions, the billiard trajectory will return to the plane $\mathbf{P}$ after only finitely many collisions with $\partial \mathcal{M}$. 
\end{proof}

For $(j,k) \in \mathbb{Z}^2$, define translations 
\begin{equation} \label{eq4.58}
    \tau_{jk}(y) = y + j\epsilon e_1 + k \rho e_3, \quad\quad y \in \mathbf{P},
\end{equation}
and 
\begin{equation} \label{eq4.59}
    \overline{\tau}_{jk}(y,w) = (\tau_{jk}(y),w), \quad\quad (y,w) \in \mathbf{P} \times \mathbb{S}^2_+.
\end{equation}
An elementary but important fact is that $K^{\Sigma,\epsilon}$ commutes with these translations.

\begin{proposition} \label{prop_colcomm}
For all $(j,k) \in \mathbb{Z}^2$, $\overline{\tau}_{jk} \circ K^{\Sigma,\epsilon} = K^{\Sigma,\epsilon} \circ \overline{\tau}_{jk}$.
\end{proposition}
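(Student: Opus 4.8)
The plan is to show that the entire billiard construction defining $K^{\Sigma,\epsilon}$ is equivariant under the translations $\tau_{jk}$, and then read off the commutation relation. First I would record the invariances of $\tau_{jk}$, viewed as a self-map of $\mathbb{R}^3$: it is a translation, hence an isometry for the kinetic energy inner product with $d\tau_{jk} = \mathrm{Id}$; by Proposition \ref{prop_Melem}(i) it maps $\mathcal{M}$ onto itself, hence $\Int\mathcal{M}$ and $\partial\mathcal{M}$ onto themselves; and since $\tau_{jk}$ changes only the $x_1$- and $\alpha$-coordinates it maps $\mathbf{P} = \{x_2 = 0\}$ onto itself. Since $W$ is $\epsilon$-periodic and rotation by $k\rho$ permutes the satellites (while $D_0$ is rotationally symmetric), one checks $D(\tau_{jk}(y)) \cap W = (D(y) \cap W) + j\epsilon e_1$, from which the singular set $\mathcal{S} = A \cup B$ of the proof of Proposition \ref{prop_Mreg}(i) is $\tau_{jk}$-invariant (alternatively, invoke minimality of $\mathcal{S}$: $\tau_{jk}(\mathcal{S})$ also satisfies i.1 and i.2 since $d\tau_{jk} = \mathrm{Id}$ and $\mathcal{M}$ is $\tau_{jk}$-invariant, so $\tau_{jk}(\mathcal{S}) = \mathcal{S}$). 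Consequently $\partial_{\reg}\mathcal{M} = \partial\mathcal{M} \smallsetminus \mathcal{S}$ is $\tau_{jk}$-invariant, and because $d\tau_{jk} = \mathrm{Id}$ the inward unit normal field satisfies $n(\tau_{jk}(q)) = n(q)$ for $q \in \partial_{\reg}\mathcal{M}$.

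Next I would establish equivariance of the specular billiard evolution. If $t \mapsto (y(t), w(t))$ solves the billiard evolution (free linear motion in $\Int\mathcal{M}$, specular reflection $w \mapsto w - 2\langle w, n\rangle n$ at $\partial_{\reg}\mathcal{M}$) with initial data $(y_0, w_0)$, then $t \mapsto (\tau_{jk}(y(t)), w(t))$ solves it with initial data $(\tau_{jk}(y_0), w_0)$: free motion is preserved because $\tau_{jk}(y + tw) = \tau_{jk}(y) + tw$, collision times are preserved because $\tau_{jk}(\partial\mathcal{M}) = \partial\mathcal{M}$, and the reflection rule is preserved because $n(\tau_{jk}(q)) = n(q)$. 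By uniqueness of the billiard evolution (Proposition \ref{prop_specvolution}, and the results of \S\ref{sec_genref} in general) these are precisely the solutions; in particular the trajectory from $(\tau_{jk}(y_0), w_0)$ fails to be well-defined for all time, or fails to return to $\mathbf{P}$, exactly when the trajectory from $(y_0, w_0)$ does, so the full-measure domain $\mathcal{F}$ of Proposition \ref{prop_detcol} is $\overline{\tau}_{jk}$-invariant.

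Finally, fix $(y,w) \in \mathcal{F}$ and write $K^{\Sigma,\epsilon}(y,w) = (y',w')$, meaning that the trajectory from $(y,-w)$ first returns to $\mathbf{P}$ at some time $t^\ast > 0$ in state $(y',w')$. Applying the equivariance with initial data $(y,-w)$, and using that $\tau_{jk}$ preserves $\mathbf{P}$, the trajectory from $(\tau_{jk}(y),-w)$ first returns to $\mathbf{P}$ at the same time $t^\ast$ in state $(\tau_{jk}(y'),w')$. Hence
\begin{equation}
    K^{\Sigma,\epsilon}(\tau_{jk}(y),w) = (\tau_{jk}(y'),w') = \overline{\tau}_{jk}(y',w') = \overline{\tau}_{jk}\big(K^{\Sigma,\epsilon}(y,w)\big),
\end{equation}
and since $\mathcal{F}$ is $\overline{\tau}_{jk}$-invariant this holds on all of $\mathcal{F}$, which is the assertion.

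The only delicate point is bookkeeping — matching ``first return to $\mathbf{P}$'' correctly across the equivariance and confirming that the exceptional set is translation-invariant. Everything else reduces to the translation invariance of $\mathcal{M}$ (Proposition \ref{prop_Melem}(i)) together with the trivial observation that a translation has identity differential, so it commutes with specular reflection; no analytic estimates are involved.
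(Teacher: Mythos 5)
Your proof is correct and follows essentially the same route as the paper, which simply observes that the commutation is immediate from the translation invariance of $\mathcal{M}$ (Proposition \ref{prop_Melem}(i)); your argument just spells out the details (invariance of $\mathbf{P}$, of the singular set, of the normal field, and equivariance of the billiard flow) that the paper leaves implicit.
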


\begin{proof}
This is immediate from Proposition \ref{prop_Melem}(i), which tells us that the configuration space $\mathcal{M}$ is invariant under the translations $\overline{\tau}_{jk}$.
\end{proof}

The collision law $K^{\Sigma,\epsilon}$ is a associated with a Markov kernel on $\mathbf{P} \times \mathbb{S}^2_+$:
\begin{equation}
    \mathbb{K}^{\Sigma,\epsilon}(y,w;\dd y'\dd w') := \delta_{K^{\Sigma,\epsilon}(y,w)}(\dd y' \dd w').
\end{equation}
We say that a Markov kernel $\mathbb{K}(y,w;\dd y'\dd w')$ is a \textit{rough collision law} if there exist a sequence of cells $\Sigma_i$ (satisfying conditions B1-B5 of \S\ref{sssec_periodic}) and positive numbers $\epsilon_i \to 0$ such that 
\begin{equation}
    \mathbb{K}^{\Sigma_i,\epsilon_i}(y,w; \dd y' \dd w')\Lambda^2(\dd y \dd w) \to \mathbb{K}(y,w; \dd y' \dd w')\Lambda^2(\dd y \dd w)
\end{equation}
weakly in the space of measures on $\mathbf{P} \times \mathbb{S}^2_+$.

The most important basic property of the rough collision laws is the following:

\begin{proposition} \label{prop_rcol_symm}
If $\mathbb{K}(y,w;\dd y' \dd w')$ is a rough collision law, then it is symmetric with respect to the measure $\Lambda^2$ in the sense that for any $h \in C_c((\mathbf{P} \times \mathbb{S}^2_+)^2)$, 
\begin{equation}
\begin{split}
    &\int_{(\mathbf{P} \times \mathbb{S}^2_+)^2} h(y,w,y',w') \mathbb{K}(y,w; \dd y' \dd w') \Lambda^2(\dd y \dd w) \\
    & \quad\quad \int_{(\mathbf{P} \times \mathbb{S}^2_+)^2} h(y',w',y,w) \mathbb{K}(y,w; \dd y' \dd w') \Lambda^2(\dd y \dd w).
\end{split}
\end{equation}
\end{proposition}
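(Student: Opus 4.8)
The plan is to establish symmetry for the approximating deterministic kernels $\mathbb{K}^{\Sigma_i,\epsilon_i}$ and then pass to the weak limit, exactly in the spirit of the passage from Proposition \ref{prop_det_ref_properties} to Proposition \ref{prop_dualref} (equivalently, from Proposition \ref{prop_reflproperties} to Proposition \ref{prop_gensymm}). First I would recall that by Proposition \ref{prop_detcol}, for each fixed cell $\Sigma$ and scale $\epsilon$ there is a full-measure open set $\mathcal{F} \subset \mathbf{P}\times\mathbb{S}^2_+$ on which $K^{\Sigma,\epsilon}$ is a $C^1$ involution preserving $\Lambda^2$. These two facts — involutivity and measure preservation — are precisely what is needed to deduce symmetry of the associated deterministic Markov kernel. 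Concretely, for $h \in C_c((\mathbf{P}\times\mathbb{S}^2_+)^2)$ one writes
\begin{equation*}
\int h(y,w,y',w')\,\mathbb{K}^{\Sigma,\epsilon}(y,w;\dd y'\dd w')\,\Lambda^2(\dd y\dd w) = \int_{\mathcal{F}} h\big((y,w),K^{\Sigma,\epsilon}(y,w)\big)\,\Lambda^2(\dd y\dd w),
\end{equation*}
then changes variables $(y,w) \mapsto K^{\Sigma,\epsilon}(y,w)$, using that this map is a measure-preserving bijection of $\mathcal{F}$ with $(K^{\Sigma,\epsilon})^{-1} = K^{\Sigma,\epsilon}$, to turn the integrand into $h\big(K^{\Sigma,\epsilon}(y,w),(y,w)\big)$. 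This gives the symmetry identity in Proposition \ref{prop_rcol_symm} with $\mathbb{K}$ replaced by $\mathbb{K}^{\Sigma,\epsilon}$, for every admissible $(\Sigma,\epsilon)$.

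Next I would pass to the limit. By definition of a rough collision law there is a sequence $(\Sigma_i,\epsilon_i)$ with $\epsilon_i \to 0$ such that $\mathbb{K}^{\Sigma_i,\epsilon_i}(y,w;\dd y'\dd w')\,\Lambda^2(\dd y\dd w) \to \mathbb{K}(y,w;\dd y'\dd w')\,\Lambda^2(\dd y\dd w)$ weakly on $\mathbf{P}\times\mathbb{S}^2_+$. For a test function $h \in C_c((\mathbf{P}\times\mathbb{S}^2_+)^2)$, the map $(y,w,y',w') \mapsto h(y',w',y,w)$ is again in $C_c((\mathbf{P}\times\mathbb{S}^2_+)^2)$, so applying the weak convergence to both $h$ and its "adjoint" $h^{\mathrm{Adj}}(y,w,y',w') := h(y',w',y,w)$ and using the symmetry identity already established for each $\mathbb{K}^{\Sigma_i,\epsilon_i}$ yields
\begin{equation*}
\int h\,\mathbb{K}(y,w;\dd y'\dd w')\,\Lambda^2 = \lim_i \int h\,\mathbb{K}^{\Sigma_i,\epsilon_i}\,\Lambda^2 = \lim_i \int h^{\mathrm{Adj}}\,\mathbb{K}^{\Sigma_i,\epsilon_i}\,\Lambda^2 = \int h^{\mathrm{Adj}}\,\mathbb{K}(y,w;\dd y'\dd w')\,\Lambda^2,
\end{equation*}
which is exactly the claimed symmetry. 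One small point to check carefully is that the weak convergence stated in the definition of a rough collision law, which is phrased in duality against $C_c$ functions on the product space (and, via the density remark, against tensor products $f(y,w)g(y',w')$), is indeed enough to handle $h^{\mathrm{Adj}}$: since $C_c^\infty(\mathbf{P}\times\mathbb{S}^2_+)\otimes C_c^\infty(\mathbf{P}\times\mathbb{S}^2_+)$ is dense in $C_c((\mathbf{P}\times\mathbb{S}^2_+)^2)$ and is stable under the involution swapping the two factors, this causes no difficulty.

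The only genuinely non-routine step is the change-of-variables computation for the deterministic kernels, where one must be slightly careful that $K^{\Sigma,\epsilon}$ is defined and a measure-preserving involution only off a $\Lambda^2$-null set, and that the complement of $\mathcal{F}$ contributes nothing to either side — but this is handled entirely by Proposition \ref{prop_detcol} together with the fact that $\mathbf{P}\times\mathbb{S}^2_+ \setminus \mathcal{F}$ is $\Lambda^2$-null. I expect the main (very minor) obstacle to be bookkeeping: making explicit that Proposition \ref{prop_rcol_symm} is a direct instance of the general symmetry result Proposition \ref{prop_gensymm} proved in \S\ref{sec_genref}, so that in the final writeup one can simply cite that proposition once the identification $K^{\Sigma,\epsilon} = P^{\mathcal{M},\mathcal{M}_0}$ from the proof of Proposition \ref{prop_detcol} is invoked, and then the weak-limit argument above is the only thing that needs to be spelled out.
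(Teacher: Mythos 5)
Your proposal is correct and matches the paper's argument: the paper proves this by observing that the collision law is a special case of the general macro-reflection law (via Proposition \ref{prop_detcol}), so the result is an instance of Proposition \ref{prop_gensymm}, whose proof is exactly your change-of-variables argument (involutivity plus $\Lambda^2$-invariance for each deterministic kernel) followed by passage to the weak limit. Your explicit handling of the null set outside $\mathcal{F}$ and of the swapped test function $h^{\mathrm{Adj}}$ just spells out details the paper leaves implicit.
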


\begin{proof}
It follows from the proof of Proposition \ref{prop_detcol} that the collision law is a special case of the macro-reflection law defined in \S\ref{sssec_detref}. Therefore, the rough collision law is a special case of the rough reflection law defined in \S\ref{sssec_genrfref}. Thus the proposition follows from Proposition \ref{prop_gensymm}.
\end{proof}

\subsubsection{Cylindrical collision law} \label{sssec_cylcol}

Recall the definition of the cylindrical configuration space. By condition A3' of \S\ref{sssec_rigid}, $\mathbb{R} \times (-\infty,-1 - \epsilon] \subset W \subset \mathbb{R} \times (-\infty,-1]$. It thus follows from the definition of $\mathcal{M}_{\cyl}$ that  
\begin{equation}
    \mathbb{R} \times [0,\infty) \times \mathbb{R} \subset \mathcal{M}_{\cyl} \subset \mathbb{R} \times [-\epsilon,\infty) \times \mathbb{R}.
\end{equation}
Let $(y,w) \in \mathbf{P} \times \mathbb{S}^2_+$. Suppose a point particle starting from initial state $(y,-w)$ hits and reflects specularly from $\partial \mathcal{M}_{\cyl}$ a certain number of times before eventually returning to $\mathbf{P}$ in a state $(y',w') \in \mathbf{P} \times \mathbb{S}^2_+$. By definition, the \textit{cylindrical collision law (associated with cell $\Sigma$ and scale $\epsilon$)} is the mapping 
\begin{equation}
    K^{\Sigma,\epsilon}_{\cyl}(y,w) = (y',w').
\end{equation}
In the notation of \S\ref{sssec_detref} with $\mathcal{M}_0 = \{(x_1,x_2,\alpha) : x_2 \geq 0\}$, 
\begin{equation}
    K^{\Sigma,\epsilon}_{\cyl} := P^{\mathcal{M}_{\cyl}, \mathcal{M}_0}.
\end{equation}
As in the previous case, $K^{\Sigma,\epsilon}_{\cyl}$ is well-defined on a full-measure subset of $\mathbf{P} \times \mathbb{S}^2_+$, and the following proposition holds.

\begin{proposition} \label{prop_detcolcyl}
There exists a full measure open set $\mathcal{F}_{\cyl} \subset \mathbf{P} \times \mathbb{S}^2$ such that:

(i) $K^{\Sigma,\epsilon}_{\cyl} : \mathcal{F}_{\cyl} \to \mathbf{P} \times \mathbb{S}^2_+$ is a well-defined $C^1$ mapping.

(ii) $K^{\Sigma,\epsilon}_{\cyl}$ maps $\mathcal{F}_{\cyl}$ into $\mathcal{F}_{\cyl}$ and is an involution in the sense that $K^{\Sigma,\epsilon} \circ K^{\Sigma,\epsilon} = \text{Id}_{\mathcal{F}_{\cyl}}$. Consequently, $K^{\Sigma,\epsilon} : \mathcal{F}_{\cyl} \to \mathcal{F}_{\cyl}$ is a $C^1$ diffeomorphism.

(iii) $K^{\Sigma,\epsilon}_{\cyl}$ preserves the measure $\Lambda^2$.
\end{proposition}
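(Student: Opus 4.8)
The argument will closely mirror that of Proposition~\ref{prop_detcol}. The plan is to invoke the general machinery of \S\ref{sssec_detref} with $\mathcal{R} = \mathbb{R}^3$, $\mathcal{M}_1 = \mathcal{M}_{\cyl}$, $\mathcal{M}_0 = \{(x_1,x_2,\alpha) : x_2 \geq 0\}$, and $\mathcal{N} = \overline{\mathcal{M}_{\cyl} \smallsetminus \mathcal{M}_0}$; by the definition of the cylindrical collision law this gives $K^{\Sigma,\epsilon}_{\cyl} = P^{\mathcal{M}_{\cyl},\mathcal{M}_0}$, and then the three assertions of the proposition (the existence of a full-measure open $\mathcal{F}_{\cyl}$ on which $K^{\Sigma,\epsilon}_{\cyl}$ is a $C^1$ involution, hence a $C^1$ diffeomorphism, and preservation of $\Lambda^2$) are precisely the conclusions of Proposition~\ref{prop_reflproperties}, provided conditions D1, D2, D3', D4, D5 of \S\ref{sssec_detref} (with D3' as in Remark~\ref{rem_extend}) hold for this data. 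Condition D1 is immediate from the geometry of the half-space $\mathcal{M}_0$, and D2 is exactly Lemma~\ref{lem_classy0}, which here plays the role that Corollary~\ref{cor_classy} played in the proof of Proposition~\ref{prop_detcol}.

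For D4 I would use that $\mathcal{M}_{\cyl}$, and hence $\mathcal{N}$, is confined to the slab $\{-\epsilon \leq x_2 \leq 0\}$: indeed $\mathbb{R} \times [0,\infty) \times \mathbb{R} \subset \mathcal{M}_{\cyl} \subset \mathbb{R} \times [-\epsilon,\infty) \times \mathbb{R}$, and since $\partial W \subset \mathbb{R} \times [-1-\epsilon,-1]$ the boundary $\partial \mathcal{M}_{\cyl}$ lies in $\{-\epsilon \leq x_2 \leq 0\}$. A billiard trajectory in $\mathcal{N}$ that never again meets the boundary must therefore run parallel to the planes $x_2 = \mathrm{const}$, i.e.\ satisfy $\langle w, e_2 \rangle = 0$, which cuts out a null subset of $\mathcal{U}^+$. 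For D5, instead of the $\rho e_3$-periodicity used for $\mathcal{M}$ I would exploit the cylindrical structure directly: $\mathcal{M}_{\cyl}$ is invariant under translation by $\epsilon e_1$ (since $\overline{W^c}$ is $\epsilon$-periodic) and under translation along the axis $\chi$ (since the defining linear map $(x_1,x_2,\alpha)\mapsto(x_1+\alpha,\,x_2-1)$ annihilates $\chi$). Consequently the reduced billiard domain $\mathcal{N}/(\mathbb{R}\chi + \epsilon\mathbb{Z}e_1)$ is a closed subset of a compact set of the form (circle)$\times[-\epsilon,0]$, so it carries a finite invariant measure, and the Poincar\'e Recurrence Theorem yields D5.

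\textbf{Main obstacle.} The one step requiring genuine care is D3', i.e.\ that $A_2 := \mathbf{P} \cap \overline{\mathcal{M}_{\cyl} \smallsetminus \mathcal{M}_0} \smallsetminus \Int \mathcal{M}_{\cyl}$ is $\mathcal{H}^2$-null. Since $\mathbf{P} \subset \mathcal{M}_{\cyl}$, one has $A_2 \subset \mathbf{P} \cap \partial \mathcal{M}_{\cyl} = \{(x_1,0,\alpha) : (x_1+\alpha,-1) \in \partial W\}$. Unlike in the case of $\mathcal{M}$, where Proposition~\ref{prop_Melem}(vii) confines the analogous set to countably many lines, here $\partial W$ may contain an entire segment lying in the top line $x_2 = -1$, and the preimage of such a flat-top segment under $(x_1,\alpha) \mapsto x_1+\alpha$ is a two-dimensional region of $\mathbf{P}$. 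The key observation is that this region does \emph{not} lie in $\overline{\mathcal{M}_{\cyl} \smallsetminus \mathcal{M}_0}$: near a configuration $q$ with $(x_1+\alpha,-1)$ in the relative interior of a flat-top segment, $\overline{W^c}$ coincides locally with the closed half-plane $\{x_2 \geq -1\}$, so $\mathcal{M}_{\cyl}$ coincides locally with $\mathcal{M}_0$ and $q$ cannot be a limit of points of $\mathcal{M}_{\cyl} \cap \{x_2 < 0\}$. Hence $A_2$ is contained in the preimage of $\partial W \cap \{x_2=-1\}$ with the relative interiors of its flat-top segments removed; by condition A4(iv) (in its periodic form, via B4) together with local finiteness of the decomposition of $\partial W$ into $C^2$ segments, this leftover set of first-coordinates is countable and locally finite, so $A_2$ lies in a countable union of lines in $\mathbf{P}$ and is $\mathcal{H}^2$-null. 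With D1--D5 established, the proposition follows from Proposition~\ref{prop_reflproperties} exactly as in the proof of Proposition~\ref{prop_detcol}.
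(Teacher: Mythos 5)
Your proposal is correct and follows essentially the same route as the paper: reduce to Proposition \ref{prop_reflproperties} via conditions D1, D2 (Lemma \ref{lem_classy0}), D3', D4, D5, with the D3' verification resting on exactly the paper's key observation that a point over the relative interior of a flat-top segment is not in $\overline{\mathcal{M}_{\cyl} \smallsetminus \mathcal{M}_0}$, so that only endpoints survive and $A_2$ lies in a countable union of lines (the paper phrases this after projecting to the cylindrical base $\widehat{B}$, you phrase it directly in $\mathbf{P}$, but the argument is the same). Your D5 verification via invariance under $\epsilon e_1$ and continuous translation along $\chi$ is a harmless variant of the compactness-plus-Poincar\'e argument the paper imports from Proposition \ref{prop_detcol} (note only that $\mathcal{M}_{\cyl}$ is not $\rho e_3$-periodic in general, so some such substitute, e.g.\ $\epsilon e_3$-periodicity or axis invariance as you use, is indeed needed).
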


\begin{proof}
The situation is similar to that of Proposition \ref{prop_detcol}. Taking $\mathcal{M}_1 = \mathcal{M}_{\cyl}$, $\mathcal{M}_0 = \{(x_1,x_2,\alpha) : x_2 \geq 0\}$, and $\mathcal{N} = \overline{\mathcal{M}_1 \smallsetminus \mathcal{M}_0}$, it is sufficient to check that conditions D1, D2, D3', D4, and D5 from \S\ref{sssec_detref} hold. The proposition then follows as a special case of Proposition \ref{prop_reflproperties}.

The verification of conditions D1, D2, D4, and D5 is almost identical to the verification of the same conditions in the proof of Proposition \ref{prop_detcol}, so we omit it.

To verify condition D3', we must show that the 2-dimensional Hausdorff measure of 
\begin{equation}
    A_2 := \mathbf{P} \cap (\overline{\mathcal{M}_{\cyl} \smallsetminus \mathcal{M}_0}) \smallsetminus \Int \mathcal{M}_{\cyl}
\end{equation}
is zero. Let $G : \mathbb{R}^3 \to \mathbb{R}^2$ be defined by
\begin{equation}
    G(x_1,x_2,\alpha) = (x_1 + \alpha, x_2).
\end{equation}
Since $\mathcal{M}_{\cyl}$ is the cylinder with base $\widehat{B}$ and axis $\chi$, we have $\mathcal{M}_{\cyl} = G^{-1}(\widehat{B})$. Let $\mathbf{L}$ be the line $\{(x_1,x_2) : x_2 = 0\} \subset \mathbb{R}^2$. As the set $A_2$ is cylindrical with axis $\chi$, the projection of $A_2$ under $G$ onto $\mathbb{R}^2$ is 
\begin{equation}
    \widetilde{A}_2 := \mathbf{L} \cap \overline{\{(x_1,x_2) \in \widehat{B} : x_2 < 0\}} \smallsetminus \Int \widehat{B}.
\end{equation}
Since $\mathbf{L}$ is a subset of $\widehat{B}$, we have $\widetilde{A}_2 \subset \partial \widehat{B}$.

Fix any $q \in \widetilde{A}_2$, and let $\{\widehat{\Gamma}_i, i \in \mathbb{Z}\}$, be the collection of curve segments constituting the boundary of $\widehat{B}$, as in the proof of Lemma \ref{lem_classy0}.   Evidently, $q \in \widehat{\Gamma}_i$ for some $i$. But by our assumptions on the $\Gamma_i$, if the interior of $\widehat{\Gamma}_i$ intersects $\mathbf{L}$, then $\widehat{\Gamma}_i \subset \mathbf{L}$. Thus either $q$ is an endpoint of $\widehat{\Gamma}_i$, or $q \in \Int \widehat{\Gamma}_i \subset \mathbf{L}$. In the second case we may choose a neighborhood $U$ of $q$ small enough that $U \cap \widehat{B} = U \cap \{(x_1,x_2) : x_2 \geq 0\}$. But then $q$ is not in the closure of $\{(x_1,x_2) \in \widehat{B} : x_2 < 0\}$, contrary to our assumption. We conclude that $q$ is an endpoint of $\widehat{\Gamma}_i$.

Let $E$ be the set of endpoints of the $\Gamma_i$'s. By the argument above, $\widetilde{A}_2 \subset E$, and thus $A_2 \subset G^{-1}(E)$ is a discrete collection of lines. Therefore $\mathcal{H}^2(A_2) = 0$.
\end{proof}

For $(j,k) \in \mathbb{Z}^2$ and $s \in \mathbb{R}$, define translations 
\begin{equation}
    \tau_{jk}^{(s)}(y) = y + j\epsilon e_1 + k s \chi, \quad\quad y \in \mathbf{P},
\end{equation}
and 
\begin{equation}
    \overline{\tau}_{jk}^{(s)}(y,w) = (\tau_{jk}^{(s)}(y),w), \quad\quad (y,w) \in \mathbf{P} \times \mathbb{S}^2_+.
\end{equation}
The cylindrical collision law $K_{\cyl}^{\Sigma,\epsilon}$ commutes with these translations.

\begin{proposition} \label{prop_cylcomm}
For all $(j,k) \in \mathbb{Z}^2$, $\overline{\tau}_{jk}^{(s)} \circ K_{\cyl}^{\Sigma,\epsilon} = K_{\cyl}^{\Sigma,\epsilon} \circ \overline{\tau}_{jk}^{(s)}$.
\end{proposition}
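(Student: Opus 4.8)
The plan is to mimic exactly the proof of Proposition \ref{prop_colcomm}, which derived translation-invariance of $K^{\Sigma,\epsilon}$ from translation-invariance of $\mathcal{M}$. Here the relevant object is the cylindrical configuration space $\mathcal{M}_{\cyl}$, and the relevant translations are $\tau_{jk}^{(s)}(y) = y + j\epsilon e_1 + ks\chi$ on the space side, lifted to $\overline\tau_{jk}^{(s)}(y,w) = (\tau_{jk}^{(s)}(y),w)$ on the phase space. So the only thing I need to check is that $\mathcal{M}_{\cyl}$ (and hence $\partial\mathcal{M}_{\cyl}$) is invariant under translation by $\epsilon e_1$ and by $s\chi$, and that these translations preserve the plane $\mathbf{P}$ so that the collision law, defined in terms of billiard trajectories that start and end on $\mathbf{P}$, intertwines with them.

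First I would verify invariance under $\epsilon e_1$: since $\mathcal{M}_{\cyl} = \{(x_1,x_2,\alpha) : (x_1+\alpha,x_2-1)\in\overline{W^c}\}$ and $W = W(\Sigma,\epsilon)$ is $\epsilon$-periodic in the $x_1$-direction (condition A5b / \eqref{eq1.16}), the set $\overline{W^c}$ is $\epsilon$-periodic in its first coordinate, so replacing $x_1$ by $x_1 - \epsilon$ leaves membership unchanged. Second, invariance under $s\chi$ for any $s\in\mathbb{R}$: $\chi = (m+J)^{-1/2}(1,0,-1)$ is, up to the positive scalar $(m+J)^{-1/2}$, the vector $(1,0,-1)$, which is precisely the axis of the cylinder; translating $(x_1,x_2,\alpha)$ by a multiple of $(1,0,-1)$ changes $x_1$ and $\alpha$ by opposite amounts, so the quantity $x_1 + \alpha$ — the only combination of $x_1,\alpha$ that appears in the defining condition — is unchanged, and $x_2$ is unchanged, so membership in $\mathcal{M}_{\cyl}$ is preserved. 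The same computations show $\partial\mathcal{M}_{\cyl}$, $\Int\mathcal{M}_{\cyl}$, and the set $\mathcal{S}$ of singular points (all defined purely in terms of the base $\widehat B$ and the coordinate $x_1+\alpha$, per the proof of Lemma \ref{lem_classy0}) are each invariant.

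With this invariance in hand, the conclusion is essentially formal. A billiard trajectory in $\mathcal{N}_{\cyl} := \overline{\mathcal{M}_{\cyl}\smallsetminus\mathcal{M}_0}$ starting from $(y,-w)$ is, by the uniqueness part of Proposition \ref{prop_specvolution} (free motion plus specular reflection), determined by its initial data and the geometry of $\mathcal{M}_{\cyl}$; translating the whole trajectory by $j\epsilon e_1 + ks\chi$ produces a trajectory with the same initial velocity, starting at $\tau_{jk}^{(s)}(y)$, reflecting from the translated — hence identical — boundary $\partial\mathcal{M}_{\cyl}$ at the translated reflection points, and the inward unit normal at a translated point equals the normal at the original point (the normal depends only on the local geometry of $\partial\mathcal{M}_{\cyl}$, which is translation-invariant), so specular reflection commutes with the translation. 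Since $\tau_{jk}^{(s)}$ maps $\mathbf{P}$ into $\mathbf{P}$ (translation by $e_1$ and by $\chi = (m+J)^{-1/2}(1,0,-1)$ both leave the $x_2$-coordinate fixed at $0$), the translated trajectory first returns to $\mathbf{P}$ exactly when the original one does, at the translated point and with the same velocity. Hence $K_{\cyl}^{\Sigma,\epsilon}(\tau_{jk}^{(s)}(y),w) = (\tau_{jk}^{(s)}(y'),w') = \overline\tau_{jk}^{(s)}(K_{\cyl}^{\Sigma,\epsilon}(y,w))$ wherever either side is defined, which is the claim.

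I do not expect a genuine obstacle here: the statement is a direct translation-equivariance fact of exactly the type already established in Propositions \ref{prop_colcomm} (and implicitly used throughout), and the single point requiring a moment's care is recognizing that the cylinder axis $\chi$ is parallel to $(1,0,-1)$ so that translation along $\chi$ preserves the combination $x_1+\alpha$ defining $\mathcal{M}_{\cyl}$ — which is precisely why the cylindrical approximation was set up with this axis in \S\ref{sssec_cylapprox}. The proof can therefore be written in two or three sentences, citing Proposition \ref{prop_detcolcyl} for well-definedness of $K_{\cyl}^{\Sigma,\epsilon}$ on a full-measure set and the above invariance of $\mathcal{M}_{\cyl}$.
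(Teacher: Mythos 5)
Your proposal is correct and follows essentially the same route as the paper, which likewise deduces the commutation from the invariance of the cylinder $\mathcal{M}_{\cyl}$ (base $\epsilon$-periodic in $e_1$, axis $\chi$) under the translations $\tau_{jk}^{(s)}$; you simply spell out the details (preservation of $x_1+\alpha$, of $\mathbf{P}$, and of normals along translated trajectories) that the paper leaves implicit.
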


\begin{proof}
Since $\mathcal{M}_{\cyl}$ is a cylinder with base $\widehat{B}$ and axis $\chi$, and the base is $\epsilon$-periodic in the $e_1$ direction, it follows that $\mathcal{M}_{\cyl}$ is invariant under the translations $\tau_{jk}^{(s)}$. This implies that $K_{\cyl}^{\Sigma,\epsilon}$ commutes with the translations $\overline{\tau}_{jk}^{(s)}$.
\end{proof}

\subsubsection{Modified collision law} \label{sssec_modcol}

When we compare the mappings $K^{\Sigma,\epsilon}$ and $K^{\Sigma,\epsilon}_{\cyl}$, the modified collision law will serve as a kind of intermediate mapping to which we may compare both. As the definition of the modified collision law is somewhat technical, the reader may wish to refer to \S\ref{ssec_pure_scaling} before reading this section for better motivation.

The plane $\mathbf{P}$ forms the boundary of the two half-spaces $\mathbb{R}^3_{+} := \{(x_1,x_2,\alpha) : x_2 > 0\}$ and $\mathbb{R}^3_{-} := \{(x_1,x_2,\alpha) : x_2 < 0\}$. Recall the mapping $H_1 : \widehat{\mathcal{Z}} \to \widehat{\mathcal{Z}}$ defined by (\ref{eq4.43}). We define subsets of $\widehat{\mathcal{Z}}$:
\begin{equation}
    \widetilde{\mathbf{P}} = H_1(\mathbf{P} \cap \widehat{\mathcal{Z}}),
\end{equation}
\begin{equation}
    \mathcal{O}_+ = H_1(\mathbb{R}^3_+ \cap \widehat{\mathcal{Z}}),
\end{equation}
\begin{equation}
    \mathcal{O}_- = H_1(\mathbb{R}^3_- \cap \widehat{\mathcal{Z}}).
\end{equation}
Then $\mathcal{O}_+$ and $\mathcal{O_-}$ are disconnected open subsets of $\widehat{\mathcal{Z}}$ satisfying 
\begin{equation}
    \overline{\mathcal{O}_+} \cup \overline{\mathcal{O}_-} = \widehat{\mathcal{Z}} \quad \text{ and } \quad \overline{\mathcal{O}_+} \cap \overline{\mathcal{O}_-} = \widetilde{\mathbf{P}},
\end{equation}
where closure is taken in $\widehat{\mathcal{Z}}$.

As $H_1$ fixes the $\alpha$-coordinate, $H_1$ maps $\mathbb{R}^3_- \cap \widehat{\mathcal{Z}}$ into itself. Since $\partial \mathcal{M}_{\cyl} \subset \overline{\mathbb{R}^3_-}$, and $\mathbf{P} \subset \overline{\mathbb{R}^3_+}$, it follows from (\ref{eq4.46}) that 
\begin{equation}
    \partial\mathcal{M} \cap \widehat{\mathcal{Z}} \subset \overline{\mathcal{O}_-} \text{ and } \mathbf{P} \cap \widehat{\mathcal{Z}} \subset \overline{\mathcal{O}_+}.
\end{equation}

The set $\widetilde{\mathbf{P}}$ is a smoothly embedded surface in $\widehat{\mathcal{Z}}$. For $(x_1,\alpha) \in \mathbf{P} \cap \widehat{\mathcal{Z}}$, we define 
\begin{equation} 
    u(x_1,\alpha) = \cos\overline{\alpha} - 1,
\end{equation}
where $\overline{\alpha} = \alpha - \overline{k}\rho$ and $\overline{k} = \text{argmin}\{|\alpha - k\rho| : k \in \mathbb{Z}\}$. One may verify directly from the definition of $H_1$ that $\widetilde{\mathbf{P}}$ is the graph of $u$, i.e. 
\begin{equation}
    \widetilde{\mathbf{P}} = \{(x_1,x_2,\alpha) \in \widehat{\mathcal{Z}} : x_2 = u(x_1,\alpha)\}.
\end{equation}
Two useful observations about $u$ are the following: First, for any $k \in \mathbb{Z}$, the restriction of $u$ to $\mathbf{P} \cap \mathbb{R}^2 \times (k\rho -\rho/2 + \delta_0, k\rho + \rho/2 - \delta_0)$ is concave. Second, we have the estimate on the gradient:
\begin{equation} \label{eq4.65}
    ||\grad u(x_1,\alpha)|| \leq |\sin\overline{\alpha}| \leq \sin(\rho/2) = o(1), \quad \text{ as } \epsilon \to 0.
\end{equation}
Thus $\widetilde{\mathbf{P}}$ is almost ``flat'' and almost parallel to $\mathbf{P}$. 

Let $\mathbb{S}^2_{+\rho/2} = \{w \in \mathbb{S}^2 : \langle w, e_2 \rangle > \sin(\rho/2)\}$, and define $\Psi : \widetilde{\mathbf{P}} \times \mathbb{S}^2_{+\rho/2} \to \mathbf{P} \times \mathbb{S}^2_{+\rho/2}$ by 
\begin{equation} \label{eq4.66}
    \Psi(y,w) = \left(y - \frac{\langle y, e_2\rangle}{\langle w,e_2 \rangle}w, w\right).
\end{equation}
That is, $\Psi$ maps $(y,w)$ to $(y',w)$, where $y'$ is the point of intersection of the ray $\{y + tw : t \geq 0\}$ with $\mathbf{P}$. The fact that, for $(y,w) \in \widetilde{\mathbf{P}} \times \mathbb{S}^2_{+\rho/2}$, the ray makes an angle of at most $\pi/2 - \rho/2$ with $e_2$ guarantees that the ray cannot intersect $\widetilde{\mathbf{P}}$ at any point other than its initial point and thus $\Psi$ is injective. In fact, the following is true:

\begin{lemma} \label{lem_modification}
$\Psi$ is a diffeomorphism onto an open subset of $\mathbf{P} \times \mathbb{S}^2_{+\rho/2}$.
\end{lemma}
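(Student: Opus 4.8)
The statement to prove is that $\Psi : \widetilde{\mathbf{P}} \times \mathbb{S}^2_{+\rho/2} \to \mathbf{P} \times \mathbb{S}^2_{+\rho/2}$, defined by $\Psi(y,w) = \bigl(y - \tfrac{\langle y, e_2 \rangle}{\langle w, e_2 \rangle} w, \, w \bigr)$, is a diffeomorphism onto an open subset of $\mathbf{P} \times \mathbb{S}^2_{+\rho/2}$. The plan is to verify three things: (1) $\Psi$ is smooth; (2) $\Psi$ is injective, with smooth inverse on its image; and (3) the image is open. Smoothness of $\Psi$ is clear: $\widetilde{\mathbf{P}}$ is a smoothly embedded surface (noted in the excerpt), the map $w \mapsto \langle w, e_2 \rangle$ is smooth and bounded away from zero on $\mathbb{S}^2_{+\rho/2}$ (where $\langle w, e_2 \rangle > \sin(\rho/2) > 0$), and the output first coordinate lands in $\mathbf{P}$ by construction since subtracting $\tfrac{\langle y, e_2 \rangle}{\langle w, e_2 \rangle} w$ from $y$ kills the $e_2$-component.

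First I would pin down injectivity more carefully than the one-line remark in the excerpt. The geometric picture is: $\Psi(y,w) = (y',w)$ where $y'$ is where the ray $\{y + tw : t \geq 0\}$ (or its backward extension — one should check the sign of $t$, but it is immaterial for injectivity) meets $\mathbf{P}$. The key point is that $\widetilde{\mathbf{P}}$ is a graph $x_2 = u(x_1,\alpha)$ with $\|\grad u\| \leq \sin(\rho/2)$ by (\ref{eq4.65}), so any line making an angle of less than $\pi/2 - \rho/2$ with $e_2$ (which every ray in direction $w \in \mathbb{S}^2_{+\rho/2}$ does) is transverse to $\widetilde{\mathbf{P}}$ and meets it in at most one point: if $(y_1,u(y_1),\ldots)$ and $(z_1,u(z_1),\ldots)$ both lay on such a line, the $e_2$-displacement $u(z_1) - u(y_1)$ would have magnitude at most $\sin(\rho/2)$ times the horizontal displacement, while the line forces it to be at least $\cot(\pi/2-\rho/2)^{-1} = \tan(\pi/2 - \rho/2)$ times — wait, more simply: along the line, $|e_2\text{-displacement}| \geq \cos(\rho/2)/\sin(\rho/2) \cdot$... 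I would phrase it as: the line has $e_2$-direction cosine $\geq \sin(\rho/2)$ (so it is "steep"), while $\widetilde{\mathbf{P}}$ has slope $\leq \sin(\rho/2) < 1 \leq$ the reciprocal slope of the line, giving at most one intersection; care with the precise Lipschitz bound versus the angle bound is needed but routine. Hence if $\Psi(y,w) = \Psi(y',w')$ then $w = w'$ (read off from the second coordinate) and then $y = y'$ since both lie on $\widetilde{\mathbf{P}}$ and on the same line through the common image point in direction $w$; this forces $y = y'$ by the transversality just discussed.

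Next I would establish that $\Psi$ is an immersion (equivalently, an open map and a local diffeomorphism), which combined with injectivity yields that $\Psi$ is a diffeomorphism onto an open subset. The cleanest route is the inverse function theorem: compute $d\Psi$ at a point $(y,w)$ and show it is invertible. In local coordinates, parametrize $\widetilde{\mathbf{P}}$ by $(y_1,\alpha) \mapsto (y_1, u(y_1,\alpha), \alpha)$ and use the $w$-coordinate directly on $\mathbb{S}^2_{+\rho/2}$; the second block of $d\Psi$ is the identity on the $w$-variables, so invertibility of $d\Psi$ reduces to invertibility of the partial derivative in the $(y_1,\alpha)$ directions of the map $y \mapsto y - \tfrac{\langle y,e_2\rangle}{\langle w,e_2\rangle} w$ restricted to $\widetilde{\mathbf{P}}$ and projected to $\mathbf{P}$ — this is exactly the linear projection of $T_y\widetilde{\mathbf{P}}$ onto $\mathbf{P}$ along the direction $w$, which is an isomorphism precisely because $w$ is transverse to $T_y\widetilde{\mathbf{P}}$ (again by the slope bound (\ref{eq4.65}) versus $w \in \mathbb{S}^2_{+\rho/2}$). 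So $d\Psi$ is everywhere invertible, $\Psi$ is an open map and a local diffeomorphism, and being also injective it is a diffeomorphism onto its (open) image.

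The main obstacle, such as it is, is purely bookkeeping: making the transversality estimate quantitatively correct — reconciling the Lipschitz constant $\sin(\rho/2)$ of $u$ with the angular condition $\langle w, e_2 \rangle > \sin(\rho/2)$ so that the line-direction genuinely lies outside the tangent cone of the graph — and then carrying out the differential computation cleanly in the chosen local coordinates. There is no deep content; the structure is "smooth + injective + local diffeo $\Rightarrow$ diffeo onto open image," and all three inputs follow from the explicit formula for $\Psi$ together with the gradient bound (\ref{eq4.65}) already recorded in the excerpt.
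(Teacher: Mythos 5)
Your proposal is correct and follows essentially the same route as the paper: injectivity via the transversality of rays in direction $w \in \mathbb{S}^2_{+\rho/2}$ to the graph $\widetilde{\mathbf{P}}$ (the gradient bound (\ref{eq4.65})), and then the inverse function theorem, with the invertibility of $\dd\Psi$ reduced to the fact that $w$ is not tangent to $\widetilde{\mathbf{P}}$. The only cosmetic difference is that the paper extends $\Psi$ to an ambient map $\overline{\Psi}$ and checks that the null direction $(w,0)$ of $\dd\overline{\Psi}$ is transverse to $\widetilde{\mathbf{P}} \times \mathbb{S}^2_{+\rho/2}$, whereas you compute the differential intrinsically as the projection of $T_y\widetilde{\mathbf{P}}$ onto $\mathbf{P}$ along $w$ --- the same argument in different coordinates.
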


\begin{proof}
By injectivity, it is enough to show that $\Psi$ is a local diffeomorphism. Let $\overline{\Psi}$ denote a function given by the formula (\ref{eq4.66}) and defined on an open subset of $\mathbb{R}^3 \times \mathbb{R}^3$ containing $\widetilde{\mathbf{P}} \times \mathbb{S}^2_{+\rho/2}$. The differential of $\overline{\Psi}$ takes the form
\begin{equation}
    \dd\overline{\Psi}(x_1,x_2,\alpha, v_1, v_2,\omega) = \begin{bmatrix}
    1 & -\frac{v_1}{v_2} & 0 & & & \\
    0 & 0 & 0 & & * & \\
    0 & -\frac{\omega}{v_2} & 1 & & & \\
    & & & & & \\
    & 0_{3 \times 3} & & & I_{3 \times 3} & \\
    & & & & & 
    \end{bmatrix},
\end{equation}
where $0_{3 \times 3}$ and $I_{3 \times 3}$ are the $3 \times 3$ zero matrix and identity matrix respectively. The null space of this matrix is 
\begin{equation}
    \text{Span}(\overline{w}), \quad \text{where } \overline{w} = (v_1,v_2,\omega,0,0,0) \in \mathbb{R}^6.
\end{equation}
A consequence of the estimate (\ref{eq4.65}) is that if $w = (v_1,v_2,\omega) \in \mathbb{S}^2_{+\rho/2}$, then $w$ cannot be tangent to $\widetilde{\mathbf{P}}$ at $(x_1,x_2,\alpha)$; otherwise $v_2 \leq \grad g(x_1,\alpha) \leq \sin(\rho/2)$. Thus $\overline{w}$ is not tangent to $\widetilde{\mathbf{P}} \times \mathbb{S}^2_+ \subset \mathbb{R}^3 \times \mathbb{R}^3$. Since $\Psi$ is the restriction of $\overline{\Psi}$ to $\widetilde{\mathbf{P}} \times \mathbb{S}^2_{+\rho/2}$, we conclude that the differential of $\Psi$ has full rank, and the result follows by the inverse function theorem.
\end{proof}

Set 
\begin{equation} \label{eq4.69}
    \overline{H}_1(y,w) = (H_1(y,w),w) \quad\quad \text{ for } (y,w) \in \widehat{\mathcal{Z}} \times \mathbb{S}^2.
\end{equation}
This is a diffeomorphism of $\widehat{\mathcal{Z}} \times \mathbb{S}^2$. Define $\eta : \mathbf{P} \cap \widehat{\mathcal{Z}} \times \mathbb{S}^2_{+\rho/2} \to \mathbf{P} \cap \widehat{\mathcal{Z}} \times \mathbb{S}^2_{+\rho/2}$ by 
\begin{equation} \label{eq4.70}
    \eta(y,w) = \Psi \circ \overline{H}_1(y,w).
\end{equation}
This is a diffeomorphism onto its image because $\overline{H}_1$ and $\Psi$ are.

Recall the full-measure subset open subset $\mathcal{F} \subset \mathbf{P} \times \mathbb{S}^2_+$ on which the collision law $K^{\Sigma,\epsilon} : \mathcal{F} \to \mathcal{F}$ is a well-defined involutive $C^1$ diffeomorphism, and let
\begin{equation} 
    \widetilde{\mathcal{F}} = \eta^{-1}(\mathcal{F} \cap K^{\Sigma, \epsilon}(\Img \eta))) \subset \mathbf{P} \times \mathbb{S}^2_{+\rho/2}.
\end{equation}
We define the \textit{modified collision law} $\widetilde{K}^{\Sigma,\epsilon} : \widetilde{\mathcal{F}} \to \widetilde{\mathcal{F}}$ by 
\begin{equation} \label{eq4.72}
    \widetilde{K}^{\Sigma,\epsilon} = \eta^{-1} \circ K^{\Sigma,\epsilon} \circ \eta.
\end{equation}
This is an involutive $C^1$ diffeomorphism of $\widetilde{\mathcal{F}}$ because $K^{\Sigma,\epsilon}$ is an involutive $C^1$ diffeomorphism of $\mathcal{F}$.

Note that $\widetilde{\mathcal{F}}$ is not a full-measure subset of $\mathbf{P} \times \mathbb{S}^2$. Nonetheless, we will see below that $\Lambda^2(B \smallsetminus \widetilde{\mathcal{F}}) \to 0$ as $\epsilon \to 0$ for any set $B$ of finite $\Lambda^2$-measure (see Lemma \ref{lem_mod} and Remark \ref{rem_3.2}).

Recall the translation maps $\tau_{jk}$ and $\overline{\tau}_{jk}$ defined by (\ref{eq4.58}) and (\ref{eq4.59}). It is easy to show directly that $\eta$ commutes with $\overline{\tau}_{jk}$. This observation and Proposition \ref{prop_colcomm} give us

\begin{proposition} \label{prop_modcomm}
For all $(j,k) \in \mathbb{Z}^2$, $\overline{\tau}_{jk} \circ \widetilde{K}^{\Sigma,\epsilon} = \widetilde{K}^{\Sigma,\epsilon} \circ \overline{\tau}_{jk}$.
\end{proposition}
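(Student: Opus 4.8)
The plan is to reduce the statement to the commutation relation already recorded for $K^{\Sigma,\epsilon}$ in Proposition~\ref{prop_colcomm}, together with the elementary fact (asserted in the text preceding the proposition) that the change-of-coordinates diffeomorphism $\eta = \Psi \circ \overline{H}_1$ also commutes with the translations $\overline{\tau}_{jk}$. Concretely, I would proceed in three steps.

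\textbf{Step 1: $\eta$ commutes with $\overline{\tau}_{jk}$.} Since $\eta = \Psi \circ \overline{H}_1$, it suffices to verify the commutation for $\overline{H}_1$ and for $\Psi$ separately. The translation $\overline{\tau}_{jk}$ fixes the velocity coordinate $w$ and sends $(x_1,x_2,\alpha)$ to $(x_1 + j\epsilon, x_2, \alpha + k\rho)$. The crucial observation is that the quantity $\overline{\alpha} = \alpha - \overline{k}\rho$, with $\overline{k} = \argmin_k |\alpha - k\rho|$, is invariant under $\alpha \mapsto \alpha + k\rho$: the minimizer shifts from $\overline{k}$ to $\overline{k} + k$, so $\overline{\alpha}$ is unchanged; this is unambiguous precisely because $\mathcal{Z}$ avoids the midpoints $\rho/2 + k\rho$. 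Since the formula (\ref{eq4.43}) for $H_1$ modifies only the first two coordinates, and only by functions of $\overline{\alpha}$, one reads off $H_1 \circ \tau_{jk} = \tau_{jk} \circ H_1$, hence $\overline{H}_1 \circ \overline{\tau}_{jk} = \overline{\tau}_{jk} \circ \overline{H}_1$. For $\Psi$: because $\langle e_1, e_2\rangle = \langle e_3, e_2\rangle = 0$, the scalar $\langle y, e_2\rangle / \langle w, e_2\rangle$ appearing in (\ref{eq4.66}) is unaffected by $y \mapsto \tau_{jk}(y)$, so $\Psi(\tau_{jk}(y), w) = \tau_{jk}(\Psi(y,w))$ wherever both sides are defined. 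The domain $\widetilde{\mathbf{P}} \times \mathbb{S}^2_{+\rho/2}$ is itself $\overline{\tau}_{jk}$-invariant: $\mathbf{P} \cap \widehat{\mathcal{Z}}$ is $\tau_{jk}$-invariant (as $\mathbf{P} = \{x_2 = 0\}$ and $\mathcal{Z}$ is $\rho$-periodic by (\ref{eq4.1})), and $\widetilde{\mathbf{P}} = H_1(\mathbf{P} \cap \widehat{\mathcal{Z}})$ then inherits invariance from the commutation of $H_1$ with $\tau_{jk}$. Combining, $\eta \circ \overline{\tau}_{jk} = \overline{\tau}_{jk} \circ \eta$, and likewise $\eta^{-1} \circ \overline{\tau}_{jk} = \overline{\tau}_{jk} \circ \eta^{-1}$.

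\textbf{Step 2: $\widetilde{\mathcal{F}}$ is $\overline{\tau}_{jk}$-invariant.} Without loss of generality we may take the set $\mathcal{F}$ of Proposition~\ref{prop_detcol} to be invariant under all $\overline{\tau}_{jk}$ — e.g.\ replace it by $\bigcap_{(j,k) \in \mathbb{Z}^2} \overline{\tau}_{jk}(\mathcal{F})$, which is still a full-measure open set on which $K^{\Sigma,\epsilon}$ is a well-defined involutive $C^1$ diffeomorphism, by Proposition~\ref{prop_colcomm}. Then $\Img \eta$ is $\overline{\tau}_{jk}$-invariant (since $\eta$ commutes with the bijection $\overline{\tau}_{jk}$), $K^{\Sigma,\epsilon}(\Img \eta)$ is $\overline{\tau}_{jk}$-invariant by Proposition~\ref{prop_colcomm}, hence so is $\mathcal{F} \cap K^{\Sigma,\epsilon}(\Img \eta)$, and therefore $\widetilde{\mathcal{F}} = \eta^{-1}\big(\mathcal{F} \cap K^{\Sigma,\epsilon}(\Img \eta)\big)$ is $\overline{\tau}_{jk}$-invariant.

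\textbf{Step 3: Conclusion.} For $(y,w) \in \widetilde{\mathcal{F}}$ we have $\overline{\tau}_{jk}(y,w) \in \widetilde{\mathcal{F}}$, and using the definition (\ref{eq4.72}) of $\widetilde{K}^{\Sigma,\epsilon}$, the commutation of $\eta$ and $\eta^{-1}$ with $\overline{\tau}_{jk}$ from Step 1, and Proposition~\ref{prop_colcomm},
\[
\widetilde{K}^{\Sigma,\epsilon} \circ \overline{\tau}_{jk} = \eta^{-1} \circ K^{\Sigma,\epsilon} \circ \eta \circ \overline{\tau}_{jk} = \eta^{-1} \circ K^{\Sigma,\epsilon} \circ \overline{\tau}_{jk} \circ \eta = \eta^{-1} \circ \overline{\tau}_{jk} \circ K^{\Sigma,\epsilon} \circ \eta = \overline{\tau}_{jk} \circ \eta^{-1} \circ K^{\Sigma,\epsilon} \circ \eta = \overline{\tau}_{jk} \circ \widetilde{K}^{\Sigma,\epsilon}.
\]
This gives the claim. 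The only point requiring any care is the first step — in particular the invariance of $\overline{\alpha}$ under the $\rho$-shift and the bookkeeping of which coordinates $H_1$ and $\Psi$ affect — but this is a short explicit computation with nothing substantial at stake; the domain bookkeeping in Step 2 is then purely formal.
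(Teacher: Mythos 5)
Your proof is correct and is essentially the paper's own argument: the paper proves this proposition precisely by noting that $\eta$ commutes with the translations $\overline{\tau}_{jk}$ and combining this with Proposition \ref{prop_colcomm}, which is your Steps 1 and 3, with you simply spelling out the computation and domain bookkeeping the paper leaves implicit. One minor caveat: the countable intersection $\bigcap_{(j,k)} \overline{\tau}_{jk}(\mathcal{F})$ in your Step 2 need not be open, but this is harmless, since the natural maximal domain $\mathcal{F}$ is already $\overline{\tau}_{jk}$-invariant by the equivariance of the billiard dynamics underlying Proposition \ref{prop_colcomm}, and openness plays no role in the stated equality of maps.
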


We think of $\widetilde{K}^{\Sigma,\epsilon}$ as a collision law obtained by ``modifying'' $K^{\Sigma,\epsilon}$ in the spatial coordinates by $\eta$. Let us consider the perturbation $\eta$ in more detail. We denote its domain more succinctly by 
\begin{equation}
    \mathcal{G} := (\mathbf{P} \cap \widehat{\mathcal{Z}}) \times \mathbb{S}^2_{+\rho/2}.
\end{equation}
Let $(y,w) \in \mathcal{G}$ and let $(y',w') = \eta(y,w) \in \mathbf{P} \times \mathbb{S}^2_+$. Then $w' = w$, and 
\begin{equation}
\begin{split}
    ||y' - y|| & \leq ||y - H_1(y)|| + \left|\frac{\langle H_1(y), e_2 \rangle}{\langle w, e_2 \rangle}\right| \\
    & \leq |\alpha - \sin\overline{\alpha}| + |-1 + \cos\overline{\alpha}| + \frac{|-1 + \cos\overline{\alpha}|}{\sin(\rho/2)} \leq C\rho
\end{split}
\end{equation}
for some constant $C$, here using $|\overline{\alpha}| < \rho/2$. This shows that 
\begin{equation} \label{eq4.75}
    ||\eta - \text{Id}_{\mathcal{G}}||_{L^{\infty}(\mathcal{G})} \leq C\rho = o(1) \quad \text{ as } \epsilon \to 0.
\end{equation}
Since $\eta : \mathcal{G} \to \eta(\mathcal{G})$ is a diffeomorphism, we also have the following estimate:
\begin{equation} \label{eq4.76}
    ||\text{Id}_{\eta(\mathcal{G})} - \eta^{-1}||_{L^{\infty}(\eta(\mathcal{G}))} \leq C\rho = o(1) \quad \text{ as } \epsilon \to 0.
\end{equation}

\section{Proofs of Main Results} \label{sec_main_results}

In this section we prove Theorems \ref{thm_pure_scaling}, \ref{thm_dichotomy}, and \ref{thm_classification}. For a short summary of our arguments, see \S\ref{sssec_minformal}.

\subsection{Cylindrical configuration space} \label{ssec_cyl}

Our first task is to show that a version of Theorem \ref{thm_classification} holds if we replace $\mathcal{M}$ with $\mathcal{M}_{\cyl}$. 

\begin{theorem} \label{thm_cylindrical}
Given a sequence of cells $\Sigma_i$, there exists decreasing sequence of positive numbers $\{b_i\}$ such that exactly one of the following is true:
\begin{enumerate}[label = (\Alph*)]
    \item There exists a Markov kernel $\mathbb{K}$ such that, for any sequence $\epsilon_i \leq b_i$ with $\epsilon_i \to 0$, the limit $\lim_{i \to \infty} \mathbb{K}^{\Sigma_i,\epsilon_i}_{\cyl}$ exists and is equal to $\mathbb{K}$.
    
    \item For any sequence of positive numbers $\epsilon_i \leq b_i$ with $\epsilon_i \to 0$, the limit $\lim_{i \to \infty} \mathbb{K}^{\Sigma_i,\epsilon_i}_{\cyl}$ does not exist.
\end{enumerate}
If (A) holds, then $\mathbb{K}$ takes the following form
\begin{equation} \label{eq3.7}
\mathbb{K}(y_1,y_2,\theta,\psi ; \dd y_1' \dd y_2' \dd\theta' \dd\psi') = \delta_{(y_1,y_2)}(y_1',y_2')\dd y_1'\dd y_2' \times \widetilde{\mathbb{P}}(\theta,\dd\theta') \times \delta_{\pi - \psi}(\psi') \dd\psi',
\end{equation}
where $\widetilde{\mathbb{P}}$ is a Markov kernel on $\mathbb{S}^1_+$ satisfying the following properties:
\begin{enumerate}[label = \roman*.]
    \item $\widetilde{\mathbb{P}}$ is symmetric with respect to the measure $\sin\theta \dd\theta$ on $\mathbb{S}^1_+$.
    \item Let
    \begin{equation}
    \begin{split}
        \widetilde{\Sigma}_i & = \{(y_1,y_2) : (y_1,(1+mJ^{-1})^{-1/2}y_2) \in \Sigma_i\}, \\
        \widetilde{\epsilon}_i & = (1+mJ^{-1})^{-1/2}\epsilon_i, \\
        \mathbb{P} & = \delta_{y_1}(y_1') \dd y_1' \times \widetilde{\mathbb{P}}(\theta,\dd\theta').
    \end{split}
    \end{equation}
    Then 
    \begin{equation}
        \mathbb{P} = \lim_{i \to \infty} \mathbb{P}^{\widetilde{\Sigma}_i, \widetilde{\epsilon}_i}.
    \end{equation}
\end{enumerate}
Consequently, $\mathbb{K}$ is symmetric with respect to the measure $\Lambda^2$.

Moreover, if the sequence of cells $\Sigma_i = \Sigma$ is constant, then we may take $b_i = \infty$ and (A) always holds.

Conversely, if $\mathbb{K}$ is a Markov kernel on $\mathbf{P} \times \mathbb{S}^2_+$ of form (\ref{eq3.7}) such that the measure $\sin\theta \dd\theta$ on $\mathbb{S}^1_+$ is invariant with respect to $\widetilde{\mathbb{P}}$, then there exist a sequence of cells $\{\Sigma_i\}$ and a decreasing sequence of positive numbers $\{b_i\}$ such that $\mathbb{K} = \lim_{i \to \infty} \mathbb{K}^{\Sigma_i,\epsilon_i}_{\cyl}$ whenever $\epsilon_i \to 0$ and $0 < \epsilon_i \leq b_i$.
\end{theorem}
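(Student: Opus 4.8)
The plan is to exploit the cylindrical structure of $\mathcal{M}_{\cyl}$ to reduce the three-dimensional billiard to a trivial linear flow along the axis $\chi$ together with a two-dimensional billiard in a cross-sectional plane, which will turn out to be exactly the upper-half-plane billiard of \S\ref{sssec_uphalf} against a foreshortened wall. Once this reduction is in place, the statements about the form of $\mathbb{K}$ and the role of $\widetilde{\mathbb{P}}$ follow from the already-established theory of rough reflections (Lemma \ref{lem_construct}, Corollary \ref{cor_rough_ref_char}, Proposition \ref{prop_dualref}), and the dichotomy (A)/(B) follows from a soft argument with the pseudometric $d_{\mathcal{G}}^{\Lambda^2}$.

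First I would set up the decoupling. Write $\mathbb{R}^3 = \mathbf{Q}_1 \oplus \mathbb{R}\chi$ orthogonally with respect to $\langle \cdot,\cdot\rangle$, where $\mathbf{Q}_1 = \{y : \langle y,\chi\rangle = 0\}$, and use the orthonormal frame $(\chi^\perp, \widehat{e}_2, \chi)$ from \S\ref{sssec_tilted}. Since $\mathcal{M}_{\cyl}$ is a cylinder with axis $\chi$ lying in $\CES_0^2(\mathbb{R}^3)$ (Lemma \ref{lem_classy0}), every inward unit normal to $\partial\mathcal{M}_{\cyl}$ is orthogonal to $\chi$; hence along any billiard trajectory in $\mathcal{M}_{\cyl}$ the $\chi$-component of the velocity is conserved, the $\chi$-component of position moves linearly, and the $\mathbf{Q}_1$-component executes a two-dimensional billiard in $\mathcal{M}_{\cyl} \cap \mathbf{Q}_1$. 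A direct computation in the frame $(\chi^\perp, \widehat{e}_2, \chi)$ identifies $\mathcal{M}_{\cyl} \cap \mathbf{Q}_1$, in the dual coordinates $(y_1,y_2)$, with a unit translate of $\overline{\widetilde{W}^c}$ for the foreshortened wall $\widetilde{W} = W(\widetilde{\Sigma},\widetilde{\epsilon})$; the factors $(1 + mJ^{-1})^{\pm 1/2}$ in the definitions of $\widetilde{\Sigma}$ and $\widetilde{\epsilon}$ are precisely what the oblique projection along $\chi$ produces, and one checks that foreshortening preserves conditions B1--B5 (up to a harmless rescaling absorbed into $\widetilde{\epsilon}$) and that the singular set of $\mathcal{M}_{\cyl}$ projects onto the (discrete, hence negligible) singular set of $\widetilde{W}$. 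Consequently, in the coordinates $(y_1,y_2,\theta,\psi)$, the map $K^{\Sigma_i,\epsilon_i}_{\cyl}$ sends $\psi \mapsto \pi - \psi$ (the $\chi$-velocity is unchanged, so the $e_2$-component reverses on return to $\mathbf{P}$), applies the two-dimensional macro-reflection law $P^{\widetilde{\Sigma}_i,\widetilde{\epsilon}_i}$ to $(y_1,\theta)$, and shifts $y_2$ by $-\cos\psi$ times the two-dimensional first-return time to $\mathbf{P}\cap\mathbf{Q}_1$; the last displacement equals $(1+mJ^{-1})^{-1/2}\epsilon_i$ times the return time of the rescaled size-one billiard, which is finite $\Lambda^2$-a.e.\ (conditions D4--D5 in the proof of Proposition \ref{prop_detcolcyl}), hence vanishes in the weak limit by the cutoff argument used in the proof of Lemma \ref{lem_construct}. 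Thus $\mathbb{K}^{\Sigma_i,\epsilon_i}_{\cyl}$ is, up to a correction that disappears as $\epsilon_i\to 0$, the coordinate-pushforward of $\mathbb{P}^{\widetilde{\Sigma}_i,\widetilde{\epsilon}_i}(y_1,\theta;\cdot)\otimes\delta_{y_2}(\cdot)\otimes\delta_{\pi-\psi}(\cdot)$.

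With the reduction in hand, the constant-cell case follows from Lemma \ref{lem_construct}(iii): if $\Sigma_i=\Sigma$ then $\widetilde{\Sigma}_i=\widetilde{\Sigma}$ and $\mathbb{P}^{\widetilde{\Sigma},\widetilde{\epsilon}_i}\to\delta_{y_1}(dy_1')\widetilde{\mathbb{P}}^{\widetilde{\Sigma},1}(\theta,d\theta')$ for every $\epsilon_i\to 0$, so $\mathbb{K}^{\Sigma,\epsilon_i}_{\cyl}$ converges to the kernel (\ref{eq3.7}) and $b_i=\infty$ works; symmetry of $\widetilde{\mathbb{P}}$ with respect to $\sin\theta\,d\theta$ is Proposition \ref{prop_dualref} for the limiting two-dimensional law, and symmetry of $\mathbb{K}$ with respect to $\Lambda^2$ then reduces, via the representation (\ref{eq1.28'}) and the $\psi\leftrightarrow\pi-\psi$ symmetry of $\sin^2\psi$, to (\ref{eq1.18'}) for $\widetilde{\mathbb{P}}$ (alternatively it is immediate from the general theory of \S\ref{sec_genref} since $\mathcal{M}_{\cyl}\in\CES_0^2$). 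For a general sequence $\{\Sigma_i\}$, let $\mathbb{K}_i:=\lim_{\epsilon\to 0}\mathbb{K}^{\Sigma_i,\epsilon}_{\cyl}$, which exists by the constant-cell case and depends on $\Sigma_i$ alone, and choose $\{b_i\}$ decreasing with $d_{\mathcal{G}}^{\Lambda^2}(\mathbb{K}^{\Sigma_i,\epsilon}_{\cyl},\mathbb{K}_i)<1/i$ for $0<\epsilon\le b_i$; then for any $\epsilon_i\le b_i$ with $\epsilon_i\to 0$ we have $d_{\mathcal{G}}^{\Lambda^2}(\mathbb{K}^{\Sigma_i,\epsilon_i}_{\cyl},\mathbb{K}_i)\to 0$, so $\lim_i\mathbb{K}^{\Sigma_i,\epsilon_i}_{\cyl}$ exists iff $\lim_i\mathbb{K}_i$ does and the two agree, which is the dichotomy; and when the limit exists it again has the form (\ref{eq3.7}) with symmetric $\widetilde{\mathbb{P}}$ because that class of kernels is closed under $d_{\mathcal{G}}^{\Lambda^2}$-convergence (the two $\delta$-factors and the symmetry property pass to the limit). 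For the converse, given $\mathbb{K}$ of the form (\ref{eq3.7}) with $\widetilde{\mathbb{P}}$ symmetric with respect to $\sin\theta\,d\theta$, apply Corollary \ref{cor_rough_ref_char} to obtain cells $\widetilde{\Sigma}_i$ and $\widetilde{\epsilon}_i\to 0$ with $\mathbb{P}^{\widetilde{\Sigma}_i,\widetilde{\epsilon}_i}\to\delta_{y_1}(dy_1')\widetilde{\mathbb{P}}(\theta,d\theta')$, un-foreshorten to get $\{\Sigma_i\}$ and the associated $\epsilon_i$, apply the previous step to produce $\{b_i\}$, and invoke the reduction once more to conclude $\mathbb{K}^{\Sigma_i,\epsilon_i}_{\cyl}\to\mathbb{K}$ whenever $\epsilon_i\le b_i$ and $\epsilon_i\to 0$.

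The main obstacle is the reduction itself: carrying out the oblique change of frame precisely enough to see that the cross-sectional two-dimensional billiard domain is exactly $\overline{\widetilde{W}^c}$ with the claimed foreshortening, that the reduction is valid off a $\Lambda^2$-null set and compatible with the $\CES_0^2$ framework of \S\ref{sec_genref}, and that the genuine non-product $y_2$-displacement, though nonzero at finite $\epsilon$, is controlled uniformly enough on sets of nearly full $\Lambda^2$-measure to be invisible in the weak limit. Everything downstream is either bookkeeping or a direct appeal to the two-dimensional theory and the pseudometric formalism.
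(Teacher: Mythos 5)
Your proposal is correct, and its heart -- the decoupling of the cylindrical billiard into a conserved $\chi$-component plus a two-dimensional billiard in $\mathbf{Q}_1$ against the foreshortened wall, followed by an appeal to Lemma \ref{lem_construct}, Corollary \ref{cor_rough_ref_char} and Proposition \ref{prop_dualref} -- is exactly the paper's Steps 1a--1d. Where you diverge is in how $b_i$ is chosen and how the dichotomy is run: the paper fixes $b_i = C_i^{-1}i^{-1}$ where $C_i$ is a quantitative tail bound on $\|K^{\Sigma_i,1}_{\cyl} - \mathrm{Id}\|$ over one fundamental cell, and uses this (Claim 5.1.1, via zooming and tessellation) to prove a comparison identity valid at finite $\epsilon_i$ uniformly along all admissible sequences, from which existence of $\lim_i \mathbb{K}^{\Sigma_i,\epsilon_i}_{\cyl}$ is shown to be equivalent to existence of $\lim_i \mathbb{P}^{\widetilde{\Sigma}_i,\widetilde{\epsilon}_i}$, giving the form (\ref{eq3.7}), properties i--ii, and the converse all at once; you instead define $\mathbb{K}_i$ as the pure-scaling limit for the fixed cell $\Sigma_i$, choose $b_i$ softly so that $d_{\mathcal{G}}^{\Lambda^2}(\mathbb{K}^{\Sigma_i,\epsilon}_{\cyl},\mathbb{K}_i) < 1/i$ for $\epsilon \le b_i$, and get the dichotomy by the triangle inequality -- which means you only ever need the axial-displacement estimate in the fixed-cell case, where finiteness a.e.\ of the scale-one return time suffices (for varying cells your "finite a.e., hence negligible" remark would not be uniform, but your architecture never uses it there). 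Your route is lighter and is in fact the same pseudometric trick the paper deploys later for Theorems \ref{thm_dichotomy} and \ref{thm_classification}; the paper's route buys an explicit finite-$\epsilon$ identity that delivers property ii and the converse without any limit-closure claim. The one step you assert rather than prove is that the class of kernels of form (\ref{eq3.7}) with symmetric $\widetilde{\mathbb{P}}$ is closed under $d_{\mathcal{G}}^{\Lambda^2}$-limits: the two $\delta$-factors do pass to the limit by a support argument, but the independence of the $\theta$-factor from $(y_1,y_3,\psi)$, the identification of its first marginal as $\sin\theta\,\dd\theta$ (no mass escaping to $\theta' \in \{0,\pi\}$, which uses that the limit is assumed to be a Markov kernel), and the verification of property ii for the specific scales $\widetilde{\epsilon}_i$ all require a short factorization/disintegration argument -- most cleanly done by writing $\mathbb{K}_i = \delta_{(y_1,y_3)}\otimes\widetilde{\mathbb{P}}^{\widetilde{\Sigma}_i,1}\otimes\delta_{\pi-\psi}$ explicitly, testing against product functions to see that convergence of $\mathbb{K}_i$ is equivalent to convergence of $\widetilde{\mathbb{P}}^{\widetilde{\Sigma}_i,1}$, and invoking Lemma \ref{lem_construct}(ii) and Remark \ref{rem_indep}. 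This is routine bookkeeping, not a flaw in the approach, but it should be written out.
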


\begin{proof} First, we will define the sequence $b_i$. Define the parallelogram
\begin{equation}
    R_{1 \times 1} = \{(x_1,\alpha) \in \mathbf{P} : 0 \leq x_1 + \alpha \leq 1, 0 \leq \alpha \leq 1\}.
\end{equation}
By Proposition \ref{prop_detcolcyl}, $K^{\Sigma_i,1}_{\cyl}$ is well-defined and finite $\Lambda^2$-almost surely. Since $\Lambda^2(R_{1 \times 1} \times \mathbb{S}^2_+) < \infty$, for each $i$ there exists a constant $C_i$ such that 
\begin{equation} \label{eq4.3}
    \Lambda^2(\{(y,w) \in R_{1 \times 1} \times \mathbb{S}^2_+ : ||K^{\Sigma_i,1}_{\cyl}(y,w) - (y,w)|| \geq C_i\}) \leq i^{-1}.
\end{equation}
By replacing each $C_i$ with $\max\{C_1, \dots, C_i\}$, we may suppose that the constants $C_i$ are increasing. We define 
\begin{equation} \label{eq5.4}
    b_i = C_i^{-1}i^{-1}.
\end{equation}
We assume possibility (B) does not hold, i.e. we assume that $\mathbb{K} = \lim_{i \to \infty} \mathbb{K}^{\Sigma_i,\epsilon_i^{(0)}}_{\cyl}$ does exist for some sequence of positive numbers $\epsilon_i^{(0)} \leq b_i$ such that $\epsilon_i^{(0)} \to 0$. We will argue that (A) holds and that $\mathbb{K}$ takes the form described above. 

Fix any sequence of positive numbers $\epsilon_i \leq b_i$ with $\epsilon_i \to 0$, and consider the billiard trajectory in $\mathcal{M}_{\cyl} = \mathcal{M}_{\cyl}(\epsilon_i)$.

\textit{Step 1a.} The key fact is that the billiard evolution decouples into two independent evolutions. To describe this decoupling, first recall that a point particle in $\mathcal{M}_{\cyl}$ moves linearly in $\Int \mathcal{M}_{\cyl}$ and reflects specularly from the boundary $\partial \mathcal{M}_{\cyl}$. Note that, for every $p \in \partial \mathcal{M}_{\cyl}$, $\chi$ is tangent to the boundary $\partial \mathcal{M}_{\cyl}$ at $p$. Therefore, specular reflection preserves the angle between the velocity of the point particle and $\chi$. Hence, the angle between the velocity and $\chi$ is conserved for all time.

In the coordinates $(y_1,y_2, y_3)$, the $y_3$-axis is parallel to the cylindrical axis $\chi$.  Consequently, $\mathcal{M}_{\cyl}$ may be identified with the product space $\mathcal{M}_{\cyl}^{1,2} \times \mathbb{R}$, where $\mathcal{M}_{\cyl}^{1,2} = \chi^\perp \cap \mathcal{M}_{\cyl} = \{(y_1,y_2) : (y_1,y_2,0) \in \mathcal{M}_{\cyl}\}$. The trajectory $y(t)$ of the point particle in $\mathcal{M}_{\cyl}$ decouples into a pair of independent trajectories $(y_{1,2}(t), y_3(t)) \in \mathcal{M}_{\cyl}^{1,2} \times \mathbb{R}$. The trajectory $y_{1,2}(t)$ moves linearly in the interior of $\mathcal{M}_{\cyl}^{1,2}$ with velocity $\dot{y}_{1,2}(t) = \dot{y}(t) - \langle \dot{y}(t) ,\chi \rangle \chi$, and reflects specularly from $\partial \mathcal{M}_{\cyl}^{1,2}$. The trajectory $y_3(t)$ moves freely in $\mathbb{R}$ with constant velocity $\dot{y}_3 = \langle \dot{y}(t), \chi \rangle = \langle \dot{y}(0), \chi \rangle$ for all time.

\textit{Step 1b.} Let us consider the two-dimensional billiard in $\mathcal{M}_{\cyl}^{1,2}$ in more detail. Define planes $\mathbf{Q}_0 = \{(x_1,x_2,\alpha) : \alpha = 0\}$ and $\mathbf{Q}_1 = \chi^\perp =\{(y_1,y_2,y_3) : y_3 = 0\}$. The angle between these two planes (with respect to the kinetic energy inner product) is $\gamma = \arccos(\langle e_3,\chi\rangle) = \arccos((1+mJ^{-1})^{-1/2})$. Let $\pi_0 : \mathbf{Q}_0 \to \mathbf{Q}_1$ denote orthogonal projection from $\mathbf{Q}_0$ onto $\mathbf{Q}_1$. The base of the cylinder $\mathcal{M}_{\cyl}$ is $\widehat{B} \subset \mathbf{Q}_0$, and therefore $\mathcal{M}_{\cyl}^{1,2} = \pi_0(\widehat{B})$. Identifying $\mathbf{Q}_0$ with $\mathbb{R}^2$ with coordinates $(x_1,x_2)$ and $\mathbf{Q}_1$ with $\mathbb{R}^2$ with coordinates $(y_1,y_2)$, $\pi_0$ is just the ``foreshortening map''
\begin{equation}
    \pi_0 : (x_1,x_2) \mapsto (y_1,y_2) = ((1+mJ^{-1})^{-1/2}x_1, x_2).
\end{equation}
Thus, the particle with position $y_{1,2}(t)$ moves freely in the complement of the ``foreshortened'' wall 
\begin{equation} \label{eq3.3}
    \widetilde{W} := \{(y_1,y_2) \in \mathbf{Q}_1 : ((1+mJ^{-1})^{1/2}y_1,y_2) \in W + e_2\},
\end{equation}
and reflects specularly from $\partial \widetilde{W}$. The boundary $\partial \widetilde{W}$ is piecewise $C^2$ and bounded between the lines $y_2 = -\epsilon$ and $y_2 = 0$, because $\partial W$ is piecewise $C^2$ and bounded between the lines $y_2 = -1 - \epsilon$ and $y_2 = -1$. Letting $\mathbf{L} = \{(y_1,y_2) : y_2 = 0\}$, we may therefore define a macro-reflection law $P^{\widetilde{\Sigma}_i, \widetilde{\epsilon}_i} : \mathbf{L} \times (0,\pi) \to \mathbf{L} \times (0,\pi)$, in the way described in \S\ref{sssec_uphalf} (with $\mathbf{L}$ playing the role of $\mathbb{R}$). 

\textit{Step 1c.} We now describe the marginals of the collision law. The relationship between the macroscopic reflection law $P^{\widetilde{\Sigma}_i, \widetilde{\epsilon}_i}$ and the collision law $K^{\Sigma_i,\epsilon_i}_{\cyl}$ is as follows. Note that $\mathbf{L} = \mathbf{P} \cap \mathbf{Q}_1$. The macro-reflection law $P^{\widetilde{\Sigma}_i, \widetilde{\epsilon}_i}$ describes the orthogonal projection onto $\mathbf{Q}_1$ of the state of the point particle after returning to the plane $\mathbf{P}$. In more detail, letting $\pi_{1,3} : \mathbf{P} \times (0,\pi)^2 \to \mathbb{R} \times (0,\pi)$ be the mapping $(y_1,y_3,\theta,\psi) \mapsto (y_1,\psi)$, we have 
\begin{equation} \label{eq3.10}
    \pi_{1,3} \circ K^{\Sigma_i,\epsilon_i}_{\cyl}(y_1,y_3,\theta,\psi) = P^{\widetilde{\Sigma}_i, \widetilde{\epsilon}_i}(y_1,\theta).
\end{equation}
Note that the right-hand side only depends on $y_1$ and $\theta$.

\indent We also consider the projections $\pi_2 : \mathbf{P} \times (0,\pi)^2 \to \mathbb{R}$ mapping $(y_1,y_3,\theta,\psi) \mapsto y_3$, and $\pi_4 : \mathbf{P} \times (0,\pi)^2 \to (0,\pi)$ mapping $(y_1,y_2,\theta,\psi) \mapsto \psi$. We define
\begin{equation} 
    Q_2(y_1,y_3,\theta,\psi) = \pi_2 \circ K^{\Sigma_i,\epsilon_i}_{\cyl}(y_1,y_3,\theta,\psi), 
\end{equation}
\begin{equation} 
    Q_4(y_1,y_3,\theta,\psi) = \pi_4 \circ K^{\Sigma_i,\epsilon_i}_{\cyl}(y_1,y_3,\theta,\psi).
\end{equation}
Suppose $(y,w) \in \mathbf{P} \times \mathbb{S}^2_+$ with coordinates $(y_1,y_2,\theta,\psi)$, and let $(y',w') = K^{\Sigma_i,\epsilon_i}(y,w)$ with coordinates $(y_1',y_2',\theta',\psi')$. Then, since the angle between the velocity and $\chi$ is conserved for all time, $\cos\psi' = \langle w',\chi \rangle = \langle -w,\chi\rangle = \cos(\pi -\psi)$. Hence, $\psi' = \pi - \psi$. Therefore, $Q_4$ only depends on $\psi$, and 
\begin{equation} \label{eq3.13}
    Q_4(\psi) = \pi - \psi.
\end{equation}

\indent To describe $Q_2$, let us write 
\begin{equation} \label{eq3.14}
    Q_2(y_1,y_3,\theta,\psi) = y_3 + E(y_1,y_3,\theta,\psi).
\end{equation}
The definition of $E$ makes sense for any choice of cell $\Sigma$ and roughness scale $\epsilon$. In the argument below, we will indicate explicitly the dependence of $E$ on $\Sigma$ and $\epsilon$ by writing $E = E_{\Sigma}^{\epsilon}$. We will prove 

\begin{claim}{5.1.1} \label{claim5.1.1}
For any bounded set $B \subset \mathbf{P} \times \mathbb{S}^2_+$, there exists a constant $C_B$ depending only on $B$ such that 
\begin{equation}
    \Lambda^2(B \cap \{(y,w) : |E_{\Sigma_i}^{\epsilon_i}(y,w)| \geq i^{-1}\}) \leq C_B i^{-1}.
\end{equation}
\end{claim}

\begin{subproof}[Proof of Claim \ref{claim5.1.1}]
The key observation is that the change of spatial coordinates $y \mapsto \epsilon_i^{-1} y$ maps $\mathcal{M}_{\cyl}(\epsilon_i)$ to $\mathcal{M}_{\cyl}(1)$. Consequently $K^{\Sigma_i,1}_{\cyl}$ and $E_{\Sigma_i}^{1}$ may be viewed respectively as $K^{\Sigma_i,\epsilon_i}_{\cyl}$ and $E_{\Sigma_i}^{\epsilon_i}$ re-expressed in ``zoomed'' coordinates. Let 
\begin{equation}
    R_{\epsilon_i \times \epsilon_i} = \epsilon_i R_{1 \times 1} = \{(x_1,\alpha) \in \mathbf{P} : 0 \leq x_1 \leq \epsilon_i, 0 \leq x_1 + \alpha \leq \epsilon_i\}.
\end{equation}
Making the change of coordinates $y \mapsto \epsilon_iy$, recalling that the spatial factor of $\Lambda^2$ is just Lebesgue measure on $\mathbf{P}$, and using the observation above, we have 
\begin{equation} \label{eq4.14}
\begin{split}
    \Lambda^2(R_{\epsilon_i \times \epsilon_i} \times \mathbb{S}^2_+ \cap \{|E_{\Sigma_i}^{\epsilon_i}| \geq i^{-1}\}) & = \epsilon_i^2\Lambda^2(R_{1 \times 1} \times \mathbb{S}^2_+ \cap \{|E_{\Sigma_i}^1| \geq \epsilon_i^{-1} i^{-1}\}) \\
    & \leq \epsilon_i^2\Lambda^2(R_{1 \times 1} \times \mathbb{S}^2_+ \cap \{|E_{\Sigma_i}^1| \geq C_i\}) \\
    & \leq \epsilon_i^2\Lambda^2(R_{1 \times 1} \times \mathbb{S}^2_+ \cap \{||K^{\Sigma_i,1}_{\cyl} - \text{Id}_{\mathbf{P} \times \mathbb{S}^2_+}|| \geq C_i\}) \\
    & \leq \epsilon_i^2 i^{-1},
\end{split}
\end{equation}
where the second line follows from (\ref{eq5.4}) and $\epsilon_i \leq b_i$, the third line follows because $||K^{\Sigma_i,1}_{\cyl} - \text{Id}_{\mathbf{P} \times \mathbb{S}^2_+}|| \geq |E_{\Sigma_i}^{\epsilon_i}|$, and the fourth line follows from (\ref{eq4.3}). For $(j,k) \in \mathbb{Z}^2$, define translations, 
\begin{equation}
    \tau_{jk}(y) = y + j \epsilon_i e_1 + (1 + mJ^{-1})^{1/2}k \epsilon_i \chi, \quad\quad y \in \mathbf{P}.
\end{equation}
The plane $\mathbf{P}$ is tessellated by the translates $\tau_{jk} R_{\epsilon_i \times \epsilon_i}$ in the sense that 
\begin{equation}
    \mathbf{P} = \bigcup_{(i,k) \in \mathbb{Z}^2} \tau_{jk} R_{\epsilon_i \times \epsilon_i},
\end{equation} 
and for $(j,k) \neq (j',k')$, the set $\tau_{jk} R_{\epsilon_i \times \epsilon_i} \cap \tau_{j'k'} R_{\epsilon_i \times \epsilon_i}$ has measure zero. We also note that the cylindrical set $\mathcal{M}_{\cyl}(\epsilon_i)$ is invariant under the translates $\tau_{jk}$. Consequently, $E_{\Sigma_i}^{\epsilon_i}$ is invariant under the $\tau_{jk}$ in the sense that $E_{\Sigma_i}^{\epsilon_i} \circ \tau_{jk} = E_{\Sigma_i}^{\epsilon_i}$. Therefore, for any $(j,k) \in \mathbb{Z}^2$, 
\begin{equation} \label{eq4.15}
    \Lambda^2(\tau_{jk} R_{\epsilon_i \times \epsilon_i} \times \mathbb{S}^2_+ \cap \{|E_{\Sigma_i}^{\epsilon_i}| \geq i^{-1}\}) = \Lambda^2(R_{\epsilon_i \times \epsilon_i} \times \mathbb{S}^2_+ \cap \{|E_{\Sigma_i}^{\epsilon_i}| \geq i^{-1}\}) \leq \epsilon_i^2 i^{-1}.
\end{equation}
Fix a bounded set $B \subset \mathbf{P} \times \mathbb{S}^2_+$. Note that there exists a constant $C_B$ depending only on $B$ such that $B$ may be covered by $C_B \epsilon_i^{-2}$ sets of form $\tau_{jk} P_{\epsilon_i \times \epsilon_i} \times \mathbb{S}^2_+$. Consequently, by (\ref{eq4.15}),
\begin{equation}
    \Lambda^2(B \cap \{|E_{\Sigma_i}^{\epsilon_i}| \geq i^{-1}\}) \leq C_B i^{-1}.
\end{equation}
This implies the claim.
\end{subproof}

\textit{Step 1d.} We now conclude the proof that $\lim_{i \to \infty}\mathbb{K}^{\Sigma_i,\epsilon_i}_{\cyl}$ exists and is of form (\ref{eq3.7}). We write 
\begin{equation} \label{eq4.17}
\begin{split}
   &\int_{(\mathbf{P} \times \mathbb{S}^2_+)^2} g(y',w')\mathbb{K}^{\Sigma_i,\epsilon_i}_{\cyl}(y,w; \dd y' \dd w')f(y,w) \Lambda^2(\dd y \dd w) \\
    & = \int_{\mathbf{P} \times \mathbb{S}^2_+} g(K^{\Sigma_i,\epsilon}_{\cyl}(y,w))f(y,w)\Lambda^2(\dd y \dd w) \\
    & = \int_{\{E < i^{-1}\}} g(K^{\Sigma_i,\epsilon_i}_{\cyl}(y,w))f(y,w)\Lambda^2(\dd y \dd w) \\
    & \quad + \int_{\{E \geq i^{-1}\}} g(K^{\Sigma_i,\epsilon_i}_{\cyl}(y,w))f(y,w)\Lambda^2(\dd y \dd w).
\end{split}
\end{equation}
Noting that $g$ is bounded and $f$ has compact support, we see that by Claim \ref{claim5.1.1}, with $B = \supp f$, the second term above converges to zero as $i \to \infty$.

To handle the first term in (\ref{eq4.17}), we introduce the following notation: For $h \in C_c(\mathbf{P} \times \mathbb{S}^2_+)$, let $\widehat{h}(y_1,\theta,y_3,\psi) = h(y_1,y_3,\theta,\psi)$. Using (\ref{eq3.10}), (\ref{eq3.13}), and (\ref{eq3.14}), the first term in (\ref{eq4.17}) is equal to
\begin{equation} \label{eq4.18}
\begin{split}
    &\int_{\{E < i^{-1}\}} g(K^{\Sigma_i,\epsilon_i}_{\cyl}(y,w))f(y,w)\Lambda^2(\dd y \dd w) \\
    & = \int_{\{E < i^{-1}\}} \widehat{g}(P^{\widetilde{\Sigma}_i, \widetilde{\epsilon}_i}(y_1,\theta), y_3 + E, \pi - \psi) f(y_1,y_3,\theta,\psi)\sin\theta\sin^2\psi \dd y_1 \dd y_2 \dd\theta \dd\psi \\
    & = \int_{\{E < i^{-1}\}} \widehat{g}(P^{\widetilde{\Sigma}_i, \widetilde{\epsilon}_i}(y_1,\theta),y_3,\pi-\psi) f(y_1,y_3,\theta,\psi)\sin\theta\sin^2\psi \dd y_1 \dd y_2 \dd\theta \dd\psi \\
    & \quad + \int_{\{E < i^{-1}\}} (\tau_{(0,0,E,0)}\widehat{g} - \widehat{g})(P^{\widetilde{\Sigma}_i, \widetilde{\epsilon}_i}(y_1,\theta),y_3,\pi-\psi) f(y_1,y_3,\theta,\psi) \\
    & \hspace{3in}\times\sin\theta\sin^2\psi \dd y_1 \dd y_2 \dd\theta \dd\psi \\
    & = \int_{\{E < i^{-1}\}} \widehat{g}(P^{\widetilde{\Sigma}_i, \widetilde{\epsilon}_i}(y_1,\theta),y_3,\pi-\psi) f(y_1,y_3,\theta,\psi)\sin\theta\sin^2\psi \dd y_1 \dd y_2 \dd\theta \dd\psi \\
    & \quad + \int_{P^{\widetilde{\Sigma}_i, \widetilde{\epsilon}_i}(\{E < i^{-1}\})} (\tau_{(0,0,E,0)}\widehat{g} - \widehat{g})(y_1,\theta,y_3,\pi-\psi) \widehat{f}(P^{\widetilde{\Sigma}_i, \widetilde{\epsilon}_i}(y_1,\theta), y_3,\psi)\\
    & \hspace{3in}\times\sin\theta\sin^2\psi \dd y_1 \dd y_2 \dd\theta \dd\psi,
\end{split}
\end{equation}
using in the last line the fact that $P^{\widetilde{\Sigma}_i, \widetilde{\epsilon}_i}(y_1,\theta)$ is an involution which preserves the measure $\sin\theta \dd y_1 \dd\theta$ by Proposition \ref{prop_det_ref_properties} in \S\ref{ssec_rough_bill}. The second term above is bounded in absolute value by 
\begin{equation}
\begin{split}
    &\sup_{||h|| \leq i^{-1}} ||\tau_h\widehat{g} - \widehat{g}||_{L^\infty} \int_{\mathbf{P} \times \mathbb{S}^2_+} |\widehat{f}(P^{\widetilde{\Sigma}_i, \widetilde{\epsilon}_i}(y_1,\theta), y_3,\psi)|\sin\theta\sin^2\psi \dd y_1 \dd y_2 \dd\theta \dd\psi \\
    & = \sup_{||h|| \leq i^{-1}} ||\tau_h\widehat{g} - \widehat{g}||_{L^\infty} ||f||_{L^1} \to 0 \quad \text{ as } i \to \infty,
\end{split}
\end{equation}
again using invariance of $P^{\widetilde{\Sigma}_i, \widetilde{\epsilon}_i}$ with respect to $\sin\theta \dd\theta \dd y_1$ to obtain the equality. On the other hand, by Claim \ref{claim5.1.1} and the fact that $f$ and $g$ are bounded and $f$ has compact support, the first term in (\ref{eq4.18}) is equal to 
\begin{equation} 
\begin{split}
    &\int_{\mathbf{P} \times \mathbb{S}^2_+} \widehat{g}(P^{\widetilde{\Sigma}_i, \widetilde{\epsilon}_i}(y_1,\theta),y_3,\pi-\psi) f(y_1,y_3,\theta,\psi)\sin\theta\sin^2\psi \dd y_1 \dd y_2 \dd\theta \dd\psi + o_{i \to \infty}(1) \\
    & = \int_{\mathbf{P} \times \mathbb{S}^2_+} \left(\int_{\mathbf{P} \times \mathbb{S}^2_+} g(y_1',y_3', \theta',\psi') \mathbb{P}^{\widetilde{\Sigma}_i, \widetilde{\epsilon}_i}(y_1,\theta; \dd y_1' \dd\theta') \delta_{y_3}(\dd y_3') \delta_{\pi - \psi}(\dd\psi') \right)\\
    & \quad\quad\quad\quad\quad\quad\quad\quad\quad \times f(y_1,y_3,\theta,\psi)\sin\theta\sin^2\psi \dd y_1 \dd y_2 \dd\theta \dd\psi + o_{i \to \infty}(1).
\end{split}
\end{equation}
To summarize, we have shown that 
\begin{equation} \label{eq4.21}
\begin{split}
    &\int_{\mathbf{P} \times \mathbb{S}^2_+} \left(\int_{\mathbf{P} \times \mathbb{S}^2_+} g(y',w')\mathbb{K}^{\Sigma_i,\epsilon_i}_{\cyl}(y,w; \dd y' \dd w') \right) f(y,w) \Lambda^2(\dd y \dd w) \\
    & = \int_{\mathbf{P} \times \mathbb{S}^2_+} \left(\int_{\mathbf{P} \times \mathbb{S}^2_+} g(y_1',y_3', \theta',\psi') \mathbb{P}^{\widetilde{\Sigma}_i, \widetilde{\epsilon}_i}(y_1,\theta; \dd y_1' \dd\theta') \delta_{y_3}(\dd y_3') \delta_{\pi - \psi}(\dd\psi') \right)\\
    & \quad\quad\quad\quad\quad\quad\quad\quad\quad \times f(y_1,y_3,\theta,\psi)\sin\theta\sin^2\psi \dd y_1 \dd y_2 \dd\theta \dd\psi + o_{i \to \infty}(1).
\end{split}
\end{equation}

Since by assumption $\mathbb{K} = \lim_{i \to \infty} \mathbb{K}^{\Sigma_i,\epsilon_i^{(0)}}_{\cyl}$ exists, the equality above implies that the following limit exists:
\begin{equation}
    \mathbb{P} = \lim_{i \to \infty} \mathbb{P}^{W(\widetilde{\Sigma}_i,\widetilde{\epsilon}_i^{(0)})},
\end{equation}
and $\mathbb{K} = \mathbb{P} \times \delta_{y_3} \times \delta_{\pi - \psi}$. By Corollary \ref{cor_rough_ref_char} and Remark \ref{rem_indep}, $\mathbb{P} = \lim \mathbb{P}^{\gamma(\widetilde{\Sigma}_i,\widetilde{\epsilon}_i)}$ does not depend on the sequence $\epsilon_i \to 0$, and moreover $\mathbb{P}(y_1,\theta;\dd y_1' \dd\theta') = \delta_{y_1}(\dd y_1') \widetilde{\mathbb{P}}(\theta, \dd\theta')$, for some Markov kernel $\widetilde{\mathbb{P}}$ on $\mathbb{S}^1_+$ which is symmetric with respect to the measure $\sin\theta \dd\theta$. Thus by (\ref{eq4.21}) we see that $\lim_{i \to \infty} \mathbb{K}^{\Sigma_i,\epsilon_i}$ exists for any $\epsilon_i \leq b_i$, and this limit is equal to $\mathbb{K} = \delta_{y_1} \times \widetilde{\mathbb{P}} \times \delta_{y_3} \times \delta_{\pi - \psi}$. This proves the first part of the theorem.

\textit{Step 2.} Now let us assume that $\Sigma_i = \Sigma$ is constant, let $\epsilon_i \to 0$ be arbitrary (not necessarily bounded by $b_i$) and consider how the proof above goes through in this case. In steps 1a-1d, to obtain the equality (\ref{eq4.21}) we did not use the assumption that $\mathbb{K}^{\Sigma_i,\epsilon_i}_{\cyl}$ converges. Thus (\ref{eq4.21}) always holds. On the other hand, $\widetilde{\Sigma}_i = \widetilde{\Sigma}$ is constant, it follows from Lemma \ref{lem_construct} that the limit $\mathbb{P} = \lim_{i\to \infty} \mathbb{P}^{\widetilde{\Sigma},\widetilde{\epsilon}_i}$ exists. This implies that the limit $\lim_{i \to \infty} \mathbb{K}^{\Sigma, \epsilon_i}_{\cyl}$ exists. This proves the second statement in the theorem.

\textit{Step 3.} Finally, the converse may be proved as follows. Suppose $\mathbb{K}$ takes the form (\ref{eq3.7}) where $\widetilde{\mathbb{P}}$ is a Markov kernel on $\mathbb{S}^1_+$ such that $\sin\theta \dd\theta$ is invariant with respect to $\widetilde{\mathbb{P}}$. By Corollary \ref{cor_rough_ref_char} in \S\ref{ssec_rough_bill} there is a sequence $\widetilde{\Sigma}_i$ of cells such that for any sequence $\widetilde{\epsilon}_i \to 0$, $\delta_{y_1} \times \widetilde{\mathbb{P}} = \lim_{i \to \infty} \mathbb{P}^{\widetilde{\Sigma}_i, \widetilde{\epsilon}_i}$. Let 
\begin{equation}
    \Sigma_i = \{(x_1,x_2) : (x_1,(1+mJ^{-1})^{1/2}x_2) \in \widetilde{\Sigma}_i\}, 
\end{equation}
and let $b_i$ be chosen with respect to the $\Sigma_i$ above as in the beginning of the proof. Take a sequence of positive numbers $\epsilon_i \to 0$ with $\epsilon_i \leq b_i$, and let $\widetilde{\epsilon}_i = (1+mJ^{-1})^{1/2} \epsilon_i$. Let $W_i = W(\Sigma_i,\epsilon_i)$ and $\widetilde{W}_i = W(\widetilde{\Sigma}_i, \widetilde{\epsilon}_i)$, and observe that $\widetilde{W}_i$ is just foreshortened version of $W_i$ in the sense of (\ref{eq3.3}). Consider the cylindrical configuration space $\mathcal{M}_{\cyl}(\Sigma_i,\epsilon_i)$ and the cylindrical collision law $K^{\Sigma_i,\epsilon_i}_{\cyl}$ which the wall $W_i$ gives rise to. By Steps 1a-1d, the equality (\ref{eq4.21}) holds. Since $\mathbb{P}^{\widetilde{\Sigma}_i, \widetilde{\epsilon}_i} \to \mathbb{P} = \delta_{y_1} \times \widetilde{\mathbb{P}}$, it follows that $\mathbb{K}^{\Sigma_i,\epsilon_i}_{\cyl} \to \delta_{y_1} \times \widetilde{\mathbb{P}} \times \delta_{y_3} \times \delta_{\pi - \psi} = \mathbb{K}$. 
\end{proof}

\subsection{Cylindrical approximation in the pure scaling case} \label{ssec_pure_scaling}

In this section we prove Theorem \ref{thm_pure_scaling}, delegating the proof of a key lemma to a subsequent subsection. 

Since the sequence of cells is constant, we take $\Sigma$ to be fixed throughout this section, and we denote the collision law in $\mathcal{M} = \mathcal{M}(\Sigma, \epsilon)$ by $K^\epsilon$, dropping the explicit dependence on $\Sigma$ from our notation. Similarly, we denote the collision law in the cylindrical configuration space $\mathcal{M}_{\cyl} = \mathcal{M}_{\cyl}(\Sigma,\epsilon)$ by $K^\epsilon_{\cyl}$, and we denote the modified collision law by $\widetilde{K}^{\epsilon}$.

The following lemma gives us the sense in which the collision laws $K^\epsilon$ and $K^\epsilon_{\cyl}$ approximate each other.

\begin{lemma} \label{lem_comparison}
For any $f, g \in C_c^\infty(\mathbf{P} \times \mathbb{S}^2_+)$,  
\begin{equation}\label{eq5.25}
    \int_{\mathbf{P} \times \mathbb{S}^2_+} g [f \circ K^\epsilon - f \circ K^\epsilon_{\cyl}] \dd\Lambda^2 \to 0 \quad \text{ as } \epsilon \to 0.
\end{equation}
\end{lemma}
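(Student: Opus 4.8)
\textbf{Proof strategy for Lemma \ref{lem_comparison}.}
The plan is to interpolate between $K^\epsilon$ and $K^\epsilon_{\cyl}$ using the modified collision law $\widetilde K^\epsilon = \eta^{-1}\circ K^\epsilon \circ \eta$ of \S\ref{sssec_modcol}, and to bound each of the two resulting differences separately. Write
\begin{equation}
\int_{\mathbf{P}\times\mathbb{S}^2_+} g\,[f\circ K^\epsilon - f\circ K^\epsilon_{\cyl}]\,\dd\Lambda^2
= \int g\,[f\circ K^\epsilon - f\circ\widetilde K^\epsilon]\,\dd\Lambda^2
+ \int g\,[f\circ\widetilde K^\epsilon - f\circ K^\epsilon_{\cyl}]\,\dd\Lambda^2 .
\end{equation}
(One must be slightly careful that $\widetilde K^\epsilon$ is only defined on the set $\widetilde{\mathcal F}$, which is not of full $\Lambda^2$-measure; the point is that, by the estimates \eqref{eq4.75}--\eqref{eq4.76} together with the forthcoming Lemma \ref{lem_mod}/Remark \ref{rem_3.2}, $\Lambda^2(\supp g\smallsetminus\widetilde{\mathcal F})\to 0$, so the integrals over the complement are negligible and we may freely restrict attention to $\widetilde{\mathcal F}$.)

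First I would handle the term $\int g\,[f\circ K^\epsilon - f\circ\widetilde K^\epsilon]\,\dd\Lambda^2$. Since $\widetilde K^\epsilon = \eta^{-1}\circ K^\epsilon\circ\eta$ and $\eta\to\mathrm{Id}$ in $C^1$ as $\epsilon\to 0$ on its domain $\mathcal G$ (this is the content of \eqref{eq4.75}--\eqref{eq4.76}, and the $C^1$ convergence is asserted in \S\ref{sssec_prfoutline} and to be proved in \S\ref{ssec_pure_scaling}), one can change variables by $\eta$ in the $\widetilde K^\epsilon$ integral and use that $\Lambda^2$ is, up to a Jacobian factor tending to $1$, preserved by $\eta$; then $f\circ K^\epsilon$ is integrated against $g$ versus $g\circ\eta^{-1}$ times a density close to $1$, and $\|g - g\circ\eta^{-1}\|_{L^\infty}\to 0$ because $g$ is uniformly continuous and $\|\eta^{-1}-\mathrm{Id}\|_{L^\infty}\to 0$. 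The boundedness of $f$ and the fact that $K^\epsilon$ preserves $\Lambda^2$ (Proposition \ref{prop_detcol}(iii)) keep the resulting error terms under control; one also needs that the region where $f\circ K^\epsilon$ or $f\circ\eta$ is supported has bounded $\Lambda^2$-measure, which follows since $f$ has compact support and $\eta, K^\epsilon$ are measure-preserving (or close to it). This gives $\int g\,[f\circ K^\epsilon - f\circ\widetilde K^\epsilon]\,\dd\Lambda^2\to 0$.

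The second term, $\int g\,[f\circ\widetilde K^\epsilon - f\circ K^\epsilon_{\cyl}]\,\dd\Lambda^2$, is where the real work lies, and I expect it to be the main obstacle. The plan is to invoke the ``zooming'' comparison Lemma \ref{lem_mod} (to be proved in \S\ref{ssec_zoom}), which asserts that there is a large set $\Omega\subset\mathbf{P}\times\mathbb{S}^2_+$ — large in the sense $\Lambda^2(B\smallsetminus\Omega)\to 0$ for every bounded $B$ — on which $\|\widetilde K^\epsilon(y,w) - K^\epsilon_{\cyl}(y,w)\|\to 0$ uniformly over $\Omega\cap(R_\epsilon\times\mathbb{S}^2_+)$, and hence, by the double-periodicity of $\mathcal M$ and $\mathcal M_{\cyl}$ (Propositions \ref{prop_modcomm} and \ref{prop_cylcomm}) and a tessellation of $\mathbf{P}$ by translates of $R_\epsilon$, uniformly over all of $\Omega\cap(B\times\mathbb{S}^2_+)$ for any bounded $B$. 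Granting this, split the integral into the part over $\Omega$ and the part over its complement: on $\Omega\cap\supp g$ we use uniform continuity of $f$ together with the uniform smallness of $\|\widetilde K^\epsilon - K^\epsilon_{\cyl}\|$ to make $|f\circ\widetilde K^\epsilon - f\circ K^\epsilon_{\cyl}|$ uniformly small, and integrate against the finite measure $|g|\,\dd\Lambda^2$ restricted to $\supp g$; on the complement we bound $|f\circ\widetilde K^\epsilon - f\circ K^\epsilon_{\cyl}|\le 2\|f\|_{L^\infty}$ and use $\Lambda^2(\supp g\smallsetminus\Omega)\to 0$. Both pieces go to zero, completing the proof. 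The subtlety throughout is bookkeeping the various ``large but not full measure'' exceptional sets ($\mathbf{P}\times\mathbb{S}^2\smallsetminus\mathcal G$, $\widetilde{\mathcal F}$, $\Omega$, $\mathbb{S}^2_+\smallsetminus\mathbb{S}^2_{+\rho/2}$) and checking that each has $\Lambda^2$-measure vanishing as $\epsilon\to 0$ on the compact set $\supp f\cup\supp g$, so that the compact support of $f$ and $g$ and the (near-)measure-preservation of all maps involved make the error contributions negligible.
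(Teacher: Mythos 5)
Your overall architecture (interpolate through $\widetilde K^\epsilon=\eta^{-1}\circ K^\epsilon\circ\eta$, peel off the exceptional sets, handle the $K^\epsilon$-vs-$\widetilde K^\epsilon$ term by a change of variables with Jacobian close to $1$, and handle the $\widetilde K^\epsilon$-vs-$K^\epsilon_{\cyl}$ term via Lemma \ref{lem_mod}) is the same as the paper's, and the first comparison and the measure bookkeeping are carried out correctly. However, there is a genuine gap in your treatment of the second term: you claim that the uniform estimate of Lemma \ref{lem_mod}(iii), which holds only on $\Omega\cap(R_\epsilon\times\mathbb{S}^2_+)$, extends ``by the double-periodicity of $\mathcal M$ and $\mathcal M_{\cyl}$ and a tessellation of $\mathbf P$ by translates of $R_\epsilon$'' to a uniform estimate on $\Omega\cap(B\times\mathbb{S}^2_+)$ for any bounded $B$. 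This inference fails because $\widetilde K^\epsilon$ and $K^\epsilon_{\cyl}$ are periodic with respect to \emph{different} lattices: Proposition \ref{prop_modcomm} gives commutation of $\widetilde K^\epsilon$ with $\overline\tau_{jk}$ (translations by $j\epsilon e_1+k\rho e_3$), while Proposition \ref{prop_cylcomm} gives commutation of $K^\epsilon_{\cyl}$ only with translations by $j\epsilon e_1+ks\chi$. Writing $\rho e_3=\rho\widetilde\chi+\rho e_1$, conjugating $K^\epsilon_{\cyl}$ by $\overline\tau_{jk}$ introduces a residual horizontal shift by $\{k\rho/\epsilon\}\epsilon\, e_1$, and since the microstructure lives at scale $\epsilon$, such an order-$\epsilon$ change of the impact parameter generically changes the outgoing velocity by an $O(1)$ amount (tooth versus crevice). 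So for $(y,w)\in\Omega$ lying in a translate $\tau_{jk}R_\epsilon$ with $\{k\rho/\epsilon\}$ bounded away from $0$ and $1$, the quantity $\|\widetilde K^\epsilon(y,w)-K^\epsilon_{\cyl}(y,w)\|$ need not be small at all; the pointwise uniform comparison is simply false off the fundamental cell, not merely unproved.

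The repair is the one the paper carries out in Step 3 of its proof: introduce the lattice $\tau'_{jk}$ adapted to $K^\epsilon_{\cyl}$ and the order-$\epsilon$ correcting shift $\theta$ satisfying $\theta\circ\overline\tau'_{jk}=\overline\tau_{jk}$ on $\mathbf P_k\times\mathbb{S}^2_+$, and decompose
\begin{equation}
f\circ\widetilde K^\epsilon-f\circ K^\epsilon_{\cyl}
=\bigl[f\circ\widetilde K^\epsilon-f\circ\theta\circ K^\epsilon_{\cyl}\circ\theta^{-1}\bigr]
+\bigl[f\circ\theta\circ K^\epsilon_{\cyl}\circ\theta^{-1}-f\circ K^\epsilon_{\cyl}\circ\theta^{-1}\bigr]
+\bigl[f\circ K^\epsilon_{\cyl}\circ\theta^{-1}-f\circ K^\epsilon_{\cyl}\bigr].
\end{equation}
The first bracket does reduce, via the commutation relations and the tessellation, to the fundamental-cell supremum of Lemma \ref{lem_mod}(iii) multiplied by $\Lip(f)\,|I|\,\Lambda^2(R_\epsilon\times\mathbb{S}^2_+)=O(1)$; the two $\theta$-brackets are \emph{not} controlled pointwise by closeness of the maps but at the level of the integrals, using that $\theta\to 0$ uniformly, that $f$ and $g$ are uniformly continuous, and that $\Lambda^2$ is invariant under translations and under $K^\epsilon_{\cyl}$, so only $\|f\circ\theta-f\|_{L^\infty}$, $\|g\circ\theta-g\|_{L^\infty}$ and $\Lambda^2(\supp g\smallsetminus\Omega)$ appear. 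Without this correction, your argument for the term $\int_\Omega g\,[f\circ\widetilde K^\epsilon-f\circ K^\epsilon_{\cyl}]\,\dd\Lambda^2$ does not go through.
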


The proof of the lemma involves first throwing away a small set of ``bad'' inputs, and then making two comparisons: (i) a comparison between the true collision law $K^\epsilon$ and the modified collision law $\widetilde{K}^{\epsilon}$, and (ii) a comparison between the modified collision law $\widetilde{K}^{\epsilon}$ and the cylindrical collision law $K^{\epsilon}_{\cyl}$. 

Recall the definition (\ref{eq4.72}) of $\widetilde{K}^{\epsilon}$. The main idea for making the comparison (i) is to use estimates for $\eta$ and its differential to argue that $\widetilde{K}^{\epsilon} = \eta^{-1} \circ K^\epsilon \circ \eta$ approximates $K^\epsilon$ as $\epsilon \to 0$ in the sense of (\ref{eq5.25}). 

Most of the work in this section is concerned with making the comparison (ii). 

Consider the parallelogram
\begin{equation} \label{eq5.26}
    R_{\epsilon} = \{(x_1,\alpha) \in \mathbf{P} : 0 \leq x_1 + \alpha \leq \epsilon, -\rho(\epsilon)/2 \leq \alpha \leq \rho(\epsilon)/2\}.
\end{equation}
For $(j,k) \in \mathbb{Z}^2$, define translations 
\begin{equation}
    \tau_{jk}(y) = y + j\epsilon e_1 + k\rho e_3, \quad\quad y \in \mathbf{P}.
\end{equation}
We also let 
\begin{equation}
    \overline{\tau}_{jk}(y,w) = (\tau_{jk}(y),w), \quad\quad (y,w) \in \mathbf{P} \times \mathbb{S}^2_+.
\end{equation}
The plane $\mathbf{P}$ may be tessellated by the parallelograms $\tau_{jk} R_\epsilon$ in the sense that $\mathbf{P} = \bigcup_{(j,k) \in \mathbb{Z}^2} \tau_{jk}R_\epsilon$ and $\tau_{jk}R_\epsilon \cap \tau_{j'k'}R_\epsilon$ has Lebesgue measure zero whenever $(j,k) \neq (j',k')$. 

Recall that $\widetilde{K}^\epsilon$ is defined on an open subset $\widetilde{\mathcal{F}} \subset \mathbf{P} \times \mathbb{S}^2_+$.

\begin{lemma} \label{lem_mod}
For $\epsilon > 0$ sufficiently small, there exist sets $\Omega(\epsilon) \subset \widetilde{\mathcal{F}}$ such that
\begin{enumerate}[label = \roman*.]
    \item $\Omega(\epsilon)$ is invariant under the translations $\overline{\tau}_{jk}$;
    \item $\frac{1}{\epsilon\rho(\epsilon)}\Lambda^2(R_{\epsilon} \times \mathbb{S}^2_+ \smallsetminus \Omega(\epsilon)) \to 0$ as $\epsilon \to 0$; and 
    \item the following limit holds:
    \begin{equation}
        \lim_{\epsilon \to 0} \sup_{(y,w) \in R_\epsilon \cap \Omega(\epsilon)} ||\widetilde{K}^{\epsilon}(y,w) - K^\epsilon_{\cyl}(y,w)|| = 0.
    \end{equation}
\end{enumerate}
\end{lemma}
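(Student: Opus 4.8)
\textbf{Plan of proof for Lemma \ref{lem_mod}.}

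The strategy is to work in ``zoomed'' coordinates $\sigma_{\epsilon^{-1}}(y,w)=(\epsilon^{-1}y,w)$, where the cylindrical configuration space $\mathcal{M}^*_{\cyl}:=\epsilon^{-1}\mathcal{M}_{\cyl}$ becomes the fixed cylinder with base $\overline{W(\Sigma,1)^c}+e_2$ and axis $\chi$, independent of $\epsilon$. Set $\widetilde{K}^* = \sigma_{\epsilon^{-1}}\circ\widetilde{K}^\epsilon\circ\sigma_\epsilon$ and $K^*_{\cyl}=\sigma_{\epsilon^{-1}}\circ K^\epsilon_{\cyl}\circ\sigma_\epsilon$, and similarly view the zoomed true configuration space $\mathcal{M}^*:=\epsilon^{-1}\mathcal{M}$. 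Because $\eta$ is built from $\overline H_1$ and $\Psi$, which by the estimates (\ref{eq3.6}), (\ref{eq4.65}), (\ref{eq4.75}), (\ref{eq4.76}) differ from the identity by $o(1)$ in $C^1$, the zoomed map $\widetilde{K}^*$ is a small perturbation of the billiard macro-reflection in the fixed domain $\mathcal{M}^*_{\cyl}$; more precisely, I expect to show via Lemmas \ref{lem_control} and \ref{lem_control2} (the ``control'' of billiard trajectories in $\mathcal{M}^*$ and $\mathcal{M}^*_{\cyl}$ in terms of their projections onto the fixed base $\overline{W(\Sigma,1)^c}+e_2$) that $\|\widetilde K^*(y,w)-K^*_{\cyl}(y,w)\|$ is uniformly small on a good set of $(y,w)$.

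\textbf{Key steps, in order.} (1) Reduce to the fundamental parallelogram: by Propositions \ref{prop_modcomm} and \ref{prop_cylcomm} both $\widetilde K^\epsilon$ and $K^\epsilon_{\cyl}$ commute with $\overline\tau_{jk}$, and $\mathbf P=\bigcup_{(j,k)}\tau_{jk}R_\epsilon$, so it suffices to construct $\Omega(\epsilon)$ inside $R_\epsilon\times\mathbb S^2_+$ and then spread it out by the $\overline\tau_{jk}$; properties (i) and (ii) of the lemma then become a statement about a single period, and the normalization $\tfrac1{\epsilon\rho(\epsilon)}\Lambda^2(R_\epsilon\times\mathbb S^2_+\smallsetminus\Omega(\epsilon))\to 0$ is just asserting that the ``bad'' fraction of phase space in one period vanishes. (2) Define the bad set: discard (a) inputs whose $e_2$-component of velocity is $\le\sin(\rho/2)$, i.e. $\mathbb S^2_+\smallsetminus\mathbb S^2_{+\rho/2}$ — this fraction is $O(\rho)=o(1)$ and is needed so that $\eta$ and the modified collision law are defined (cf. Lemma \ref{lem_modification}); (b) inputs lying over the ``gap'' part of the configuration space, i.e. where $\alpha\notin\mathcal Z$ — by the discussion after (\ref{eq4.1}) this costs Lebesgue measure $O(\delta_0)=O(\epsilon/\rho)=o(\epsilon^{1/2})$ per unit length, hence $o(\epsilon\rho)$ relative to $\Lambda^2(R_\epsilon\times\mathbb S^2_+)\asymp\epsilon\rho$ once we also keep $\rho/\epsilon^{1/2}\to\infty$ in mind — this is where the hypothesis $\epsilon^{1/2}/\rho(\epsilon)\to 0$ is used; (c) inputs whose zoomed trajectory in $\mathcal M^*$ or $\mathcal M^*_{\cyl}$ is ``tangential'' or hits near a singular point or takes too long to return — controlled by the Poincaré-recurrence/finiteness of return-time statements behind Propositions \ref{prop_detcol}, \ref{prop_detcolcyl} together with Lemmas \ref{lem_control}, \ref{lem_control2}, giving another $o(1)$ fraction. (3) On the complement $\Omega(\epsilon)$, carry out the trajectory comparison: using (\ref{eq4.46}) the map $H_1$ takes $\mathcal M_{\cyl}\cap\widehat{\mathcal Z}^0$ onto $\mathcal M\cap\widehat{\mathcal Z}^0$ and is $o(1)$-close to the identity in $C^1$ by (\ref{eq3.6}); after zooming, the billiard trajectory for $\widetilde K^\epsilon$ that stays over $\mathcal Z$ shadows the corresponding trajectory for $K^\epsilon_{\cyl}$, reflection by reflection, with error accumulating like (number of reflections)$\times o(1)$; since on $\Omega(\epsilon)$ the number of reflections is bounded (uniformly in $\epsilon$ for that part of phase space, by the control lemmas), the total error is $o(1)$, which is exactly property (iii). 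Finally verify that $\Omega(\epsilon)$ as constructed can be taken $\overline\tau_{jk}$-invariant (take the union of $\overline\tau_{jk}$-translates of the good subset of $R_\epsilon\times\mathbb S^2_+$; the bad conditions (a)–(c) are all themselves translation-invariant since $\mathcal M$, $\mathcal M_{\cyl}$, $\mathcal Z$, and the velocity condition are), giving (i).

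\textbf{Main obstacle.} The hard part is step (3): showing that a billiard trajectory in the true zoomed space $\mathcal M^*$ stays uniformly close — through a bounded but a priori $\epsilon$-dependent number of reflections — to the trajectory in the fixed cylinder $\mathcal M^*_{\cyl}$, and in particular that the two trajectories have the \emph{same} combinatorial reflection pattern (hit the same sequence of boundary pieces) on the good set. This requires quantitative transversality: one must rule out that a trajectory in $\mathcal M^*_{\cyl}$ grazing a boundary curve gets pushed, under the $o(1)$ perturbation $H_1$, onto a qualitatively different path (missing a piece of boundary, or gaining an extra reflection). This is precisely what Lemmas \ref{lem_control} and \ref{lem_control2} are designed to provide — bounding trajectories in $\mathcal M^*$ and $\mathcal M^*_{\cyl}$ in terms of properties of their projections to the \emph{fixed} base $\overline{W(\Sigma,1)^c}+e_2$ — so the proof of Lemma \ref{lem_mod} will consist largely of setting up the good set $\Omega(\epsilon)$ so that those lemmas apply, and then bookkeeping the accumulated $o(1)$ errors; the delicate estimates on near-tangential trajectories and near-singular reflections are deferred to (and isolated in) Lemmas \ref{lem_control}, \ref{lem_control2}.
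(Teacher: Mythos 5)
Your overall strategy is the paper's: zoom by $\epsilon^{-1}$ so that $\mathcal M^*_{\cyl}$ is a fixed cylinder, define a good set, and defer the per-reflection comparison to Lemmas \ref{lem_control} and \ref{lem_control2}. But the quantitative bookkeeping in your step (3) has a genuine gap, and it interacts badly with property (ii). First, you cannot take the number of reflections $N$ to be bounded uniformly in $\epsilon$ on $\Omega(\epsilon)$: in zoomed coordinates the cylindrical dynamics on one period is $\epsilon$-independent, so for a fixed cutoff $N_0$ the normalized measure $\frac1{\epsilon\rho}\Lambda^2(\{N>N_0\}\cap(R_\epsilon\times\mathbb S^2_+))$ is (essentially) a positive constant independent of $\epsilon$, and (ii) fails. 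The cutoff must grow as $\epsilon\to0$ (the paper takes $N\le\frac{\log(1/\rho)}{4\log\log(1/\rho)}$). Second, once $N$ grows, the error does not accumulate additively as ``$N\times o(1)$'': each application of Lemma \ref{lem_control2} requires the incoming discrepancy to be smaller than the outgoing tolerance by a factor of order $\cos\theta_{j+1}\sin^2\nu/\bigl(\overline\kappa(\overline L+1)\bigr)$, i.e.\ the admissible initial error shrinks geometrically in $N$ and is further degraded at near-grazing incidence and by curvature. Closing the argument therefore requires choosing the thresholds in the definition of $\Omega$ (lower bounds $r_j,\ \cos\theta_j,\ \sin\nu\gtrsim 1/\log(1/\rho)$ together with the bound on $N$ above) so that the compounded factor, roughly $\log(1/\rho)^{-4N}\ge\rho(\epsilon)$, is still compatible with the smallness conditions on $\epsilon$ via $\widecheck{\rho}^{-1}$ and with $\epsilon^{1/2}/\rho\to0$; this balancing is the heart of the proof and is absent from your plan.

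Two further points. Your good set should be specified purely in terms of the cylindrical trajectory data (reflection count, transversality radii, incidence angles, angle to $\chi$, and the angular coordinate), as in the paper: licitness of the trajectory in $\mathcal M^*$ is a conclusion of Lemma \ref{lem_control2}, not a hypothesis, and conditions phrased in terms of $\mathcal M^*$ would make the measure estimate in (ii) circular, whereas with cylinder-only conditions (ii) follows from translation invariance along $\chi$, monotonicity of the good sets in $\epsilon$, and continuity from above (using that $N<\infty$, non-tangency, and $\nu>0$ hold a.e.). Also your condition (b) is too weak: it is not enough that $\alpha\in\mathcal Z$; the angular coordinate must keep a margin of order $N\overline L/\sin\nu$ from the edge of its component of $\mathcal Z$ so that the entire excursion (all $N$ chords) stays in the rolling region — this is the set $\Xi$ in the paper, and it is what makes hypothesis (\ref{eq3.81}) of Lemma \ref{lem_control2} available at every step.
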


\begin{remark} \label{rem_3.2} \normalfont
Lemma \ref{lem_mod}(i) and (ii), together with Claim \ref{claim5.2.1} from Step 1 of its proof, below, imply that for any bounded set $B \subset \mathbf{P} \times \mathbb{S}^2_+$, $\Lambda^2(B \smallsetminus \Omega(\epsilon)) \to 0$ as $\epsilon \to 0$. Consequently, $\Lambda^2(B \smallsetminus \widetilde{\mathcal{F}}) \to 0$ as $\epsilon \to 0$. 
\end{remark}

The proof of this lemma is given in \S\ref{ssec_zoom}. We will now prove Lemma \ref{lem_comparison} using the lemma above.

\begin{proof}[Proof of Lemma \ref{lem_comparison}]
Let $\Omega(\epsilon) \subset \widetilde{\mathcal{F}}$ be as in Lemma \ref{lem_mod}. We split up the integral $\int g[f \circ K^\epsilon - f \circ K^\epsilon_{\cyl}] \dd\Lambda^2$ as follows:
\begin{equation}
    \begin{split}
        & \int_{\mathbf{P} \times \mathbb{S}^2_+} g[f \circ K^\epsilon - f \circ K^\epsilon_{\cyl}] \dd\Lambda^2 \\
        & = \underbrace{\int_{\mathbf{P} \times \mathbb{S}^2_+ \smallsetminus \Omega} g[f \circ K^\epsilon - f \circ K^\epsilon_{\cyl}] \dd\Lambda^2}_{\text{\normalsize $=: I_1$}} + \underbrace{\int_{\Omega} g[f \circ K^\epsilon - f \circ \widetilde{K}^{\epsilon}] \dd\Lambda^2}_{\text{\normalsize $=: I_2$}} \\
        & \hspace{1.5in} + \underbrace{\int_{\Omega} g[f \circ \widetilde{K}^{\epsilon} - f \circ K^\epsilon_{\cyl}] \dd\Lambda^2}_{\text{\normalsize $=: I_3$}}.
    \end{split}
\end{equation}
Note that the modified collision law $\widetilde{K}^{\epsilon}$ is defined on $\Omega$ because $\Omega \subset \widetilde{\mathcal{F}}$. We will show separately that each of $I_1$, $I_2$, and $I_3$ converge to zero as $\epsilon \to 0$.

\textit{Step 1.} To show $I_1 \to 0$, first we write  
\begin{equation} \label{eq3.60}
\begin{split}
|I_1| & \leq \int_{\mathbf{P} \times \mathbb{S}^2_+ \smallsetminus \Omega} |g|\cdot |f\circ K^\epsilon|\dd\Lambda + \int_{\mathbf{P} \times \mathbb{S}^2_+ \smallsetminus \Omega}|g|\cdot |f \circ K^\epsilon_{\cyl}|\dd\Lambda \\
& \leq 2||f||_{L^\infty} \Lambda^2(\supp g \smallsetminus \Omega).
\end{split}
\end{equation}

\begin{claim}{5.2.1} \label{claim5.2.1}
Let $B$ be a compact set. There exists a constant $C = C_{B} < \infty$ depending only on $B$ such that, for $\epsilon$ sufficiently small, the set $B$ can be covered by $\frac{C}{\epsilon\rho(\epsilon)}$ sets of form $\tau_{jk} R_\epsilon \times \mathbb{S}^2_+$.
\end{claim}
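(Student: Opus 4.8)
The plan is a direct area-counting argument. First I would pass to the spatial projection: since $B$ is compact, its image under the projection $\mathbf{P} \times \mathbb{S}^2_+ \to \mathbf{P}$ is a compact subset of $\mathbf{P}$, hence contained in some box
\begin{equation}
    Q_M := \{(x_1,0,\alpha) \in \mathbf{P} : |x_1| \leq M, \ |\alpha| \leq M\}
\end{equation}
for a constant $M = M_B$ depending only on $B$. It therefore suffices to bound the number of parallelograms $\tau_{jk} R_\epsilon$ that intersect $Q_M$, since then $B$ is covered by the corresponding sets $\tau_{jk} R_\epsilon \times \mathbb{S}^2_+$.

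Next I would record two elementary facts about $R_\epsilon$. By a shear change of variables (fixing $\alpha$ and shifting $x_1$), the $2$-dimensional Lebesgue measure of $R_\epsilon$, and hence of every translate $\tau_{jk} R_\epsilon$, equals $\epsilon\rho(\epsilon)$; and the diameter of $R_\epsilon$ is $O(\epsilon + \rho(\epsilon))$. Since $\epsilon \to 0$ and $\rho(\epsilon) \to 0$, for $\epsilon$ sufficiently small this diameter is at most $1$, so any $\tau_{jk} R_\epsilon$ meeting $Q_M$ is contained in the enlarged box $Q_{M+1}$. Because the parallelograms $\{\tau_{jk} R_\epsilon\}_{(j,k) \in \mathbb{Z}^2}$ tessellate $\mathbf{P}$ with pairwise Lebesgue-null overlaps (as already noted in the text preceding the claim), the number $N$ of them contained in $Q_{M+1}$ satisfies
\begin{equation}
    N \cdot \epsilon\rho(\epsilon) \leq \mathrm{Leb}(Q_{M+1}) = (2M+2)^2,
\end{equation}
so $N \leq (2M+2)^2/(\epsilon\rho(\epsilon))$. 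Setting $C = C_B := (2M_B + 2)^2$ then gives the claim.

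There is no substantial obstacle here; the only points requiring a word of care are the area computation for the sheared parallelogram $R_\epsilon$ and the observation that all parallelograms meeting $Q_M$ lie in $Q_{M+1}$ once $\epsilon$ is small (which uses the hypotheses $\epsilon \to 0$, $\rho(\epsilon)\to 0$). Everything else is bookkeeping, and the $\mathbb{S}^2_+$ factor plays no role since it is common to $B$ and to each covering set.
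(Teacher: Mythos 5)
Your proof is correct and follows essentially the same elementary route as the paper: both enclose the (compact) spatial projection of $B$ in a fixed box and count the lattice translates $\tau_{jk}R_\epsilon$ needed to cover it, yielding $C_B/(\epsilon\rho(\epsilon))$. The only cosmetic difference is that you extract the count from an area comparison using the null-overlap tessellation, whereas the paper counts translates directly from the side lengths $\epsilon$ and $\rho(\epsilon)$ of the fundamental parallelogram; either bookkeeping is fine.
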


\begin{subproof}[Proof of Claim \ref{claim5.2.1}]
Since $B$ is compact, there exist minimal $a,b < \infty$ such that 
\begin{equation}
B \subset \{(x_1,\alpha) \in \mathbf{P} : -a \leq x_1 + \alpha \leq a, -b \leq \alpha \leq b\} \times \mathbb{S}^2_+.
\end{equation}
Note that parallelogram $R_{\epsilon}$ has length $\epsilon$ and height $\rho$, so the parallelogram appearing on the right-hand side above can be covered by $\frac{2a + 1}{\epsilon} \cdot \frac{2b + 1}{\rho(\epsilon)}$ translates $\tau_{jk} R_\epsilon$. The claim follows by taking $C = (2a + 1)(2b + 1)$.
\end{subproof}

Since $\Omega$ is invariant with respect to the translations $\tau_{jk}$ and $\supp g$ is compact, it follows that 
\begin{equation} \label{eq3.62}
    \Lambda^2(\supp g \smallsetminus \Omega) \leq \frac{C}{\epsilon\rho(\epsilon)} \Lambda^2(R_\epsilon \smallsetminus \Omega),
\end{equation}
where $C = C_{\supp g}$. The right-hand side converges to zero by Lemma \ref{lem_mod}.

\textit{Step 2.} Next we show $I_2 \to 0$. Recall the definition (\ref{eq4.70}) of the diffeomorphism $\eta : \mathcal{G} \to \eta(\mathcal{G}) \subset \mathbf{P} \times \mathbb{S}^2_{+\rho/2}$. We may write $\eta(y,w) = (\varphi_{w}(y), w)$, where for each $w = (v_1,v_2,\omega) \in \mathbb{S}^2_{+\rho/2}$, $\varphi_{w} : \mathbf{P} \cap \widehat{\mathcal{Z}} \to \mathbf{P}$ is given by the formula
\begin{equation}
   \varphi_{w}(x_1,\alpha) = \begin{pmatrix}
   x_1 -\overline{\alpha} + \sin\overline{\alpha} - (1 - \cos\overline{\alpha})\frac{v_1}{v_2} \\
   \alpha - (1 - \cos\overline{\alpha})\frac{\omega}{v_2}
   \end{pmatrix},
\end{equation}
where $\overline{\alpha} = \alpha - \overline{k}\rho$ and $\overline{k} = \text{ argmin}\{|\alpha - k\rho| : k \in \mathbb{Z}\}$. The differential of $\varphi_{w}$ is thus given by the formula:
\begin{equation}
    \dd(\varphi_{w})(x_1,\alpha) = \begin{bmatrix}
    1 & -1 + \cos\overline{\alpha} - \sin(\overline{\alpha})\frac{v_1}{v_2} \\
    0 & 1 - \sin(\overline{\alpha})\frac{\omega}{v_2}
    \end{bmatrix}.
\end{equation}
For $(y, w) \in \mathcal{G}$ , we define
\begin{equation}
    U(y,w) = |\det \dd(\varphi_{w})(x_1,\alpha)| = \left|1 - \sin(\overline{\alpha})\frac{\omega}{v_2} \right|.
\end{equation}
Then for $(y,w) \in \eta(\mathcal{G})$, 
\begin{equation}
    |\det \dd(\varphi_{w}^{-1})(x_1,\alpha)| = \frac{1}{U \circ \eta^{-1}(y,w)}.
\end{equation}

\begin{claim}{5.2.2} \label{claim5.2.2}
If $B$ is a compact subset of $\mathbf{P} \times \mathbb{S}^2_+$, then 
\begin{equation}
    \sup_{(y,w) \in B \cap \mathcal{G}} |U(y,w) - 1| \to 0 \quad \text{ as } \epsilon \to 0.
\end{equation}
\end{claim}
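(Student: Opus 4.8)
The plan is to reduce the claim to an elementary bound on the ratio $\omega/v_2$ over the compact set $B$, using the explicit formula for $U$ obtained just above and the fact that $\rho(\epsilon)\to 0$. Writing a point of $\mathcal{G}$ in the coordinates used there as $(y,w)=(x_1,\alpha,v_1,v_2,\omega)$, recall that
\[
U(y,w) = \left| 1 - \sin(\overline{\alpha})\,\frac{\omega}{v_2} \right|,
\qquad \overline{\alpha} = \alpha - \overline{k}\rho(\epsilon),\quad \overline{k} = \argmin\{|\alpha - k\rho(\epsilon)| : k\in\mathbb{Z}\},
\]
and that $(x_1,\alpha)\in\mathbf{P}\cap\widehat{\mathcal{Z}}$ forces $\alpha\in\mathcal{Z}$, hence $|\overline{\alpha}| < \rho(\epsilon)/2$ — this is precisely the reason the set $\widehat{\mathcal{Z}}$ was introduced. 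Therefore, for every $(y,w)\in\mathcal{G}$,
\[
|U(y,w) - 1| \;\leq\; |\overline{\alpha}|\,\left|\frac{\omega}{v_2}\right| \;\leq\; \frac{\rho(\epsilon)}{2}\,\left|\frac{\omega}{v_2}\right|,
\]
so it suffices to bound $|\omega/v_2|$ on $B$ by a constant depending only on $B$ (and not on $\epsilon$).

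For this, I would use that $B$ is a compact subset of $\mathbf{P}\times\mathbb{S}^2_+$, so its image under the projection to the velocity sphere is a compact subset of the \emph{open} hemisphere $\mathbb{S}^2_+ = \{(v_1,v_2,\omega)\in\mathbb{S}^2 : v_2 > 0\}$; on such a set the coordinate $v_2$ attains a strictly positive minimum $c_B$, while $|\omega|$ is bounded above on all of $\mathbb{S}^2$ (indeed $J\omega^2\leq\|w\|^2 = 1$ gives $|\omega|\leq J^{-1/2}$). Hence $|\omega/v_2|\leq (c_B\sqrt{J})^{-1}=:C_B$ uniformly over $B\cap\mathcal{G}$ and uniformly in $\epsilon$, and combining with the previous display,
\[
\sup_{(y,w)\in B\cap\mathcal{G}} |U(y,w) - 1| \;\leq\; \frac{\rho(\epsilon)}{2}\,C_B \;\xrightarrow[\epsilon\to 0]{}\; 0,
\]
since $\rho(\epsilon)\to 0$. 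This is the assertion of the claim.

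The only point that needs a moment's care — the nearest thing to an obstacle — is recognizing where the bound on $\omega/v_2$ comes from: on the full domain $\mathcal{G}$ of $\eta$ the velocity only satisfies $\langle w, e_2\rangle > \sin(\rho(\epsilon)/2)$, i.e. $v_2$ is bounded below merely by a quantity that itself tends to $0$ with $\epsilon$, so $\omega/v_2$ can be of order $1$ and $U-1$ need not tend to $0$ uniformly on $\mathcal{G}$. It is precisely the restriction to the fixed compact set $B$, on which $v_2$ is bounded away from $0$ independently of $\epsilon$, that makes the estimate work. With that observation in hand the argument uses nothing beyond $|\overline{\alpha}| < \rho(\epsilon)/2$ and $\rho(\epsilon)\to 0$ — in particular no input from the billiard dynamics or the geometry of $\mathcal{M}$ — so no further computation is required.
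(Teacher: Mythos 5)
Your proof is correct and follows essentially the same route as the paper: compactness of $B$ in $\mathbf{P}\times\mathbb{S}^2_+$ gives a lower bound $v_2\geq c_B>0$ uniform in $\epsilon$, while $|\overline{\alpha}|\leq\rho(\epsilon)/2$ on $\mathcal{G}$ makes the factor $|\sin\overline{\alpha}|$ vanish uniformly, yielding $|U-1|\leq C_B\,\rho(\epsilon)/2\to 0$. Your closing remark about why the bound fails on all of $\mathcal{G}$ (where $v_2$ is only bounded below by $\sin(\rho/2)$) is a correct and useful observation, but the argument itself matches the paper's.
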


\begin{subproof}[Proof of Claim \ref{claim5.2.2}]
By compactness, for all $(y,w) \in B$, $v_2 = \langle w, e_2 \rangle > c > 0$ for some fixed constant $c$. Also note that $|\overline{\alpha}| \leq \rho(\epsilon)/2 \to 0$ as $\epsilon \to 0$. Therefore, 
\begin{equation}
    |U(y,w) - 1| \leq \sin(\rho/2) \cdot c^{-1} \to 0 \quad \text{ as } \epsilon \to 0,
\end{equation}
and the claim is proved.
\end{subproof}

Recall also the definition (\ref{eq4.72}) of the modified collision law $\widetilde{K}^\epsilon$. Making the change of variables $(y,w) \mapsto \eta^{-1}(y,w) = (\varphi_{w}^{-1}(y),w)$ and noting that the first factor of $\Lambda^2$ is just Lebesgue measure on $\mathbf{P}$, we have 
\begin{equation}
    \int_{\Omega} g[f \circ \widetilde{K}^\epsilon]\dd\Lambda^2 = \int_{\eta(\Omega)} [g \circ \eta^{-1}][f \circ \eta^{-1} \circ K^\epsilon]\frac{1}{U \circ \eta^{-1}}\dd\Lambda^2.
\end{equation}
Using this, we have
\begin{equation} \label{eq3.40}
\begin{split}
    &\left|\int_{\Omega} g[f \circ K^\epsilon - f \circ \widetilde{K}^\epsilon]\dd\Lambda^2 \right| \\
    & = \left|\int_{\Omega} g[f \circ K^\epsilon]\dd\Lambda^2 - \int_{\eta(\Omega)} [g \circ \eta^{-1}][f \circ \eta^{-1} \circ K^\epsilon]\frac{1}{U \circ \eta^{-1}} \dd\Lambda^2\right| \\
    & \leq \left| \int_{\Omega \cap \eta(\Omega)} g[f \circ K^\epsilon - f \circ \eta^{-1} \circ K^\epsilon]\dd\Lambda^2 \right| \\
    & \quad + \left| \int_{\Omega \cap \eta(\Omega)} [g - g \circ \eta^{-1}][f \circ \eta^{-1} \circ K^\epsilon]\dd\Lambda^2 \right| \\
    & \quad + \left|\int_{\Omega \cap \eta(\Omega)} [g \circ \eta^{-1}][f \circ \eta^{-1} \circ K^\epsilon]\left[1 - \frac{1}{U \circ \eta^{-1}}\right]\dd\Lambda^2 \right| \\
    & \quad + \left| \int_{\Omega \smallsetminus \eta(\Omega)} g[f \circ K^\epsilon]\dd\Lambda^2 \right| + \left| \int_{\eta(\Omega) \smallsetminus \Omega} [g \circ \eta^{-1}][f \circ K^\epsilon]\frac{1}{U \circ \eta^{-1}} \dd\Lambda^2 \right|.
\end{split}
\end{equation}
We will estimate separately each term appearing above. First, recall (\ref{eq4.75}) and (\ref{eq4.76}) which say that $\eta$ and $\eta^{-1}$ converges uniformly to the identity on their respective domains $\mathcal{G}$ and $\eta(\mathcal{G})$ as $\epsilon \to 0$. This has two important consequences: First, since $f$ and $g$ are continuous and compactly supported, 
\begin{equation} \label{eq3.41}
    ||f - f \circ \eta||_{L^\infty(\mathcal{G})} \to 0 \quad \text{ and } \quad ||g - g \circ \eta||_{L^\infty(\mathcal{G})} \to 0 \quad \text{ as } \quad \epsilon \to 0,
\end{equation}
and
\begin{equation} \label{eq3.43'}
    ||f - f \circ \eta^{-1}||_{L^\infty(\eta(\mathcal{G}))} \to 0 \quad \text{ and } \quad ||g - g \circ \eta^{-1}||_{L^\infty(\eta(\mathcal{G}))} \to 0 \quad \text{ as } \quad \epsilon \to 0.
\end{equation}
Second, there exists a fixed compact set $B \subset \mathbf{P} \times \mathbb{S}^2_+$ such that, for $\epsilon$ sufficiently small,
\begin{equation} \label{eq3.42}
\begin{split}
    &\mathcal{G} \cap \big\{\supp f \cup \supp g \cup \supp(f \circ \eta) \cup \supp(g \circ \eta)\big\} \subset B, \quad \text{ and } \\
    & \eta(\mathcal{G}) \cap \big\{\supp f \cup \supp g \cup \supp(f \circ \eta^{-1}) \cup \supp(g \circ \eta^{-1}) \big\} \subset B.
\end{split}
\end{equation}
We recall that by Lemma \ref{lem_mod}, $\Omega \subset \widetilde{\mathcal{F}} \subset \mathcal{G}$, and hence
\begin{equation}
    \widetilde{K}^\epsilon(\Omega) \subset \widetilde{K}^\epsilon(\widetilde{\mathcal{F}}) = \widetilde{\mathcal{F}} \subset \mathcal{G} \quad \Rightarrow \quad  K^\epsilon(\eta(\Omega)) \subset \eta(\mathcal{G}).
\end{equation}
By invariance of $\Lambda^2$ with respect to $K^\epsilon$ and (\ref{eq3.43'}),
\begin{equation} \label{eq3.45}
\begin{split}
    &\left| \int_{\Omega \cap \eta(\Omega)} g[f \circ K^\epsilon - f \circ \eta^{-1} \circ K^\epsilon]\dd\Lambda^2 \right| \\
    & \quad\quad = \int_{K^\epsilon(\Omega \cap \eta(\Omega))} |g \circ K^\epsilon|\cdot |f - f \circ \eta^{-1}|\dd\Lambda^2 \\
    & \quad\quad \leq ||g \circ K^\epsilon||_{L^1_{\Lambda^2}(\mathbf{P} \times \mathbb{S}^2_+)} ||f - f \circ \eta^{-1}||_{L^\infty(\mathcal{G})} \\
    & \quad\quad = ||g||_{L^1_{\Lambda^2}(\mathbf{P} \times \mathbb{S}^2_+)} ||f - f \circ \eta^{-1}||_{L^\infty(\mathcal{G})} \to 0 \quad \text{ as } \epsilon \to 0.
\end{split}
\end{equation}
Also, by (\ref{eq3.43'}) and (\ref{eq3.42}), we have 
\begin{equation} \label{eq3.46}
\begin{split}
    &\left| \int_{\Omega \cap \eta(\Omega)} [g - g \circ \eta^{-1}][f \circ \eta^{-1} \circ K^\epsilon]\dd\Lambda^2 \right| \\
    & \leq ||f||_{L^\infty}\left| \int_{B \cap \Omega \cap \eta(\Omega)} [g - g \circ \eta^{-1}]\dd\Lambda^2 \right| \to 0 \quad \text{ as } \epsilon \to 0.
\end{split}
\end{equation}
In addition, note that $\supp g \subset \eta(B)$, and by Claim \ref{claim5.2.2}, $\frac{1}{U} \to 1$ uniformly on $B \cap \mathcal{G}$; thus
\begin{equation} \label{eq3.47}
\begin{split}
    &\left|\int_{\Omega \cap \eta(\Omega)} [g \circ \eta^{-1}][f \circ \eta^{-1} \circ K^\epsilon]\left[1 - \frac{1}{U \circ \eta^{-1}}\right]\dd\Lambda^2 \right| \\
    & \leq ||f||_{L^\infty} ||g||_{L^\infty} \int_{\Omega \cap \eta(B \cap \Omega)} \left|1 - \frac{1}{U \circ \eta^{-1}}\right|\dd\Lambda^2 \to 0 \quad \text{ as } \epsilon \to 0.
\end{split}
\end{equation}
By Claim \ref{claim5.2.2}, there exists a constant $d_1 < \infty$ such that $U \leq d_1$ on $B \cap \mathcal{G}$, for $\epsilon$ sufficiently small. Thus
\begin{equation} \label{eq3.48}
\begin{split}
    \left|\int_{\Omega \smallsetminus \eta(\Omega)} g[f \circ K^\epsilon]\dd\Lambda^2\right| & = \left|\int_{\eta^{-1}(\Omega) \smallsetminus \Omega} (g \circ \eta)[f \circ K^\epsilon \circ \eta] U \dd\Lambda^2\right| \\
    & = d_1||f||_{L^\infty} \int_{\eta^{-1}(\Omega) \smallsetminus \Omega} |g \circ \eta| \dd\Lambda^2 \\
    & \leq d_1||f||_{L^\infty} ||g||_{L^\infty} \Lambda^2(B \smallsetminus \Omega) \\
    & \leq d_1||f||_{L^\infty} ||g||_{L^\infty} \frac{C}{\epsilon\rho(\epsilon)}\Lambda^2(R_\epsilon \smallsetminus \Omega) \to 0 \text{ as } \epsilon \to 0,
\end{split}
\end{equation}
where $C = C_{B}$ as in Claim \ref{claim5.2.1} from Step 1, and the convergence to zero follows by Lemma \ref{lem_mod}. 

Also by Claim \ref{claim5.2.2} there exists $d_2 < \infty$ such that $\frac{1}{U} \leq d_2$ on $B \cap \mathcal{G}$. Noting that $\supp (g \circ \eta^{-1}) \subset B \cap \eta(B)$ by (\ref{eq3.42}), we similarly have
\begin{equation} \label{eq3.49}
\begin{split}
    &\left| \int_{\eta(\Omega) \smallsetminus \Omega} [g \circ \eta^{-1}][f \circ K^\epsilon]\frac{1}{U \circ \eta^{-1}} \dd\Lambda^2 \right| \\
    & = \left| \int_{B \cap \eta(\Omega \cap B) \smallsetminus \Omega} [g \circ \eta^{-1}][f \circ K^\epsilon]\frac{1}{U \circ \eta^{-1}} \dd\Lambda^2 \right| \\
    & \leq d_2||f||_{L^\infty} ||g||_{L^\infty} \Lambda^2(B \smallsetminus \Omega) \\
    & \leq d_2||f||_{L^\infty} ||g||_{L^\infty} \frac{C}{\epsilon\rho(\epsilon)}\Lambda^2(R_\epsilon \smallsetminus \Omega) \to 0 \text{ as } \epsilon \to 0.
\end{split}
\end{equation}
From (\ref{eq3.45}), (\ref{eq3.46}), (\ref{eq3.47}), (\ref{eq3.48}), and (\ref{eq3.49}), we see that each term in the bound (\ref{eq3.40}) converges to zero. Thus $I_2 \to 0$.

\textit{Step 3.} Finally, we show that $I_3 \to 0$. We must deal with the fact that $\widetilde{K}^\epsilon$ and $K^\epsilon_{\cyl}$ have different periodicity (see Propositions \ref{prop_cylcomm} and \ref{prop_modcomm}).
Let $\tau_{jk}$ be defined as above, and for $(j,k) \in \mathbb{Z}^2$, let 
\begin{equation}
    \tau_{jk}'(y) = y + \left(j + \left\lceil\frac{j(1+mJ^{-1})^{1/2}\rho}{\epsilon}\right\rceil\right)\epsilon e_1 + k(1+mJ^{-1})^{1/2}\rho \chi, \quad\quad y \in \mathbf{P}.
\end{equation}
In the lattice in $\mathbf{P}$ spanned by integer combinations of $e_1$ and $(1+mJ^{-1})^{1/2}\rho\chi$, the point $\tau_{jk}'(0)$ is the closest point in the lattice to the right of the point $\tau_{jk}(0) = j\epsilon e_1 + k\rho e_3$. We also let 
\begin{equation}
    \overline{\tau}_{jk}'(y,w) = (\tau_{jk}'(y), w), \quad\quad (y,w) \in \mathbf{P} \times \mathbb{S}^2_+.
\end{equation}
Then
\begin{equation}
    K^\epsilon \circ \overline{\tau}_{jk} = \overline{\tau}_{jk} \circ K^\epsilon, \quad\text{ and } \quad K^\epsilon_{\cyl} \circ \overline{\tau}_{jk}' = \overline{\tau}_{jk}' \circ K^\epsilon_{\cyl}.
\end{equation}
Also, for $k \in \mathbb{Z}$, define 
\begin{equation}
    \mathbf{P}_k = \{(x_1,\alpha) \in \mathbf{P} : k\rho - \rho/2 \leq \alpha < k\rho + \rho/2\},
\end{equation}
and define $\theta : \mathbf{P} \times \mathbb{S}^2_+ \to \mathbf{P} \times \mathbb{S}^2_+$ by
\begin{equation}
    \theta(y,w) = \left(y - \epsilon\left\{\frac{k (1+mJ^{-1})^{1/2}\rho}{\epsilon}\right\}e_1,w\right), \text{ if } (y,w) \in \mathbf{P}_k \times \mathbb{S}^2_+.
\end{equation}
Here $\left\{\frac{k(1+mJ^{-1})^{1/2}\rho}{\epsilon}\right\}$ is the fractional part of $\frac{k(1+mJ^{-1})^{1/2}\rho}{\epsilon}$. We see that $\theta$ is an order $\epsilon$ shift which ``corrects'' for the difference of periods, in the sense that
\begin{equation}
    \theta \circ \overline{\tau}_{jk}' = \overline{\tau}_{jk} \quad \text{ on } \mathbf{P}_k \times \mathbb{S}^2_+.
\end{equation}
We may write
\begin{equation} \label{eq3.55}
\begin{split}
    \int_{\Omega} g[f \circ \widetilde{K}^\epsilon - f \circ K^\epsilon_{\cyl}] & = \int_{\Omega} g[f \circ \widetilde{K}^\epsilon - f \circ \theta \circ K^\epsilon_{\cyl} \circ \theta^{-1}]\dd\Lambda^2 \\
    & + \int_{\Omega} g[f \circ \theta \circ K^\epsilon_{\cyl} \circ \theta^{-1} - f \circ K^\epsilon_{\cyl} \circ \theta^{-1}]\dd\Lambda^2 \\
    & + \int_{\Omega} g[f \circ K^\epsilon_{\cyl} \circ \theta^{-1} - f \circ K^\epsilon_{\cyl}]\dd\Lambda^2.
\end{split}
\end{equation}
We show separately that each of the above terms converges to zero. If $u$ is any Lipschitz function on $\mathbf{P} \times \mathbb{S}^2_+$, we let
\begin{equation} \label{eq5.61}
    \Lip(u) = \sup_{(y,w) \neq (y',w')} \frac{|u(y,w) - u(y',w')|}{||(y,w) - (y',w')||}. 
\end{equation}
By Claim \ref{claim5.2.1} from Step 1, there is an index set $I \subset \mathbb{Z}^2$ with $|I| \leq C/\epsilon\rho$ such that $\supp g$ is covered by the parallelograms $\tau_{jk}R_\epsilon$, $(j,k) \in I$. Using the fact that $g$ is bounded and $f$ is Lipschitz (since it is in $C_c^\infty$), we have 
\begin{equation}
\begin{split}
    &\left| \int_{\Omega} g[f \circ \widetilde{K}^\epsilon - f \circ \theta \circ K^\epsilon_{\cyl} \circ \theta^{-1}]\dd\Lambda^2\right| \\
    & \leq ||g||_{L^\infty} \Lip(f) \int_{\Omega \cap \supp g} ||\widetilde{K}^\epsilon - \theta \circ K^\epsilon_{\cyl} \circ \theta^{-1}||\dd\Lambda^2 \\
    & \leq ||g||_{L^\infty} \Lip(f) \sum_{(j,k) \in I} \int_{\overline{\tau}_{jk}(R_\epsilon \times \mathbb{S}^2_+) \cap \Omega} ||\widetilde{K}^\epsilon - \theta \circ K^\epsilon_{\cyl} \circ \theta^{-1}||\dd\Lambda^2 \\
    & = ||g||_{L^\infty} \Lip(f) \sum_{(j,k) \in I} \int_{(R_\epsilon \times \mathbb{S}^2_+) \cap \Omega} ||\widetilde{K}^\epsilon \circ \overline{\tau}_{jk} - \theta \circ K^\epsilon_{\cyl} \circ \overline{\tau}_{jk}'||\dd\Lambda^2 \\
    & = ||g||_{L^\infty} \Lip(f) \sum_{(j,k) \in I} \int_{(R_\epsilon \times \mathbb{S}^2_+) \cap \Omega} ||\overline{\tau}_{jk} \circ \widetilde{K}^\epsilon - \overline{\tau}_{jk} \circ K^\epsilon_{\cyl}|| \dd\Lambda^2 \\
    & = ||g||_{L^\infty} \Lip(f) |I| \int_{(R_\epsilon \times \mathbb{S}^2_+) \cap \Omega} ||\widetilde{K}^\epsilon - K^\epsilon_{\cyl}|| \dd\Lambda^2 \\
    & \leq ||g||_{L^\infty}\Lip(f)|I|\Lambda^2(R_\epsilon \times \mathbb{S}^2_+) \sup_{(R_\epsilon \times \mathbb{S}^2_+) \cap \Omega} ||\widetilde{K}^{\epsilon} - K^\epsilon|| \\
    & = 2\pi C||g||_{L^\infty} \Lip(f) \sup_{(R_\epsilon \times \mathbb{S}^2_+) \cap \Omega} ||\widetilde{K}^{\epsilon} - K^\epsilon||,
\end{split}
\end{equation}
using in the last line the fact that $|I| = C/\epsilon\rho$ and $\Lambda^2(R_\epsilon \times \mathbb{S}^2_+) = 2\pi\rho\epsilon$.
The last quantity converges to zero by Lemma \ref{lem_mod}. To show that the other two terms in (\ref{eq3.55}) converge to zero, observe that 
\begin{equation}
\begin{split}
    &\left|\int_{\Omega} g[f \circ \theta \circ K^\epsilon_{\cyl} \circ \theta^{-1} - f \circ K^\epsilon_{\cyl} \circ \theta^{-1}]\dd\Lambda^2\right| \\
    & \leq \int_{\mathbf{P} \times \mathbb{S}^2_+} |g||f \circ \theta \circ K^\epsilon_{\cyl} \circ \theta^{-1} - f \circ K^\epsilon_{\cyl} \circ \theta^{-1}|\dd\Lambda^2 \\
    & = \int_{\mathbf{P} \times \mathbb{S}^2_+} |g||f \circ \theta - f|\dd\Lambda^2,
\end{split}
\end{equation}
where the last equality follows by invariance of $\Lambda^2$ under translation and $K^\epsilon_{\cyl}$. The quantity above converges to zero because $\theta \to 0$ uniformly as $\epsilon \to 0$. To handle the last term in (\ref{eq3.55}), we make the change of variables $(y,w) \mapsto \theta(y,w)$ to obtain
\begin{equation}
\begin{split}
    &\left| \int_{\Omega} g[f \circ K^\epsilon_{\cyl} \circ \theta^{-1} - f \circ K^\epsilon_{\cyl}]\dd\Lambda^2 \right| \\
    & = \left| \int_{\theta^{-1}(\Omega)} [g \circ \theta][f \circ K^\epsilon_{\cyl}]\dd\Lambda^2 - \int_{\Omega} g[f \circ K^\epsilon_{\cyl}]\dd\Lambda^2 \right| \\
    & \leq \int_{\mathbf{P} \times \mathbb{S}^2_+} |g \circ \theta - g||f \circ K^\epsilon_{\cyl}|\dd\Lambda^2 \\
    & \quad\quad + ||g||_{L^\infty}||f||_{L^\infty}\left\{\Lambda^2(\theta^{-1}(\supp g) \smallsetminus \theta^{-1}(\Omega)) + \Lambda^2(\supp g \smallsetminus \Omega)\right\} \\
    & \leq ||g \circ \theta - g||_{L^\infty}||f||_{L^1_{\Lambda^2}} + 2||g||_{L^\infty}||f||_{L^\infty}\Lambda^2(\supp g \smallsetminus \Omega),
\end{split}
\end{equation}
using invariance of $\Lambda^2$ with respect to $K^\epsilon_{\cyl}$ and $\theta$ to obtain the last line. The first term in the line above converges to zero because $\theta \to 0$ uniformly, and the second term converges to zero by the same argument as in Step 1. Thus all three terms on the right-hand side of (\ref{eq3.55}) converge to zero, so $I_3 \to 0$.
\end{proof}

We now apply Lemma \ref{lem_comparison} and Theorem \ref{thm_cylindrical} to prove our main results.

\begin{proof}[Proof of Theorem \ref{thm_pure_scaling}]
Fix a constant sequence of cells $\Sigma_i = \Sigma$ and a sequence of positive numbers $\epsilon_i \to 0$. By Theorem \ref{thm_cylindrical}, there exists a Markov kernel $\mathbb{K}$ not depending on the sequence $\epsilon_i$ such that the limit $\lim_{i \to \infty} \mathbb{K}^{\Sigma_i,\epsilon_i}_{\cyl}$ exists and is equal to $\mathbb{K}$. Moreover, $\mathbb{K}$ takes the form \ref{rough_col0}. We may write 
\begin{equation}
\begin{split}
&\int_{\mathbf{P} \times \mathbb{S}^2_+} g(y,w) \left(\int_{\mathbf{P} \times \mathbb{S}^2_+} f(y,w)\mathbb{K}^{\Sigma_i,\epsilon_i}(y,w;\dd y'\dd w') \right) \dd\Lambda^2(\dd y \dd w) \\
& = \int_{\mathbf{P} \times \mathbb{S}^2_+} g(y,w) \left(\int_{\mathbf{P} \times \mathbb{S}^2_+} f(y,w)\mathbb{K}^{\Sigma_i,\epsilon_i}_{\cyl}(y,w;\dd y'\dd w') \right) \dd\Lambda^2(\dd y \dd w) \\
& \quad\quad + \int_{\mathbf{P} \times \mathbb{S}^2_+} g(y,w) [f(K^{\Sigma_i,\epsilon_i}(y,w)) - f(K^{\Sigma_i,\epsilon_i}_{\cyl}(y,w))] \dd\Lambda^2(\dd y \dd w).
\end{split}
\end{equation}
The second term above converges to zero as $\epsilon_i \to 0$ by Lemma \ref{lem_comparison}. It follows that $\mathbb{K}^{\Sigma_i,\epsilon_i} \to \mathbb{K}$, and the theorem is proved. 
\end{proof}

\begin{proof}[Proof of Theorem \ref{thm_dichotomy}]
Fix a sequence of cells $\{\Sigma_i\}_{i \geq 1}$. Lemma \ref{lem_comparison} implies that, for each fixed $i \geq 1$,
\begin{equation}
    d_{\mathcal{G}}^{\Lambda^2}(\mathbb{K}^{\Sigma_i,\epsilon}_{\cyl}, \mathbb{K}^{\Sigma_i,\epsilon}) \to 0 \quad \text{ as } \epsilon \to 0,
\end{equation}
where $d^{\Lambda^2}_{\mathcal{G}}$ is the pseudometric introduced in \ref{sssec_pseudo}. Thus we may choose a decreasing sequence $\{b_i^{(1)}\}_{i \geq 1}$ such that, for each $i \geq 1$, if $\epsilon \leq b_i^{(1)}$, then 
\begin{equation}
    d_{\mathcal{G}}^{\Lambda^2}(\mathbb{K}^{\Sigma_i,\epsilon}_{\cyl}, \mathbb{K}^{\Sigma_i,\epsilon}) \leq i^{-1}.
\end{equation}
Let another decreasing sequence $\{b_i^{(2)}\}_{i \geq 1}$ be chosen as in Theorem \ref{thm_cylindrical}, and let $b_i = \min\{b_i^{(1)}, b_i^{(2)}\}$. 

To prove Theorem \ref{thm_dichotomy}, we will show that $\neg (B) \Rightarrow (A)$. Assume that there exists a sequence $\epsilon_i^{(0)} \to 0$, $\epsilon_i^{(0)} \leq b_i$ such that $\mathbb{K} := \lim_{i \to \infty} \mathbb{K}^{\Sigma_i,\epsilon_i^{(0)}}$ exists. Then by the triangle inequality, 
\begin{equation} \label{eq4.62}
\begin{split}
    d_{\mathcal{G}}^{\Lambda^2}(\mathbb{K}^{\Sigma_i,\epsilon_i^{(0)}}_{\cyl}, \mathbb{K}) & \leq d_{\mathcal{G}}^{\Lambda^2}(\mathbb{K}^{\Sigma_i,\epsilon_i^{(0)}}_{\cyl}, \mathbb{K}^{\Sigma_i,\epsilon_i^{(0)}}) + d_{\mathcal{G}}^{\Lambda^2}(\mathbb{K}^{\Sigma_i,\epsilon_i^{(0)}}, \mathbb{K}) \\
    & \quad\quad \to 0 \quad \text{ as } i \to \infty.
\end{split}
\end{equation}
By Theorem \ref{thm_cylindrical}, it follows that, for any sequence $\epsilon_i \to 0$, $\epsilon_i \leq b_i$, $\mathbb{K}^{\Sigma_i,\epsilon_i}_{\cyl} \to \mathbb{K}$ as $i \to \infty$; so by the triangle inequality again 
\begin{equation} \label{eq4.63}
\begin{split}
    d_{\mathcal{G}}^{\Lambda^2}(\mathbb{K}^{\Sigma_i,\epsilon_i}, \mathbb{K}) & \leq d_{\mathcal{G}}^{\Lambda^2}(\mathbb{K}^{\Sigma_i,\epsilon_i}_{\cyl}, \mathbb{K}^{\Sigma_i,\epsilon_i}) + d_{\mathcal{G}}^{\Lambda^2}(\mathbb{K}^{\Sigma_i,\epsilon_i}_{\cyl}, \mathbb{K}) \\
    & \quad\quad \to 0 \quad \text{ as } i \to \infty.
\end{split}
\end{equation}
This proves the theorem.
\end{proof}

\begin{remark} \normalfont
The pseudometric $d^{\Lambda^2}_{\mathcal{G}}$ is an important ingredient in the proof. A direct application of (\ref{eq5.25}) to handle convergence would not allow for a choice of $\{b_i\}_{i \geq 1}$ which is independent of $f$ and $g$.
\end{remark}

\begin{proof}[Proof of Theorem \ref{thm_classification}]
Suppose that $\mathbb{K} \in \mathcal{A}_0$. Then there exists a sequence of cells $\{\Sigma_i\}_{i \geq 1}$ and a decreasing sequence of positive numbers $\{b_i\}_{i \geq 1}$ such that (A) holds, i.e. for any sequence $\epsilon_i \to 0$ with $0 < \epsilon_i \leq b_i$, $\lim \mathbb{K}^{\Sigma_i,\epsilon_i} = \mathbb{K}$. By the triangle inequality argument giving us (\ref{eq4.62}) in the proof of Theorem \ref{thm_dichotomy}, if $\epsilon_i \to 0$ sufficiently fast then we also have $\lim \mathbb{K}^{\Sigma_i,\epsilon_i}_{\cyl}$ exists and is equal to $\mathbb{K}$, and it follows from Theorem \ref{thm_cylindrical} that $\mathbb{K}$ must take the form (\ref{rough_col}). The proves the forward direction.

To prove the converse statement, suppose $\mathbb{K}$ is a Markov kernel on $\mathbf{P} \times \mathbb{S}_+^2$ which takes the form (\ref{rough_col}) where $\widetilde{\mathbb{P}}$ preserves the measure $\sin\theta \dd\theta$. By Theorem \ref{thm_cylindrical}, there is a sequence of cells $\Sigma_i$ and a decreasing sequence of positive numbers $b_i^{(1)}$ such that $\mathbb{K} = \lim_{i \to \infty} \mathbb{K}^{\Sigma_i,\epsilon_i}_{\cyl}$ whenever $0 < \epsilon_i \leq b_i^{(1)}$ and $\epsilon_i \to 0$. By the triangle inequality argument giving us (\ref{eq4.63}) in the proof of Theorem \ref{thm_dichotomy}, we have that $\mathbb{K}^{\Sigma_i,\epsilon_i} \to \mathbb{K}$ provided that $\epsilon_i \to 0$ sufficiently fast. This implies that $\mathbb{K} \in \mathcal{A}_0$.
\end{proof}

\subsection{Zooming argument} \label{ssec_zoom}

In this subsection, we will define the set $\Omega$ and prove Lemma \ref{lem_mod}. The important idea from this section is to compare the billiard evolutions in the respective configuration spaces $\mathcal{M}$ and $\mathcal{M}_{\cyl}$ after ``zooming in'' by a factor of $\epsilon^{-1}$.

Throughout this subsection, the cell $\Sigma$ is fixed. The scale $\epsilon > 0$ is also fixed, except at the very end.

\subsubsection{Notation and elementary observations} \label{sssec_zoomnote}

For $r > 0$, we define  
\begin{equation}
    \sigma_r(y,w) = (ry, w), \quad\quad (y,w) \in \mathbb{R}^3 \times \mathbb{R}^3.
\end{equation}

Throughout this subsection, if $A$ is a subset of $\mathbb{R}^3$ and $B$ is a subset of $\mathbb{R}^3 \times \mathbb{R}^3$ we will denote 
\begin{equation}
\begin{split}
    & A^* = \epsilon^{-1}A = \{y \in \mathbb{R} : \epsilon y \in A\}, \\
    & B^* = \sigma_{\epsilon^{-1}}(B) = \{(y,w) \in \mathbb{R}^3 \times \mathbb{R}^3 : (\epsilon y, w) \in B\}.
\end{split}
\end{equation}
Thus, for example, we will consider $\mathcal{M}^*$, $\mathcal{M}^*_{\cyl}$, $\mathcal{Z}^*$, $\widehat{\mathcal{Z}}^*$, $\Gamma_{\roll}^* \subset \partial \mathcal{M}^*$, and so forth.

As before, we let $\mathbf{Q}_0 = \{(x_1,x_2,\alpha) : \alpha = 0\}$. We will often identify this plane with $\mathbb{R}^2$.

One motivation for ``zooming'' is that the zoomed cylindrical configuration space $\mathcal{M}_{\cyl}^*$ does not depend on $\epsilon$. In particular, $\mathcal{M}_{\cyl}^*$ is the cylinder with base 
\begin{equation} \label{eq5.70}
    \widehat{B}^* := \epsilon^{-1}\widehat{B}(\epsilon) = \overline{W(\Sigma,1)^c} + e_2 \subset \mathbf{Q}_0
\end{equation}
and axis $\chi$. Neither the base nor the axis depend on $\epsilon$. Another important observation is that 
\begin{equation}
     \mathcal{M}_{\cyl}^* \cap \mathbf{Q}_0 = \widehat{B}^* = \mathcal{M}^* \cap \mathbf{Q}_0.
\end{equation}
(The second equality follows from the parametrization of the configuration space given by Proposition \ref{prop_Mreg}(iii).) Thus, while $\mathcal{M}^*$ does depend on $\epsilon$, its intersection with $\mathbf{Q}_0$ does not.

By Proposition \ref{prop_Mreg}(iii), the subset of the zoomed configuration space $\mathcal{M}^* \cap \widehat{\mathcal{Z}}^*$ is parametrized by $f^* : \overline{W(\Sigma,1)^c} \times \mathcal{Z}^* \to \mathbb{R}^3$, defined by 
\begin{equation} \label{eq3.56}
    f^*(x_1,x_2,\alpha) = \epsilon^{-1}f(\epsilon x_1,\epsilon x_2,\epsilon\alpha) =  \begin{pmatrix}
    x_1 - \epsilon^{-1}\sin(\epsilon \overline{\alpha}^*) \\
    x_2 + 1 + \epsilon^{-1}(-1+\cos(\epsilon\overline{\alpha}^*)) \\
    \alpha
    \end{pmatrix},
\end{equation}
where $\overline{\alpha}^* = \alpha - \overline{k}^*\rho$ and $\overline{k}^* = \text{argmin}\{|\alpha - k\epsilon^{-1}\rho| : k \in \mathbb{Z}\}$. In particular, note that the restriction of $f^*$ to $\partial W(\Sigma,1) \times \mathcal{Z}^* \to \mathbb{R}^3$ parametrizes $\Gamma_{\roll}^* = \partial \mathcal{M}^* \cap \widehat{\mathcal{Z}}^*$.

Recall the diffeomorphism $H_1 : \widehat{\mathcal{Z}} \to \widehat{\mathcal{Z}}$ defined by (\ref{eq4.43}). We define $H_\epsilon : \widehat{\mathcal{Z}}^* \to \widehat{\mathcal{Z}}^*$ by 
\begin{equation} \label{eq3.54}
    H_\epsilon(x_1,x_2,\alpha) = \epsilon^{-1}H_1(\epsilon x_1, \epsilon x_2, \epsilon\alpha) = \begin{pmatrix}
    x_1 + \overline{\alpha}^* - \epsilon^{-1}\sin(\epsilon\overline{\alpha}^*) \\
    x_2 + \epsilon^{-1}(-1 + \cos(\epsilon\overline{\alpha}^*)) \\
    \alpha
    \end{pmatrix}.
\end{equation}
(Arguably, we should denote this map instead by $H^*$. However, the parameter $\epsilon$ will play an important role in our arguments.)

Corresponding to (\ref{eq4.46}), we have 
\begin{equation} \label{eq5.71}
\begin{split}
    &H_\epsilon(\mathcal{M}_{\cyl}^* \cap \widehat{\mathcal{Z}}^*) = \mathcal{M}^* \cap \widehat{\mathcal{Z}}^*, \text{ and } \\
    &H_\epsilon(\partial \mathcal{M}_{\cyl}^* \cap \widehat{\mathcal{Z}}^*) = \partial \mathcal{M}^* \cap \widehat{\mathcal{Z}}^*.
\end{split}
\end{equation}

The motivation behind ``zooming'' becomes clearer by observing that 
\begin{equation}
    H_\epsilon(y) = y + \text{Error}(y,\epsilon),
\end{equation}
where the error term and its first derivatives converge uniformly to zero as $\epsilon \to 0$. Thus $H_\epsilon$ maps straight chords in $\mathcal{M}_{\cyl}^*$ to ``almost straight'' chords in $\mathcal{M}^*$, and this suggests that a billiard trajectory in $\mathcal{M}^*$ will in some sense be closely tracked by a billiard trajectory in $\mathcal{M}^*_{\cyl}$ as $\epsilon \to 0$.

\subsubsection{Projection onto the cylindrical base} \label{sssec_cylproject}

We will analyze separately the projection of the billiard trajectory onto the cylindrical base and the cylindrical axis. Define a projection map $G_0 : \mathbb{R}^3 \to \mathbf{Q}_0$ by
\begin{equation}
    G_0(x_1,x_2,\alpha) = (x_1 + \alpha, x_2).
\end{equation}
Note that $G_0(\chi) = 0$, and consequently, $G_0$ projects the cylinder $\mathcal{M}_{\cyl}^*$ onto its base $\widehat{B}^*$ and the cylindrical boundary $\partial \mathcal{M}_{\cyl}^*$ onto  $\partial \widehat{B}^*$. 

Therefore, by (\ref{eq5.71}), the ``perturbed'' projection map
\begin{equation}
    G_\epsilon := G_0 \circ H_\epsilon^{-1} : \widehat{\mathcal{Z}}^* \to \mathbf{Q}_0
\end{equation}
maps $\mathcal{M}^* \cap \widehat{\mathcal{Z}}^*$ onto $\widehat{B}^*$ and $\partial \mathcal{M}^* \cap \widehat{\mathcal{Z}}^*$ onto $\partial \widehat{B}^*$. Explicitly, $G_\epsilon$ is given by the formula 
\begin{equation} \label{eq3.60'}
G_\epsilon(x_1,x_2,\alpha) = \begin{pmatrix}
x_1 + \epsilon^{-1}\sin(\epsilon\overline{\alpha}^*) \\
x_2 - \epsilon^{-1}(-1 + \cos(\epsilon\overline{\alpha}^*))
\end{pmatrix}.
\end{equation}

The  goal is to compare the projection under $G_0$ of the billiard trajectory in $\mathcal{M}_{\cyl}^*$ to the projection under $G_\epsilon$ of the billiard trajectory in $\mathcal{M}^*$. The advantage of considering the projection is that the image of $G_\epsilon$ is $\widehat{B}^*$ which does not depend on $\epsilon$. 

Towards our goal, we first provide estimates on the differential of $G_\epsilon$.

\begin{lemma} \label{dGlem}
Let $\epsilon \geq 0$, $y = (x_1,x_2,\alpha) \in \mathbb{R}^3$ and $w = (v_1,v_2,\omega) \in \mathbb{S}^2$. Set $\varphi = \angle(w, \chi)$. Then  
\begin{equation} \label{eq5.79}
\begin{split}
[1 - ((mJ)^{-1/2} + 1)|\epsilon\overline{\alpha}^*|]\sin\varphi & \leq ||\dd(G_\epsilon)_{y}(w)|| \\
& \leq (1 + mJ^{-1})^{1/2}[1 + ((mJ)^{-1/2} + 1)|\epsilon\overline{\alpha}^*|]\sin\varphi
\end{split}
\end{equation}
\end{lemma}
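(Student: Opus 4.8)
\textbf{Proof plan for Lemma \ref{dGlem}.}

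The plan is to compute the differential $\dd(G_\epsilon)_y$ explicitly from the formula (\ref{eq3.60'}), express it in terms of the orthonormal basis $(\chi^\perp, \widehat{e}_2, \chi)$, and then bound the operator applied to the unit vector $w$ using the fact that $w$ makes angle $\varphi$ with $\chi$. First I would differentiate (\ref{eq3.60'}) with respect to $(x_1,x_2,\alpha)$; since $\overline{\alpha}^* = \alpha - \overline{k}^*\rho$ and $\overline{k}^*$ is locally constant in $\alpha$ on $\widehat{\mathcal{Z}}^*$, the map $\alpha \mapsto \epsilon^{-1}\sin(\epsilon\overline{\alpha}^*)$ has derivative $\cos(\epsilon\overline{\alpha}^*)$ and $\alpha \mapsto -\epsilon^{-1}(-1+\cos(\epsilon\overline{\alpha}^*))$ has derivative $\sin(\epsilon\overline{\alpha}^*)$, so
\begin{equation}
\dd(G_\epsilon)_y = \begin{bmatrix} 1 & 0 & \cos(\epsilon\overline{\alpha}^*) \\ 0 & 1 & \sin(\epsilon\overline{\alpha}^*) \end{bmatrix}.
\end{equation}
Notice that when $\epsilon = 0$ (or $\overline{\alpha}^* = 0$) this is exactly $G_0$'s differential $\begin{bmatrix}1&0&1\\0&1&0\end{bmatrix}$, which annihilates $\chi$ and acts on $\chi^\perp$ by the foreshortening factor. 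So the strategy is to write $\dd(G_\epsilon)_y = \dd(G_0) + E_\epsilon$, where $E_\epsilon$ has the single nonzero entry $\cos(\epsilon\overline{\alpha}^*) - 1$ in the $(1,3)$ slot and $\sin(\epsilon\overline{\alpha}^*)$ in the $(2,3)$ slot; then $\|E_\epsilon\|_{\mathrm{op}} \leq |\cos(\epsilon\overline{\alpha}^*)-1| + |\sin(\epsilon\overline{\alpha}^*)| = O(|\epsilon\overline{\alpha}^*|)$, and more precisely $\leq |\epsilon\overline{\alpha}^*|$ (using $|1-\cos t| + |\sin t| \leq |t|$ for small $t$, or $\leq |\sin t|(1 + |t|/2)$-type bounds — I would pick whichever gives the clean constant $(mJ)^{-1/2}+1$).

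Next I would evaluate $\|\dd(G_0)(w)\|$ exactly. Writing $w = (\cos\theta\sin\psi)\chi^\perp + (\sin\theta\sin\psi)\widehat{e}_2 + \cos\psi\,\chi$ in the notation of (\ref{eq1.27'}) — here $\psi = \varphi$ is the angle to $\chi$ — and recalling that $G_0$ kills $\chi$, we get $\dd(G_0)(w) = \dd(G_0)\big((\cos\theta\sin\varphi)\chi^\perp + (\sin\theta\sin\varphi)\widehat{e}_2\big)$. Now $\dd(G_0)(\widehat{e}_2)$ has Euclidean norm computed from $\widehat{e}_2 = (0,m^{-1/2},0)$, giving the second row contribution $m^{-1/2}$; and $\dd(G_0)(\chi^\perp)$, with $\chi^\perp = (m^{-1}+J^{-1})^{-1/2}(m^{-1},0,J^{-1})$, gives first-row entry $(m^{-1}+J^{-1})^{-1/2}(m^{-1}+J^{-1}) = (m^{-1}+J^{-1})^{1/2}$. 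Since the images of $\chi^\perp$ and $\widehat{e}_2$ under $\dd(G_0)$ land in the orthogonal $x_1$- and $x_2$-directions of $\mathbf{Q}_0$ respectively, the norm splits as a Pythagorean sum, and one checks that $m^{-1/2}\sin\varphi \leq \|\dd(G_0)(w)\| \leq (m^{-1}+J^{-1})^{1/2}\sin\varphi$. (I would need to double-check the normalization conventions here — whether $\mathbf{Q}_0$ carries the Euclidean or the kinetic-energy metric — but the stated bounds in (\ref{eq5.79}) suggest the relevant comparison constant is $(1+mJ^{-1})^{1/2}$, so after reconciling units the upper factor is $(1+mJ^{-1})^{1/2}$ and the lower is $1$ times $\sin\varphi$.) Then the triangle inequality gives
\begin{equation}
\big|\,\|\dd(G_\epsilon)_y(w)\| - \|\dd(G_0)(w)\|\,\big| \leq \|E_\epsilon(w)\| \leq |\epsilon\overline{\alpha}^*|\cdot\big((mJ)^{-1/2}+1\big)\sin\varphi,
\end{equation}
where the factor $\sin\varphi$ comes from the fact that $E_\epsilon$ only sees the $\chi^\perp$-component of $w$ (it acts through the third coordinate, i.e. $w$'s $\chi$-direction — wait, I would recheck: $E_\epsilon$ is supported on the third input coordinate, which is $\langle w,\chi\rangle = \cos\varphi$, not $\sin\varphi$). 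This is the one subtle point: I need the error bound proportional to $\sin\varphi$, not $\cos\varphi$, and achieving that requires observing that $\|\dd(G_0)(w)\|$ itself is $\Theta(\sin\varphi)$ and factoring $\sin\varphi$ out of \emph{both} sides — i.e. the right inequality to prove is $\|\dd(G_\epsilon)_y(w)\|/\sin\varphi \in [1 - c|\epsilon\overline{\alpha}^*|,\ (1+mJ^{-1})^{1/2}(1+c|\epsilon\overline{\alpha}^*|)]$ with $c = (mJ)^{-1/2}+1$, which one gets by dividing through by $\|w - \langle w,\chi\rangle\chi\| = \sin\varphi$ first and only then applying the perturbation estimate to the unit vector $(w - \langle w,\chi\rangle\chi)/\sin\varphi$. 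That rescaling step is the main obstacle — everything else is bookkeeping with the explicit $2\times 3$ matrix.

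Finally I would assemble: the lower bound $[1 - c|\epsilon\overline{\alpha}^*|]\sin\varphi$ follows from $\|\dd(G_\epsilon)(w)\| \geq \|\dd(G_0)(w)\| - \|E_\epsilon(w)\| \geq \sin\varphi - c|\epsilon\overline{\alpha}^*|\sin\varphi$, and the upper bound $(1+mJ^{-1})^{1/2}[1 + c|\epsilon\overline{\alpha}^*|]\sin\varphi$ from $\|\dd(G_\epsilon)(w)\| \leq \|\dd(G_0)(w)\| + \|E_\epsilon(w)\| \leq (1+mJ^{-1})^{1/2}\sin\varphi + c|\epsilon\overline{\alpha}^*|\sin\varphi \leq (1+mJ^{-1})^{1/2}(1 + c|\epsilon\overline{\alpha}^*|)\sin\varphi$ (using $(1+mJ^{-1})^{1/2}\geq 1$ to absorb the error term into the product form). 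The case $\epsilon = 0$ is immediate since then $G_\epsilon = G_0$ and both bounds reduce to the exact computation $\|\dd(G_0)(w)\| \in [\sin\varphi, (1+mJ^{-1})^{1/2}\sin\varphi]$. I expect the only real care needed is (a) pinning down the metric on $\mathbf{Q}_0$ so the constant $(1+mJ^{-1})^{1/2}$ comes out exactly, and (b) the factoring-out-$\sin\varphi$ maneuver described above.
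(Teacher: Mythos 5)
Your computation of $\dd(G_\epsilon)_y$ agrees with the paper's (its display (\ref{eq3.66})), and your treatment of the $\epsilon=0$ case is in substance the same as the paper's: the paper gets $\sin\varphi \le \|\dd(G_0)_y(w)\| \le (1+mJ^{-1})^{1/2}\sin\varphi$ by a geometric ellipse argument (projection along $\chi$ onto $\mathbf{Q}_0$, the angle $\theta$ between $\chi$ and $\mathbf{Q}_0$ satisfying $\sin\theta=(J/(m+J))^{1/2}$), which is your Pythagorean computation in disguise. The normalization you were unsure about resolves as follows: the image is measured in the kinetic-energy norm restricted to $\mathbf{Q}_0$; with the Euclidean norm your constants would be $m^{-1/2}$ and $(m^{-1}+J^{-1})^{1/2}$, and the kinetic norm rescales these to $1$ and $(1+mJ^{-1})^{1/2}$.

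The genuine gap is exactly at the step you flagged as ``the main obstacle,'' and your proposed fix does not close it. The error operator acts through the third standard coordinate: $E_\epsilon(w)=\omega\,(\cos(\epsilon\overline{\alpha}^*)-1,\ \sin(\epsilon\overline{\alpha}^*))$, and $\omega$ is neither $\langle w,\chi\rangle$ nor controlled by $\sin\varphi$ (for $w=\chi$ one has $\sin\varphi=0$ but $\omega=-(m+J)^{-1/2}\neq 0$). Your rescue --- set $w^\perp=w-\langle w,\chi\rangle\chi$, divide by $\sin\varphi$, and apply the perturbation estimate to the unit vector $w^\perp/\sin\varphi$ --- silently discards the term $\langle w,\chi\rangle\,\dd(G_\epsilon)_y(\chi)$. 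Unlike $\dd(G_0)$, the map $\dd(G_\epsilon)_y$ does not annihilate $\chi$: $\dd(G_\epsilon)_y(\chi)=(m+J)^{-1/2}\bigl(1-\cos(\epsilon\overline{\alpha}^*),\,-\sin(\epsilon\overline{\alpha}^*)\bigr)$, of norm comparable to $|\epsilon\overline{\alpha}^*|$. So after your renormalization an additive error of size $\approx|\epsilon\overline{\alpha}^*|\,|\cos\varphi|$ remains, which is not of the form $c\,|\epsilon\overline{\alpha}^*|\sin\varphi$; the triangle-inequality route genuinely yields only an additive bound $\bigl|\,\|\dd(G_\epsilon)_y(w)\|-\|\dd(G_0)_y(w)\|\,\bigr|\le C|\epsilon\overline{\alpha}^*|$. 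Indeed no manipulation can give the multiplicative form uniformly in $w$: at $w=\chi$ the stated upper bound is $0$ while $\|\dd(G_\epsilon)_y(\chi)\|>0$, so any proof must (implicitly or explicitly) work in a regime where $\sin\varphi$ is not too small relative to $|\epsilon\overline{\alpha}^*|$.

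For comparison, the paper argues on squares: it writes $\|\dd(G_\epsilon)_y(w)\|^2=\|\dd(G_0)_y(w)\|^2+E$, bounds the cross term by $((mJ)^{-1/2}+1)|\epsilon\overline{\alpha}^*|$ via Cauchy--Schwarz and AM--GM with the weights $m$ and $J$ (this weighted step is where the constant $(mJ)^{-1/2}+1$ comes from, which your operator-norm sketch does not yet produce), and then converts to the multiplicative form using $\sqrt{x+h}\le\sqrt{x}(1+h)$ --- a step that itself requires $\|\dd(G_0)_y(w)\|^2\ge 1/(2+h)$, i.e.\ the same restriction on $\varphi$, consistent with how the lemma is later applied (on $\Omega$ one has $\sin\nu\ge 1/\log(1/\rho)$ while $|\epsilon\overline{\alpha}^*|\le\rho/2$). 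To repair your write-up, either prove and carry through the additive estimate, or follow the squared-norm computation and state the regime in which the multiplicative conversion is valid. One small slip: $|1-\cos t|+|\sin t|\le|t|$ is false for small $t>0$; use $\sqrt{(1-\cos t)^2+\sin^2 t}=2|\sin(t/2)|\le|t|$ instead.
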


\begin{proof}
One may easily prove the result if $\epsilon = 0$, because then $G_0$ is just the linear map which projects vectors in the direction $\chi$ onto the plane $\mathbf{Q}_0$, and
\begin{equation}
\dd(G_0)_{y}(w) = \begin{pmatrix} v_1 + \omega \\
v_2
\end{pmatrix}.
\end{equation}
Indeed, if $\theta$ is the angle between $\chi$ and the plane $\mathbf{Q}_0$, then  $\dd(G_0)_{y}(w)$ must lie on the boundary of the ellipse with major and minor axes $\sin\varphi$ and $\frac{\sin\varphi}{\sin\theta}$ respectively; hence
\begin{equation}
    \sin\varphi \leq \dd(G_0)_{y}(w) \leq \frac{\sin\varphi}{\sin\theta}.
\end{equation}
Noting that $\widehat{e}_3 := (0,0,J^{-1/2})$ is the unit normal to the plane $\mathbf{Q}_0$, we have
\begin{equation}
    \sin\theta = \langle \chi, n_0 \rangle = \left(\frac{J}{m + J} \right)^{1/2}
\end{equation}
Substituting this into the previous equality yields
\begin{equation} \label{eq3.62'}
    \sin\varphi \leq \dd(G_0)_{y}(w) \leq (1 + mJ^{-1})^{1/2}\sin\varphi.
\end{equation}

Now assume $\epsilon > 0$. The differential of $G_\epsilon$ is 
\begin{equation} \label{eq3.66}
\dd(G_\epsilon)_{y} = \begin{bmatrix} 
1 & 0 & \cos(\epsilon\overline{\alpha}^*) \\
0 & 1 & \sin(\epsilon\overline{\alpha}^*)
\end{bmatrix}.
\end{equation}
Thus
\begin{equation} \label{eq3.67}
||\dd(G_\epsilon)_{y}(w)||^2 = (v_1 + \omega)^2 + (v_2)^2 + E(\epsilon, y,w) = ||\dd(G_0)_{y}(w)||^2 + E(\epsilon,y,w),
\end{equation}
where 
\begin{equation}
E(\epsilon,y,w) = 2\omega[-v_1(1 - \cos(\epsilon\overline{\alpha}^*)) + v_2\sin(\epsilon\overline{\alpha}^*)] + \omega^2\sin^2(\epsilon\overline{\alpha}^*).
\end{equation}
We estimate the middle term
\begin{equation}
\begin{split}
& 2\omega[-v_1(1 - \cos(\epsilon\overline{\alpha}^*)) + v_2\sin(\epsilon\overline{\alpha}^*)] \\
& \leq 2\omega \sqrt{(v_1)^2 + (v_2)^2}\sqrt{(1 - \cos(\epsilon\overline{\alpha}^*))^2 + \sin^2(\epsilon\overline{\alpha}^*)} \quad\quad (\text{Cauchy-Schwarz}) \\
& = 2\omega\sqrt{(v_1)^2 + (v_2)^2}\sqrt{2 - 2\cos(\epsilon\overline{\alpha}^*)} \\
& \leq 2\omega\sqrt{(v_1)^2 + (v_2)^2}|\epsilon\overline{\alpha}^*|  \quad\quad\quad\quad\quad\quad\quad (\text{using } 1 - \cos(x) \leq x^2/2) \\
& = 2(mJ)^{-1/2}(J^{1/2}\omega m^{1/2}\sqrt{(v_1)^2 + (v_2)^2}|\epsilon\overline{\alpha}^*| \\
& \leq (mJ)^{-1/2}[m(v_1)^2 + m(v_2)^2 + J \omega^2]|\epsilon\overline{\alpha}^*| \\
& = (mJ)^{-1/2}||w||^2 |\epsilon\overline{\alpha}^*| = (mJ)^{-1/2}|\epsilon\overline{\alpha}^*|,
\end{split}
\end{equation}
where the second-to-last line above follows from the Arithmetic-Geometric Mean Inequality. Since $\omega^2 \leq ||w||^2 = 1$, we have 
\begin{equation}
|E(\epsilon,y,w)| \leq ((mJ)^{-1/2} + 1)|\epsilon\overline{\alpha}^*|.
\end{equation}
By (\ref{eq3.67}), we obtain 
\begin{equation}
\begin{split}
||\dd(G_0)_{y}(w)||^2 - ((mJ)^{-1/2} + 1)|\epsilon\overline{\alpha}^*| & \leq ||\dd(G_\epsilon)_{y}(w)||^2 \\
& \leq ||\dd(G_\epsilon)_{y}(w)||^2 + ((mJ)^{-1/2} + 1)|\epsilon\overline{\alpha}^*|.
\end{split}
\end{equation}
Taking the square root of each side, and using the fact that $\sqrt{x + h} \leq \sqrt{x}(1+h)$ for $h > 0$ and $\sqrt{x + h} \geq \sqrt{x}(1+h)$ for $h < 0$, we conclude that 
\begin{equation}
\begin{split}
||\dd(G_0)_{y}(w)||(1 - ((mJ)^{-1/2} & + 1)|\epsilon\overline{\alpha}^*|) \leq ||\dd(G_\epsilon)_{y}(w)|| \\
& \leq ||\dd(G_0)_{y}(w)||(1 + ((mJ)^{-1/2} + 1)|\epsilon\overline{\alpha}^*|).
\end{split}
\end{equation}
By (\ref{eq3.62'}) and the above, we obtain (\ref{eq5.79}).
\end{proof}

\subsubsection{Radius of transversality}

To state our next result, we must introduce some additional terminology. Consider a closed set $\mathcal{D} \subset \mathbb{R}^2$. Assume that $\mathcal{D}$ is the closure of its interior and has a piecewise $C^2$ boundary (in the sense of \S\ref{sssec_uphalf}). Assume that some decomposition $\partial \mathcal{D} = \bigcup_{i = 1}^\infty I_i$ into closed $C^2$ curve segments has been fixed.

We will call a line segment between two points $p$ and $p'$ in $\partial \mathcal{D}$ such that the interior of the line segment lies in $\Int \mathcal{D}$ a \textit{chord} of $\mathcal{D}$.

Consider a chord $C$ of the domain $\mathcal{D}$ with endpoints $p,p' \in \partial \mathcal{D}$, and let $v = \frac{p' - p}{||p' - p||}$. (For example, $C$ might be the segment of the billiard trajectory starting from the initial state $(p,v)$.) 

Let $I, I' \subset \partial\mathcal{D}$ denote $C^2$ curve segments from the fixed decomposition $\{I_i\}$ containing $p, p'$ respectively, and let $L(p,p')$ denote the line through points $p$ and $p'$. 

Let $Z \subset I$ be the set of all points $q \in \Int I$ such that the ray starting from $q$ with direction $v$ first returns to the boundary at a point $q' \in \Int I'$ and such that the line $L(q,q')$ is not tangent to the curve segments $I$, $I'$ at the points $q$, $q'$ respectively. 

We define the \textit{radius of transversality associated with} $C$ to be the number 
\begin{equation}
r_{\mathcal{D}}(C) = r_{\mathcal{D}}(p,v) = \sup\{r \geq 0 : (\forall q \in I) \  \text{dist}(q,L(p,p')) \leq r \Rightarrow q \in Z\}.
\end{equation}
This quantity is well-defined up to the choice of decomposition of $\partial \mathcal{D}$ into $C^2$ curve segments $\{I_i\}$. Indeed, if either $p$ or $p'$ lies in at the endpoint of a curve segment in the fixed decomposition of $\partial \mathcal{D}$, then $r_{\mathcal{D}}(p,v) = 0$ for any choice of $I$ and $I'$. On the other hand, if $p$ and $p'$ both lie in the interiors of curve segments, then $I$ and $I'$ are uniquely determined.
 
The concept is illustrated in Figure \ref{transversality_fig}. The term ``transversality'' refers to the fact that the lines $L(q,q')$ must cross the segments $I, I'$ transversally. We will also sometimes refer to this as the radius of transversality associated with state $(p,v)$ if $C$ arises as a segment of a billiard trajectory.

\begin{figure}
    \centering
    \includegraphics[width = 0.7\linewidth]{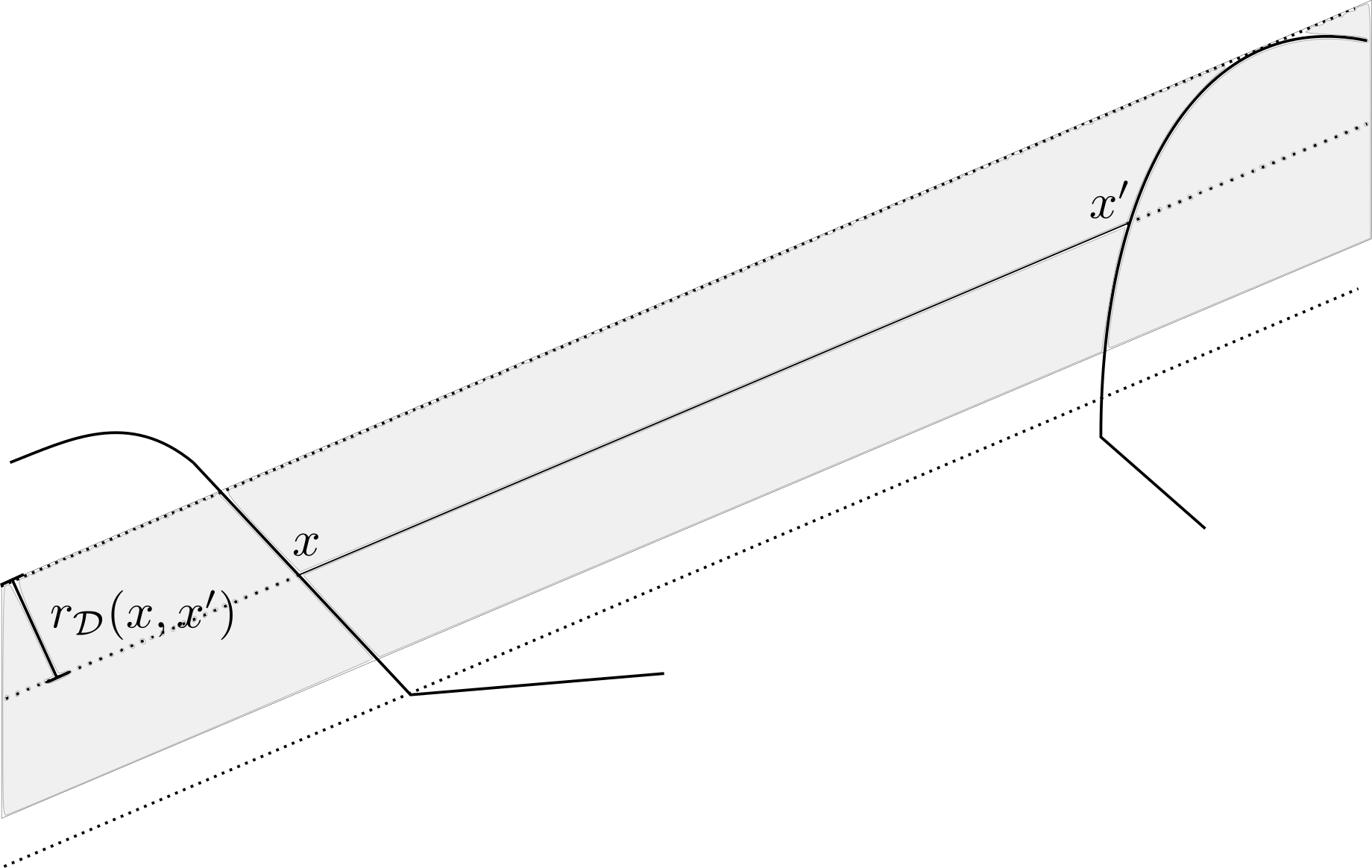}
    \caption{Radius of transversality}
    \label{transversality_fig}
\end{figure}

\subsubsection{Modulus of continuity lemmas} \label{sssec_modcon}

To state our next results, we need to refer to some aspects of the construction of the cylindrical collision law $K^\epsilon_{\cyl}$. According to the definition of $K^\epsilon_{\cyl}$ given in \S\ref{sssec_cylcol}, the cylindrical collision law is a special case of the general macro-reflection laws defined in \S\ref{sssec_detref} (taking $\mathcal{M}_1 = \mathcal{M}_{\cyl}$ and $\mathcal{M}_0 = \{(x_1,x_2,\alpha) : x_2 \geq 0\}$). Adapting notation from Remark \ref{rem_extend} to this setting, we have
\begin{equation}
    \mathcal{N} = \overline{\{(x_1,x_2,\alpha) \in \mathcal{M}_{\cyl} : x_2 < 0\}},
\end{equation}
and the plane $\mathbf{P}$ may be expressed as the disjoint union
\begin{equation}
    \mathbf{P} = A_1 \cup A_2 \cup A_3,
\end{equation}
where 
\begin{equation}
    A_1 = \mathbf{P} \cap \mathcal{N} \cap \Int \mathcal{M}_{\cyl}, \quad\quad A_2 = \mathbf{P} \cap \mathcal{N} \smallsetminus \Int \mathcal{M}_{\cyl}, \quad\quad A_3 = \mathbf{P} \smallsetminus \mathcal{N}.
\end{equation}
The set $A_2$ has Lebesgue measure zero, and consequently $\mathcal{N}$ belongs to the class $\CES^2_0(\mathbb{R}^3)$. (See the proof of Proposition \ref{prop_detcolcyl}.)

The billiard domain $\mathcal{N}$ plays a key role in the definition of $K_{\cyl}^\epsilon$. If $(y,w) \in A_3 \times \mathbb{S}^2_+$, then the collision law maps $(y,w)$ to $R(y,-w)$, where $R$ denotes specular reflection. And if $(y,w) \in A_1 \times \mathbb{S}^2_+$, then (except on a null set) the billiard trajectory starting from $(y,-w)$ will enter $\mathcal{N}$ and reflect from $\partial \mathcal{M}_{\cyl}$ a finite number of times before eventually returning to $\mathbf{P}$ in a state $(y',w')$; in this case $K_{\cyl}^\epsilon(y,w) := (y',w')$. The details of the more general case are covered in \S\ref{sssec_detref} and Remark \ref{rem_extend}.

We would like to study the billiard trajectory in $\mathcal{N}$ after zooming. That is, we introduce ``zoomed'' versions of the sets defined above:
\begin{equation} \label{eq5.92}
    \mathcal{N}^* = \epsilon^{-1}\mathcal{N} = \overline{\{(x_1,x_2,\alpha) \in \mathcal{M}_{\cyl}^* : x_2 < 0\}}, \quad\quad A_i^* = \epsilon^{-1}A_i, \quad i = 1,2,3.
\end{equation}
The set $\mathcal{N}^*$ is cylindrical, with axis $\chi$ and base
\begin{equation} \label{eq5.93}
    \mathcal{D}^* := G_0(\mathcal{N}^*)  = \overline{\{(x_1,x_2) \in \widehat{B}^* : x_2 < 0\}} \subset \mathbf{Q}_0,
\end{equation}
where we recall that $\widehat{B}^*$ is the base of the cylinder $\mathcal{M}_{\cyl}^*$ in the plane $\mathbf{Q}_0$; see (\ref{eq5.70}). By our assumptions on the wall, the boundary $\partial \widehat{B}^*$ is piecewise $C^2$; it decomposes as a union of compact $C^2$ curves $\partial \widehat{B}^* = \bigcup_{i \in \mathbb{Z}} \widehat{\Gamma}_i^*$; for each $i$, if the interior of $\widehat{\Gamma}_i^*$ intersects the line $\mathbf{L} := \{(x_1,x_2) : x_2 = 0\}$, then $\widehat{\Gamma}_i^* \subset \mathbf{L}$. Consequently, $\mathcal{D}^*$ also has a piecewise $C^2$ boundary, consisting of a union of some subcollection of the curve segments $\{\widehat{\Gamma}_i^*, i \in \mathbb{Z}\}$ and a collection of line segments in $\mathbf{L}$. Fixing such a decomposition of $\partial \mathcal{D}^*$, we can unambiguously refer to the radius of transversality of a chord of $\mathcal{D}^*$.

We will now very carefully study a single segment of the billiard trajectory in $\mathcal{N}^*$ and its projected image in $\mathcal{D}^*$.

Let $y^0 \in \partial \mathcal{N}^*$ and $w^0 \in \mathbb{S}^2$. We assume that the billiard trajectory starting from initial state $(y^0,w^0)$ immediately enters the interior of $\mathcal{N}^*$ and first returns to $\partial \mathcal{N}^*$ at a point $y^1$. We also assume that if $y^i \in \partial \mathcal{M}_{\cyl}^*$ then the unit normal vector at $y^i$ exists, for $i = 0,1$. These assumptions will be satisfied for example if $(y^0,w^0)$ is in the domain of the billiard map for the billiard domain $\mathcal{N}^*$.

If $y^1 \in \partial \mathcal{M}_{\cyl}^*$, let $w^1$ denote the velocity of the point particle after reflecting specularly at $y^1$. Otherwise (if $y^1 \in \mathbf{P}$) let $w^1 = w^0$.

Recall the surface $\widetilde{\mathbf{P}} = H_1(\mathbf{P} \cap \widehat{\mathcal{Z}})$, which is the boundary of the open sets $\mathcal{O}_{\pm} = H_1(\mathbb{R}^3_\pm\cap \widehat{\mathcal{Z}}^*)$ (see \S\ref{sssec_modcol}). We have
\begin{equation}
    \widetilde{\mathbf{P}}^* := \epsilon^{-1}\widetilde{\mathbf{P}} = H_{\epsilon}(\mathbf{P} \cap \widehat{\mathcal{Z}}^*), \quad\quad \mathcal{O}_{\pm}^* := \epsilon^{-1}\mathcal{O}_{\pm} = H_\epsilon(\mathbb{R}^3_{\pm} \cap \widehat{\mathcal{Z}}^*).
\end{equation}

We want to compare the segment of the billiard trajectory running from $(y^0,w^0)$ to $(y^1,w^1)$ to a segment of a billiard trajectory in $\mathcal{M}^*$. However, rather than taking as given that the segment of the trajectory in $\mathcal{M}^*$ has nice properties, as we have for the segment in $\mathcal{M}_{\cyl}^*$, we will state conditions in the lemmas below which guarantee that this is the case. Our goal in Lemmas \ref{lem_control} and \ref{lem_control2} is to be able to control the billiard trajectory in $\mathcal{M}^*$ in terms of properties of the billiard trajectory in $\mathcal{M}_{\cyl}^*$. See Figure \ref{projections_fig}

\begin{figure}
    \centering
    \includegraphics[width = \linewidth]{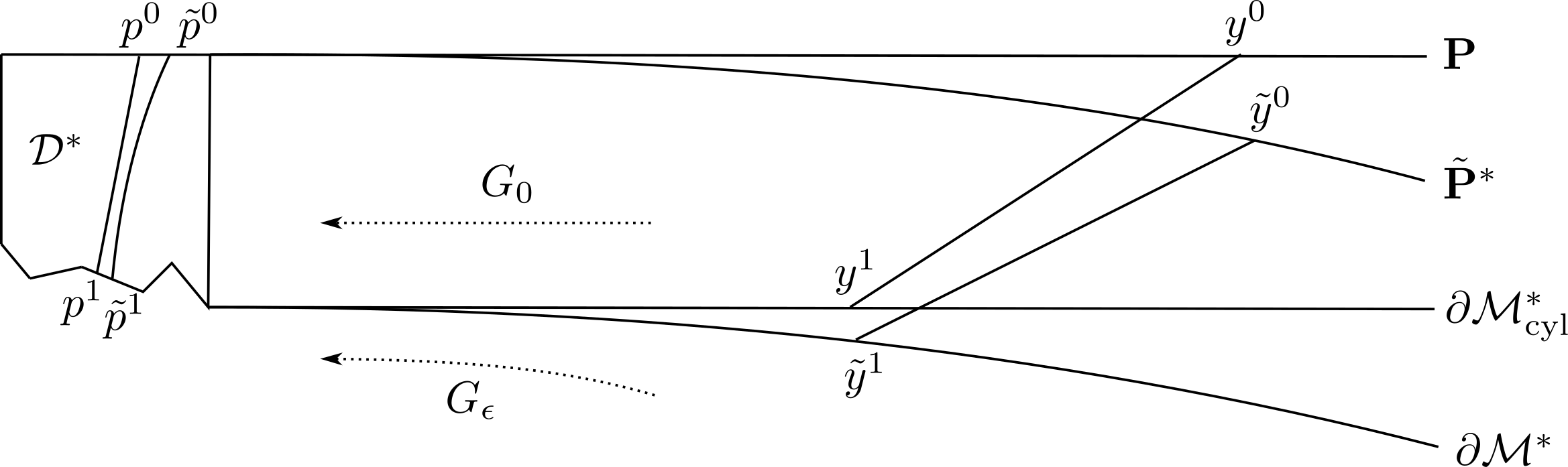}
    \caption{The line segment from $y^0$ to $y^1$ projects under $G_0$ to the chord of the cylindrical base $\mathcal{D}^*$ from $p^0$ to $p^1$. The line segment from $\widetilde{y}^0$ to $\widetilde{y}^1$ projects under $G_\epsilon$ to a (slightly) curved segment from $\widetilde{p}^0$ to $\widetilde{p}^1$ whose interior lies in $\Int \mathcal{D}^*$.}
    \label{projections_fig}
\end{figure}

Let $\widetilde{y}^0 \in (\partial \mathcal{M}^* \cap \widehat{\mathcal{Z}}^*) \cup \widetilde{\mathbf{P}}^*$, and let $\widetilde{w}^0 \in \mathbb{S}^2$. We say that the pair $(\widetilde{y}^0,\widetilde{w}^0)$ is \textit{licit} if (i) the billiard trajectory starting from initial state $(\widetilde{y}^0,\widetilde{w}^0)$ immediately enters $\Int \mathcal{M}^* \cap \mathcal{O}_-$ and first returns to $ (\partial \mathcal{M}^* \cap \widehat{\mathcal{Z}}^*) \cup \widetilde{\mathbf{P}}^*$ at a point $\widetilde{y}^1$; and (ii) if $\widetilde{y}^i \in \partial \mathcal{M}^*$ then there is a well-defined inward-pointing unit normal vector at $\widetilde{y}^i$, $i = 0,1$. If $\widetilde{y}^1 \in \partial \mathcal{M}^*$, let $\widetilde{w}^1$ denote the velocity of the point particle after reflecting at $y^1$. Otherwise (if $\widetilde{y}^1 \in \widetilde{\mathbf{P}}^*$) let $\widetilde{w}^1 = \widetilde{w}^0$. 

For $i = 0,1$, we use the following notation:
\begin{itemize}
    \item $\alpha^i$ denotes the angular coordinate of $y^i$, and $\widetilde{\alpha}^i$ denotes the angular coordinate of $\widetilde{y}^i$.
    \item $\omega^i$ denotes the angular coordinate of $w^i$, and $\widetilde{\omega}^i$ denotes the angular coordinate of $\widetilde{w}^i$.
    \item $(p^i,u^i) = \dd G_0(y^i,w^i)$ and $(\widetilde{p}^i,\widetilde{u}^i) = \dd G_\epsilon(\widetilde{y}^i,\widetilde{w}^i)$.
\end{itemize}
The quantities above whose definition depends on $(\widetilde{y}^0,\widetilde{w}^0)$ being licit are: $\widetilde{y}^1$, $\widetilde{w}^1$, $\widetilde{\alpha}^1$, $\widetilde{\omega}^1$, $\widetilde{p}^1$, and $\widetilde{u}^1$.

The projection $G_0$ maps $\mathcal{N}^* = \overline{\mathbb{R}_- \cap \mathcal{M}_{\cyl}}$ onto $\mathcal{D}^*$, mapping the boundary onto the boundary and the interior onto the interior. Consequently, $G_\epsilon$ maps $H_\epsilon(\overline{\mathbb{R}_- \cap \mathcal{M}_{\cyl}}) = \overline{\mathcal{M}^* \cap\mathcal{O}_-^*}$ onto $\mathcal{D}^*$, mapping the boundary onto the boundary and the interior onto the interior. The line segment from $p^0$ to $p^1$ is therefore a chord of $\mathcal{D}^*$. Similarly, the image under $G_\epsilon$ of the line segment from $y^0$ to $y^1$ is a closed (not necessarily straight) curve segment whose interior lies in $\Int\mathcal{D}^*$. 

We also use the following notation:
\begin{itemize}
    \item $L = ||p^1 - p^0||$.
    \item $\theta_0$ is the angle between $u^0$ and the inward pointing normal vector at $p^0 \in \partial \mathcal{D}^*$, and $\theta_1$ is the angle between $-u^0$ and the inward pointing normal at $p^1 \in \partial \mathcal{D}^*$.
    \item $\nu$ is the angle between $w$ and the line spanned by $\chi$.
    \item $\widehat{u}^0 = u^0/||u^0||$.
    \item $\xi$ is the unique unit vector such that $(\xi,\widehat{u}^0)$ is a positively oriented orthonormal basis.
    \item $r_0 = r_{\mathcal{D}^*}(p,\widehat{u}^0)$ is the radius of transversality associated with the chord from $p^0$ to $p^1$ in $\mathcal{D}^*$.
\end{itemize}
All of these quantities depend only on the segment of the billiard trajectory in the cylindrical configuration space. See Figure \ref{projection2_fig}.

\begin{figure}
    \centering
    \includegraphics[width = 0.9\linewidth]{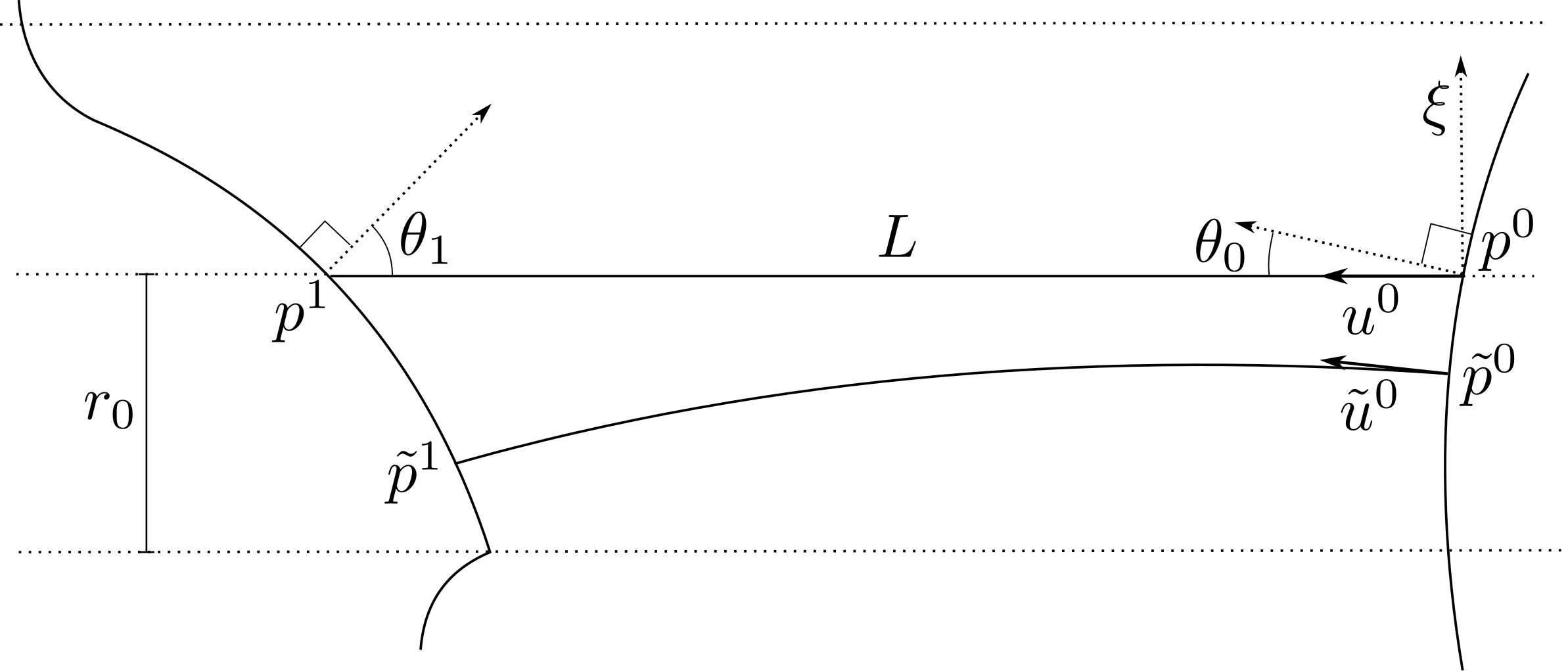}
    \caption{Projected segments of the billiard trajectories.}
    \label{projection2_fig}
\end{figure}

By the definition of the radius of transversality and the implicit function theorem, there exist $C^2$ functions $f_0, f_1 : (-r_0, r_0) \to \mathbb{R}$ such that for all $h \in (-r_0,r_0)$, $p^0 + h\xi + f_0(h)\widehat{u}^0$ lies in the $C^2$ curve segment containing $p^0$ and  $p^0 + h\xi + f_1(h)\widehat{u}^0$ lies in the $C^2$ curve segment containing $p^1$. Moreover, the strip 
\begin{equation}
    \{h \xi + h' \widehat{u}^0 : -r_0 < h < r_0, f_0(h) < h' < f_1(h)\} \subset \mathbb{R}^2
\end{equation}
does not intersect the wall $W(\Sigma, 1)$.

Recall that the maximum curvature $\kappa_{\max}$ of the $C^2$ curve segments constituting $\partial W(\Sigma, 1)$ is finite (see Remark \ref{rem_finkappa}). Let $L_{\max}$ denote the supremum of the lengths of all chords of $\partial W(\Sigma, 1)$. Recall that by construction, $W(\Sigma,1)$ satisfies conditions A5a and A5b from \S\ref{sssec_uphalf}. These conditions imply that $L_{\max}$ is finite. Let \begin{equation} \label{eq5.101}
    \overline{\kappa} = \max\{\kappa_{\max}, 1\}, \quad\quad \overline{L} = \max\{L_{\max}, 1\}.
\end{equation}

In our formulas below, we would like to consider the inverse of the function $\rho$, but this is not possible since $\rho$ is not strictly increasing (by assumption, it is increasing but its values are always some fraction of $2\pi$). Therefore, we instead consider
\begin{equation} \label{eq5.102}
    \widecheck{\rho}^{-1}(s) := \sup\{t \in (0,\infty) : \rho(t) < s\}, \quad\quad s \in (0,\infty).
\end{equation}
It follows from the assumptions on the growth of $\rho(\epsilon)$, stated in \ref{sssec_rigid} that $\frac{\widecheck{\rho}^{-1}(s)}{s^2} \to 0$ as $s \to 0$.

\begin{lemma} \label{lem_control}
Let $0 < s_2, s_3 \leq 1$ and \begin{equation} \label{eq3.74}
    0 < s_1 \leq \min\left\{r_0, \frac{5\cos\theta_1}{2\overline{\kappa}}, \frac{5s_2\sin\nu}{12}, \frac{s_3}{24\sqrt{10}\overline{\kappa}}\right\},
\end{equation}
and assume that 
\begin{equation} \label{eq3.75}
\epsilon \leq \min\left\{ \widecheck{\rho}^{-1}\left(\frac{s_1\cos\theta_1\sin\nu}{10(L+1)}\right), \frac{s_1\cos\theta_1\sin^2\nu}{10(L+1)^2}, \frac{5s_2\sin\nu}{12}, \widecheck{\rho}^{-1}\left(\frac{s_3}{12\sqrt{2}}\right), \right\}, 
\end{equation}
and 
\begin{equation} \label{eq3.76'}
|\alpha^0| < \epsilon^{-1}\left(\frac{\rho}{2} - \delta_0\right) - \left(\frac{L}{\sin\nu} + 1 \right).
\end{equation}
Assume in addition that 
\begin{equation} \label{eq3.76}
\begin{split}
& ||\widetilde{p}^0 - p^0|| \leq \frac{s_1\cos\theta_1}{20}, \\
& |\widetilde{\alpha}^0 - \alpha^0| \leq \frac{s_2}{3}, \quad\quad \text{ and } \\
& ||\widetilde{w}^0 - w^0|| \leq \min\left\{ \frac{s_1\cos\theta_1\sin\nu}{20(1+mJ^{-1})^{1/2}(2+(mJ)^{-1/2})(L+1)}, \frac{s_2\sin\nu}{3L},\frac{s_3}{12} \right\}.
\end{split}
\end{equation}
Then the pair $(\widetilde{y}^0,\widetilde{w}^0)$ is licit, in the sense defined above. Moreover, the point $\widetilde{p}^1$ lies on the same curve segment of $\partial W(\Sigma,1)$ as $p^1$ (with respect to the given decomposition), and 
\begin{equation}
||\widetilde{p}^1 - p^1|| \leq s_1, \quad |\widetilde{\alpha}^1 - \alpha^1| \leq s_2, \quad\quad ||\widetilde{w}^1 - w^1|| \leq s_3.
\end{equation}
\end{lemma}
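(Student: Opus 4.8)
\textbf{Proof plan for Lemma \ref{lem_control}.}

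The plan is to carry out a careful perturbation analysis comparing the single billiard chord from $(y^0,w^0)$ to $(y^1,w^1)$ in $\mathcal{M}_{\cyl}^*$ with the purported chord from $(\widetilde y^0,\widetilde w^0)$ to $(\widetilde y^1,\widetilde w^1)$ in $\mathcal{M}^*$, and to track all the error terms through the projections $G_0$ and $G_\epsilon$. The central device is that $G_\epsilon = G_0\circ H_\epsilon^{-1}$ maps $\overline{\mathcal{M}^*\cap\mathcal{O}_-^*}$ onto the fixed cylindrical base $\mathcal{D}^*$, and that $G_\epsilon$ differs from the linear projection $G_0$ only by an error which, together with its differential, is $O(|\epsilon\overline\alpha^*|)$ — this is exactly the content of the formula (\ref{eq3.60'}) for $G_\epsilon$ and the differential estimate Lemma \ref{dGlem}. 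So the strategy is: (1) project the hypothesized trajectory in $\mathcal{M}^*$ down to $\mathcal{D}^*$ via $G_\epsilon$, obtaining an \emph{almost-straight} curve that starts near $p^0$ with direction near $\widehat u^0$; (2) use the radius of transversality $r_0$ and the implicit-function description of the two bounding curve segments (the $C^2$ functions $f_0,f_1$ on $(-r_0,r_0)$) to show this almost-straight curve stays in $\Int\mathcal{D}^*$ until it must exit through the curve segment containing $p^1$, landing at a point $\widetilde p^1$ within $s_1$ of $p^1$; (3) lift this back up to conclude the trajectory in $\mathcal{M}^*$ is licit and that $\widetilde y^1$ is near $y^1$; (4) propagate the angular coordinate estimate $|\widetilde\alpha^0-\alpha^0|\le s_2/3 \Rightarrow |\widetilde\alpha^1-\alpha^1|\le s_2$, which is needed both to control $|\epsilon\overline\alpha^*|$ (via the hypothesis (\ref{eq3.76'}) ensuring $\overline\alpha^*$ does not jump to a neighboring lattice cell along the chord) and to keep us inside $\widehat{\mathcal Z}^*$; and (5) use the specular reflection law at $\widetilde y^1$, together with the smallness of the normal-vector perturbation (the configuration-space normal formula (\ref{eq3.26}) depends smoothly on the wall normal and on $\alpha$), to get $\|\widetilde w^1-w^1\|\le s_3$.

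More concretely, the order I would carry out the steps is as follows. First, estimate $\|\widetilde p^0-p^0\|$, $\|\widetilde u^0-\widehat u^0\|$ (and the length and direction of the projected velocity) in terms of the hypotheses (\ref{eq3.76}): here one uses $\|\widetilde y^0-y^0\|\lesssim \|\widetilde p^0-p^0\| + O(|\epsilon\overline\alpha^*|)$ coming from $H_\epsilon\approx\mathrm{Id}$, and the two-sided bound of Lemma \ref{dGlem} to transfer velocity estimates through $dG_\epsilon$ versus $dG_0$, absorbing the factor $(1+mJ^{-1})^{1/2}(2+(mJ)^{-1/2})$ that appears in the velocity hypothesis. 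Second, a planar lemma: an almost-straight $C^2$ curve in $\mathbb{R}^2$ starting at distance $\le \delta$ from $p^0$ with direction within $\delta'$ of $\widehat u^0$, with curvature $\le C\epsilon$, must cross the strip $\{h\xi+h'\widehat u^0 : -r_0<h<r_0,\ f_0(h)<h'<f_1(h)\}$ transversally and exit at the $f_1$-boundary within $s_1$ of $p^1$ — this is a quantitative version of transversality, using $\cos\theta_1$ to bound how much the exit point moves per unit transverse displacement, and using $\overline\kappa$, $\overline L$ to control the $f_i$. The constraints $s_1\le 5\cos\theta_1/(2\overline\kappa)$, $s_1\le r_0$, the $\epsilon$-bounds involving $L$, $\sin\nu$, $\cos\theta_1$, and the $\widecheck\rho^{-1}$ terms are precisely what make this planar estimate close. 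Third, lift: since $G_\epsilon$ is a submersion with the explicit form (\ref{eq3.60'}) and $H_\epsilon$ is a near-identity diffeomorphism of $\widehat{\mathcal Z}^*$, the fact that the projected curve reaches $\partial\mathcal{D}^*$ transversally at $\widetilde p^1$ in the interior-to-boundary sense forces the lifted chord in $\mathcal{M}^*$ to hit $(\partial\mathcal{M}^*\cap\widehat{\mathcal Z}^*)\cup\widetilde{\mathbf P}^*$ exactly once, at a point $\widetilde y^1$ over $\widetilde p^1$, with a well-defined normal (since $\widetilde p^1$ is in the interior of a $C^2$ curve segment of $\partial W(\Sigma,1)$); this is licitness. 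Fourth, bookkeeping of $\widetilde\alpha^1$ and $\widetilde\omega^1$: along the chord $\alpha$ changes by $(\text{travel time})\cdot\omega$, so $|\widetilde\alpha^1-\widetilde\alpha^0|$ and $|\alpha^1-\alpha^0|$ are comparable up to errors controlled by $\|\widetilde w^0-w^0\|$ and $L/\sin\nu$, giving $|\widetilde\alpha^1-\alpha^1|\le s_2$; the bound (\ref{eq3.76'}) guarantees $\overline k^*$ is the same lattice index at $\widetilde y^1$ as at $y^0$, so all the $|\epsilon\overline\alpha^*|$ error terms invoked above are genuinely $O(\epsilon\cdot|\alpha^0|)$-small under (\ref{eq3.75}). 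Fifth, the velocity: write $\widetilde w^1 = \widetilde w^0 - 2\langle \widetilde w^0, n(\widetilde y^1)\rangle\, n(\widetilde y^1)$ and $w^1 = w^0 - 2\langle w^0, n(y^1)\rangle\, n(y^1)$ (or $\widetilde w^1=\widetilde w^0$, $w^1=w^0$ in the $\widetilde{\mathbf P}^*$ case), and bound the difference using $\|\widetilde w^0-w^0\|$, the Lipschitz dependence of the normal $n$ on the wall point (within a single $C^2$ curve segment, so curvature $\le\overline\kappa$ controls it) and on $\alpha$ via (\ref{eq3.26}); this yields $\|\widetilde w^1-w^1\|\le s_3$, with the constant $12$ and the $\sqrt{2}$, $\sqrt{10}$ factors in (\ref{eq3.74})–(\ref{eq3.76}) arising from these Lipschitz constants and triangle inequalities.

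I expect the main obstacle to be Step 2, the quantitative transversality estimate in the plane: one must show not merely that the perturbed almost-straight curve exits $\mathcal{D}^*$ near $p^1$, but that it does so through the \emph{same} $C^2$ curve segment (so that the lifted normal exists and the subsequent velocity estimate makes sense), and that it does not prematurely graze the $f_0$-side or leave the strip $(-r_0,r_0)$ in the transverse variable. This requires carefully comparing the displacement of the exit point along $\partial\mathcal{D}^*$ to the transverse displacement and the turning of the curve, and it is where the curvature bound $\overline\kappa$, the transversality radius $r_0$, the incidence angle $\cos\theta_1$ at $p^1$, the chord length $L$, the angle $\nu$ (which controls $\|dG_\epsilon(w)\|\ge(\text{const})\sin\nu$ via Lemma \ref{dGlem}, hence the travel time), and the smallness of $\epsilon$ (via $\widecheck\rho^{-1}$ and the explicit error in $G_\epsilon$) all enter simultaneously — explaining the somewhat baroque form of the hypotheses (\ref{eq3.74})–(\ref{eq3.76}). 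Everything else is bounded bookkeeping of near-identity maps and the specular reflection formula, but assembling the constants so the implications close without circularity is the delicate part, and I would handle it by first proving the planar lemma as a standalone statement with abstract constants and only then matching those constants to the five displayed bounds.
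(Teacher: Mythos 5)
Your plan follows essentially the same route as the paper's proof: project the two chords onto the fixed base $\mathcal{D}^*$ via $G_0$ and $G_\epsilon$, use the graph representation of the curve segment over the transversality strip together with Taylor/curvature ($\overline{\kappa}$) estimates and the differential bounds of Lemma \ref{dGlem} to show the slightly curved projected trajectory exits through the same curve segment within $s_1$ of $p^1$, propagate the angular coordinate linearly to get the $s_2$ bound (with (\ref{eq3.76'}) keeping the trajectory in $\widehat{\mathcal{Z}}^*$), and compare the specular reflections through a perturbation estimate on the unit normals to get the $s_3$ bound. The details you defer are exactly the bookkeeping the paper carries out (its Claims 5.7.1--5.7.2 and the normal-vector computation, including the point that the chordal bound $\|\widetilde{p}^1-p^1\|\le s_1$ must be upgraded to an arc-length bound before the curvature controls $\|p'(c)-p'(0)\|$), so the proposal is sound.
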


A weaker but more convenient form of the lemma is as follows:

\begin{lemma} \label{lem_control2}
Assume that 
\begin{equation} \label{eq3.79'}
    s \leq 79\min\left\{ r_0, (\overline{\kappa})^{-1}\cos\theta_1\right\},
\end{equation}
and
\begin{equation} \label{eq3.80}
    \epsilon \leq \min\left\{ \widecheck{\rho}^{-1}\left( \frac{s\cos\theta_1\sin^2\nu}{800\overline{\kappa}(\overline{L} + 1)} \right), \frac{s\cos\theta_1\sin^3\nu}{800\overline{\kappa}(\overline{L} + 1)^2} \right\},
\end{equation}
and 
\begin{equation} \label{eq3.81}
    |\alpha^0| \leq \epsilon^{-1}\left(\frac{\rho}{2} - \delta_0 \right) - \left( \frac{\overline{L}}{\sin\nu} + 1 \right).
\end{equation}
Assume in addition that 
\begin{equation} \label{eq3.82'}
    ||\widetilde{p}^0 - p^0|| + |\widetilde{\alpha}^0 - \alpha^0| + ||\widetilde{w}^0 - w^0|| \leq \frac{s\cos\theta_1\sin^2\nu}{1600(1+mJ^{-1})^{1/2}(2+(mJ)^{-1/2})\overline{\kappa}(\overline{L}+1)}.
\end{equation} 
Then the pair $(\widetilde{y}^0,\widetilde{w}^0)$ is licit. Moreover, $\widetilde{p}^1$ lies on the same curve segment of $\partial W(\Sigma,1)$ as $p^1$, and
\begin{equation} \label{eq3.83}
    ||\widetilde{p}^1 - p^1|| + |\widetilde{\alpha}^1 - \alpha^1| + ||\widetilde{w}^1 - w^1|| \leq s.
\end{equation}
\end{lemma}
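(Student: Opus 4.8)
\textbf{Proof plan for Lemma \ref{lem_control2}.}

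The plan is to deduce Lemma \ref{lem_control2} directly from Lemma \ref{lem_control} by making a single, uniform choice of the three slack parameters $s_1, s_2, s_3$ in terms of the one parameter $s$, and then checking that every hypothesis of Lemma \ref{lem_control} is implied by the (stronger, but simpler) hypotheses (\ref{eq3.79'})--(\ref{eq3.82'}). The natural choice is $s_1 = s_2 = s_3 = s/3$, so that the conclusion $||\widetilde p^1 - p^1|| \le s_1$, $|\widetilde\alpha^1 - \alpha^1| \le s_2$, $||\widetilde w^1 - w^1|| \le s_3$ of Lemma \ref{lem_control} sums to the desired bound (\ref{eq3.83}); the curve-segment assertion transfers verbatim. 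With this substitution the whole proof is a bookkeeping exercise: every quantity appearing in (\ref{eq3.74})--(\ref{eq3.76}) must be bounded below by the corresponding quantity in (\ref{eq3.79'})--(\ref{eq3.82'}), using only the crude inequalities $\sin\nu \le 1$, $\cos\theta_1 \le 1$, $L \le \overline L$, $1 \le \overline\kappa$, $\overline L \ge 1$, and $\sin\nu, \cos\theta_1 \in (0,1]$, together with the fact that $\widecheck\rho^{-1}$ is non-decreasing.

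First I would verify (\ref{eq3.74}): with $s_1 = s/3$ we need $s/3 \le \min\{r_0, \tfrac{5\cos\theta_1}{2\overline\kappa}, \tfrac{5(s/3)\sin\nu}{12}, \tfrac{s/3}{24\sqrt{10}\,\overline\kappa}\}$. The first two terms follow from (\ref{eq3.79'}) since $79/3 > 26 > 1$ and $79/3 > 26 > 5/2$; the third is automatic because $s_1 = s_2 = s/3$ and the inequality $s_1 \le \tfrac{5 s_2 \sin\nu}{12}$ would require $\sin\nu \ge 12/5 > 1$, which is false, so in fact one must instead fold this into the choice --- this is the one genuinely non-cosmetic point, and I address it in the next paragraph. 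Next, (\ref{eq3.75}): each of the four bounds on $\epsilon$ there must be implied by the two bounds in (\ref{eq3.80}); this works because $\tfrac{s_1\cos\theta_1\sin\nu}{10(L+1)} \ge \tfrac{s\cos\theta_1\sin^2\nu}{800\overline\kappa(\overline L+1)}$ (monotonicity of $\widecheck\rho^{-1}$ then handles the first term), $\tfrac{s_1\cos\theta_1\sin^2\nu}{10(L+1)^2} \ge \tfrac{s\cos\theta_1\sin^3\nu}{800\overline\kappa(\overline L+1)^2}$ for the second, and the remaining two terms $\tfrac{5 s_2 \sin\nu}{12}$ and $\widecheck\rho^{-1}(\tfrac{s_3}{12\sqrt2})$ dominate the right-hand side of (\ref{eq3.80}) by the same kind of constant-chasing. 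Condition (\ref{eq3.76'}) is literally (\ref{eq3.81}) after replacing $L$ by $\overline L$ (which only weakens the right-hand side, since $\overline L \ge L$), so it transfers immediately. Finally, for (\ref{eq3.76}): since $||\widetilde p^0 - p^0|| + |\widetilde\alpha^0 - \alpha^0| + ||\widetilde w^0 - w^0||$ is bounded by the right-hand side of (\ref{eq3.82'}), which is designed to be smaller than each of $\tfrac{s_1\cos\theta_1}{20} = \tfrac{s\cos\theta_1}{60}$, $\tfrac{s_2}{3} = \tfrac{s}{9}$, and the minimum of the three quantities in the third line of (\ref{eq3.76}), each of the three individual bounds in (\ref{eq3.76}) follows a fortiori (a single sum bounded by $X$ certainly bounds each summand by $X$, and $X$ has been chosen $\le$ each target).

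The subtlety flagged above --- the constraint $s_1 \le \tfrac{5 s_2 \sin\nu}{12}$ and, inside (\ref{eq3.76}), the factor $s_2 \sin\nu / (3L)$ --- forces a choice of $s_2$ that is \emph{larger} than $s_1$ relative to $s$, or equivalently a choice of $s$ in (\ref{eq3.79'}) smaller than the naive $s/3$ would suggest; this is exactly why the constant $79$ (rather than $3$) and the extra powers of $\sin\nu$ and $\overline\kappa(\overline L + 1)$ appear in (\ref{eq3.79'})--(\ref{eq3.82'}). Concretely I would set, say, $s_1 = s_3 = s\sin\nu/C$ and $s_2 = s/C$ for a fixed absolute constant $C$ (something like $C = 40$ will do, after one tracks the worst of the numerical factors $20, 12\sqrt2, 24\sqrt{10}$), check that with this assignment $s_1 + s_2 + s_3 \le s$ (using $\sin\nu \le 1$), and then re-run the three verifications above; the extra factor $\sin\nu$ in $s_1$ cancels the $1/\sin\nu$-type losses in (\ref{eq3.75}) and (\ref{eq3.76}), and the extra factors of $\overline\kappa$ and $\overline L + 1$ in the $\epsilon$- and perturbation-bounds of (\ref{eq3.80}) and (\ref{eq3.82'}) absorb the remaining geometric constants. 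The main obstacle is therefore not conceptual but purely the care needed to pick $C$ so that \emph{every} one of the roughly ten inequalities of Lemma \ref{lem_control} is simultaneously implied; once a valid $C$ is exhibited, the conclusions (licitness, same curve segment, and (\ref{eq3.83})) are immediate consequences of the corresponding conclusions of Lemma \ref{lem_control}.
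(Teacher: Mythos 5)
Your overall strategy --- specialize Lemma \ref{lem_control} with a split $s_1+s_2+s_3\le s$ and verify its hypotheses from (\ref{eq3.79'})--(\ref{eq3.82'}) --- is exactly the paper's route, and you correctly notice that the equal split fails because of the constraint $s_1\le \tfrac{5 s_2\sin\nu}{12}$. But your concrete repair, $s_1=s_3=s\sin\nu/C$, $s_2=s/C$ with a fixed absolute constant $C$ (you suggest $C=40$), still violates hypothesis (\ref{eq3.74}) of Lemma \ref{lem_control}, and no absolute constant can fix it. The fourth entry of (\ref{eq3.74}) demands $s_1\le s_3/(24\sqrt{10}\,\overline{\kappa})$, i.e.\ $s_3\ge 24\sqrt{10}\,\overline{\kappa}\,s_1$; with $s_1=s_3$ this reads $24\sqrt{10}\,\overline{\kappa}\le 1$, false for every $\overline{\kappa}\ge 1$. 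This is a ratio constraint internal to the split: it involves neither $\epsilon$ nor the initial discrepancy, so it cannot be ``absorbed'' by the extra factors of $\overline{\kappa}$ and $\overline{L}+1$ in (\ref{eq3.80}) and (\ref{eq3.82'}), as you assert; moreover $\overline{\kappa}$ depends on the cell and is not an absolute constant, so $s_3/s_1$ must be allowed to grow like $\overline{\kappa}$. Similarly, your $s_2=s_1/\sin\nu$ misses the needed $s_2\ge\tfrac{12}{5}\,s_1/\sin\nu$ by the factor $12/5$, and with $C=40$ even the first entry of (\ref{eq3.74}) can fail: (\ref{eq3.79'}) only gives $s\le 79\,r_0$, so $s_1=s\sin\nu/40$ may be as large as roughly $2r_0$. (Your remark ``$79/3>26>1$'' runs the inequality the wrong way: a larger constant in (\ref{eq3.79'}) weakens the hypothesis and makes $s_1\le r_0$ harder to conclude, not easier.)

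The correct choice couples the split to $\sin\nu$ and $\overline{\kappa}$: take $s_2=\tfrac{12 s_1}{5\sin\nu}$ and $s_3=24\sqrt{10}\,\overline{\kappa}\,s_1$, and then $s_1=\bigl(1+\tfrac{12}{5\sin\nu}+24\sqrt{10}\,\overline{\kappa}\bigr)^{-1}s$, so that $s_1+s_2+s_3=s$ exactly and the third and fourth entries of (\ref{eq3.74}) hold with equality; since $1+\tfrac{12}{5\sin\nu}+24\sqrt{10}\,\overline{\kappa}\ge 79$, hypothesis (\ref{eq3.79'}) gives $s_1\le s/79\le\min\{r_0,\overline{\kappa}^{-1}\cos\theta_1\}$, which handles the first two entries. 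With this assignment the remaining checks of (\ref{eq3.75}), (\ref{eq3.76'}), and (\ref{eq3.76}) are indeed the monotone constant-chasing you describe (using $L\le\overline{L}$, $\sin\nu,\cos\theta_1\le 1$, $\overline{\kappa}\ge 1$, monotonicity of $\widecheck{\rho}^{-1}$, and the fact that the summed bound (\ref{eq3.82'}) dominates each individual bound in (\ref{eq3.76}) once the relations among $s_1,s_2,s_3$ are inserted), and the conclusion (\ref{eq3.83}) follows because the three output errors sum to $s_1+s_2+s_3=s$. So the gap is not the strategy but the quantitative split: as written, your $s_1,s_2,s_3$ do not satisfy Lemma \ref{lem_control}'s hypotheses, and no fixed $C$ can make them do so.
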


We now prove the two lemmas.

\begin{proof}[Proof of Lemma \ref{lem_control}]
\textit{Step 1.} Recall that if $I'$ denotes the curve segment containing $p^1$, and $I_1$ denotes the connected component of the intersection of $I'$ with the strip $\{x \in \mathbb{R}^2 : |\langle x - p^0, \xi \rangle| < r_0\}$, then the transversality radius $I_1$ may be represented as the graph of the function $f_1 : (-r_0, r_0) \to \mathbb{R}$, in the sense that 
\begin{equation}
    I_1 = \{h\xi + f_1(h)\widehat{u}^0 : h \in (-r_0, r_0)\}.
\end{equation}

The proof of the lemma depends on the following two claims.

\begin{claim}{5.7.1} \label{claim5.7.1}
Let $p : [a,b] \to \mathbb{R}^2$ be a unit speed parametrization of the curve segment $I_1$ containing $p^1$, with $a < 0 < b$ and $p(0) = p^1$. Let $0 < c < \min\{|a|, |b|\}$ be such that 
\begin{equation}
|\langle p(c) - p(0), \xi \rangle| < \min\left\{ r_0, \frac{(\overline{\kappa})^{-1}}{2} \right\}.
\end{equation}
Then the functional $\sigma \mapsto ||p(\sigma) - p(0)||$ is monotone increasing on the interval $[0,c]$ and monotone decreasing on the interval $[-c,0]$.
\end{claim}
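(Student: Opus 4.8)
The plan is to reduce the statement to an elementary fact about a $C^2$ curve with bounded curvature: once we are close enough to the basepoint $p(0)=p^1$, the curve stays in the half-plane on one side of the normal line through $p^1$, so its distance to $p^1$ is controlled by the arc-length parameter. Concretely, write $p(\sigma) - p(0) = \sigma\, p'(0) + \tfrac12\sigma^2 p''(\tilde\sigma)$ for some $\tilde\sigma$ between $0$ and $\sigma$ (Taylor with Lagrange remainder), and recall that $|p''| \le \overline\kappa$ everywhere on $I_1$ since $|p''|$ is exactly the (unsigned) curvature for a unit-speed parametrization and $\overline\kappa = \max\{\kappa_{\max},1\}$. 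I would then differentiate $\phi(\sigma) := \tfrac12\|p(\sigma)-p(0)\|^2$, obtaining $\phi'(\sigma) = \langle p(\sigma)-p(0),\, p'(\sigma)\rangle$, and show $\phi'(\sigma) > 0$ for $\sigma \in (0,c]$ and $\phi'(\sigma) < 0$ for $\sigma \in [-c,0)$; this gives monotonicity of $\|p(\sigma)-p(0)\|$ on each side. Since $\phi' = \|p(\sigma)-p(0)\| \cdot \tfrac{d}{d\sigma}\|p(\sigma)-p(0)\|$ away from $\sigma=0$, the sign of $\phi'$ is the sign of the derivative of the distance function, which is what we want.

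To estimate $\phi'(\sigma)$ for $\sigma\in(0,c]$, decompose $p'(\sigma) = p'(0) + \int_0^\sigma p''(t)\,dt$, so that $\langle p(\sigma)-p(0), p'(\sigma)\rangle = \langle p(\sigma)-p(0), p'(0)\rangle + \langle p(\sigma)-p(0), \int_0^\sigma p''(t)\,dt\rangle$. For the first term, use $p(\sigma)-p(0) = \sigma p'(0) + \tfrac12\sigma^2 p''(\tilde\sigma)$ and $\|p'(0)\|=1$ to get $\langle p(\sigma)-p(0), p'(0)\rangle = \sigma + \tfrac12\sigma^2\langle p''(\tilde\sigma),p'(0)\rangle \ge \sigma - \tfrac12\overline\kappa\sigma^2 \ge \sigma(1 - \tfrac12\overline\kappa c)$. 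For the second term, $\|p(\sigma)-p(0)\| \le \sigma + \tfrac12\overline\kappa\sigma^2 \le \sigma(1+\tfrac12\overline\kappa c)$ and $\|\int_0^\sigma p''(t)\,dt\| \le \overline\kappa\sigma$, so this term is bounded below by $-\overline\kappa\sigma^2(1+\tfrac12\overline\kappa c)$. Combining, $\phi'(\sigma) \ge \sigma\big(1 - \tfrac12\overline\kappa c - \overline\kappa\sigma(1+\tfrac12\overline\kappa c)\big) \ge \sigma\big(1 - \tfrac32\overline\kappa c - \tfrac12\overline\kappa^2 c^2\big)$. Now I need to check that the hypothesis on $c$ forces this bracket to be positive. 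The hypothesis bounds $|\langle p(c)-p(0),\xi\rangle|$ — the transverse displacement — by $\min\{r_0, \tfrac12(\overline\kappa)^{-1}\}$, not the arc length $c$ directly, so a short lemma is needed: when the chord from $p(0)$ to $p(c)$ is nearly parallel to $\widehat u^0$ (which it is, by transversality: $L(q,q')$ crosses $I'$ transversally, so the tangent $p'(0)$ is not perpendicular to $\widehat u^0$, hence $\xi$-component of the chord is small relative to $c$), the arc length $c$ is comparable to the transverse displacement. Actually the cleaner route: since $|p''|\le\overline\kappa$, the curve cannot turn by more than $\overline\kappa c$ over $[0,c]$, so if $\overline\kappa c \le \tfrac12$ the curve stays within a cone of half-angle $\overline\kappa c$ around $p'(0)$; this is the step that needs care to extract from the stated hypothesis. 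By the symmetric argument on $[-c,0)$ (replacing $\sigma$ by $-\sigma$), $\phi'(\sigma) < 0$ there.

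The main obstacle is exactly this translation between the hypothesis, which controls the \emph{transverse} displacement $|\langle p(c)-p(0),\xi\rangle|$, and the quantity $c$ (arc length) appearing in all the curvature estimates. I expect the argument to go: transversality of the chord $L(p^0,p^1)$ at $p^1$ means the angle between $p'(0)$ and $\widehat u^0$ is bounded away from $\pi/2$; the curvature bound $|p''|\le\overline\kappa$ together with $|\langle p(c)-p(0),\xi\rangle|$ being small then pins down $c$ as $O(|\langle p(c)-p(0),\xi\rangle|)$ with a constant depending only on that angle and $\overline\kappa$; feeding this back in, the hypothesis $|\langle p(c)-p(0),\xi\rangle| < \tfrac12(\overline\kappa)^{-1}$ suffices to make $1 - \tfrac32\overline\kappa c - \tfrac12\overline\kappa^2 c^2 > 0$. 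One should double-check the numerical constant $\tfrac12$ is the right one or adjust the threshold; since the surrounding lemmas (\ref{lem_control}, \ref{lem_control2}) carry generous constants, a slightly worse constant here is harmless. Everything else — Taylor expansion, the bound $|p''|\le\overline\kappa$, the sign analysis of $\phi'$ — is routine.
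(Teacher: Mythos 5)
There is a genuine gap, and it sits exactly where you flagged it: your lower bound $\phi'(\sigma)\ge \sigma\bigl(1-\tfrac{3}{2}\overline{\kappa}c-\tfrac{1}{2}\overline{\kappa}^2c^2\bigr)$ is useless without an a priori bound $\overline{\kappa}c\lesssim 1$ on the \emph{arc length}, and the step you propose to supply it does not work. The hypothesis only controls the transverse displacement $|\langle p(c)-p(0),\xi\rangle|$, and arc length is not $O$ of transverse displacement: if the curve is nearly parallel to $\widehat{u}^0$ (which nothing excludes — transversality at $p^1$ means $p'(0)$ is not \emph{parallel} to $\widehat{u}^0$, i.e.\ not perpendicular to $\xi$; you stated the condition backwards, and in any case it is qualitative, with no lower bound on the angle), then an arc of arbitrarily large length, even with zero curvature, has arbitrarily small $\xi$-displacement. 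Moreover the constant $(\overline{\kappa})^{-1}/2$ in the claim does not depend on any angle at $p^1$, so a lemma of the form ``$c=O(|\langle p(c)-p(0),\xi\rangle|)$ with constant depending on the transversality angle'' cannot prove the claim as stated; in the near-tangent regime such a bound on $c$ is simply false, yet the claim must still hold there.

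The paper's proof supplies the missing ideas by working in the coordinates $g_1(\sigma)=\langle p(\sigma)-p(0),\xi\rangle$ and $g_2(\sigma)=\langle p(\sigma)-p(0),\widehat{u}^0\rangle$. Since $|\langle p(c)-p(0),\xi\rangle|<r_0$, the arc lies on the graph of $f_1$ over the $\xi$-direction, so $g_1$ is monotone and $g_1^2$ is nondecreasing; then one splits into cases. If $g_2$ has no extremum in $(0,c)$, then $g_2^2$ is also nondecreasing and $\|p(\sigma)-p(0)\|^2=g_1^2+g_2^2$ is monotone with no bound on $c$ needed at all — this is precisely the (near-tangent, possibly long) case your derivative estimate cannot reach. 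If $g_2$ does have an interior extremum $\sigma^*$, then $p'(\sigma^*)=\pm\xi$, and Taylor expansion about $\sigma^*$ together with $|\langle p(c)-p(0),\xi\rangle|<(\overline{\kappa})^{-1}/2$ and the monotonicity of $g_1$ gives $|\sigma-\sigma^*|<(\overline{\kappa})^{-1}$ for all $\sigma\in[0,c]$, hence $c<2(\overline{\kappa})^{-1}$; only then does one conclude with the derivative estimate, in the sharper form $\langle p(\sigma)-p(0),p'(\sigma)\rangle\ge\sigma-\tfrac{\overline{\kappa}}{2}\sigma^2\ge 0$ (Taylor about $\sigma$, not about $0$), which is exactly compatible with $c<2(\overline{\kappa})^{-1}$, whereas your cruder bracket would require $\overline{\kappa}c$ below roughly $1/2$. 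So beyond routine Taylor estimates, the proof needs the graph property coming from $r_0$, the dichotomy on critical points of $g_2$, and the observation that at such a critical point the tangent equals $\pm\xi$, which is what converts the transverse bound into an arc-length bound; none of these appear in your proposal.
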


\begin{claim}{5.7.2} \label{claim5.7.2}
Consider the projected billiard trajectories $\gamma(t) := G_0(y^0 + tw^0)$, and $\widetilde{\gamma}(t) := G_{\epsilon}(\widetilde{y}^0 + t\widetilde{w}^0)$, where $t \geq 0$. Let
\begin{equation}
    t_1 = \inf\{t > 0 : \gamma(t) \in \partial\mathcal{D}^*\}, \quad \widetilde{t}_1 = \inf\{t > 0 : \widetilde{\gamma}(t) \in \partial\mathcal{D}^*\}
\end{equation}
(defined equal to $\infty$ if the set over which we take the infimum is empty). Then $t_1 < \infty$. Moreover, if we assume that
\begin{equation}
    ||\widetilde{\gamma}(t) - \gamma(t)|| \leq s_1\cos\theta_1/5 \quad \text{ for all } 0 \leq t \leq t_1 + \frac{4s_1}{5||u^0||},
\end{equation}
then 
\begin{enumerate}[label = (\roman*)]
    \item $t_1 - \frac{4s_1}{5||u^0||} \leq \widetilde{t}_1 \leq t_1 + \frac{4s_1}{5||u^0||}$, and
    \item $||\widetilde{p}^1 - p^1|| \leq s_1$.
\end{enumerate}
\end{claim}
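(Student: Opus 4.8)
The plan is to push everything forward under the projection $G_0$ onto the cylindrical base $\mathcal{D}^*\subset\mathbf{Q}_0$, so that the cylindrical billiard segment becomes the straight chord $\gamma(t)=p^0+tu^0$, and then to show that the perturbed path $\widetilde\gamma$, which by hypothesis hugs $\gamma$ to within $s_1\cos\theta_1/5$, is trapped between the two boundary graphs supplied by the radius of transversality until it crosses the one through $p^1$. The finiteness of $t_1$ will come essentially for free: since the billiard trajectory $y^0+tw^0$ enters $\Int\mathcal{N}^*$ and, by the standing assumption, first returns to $\partial\mathcal{N}^*$ at some finite time $\bar t$, and since $G_0$ carries the cylinder $\mathcal{N}^*$ (axis $\chi$) onto $\mathcal{D}^*$ taking interior to interior and boundary to boundary, $\gamma$ lies in $\Int\mathcal{D}^*$ on $(0,\bar t)$ and reaches $p^1=G_0(y^1)\in\partial\mathcal{D}^*$ at $\bar t$; as $\|u^0\|=\|\dd(G_0)(w^0)\|\ge\sin\nu>0$ by Lemma \ref{dGlem} with $\epsilon=0$, $\gamma$ is non-constant, so $t_1=\bar t<\infty$ with $\gamma(t_1)=p^1$.

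For (i) I would work in the positively oriented orthonormal frame $(\xi,\widehat u^0)$ based at $p^0$, in which $\gamma$ is the segment $\{h'\widehat u^0:0\le h'\le L\}$ and $p^1=L\widehat u^0$. Inside the strip $\{|h|<r_0\}$ the radius of transversality gives the two $C^2$ graphs $h'=f_0(h)$ (the arc $I$ through $p^0$) and $h'=f_1(h)$ (the arc $I'$ through $p^1$), with $f_0(0)=0$, $f_1(0)=L$, with the open region between them contained in $\Int\mathcal{D}^*$ and meeting $\partial\mathcal{D}^*$ only along $I$ and $I'$; the transversality angles $\theta_0,\theta_1$ together with the curvature bound $\overline\kappa$ control $|f_j'(0)|$ and $|f_j''|\le\overline\kappa$, whence $|f_1(h)-L|\le|h|\tan\theta_1+\tfrac{\overline\kappa}{2}h^2$ and similarly for $f_0$. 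The closeness hypothesis confines $\widetilde\gamma$ to the band $|h|\le s_1\cos\theta_1/5<r_0$ (using $s_1\le r_0$ from (\ref{eq3.74})) and to within $\widehat u^0$-coordinate distance $s_1\cos\theta_1/5$ of $\gamma$; moreover, from the explicit form of $G_\epsilon$ and (\ref{eq3.75}), $\widetilde\gamma$ deviates from a straight line by $O(\epsilon(L/\sin\nu)^2)$, so over the relevant time interval its velocity stays within a narrow cone of $\widetilde u^0\approx u^0$ and in particular has strictly positive $\widehat u^0$-component.

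Granting this picture, the two inclusions in (i) become elementary estimates, and (ii) follows from (i). For $0<t<t_1-\tfrac{4s_1}{5\|u^0\|}$ the $\widehat u^0$-coordinate of $\widetilde\gamma(t)$ is less than $L-\tfrac{3s_1}{5}$, which the bound on $f_1$ together with (\ref{eq3.74}) places strictly below $f_1(\widetilde h)$; and since $\widetilde\gamma(0)=\widetilde p^0$ lies on the arc $I$ (same $C^2$ segment as $p^0$, by (\ref{eq3.76}) and $s_1<r_0$) with $\widetilde u^0$ pointing transversally into $\mathcal{D}^*$ (again by (\ref{eq3.76}) with $\epsilon$ small), $\widetilde\gamma$ stays strictly above $f_0$; hence $\widetilde\gamma(t)\in\Int\mathcal{D}^*$ and $\widetilde t_1\ge t_1-\tfrac{4s_1}{5\|u^0\|}$. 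Conversely, at $t=t_1+\tfrac{4s_1}{5\|u^0\|}$ the $\widehat u^0$-coordinate of $\widetilde\gamma(t)$ is at least $L+\tfrac{3s_1}{5}$, exceeding $f_1(\widetilde h)\le L+|\widetilde h|\tan\theta_1+\tfrac{\overline\kappa}{2}\widetilde h^2$ under the same constraints, so $\widetilde\gamma$ has already left $\mathcal{D}^*$ and $\widetilde t_1\le t_1+\tfrac{4s_1}{5\|u^0\|}$; Claim \ref{claim5.7.1} (monotonicity of the distance from $p^1$ along $I'$) identifies the first crossing as occurring on $I'$ near $p^1$. For (ii), I would first check via (\ref{eq3.76'}) that $\widetilde y^0+t\widetilde w^0$ stays in $\widehat{\mathcal{Z}}^*$ for $0\le t\le\widetilde t_1$, so that the pair is licit and $\widetilde p^1=G_\epsilon(\widetilde y^1)=\widetilde\gamma(\widetilde t_1)$; then $\|\widetilde p^1-p^1\|\le\|\widetilde\gamma(\widetilde t_1)-\gamma(\widetilde t_1)\|+\|u^0\|\,|\widetilde t_1-t_1|\le\tfrac{s_1\cos\theta_1}{5}+\tfrac{4s_1}{5}\le s_1$.

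The step I expect to be the main obstacle is the lower bound $\widetilde t_1\ge t_1-\tfrac{4s_1}{5\|u^0\|}$: ruling out an early exit is delicate because the chord $\gamma$ may itself graze $\partial\mathcal{D}^*$ long before $t_1$, and it is precisely the radius-of-transversality structure — the open strip between $f_0$ and $f_1$ lying inside $\mathcal{D}^*$ — that confines the nearby curve $\widetilde\gamma$. Making this airtight requires simultaneously controlling that $\widetilde\gamma$ never leaves the transversality strip $\{|h|<r_0\}$ (from the closeness hypothesis and $s_1\le r_0$), that it does not curl back onto the graph $f_0$ (from its near-straightness, which is where the smallness of $\epsilon$ in (\ref{eq3.75}) enters), and that its behavior at $t=0^+$ is correct even though $\widetilde\gamma(0)$ already lies on $\partial\mathcal{D}^*$ (from transversality of $\widetilde u^0\approx u^0$). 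All of the quantitative slack for these is supplied by the hypotheses (\ref{eq3.74})--(\ref{eq3.76'}) of Lemma \ref{lem_control}.
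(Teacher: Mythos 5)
Your overall skeleton is the same as the paper's: finiteness of $t_1$ from the return assumption and linearity of $G_0$, confinement of $\widetilde\gamma$ to the strip $\{|\langle x-p^0,\xi\rangle|\le s_1\cos\theta_1/5\}\subset\{|h|<r_0\}$, location of the exit on the arc through $p^1$, and part (ii) by exactly the triangle inequality $\|\widetilde p^1-p^1\|\le\frac{s_1\cos\theta_1}{5}+\|u^0\|\,|\widetilde t_1-t_1|\le s_1$. But two steps are not justified as written. The lesser one is quantitative: your bound $|f_1(h)-L|\le|h|\tan\theta_1+\frac{\overline\kappa}{2}h^2$ is false in general, because $\overline\kappa$ bounds the curvature of the arc, not the second derivative of its graph over the $\xi$-axis, which carries a factor of the cube of the secant of the angle between the tangent and $\xi$ and is not a priori controlled over the whole window. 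The paper avoids this by never writing a graph estimate: it parametrizes $I_1$ by arclength, uses $s_1\le\frac{5\cos\theta_1}{2\overline\kappa}$ from (\ref{eq3.74}) to show that the portion of $I_1$ lying over the thin strip has arclength parameter $|\sigma|\le\frac{2s_1}{5}$, hence lies within $\frac{3s_1}{5}$ of $p^1$, and then runs a disk/connectedness argument. Your route can be repaired (the corrected quadratic coefficient still fits the numerics under (\ref{eq3.74})), but the step ``$L-\frac{3s_1}{5}$ is strictly below $f_1(\widetilde h)$'' does not follow from the bound you state.

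The genuine gap is the one you yourself flag: the lower bound $\widetilde t_1\ge t_1-\frac{4s_1}{5\|u^0\|}$. Your argument is that $\widetilde p^0$ lies on the arc $I$ and that, ``by (\ref{eq3.76}) with $\epsilon$ small,'' $\widetilde u^0$ points transversally into $\mathcal{D}^*$, whence $\widetilde\gamma$ stays above $f_0$. Neither half is supported: the hypotheses (\ref{eq3.74})--(\ref{eq3.76}) contain no lower bound on $\cos\theta_0$ (all the perturbation allowances scale with $\cos\theta_1$, the angle at the far end, plus the $O(\rho)$ term in (\ref{eq3.95})), so at near-grazing incidence at $p^0$ the permitted errors in $\widetilde p^0$ and $\widetilde u^0$ need not be small compared with the entry angle $\frac{\pi}{2}-\theta_0$, and $\widetilde u^0$ may even point out of $\mathcal{D}^*$ at $\widetilde p^0$; and even granting strict entry at $t=0$, transversality at a single instant together with near-straightness of $\widetilde\gamma$ does not yield ``stays above $f_0$ on all of $[0,t^-]$'' --- that is a confinement statement which is exactly where the radius-of-transversality structure would have to be invoked, and you have not supplied that argument. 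For comparison, the paper's subproof never argues about $f_0$ at all: for each $t_0<t^-$ it shows, by the triangle inequality with $\|\gamma(t_0)-p^1\|\ge\frac{4s_1}{5}$ and $\|p(\sigma)-p^1\|\le\frac{3s_1}{5}$, that the entire disk of radius $\frac{s_1\cos\theta_1}{5}$ about $\gamma(t_0)$ misses $I_1$ over the thin strip, hence by connectedness lies below the graph of $f_1$, hence so does $\widetilde\gamma(t_0)$, and it concludes $\widetilde t_1>t^-$ from that alone (the upper bound and (ii) then follow as you say, via Claim \ref{claim5.7.1} and the intermediate value theorem). So the $f_0$-side control you are attempting is both stronger than what the paper's written argument establishes and not established by your proposal; as it stands, the lower-bound step is a hole.
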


\begin{subproof}[Proof of Claim \ref{claim5.7.1}]
By symmetry it is enough to prove that the functional is increasing on $[0,c]$. Let $g_1(\sigma) = \langle p(\sigma) - p(0), \xi \rangle$ and $g_2(\sigma) = \langle p(\sigma) - p(0), \widehat{u}^0 \rangle$ for $\sigma \in [0,c]$. Since $|\langle p(c) - p(0), \xi \rangle| < r_0$, it follows that the part of the curve segment between $p(0)$ and $p(c)$ lies on the graph of the function $f_1$. Consequently, $g_1$ is nondecreasing on $[0,c]$. We may write 
\[
||p(\sigma) - p(0)||^2 = g_1(\sigma)^2 + g_2(\sigma)^2.
\]
If $g_2$ does not attain an extreme value in $(0,c)$, then the above functional is nondecreasing, and we are done. Suppose $g_2$ does attain an extreme value at some point $\sigma^* \in (0, c)$. Then $g_2'(\sigma^*) = \langle p'(\sigma^*), \widehat{u}^0 \rangle = 0$ and $\langle p'(\sigma^*), \xi \rangle = \pm 1$. Taylor expanding about $\sigma^*$ gives us
\begin{equation}
    p(\sigma) - p(\sigma^*) = p'(\sigma^*)(\sigma - \sigma^*) + E(\sigma), \quad ||E(\sigma)|| \leq \frac{\overline{\kappa}}{2}|\sigma - \sigma^*|^2.
\end{equation}
Since the curve segment between $\sigma = 0$ and $\sigma = c$ lies on the graph of the function $f_1$, for all $\sigma \in [0,c]$, $|\langle p(c) - p(0), \xi \rangle| \geq |\langle p(\sigma) - p(\sigma^*), \xi \rangle|$, and hence using the hypothesis
\begin{equation}
\begin{split}
    \frac{(\overline{\kappa})^{-1}}{2} & > |\langle p(c) - p(0), \xi \rangle| \\
    & \geq |\langle p(\sigma) - p(\sigma^*), \xi \rangle| \\
    & = |\langle p'(\sigma^*), \xi \rangle (\sigma - \sigma^*) - \langle E(\sigma), \xi \rangle| \\
    & \geq |\sigma - \sigma^*| - \frac{\overline{\kappa}}{2}|\sigma - \sigma^*|^2.
\end{split}
\end{equation}
The function $f(s) = s - \frac{\overline{\kappa}}{2}s^2$ achieves a maximum value of $\frac{(\overline{\kappa})^{-1}}{2}$ at $s = (\overline{\kappa})^{-1}$. It follows that $|\sigma - \sigma^*| < (\overline{\kappa})^{-1}$ for all $\sigma \in [0,c]$. Therefore, $c < 2(\overline{\kappa})^{-1}$.

Set $h(\sigma) = ||p(\sigma) - p(0)||^2$, $\sigma \in [0,c]$. Then 
\begin{equation}
\begin{split}
    h'(\sigma) & = 2\langle p(\sigma) - p(0), p'(\sigma) \rangle \\
    & \geq \sigma - \frac{\overline{\kappa}}{2}\sigma^2,
\end{split}
\end{equation}
where we Taylor expand about $\sigma$ to obtain the second line. Since $0 \leq \sigma \leq c < 2(\overline{\kappa})^{-1}$, we see that the last quantity above is nonnegative, and $h'(\sigma) \geq 0$ for $\sigma \in [0,c]$. This proves the claim.
\end{subproof}

\begin{subproof}[Proof of Claim \ref{claim5.7.2}]
Since $G_0$ is linear, we have \begin{equation} \label{eq3.79}
    \gamma(t) = p^0 + tu^0,
\end{equation}
and $t_1 = L/||u^0|| < \infty$. We also have for $0 \leq t \leq t_1 + \frac{4s_1}{5||u^0||}$,
\begin{equation}
\begin{split}
    |\langle \widetilde{\gamma}(t) - p^0, \xi \rangle| & = |\langle \widetilde{\gamma}(t) - \gamma(t), \xi \rangle + \langle \gamma(t) - p^0, \xi \rangle| \\
    & = |\langle \widetilde{\gamma}(t) - \gamma(t), \xi \rangle| \\
    & \leq ||\langle \widetilde{\gamma}(t) - \gamma(t)|| \leq \frac{s_1\cos\theta_1}{5} < r_0, 
\end{split}
\end{equation}
using (\ref{eq3.74}) in the last line. Thus $\widetilde{\gamma}(t)$ lies in the strip $\{x : |\langle x - p^0 \rangle| < r_0\}$, for all times $0 \leq t \leq t_1 + \frac{4s_1}{5||u^0||}$. 

As before, let $p : [a,b] \to \mathbb{R}^2$ be a unit speed parametrization of $I_1$ with $p(0) = p^1$. Since the segment $I_1$ coincides with the graph of the function $f_1 : (-r_0,r_0) \to \mathbb{R}$, as described above, and $s_1\cos\theta_1/5 < r_0$ by (\ref{eq3.74}), there is a maximal interval $[c_1, c_2]$ with $c_1 < 0 < c_2$ such that $p(\sigma)$ lies in the strip $\{x : |\langle x - p^0 \rangle| \leq s_1\cos\theta_1/5\}$ for all $\sigma \in [c_1,c_2]$. Reversing the direction of the parametrization if necessary, we may assume that $\sigma \mapsto \langle p(\sigma) - p(0), \xi \rangle$ is increasing on $[c_1,c_2]$. By Taylor expansion, we have 
\begin{equation}
    p(\sigma) = p(0) + p'(0)\sigma + E(\sigma), \quad ||E(\sigma)|| \leq \frac{\overline{\kappa}}{2}\sigma^2. 
\end{equation}
Thus for all $\sigma \in [c_1,c_2]$,
\begin{equation} \label{eq3.82}
\begin{split}
    \frac{s_1\cos\theta_1}{5} & \geq |\langle p(\sigma) - p(0), \xi \rangle| \\
    & = |(\cos\theta_1) \sigma + \langle E(\sigma), \xi \rangle |\\
    & \geq \cos\theta_1 |\sigma| - \frac{\overline{\kappa}}{2}|\sigma|^2.
\end{split}
\end{equation}
The function $f(s) = (\cos\theta_1)s - \frac{\overline{\kappa}}{2}s^2$ is increasing on the interval $[0,(\overline{\kappa})^{-1}\cos\theta_1]$ and achieves a maximum of $\frac{\cos^2\theta_1}{2\overline{\kappa}}$ at $s = (\overline{\kappa})^{-1}\cos\theta_1$. By (\ref{eq3.74}) $\frac{s_1\cos\theta_1}{5} < \frac{\cos^2\theta_1}{2\overline{\kappa}}$, and therefore $|\sigma| < (\overline{\kappa})^{-1}\cos\theta_1$. Thus by (\ref{eq3.82}), for all $\sigma \in [c_1,c_2]$
\begin{equation}
\begin{split}
    \frac{s_1\cos\theta_1}{5} & \geq \cos\theta_1|\sigma| - \frac{\overline{\kappa}}{2}|\sigma|[(\overline{\kappa})^{-1}\cos\theta_1] \geq \frac{\cos\theta_1|\sigma|}{2} \\
    & \Rightarrow \quad \frac{2s_1}{5} \geq |\sigma|.
\end{split}
\end{equation}
We conclude that 
\begin{equation}
    \max\{|c_1|, |c_2|\} \leq \frac{2s_1}{5}.
\end{equation} 
Let $t^- = t_1 - \frac{4s_1}{5||u^0||}$, and let $t^+ = t_1 + \frac{4s_1}{5||u^0||}$. Fix $t_0 < t^-$ and $\sigma \in [c_1,c_2]$. Let $\widetilde{x}$ be a point in the disk of radius $s_1\cos\theta_1/5$ centered at $\gamma(t_0)$. We have
\begin{equation} \label{eq3.85}
\begin{split}
    ||p(\sigma) - \widetilde{x}|| \geq ||\gamma(t_0) - p(0)|| - ||\gamma(t_0) - \widetilde{x}|| - ||p(\sigma) - p(0)||.
\end{split}
\end{equation}
Note that 
\begin{equation} \label{eq3.86}
    ||\gamma(t_0) - p(0)|| = ||\gamma(t_0) - \gamma(t_1)|| = |t_0 - t_1| \cdot ||u^0|| \geq |t^- - t_1|\cdot||u^0|| = \frac{4s_1}{5}.
\end{equation}
Also, by Taylor expansion, 
\begin{equation} \label{eq3.87}
    ||p(\sigma) - p(0)|| \leq |\sigma| + \frac{\overline{\kappa}}{2}|\sigma|^2 \leq \frac{3}{2}|\sigma| \leq \frac{3}{5}s_1,
\end{equation}
here using the fact proved above that $|\sigma| \leq (\overline{\kappa})^{-1}$ and $|\sigma| \leq 2s_1/5$. Substituting (\ref{eq3.86}) and (\ref{eq3.87}) as well as the hypothesized bound on $||\gamma(t) - \widetilde{\gamma}(t)||$ into (\ref{eq3.85}) gives us
\begin{equation}
    ||p(\sigma) - \widetilde{x}|| \geq \frac{4s_1}{5} - \frac{s_1\cos\theta_1}{5} - \frac{3s_1}{5} > 0.
\end{equation}
Since $\gamma(t_0)$ lies below the graph of $f_1$ in the center of the strip $\{x : |\langle x - p^0, \xi \rangle| \leq s_1\cos\theta_1/5\}$, by connectedness this proves that the entire disk of radius $s_1\cos\theta_1/5$ centered at $\gamma(t_0)$ lies below the graph of $f_1$ in the strip $\{x : |\langle x - p^0, \xi \rangle| \leq s_1\cos\theta_1/5\}$. In particular, by the hypothesized bound on $||\gamma(t) - \widetilde{\gamma}(t)||$, we see that $\gamma(t_0)$ lies below the graph of $f_1$ in the strip $\{x : |\langle x - p^0, \xi \rangle| \leq s_1\cos\theta_1/5\}$. This proves that $\widetilde{t}_1 > t^-$.

A symmetric argument shows that the disk of radius $s_1\cos\theta_1/5$ and centered at $\gamma(t^+)$ lies above the graph of $f_1$ in the strip $\{x : |\langle x - p^0, \xi \rangle| \leq s_1\cos\theta_1/5\}$. Consequently, $\widetilde{\gamma}(t^+)$ lies above the graph of $f_1$ in the strip $\{x : |\langle x - p^0, \xi \rangle| \leq s_1\cos\theta_1/5\}$. By the intermediate value theorem, we conclude that $t^- < \widetilde{t}_1 < t^+$, and this proves part (i) of the claim.

To prove part (ii), we make the following estimate using part (i):
\begin{equation}
\begin{split}
    ||\widetilde{p}^1 - p^1|| & = ||\widetilde{\gamma}(\widetilde{t}_1) - \gamma(t_1)|| \\
    & \leq ||\widetilde{\gamma}(\widetilde{t}_1) - \gamma(\widetilde{t}_1)|| + ||\gamma(\widetilde{t}_1) - \gamma(t_1)|| \\
    & \leq \frac{s_1\cos\theta_1}{5} + |\widetilde{t}_1 - t_1|\cdot ||u^0|| \\
    & \leq \frac{s_1\cos\theta_1}{5} + \frac{4s_1}{5} \leq s_1.
\end{split}
\end{equation}
The claim is proved.
\end{subproof}

\textit{Step 2.} We will now show that, under the hypotheses of the lemma, $||\widetilde{p}^1 - p^1|| \leq s_1$. Let $\gamma$ and $\widetilde{\gamma}$ be as in Claim \ref{claim5.7.2} above. By Taylor expansion,
\begin{equation} \label{eq3.90}
\widetilde{\gamma}(t) = \widetilde{p}^0 + t \widetilde{u}^0 + E(t), \quad\quad \text{ where } ||E(t)|| \leq \frac{t^2}{2}\sup_{u \in [0,t]}||\gamma''(u)||.
\end{equation}
To compute $\widetilde{\gamma}''(t)$, write $\widetilde{y}^0 = (\widetilde{x}_1^0, \widetilde{x}_2^0, \widetilde{\alpha}^0)$ and $\widetilde{w}^0 = (\widetilde{v}_1^0, \widetilde{v}_2^0, \widetilde{\omega}^0)$, and note that by (\ref{eq3.66})
\begin{equation}
\widetilde{\gamma}'(t) = \dd(G_\epsilon)_{\widetilde{y}^0 + t\widetilde{w}^0}(\widetilde{w}^0) = \begin{pmatrix}
\widetilde{v}_1^0 + \widetilde{\omega}^0\cos(\epsilon(\widetilde{\alpha}^0 + t\widetilde{\omega}^0)) \\
\widetilde{v}_2^0 + \widetilde{\omega}^0\sin(\epsilon(\widetilde{\alpha}^0 + t\widetilde{\omega}^0))
\end{pmatrix},
\end{equation}
and therefore
\begin{equation}
\widetilde{\gamma}''(t) = \begin{pmatrix}
- \epsilon(\widetilde{\omega}^0)^2\sin(\epsilon(\widetilde{\alpha}^0 + t\widetilde{\omega}^0)) \\
\epsilon(\widetilde{\omega}^0)^2\cos(\epsilon(\widetilde{\alpha}^0 + t\widetilde{\omega}^0))
\end{pmatrix}.
\end{equation}
Thus $||\widetilde{\gamma}''(t)|| = \epsilon(\widetilde{\omega}^0)^2$ and $||E(t)|| \leq \epsilon t^2 (\widetilde{\omega}^0)^2 / 2$. This, together with (\ref{eq3.79}) and (\ref{eq3.90}), gives us
\begin{equation} \label{eq3.93}
||\widetilde{\gamma}(t) - \gamma(t)|| \leq ||\widetilde{p}^0 - p^0|| + ||\widetilde{u}^0 - u^0|| t + \frac{\epsilon t^2(\widetilde{\omega}^0)^2}{2}.
\end{equation}
Using $(\ref{eq3.66})$ and Lemma \ref{dGlem}, we compute
\begin{equation} \label{eq3.99}
\begin{split}
||\widetilde{u}^0 - u^0|| & = ||(\dd G_0)_{y^0}(\widetilde{w}^0) - (\dd G_\epsilon)_{\widetilde{y}^0}(w^0)|| \\
& \leq ||(\dd G_0)_{y^0}(w^0) - (\dd G_\epsilon)_{\widetilde{y}^0}(w^0)|| + ||(\dd G_\epsilon)_{\widetilde{y}^0}(\widetilde{w}^0 - w^0)|| \\
& \leq |\omega^0|\sqrt{(1 - \cos(\epsilon\widetilde{\alpha}^0))^2 + \sin^2(\epsilon\widetilde{\alpha}^0)} \\
& + (1+mJ^{-1})^{1/2}(1 + ((mJ)^{-1/2} + 1)|\epsilon\widetilde{\alpha}^0|)||\widetilde{w}^0 - w^0|| \\
& \leq |\omega^0||\epsilon\widetilde{\alpha}^0| + (1+mJ^{-1})^{1/2}(1 + ((mJ)^{-1/2} + 1)|\epsilon\widetilde{\alpha}^0|)||\widetilde{w}^0 - w^0||. 
\end{split}
\end{equation}
Using the hypotheses, we have 
\begin{equation}
    |\epsilon\widetilde{\alpha}^0| \leq \epsilon(|\alpha^0| + |\widetilde{\alpha}^0 - \alpha^0|) \leq \frac{\rho(\epsilon)}{2} - \delta_0 - \epsilon(\frac{L}{\sin\nu}+1) + \frac{\epsilon s_2}{3} \leq \frac{\rho(\epsilon)}{2} \leq 1.
\end{equation}
and $|\omega^0| \leq ||w^0|| = 1$. Hence from (\ref{eq3.99}) we obtain
\begin{equation} \label{eq3.95}
||\widetilde{u}^0 - u^0|| \leq \frac{\rho(\epsilon)}{2} + (1+mJ^{-1})^{1/2}(2 + (mJ)^{-1/2})||\widetilde{w}^0 - w^0||.
\end{equation}
Using the same notation as in the proof of Claim \ref{claim5.7.2}, we let
\begin{equation}
t^+ = t_1 + \frac{4s_1}{5||u^0||} = \frac{L + 4s_1/5}{||u^0||}.
\end{equation}
By Lemma \ref{dGlem},
\begin{equation} \label{eq3.97}
||u^0|| = ||\dd G_0(w^0)|| \geq \sin\nu,
\end{equation}
and hence 
\begin{equation} \label{eq3.98}
t^+ \leq \frac{L + 4s_1/5}{\sin\nu} \leq \frac{L+1}{\sin\nu}
\end{equation}
(note that $s_1 \leq 5/4$ by the given bounds (\ref{eq3.74})). Substituting (\ref{eq3.95}) into (\ref{eq3.93}) gives us, for $0 \leq t \leq t^+$, 
\begin{equation}
\begin{split}
||\widetilde{\gamma}(t) - \gamma(t)|| & \leq ||\widetilde{p}^0 - p^0|| + \frac{\rho(\epsilon)}{2}t^+ \\
& \quad + (1+mJ^{-1})^{1/2}(2 + (mJ)^{-1/2})||\widetilde{w}^0 - w^0||t^+ + \frac{\epsilon(t^+)^2}{2}.
\end{split}
\end{equation}
By the bounds in the hypothesis (\ref{eq3.76}) and the bound (\ref{eq3.98}) on $t^+$, we obtain
\begin{equation}
\begin{split}
||\widetilde{\gamma}(t) - \gamma(t)|| & \leq \frac{s_1\cos\theta_1}{20} + \frac{1}{2}\left(\frac{s_1\cos\theta_1\sin\nu}{10(L+1)} \right)\left(\frac{L+1}{\sin\nu}\right) \\
& + (1+mJ^{-1})^{1/2}(2 + (mJ)^{-1/2}) \\
&\quad\times \left(\frac{s_1\cos\theta_1\sin\nu}{20(1+mJ^{-1})^{1/2}(2 + (mJ)^{-1/2})(L+1)}\right)\left(\frac{L+1}{\sin\nu}\right) \\
& + \frac{1}{2}\left(\frac{s_1\cos\theta_1\sin^2\nu}{10(L+1)^2}\right)\left(\frac{L+1}{\sin\nu}\right)^2 \\
& = \frac{s_1\cos\theta_1}{5}.
\end{split}
\end{equation}
By Claim \ref{claim5.7.2}, this implies $||\widetilde{p}^1 - p^1|| \leq s_1$.

\textit{Step 3.} To bound the difference in the angle coordinates $|\widetilde{\alpha}^1 - \alpha^1|$, we proceed as follows. Let $t_1, \widetilde{t}_1 > 0$ be the times defined above at which the trajectory hits the boundary in the cylindrical configuration space and the true configuration space respectively. As we argued above $||\widetilde{\gamma}(t) - \gamma(t)|| \leq s_1 \cos\theta_1 / 5$ for all $0 \leq t \leq (L + 4s_1 /5)/ ||u^0||$. By Claim \ref{claim5.7.2}, this implies that $t_1 - 4s_1/5||u^0|| \leq \widetilde{t}_1 \leq t_1 + 4s_1/5||u^0||$. We want to bound $|\widetilde{\alpha}(\widetilde{t}_1) - \alpha(t_1)|$. Since $\widetilde{\alpha}(t) = \widetilde{\alpha}^0 + t \widetilde{\omega}^0$ is linear, the function $t \mapsto \widetilde{\alpha}(t) - \alpha(t_1)$ is either increasing or decreasing. Consequently, by the bounds on $\widetilde{t}_1$ here described, 
\begin{equation}
|\widetilde{\alpha}(\widetilde{t}_1) - \alpha(t_1)| \leq \max\left\{\left|\widetilde{\alpha}\left(t_1 + \frac{4s_1}{5||u^0||}\right) - \alpha(t_1) \right|, \left|\widetilde{\alpha}\left(t_1 - \frac{4s_1}{5||u^0||}\right) - \alpha(t_1)\right|\right\}.
\end{equation}
Recall that $t_1 = L/||u^0||$. Substituting $\alpha(t) = \alpha^0 + t\omega^0$ and $\widetilde{\alpha}(t) = \widetilde{\alpha}^0 + t \widetilde{\omega^0}$ into the above and applying the triangle inequality, we obtain
\begin{equation}
\begin{split}
|\widetilde{\alpha}(\widetilde{t}_1) - \alpha(t_1)| & \leq  |\widetilde{\alpha}^0 - \alpha^0| + |\widetilde{\omega}^0 - \omega^0|\left(\frac{L}{||u^0||}\right) + |\widetilde{\omega}^0|\left(\frac{4s_1}{5||u^0||}\right) \\
& \leq |\widetilde{\alpha}^0 - \alpha^0| + ||\widetilde{w}^0 - w^0||\left(\frac{L}{\sin\nu}\right) + \left(\frac{4s_1}{5\sin\nu}\right),
\end{split}
\end{equation}
using (\ref{eq3.97}) in the last line. Substituting the hypothesized bounds (\ref{eq3.76}) for the quantities above, we obtain 
\begin{equation}
\begin{split}
|\widetilde{\alpha}^1 - \alpha^1| \leq \frac{s_2}{3} + \frac{s_2\sin\nu}{3L}\left(\frac{L}{\sin\nu}\right) +  \frac{4}{5\sin\nu}\left(\frac{5s_2\sin\nu}{12}\right) = s_2.
\end{split}
\end{equation}

\textit{Step 4.} Finally, we bound the difference in velocities $||\widetilde{w}^1 - w^1||$. Let $\widetilde{n}^1$ be the inward-pointing unit normal at $\widetilde{y}^1 \in \partial \mathcal{M}^*$ and let $n^1$ be the unit normal at $y^1 \in \partial \mathcal{M}^*_{\cyl}$. Then by specular reflection
\begin{equation} \label{eq3.104}
\widetilde{w}^1 = \widetilde{w}^0 - 2\langle \widetilde{w}^0, \widetilde{n}^1 \rangle \widetilde{n}^1 \quad \text{ and } \quad w^1 = w^0 - 2\langle w^0, n^1 \rangle n^1.
\end{equation}
Therefore, writing
\begin{equation}
\langle \widetilde{w}^0, \widetilde{n}^1\rangle\widetilde{n}^1 - \langle w^0, n^1 \rangle n^1 = \langle \widetilde{w}^0 - w^0, \widetilde{n}^1 \rangle n^1 + \langle w^0, \widetilde{n^1} - n^1 \rangle\widetilde{n}^1 + \langle w^0, n^1 \rangle (\widetilde{n}^1 - n^1), 
\end{equation}
we obtain by (\ref{eq3.104}) and Cauchy-Schwarz
\begin{equation} \label{eq3.106}
\begin{split}
||\widetilde{w}^1 - w^1|| \leq  3||\widetilde{w}^0 - w^0|| + 4||\widetilde{n}^1 - n^1||.
\end{split}
\end{equation}

To bound the difference in the unit normal vectors, first we obtain an expression for $\widetilde{n}^1$ in terms of $n^1$. Suppose once again that $p(\sigma)$, $\sigma \in (a,b)$ is a unit speed parametrization of the curve segment $I_1$ with $p(0) = p^1$. Note that by Step 2, $\widetilde{p}^1 \in I_1$, and we may assume without loss of generality that $p(c) = \widetilde{p}^1$ for some $c \in (0,b)$. 

We obtain a parametrization of a neighborhood of $\widetilde{y}^1$ in $\partial \mathcal{M}^*$ as follows. First, from the hypothesis and the conclusion of Step 3, we have
\begin{equation} \label{eq3.113}
\begin{split}
    |\widetilde{\alpha}^1| & \leq |\alpha^0| + |\alpha^1 - \alpha^0| + |\widetilde{\alpha}^1 - \alpha^1| \\
    & < \frac{\epsilon^{-1}\rho}{2} - \epsilon^{-1}\delta_0 - (\frac{L}{\sin\nu} + 1) + |t_1||\omega^0| + s_2 \\
    & \leq \frac{\epsilon^{-1}\rho}{2} - \epsilon^{-1}\delta_0 - (\frac{L}{\sin\nu} + 1) + \frac{L}{||u^0||} + s_2 \\
    & \leq \frac{\epsilon^{-1}\rho}{2} - \epsilon^{-1}\delta_0,
\end{split}
\end{equation}
using $t_1 = L/||u^0||$ and $|\omega^0| \leq 1$ to obtain the second to last line, and (\ref{eq3.97}) for the last line. Consequently, $\widetilde{y}^1 \in \Gamma_{\roll}^*$. Recall the map $f^*$ defined by (\ref{eq3.56}) which parametrizes the surface $\partial \mathcal{M}^*_{\roll}$, and observe that $f^*(\widetilde{p}^1, \widetilde{\alpha}^1) = \widetilde{y}^1$. Thus, the map 
\begin{equation}
    Q(\sigma,\alpha) = f(p(\sigma),\alpha), \quad (\sigma,\alpha) \in (a,b) \times (-\frac{\epsilon^{-1}\rho}{2} + \epsilon^{-1}\delta_0, \frac{\epsilon^{-1}\rho}{2} - \epsilon^{-1}\delta_0),
\end{equation}
parametrizes a neighborhood of $\widetilde{y}^1$ in $\partial \mathcal{M}^*$. Explicitly,
\begin{equation}
Q(\sigma,\alpha) = p(\sigma) + \begin{pmatrix} \epsilon^{-1}\sin(\epsilon\alpha) \\ 1 + \epsilon^{-1}(-1 + \cos(\epsilon\alpha)) \\ \alpha \end{pmatrix},
\end{equation}
Observe that $\widetilde{y}^1 = Q(c,\widetilde{\alpha}^1)$. The tangent space of $\partial \mathcal{M}^*$ at $\widetilde{y}^1$ is therefore spanned by the following unit vectors
\begin{equation}
\partial_\sigma Q(c,\widetilde{\alpha}^1) = p'(c), \quad (m+J)^{-1/2}\partial_\alpha Q(c,\widetilde{\alpha}^1) = (m+J)^{-1/2}\begin{pmatrix}
\cos(\epsilon\widetilde{\alpha}^1) \\
\sin(\epsilon\widetilde{\alpha}^1) \\
1
\end{pmatrix}  =: \widetilde{\chi}.
\end{equation} 
(These are linearly independent since the $\alpha$-component of $\widetilde{\chi}$ is nonzero.) Let 
\begin{equation} \label{eq3.109}
\widehat{\chi} = \frac{\widetilde{\chi} - \langle p'(c), \widetilde{\chi} \rangle p'(c)}{||\widetilde{\chi} - \langle p'(c), \widetilde{\chi} \rangle p'(c)||}.
\end{equation}
Then $(p'(c), \widehat{\chi})$ is an orthonormal basis for the tangent space to $\Gamma_{\roll}$ at $\widetilde{y}^1$. Consequently, the unit normal at $\widetilde{y}^1$ is given by the formula
\begin{equation}
\begin{split}
\widetilde{n}^1 & = \frac{n^1 - \langle n^1, p'(c)\rangle p'(c) - \langle n^1, \widehat{\chi} \rangle \widehat{\chi}}{||n^1 - \langle n^1, p'(c)\rangle p'(c) - \langle n^1, \widehat{\chi} \rangle \widehat{\chi}||} \\
& = \frac{n^1 - \langle n^1, p'(c)\rangle p'(c) - \langle n^1, \widehat{\chi} \rangle \widehat{\chi}}{\sqrt{1 - \langle n^1, p'(c)\rangle^2 - \langle n^1, \widehat{\chi} \rangle^2}}.
\end{split}
\end{equation}
Therefore, 
\begin{equation} \label{eq3.111}
\langle n^1, \widetilde{n}^1\rangle = \frac{1  - \langle n^1, p'(c)\rangle^2 - \langle n^1, \widehat{\chi} \rangle^2}{\sqrt{1 - \langle n^1, p'(c)\rangle^2 - \langle n^1, \widehat{\chi} \rangle^2}} = \sqrt{1 - \langle n^1, p'(c)\rangle^2 - \langle n^1, \widehat{\chi} \rangle^2}.
\end{equation}
It is even easier to specify a frame at $y^1 \in \partial \mathcal{M}^*_{\cyl}$. Because of the cylindrical structure of $\partial \mathcal{M}^*_{\cyl}$, both $p'(0)$ and $\chi = (m+J)^{-1/2}(-1,0,1)$ are tangent to $\partial \mathcal{M}^*_{\cyl}$ at $y^1$. Hence $\langle p'(0), n^1 \rangle = \langle \chi , n^1\rangle = 0$, and 
\begin{equation} \label{eq3.112}
|\langle n^1, p'(c) \rangle|  = |\langle n^1, p'(c) - p'(0)\rangle| \leq ||p'(c) - p'(0)||.
\end{equation}
To bound the quantity on the right, first note that 
\begin{equation}
    |\langle p(c) - p(0), \xi \rangle| \leq ||p(c) - p(0)|| = ||\widetilde{p}^1 - p^1|| \leq s_1 < \min\{r_0, \frac{1}{2}(\overline{\kappa})^{-1}\},
\end{equation}
by (\ref{eq3.74}). Thus, by Claim \ref{claim5.7.1}, $\sigma \mapsto ||p(\sigma) - p(0)||$ is monotone increasing on $[0,c]$. Consequently, for all $\sigma \in [0,c]$,
\begin{equation} \label{eq3.114}
    s_1 \geq ||p(\sigma) - p(0)|| \geq |\sigma| - \frac{\overline{\kappa}}{2}|\sigma|^2,
\end{equation}
using Taylor's theorem for the last estimate. Since the function $f(s) = s^2 - \frac{\overline{\kappa}}{2}s^2$ is monotone increasing on $[0, (\overline{\kappa})^{-1}]$ and achieves a maximum of $\frac{(\overline{\kappa})^{-1}}{2} > s_1$ at $s = (\overline{\kappa})^{-1}$, it follows that $c < (\overline{\kappa})^{-1}$. Thus by (\ref{eq3.114})
\begin{equation}
    s_1 \geq |c| - \frac{\overline{\kappa}}{2}|c|(\overline{\kappa})^{-1} = \frac{1}{2}|c|.
\end{equation}
It follows that 
\begin{equation}
    ||p'(c) - p(0)|| \leq \int_0^c ||p''(u)||\dd u \leq \overline{\kappa}|c| \leq 2\overline{\kappa}s_1.
\end{equation}
Substitution into (\ref{eq3.112}) yields
\begin{equation} \label{eq3.117}
|\langle n^1, p'(c) \rangle| \leq 2\overline{\kappa}s_1.
\end{equation}
In addition, we have 
\begin{equation} 
\begin{split}
|\langle n^1, \widetilde{\chi} \rangle| & = |\langle n^1, \widetilde{\chi} - \chi \rangle| \leq ||\widetilde{\chi} - \chi|| \\
& = \sqrt{(1 - \cos(\epsilon\widetilde{\alpha}^1))^2 + \sin^2(\epsilon\widetilde{\alpha}^1)} \\
& = \sqrt{2 - 2\cos(\epsilon\widetilde{\alpha}^1)} \leq |\epsilon\widetilde{\alpha}^1|.
\end{split}
\end{equation}
Therefore, by (\ref{eq3.109})
\begin{equation} \label{eq3.119}
\begin{split}
|\langle n^1, \widehat{\chi} \rangle| & = \frac{|\langle n^1, \widetilde{\chi}\rangle - \langle p'(c), \widetilde{\chi} \rangle \langle n^1, p'(c) \rangle|}{\sqrt{1 - \langle p'(c), \widetilde{\chi}\rangle^2}} \\
& \leq \frac{|\epsilon\widetilde{\alpha}^1| + 2\overline{\kappa}s_1}{\sqrt{1 - 4(\overline{\kappa})^2s_1^2}} \leq \sqrt{2}(|\epsilon\widetilde{\alpha}^1| + 2\overline{\kappa}s_1),
\end{split}
\end{equation}
where the last line follows by the assumption that $s_1 \leq 1/(2\sqrt{2}\overline{\kappa})$ by (\ref{eq3.74}). By (\ref{eq3.111}), (\ref{eq3.117}), and (\ref{eq3.119}), we conclude that 
\begin{equation}
\begin{split}
\langle \widetilde{n}^1, n^1 \rangle^2 & \geq 1 - 4(\overline{\kappa})^2s_1^2 - 2(|\epsilon\widetilde{\alpha}^1| + 2\overline{\kappa}s_1)^2 \\
& \geq 1 - 20(\overline{\kappa})^2s_1^2 - 4\epsilon^2|\widetilde{\alpha}^1|^2 \\
& \geq 1 - 20(\overline{\kappa})^2s_1^2 - \rho(\epsilon)^2,
\end{split}
\end{equation}
noting that $|\widetilde{\alpha}^1| \leq \epsilon^{-1}\rho/2$ by (\ref{eq3.113}). From this we obtain the estimate 
\begin{equation}
\begin{split}
||\widetilde{n}^1 - n^1|| & = \sqrt{2 - 2\langle \widetilde{n}^1, n^1 \rangle} \\
& \leq \sqrt{40(\overline{\kappa})^2s_1^2 + 2\rho(\epsilon)^2} \\
& \leq 2\sqrt{10}\overline{\kappa}s_1 + \sqrt{2}\rho(\epsilon).
\end{split}
\end{equation}
From (\ref{eq3.106}) we obtain the following bound on the difference of velocities
\begin{equation}
||\widetilde{w}^1 - w^1|| \leq 3||\widetilde{w}^0 - w^0|| + 8\sqrt{10}\overline{\kappa}s_1 + 4\sqrt{2}\rho(\epsilon).
\end{equation}
Substituting the bounds (\ref{eq3.74}), (\ref{eq3.75}), and (\ref{eq3.76}) for the quantities above, we obtain
\begin{equation}
||\widetilde{w}^1 - w^1|| \leq 3(\frac{s_3}{12}) + 8\sqrt{10}\overline{\kappa}(\frac{s_3}{24\sqrt{10}\overline{\kappa}}) + 4\sqrt{2}(\frac{s_3}{12\sqrt{2}}) = s_3.
\end{equation}
This completes the proof.
\end{proof}

\begin{proof}[Proof of Lemma \ref{lem_control2}]
This lemma is corollary of the previous one. Take 
\begin{equation} \label{eq3.138}
\begin{split}
    &s_1 = \left( 1 + \frac{12}{5\sin\nu} + 24\sqrt{10}\overline{\kappa} \right)^{-1}s, \\
    &s_2 = \frac{12s_1}{5\sin\nu}, \\
    &s_3 = 24\sqrt{10}\overline{\kappa}s_1.
\end{split}
\end{equation}
Since by the above and (\ref{eq3.79}) and the assumption $s \leq 1$, we have $s_2 \leq 1$ and $s_3 \leq 1$. Since $0 < \sin\nu \leq 1$ and $\overline{\kappa} \geq 1$, we have 
\begin{equation} \label{eq3.139}
    79 \leq 1 + \frac{12}{5\sin\nu} + 24\sqrt{10}\overline{\kappa} \leq \frac{80\overline{\kappa}}{\sin\nu}.
\end{equation}
Thus by (\ref{eq3.79'}),
\begin{equation}
    s_1 \leq 79^{-1}s \leq \min\{r_0,(\overline{\kappa})^{-1}\cos\theta_1\} \leq \min\left\{r_0,\frac{5\cos\theta_1}{2\overline{\kappa}}\right\}.
\end{equation}
Also, by definition of $s_2$ and $s_3$ above, we have
\begin{equation}
    s_1 = \frac{5s_2\sin\nu}{12} = \frac{s_3}{24\sqrt{10}\overline{\kappa}}
\end{equation}
Therefore the bound (\ref{eq3.74}) is satisfied.

To see that $(\ref{eq3.75})$ is satisfied, first observe that 
\begin{equation} \label{eq3.142}
\begin{split}
    \frac{s\cos\theta_1\sin^2\nu}{800\overline{\kappa}(\overline{L} + 1)} & = \left(1 + \frac{12}{5\sin\nu} + 24\sqrt{10}\overline{\kappa}\right) \frac{s_1\cos\theta_1\sin^2\nu}{800\overline{\kappa}(\overline{L} + 1)} \\
    & \leq \frac{s_1\cos\theta_1\sin\nu}{10(\overline{L}+1)} \leq \frac{s_1\cos\theta_1\sin\nu}{10(L+1)},
\end{split}
\end{equation}
where the first inequality above follows from (\ref{eq3.139}). Consequently, since $\widecheck{\rho}^{-1}$ is increasing, by (\ref{eq3.80}) we have 
\begin{equation} \label{eq3.143}
    \epsilon \leq \widecheck{\rho}^{-1}\left( \frac{s\cos\theta_1\sin^2\nu}{800\overline{\kappa}(\overline{L} + 1)} \right) \leq \widecheck{\rho}^{-1}\left( \frac{s_1\cos\theta_1\sin\nu}{10(L+1)} \right).
\end{equation}
By (\ref{eq3.80}) and the estimate (\ref{eq3.142}), we also have 
\begin{equation} \label{eq3.144}
    \epsilon \leq \frac{s\cos\theta_1\sin^3\nu}{800\overline{\kappa}(\overline{L} + 1)^2} \leq \frac{s_1\cos\theta_1\sin^2\nu}{10(\overline{L}+1)^2} \leq \frac{s_1\cos\theta_1\sin^2\nu}{10(L+1)^2}.
\end{equation}
It also follows from (\ref{eq3.144}) and the definition of $s_2$ that 
\begin{equation} \label{eq3.145}
    \epsilon \leq s_1 = \frac{5s_2\sin\nu}{12},
\end{equation}
and from (\ref{eq3.143}) and the definition of $s_3$ that 
\begin{equation} \label{eq3.146}
    \epsilon \leq \widecheck{\rho}^{-1}(s_1) \leq \widecheck{\rho}^{-1}\left(2\sqrt{5}s_1\right) = \widecheck{\rho}^{-1}\left(\frac{s_3}{12\sqrt{2}}\right).
\end{equation}
Combining (\ref{eq3.143}), (\ref{eq3.144}), (\ref{eq3.145}), and (\ref{eq3.146}), we obtain the desired bound (\ref{eq3.75}) on $\epsilon$.

To see that (\ref{eq3.76'}) is satisfied, observe that by (\ref{eq3.81}),
\begin{equation}
    |\alpha^0| \leq \epsilon^{-1}\left(\frac{\rho}{2} - \delta_0\right) - \left(\frac{\overline{L}}{\sin\nu} + 1\right) \leq \epsilon^{-1}\left(\frac{\rho}{2} - \delta_0\right) - \left(\frac{L}{\sin\nu} + 1\right).
\end{equation}

From (\ref{eq3.82'}) we get the following bounds on $||\widetilde{p}^0 - p^0||$, $|\widetilde{\alpha}^0 - \alpha^0|$, and $||\widetilde{w}^0 - w^0||$. First, by definition of $s_1$ and (\ref{eq3.82'}), we have 
\begin{equation} \label{eq3.149}
\begin{split}
   ||\widetilde{p}^0 - p^0|| & + |\widetilde{\alpha}^0 - \alpha^0| + ||\widetilde{w}^0 - w^0|| \\
   & \leq \left(1 + \frac{12}{5\sin\nu} + 24\sqrt{10}\overline{\kappa}\right) \\
   & \quad \times\frac{s_1\cos\theta_1\sin^2\nu}{1600(1 + mJ^{-1})^{1/2}(2 + (mJ)^{-1/2})\overline{\kappa}(\overline{L} + 1)} \\
   & \leq \left(\frac{80\overline{\kappa}}{\sin\nu}\right)\frac{s_1\cos\theta_1\sin^2\nu}{1600(1 + mJ^{-1})^{1/2}(2 + (mJ)^{-1/2})\overline{\kappa}(\overline{L} + 1)} \\
   & = \frac{s_1\cos\theta_1\sin\nu}{20(1 + mJ^{-1})^{1/2}(2 + (mJ)^{-1/2})(\overline{L} + 1)}.
\end{split}
\end{equation}
It follows from the above that 
\begin{equation}
    ||\widetilde{p}^0 - p^0|| \leq \frac{s_1\cos\theta_1}{20}.
\end{equation}
Since $s_1 = \frac{5s_2\sin\nu}{12}$, we also obtain from (\ref{eq3.149})
\begin{equation}
\begin{split}
    |\widetilde{\alpha}^0 - \alpha^0| & \leq \frac{s_2\cos\theta_1\sin^2\nu}{48(1 + mJ^{-1})^{1/2}(2 + (mJ)^{-1/2})(\overline{L} + 1)} \leq \frac{s_2}{3}.
\end{split}
\end{equation}
In addition, (\ref{eq3.149}) implies 
\begin{equation}
    ||\widetilde{w}^0 - w^0|| \leq \frac{s_1\cos\theta_1\sin\nu}{20(1 + mJ^{-1})^{1/2}(2 + (mJ)^{-1/2})(L + 1)}.
\end{equation}
Substituting $s_1 = \frac{5s_2\sin\nu}{12}$ into (\ref{eq3.149}) gives us 
\begin{equation}
    ||\widetilde{w}^0 - w^0|| \leq \frac{s_2\cos\theta_1\sin^2\nu}{48(1 + mJ^{-1})^{1/2}(2 + (mJ)^{-1/2})(\overline{L} + 1)} \leq \frac{s_2\sin\nu}{3L},
\end{equation}
and substituting $s_1 = \frac{s_3}{24\sqrt{10}\overline{\kappa}}$ gives us 
\begin{equation}
    ||\widetilde{w}^0 - w^0|| \leq \frac{s_3\cos\theta_1\sin\nu}{48\sqrt{10}\overline{\kappa}(1 + mJ^{-1})^{1/2}(2 + (mJ)^{-1/2})(\overline{L} + 1)} \leq \frac{s_3}{12}.
\end{equation}
We conclude from the last five displays that the bounds (\ref{eq3.76}) are satisfied. Therefore, the hypotheses of Lemma \ref{lem_control} hold, and we conclude that 
\begin{equation}
\begin{split}
    ||\widetilde{p}^1 - p^1|| + |\widetilde{\alpha}^1 - \alpha^1| + ||\widetilde{w}^1 - w^1|| & \leq s_1 + s_2 + s_3 \\
    & = \left(1 + \frac{12}{5\sin\nu} + 24\sqrt{10}\overline{\kappa}\right)s_1 = s,
\end{split}
\end{equation}
here using the definitions (\ref{eq3.138}).
\end{proof}

\subsubsection{Definition of $\Omega$ and proof of Lemma \ref{lem_mod}} \label{sssec_omegadef}

Recall the sets $\mathcal{N}^*$ and $A_1^*, A_2^*, A_3^*$ defined by (\ref{eq5.92}). The set $A_2^*$ has Lebesgue measure zero. (See the proof of Proposition \ref{prop_detcolcyl}.)

Recall that $K^{\Sigma,\epsilon}_{\cyl}$ is a well-defined $C^1$ involutive diffeomorphism on a $\Lambda^2$-full measure open subset $\mathcal{F}_{\cyl} \subset \mathbf{P} \times \mathbb{S}^2_+$. For any $(y,w) \in \mathcal{F}_{\cyl}$, the billiard trajectory in $\mathcal{M}_{\cyl}$ starting from $(y,-w)$ is well-defined for all time and hits $\partial \mathcal{M}_{\cyl}$ only finitely many times before returning to $\mathbf{P}$. (See Propositions \ref{prop_detcolcyl} and \ref{prop_reflproperties}.)

Let $\mathcal{F}_{\cyl}^* = \sigma_{\epsilon^{-1}}(\mathcal{F}_{\cyl}) = \{(y,w) \in \mathbf{P} \times \mathbb{S}^2_+ : (\epsilon y, w) \in \mathcal{F}_{\cyl}\}$. For any $(y,w) \in \mathcal{F}_{\cyl}^*$, the billiard trajectory in $\mathcal{M}_{\cyl}^*$ starting from initial state $(y,-w)$ is defined for all time and returns to $\mathbf{P}$ after only finitely many collisions with $\partial \mathcal{M}_{\cyl}^*$.

Fix $(y,w) \in (A_1^* \times \mathbb{S}^2) \cap \mathcal{F}_{\cyl}^*$, and let $(y^0,w^0) = (y,-w)$. Let $N$ denote the number of times which the point particle hits $\partial \mathcal{M}_{\cyl}^*$ before returning to the plane $\mathbf{P}$. For $1 \leq j \leq N$, let $y^j$ denote the point in $\partial \mathcal{M}_{\cyl}^*$ where the billiard trajectory starting from $(y^{j-1},w^{j-1})$ first returns to the boundary, and let $w^j$ denote the velocity of the point particle immediately after reflecting from the boundary at $y^j$. Let $(y^{N+1},w^{N+1})$ denote the state of the point particle upon returning to the plane $\mathbf{P}$ (thus $w^{N+1} = w^N$). 

We will also use the following notation: For $0 \leq j \leq N+1$
\begin{itemize}
    \item $(p^j,u^j) = \dd G_0(y^j,w^j)$.
    \item $k^j$ is the unit normal vector to $\partial\widehat{B}^*$ at $p^j$ if $1 \leq j \leq N$. We also let $k^0 = k^{N+1} = -e_2$. (Thus each $k^j$ is the inward-pointing unit normal at $p^j \in \partial \mathcal{D}^*$, where $\mathcal{D}^*$ is the region defined by (\ref{eq5.93})).
    \item $\theta_j$ is the angle between $k^j$ and $-u^{j-1}$ (i.e. $\cos \theta_j = \langle k^j, -u^{j-1} \rangle$) if $1 \leq j \leq N+1$. We let $\theta_0$ be the angle between $w^0$ and $k^0 = -e_2$.
    \item $r_j = r_{\mathcal{D}^*}(p_j, \widehat{u}_j)$ is the radius of transversality associated with the chord from $p_j$ to $p_{j+1}$ if $0 \leq j \leq N$.
    \item $\nu$ is the angle between $w^0$ and the line spanned by $\chi$. 
\end{itemize}

Recall that in the cylindrical configuration space the projection of the point particle velocity onto $\chi$ is preserved. It follows that $\nu$ is the angle between $w^j$ and the line spanned by $\chi$ for $0 \leq j \leq N+1$.

In the notation above, we let $F = F(\epsilon)$ denote the set of all states states $(y,w) \in (A_1 \times \mathbb{S}^2) \cap \mathcal{F}_{\cyl}^*$ such that 
\begin{enumerate}[label = F\arabic*.]
    \item $N \leq \frac{\log(1/\rho)}{4\log\log(1/\rho)}$,
    \item $r_j \geq \frac{1}{\log(1/\rho)}$ for $0 \leq j \leq N$,
    \item $\cos\theta_j \geq \frac{\overline{\kappa}}{\log(1/\rho)}$ for $0 \leq j \leq N+1$, and
    \item $\sin\nu \geq \frac{1}{\log(1/\rho)}$.
\end{enumerate}
We also define $\Xi = \Xi(\epsilon)$ to be the set of all states $(y,w) \in (A_1 \times \mathbb{S}^2) \cap \mathcal{F}_{\cyl}^*$ such that if $\alpha$ is the angular coordinate of $y$, then the following holds:
\begin{equation}
\begin{split}
    \text{for all } j \in \mathbb{Z}, &\text{ if } \alpha \in \Big(\epsilon^{-1}\Big(-\frac{\rho}{2} + j\rho\Big), \epsilon^{-1}\Big(\frac{\rho}{2} + j\rho\Big)\Big], \\
    &\text{ then } |\alpha - j\epsilon^{-1}\rho| \leq \epsilon^{-1}\left(\frac{\rho}{2} - \delta_0 - \frac{12(\overline{L} + 1)}{5\log(1/\rho)^2} \right) - \frac{\log(1/\rho)^2 \overline{L}}{4\log\log(1/\rho)}.
\end{split}
\end{equation}

Recall the parallelogram $R_\epsilon$, defined by (\ref{eq5.26}). Let 
\begin{equation}
    R_{\epsilon}^* = \epsilon^{-1}R_\epsilon = \{(x_1,\alpha) \in \mathbf{P} : 0 \leq x_1 + \alpha \leq 1 \text{ and } -\epsilon^{-1}\rho(\epsilon)/2 \leq \alpha \leq \epsilon^{-1}\rho(\epsilon)/2\}.
\end{equation}
Also define translation maps 
\begin{equation}
    \tau_{jk}^*(y) = \epsilon^{-1}\tau_{jk}(\epsilon y) = y + j e_1 + k \epsilon^{-1}\rho e_3, \quad\quad y \in \mathbf{P},
\end{equation}
\begin{equation}
    \overline{\tau}_{jk}^*(y,w) = (\tau_{jk}^*(y),w), \quad\quad (y,w) \in \mathbf{P} \times \mathbb{S}^2_+.
\end{equation}
The translates $\tau_{jk}^* R_\epsilon^*$ tessellate the plane $\mathbf{P}$, in the sense that $\mathbf{P} = \bigcup_{(j,k) \in \mathbb{Z}^2} \tau_{jk}^*R_{\epsilon}^*$, and $\tau_{jk}^*R_{\epsilon}^* \cap \tau_{j'k'}^*R_{\epsilon}^*$ has Lebesgue measure zero whenever $(j,k) \neq (j',k')$.

We let $\widehat{R}_{\epsilon}^*$ denote the parallelogram $R_\epsilon^*$ minus its upper and right boundary segments, i.e.
\begin{equation}
    \widehat{R}_{\epsilon}^* = \{(x_1,\alpha) \in \mathbf{P} : 0 \leq x_1 + \alpha < 1 \text{ and } -\epsilon^{-1}\rho(\epsilon) \leq \alpha < \epsilon^{-1}\rho(\epsilon)/2\}.
\end{equation}
Then the translates $\tau_{jk}^* \widehat{R}_\epsilon^*$ form a collection of disjoint sets which tessellate the plane $\mathbf{P}$.

We define $\Omega^*$ to be the subset of $\mathbf{P} \times \mathbb{S}^2_+$ which is invariant under the translations $\overline{\tau}_{jk}^*$, $(j,k) \in \mathbb{Z}^2$ and such that 
\begin{equation}
    \Omega^* \cap (\widehat{R}_{\epsilon}^* \times \mathbb{S}^2_+) = \left(F \cup \Xi \cup (A_3^* \times \mathbb{S}^2_+)  \right) \cap (\widehat{R}_{\epsilon}^* \times \mathbb{S}^2_+).
\end{equation}
We then define $\Omega \subset \mathbf{P} \times \mathbb{S}^2$ by 
\begin{equation}
    \Omega = \sigma_{\epsilon}(\Omega^*) = \{(y,w) \in \mathbf{P} \times \mathbb{S}^2_+ : (\epsilon^{-1}y, w) \in \Omega^*\}.
\end{equation}

\begin{proof}[Proof of Lemma \ref{lem_mod}]
(i) follows immediately from the definition of $\Omega$.

(ii) Changing to zoomed coordinates, it is enough to show that $\epsilon \rho(\epsilon)^{-1}\Lambda^2(\mathbf{P} \times \mathbb{S}^2 \smallsetminus \Omega^*) \to 0$ as $\epsilon \to 0$. Note that $A_1^* \cup A_3^*$ is a full measure subset of $\mathbf{P}$. Consequently, it is enough to show that $\epsilon \rho(\epsilon)^{-1}\Lambda^2((R_\epsilon^* \cap A_1^*)  \times \mathbb{S}^2 \smallsetminus F) \to 0$ and $\epsilon\rho(\epsilon)^{-1}\Lambda^2((R_\epsilon^* \cap A_1^*)  \times \mathbb{S}^2 \smallsetminus \Xi) \to 0$ as $\epsilon \to 0$.

First we deal with $F$. We let $k = \lceil \epsilon^{-1}\rho(\epsilon)/2 \rceil$, and we chop $R_{\epsilon}$ into $2k$ parts 
\begin{equation}
    R_{\epsilon}^{*j} = R_{\epsilon}^* \cap \left\{(x_1,\alpha) : \frac{(j-1)\epsilon^{-1}\rho}{2k} \leq \alpha \leq \frac{j\epsilon^{-1}\rho}{2k} \right\}, \quad\quad -k + 1 \leq j \leq k. 
\end{equation}
By definition of the parallelogram $R_\epsilon^*$, we see that 
\begin{equation}
    R_\epsilon^{*j} = R_{\epsilon}^{*0} + \frac{(1 + mJ^{-1})^{-1/2}\epsilon^{-1}\rho j}{k}\chi.
\end{equation}
By observing that the sets $F$ and $A_1^*$ are invariant under any translation in the direction $\chi$, it follows that 
\begin{equation}
    \Lambda^2((R_\epsilon^{*j} \cap A_1^*) \times \mathbb{S}^2_+ \smallsetminus F) = \Lambda^2((R_\epsilon^{*0} \cap A_1^*)  \times \mathbb{S}^2_+ \smallsetminus F), \quad \text{ for } -k+1 \leq j \leq k.
\end{equation}
Therefore, 
\begin{equation}
    \Lambda^2((R_\epsilon^{*0} \cap A_1^*)  \times \mathbb{S}^2_+ \smallsetminus F) = (2k)^{-1}\Lambda^2((R_\epsilon^* \cap A_1^*) \times \mathbb{S}^2_+ \smallsetminus F).
\end{equation}
Noting that $(2k)^{-1} \sim \frac{\epsilon}{\rho}$, we see it is enough to show that $\Lambda^2((R_\epsilon^{*0} \cap A_1^*)  \times \mathbb{S}^2_+ \smallsetminus F) \to 0$ as $\epsilon \to 0$. To this end, note that $R_\epsilon^{*0}  \times \mathbb{S}^2_+$ lies in the fixed bounded set $\{(x_1,\alpha) : 0 \leq x_1 \leq 1, -1 \leq x_1 + \alpha \leq 0\}  \times \mathbb{S}^2_+$ which has finite $\Lambda^2$-measure. Also notice that $F(\epsilon) \supset F(\epsilon')$ whenever $\epsilon < \epsilon'$. Consequently, by continuity from above, it is enough to show that $\bigcup_{\epsilon > 0} F(\epsilon)$ has full $\Lambda^2$ measure in $A_1^* \times \mathbb{S}^2_+$. Now $\bigcup_{\epsilon > 0} F(\epsilon)$ is the set of all $(y,w) \in A_1^* \times \mathbb{S}^2_+$ such that 
\begin{enumerate}
    \item $N < \infty$, 
    \item $r_j > 0$ for $1 \leq j \leq N-1$,
    \item $\theta_j < \frac{\pi}{2}$ for $0 \leq j \leq N+1$, and 
    \item $\nu > 0$.
\end{enumerate}
By construction of the cylindrical collision law, the set of initial states $(y,w)$ such that $N = \infty$ is a measure zero subset of $A_1^* \times \mathbb{S}^2_+$ (see \S\ref{sssec_cylcol} and \S\ref{sssec_detref}). If $r_j = 0$, this means that the billiard trajectory hits $\partial\mathcal{M}_{\cyl}$ at $y^j$ either tangentially or at a singularity of $\partial\mathcal{M}_{\cyl}$, and the set of initial states such that this happens has measure zero (see Lemma \ref{lem_iter}). If $\theta_j = \pi/2$, this means the billiard trajectory hits $\partial\mathcal{M}_{\cyl}$ tangentially at $y^j$, and again the set of initial states such that this happens has measure zero. Finally, since $\nu$ is a conserved quantity, we see that $\nu = 0$ if and only if $w = \pm\chi$, and $\Lambda^2(A_1^* \times \{\pm\chi\}) = 0$. This proves $\bigcup_{\epsilon > 0} F(\epsilon)$ is a full-measure subset of $A_1^* \times \mathbb{S}^2_+$, as desired.

To show that $\frac{\epsilon}{\rho(\epsilon)}\Lambda^2(R_\epsilon^* \times \mathbb{S}^2_+ \smallsetminus \Xi(\epsilon)) \to 0$, observe that $R_\epsilon^* \times \mathbb{S}^2_+ \smallsetminus \Xi(\epsilon)) = (R_\epsilon^* \smallsetminus R_\epsilon') \times \mathbb{S}^2_+$, where $R_\epsilon'$ is the parallelogram defined by 
\begin{equation}
\begin{split}
    R_\epsilon' = \Bigg\{(x_1,\alpha) \in \mathbf{P} : & \text{ } 0 \leq \alpha + x_1 \leq 1, \\
    & \text{ and } |\alpha| \leq \epsilon^{-1}\left(\frac{\rho}{2} - \delta_0 \right) - \left(\frac{\overline{L}}{\sin\nu} + 1\right) - \frac{\log(1/\rho)^2 \overline{L}}{4\log\log(1/\rho)} \Bigg\}.
\end{split}
\end{equation}
It is enough to show that $\frac{\epsilon}{\rho(\epsilon)}m(R_\epsilon \smallsetminus R_\epsilon') \to 0$, where $m$ denotes Lebesgue measure on $\mathbf{P}$. Note that $R_\epsilon' \subset R_\epsilon$. Computing the area of each rectangle, we have
\begin{equation}
\begin{split}
    \frac{\epsilon}{\rho(\epsilon)}m(R_\epsilon \smallsetminus R_\epsilon') & = \frac{\epsilon}{\rho(\epsilon)}\left[\epsilon^{-1}\rho - 2\epsilon^{-1}\left(\frac{\rho}{2} - \delta_0 \right) + 2\left(\frac{\overline{L}}{\sin\nu} + 1\right) + \frac{2\log(1/\rho)^2 \overline{L}}{4\log\log(1/\rho)}\right] \\
    & = \frac{2\delta_0}{\rho} + \frac{2\epsilon}{\rho}\left(\frac{\overline{L}}{\sin\nu} + 1\right) + \frac{\epsilon\log(1/\rho)^2 \overline{L}}{2\rho\log\log(1/\rho)}.
\end{split}
\end{equation}
Recalling that $\frac{\epsilon^{1/2}}{\rho} = o(1)$, and $\delta_0 = O(\frac{\epsilon}{\rho}) = o(\epsilon^{1/2})$, we see that the above quantity converges to zero as $\epsilon \to 0$.

Part (iii) of Lemma \ref{lem_mod} is proved by iterating the result of Lemma \ref{lem_control2}. Fix $(y,w) \in \Omega$ and let $N$, $(y^j, w^j)$, $\theta_j$, and $r_j$ be defined as in the definition of $\Omega$ above. Let $\widetilde{y}^0 = H_{\epsilon}(y^0)$ and let $\widetilde{w}^0 = w^0$. Let $\Phi^*$ denote the billiard map in the zoomed configuration space $\mathcal{M}^*$. We will use Lemma \ref{lem_control2} to prove that 
\begin{equation}
    (\widetilde{y}^j,\widetilde{w}^j) := \Phi^*(\widetilde{y}^{j-1},\widetilde{w}_{j-1}), \quad 1 \leq j \leq N.
\end{equation} 
are well-defined, and that the trajectory starting from $(\widetilde{y}^0,\widetilde{w}^0)$ will return to the surface $\widetilde{\mathbf{P}}^*$ in some state $(\widetilde{y}^{N+1},\widetilde{w}^{N+1})$ after $N$ collisions with the boundary. The same lemma will also give us bounds on $||(\widetilde{y}^{N+1},\widetilde{w}^{N+1}) - (y^{N+1},w^{N+1})||$. Indeed, we define by backwards induction 
\begin{equation}
\begin{split}
    s_{N+1} & = 79\log(1/\rho)^{-1}, \\ 
    s_j & = \frac{s_{j+1}\cos\theta_{j+1}\sin^2\nu}{1600(1 + mJ^{-1})^{1/2}(2 + (mJ)^{-1/2})\overline{\kappa}(\overline{L}+1)}, \quad 0 \leq j \leq N.
\end{split}
\end{equation}
Then by definition of $\Omega^*$, for $0 \leq j \leq N$, 
\begin{equation} \label{eq3.91}
    s_j \leq s_{N+1} = 79\log(1/\rho)^{-1} \leq 79\min\{r_j, (\overline{\kappa})^{-1}\cos\theta_{j+1}\}.
\end{equation}
Also observe that for $1 \leq j \leq N+1$, 
\begin{equation}
    s_j \geq s_1 = s_{N+1} \prod_{j = 1}^N \frac{\cos\theta_{j+1}\sin^2\nu}{1600(1 + mJ^{-1})^{1/2}(2 + (mJ)^{-1/2})\overline{\kappa}(\overline{L} + 1)}.
\end{equation}
Hence, using the bounds in the definition of $\Omega^*$,
\begin{equation}
\begin{split}
    &\frac{s_j \cos\theta_{j}\sin^2\nu}{800\overline{\kappa}(\overline{L} + 1)} \\
    &\geq \left( s_{N+1} \prod_{j = 1}^N \frac{\cos\theta_{j+1}\sin^2\nu}{1600(1 + mJ^{-1})^{1/2}(2 + (mJ)^{-1/2})\overline{\kappa}(\overline{L} + 1)} \right)\frac{ \cos\theta_{j}\sin^2\nu}{800\overline{\kappa}(\overline{L} + 1)} \\
    & \geq \frac{79}{\log(1/\rho)}\left(\frac{\log(1/\rho)^{-3}}{1600(1+mJ^{-1})^{1/2}(2 + (mJ)^{-1/2})(\overline{L} + 1)}\right)^N\frac{\log(1/\rho)^{-3}}{800(\overline{L} + 1)} \\
    & = \log(1/\rho)^{-4N}\cdot \frac{79 \log(1/\rho)^{N - 4}}{[1600(1 + J^{-1})^{1/2}(2 + J^{-1/2})(\overline{L} + 1)]^N 800(\overline{L} + 1)}.
\end{split}
\end{equation}
Substituting in the given upper bound for $N$ in the definition of $\Omega^*$, and noting the that the right factor above converges to infinity as $\rho = \rho(\epsilon) \to 0$, we obtain that for $\epsilon$ sufficiently small, the last line above is bounded below by
\begin{equation}
    \log(1/\rho)^{-\frac{\log(1/\rho)}{\log\log(1/\rho)}} = \rho = \rho(\epsilon).
\end{equation}
Thus 
\begin{equation} \label{eq3.95'}
    \widecheck{\rho}^{-1}\left( \frac{s_j \cos\theta_{j}\sin^2\nu}{800\overline{\kappa}(\overline{L} + 1)} \right) \geq \epsilon.
\end{equation}
An almost identical calculation gives us, for $\epsilon$ sufficiently small, 
\begin{equation} \label{eq3.96}
    \frac{s_j\cos\theta_j\sin^3\nu}{800\overline{\kappa}(\overline{L} + 1)^2} \geq \log(1/\rho)^{-5N} \geq \rho(\epsilon)^{\frac{5}{4}} \geq \epsilon,
\end{equation}
here recalling that $\epsilon^{1/2}/\rho = o(1)$. Assume $\alpha^0 \in [-\epsilon^{-1}\rho/2 + k\epsilon^{-1}\rho, \epsilon^{-1}\rho/2+k\epsilon^{-1}\rho)$. For $0 \leq j \leq N-1$, 
\begin{equation}
    |\alpha^{j+1} - \alpha^j| \leq ||w^j|| t_j = t_j,
\end{equation}
where $t_j$ is the time for the billiard trajectory to go from $y^j$ to $y^{j+1}$. We have $t_j = \frac{||p^{j+1} - p^j||}{||u^j||} \leq \frac{\overline{L}}{\sin\nu}$ by Lemma \ref{dGlem}, recalling that by definition $u^j = \text{d}(G_0)_{y^j}(w^j)$. Hence, $|\alpha^{j+1} - \alpha^j| \leq \frac{\overline{L}}{\sin\nu}$.  Thus, for $0 \leq j \leq N$, applying the bound on $|\alpha^0 - k\epsilon^{-1}\rho|$ coming from the definition of $\Omega$, we obtain
\begin{equation} \label{eq3.99'}
\begin{split}
    |\alpha^j - k\epsilon^{-1}\rho| & \leq |\alpha^0 - k\epsilon^{-1}\rho| + \sum_{j = 0}^{N-1} |\alpha^{j+1} - \alpha^j| \\
    & \leq \epsilon^{-1}\left(\frac{\rho}{2} - \delta_0 \right) - \left( \frac{\overline{L}}{\sin\nu} + 1 \right) - \frac{\log(1/\rho)^2 \overline{L}}{4\log\log(1/\rho)} + \frac{N \overline{L}}{\sin\nu} \\
    & \leq \epsilon^{-1}\left(\frac{\rho}{2} - \delta_0 \right) - \left( \frac{\overline{L}}{\sin\nu} + 1 \right).
\end{split}
\end{equation}
By definition of $y^0$ and $\widetilde{y}^0$, and the fact that $H_\epsilon$ fixes the $\alpha$-coordinate, we have 
\begin{equation}
    p^0 = G_0(y^0) = G_0 \circ H_\epsilon^{-1}(H_\epsilon(y^0)) = G_\epsilon(\widetilde{y}^0) = \widetilde{p}^0.
\end{equation}
Also $\widetilde{w}^0 = w^0$ by definition. Thus trivially 
\begin{equation}
    ||\widetilde{p}^0 - p^0|| + |\widetilde{\alpha}^0 - \alpha^0| + ||\widetilde{w}^0 - w^0|| \leq s_0.
\end{equation}
From (\ref{eq3.91}), (\ref{eq3.95'}), (\ref{eq3.96}), and (\ref{eq3.99'}) we see that the hypotheses of Lemma \ref{lem_control2} are satisfied. Thus, by induction, for $1 \leq j \leq N+1$, $\widetilde{p}^j$, the pair $(\widetilde{y}^{j-1},\widetilde{w}^{j-1})$ is licit, in the sense of \S\ref{sssec_modcon}. Hence, $\widetilde{\alpha}^j$, and $\widetilde{w}^j$ are well-defined, $p^j$ and $\widetilde{p}^j$ lie on the same smooth curve segment in $\partial \mathcal{D}^*$, and 
\begin{equation}
    ||\widetilde{p}^j - p^j|| + |\widetilde{\alpha}^j - \alpha^j| + ||\widetilde{w}^j - w^j|| \leq s_j.
\end{equation}
In particular, $p^j$ and $\widetilde{p}^j$ lie on $\partial\widehat{B}^*$ for $1 \leq j \leq N$; $p^0$ and $\widetilde{p}^{N+1}$ both lie on the line $\{(x_1,x_2) : x_2 = 0\}$, and 
\begin{equation} \label{eq3.111'}
    ||\widetilde{p}^{N+1} - p^{N+1}|| + |\widetilde{\alpha}^{N+1} - \alpha^{N+1}| + ||\widetilde{w}^{N+1} - w^{N+1}|| \leq s_{N+1} = \frac{79}{\log(1/\rho)}.
\end{equation}
We will use this estimate shortly. 

Recall the diffeomorphism $H_\epsilon : \widehat{\mathcal{Z}}^* \to \widehat{\mathcal{Z}}^*$, and define a diffeomorphism $\overline{H}_\epsilon : \widehat{\mathcal{Z}}^* \times \mathbb{S}^2 \to \widehat{\mathcal{Z}}^* \times \mathbb{S}^2$ by 
\begin{equation} \label{eq5.212}
    \overline{H}_{\epsilon}(y,w) = (H_\epsilon(y), w).
\end{equation}
In other words, $\overline{H}_{\epsilon} = \sigma_{\epsilon^{-1}} \circ \overline{H}_1 \circ \sigma_{\epsilon}$, where $\overline{H}_1$ is defined as in (\ref{eq4.69}). We also define ``zoomed versions'' of $\Psi$, $K^{\Sigma,\epsilon}$, $K^{\Sigma,\epsilon}_{\cyl}$, and $\widetilde{K}^{\Sigma,\epsilon}$ as follows:
\begin{equation}
\begin{split}
    \Psi^* & = \sigma_{\epsilon^{-1}} \circ \Psi \circ \sigma_{\epsilon} : \widetilde{\mathbf{P}}^* \times \mathbb{S}^2_{+\rho/2} \to \mathbf{P}^* \times \mathbb{S}^2_{+\rho/2}, \\
    K^* & = \sigma_{\epsilon^{-1}} \circ K^\epsilon \circ \sigma_{\epsilon} : \sigma_{\epsilon^{-1}}(\mathcal{F}) \to \sigma_{\epsilon^{-1}}(\mathcal{F}), \\
    K^*_{\cyl} & = \sigma_{\epsilon^{-1}} \circ K^\epsilon_{\cyl} \circ \sigma_{\epsilon} : \sigma_{\epsilon^{-1}}(\mathcal{F}_{\cyl}) \to \sigma_{\epsilon^{-1}}(\mathcal{F}_{\cyl}), \\
    \widetilde{K}^* & = \sigma_{\epsilon^{-1}} \circ \widetilde{K}^{\epsilon} \circ \sigma_{\epsilon} : \sigma_{\epsilon^{-1}}(\widetilde{\mathcal{F}}) \to \sigma_{\epsilon^{-1}}(\widetilde{\mathcal{F}}).
\end{split}
\end{equation}
Then from (\ref{eq4.72}) we see that
\begin{equation}
    \widetilde{K}^* = (\overline{H}_{\epsilon})^{-1} \circ (\Psi^*)^{-1} \circ K^* \circ \Psi^* \circ \overline{H}_{\epsilon}.
\end{equation}
For the rest of our argument to be valid, we need $(\widetilde{y}^0,-\widetilde{w}^0)$ and $(\widetilde{y}^{N+1},\widetilde{w}^{N+1})$ to lie in the domain of $\Psi^*$. This follows from:

\begin{claim}{5.3.1} \label{claim5.3.1}
For $\epsilon$ sufficiently small, $(\widetilde{y}^0,-\widetilde{w}^0) \in \widetilde{\mathbf{P}}^* \times \mathbb{S}^2_{+\rho/2}$ and $(\widetilde{y}^{N+1},\widetilde{w}^{N+1}) \in \widetilde{\mathbf{P}}^* \times \mathbb{S}^2_{+\rho/2}$.
\end{claim}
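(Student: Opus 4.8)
The plan is to verify the two membership statements by tracking where the points $\widetilde{y}^0$ and $\widetilde{y}^{N+1}$ lie and how steep their associated velocities are, using the estimates already assembled in the proof of Lemma \ref{lem_mod}. Recall that $\widetilde{\mathbf{P}}^* = H_\epsilon(\mathbf{P} \cap \widehat{\mathcal{Z}}^*)$, so showing $\widetilde{y}^0, \widetilde{y}^{N+1} \in \widetilde{\mathbf{P}}^*$ amounts to checking that the corresponding preimages under $H_\epsilon$ lie in $\mathbf{P} \cap \widehat{\mathcal{Z}}^*$, i.e.\ that their second coordinate is zero and their angular coordinate lies in $\mathcal{Z}^*$. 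For $\widetilde{y}^0 = H_\epsilon(y^0)$ this is immediate: $y^0 = (y, -w)$ has $y \in \mathbf{P}$, and the bound $|\alpha^0 - k\epsilon^{-1}\rho| \leq \epsilon^{-1}(\rho/2 - \delta_0) - (\overline{L}/\sin\nu + 1) - \tfrac{\log(1/\rho)^2\overline{L}}{4\log\log(1/\rho)}$ coming from the definition of $\Omega$ (via $\Xi$) puts $\alpha^0$ strictly inside $\mathcal{Z}^*$, so $y^0 \in \mathbf{P} \cap \widehat{\mathcal{Z}}^*$ and hence $\widetilde{y}^0 \in \widetilde{\mathbf{P}}^*$. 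For $\widetilde{y}^{N+1}$ one uses that $y^{N+1} \in \mathbf{P}$ (the cylindrical trajectory returns to $\mathbf{P}$), that $p^0$ and $\widetilde{p}^{N+1}$ both lie on the line $\{x_2 = 0\}$ as already established, and that $|\widetilde{\alpha}^{N+1} - \alpha^{N+1}| \leq s_{N+1} = 79/\log(1/\rho) = o(\epsilon^{-1}\rho)$ by (\ref{eq3.111'}); combined with the estimate (\ref{eq3.99'}) bounding $|\alpha^{N+1} - k\epsilon^{-1}\rho|$ away from the endpoints of $\mathcal{Z}^*$ by at least $\overline{L}/\sin\nu + 1$, this keeps $\widetilde{\alpha}^{N+1}$ inside $\mathcal{Z}^*$ for $\epsilon$ small, so $\widetilde{y}^{N+1} \in \widetilde{\mathbf{P}}^*$. (One also needs $\widetilde{y}^{N+1}$ to actually sit on $\widetilde{\mathbf{P}}^*$ rather than on $\partial\mathcal{M}^*$, which is part of the conclusion of the iterated application of Lemma \ref{lem_control2}: the trajectory's $(N{+}1)$-th return is to $\widetilde{\mathbf{P}}^*$.)

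Next I would handle the velocity conditions, i.e.\ that $-\widetilde{w}^0$ and $\widetilde{w}^{N+1}$ lie in $\mathbb{S}^2_{+\rho/2} = \{w : \langle w, e_2\rangle > \sin(\rho/2)\}$. For $-\widetilde{w}^0 = -\widetilde{w}^0$: by definition $\widetilde{w}^0 = w^0 = -w$ where $(y,w) \in \Omega \subset \mathbf{P} \times \mathbb{S}^2_+$, so $-\widetilde{w}^0 = w$ has $\langle w, e_2\rangle > 0$; but I need the strictly stronger lower bound $\sin(\rho/2)$. This is where the transversality/angle conditions built into $\Omega$ (namely $F3$, $\cos\theta_j \geq \overline{\kappa}/\log(1/\rho)$ for all $j$, in particular $j = 0$ and $j = N+1$, with $\theta_0$ the angle between $w^0$ and $-e_2$ and $\theta_{N+1}$ the angle between $k^{N+1} = -e_2$ and $-u^N$) do the work: $\cos\theta_0 = \langle w^0, -e_2\rangle = \langle -\widetilde{w}^0, e_2\rangle \cdot(-1)$… more carefully, $\theta_0$ is the angle between $w^0$ and $k^0 = -e_2$, and $w^0 = -w$ points into the lower half-space, so $\langle w, e_2 \rangle = -\langle w^0, e_2\rangle = \cos\theta_0 \geq \overline{\kappa}/\log(1/\rho)$, which exceeds $\sin(\rho/2) = O(\rho)$ for $\epsilon$ small since $1/\log(1/\rho) \gg \rho$. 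For $\widetilde{w}^{N+1}$: first $\langle w^{N+1}, e_2\rangle = \cos\theta_{N+1} \geq \overline{\kappa}/\log(1/\rho)$ by $F3$ again, and then by (\ref{eq3.111'}), $\|\widetilde{w}^{N+1} - w^{N+1}\| \leq 79/\log(1/\rho)$, so $\langle \widetilde{w}^{N+1}, e_2\rangle \geq \cos\theta_{N+1} - 79/\log(1/\rho)$ — but this could be negative if $\cos\theta_{N+1}$ and the error are comparable. To fix this I would instead observe that the constant in $F3$ should be chosen (or the estimate in (\ref{eq3.111'}) reexamined) so that the error $s_{N+1}$ is a small fraction of $\cos\theta_{N+1}$; indeed $s_{N+1}$ is defined to be $79/\log(1/\rho)$ whereas $\cos\theta_{N+1} \geq \overline{\kappa}/\log(1/\rho) \geq 1/\log(1/\rho)$, and one wants $79/\log(1/\rho) < \tfrac12 \cos\theta_{N+1}$, which is \emph{not} automatic. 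The cleanest route is to note that the claim only needs to hold "for $\epsilon$ sufficiently small'' and that the relevant quantity is really controlled through the projected picture: $\langle \widetilde{w}^{N+1}, e_2\rangle$ is close to $\langle w^{N+1}, e_2 \rangle = \langle u^N, -k^{N+1}\rangle/\|u^N\| \cdot \|u^N\|$-type expressions, and by Lemma \ref{dGlem} the $e_2$-components of $u^N$ and of $\widetilde u^N$ are within $O(\epsilon \overline\alpha^*) + \|\widetilde w^N - w^N\|$ of each other, all of which are much smaller than $\sin(\rho/2)^{-1}$… honestly the clean statement is: since $\psi' = \pi - \psi$ exactly in the cylindrical law (the $\psi$-coordinate, i.e.\ angle to $\chi$, is exactly preserved) one has control of the relation between $\langle w^{N+1},e_2\rangle$ and $\langle w^0, e_2\rangle$ in terms of $\nu$ and $\theta$'s, and then the $o(1)$ perturbation keeps it positive and above $\sin(\rho/2)$.

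The main obstacle, then, is precisely this last point: ensuring that the \emph{perturbed} outgoing velocity $\widetilde w^{N+1}$ still makes an angle with $e_2$ strictly larger than $\rho/2$, given only the lower bound $\cos\theta_{N+1} \geq \overline\kappa/\log(1/\rho)$ on the unperturbed angle and the error bound $\|\widetilde w^{N+1} - w^{N+1}\| \leq 79/\log(1/\rho)$. The resolution I would pursue is to strengthen (or rather, correctly bookkeep) the backward induction defining the $s_j$: tracking constants shows $s_{N+1}$ can be taken to be any fixed multiple $c/\log(1/\rho)$, and choosing $c$ small relative to the constant in condition $F3$ — equivalently, strengthening $F3$ to $\cos\theta_j \geq C/\log(1/\rho)$ with $C$ large — makes $\|\widetilde w^{N+1} - w^{N+1}\| \leq \tfrac12\cos\theta_{N+1}$, whence $\langle \widetilde w^{N+1}, e_2\rangle \geq \tfrac12 \cos\theta_{N+1} \geq \tfrac{C}{2\log(1/\rho)} > \sin(\rho/2)$ for $\epsilon$ small. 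Since $F$ was defined up to the choice of these $\log(1/\rho)$-rate thresholds and the full-measure argument in part (ii) of Lemma \ref{lem_mod} is insensitive to the multiplicative constants, this adjustment costs nothing. Once that quantitative slack is in place, Claim \ref{claim5.3.1} follows by assembling: (a) the coordinate/angular-range facts above to get $\widetilde y^0, \widetilde y^{N+1} \in \widetilde{\mathbf P}^*$; (b) $\cos\theta_0 \geq \overline\kappa/\log(1/\rho) > \sin(\rho/2)$ for $-\widetilde w^0$; and (c) the strengthened error estimate for $\widetilde w^{N+1}$.
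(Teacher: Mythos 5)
Your skeleton matches the paper's: the membership of $\widetilde{y}^0$ and $\widetilde{y}^{N+1}$ in $\widetilde{\mathbf{P}}^*$ comes from the iterated application of Lemma \ref{lem_control2} together with the angular constraints built into $\Omega^*$ (exactly as the paper dispatches it with ``as explained above''), and the velocity conditions come from conditions F3--F4. For $-\widetilde{w}^0$ your direct identity is fine and in fact shorter than the paper's route: since $\widetilde{w}^0=w^0$ exactly, $\cos\psi_0=\langle -\widetilde{w}^0,e_2\rangle=\cos\theta_0\geq \overline{\kappa}/\log(1/\rho)\gg\sin(\rho/2)$; the paper instead decomposes through $\dd G_\epsilon$, uses Lemma \ref{dGlem} and F3--F4, and obtains the same $\log(1/\rho)^{-O(1)}$ lower bound. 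So far the two arguments are equivalent in substance.

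Where you diverge is the $\widetilde{w}^{N+1}$ step, and your diagnosis of the soft spot is legitimate: the crude estimate $\|\widetilde{w}^{N+1}-w^{N+1}\|\leq s_{N+1}=79/\log(1/\rho)$ is \emph{not} automatically dominated by $\cos\theta_{N+1}\geq\overline{\kappa}/\log(1/\rho)$, and the paper hides this case behind ``almost identical.'' However, your resolution -- retuning the constant in F3 or shrinking $s_{N+1}$ -- amends the construction rather than proving the claim inside it, and it is unnecessary. Within the existing definitions the sharper bound is already available: no reflection occurs at the final crossing of $\widetilde{\mathbf{P}}^*$, so $\widetilde{w}^{N+1}=\widetilde{w}^N$ and $w^{N+1}=w^N$, whence $\|\widetilde{w}^{N+1}-w^{N+1}\|=\|\widetilde{w}^N-w^N\|\leq s_N$, and the backward recursion gives $s_N\leq \frac{\cos\theta_{N+1}\sin^2\nu}{1600(1+mJ^{-1})^{1/2}(2+(mJ)^{-1/2})\overline{\kappa}(\overline{L}+1)}\,s_{N+1}$, i.e.\ an $O(1/\log(1/\rho))$ \emph{fraction} of $\cos\theta_{N+1}$. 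Since moreover $\langle w^{N+1},e_2\rangle$ agrees (up to the fixed mass constants) with $\cos\theta_{N+1}=\langle -e_2,-u^N\rangle$ because the projection $\dd G_0$ preserves the $e_2$-component, one gets $\langle\widetilde{w}^{N+1},e_2\rangle\geq(1-o(1))\cos\theta_{N+1}\geq\tfrac{1}{2}\overline{\kappa}/\log(1/\rho)>\sin(\rho/2)$ for $\epsilon$ small, uniformly over $\Omega^*$, with no change to $F$ or to the $s_j$. If you insist on your retuning it does go through (the full-measure argument in Lemma \ref{lem_mod}(ii) and the hypothesis checks such as $s_j\leq 79\min\{r_j,\overline{\kappa}^{-1}\cos\theta_{j+1}\}$ survive a larger constant in F3 or a smaller one in $s_{N+1}$), but you are then proving a variant of the surrounding lemma rather than Claim \ref{claim5.3.1} as stated, so the in-place fix above is the one to use.
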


\begin{subproof}[Proof of Claim \ref{claim5.3.1}]
As explained above $\widetilde{y}^0$ and $\widetilde{y}^{N+1}$ lie in $\widetilde{\mathbf{P}}^*$. Let $\psi_0 = \angle(-\widetilde{w}^0, e_2)$, and let $\psi_{N+1} = \angle(\widetilde{w}^{N+1}, e_2)$. We must show that $\psi_0 \leq \frac{\pi}{2} - \rho$ and $\psi_{N+1} \leq \frac{\pi}{2} - \rho$. We have 
\begin{equation} \label{eq3.108}
\begin{split}
    \cos\psi_0 = \langle -\widetilde{w}^0, e_2 \rangle & = \langle \dd(G_\epsilon)_{\widetilde{y}^0}(-\widetilde{w}^0), e_2 \rangle + \langle \dd(G_0)_{\widetilde{y}^0}(-\widetilde{w}^0) - \dd(G_\epsilon)_{\widetilde{y}^0}(-\widetilde{w}^0), e_2 \rangle \\
    & \quad\quad\quad + \langle -\widetilde{w}^0 - \dd(G_0)_{\widetilde{y}^0}(-\widetilde{w}^0), e_2 \rangle \\
    & \geq ||\dd(G_\epsilon)(-\widetilde{w}^0)||\cos\theta_0 - ||\dd(G_0)_{\widetilde{y}^0}(-\widetilde{w}^0) - \dd(G_\epsilon)_{\widetilde{y}^0}(-\widetilde{w}^0)||,
\end{split}
\end{equation}
here using that $\langle -\widetilde{w}^0 - \dd(G_0)_{\widetilde{y}^0}(-\widetilde{w}^0), e_2 \rangle = 0$. Let $\widetilde{\alpha}^0$ denote the angular coordinate of $\widetilde{y}^0$. By Lemma \ref{dGlem},
\begin{equation}
\begin{split}
    ||\dd(G_\epsilon)(-\widetilde{w}^0)|| & \geq [1 - ((mJ)^{-1/2}+1)|\epsilon\widetilde{\alpha}_0|]\sin\varphi \\
    & \geq [1 - ((mJ)^{-1/2}+1)\epsilon\rho/2]\sin\varphi.
\end{split}
\end{equation}
Also, by (\ref{eq3.66}) we deduce that 
\begin{equation}
    ||\dd(G_0)_{\widetilde{y}^0}(-\widetilde{w}^0) - \dd(G_\epsilon)_{\widetilde{y}^0}(-\widetilde{w}^0)|| \leq |\epsilon\widetilde{\alpha}^0| \leq \frac{\epsilon\rho}{2}.
\end{equation}
Substituting these bounds into (\ref{eq3.108}), we obtain
\begin{equation}
\begin{split}
    \cos\psi_0 & \geq [1 - ((mJ)^{-1/2}+1)\epsilon\rho/2]\sin\varphi\cos\theta_0 - \frac{\epsilon\rho}{2} \\
    & \geq [1 - ((mJ)^{-1/2}+1)\epsilon\rho/2](\sin\varphi)\log(1/\rho)^{-1} - \frac{\epsilon\rho}{2} \geq \sin(\rho),
\end{split}
\end{equation}
for $\epsilon$ sufficiently small. Therefore, $\psi_0 \leq \frac{\pi}{2} - \rho(\epsilon)$ for $\epsilon$ sufficiently small. We omit the argument showing that $\psi_{N+1} \leq \frac{\pi}{2} - \rho(\epsilon)$ since it is almost identical.
\end{subproof}

\begin{sloppypar} 
By Claim \ref{claim5.3.1}, we may define $(\widehat{y}^0,\widehat{w}^0) = \Psi^*(\widetilde{y}^0,-\widetilde{w}^0)$, and $(\widehat{y}^{N+1},\widehat{w}^{N+1}) = \Psi^*(\widetilde{y}^{N+1},\widetilde{w}^{N+1})$. By definition of $\Psi$ and $\Psi^*$, $\widehat{y}^0$ and $\widehat{y}^{N+1}$ both lie in the plane $\mathbf{P}$; the interior of the line segment from $\widehat{y}^0$ to $\widetilde{y}^1$ lies in $\Int \mathcal{M}^*$ and contains the point $\widetilde{y}^0$; and the interior of the line segment from $\widetilde{y}^N$ to $\widehat{y}^{N+1}$ lies in the interior of $\mathcal{M}^*$ and contains the point $\widetilde{y}^{N+1}$ in its interior. Thus the billiard trajectory in $\mathcal{M}^*$ starting in state $(\widehat{y}^0,-\widehat{w}^0)$ hits the boundary $N$ times before returning to the plane $\mathbf{P}$ in state $(y^{N+1},w^{N+1})$. Therefore, \end{sloppypar}
\begin{equation}
    K^*(\widehat{y}^0,\widehat{w}^0) = (\widehat{y}^{N+1}, \widehat{w}^{N+1}), \quad \text{ and } \quad K^*_{\cyl}(y^0,-w^0) = (y^{N+1}, w^{N+1}).
\end{equation}
Consequently, 
\begin{equation}
\begin{split}
    \widetilde{K}^*(y^0,-w^0) & = (\overline{H}_{\epsilon})^{-1} \circ (\Psi^*)^{-1} \circ K^* \circ \Psi^* \circ \overline{H}_\epsilon(y^0, -w^0) \\
    & = (\overline{H}_{\epsilon})^{-1} \circ (\Psi^*)^{-1} \circ K^* \circ \Psi^*(\widetilde{y}^0, -\widetilde{w}^0) \\
    & = (\overline{H}_{\epsilon})^{-1} \circ (\Psi^*)^{-1} \circ K^*(\widehat{y}^0, \widehat{w}^0) \\
    & = (\overline{H}_{\epsilon})^{-1} \circ (\Psi^*)^{-1}(\widehat{y}^{N+1}, \widehat{w}^{N+1}) \\
    & = (\overline{H}_{\epsilon})^{-1}(\widetilde{y}^{N+1}, \widetilde{w}^{N+1}) = (H_\epsilon^{-1}(\widetilde{y}^{N+1}), w^{N+1}).
\end{split}
\end{equation}
Therefore, 
\begin{equation} \label{eq3.107}
\begin{split}
    ||\widetilde{K}^*(y^0, -w^0) & - K_{\cyl}^*(y^0, -w^0)|| \leq ||H_{\epsilon}^{-1}(\widetilde{y}^{N+1}) - y^{N+1}|| + ||\widetilde{w}^{N+1} - w^{N+1}|| \\
    & \leq ||G_0(H_{\epsilon}^{-1}(\widetilde{y}^{N+1}) - y^{N+1})|| + |\widetilde{\alpha}^{N+1} - \alpha^{N+1}| + ||\widetilde{w}^{N+1} - w^{N+1}||,
\end{split}
\end{equation}
here using the definition of $G_0$ and the fact that $H_\epsilon$ fixes the angular coordinate. But by definition of $G_\epsilon$, 
\begin{equation}
    G_0(H_\epsilon^{-1}(\widetilde{y}^{N+1})) = G_\epsilon(\widetilde{y}^{N+1}) = \widetilde{p}^{N+1}.
\end{equation}
Thus 
\begin{equation}
    ||G_0(H_\epsilon^{-1}(\widetilde{y}^{N+1}) - y^{N+1})|| = ||\widetilde{p}^{N+1} - p^{N+1}||.
\end{equation}
Substituting the above into (\ref{eq3.107}), and applying the bound (\ref{eq3.111}), we obtain
\begin{equation}
\begin{split}
||\widetilde{K}^*(y^0,-w^0) & - K_{\cyl}^*(y^0,-w^0)|| \\
& \leq ||\widetilde{p}^{N+1} - p^{N+1}|| + |\widetilde{\alpha}^{N+1} - \alpha^{N+1}| + ||\widetilde{w}^{N+1} - w^{N+1}|| \\
& \quad\quad \leq \frac{80}{\log(1/\rho)^3}.
\end{split}
\end{equation}
Note that this bound does not depend on the choice of $(y,w) = (y^0,-w^0) \in \Omega^*$. Thus
\begin{equation}
\begin{split}
    \sup_{\Omega} ||\widetilde{K}^{\Sigma,\epsilon} - K^{\Sigma, \epsilon}|| & \leq \sup_{\Omega} ||\sigma_{\epsilon^{-1}} \circ \widetilde{K}^{\Sigma,\epsilon} - \sigma_{\epsilon^{-1}} \circ K^{\Sigma, \epsilon}|| \\
    & = \sup_{\Omega^*} ||\widetilde{K}^* - K^*_{\cyl}|| \leq \frac{80}{\log(1/\rho)^3},
\end{split}
\end{equation}
and the right-hand side converges to zero as $\epsilon \to 0$, as desired.
\end{proof}

\section{Rough Reflections in General Billiard Domains} \label{sec_genref}

\subsection{The billiard map and the invariant measure}

The results stated in this section are well-known, but proofs under the weak regularity conditions adopted in this work may not be found in standard references. We therefore provide careful proofs here. More traditional treatments of billiards may be found in [\ref{Chernov&Markarian}] and [\ref{Tabachnikov}]. For billiards in Riemannian manifolds, see [\ref{CFS_ergodic}, Chapter 5].

\subsubsection{A billiard domain with singularities} \label{sss_sing}

Let $\mathcal{R}$ be a $d$-dimensional $C^2$ Riemannian manifold with metric $\langle \cdot, \cdot \rangle$. Assume that $\mathcal{R}$ is geodesically complete. 

Our billiard domain will be taken from a certain class of closed $C^2$ submanifolds of $\mathcal{R}$ with singularities. Namely, let $\mathcal{N}$ be a subset of $\mathcal{R}$, and let $\partial \mathcal{N}$ and $\Int \mathcal{N}$ denote the topological boundary and interior of $\mathcal{N}$. We say that $\mathcal{N}$ belongs to the class $\CES_0^2(\mathcal{R})$ if $\mathcal{N}$ is a closed subset of $\mathcal{R}$, and there exists a closed subset $\mathcal{S} \subset \partial \mathcal{N}$ such that the following two conditions hold:
\begin{enumerate}[label = C\arabic*.]
     \item The $(d-1)$-dimensional Hausdorff measure of $\mathcal{S}$ is zero.
    \item For every point $q \in \mathcal{N} \smallsetminus \mathcal{S}$, there exists a neighborhood $U \subset \mathcal{R}$ of $q$ and a $C^2$ diffeomorphism $\phi : U \to \mathbb{R}^d$ such that $\phi(q) = 0$ and the image of $\mathcal{N} \cap U$ is either $\mathbb{R}^d$ or the upper half-space $\mathbb{H}^d := \{(x_1,\dots, x_d)\ : x_d \geq 0\}$.
\end{enumerate}

The second condition means that $\mathcal{N} \smallsetminus \mathcal{S}$ is a codimension 0, embedded, $C^2$ submanifold of $\mathcal{R}$ with boundary. We let $\mathcal{N}_{\reg} = \mathcal{N} \smallsetminus \mathcal{S}$.

We will refer to points in $\mathcal{S}$ as \textit{singular points} of $\mathcal{N}$, and to points in $\mathcal{N} \smallsetminus \mathcal{S}$ as \textit{regular points} of $\mathcal{N}$. To avoid ambiguities, we will always assume that $\mathcal{S}$ is chosen to be minimal. That is, $\mathcal{S}$ is a smallest subset of $\partial \mathcal{N}$, with respect to inclusion, such that condition C2 holds.

In condition C2, we will refer to points $q$ with a neighborhood mapping to $\mathbb{R}^d$ as \textit{interior points} of $\mathcal{N}$, and we will refer to points $q$ with a neighborhood mapping to $\mathbb{H}^d$ as \textit{regular boundary points} of $\mathcal{N}$. These notions do not depend on $\phi$. The set of regular boundary points coincides with $\partial \mathcal{N} \smallsetminus \mathcal{S}$, and the set of interior points coincides with $\Int \mathcal{N}$.

\subsubsection{Billiard map} \label{sssec_bilmap}

Let $T\mathcal{R}$ denote the tangent space over $\mathcal{R}$, and let $T^1\mathcal{R} = \{(q,p) \in T\mathcal{R} : \langle p,p \rangle_q = 1\}$ denote the unit tangent bundle. We denote the \textit{geodesic flow} on $T^1\mathcal{R}$ by $G^t$. That is, for each $(q,p)$, if $\gamma(t)$ is the unique (unit speed) geodesic starting from state $(q,p)$ (defined for all time by completeness), then by definition $G^t(q,p) = (\gamma(t),\dot{\gamma}(t)) \in T^1\mathcal{R}$. In local coordinates $(q^i,p^i)$ on the tangent space, the geodesics satisfy:
\begin{equation}
    \dot{q}^i = p^i, \quad\quad \dot{p}^i = - \sum_{i,k = 1}^d \Gamma^i_{jk} p^j p^k, 
\end{equation}
where $\Gamma^i_{jk}$ are the Christoffel symbols associated with the Riemannian metric, and ``dot'' denotes the time derivative. The flow $G^t$ is generated by the vector field on $T^1\mathcal{R}$, expressed in local coordinates as
\begin{equation} \label{eq6.2'}
    X = \sum_{i = 1}^d p^i \pdv{}{q^i} - \sum_{j,k = 1}^d \Gamma^i_{jk} p^j p^k\pdv{}{p^i}.
\end{equation}
It follows from this expression that
\begin{equation} \label{eq6.3}
    \dd\pi(X(q,p)) = \sum_{i = 1}^d p^i \pdv{}{q_i} = (q,p),
\end{equation}
where 
\begin{equation}
    \pi : T\mathcal{R} \to \mathcal{R}   
\end{equation} 
is natural projection.

For each $q \in \partial \mathcal{N}_{\reg}$, let $n(q) \in T_p^1\mathcal{R}$ denote the unit normal vector pointing into $\mathcal{N}$. We denote the restriction of the unit tangent bundle on $\mathcal{R}$ to the regular boundary of $\mathcal{N}$ by 
\begin{equation} \label{eq6.5'}
    \mathcal{U} = \{(q,p) \in T^1 \mathcal{R} : q \in \partial \mathcal{N}_{\reg}\}.
\end{equation}
We also denote the inward ($+$) and outward ($-$)-pointing unit tangent bundles on the regular boundary by
\begin{equation} \label{eq6.6'}
    \mathcal{U}^\pm = \{(q,p) \in T^1\mathcal{R} : q \in \partial \mathcal{N}_{\reg} \text{ and } \pm\langle p, n(q) \rangle_q > 0\}.
\end{equation}

An important observation is that the vector field $X$ is nowhere tangent to $\mathcal{U}^+$. Indeed, if $f(q)$ is a local defining function for $\partial \mathcal{N}$, then $\grad f(q) = c n(q)$ for some $c \neq 0$, and $F(q,p) = f \circ \pi(q,p)$ is a local defining function for $\mathcal{U}$. Thus, for $(q,p) \in \mathcal{U}^+$,  
\begin{equation}
    \dd F \circ X(q,p) = \dd f \circ \dd\pi \circ X(q,p) = \dd f(q,p) = \dd f_q(p) = c\langle p, n(q) \rangle_q \neq 0,
\end{equation}
where the second equality follows by (\ref{eq6.3}). This implies $X(q,p)$ does not lie in $\text{ker } \dd F_{(q,p)} = T_{(q,p)}\mathcal{U}$. A consequence of this observation is that the flowout
\begin{equation} \label{eq6.8}
    G : \mathcal{U}^+ \times \mathbb{R} \to T^1\mathcal{R}, \quad\quad G(q,p,t) := G^t(q,p),
\end{equation}
is a local diffeomorphism.

A point particle starting from $(q,p) \in \mathcal{U}^+$ first returns to $\partial M$ at time 
\begin{equation} \label{eq6.2}
    \overline{t}(q,p) := \inf\{ t > 0 : \pi \circ G^t(q,p) \notin \Int\mathcal{N} \}.
\end{equation}
(If $\pi \circ G^t(q,p) \in \Int\mathcal{N}$ for all $t > 0$, then by convention the above quantity is infinite.) Since the geodesic starting from a point $(q,p) \in \mathcal{U}^+$ initially points into $\mathcal{N}$, the time $\overline{t}$ is strictly positive. In fact, if $\overline{t}(q,p)$ is finite, then since the complement of $\Int \mathcal{N}$ is closed, the infimum in (\ref{eq6.2}) must be a minimum. 

We introduce the following subbundles of $\mathcal{U}^+$:
\begin{equation} \label{eq6.10}
\begin{split}
& \mathcal{U}^+_{\fin} = \{(q,p) \in \mathcal{U}^+ : \overline{t}(q,p) < \infty\}, \\
& \mathcal{U}_1^+ = \{(q,p) \in \mathcal{U}^+_{\fin} : G^{\overline{t}(q,p)}(q,p) \in \mathcal{U}^-\}.
\end{split}
\end{equation}
In applications, $\mathcal{U}^+_{\fin}$ will be a full-measure subset of $\mathcal{U}^+$. The set $\mathcal{U}_1^+$ is an open subset of $\mathcal{U}$ (not just of $\mathcal{U}^+_{\fin}$). This may be proved by showing that $\overline{t}$ is finite and continuous on a neighborhood in $\mathcal{U}$ of a point $(q,p) \in \mathcal{U}_1^+$. (See the proof of Lemma \ref{lem_bilmap}).

The specular reflection map $R : \mathcal{U} \to \mathcal{U}$ is defined by
\begin{equation}
    R(q,p) = (q, p - 2\langle p,n(q) \rangle_q n(q)).
\end{equation}
Note that $R$ maps $\mathcal{U}^\pm$ onto $\mathcal{U}^{\mp}$ and is involutive (that is, $R \circ R = \text{Id}$).

The \textit{billiard map} $\Phi : \mathcal{U}_1^+ \to \mathcal{U}^+$ is defined by
\begin{equation}
    \Phi(q,p) = R \circ G^{\overline{t}(q,p)}(q,p).
\end{equation}

\begin{lemma} \label{lem_bilmap}
(i) $\Phi$ is injective, and its left-inverse is given by $-R \circ \Phi \circ -R$, where $-R(q,p) := R(q,-p)$.

(ii) Suppose $(q_1,p_1) := \Phi(q_0,p_0)$ lies in $\mathcal{U}_1^+$. Then $\Phi$ restricts to a $C^1$ diffeomorphism from a neighborhood of $(q_0,p_0)$ to a neighborhood of $(q_1,p_1)$.
\end{lemma}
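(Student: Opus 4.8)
\textbf{Proof plan for Lemma \ref{lem_bilmap}.}

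The plan is to prove the two parts using only the local-diffeomorphism property of the geodesic flowout $G$ (established just above via the transversality of $X$ to $\mathcal{U}^+$), together with the involutivity of $R$ and the fact that the geodesic flow $G^t$ satisfies the group law $G^s \circ G^t = G^{s+t}$. For part (i), the key is to interpret the map $-R$ as time-reversal: if $\gamma$ is the unit-speed geodesic from $(q_0,p_0)$ that first hits $\partial\mathcal{N}$ at $q_1 = \pi\circ G^{\overline{t}}(q_0,p_0)$ with incoming velocity $p_1^- := \dd\gamma/\dd t$ at $q_1$, then $\Phi(q_0,p_0) = R(q_1,p_1^-) = (q_1,p_1)$ with $p_1 = p_1^- - 2\langle p_1^-,n(q_1)\rangle n(q_1) \in \mathcal{U}^+$. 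Running the geodesic \emph{backward} from $(q_1,-p_1^-)$, reparametrized, retraces the same geodesic chord and arrives at $(q_0,-p_0)$. I would first check that $-R(q_1,p_1) = (q_1,-p_1^-)$ lies in $\mathcal{U}^+$ (since $p_1 \in \mathcal{U}^+$ implies $-p_1^-\in\mathcal{U}^+$), then that the geodesic starting from $(q_1,-p_1^-)$ has first-return time exactly $\overline{t}(q_0,p_0)$ — this requires the observation that the image under $\pi\circ G^s$ of this reversed geodesic, for $0 < s < \overline{t}$, is the same point set as $\pi\circ G^{\overline{t}-s}(q_0,p_0)$, which lies in $\Int\mathcal{N}$; so the first exit cannot happen earlier, and at time $\overline{t}$ it returns to $q_0$. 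Finally $G^{\overline{t}}(q_1,-p_1^-) = (q_0,-p_0^-)$ where $p_0^-$ is the \emph{outgoing} velocity at $q_0$, and applying $R$ (noting $-p_0^- $ reflects to $-p_0$ since $p_0 = -p_0^- + 2\langle\cdot\rangle n$... more precisely $R(q_0,-p_0^-) = (q_0,p_0)$ because $(q_0,p_0)$ was the inward reflection of the outgoing-at-$q_0$ direction) gives $(q_0,p_0)$. So $(-R\circ\Phi\circ -R)\circ\Phi = \mathrm{Id}$ on $\mathcal{U}_1^+$, which yields injectivity of $\Phi$.

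For part (ii), I would argue as follows. Since $(q_0,p_0)\in\mathcal{U}_1^+$, we have $\overline{t}(q_0,p_0) < \infty$ and, because the complement of $\Int\mathcal{N}$ is closed, the infimum defining $\overline{t}$ is attained, with $G^{\overline{t}}(q_0,p_0)\in\mathcal{U}^-$ transversally (the endpoint velocity is not tangent to $\partial\mathcal{N}$, as $G^{\overline{t}}(q_0,p_0)\in\mathcal{U}^-$ means $\langle p,n\rangle < 0$ strictly). Transversality of $X$ to $\mathcal{U}^-$ at the exit point plus the flowout being a local diffeomorphism lets me solve, via the implicit function theorem, for a $C^1$ function $\overline{t}$ on a neighborhood of $(q_0,p_0)$ in $\mathcal{U}$ — one defines $\overline{t}$ implicitly by $F(G^{\overline{t}(q,p)}(q,p)) = 0$ where $F$ is a local defining function for $\mathcal{U}$, using $\dd F(X)\neq 0$; I should also invoke that no \emph{earlier} exit occurs near $(q_0,p_0)$, which follows from continuity of the flow and compactness of the geodesic segment $\pi\circ G^{[0,\overline{t}]}(q_0,p_0)$ together with this segment's interior lying in the open set $\Int\mathcal{N}$ (so a uniform tube around it stays in $\Int\mathcal{N}$ for a definite time interval). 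Consequently $G^{\overline{t}(\cdot)}(\cdot): \mathcal{U}\to T^1\mathcal{R}$ is $C^1$ near $(q_0,p_0)$; composing with $R$ (a $C^1$ map, smooth wherever the unit normal field $n$ is $C^1$, which holds on $\partial\mathcal{N}_{\reg}$) gives that $\Phi$ is $C^1$ near $(q_0,p_0)$. The same argument applied to $-R\circ\Phi\circ -R$ near $(q_1,p_1)$ (legitimate precisely because $(q_1,p_1)\in\mathcal{U}_1^+$ by hypothesis) shows the left-inverse from part (i) is also $C^1$ near $(q_1,p_1)$; since it is a two-sided inverse locally, $\Phi$ restricts to a $C^1$ diffeomorphism between neighborhoods of $(q_0,p_0)$ and $(q_1,p_1)$.

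I expect the main obstacle to be the careful bookkeeping of velocity signs and reparametrizations in the time-reversal argument for part (i) — distinguishing the incoming velocity $p_1^-$ at the hit point from the reflected outgoing velocity, and verifying that the reversed geodesic from $-R(\Phi(q_0,p_0))$ has \emph{first}-return time (not merely \emph{some} return time) equal to $\overline{t}(q_0,p_0)$. The latter is where one genuinely uses that the open geodesic chord lies in $\Int\mathcal{N}$: were there an earlier exit of the reversed trajectory, reversing again would produce an exit of the original trajectory strictly before $\overline{t}$, contradicting the definition of $\overline{t}$ as an infimum. A secondary (routine) point is confirming that the specular reflection map $R$ is $C^1$ on $\mathcal{U}$ restricted over $\partial\mathcal{N}_{\reg}$; this is where the $C^1$ regularity of the inward normal field $n$ on the regular boundary — part of the definition of $\CES_0^2(\mathcal{R})$ — is invoked.
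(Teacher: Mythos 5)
Your proposal follows essentially the same route as the paper: part (i) by time-reversal and involutivity of $R$, producing the left inverse $-R\circ\Phi\circ-R$ (your care about the \emph{first}-return time of the reversed chord is exactly the point the paper leaves implicit), and part (ii) by applying the implicit function theorem to $K(q,p,t)=F(G^t(q,p))$ at the exit point, where $\dd F(X)\neq 0$ by transversality, checking that the implicit solution agrees with the first-hitting time on a neighborhood, composing with the $C^1$ reflection $R$, and then using part (i) together with the hypothesis $(q_1,p_1)\in\mathcal{U}_1^+$ to get a $C^1$ local inverse.

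One bookkeeping slip in part (i) should be fixed. Writing $p_0$ for the initial (inward) velocity, the reversed trajectory from $-R\circ\Phi(q_0,p_0)=(q_1,-p_1^-)$ arrives at $(q_0,-p_0)$, so $\Phi(q_1,-p_1^-)=R(q_0,-p_0)=\bigl(q_0,\,-(p_0-2\langle p_0,n(q_0)\rangle n(q_0))\bigr)=-R(q_0,p_0)$, which is \emph{not} $(q_0,p_0)$; your asserted identity $R(q_0,-p_0^-)=(q_0,p_0)$ holds only when $p_0$ is parallel to $n(q_0)$. The chain closes only after applying the outer $-R$ of the left inverse: since $-R$ is an involution, $-R\circ\Phi(q_1,-p_1^-)=-R(-R(q_0,p_0))=(q_0,p_0)$. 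This is precisely how the paper states it, and with that one correction your argument coincides with the paper's proof.
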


\begin{proof}
(i) Consider the unit speed geodesic running from $(q,p) \in \mathcal{U}_1^+$ to $(q',p') := G^{\overline{t}(q,p)}(q,p) \in \mathcal{U}^+$. Noting that $R$ is involutive, we have $-R \circ \Phi(q,p) = (q', -p')$. The geodesic starting from $(q',-p')$ retraces the original geodesic and returns to the boundary in the state $(q,-p)$. Thus by symmetry $-R \circ \Phi(q',-p') = (q,p)$. Hence $-R \circ \Phi \circ -R \circ \Phi(q,p) = (q,p)$.

(ii) Write $\Phi = R \circ \Phi_-$, where $\Phi_-(q,p) = G^{\overline{t}(q,p)}(q,p)$. Let $(q_1,p^-_1) = \Phi_-(q_0,p_0)$. Since $\partial \mathcal{N}_{\reg}$ is an $C^2$ submanifold of $\mathcal{R}$ of codimension 1, it has a $C^2$ local defining function $f$ defined in a neighborhood of $q_1$. Then $F(q,p) := f(q) = f\circ \pi(q,p)$ is a local defining function for $\mathcal{U}$. The function 
\begin{equation}
    K(q,p,t) := F(G^t(q,p))
\end{equation}
is defined in a neighborhood of $(q_0,p_0,\overline{t}(q_0,p_0))$, and
\begin{equation}
\begin{split}
    {\pdv{K}{t}}(q_0,p_0,\overline{t}(q_0,p_0)) & = \dd F(q_1,p^-_1) X(q_1,p^-_1) = \dd f \circ \dd\pi \circ X(q_1,p^-_1) \\
    & = \dd f_{q_1}(p^-_1) = c\langle n(q_1), p^-_1 \rangle_{q_1} \neq 0,
\end{split}
\end{equation}
for some $c \neq 0$. The third equality above follows from (\ref{eq6.3}), and the last equality follows because the gradient of $f$ is parallel to $n(q_1)$ at $q_1$. By the Implicit Function Theorem, there is a $C^2$ function $\widehat{t}$ defined on a neighborhood $U \subset \mathcal{U}_1^+$ of $(q_0,p_0)$, and a finite open interval $I = (a,b)$ containing $\overline{t}(q_0,p_0)$ such that solutions to 
\begin{equation} \label{eq6.15'}
    K(q,p,t) = 0, \quad (q,p,t) \in U \times I
\end{equation}
take the form $(q,p,\widehat{t}(q,p))$.

We claim $\overline{t} = \widehat{t}$ in a neighborhood of $(q_0,p_0)$. To see this, let $V \subset\subset U$ be a precompact open set containing $(q_0,p_0)$, and let 
\begin{equation}
    A = \{(q,p,t) \in \overline{V} \times [0,a] : \pi \circ G^t(q,p) \notin \Int M\}.
\end{equation}
For $(q,p) \in \overline{V}$, it is evident that if $\overline{t}(q,p) \neq \widehat{t}(q,p)$, then $\overline{t}(q,p) \leq a$, since otherwise $(q,p,\overline{t}(q,p))$ would solve (\ref{eq6.15'}). Therefore $\{(q,p) \in V : \overline{t}(q,p) \neq \overline{t}(q,p)\} \subset \pi_{\mathcal{U}}(A)$, where $\pi_{\mathcal{U}} : \mathcal{U} \times \mathbb{R} \to \mathcal{U}$ is projection. Note that $\pi_{\mathcal{U}}(A)$ is compact. Since $\overline{t}(q_0,p_0) > a$, we have $(q_0,p_0) \notin \pi_{\mathcal{U}}(A)$, and $\overline{t} = \widehat{t}$ on $V \smallsetminus \pi_{\mathcal{U}}(A)$.

Consequently, $\Phi_-$ is $C^2$ in a neighborhood of $(q_0,p_0)$. The unit normal field $n$ and the specular reflection map $R$ are $C^1$. Therefore $\Phi = R \circ \Phi_-$ is $C^1$ in a neighborhood of $(q_0,p_0)$.

By the same reasoning, $\Phi$ is $C^1$ in a neighborhood of $(q_1,-p^-_1)$. By (i), $\Phi$ has a left inverse $-R \circ \Phi \circ -R$ which is $C^1$ in a neighborhood of $(q_1,p_1)$. Thus $\Phi$ is a local $C^1$ diffeomorphism.
\end{proof}

\subsubsection{Invariant measure}

At each point $q \in \mathcal{R}$, the tangent space $T_q\mathcal{R}$ inherits a Euclidean metric from the metric on the manifold. Let $\sigma_q$ denote surface measure on the unit sphere $T_q^1\mathcal{R} \subset T_q\mathcal{R}$ with respect to this metric, and let $\tau$ denote the measure on $\mathcal{R}$ generated by the Riemannian metric on the manifold. Define a measure $\mu$ on the unit tangent bundle by
\begin{equation}
    \mu(\dd p\dd q) = \sigma_q(\dd p) \tau(\dd q).
\end{equation}
In other words,
\begin{equation}
    \int_{T^1\mathcal{R}} f \dd\mu = \int_{\mathcal{R}} \int_{T^1_q \mathcal{R}} f(q,p) \sigma_q(\dd p) \tau(\dd q), \quad \text{ for all } f \in C_c(T^1 \mathcal{R}).
\end{equation}
It is well-known that the geodesic flow preserves the measure $\mu$, in the sense that $\int_{T^1\mathcal{R}} f \circ G^t \dd\mu = \int_{T^1\mathcal{R}} f \dd\mu$ for any $f$ and $t \in \mathbb{R}$. This is Liouville's Theorem, stated for a Hamiltonian system with energy function $H(q,p) = \frac{1}{2}\langle p,p \rangle_q$. See [\ref{Arnold}, \S 16], or [\ref{CFS_ergodic}, Ch. 5]. 

Let $\tau_1$ denote the surface measure on $\partial \mathcal{N}_{\reg}$ induced by the metric on the ambient space $\mathcal{R}$. We define a measure $\mu_1$ on $\mathcal{U}_+$ as follows:
\begin{equation}
    \mu_1(\dd p \dd q) = \langle q, n(p) \rangle_p \sigma_p(\dd q) \tau_1(\dd p). 
\end{equation}

\begin{lemma} \label{lem_bilinvar}
The billiard map $\Phi$ preserves the measure $\mu_1$, in the sense that, for any $A \subset \mathcal{U}_1^+$, $\mu_1(\Phi(A)) = \mu_1(A)$.
\end{lemma}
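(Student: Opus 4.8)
\textbf{Proof proposal for Lemma~\ref{lem_bilinvar}.}
The plan is to reduce invariance of $\mu_1$ under the billiard map $\Phi$ to invariance of the Liouville measure $\mu$ under the geodesic flow $G^t$, via the flowout coordinates introduced in \S\ref{sssec_bilmap}. The key geometric fact is that $\mu_1$ is, up to the factor $\langle p, n(q)\rangle_q$, the natural ``cross-sectional'' measure induced by $\mu$ on the hypersurface $\mathcal{U}^+ \subset T^1\mathcal{R}$: because the generating vector field $X$ of $G^t$ is nowhere tangent to $\mathcal{U}^+$ (this was checked just before \eqref{eq6.8}), the flowout map $G : \mathcal{U}^+ \times \mathbb{R} \to T^1\mathcal{R}$, $(q,p,t)\mapsto G^t(q,p)$, is a local diffeomorphism, and one can pull back $\mu$ through it. Concretely, I would first establish the local identity
\begin{equation} \label{eq_flowout_pullback}
    G^{\#}\mu = |\langle p, n(q)\rangle_q| \, \mu_1^{\pm}(\dd q\,\dd p)\,\dd t
\end{equation}
on a neighborhood in $\mathcal{U}^{+}\times\mathbb{R}$ where $G$ is a diffeomorphism, where $\mu_1^{\pm}$ denotes the unsigned version of $\mu_1$ on $\mathcal{U}^{\pm}$ (i.e.\ replacing $\langle q,n(p)\rangle_p$ by its absolute value so the formula applies on both the inward and outward bundles). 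This is a standard ``moving hypersurface / coarea'' computation: decompose the metric near $\partial\mathcal{N}_{\reg}$ into a defining-function direction and tangential directions, note $\dd\pi(X(q,p)) = p$ by \eqref{eq6.3}, and observe that the rate at which the geodesic crosses level sets of the defining function $f$ is exactly $\langle p, n(q)\rangle_q$; the factor $|\langle p,n(q)\rangle_q|$ is the Jacobian between ``arc-length along the flow'' and ``distance from the hypersurface.''

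Granting \eqref{eq_flowout_pullback}, the argument proceeds as follows. Fix $(q_0,p_0)\in\mathcal{U}_1^+$ with image $(q_1,p_1)=\Phi(q_0,p_0)\in\mathcal{U}^+$; by Lemma~\ref{lem_bilmap}(ii), $\Phi$ is a $C^1$ diffeomorphism between neighborhoods $V_0$ of $(q_0,p_0)$ and $V_1$ of $(q_1,p_1)$. Write $\Phi = R\circ G^{\,\overline t(\cdot)}$, and let $(q_1,p_1^-) = G^{\,\overline t(q_0,p_0)}(q_0,p_0)\in\mathcal{U}^-$, so that $p_1 = p_1^- - 2\langle p_1^-, n(q_1)\rangle_{q_1} n(q_1)$ and, since $R$ is an isometry fixing $q$ and only reflecting the normal component, $\langle p_1, n(q_1)\rangle_{q_1} = -\langle p_1^-,n(q_1)\rangle_{q_1}$, hence $|\langle p_1,n(q_1)\rangle_{q_1}| = |\langle p_1^-,n(q_1)\rangle_{q_1}|$. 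So it suffices to show $G^{\,\overline t(\cdot)} : \mathcal{U}^+ \supset V_0 \to \mathcal{U}^- \supset V_1'$ (where $V_1' = -R\cdot$ image, or more simply the image of $V_0$ under $G^{\overline t}$) carries $\mu_1^+$ to $\mu_1^-$, because then applying $R$, which preserves $\mu_1^{\pm}$ by the preceding absolute-value remark together with the fact that $R$ is a metric isometry of $\mathcal{U}$ swapping $\mathcal{U}^\pm$, finishes it. For the map $G^{\overline t}$: for a measurable $A\subset V_0$, use \eqref{eq_flowout_pullback} to compute $\mu$ of the ``wedge'' $W_A := \{G^s(q,p) : (q,p)\in A,\ 0\le s\le \overline t(q,p)\}$ in two ways — parametrizing $W_A$ by the flowout from $A\subset\mathcal{U}^+$ gives $\mu(W_A) = \int_A \int_0^{\overline t(q,p)} |\langle p,n(q)\rangle_q|\,\dd s\,\mu_1^+(\dd q\,\dd p)$ (using that $|\langle p,n\rangle|$ is conserved along a geodesic chord, since it equals $|\langle\dot\gamma(s),\text{(parallel transport of)}\ n\rangle|$… actually more carefully: the integrand in \eqref{eq_flowout_pullback} is evaluated at the base point, so this integral is literally $\int_A \overline t(q,p)\,|\langle p,n(q)\rangle_q|\,\mu_1^+(\dd q\,\dd p)$), while parametrizing the same wedge by the \emph{backward} flowout from $G^{\overline t}(A)\subset\mathcal{U}^-$ gives $\mu(W_A) = \int_{G^{\overline t}(A)} \overline t(q',p')\cdot(\text{time in}) \cdot |\langle p',n(q')\rangle_{q'}|\,\mu_1^-(\dd q'\,\dd p')$ with the same ``length'' $\overline t$ along each chord; since the chord joining $(q,p)$ to its image has the same length viewed from either end and $|\langle\cdot,n\rangle|$ at the two endpoints is, in general, \emph{not} equal, I will instead avoid the length factor entirely by differentiating in a one-parameter family of wedges, or — cleaner — apply \eqref{eq_flowout_pullback} directly to the diffeomorphism $(q,p)\mapsto (G^{\overline t(q,p)}(q,p))$ by the change-of-variables / substitution $t = \overline t(q,p)$ inside the flowout chart, which converts $|\langle p,n(q)\rangle_q|\,\mu_1^+\,\dd t$ into $|\langle p',n(q')\rangle_{q'}|\,\mu_1^-\,\dd t'$ because both equal the restriction of $G^\#\mu$.

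The cleanest route, and the one I would actually write, is: $G$ restricted to a suitable open set $\mathcal{W}\subset\mathcal{U}^+\times\mathbb{R}$ with $(q,p,t)\in\mathcal{W}\iff 0<t<\overline t(q,p)+\varepsilon$ is a diffeomorphism onto an open subset of $\Int\mathcal{N}$'s unit tangent bundle, the ``exit map'' $\mathcal{E}:(q,p)\mapsto(q,p,\overline t(q,p))$ identifies $\mathcal{U}_1^+$ with a $C^1$ hypersurface-section $\{t=\overline t\}\subset\mathcal{W}$, the geodesic flow shifts $t$ and preserves $\mu$, and $G^{\overline t}=\pi_{T^1}\circ G\circ\mathcal{E}$ where the target is again transverse to $X$ — so $\mu_1^-$ is recovered from $\mu$ as the transverse measure to $X$ at the exit hypersurface with the \emph{same} weight $|\langle p,n\rangle|$, giving $(G^{\overline t})^\#\mu_1^- \cdot |\langle p,n\rangle| = |\langle p,n\rangle|\,\mu_1^+$ and hence $(G^{\overline t})^\#\mu_1^- = \mu_1^+$ after the weights cancel (they do not literally cancel pointwise, but the two transverse-measure formulas for the single measure $\mu$ force the pushforward identity). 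Finally, a partition-of-unity / exhaustion argument over countably many such charts, together with the fact that $\mathcal{U}_1^+$ is open and $\mathcal{S}$ is $(d-1)$-Hausdorff-null (so its preimages under the relevant $C^1$ maps are $\mu_1$-null), upgrades the local statement to the global one $\mu_1(\Phi(A))=\mu_1(A)$ for all measurable $A\subset\mathcal{U}_1^+$.

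The main obstacle I anticipate is making \eqref{eq_flowout_pullback} precise and clean under the weak ($C^2$-manifold, merely-$C^1$ normal field and reflection map) regularity hypotheses — in particular verifying that the Jacobian of $G$ in flowout coordinates is exactly $|\langle p,n(q)\rangle_q|$ rather than merely proportional to it, and handling the bookkeeping so that the weight $|\langle p,n\rangle|$ that appears on the \emph{entry} side genuinely matches the one on the \emph{exit} side through the identity ``both are the $X$-transverse disintegration of the single flow-invariant measure $\mu$.'' Once that disintegration statement is set up correctly, preservation under $\Phi$ is essentially formal, using only Lemma~\ref{lem_bilmap} (for the diffeomorphism structure and the $-R\circ\Phi\circ -R$ inverse) and the isometry property of $R$.
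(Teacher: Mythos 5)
Your proposal is correct and follows essentially the same route as the paper: there too $\mu_1$ is identified as the flux (i.e.\ the $X$-transverse disintegration) of the flow-invariant Liouville measure $\mu$ through the boundary cross-section --- the paper packages the weight via the Sasaki-metric identity $\langle X,\iota^*n\rangle_S=\langle p,n(q)\rangle_q$ --- so invariance of $\mu_1$ under the free-flight map $\Phi_-$ follows from invariance of $\mu$ under $G^t$, and $R$ preserves $\mu_1$ because it is an isometry fixing $q$ and preserving $\langle p,n(q)\rangle_q$. The only slip is bookkeeping: since your $\mu_1^{\pm}$ already carries the weight $|\langle p,n(q)\rangle_q|$, the flowout identity should read $G^{\#}\mu=\mu_1^{\pm}\otimes \dd t$ (a single factor of the weight, which is exactly what your coarea description produces), not $|\langle p,n(q)\rangle_q|\,\mu_1^{\pm}\otimes \dd t$; with that correction the worry about "cancelling weights" disappears and the rest of your argument is the paper's.
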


\begin{proof} We equip $T \mathcal{R}$ with the \textit{Sasaki metric} $\langle \cdot,\cdot \rangle_S$. This is the metric, first introduced in [\ref{Sasaki}], which restricts on $\mathcal{R}$ to the given Riemannian metric and on the fibers $T_q\mathcal{R}$ to the inherited Euclidean metric, and which makes $\mathcal{R}$ and $T_q\mathcal{R}$ orthogonal. Given $\xi, \zeta \in T_{(q,p)}T\mathcal{R}$ and curves $\gamma = (\alpha, U)$ and $\eta = (\beta, V)$ with $\dot{\gamma}(0) = \xi$ and $\dot{\eta}(0) = \zeta$, the metric is defined by 
\begin{equation}
    \langle \xi,\zeta \rangle_S = \langle \dot{\alpha}(0), \dot{\beta}(0) \rangle + \langle D_{\dot{\alpha}}U(0), D_{\dot{\beta}}V(0) \rangle,
\end{equation}
where $D$ denotes covariant differentiation.

The metric $\langle \cdot, \cdot \rangle_S$ induces a metric on the unit tangent bundle $T^1\mathcal{R}$. The volume measure on $T^1\mathcal{R}$ coming from this metric is just the measure $\mu$ defined above.

The following simple argument comes from [\ref{CFS_ergodic}, Chapter 6].  With respect to the Sasaki metric, the unit normal vector field on the codimension 1 submanifold $\mathcal{U}_1^+ \subset T^1\mathcal{R}$ is given by $\iota^*n$, where $\iota : \mathcal{R} \to T\mathcal{R}$ is inclusion. Consequently, the flux of the vector field $X$ through any $A \subset \mathcal{U}_1^+$ is 
\begin{equation}
    \int_A \langle X(q,p), \iota^*n(q,p) \rangle_S \mu(\dd q \dd p) = \int_A \langle p, n(q) \rangle \mu(\dd q \dd p) = \mu_1(A).
\end{equation}
As above, write $\Phi = R \circ \Phi_-$. Let $A' = \Phi_-(A)$. Since the flow $G^t$ preserves $\mu$, this implies that the flux is preserved:
\begin{equation}
    \mu_1(A) = \mu_1(A').
\end{equation}
Thus $\Phi_-$ preserves the measure $\mu_1$. Since specular reflection $R$ is an isometry and preserves the quantity $\langle p, n(q) \rangle_q$, it too preserves the measure $\mu_1$.
\end{proof}

\begin{lemma} \label{lem_iter}
(i) $\mu_1(\mathcal{U}_{\fin}^+ \smallsetminus \mathcal{U}_1^+) = 0$.

(ii) For each $m \geq 1$, let $\mathcal{U}_m^+ = \Phi^{-1}(\mathcal{U}_{m-1}^+)$. Then $\mu_1(\mathcal{U}_{\fin}^+ \smallsetminus \mathcal{U}_m^+) = 0$, and the iterate $\Phi^m = \Phi \circ \Phi \circ \cdots \circ \Phi \text{ ($m$ times)} : \mathcal{U}_m^+ \to \mathcal{U}^+$ is a $C^1$ diffeomorphism onto its image.
\end{lemma}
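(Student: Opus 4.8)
The plan is to establish (i) first and then obtain (ii) by induction on $m$, with (i) doing all the real work. For (i), write the exceptional set as $A:=\mathcal{U}^+_{\fin}\smallsetminus\mathcal{U}^+_1$. By the discussion of $\overline t$ in \S\ref{sssec_bilmap}, a point $(q,p)\in\mathcal{U}^+_{\fin}$ fails to lie in $\mathcal{U}^+_1$ exactly when its first return $G^{\overline t(q,p)}(q,p)$ lands in
\[
\mathcal{Z}:=\{(q',p')\in T^1\mathcal{R}:q'\in\mathcal{S}\}\ \cup\ \{(q',p')\in\mathcal{U}:\langle p',n(q')\rangle_{q'}=0\},
\]
i.e. the orbit either reaches a singular point of $\partial\mathcal{N}$ or grazes a regular boundary point. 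The first piece of $\mathcal{Z}$ is a bundle of $(d-1)$-spheres over the set $\mathcal{S}$, which has $\mathcal{H}^{d-1}$-measure zero by C1, so it has $\mathcal{H}^{2d-2}$-measure zero; the second is a $C^1$ submanifold of $\mathcal{U}$ of codimension one, hence of dimension $2d-3$. Thus $\mathcal{H}^{2d-2}(\mathcal{Z})=0$.

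One cannot transfer this to $\mathcal{U}^+$ by a direct change of variables, since $\overline t(\cdot)$ only picks out a single slice of the preimage of $\mathcal{Z}$ and slices of null sets need not be null. Instead I would use the geodesic flux, exactly as in the proof of Lemma \ref{lem_bilinvar}. Let $\mathcal{E}\subset T^1(\Int\mathcal{N})$ be the set of interior states whose forward geodesic exits $\Int\mathcal{N}$ in finite time at a point of $\mathcal{Z}$. That exit point has the form $G^{\tau}(q_0,p_0)$ with $\tau>0$, so $\mathcal{E}\subseteq\bigcup_{\tau>0}G^{-\tau}(\mathcal{Z})$, which is the image of $\mathcal{Z}\times(0,\infty)$ under the $C^1$ map $(v,\tau)\mapsto G^{-\tau}(v)$; since $\mathcal{H}^{2d-2}(\mathcal{Z})=0$ forces $\mathcal{H}^{2d-1}(\mathcal{Z}\times(0,\infty))=0$ and $C^1$ maps preserve $\mathcal{H}^{2d-1}$-nullity, and $\mu$ (the Sasaki Riemannian volume of the $(2d-1)$-manifold $T^1\mathcal{R}$, cf. the proof of Lemma \ref{lem_bilinvar}) gives zero mass to $\mathcal{H}^{2d-1}$-null sets, we get $\mu(\mathcal{E})=0$. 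Now fix $n$: the map $(q,p,s)\mapsto G^s(q,p)$ is injective on $\bigl(A\cap\{\overline t>1/n\}\bigr)\times(0,1/n)$ (if two such orbit segments coincided somewhere, one of the two would reach the boundary before its return time), each $G^s(q,p)$ with $0<s<1/n$ lies in $\mathcal{E}$, and the flux/flow-box identity of Lemma \ref{lem_bilinvar} gives $\mu\bigl(G((A\cap\{\overline t>1/n\})\times(0,1/n))\bigr)=\tfrac1n\,\mu_1(A\cap\{\overline t>1/n\})$. The left side is $\le\mu(\mathcal{E})=0$, so $\mu_1(A\cap\{\overline t>1/n\})=0$; letting $n\to\infty$ yields $\mu_1(A)=0$, which is (i).

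For (ii), induct on $m$. The base case $m=1$ is (i) together with the observation that $\Phi$ is a $C^1$ local diffeomorphism at \emph{every} point of $\mathcal{U}^+_1$: the implicit-function argument in the proof of Lemma \ref{lem_bilmap}(ii), applied at $(q_0,p_0)$, already gives that $\Phi$ is $C^1$ near $(q_0,p_0)$, and applied at $-R\,\Phi(q_0,p_0)$ — which always lies in $\mathcal{U}^+_1$, being the time-reversal of the same orbit segment, which returns transversally to $(q_0,-p_0)\in\mathcal{U}^-$ — gives that the local left inverse $-R\circ\Phi\circ(-R)$ of Lemma \ref{lem_bilmap}(i) is $C^1$ near $\Phi(q_0,p_0)$; combined with global injectivity, $\Phi:\mathcal{U}^+_1\to\mathcal{U}^+$ is a $C^1$ diffeomorphism onto an open subset of $\mathcal{U}^+$. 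For the inductive step, $\mathcal{U}^+_m=\Phi^{-1}(\mathcal{U}^+_{m-1})$ is relatively open in $\mathcal{U}^+_1$, satisfies $\Phi(\mathcal{U}^+_m)\subseteq\mathcal{U}^+_{m-1}$, and there $\Phi^m=\Phi^{m-1}\circ\Phi$ is a composition of $C^1$ diffeomorphisms onto their images (inductive hypothesis and base case), hence a $C^1$ diffeomorphism onto its image. For the measure statement, use
\[
\mathcal{U}^+_{\fin}\smallsetminus\mathcal{U}^+_m\ \subseteq\ (\mathcal{U}^+_{\fin}\smallsetminus\mathcal{U}^+_1)\ \cup\ \Phi^{-1}(\mathcal{U}^+_{\fin}\smallsetminus\mathcal{U}^+_{m-1})\ \cup\ \Phi^{-1}(\mathcal{U}^+\smallsetminus\mathcal{U}^+_{\fin});
\]
the first term is $\mu_1$-null by (i); the second is $\mu_1$-null by the inductive hypothesis, since $\Phi$ preserves $\mu_1$ (Lemma \ref{lem_bilinvar}) and is injective, so $\Phi$-preimages of $\mu_1$-null sets are $\mu_1$-null; and the third is $\mu_1$-null under the standing hypothesis — verified in each application, cf. condition D4 of \S\ref{sssec_detref} — that $\mathcal{U}^+\smallsetminus\mathcal{U}^+_{\fin}$ is $\mu_1$-null. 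This closes the induction.

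The main obstacle is the first step: turning "$\mathcal{Z}$ is null in $T^1\mathcal{R}$" into "$A$ is $\mu_1$-null on the boundary bundle", which fails under a naive Fubini argument and instead requires the flux computation, together with the $\{\overline t>1/n\}$ exhaustion needed to keep the flowout injective in the absence of any positive lower bound on return times. The remainder of (ii) — tracking which edge sets are genuinely $\mu_1$-null and upgrading the local diffeomorphisms to a global diffeomorphism onto an open image — is routine once the base case and the measure-preservation of $\Phi$ are available.
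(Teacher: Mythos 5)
Your proposal is correct, but for part (i) it takes a genuinely different route from the paper. The paper works entirely on $\mathcal{U}^+$: it splits $\mathcal{U}_{\fin}^+\smallsetminus\mathcal{U}_1^+$ into orbits whose first return lands on $\mathcal{S}$ and orbits that return tangentially, covers each piece by the projection to $\mathcal{U}^+$ of the preimage, under the flowout composed with $\pi$, of $\mathcal{S}$ (resp.\ of $T^1\partial\mathcal{N}_{\reg}$), and concludes by a Hausdorff-dimension count (dimension $<2d-2$). You instead push the null set into the interior: $\mathcal{Z}$ is $\mathcal{H}^{2d-2}$-null, its backward flow saturation $\mathcal{E}$ is $\mu$-null, and the flow-box identity $\mu\bigl(G(E\times(0,T))\bigr)=T\,\mu_1(E)$ on sets where the flowout is injective (hence your $\{\overline t>1/n\}$ exhaustion) converts $\mu$-nullity of the tube into $\mu_1$-nullity of the cross-section. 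Both are sound; your version trades the paper's dimension bookkeeping (which as written even slides from $\mathcal{H}^{d-1}(\mathcal{S})=0$ to $\dim_H\mathcal{S}<d-1$) for the flow-box computation, which is not the literal statement of Lemma \ref{lem_bilinvar} but is exactly what its proof establishes, so the citation is fair. For (ii) the induction is structurally the same in both proofs, with two points in your favor: your observation that $-R\,\Phi(q_0,p_0)$ automatically lies in $\mathcal{U}_1^+$ (time reversal of the same chord, exiting transversally at $(q_0,-p_0)$) correctly upgrades Lemma \ref{lem_bilmap}(ii) to "local $C^1$ diffeomorphism at every point of $\mathcal{U}_1^+$," which is what the paper's proof of (ii) tacitly uses anyway; and you are right that the measure statement needs $\mu_1(\mathcal{U}^+\smallsetminus\mathcal{U}_{\fin}^+)=0$ (condition D4) — without it the claim can genuinely fail, since a single reflection can send a positive-measure set of states on an escaping orbit — and the paper's own chain of equalities uses this silently at the step $\mu_1(\Phi(F)\cap\mathcal{U}_1^+)=\mu_1(\Phi(F))$, so flagging it explicitly, as you do, is the more careful reading and costs nothing in the applications, where D4 is always verified.
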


\begin{proof}
(i) For $(q,p) \in \mathcal{U}_{\fin}^+ \smallsetminus \mathcal{U}_1^+$, set $(q',p') = \Phi_-(q,p)$. Let $A$ be the set of all such points $(q,p)$ such that $q'$ lies in the singular part of the boundary $\mathcal{S}$. Let $B$ be the set of all such points $(q,p)$ such that $q' \in \partial \mathcal{N}_{\reg}$ and $p'$ is tangent to $\partial \mathcal{N}_{\reg}$ at $q'$. We see that $\mathcal{U}_{\fin}^+ \smallsetminus \mathcal{U}_1^+$ is the disjoint union of $A$ and $B$.  Thus it enough to show that $A$ and $B$ have measure zero in $\mathcal{U}^+$.

Recall that the flowout $G$ defined by (\ref{eq6.8}) is a local diffeomorphism. Let $\widetilde{\mathcal{S}} \subset \mathcal{U}^+ \times \mathbb{R}$ denote the preimage of $\mathcal{S}$ under the composition
\begin{equation} \label{eq6.9'}
    \mathcal{U}^+ \times \mathbb{R} \xrightarrow{G} T^1\mathcal{R} \xrightarrow{\pi} \mathcal{R}, 
\end{equation}
where $\pi$ is natural projection. Then $A \subset \pi_{\mathcal{U}^+}(\widetilde{\mathcal{S}})$, where $\pi_{\mathcal{U}^+}$ is projection onto the first factor of $\mathcal{U}^+ \times \mathbb{R}$.

Let $s$ denote the Hausdorff dimension of $\mathcal{S}$. By assumption, $s < d - 1$. Since the first map in (\ref{eq6.9'}) is a local $C^2$ diffeomorphism, and the second is a $C^2$ submersion from a $(2d-1)$-dimensional space to a $d$-dimensional space, we see that the Hausdorff dimension of $\widetilde{\mathcal{S}}$ is $s + d - 1$. Thus the Hausdorff dimension of $\pi_{\mathcal{U}^+}(\widetilde{\mathcal{S}})$ is at most $s + d - 1 < 2d - 2$. Since $\mathcal{U}^+$ is a $(2d-2)$-dimensional submanifold of $\mathcal{R}$, we conclude that $A \subset \pi_{\mathcal{U}^+}(\widetilde{\mathcal{S}})$ has measure zero in $\mathcal{U}^+$.

To see that $B$ also has measure zero in $\mathcal{U}^+$, let $\mathcal{T} \subset \mathcal{U}^+ \times \mathbb{R}$ be the preimage under $G$ of $T^1\partial \mathcal{N}_{\reg} \subset T^1\mathcal{R}$. Observe that $B \subset \pi_{\mathcal{U}^+}(\mathcal{T})$. The dimension of $T^1\partial \mathcal{N}_{\reg}$ is a $2d - 3$, and therefore the dimension of $\mathcal{T}$ is $2d - 3$, and the dimension of $\pi_{\mathcal{U}^+}(\mathcal{T})$ is at most $2d - 3$. It follows that $B \subset \pi_{\mathcal{U}^+}(\mathcal{T})$ has measure zero in $\mathcal{U}^+$. 

(ii) Recall that $\mathcal{U}_1^+$ is an open subset of $\mathcal{U}^+$ and $\Phi : \mathcal{U}_1^+ \to \Phi(\mathcal{U}_1^+)$ is a $C^1$ diffeomorphism. It clearly follows that each $\mathcal{U}_m^+$ is an open subset of $\mathcal{U}^+$ and each iterate $\Phi^m$ is a $C^1$ diffeomorphism from $\mathcal{U}_m^+$ onto its image. To see that $\mu_1(\mathcal{U}_{\fin}^+ \smallsetminus \mathcal{U}_1^+) = 0$, let $F \subset \mathcal{U}_1^+$ be any measurable set with $\mu_1(F) < \infty$. Using (i) and the fact that $\Phi$ preserves $\mu_1$, we have 
\begin{equation}
\begin{split}
    \mu_1(F \smallsetminus \mathcal{U}_2^+) & = \mu_1(F \smallsetminus \Phi^{-1}(\Phi(F) \cap \mathcal{U}_1^+)) \\
    & = \mu_1(F) - \mu_1(\Phi^{-1}(\Phi(F) \cap \mathcal{U}_1^+)) \\
    & = \mu_1(F) - \mu_1(\Phi(F) \cap \mathcal{U}_1^+) \\
    & = \mu_1(F) - \mu_1(\Phi(F)) = 0.
\end{split}
\end{equation}
It follows that $\mu_1(\mathcal{U}_1^+ \smallsetminus \mathcal{U}_2^+) = 0$. Inductively, we similarly obtain that $\mu_1(\mathcal{U}_{m-1} \smallsetminus \mathcal{U}_m) = 0$ for all $m \geq 1$. As $\mathcal{U}_1^+$ is a full measure subset of $\mathcal{U}_{\fin}^+$, the result follows.
\end{proof}

\subsection{Rough Reflection Laws} \label{ssec_reflaws}

The construction of rough reflections described here generalizes a construction appearing in [\ref{ABS_refl}]. It is also similar to constructions appearing in [\ref{feres_rw}] and [\ref{Plakhov5}].

\subsubsection{Macro-reflection laws} \label{sssec_detref}

Consider two nested billiard domains $\mathcal{M}_0 \subset \mathcal{M}_1 \subset \mathcal{R}$. The boundary of $\mathcal{M}_0$ provides an interface from which we can observe reflections from the boundary of $\mathcal{M}_1$. The goal for this subsection is to describe precisely what it means to ``observe'' the reflections from $\partial \mathcal{M}_1$ from the interface $\partial \mathcal{M}_0$. In the next subsection, we will describe what happens when $\mathcal{M}_1$ approaches $\mathcal{M}_0$ uniformly.

We suppose that $\mathcal{M}_1, \mathcal{M}_0$ are subsets of $\mathcal{R}$ which satisfy the following conditions:
\begin{enumerate}[label = D\arabic*.]
    \item $\mathcal{M}_0$ is closed, embedded $C^2$ submanifold of $\mathcal{R}$ of codimension 0, with boundary (but no corners or singularities),
    \item $\mathcal{M}_1$ belongs to the class $\CES_0^2(\mathcal{R})$, and 
    \item $\Int \mathcal{M}_1 \supset \mathcal{M}_0$
\end{enumerate}
We would also like to consider the case where $\mathcal{M}_0$ is allowed to intersect the boundary of $\mathcal{M}_1$, but this introduces some technicalities. We describe how to modify our arguments to accommodate this situation in Remark \ref{rem_extend}.

Let 
\begin{equation} \label{eq6.27}
    \mathcal{N} = \overline{\mathcal{M}_1 \smallsetminus \mathcal{M}_0}.
\end{equation}
Here the closure is taken in the ambient space $\mathcal{R}$. The fundamental fact in our analysis is that:

\begin{lemma} \label{lem_classy}
$\mathcal{N}$ belongs to the class $\CES_0^2(\mathcal{R})$.
\end{lemma}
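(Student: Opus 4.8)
The plan is to exhibit an explicit singular set for $\mathcal{N}$ — namely the singular set of $\mathcal{M}_1$ itself — and to verify conditions C1, C2 by inspecting $\mathcal{N}$ in small neighborhoods of its boundary points. First I would record the topological consequence of D3: since $\partial\mathcal{M}_0\subset\mathcal{M}_0\subset\Int\mathcal{M}_1$ while $\partial\mathcal{M}_1$ is disjoint from $\Int\mathcal{M}_1$, the two boundaries $\partial\mathcal{M}_0$ and $\partial\mathcal{M}_1$ are disjoint. Moreover, because $\mathcal{M}_0$ is closed, every point of $\partial\mathcal{M}_1$ has an open neighborhood missing $\mathcal{M}_0$, and because $\Int\mathcal{M}_1$ is open, every point of $\mathcal{M}_0$ has a neighborhood contained in $\Int\mathcal{M}_1$. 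Let $\mathcal{S}_1\subset\partial\mathcal{M}_1$ be the closed set from the $\CES_0^2$-structure of $\mathcal{M}_1$ guaranteed by D2, so $\mathcal{H}^{d-1}(\mathcal{S}_1)=0$; I will take $\mathcal{S}:=\mathcal{S}_1$.

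Next I would carry out the two local computations of $\mathcal{N}=\overline{\mathcal{M}_1\smallsetminus\mathcal{M}_0}$. For $q\in\partial\mathcal{M}_1$, choosing an open $U\ni q$ with $U\cap\mathcal{M}_0=\emptyset$ gives $(\mathcal{M}_1\smallsetminus\mathcal{M}_0)\cap U=\mathcal{M}_1\cap U$, and since closures commute with intersection by the open set $U$, $\mathcal{N}\cap U=\mathcal{M}_1\cap U$. For $q\in\mathcal{M}_0$, choosing $U\ni q$ with $U\subset\Int\mathcal{M}_1$ gives $(\mathcal{M}_1\smallsetminus\mathcal{M}_0)\cap U=U\smallsetminus\mathcal{M}_0$, hence $\mathcal{N}\cap U=\overline{\mathcal{M}_0^c}\cap U$. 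Combining these with the inclusions $\mathcal{M}_1\smallsetminus\mathcal{M}_0\subseteq\mathcal{N}\subseteq\mathcal{M}_1$, I would identify $\partial\mathcal{N}$ as the disjoint union of $\partial\mathcal{M}_1\cap\mathcal{N}$ and $\partial\mathcal{M}_0$, and in particular $\mathcal{S}_1\subset\partial\mathcal{N}$ (near $\mathcal{S}_1$, $\mathcal{N}$ agrees with $\mathcal{M}_1$, and $\mathcal{S}_1\subset\partial\mathcal{M}_1$). Then C1 for $\mathcal{N}$ is immediate: $\mathcal{H}^{d-1}(\mathcal{S})=\mathcal{H}^{d-1}(\mathcal{S}_1)=0$, and $\mathcal{N}$ is closed since it is a closure.

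For C2 I would run through the cases for a point $p\in\mathcal{N}\smallsetminus\mathcal{S}_1$. If $p\in\partial\mathcal{M}_1\smallsetminus\mathcal{S}_1$, then near $p$ we have $\mathcal{N}=\mathcal{M}_1$, so C2 is inherited verbatim from the $\CES_0^2$-structure of $\mathcal{M}_1$. If $p\in\Int\mathcal{M}_1$ and $p\notin\mathcal{M}_0$, then a small enough neighborhood of $p$ lies entirely in $\mathcal{N}$, so $p\in\Int\mathcal{N}$ and any $C^2$ chart of the ambient $C^2$ manifold $\mathcal{R}$ around $p$ serves. If $p\in\partial\mathcal{M}_0$, then near $p$ one has $\mathcal{N}=\overline{\mathcal{M}_0^c}$, and using the $C^2$ chart $\phi$ from D1 carrying $\mathcal{M}_0$ onto $\mathbb{H}^d$ with $\phi(p)=0$, this set is carried onto $\{x_d\le 0\}$, which after the sign change $x_d\mapsto -x_d$ is $\mathbb{H}^d$, establishing C2. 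Points $p\in\Int\mathcal{M}_0$ are not in $\mathcal{N}$, so they need not be considered; this exhausts $\mathcal{N}\smallsetminus\mathcal{S}_1$ by the description of $\partial\mathcal{N}$ above.

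There is no deep obstacle here; the one thing requiring genuine care — and the step I expect to be the most delicate — is the pointset-topology bookkeeping in the two local computations: justifying that one may choose the neighborhoods $U$ as claimed (this is exactly where D3 and the closedness of $\mathcal{M}_0$ enter) and that closures may be taken relative to these open sets. The conceptual point guaranteeing that $\mathcal{N}$ has no corners — which is what could in principle have prevented membership in $\CES_0^2$ — is precisely the disjointness $\partial\mathcal{M}_0\cap\partial\mathcal{M}_1=\emptyset$; once that is established, the verification of C1 and C2 is routine.
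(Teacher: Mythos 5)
Your proposal is correct and follows essentially the same route as the paper: take the singular set of $\mathcal{M}_1$ (intersected with $\mathcal{N}$) as the singular set of $\mathcal{N}$, note C1 is immediate, and verify C2 by the same three-case analysis — regular boundary points of $\mathcal{M}_1$ (where $\mathcal{N}$ locally agrees with $\mathcal{M}_1$), interior points of $\mathcal{M}_1$ outside $\mathcal{M}_0$ (interior points of $\mathcal{N}$), and points of $\partial\mathcal{M}_0$ (where the D1 chart for $\mathcal{M}_0$ is flipped to give $\mathbb{H}^d$) — with D3 and the closedness of $\mathcal{M}_0$ supplying the separating neighborhoods. Your extra bookkeeping identifying $\partial\mathcal{N}$ explicitly is a harmless elaboration of what the paper states as the disjoint decomposition $\mathcal{N}\smallsetminus\mathcal{S}_1=(\Int\mathcal{M}_1\smallsetminus\mathcal{M}_0)\cup\partial_{\reg}\mathcal{M}_1\cup\partial\mathcal{M}_0$.
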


\begin{proof}
Let $\mathcal{S}$ be the set of singular points of $\mathcal{M}_1$, and let $\mathcal{S}_1 = \mathcal{S} \cap \mathcal{N}$. It is enough to show that $\mathcal{N} \smallsetminus \mathcal{S}_1$ is a codimension 0, embedded $C^2$ submanifold of $\mathcal{R}$ with boundary. Since $\mathcal{M}_0 \subset \Int \mathcal{M}_1$, we may write
\begin{equation} \label{eq6.28}
    \mathcal{N} \smallsetminus \mathcal{S}_1 = (\Int \mathcal{M}_1 \smallsetminus \mathcal{M}_0) \cup \partial_{\reg} \mathcal{M} \cup \partial \mathcal{M}_0,
\end{equation}
and the above union is disjoint.

If $q \in \Int \mathcal{M}_1 \smallsetminus \mathcal{M}_0$, then there exists a neighborhood $U_1 \subset \mathcal{R}$ of $q$ which lies entirely in $\Int \mathcal{M}_1 \smallsetminus \mathcal{M}_0$ and which is $C^2$-diffeomorphic to $\mathbb{R}^d$. 

If $q \in \partial_{\reg} \mathcal{M}$, then there exists a neighborhood $U_2 \subset \mathcal{R}$ of $q$ and a $C^2$-diffeomorphism $\phi_2 : U_2 \to \mathbb{R}^d$ such that $\phi_2(0) = 0$ and $\phi_2(\mathcal{M}_1 \cap U_2) = \mathbb{H}^d$. Shrinking $U$ if necessary, we may suppose that $U_2 \cap \mathcal{M}_0 = \emptyset$, in which case $\phi_2(\mathcal{N} \cap U_2) = \mathbb{H}^d$. 

Finally, if $q \in \partial \mathcal{M}_0$, then there exists a neighborhood $U_3 \subset \mathcal{R}$ of $q$ and a $C^2$-diffeomorphism $\phi_3 : U_3 \to \mathbb{R}^d$ such that $\phi_3(0) = 0$ and $\phi_3(\mathcal{M}_0 \cap U_2) = \mathbb{H}^d$. Shrinking $U_3$ if necessary, we may suppose that $U_3 \subset \Int \mathcal{M}_1$, in which case 
\begin{equation}
    \phi_3(\mathcal{N} \cap U_3) = \Phi_3(U \smallsetminus \Int \mathcal{M}_0) = \mathbb{H}_-^d := \{(x_1,...,x_d) : x_d \leq 0\} \cong \mathbb{H}^d,
\end{equation}
and we are done.
\end{proof}

Notation introduced in the previous subsection specializes to our choice $\mathcal{N}$ in (\ref{eq6.27}). The boundary of $\mathcal{N}$ decomposes as the disjoint union $\partial \mathcal{M}_1 \cup \partial \mathcal{M}_0$. We denote restrictions of $\mathcal{U}^\pm$ to $\partial \mathcal{M}_0$ by 
\begin{equation}
    \mathcal{V}^\pm = \{(q,p) \in \mathcal{U}^{\mp} : q \in \partial \mathcal{M}_0\}.
\end{equation}
Note that our sign conventions are reversed with $\mathcal{V}^\pm \subset \mathcal{U}^{\mp}$. This is because we want to think of vectors in $\mathcal{V}^+$ as pointing \textit{into} the billiard domain $\mathcal{M}_0$, even though they point \textit{out} of $\mathcal{N}$.

For our definition of a macro-reflection law to make sense, two additional conditions are needed:
\begin{enumerate}
    \item[D4.] $\mathcal{U}_{\fin}^+$ is a full-measure subset of $\mathcal{U}^+$, with respect to the measure $\mu_1$.
    \item[D5.] Except on a set of $\mu_1$-measure zero, a point particle in $\mathcal{N}$ which starts with initial state $(q,p) \in \mathcal{V}^- \subset \mathcal{U}^+$ eventually returns to $\mathcal{V}^-$ after only finitely many collisions with $\partial \mathcal{M}_1$.
\end{enumerate}
For various special cases, there are straightforward conditions which guarantee that conditions D4 and D5 hold. Typically these conditions will reduce our setting to the finite measure case, and D5 will follow from the Poincar\'{e} Recurrence Theorem. See \S\ref{sssec_special}, below.

As a consequence of Lemma \ref{lem_iter}, condition D4 implies that $\Phi$ and its iterates are well-defined on $\mathcal{U}^+$, except on a null set. 

For $(q,p) \in \mathcal{U}^+$, set $M(q,p) = \min\{m \geq 1 : (q,p) \in \mathcal{U}^+_m \text{ and } \Phi^m(q,p) \in \mathcal{U}_{\partial C}^+\}$ (and set $M = \infty$ if the minimum does not exist). The \textit{macro-reflection law associated with $\mathcal{M}_0$ and $\mathcal{M}_1$} is the mapping $P^{\mathcal{M}_1,\mathcal{M}_0} = P^{\mathcal{M}_1} : \mathcal{V}^+ \to \mathcal{V}^+ \cup \{\Delta\}$ defined by 
\begin{equation} \label{eq6.24}
    P^{\mathcal{M}_1}(q,p) = \begin{cases}
    R \circ \Phi^{M(q,-p)}(q,-p) & \text{ if } M(q,-p) < \infty, \\
    \Delta & \text{otherwise},
    \end{cases}
\end{equation}
where $\Delta$ is a ``cemetery state.'' To understand this definition, consider a point particle moving freely in $\mathcal{M}_1$ and reflecting specularly from $\partial \mathcal{M}_1$. If the point particle starts in initial state $(q,-p) \in \mathcal{V}^-$ and after hitting $\partial \mathcal{M}_1$ some number of times returns to $\partial \mathcal{M}_0$ in state $(q',p') \in \mathcal{V}^+$, then by definition $P^{\mathcal{M}_1}(q,p) = (q',p')$. In (\ref{eq6.24}), the map $\Phi$ is defined for the billiard in $\mathcal{N}$ and not $\mathcal{M}_1$; thus the reason for post-composing with $R$ is that this ``undoes'' the reflection from $\partial \mathcal{M}_0$ of the point particle in $\mathcal{N}$.

For $q \in \partial \mathcal{M}_0$, let $k(q) = -n(q)$ denote the outward pointing normal vector field on $\partial \mathcal{M}_0$, let $\tau_{\partial \mathcal{M}_0}$ be surface measure on $\partial \mathcal{M}_0$, and define a measure on $\mathcal{V}^+$ by 
\begin{equation}
    \Lambda(\dd q \dd p) = \langle p, k(q) \rangle_q \sigma_q(\dd p) \tau_{\partial \mathcal{M}_0}(\dd q).
\end{equation}
Clearly the restriction of $\mu_1$ to $\mathcal{V}^-$ pushes forward to $\Lambda$ under negation $(q,p) \mapsto (q,-p)$. It is also easy to see that reflection $R$ pushes forward $\mu_1$ to $\Lambda$.

\begin{proposition} \label{prop_reflproperties}
(i) There exists a $\Lambda$-full measure, open subset $\mathcal{V}_1^+ \subset \mathcal{V}^+$ such that $P^{\mathcal{M}_1} : \mathcal{V}_1^+ \to \mathcal{V}_1^+$ is a $C^1$ diffeomorphism, and $P^{\mathcal{M}_1} \circ P^{\mathcal{M}_1} = \text{Id}_{\mathcal{V}_1^+}$.

(ii) $P^{\mathcal{M}_1}$ preserves the measure $\Lambda$.
\end{proposition}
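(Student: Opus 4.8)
The plan is to realize $P^{\mathcal{M}_1}$, on a suitable open full‑measure set, as a finite composition $R\circ\Phi^m\circ\nu$ of maps whose relevant properties are already in hand, where $\nu(q,p)=(q,-p)$ is negation, $\Phi$ is the billiard map of $\mathcal{N}=\overline{\mathcal{M}_1\smallsetminus\mathcal{M}_0}$ (Lemma \ref{lem_classy} places $\mathcal{N}$ in $\CES^2_0(\mathcal{R})$), and $R$ is specular reflection, and then to read off both statements from this decomposition together with Lemmas \ref{lem_bilmap}, \ref{lem_bilinvar}, and \ref{lem_iter}.

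First I would build the set $\mathcal{V}^+_1$. For $m\geq 1$ let $G^{(m)}$ be the set of $(q,p)\in\mathcal{V}^+$ such that, writing $z_0=(q,-p)$, one has $z_0\in\mathcal{U}^+_m$, the base point $\pi(\Phi^j(z_0))$ lies in $\partial_{\reg}\mathcal{M}_1$ for $1\leq j\leq m-1$, and $\pi(\Phi^m(z_0))\in\partial\mathcal{M}_0$. Since $\mathcal{U}^+_m$ is open and $\Phi^m$ is a $C^1$ diffeomorphism on it (Lemma \ref{lem_iter}(ii)), and since $\partial\mathcal{M}_0$ and $\partial_{\reg}\mathcal{M}_1$ are relatively open in $\partial\mathcal{N}_{\reg}$ (they are the two disjoint closed pieces of $\partial\mathcal{N}$), each $G^{(m)}$ is open; the $G^{(m)}$ are pairwise disjoint, and on $G^{(m)}$ the first‑return index in the definition of $P^{\mathcal{M}_1}$ equals $m$ exactly, so $P^{\mathcal{M}_1}|_{G^{(m)}}=R\circ\Phi^m\circ\nu$. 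I would then set $\mathcal{V}^+_1=\bigsqcup_m G^{(m)}$, an open subset of $\mathcal{V}^+$. That it has full $\Lambda$‑measure is where the hypotheses enter: $\nu$ carries $\Lambda$ on $\mathcal{V}^+$ to the restriction of $\mu_1$ to $\mathcal{V}^-\subset\mathcal{U}^+$ (both $\sigma_q$ and $\tau_{\partial\mathcal{M}_0}$ are negation‑invariant, and the density $\langle p,k(q)\rangle_q$ equals $\langle -p,n(q)\rangle_q$); by D4 and Lemma \ref{lem_iter}, $\mu_1$‑almost every $z_0\in\mathcal{U}^+$ lies in every $\mathcal{U}^+_m$, so all its iterates $\Phi^j(z_0)$ are defined and each reflection is at a regular boundary point, transversally; and by D5 the set of $z_0\in\mathcal{V}^-$ that never return to $\partial\mathcal{M}_0$ after finitely many $\partial\mathcal{M}_1$‑collisions is $\mu_1$‑null. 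The complement of $\mathcal{V}^+_1$ in $\mathcal{V}^+$ sits inside the $\nu$‑preimage of the union of these exceptional sets, hence is $\Lambda$‑null.

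The involution identity I would prove by running the trajectory backwards. On $G^{(m)}$, $P^{\mathcal{M}_1}$ is a composition of the $C^1$ diffeomorphisms $\nu$, $\Phi^m$ (Lemma \ref{lem_bilmap}(ii), iterated), and $R$ (a fiberwise linear isometry, $C^1$ since $n$ is $C^1$ on $\partial\mathcal{N}_{\reg}$), hence a $C^1$ diffeomorphism onto its image. By Lemma \ref{lem_bilmap}(i) the left inverse of $\Phi$ is $-R\circ\Phi\circ(-R)$, where $-R(q,p):=R(q,-p)$ is itself an involution; iterating and cancelling the interior $(-R)$ factors, $-R\circ\Phi^m\circ(-R)$ is a left inverse of $\Phi^m$ on the relevant domain. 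Now fix $(q,p)\in G^{(m)}$, put $(q_1,p_1)=\Phi^m(q,-p)$ and $(q_1,p_1')=R(q_1,p_1)=P^{\mathcal{M}_1}(q,p)$; the left‑inverse relation gives $\Phi^m(q_1,-p_1')=-R(q,-p)=R(q,p)$, and the intermediate states $\Phi^j(q_1,-p_1')$ for $1\leq j\leq m-1$ are the $(-R)$‑images of the original intermediate states, which lie over $\partial_{\reg}\mathcal{M}_1$ transversally. Hence $(q_1,-p_1')\in\mathcal{U}^+_m$, the return index for $(q_1,p_1')$ is again $m$, so $(q_1,p_1')\in G^{(m)}$ and $P^{\mathcal{M}_1}(q_1,p_1')=R(\Phi^m(q_1,-p_1'))=R(R(q,p))=(q,p)$. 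Thus $P^{\mathcal{M}_1}$ maps $\mathcal{V}^+_1$ bijectively onto itself and, being locally a $C^1$ diffeomorphism, is a global $C^1$ diffeomorphism of $\mathcal{V}^+_1$ with $P^{\mathcal{M}_1}\circ P^{\mathcal{M}_1}=\text{Id}_{\mathcal{V}^+_1}$, establishing (i).

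Part (ii) then falls out of the same decomposition: $\nu$ sends $\Lambda|_{\mathcal{V}^+}$ to $\mu_1|_{\mathcal{V}^-}$, $\Phi$ preserves $\mu_1$ by Lemma \ref{lem_bilinvar} so $\Phi^m$ does too, and $R$ sends the restriction of $\mu_1$ to the part of $\mathcal{U}^+$ lying over $\partial\mathcal{M}_0$ back to $\Lambda|_{\mathcal{V}^+}$ (again because $R$ fixes $q$, preserves $\sigma_q$, and preserves $|\langle p,n(q)\rangle_q|$). So $P^{\mathcal{M}_1}$ preserves $\Lambda$ on each $G^{(m)}$, and summing over $m$ yields $\Lambda((P^{\mathcal{M}_1})^{-1}(A))=\Lambda(A)$ for measurable $A\subset\mathcal{V}^+_1$, hence for all $A\subset\mathcal{V}^+$ since $\mathcal{V}^+\smallsetminus\mathcal{V}^+_1$ is $\Lambda$‑null. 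I expect the only real friction to be bookkeeping: making the three measure‑transfer statements (for $\nu$, for $\Phi$, for $R$) and the sign conventions ($\mathcal{V}^\pm\subset\mathcal{U}^\mp$, $k=-n$ on $\partial\mathcal{M}_0$) chain together cleanly, and verifying that $\mathcal{V}^+_1$ is genuinely open, i.e. that the first‑return index is locally constant — which rests on openness of $\mathcal{U}^+_m$ and on $\partial\mathcal{M}_0$ being separated from $\partial_{\reg}\mathcal{M}_1$ inside $\partial\mathcal{N}_{\reg}$. No analytic input beyond Lemmas \ref{lem_bilmap}--\ref{lem_iter} and hypotheses D1--D5 should be required.
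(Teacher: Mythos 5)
Your proposal is correct and follows essentially the same route as the paper: you build the same open sets (your $G^{(m)}$ are the paper's $V_m$, where the first-return index equals $m$), use D4, D5 and Lemma \ref{lem_iter} for full $\Lambda$-measure, derive the involution from the left-inverse identity of Lemma \ref{lem_bilmap}(i), and obtain measure preservation by chaining the pushforwards under negation, $\Phi^m$ (Lemma \ref{lem_bilinvar}), and $R$. The only difference is cosmetic: you verify local constancy of the return index a bit more explicitly than the paper does.
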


\begin{proof}
For $m \geq 1$, let $V_m = \{(q,p) \in \mathcal{V}^+ : M(q,-p) = m\}$. For each $(q,p) \in V_m$, $M(q,p) = m < \infty$, and therefore by Lemma \ref{lem_iter} there is a neighborhood $N$ of $(q,p)$ such that $\Phi^m : N \to \Phi(N)$ is a diffeomorphism. Since $\mathcal{V}^-$ is an open subset of $\mathcal{U}^+$, $\Phi^{-m}(N \cap \mathcal{V}^-)$ is a neighborhood of $(q,p)$ contained in $-V_m$. Thus $V_m$ is open.

Let $\mathcal{V}_1^+ = \bigcup_{m \geq 1} V_m = \{(q,p) \in \mathcal{V}^+ : M(q,-p) < \infty\}$. By conditions D4 and D5, there exists a full measure set $\mathcal{F} \subset \mathcal{V}^-$ such that 
\begin{equation}
\mathcal{F} \cap \bigcap_{m = 1}^\infty \mathcal{U}^+_m \subset -\mathcal{V}_1^+.
\end{equation}
Since each of the sets in the intersection on the left has full $\mu_1$-measure in $\mathcal{V^-} \subset \mathcal{U}^+$ (see Lemma \ref{lem_iter}), it follows that $\mathcal{V}_1^+$ has full $\Lambda$-measure in $\mathcal{V}^+$.

To see that $P^{\mathcal{M}_1}$ is involutive, let $1 \leq n \leq m$. By Lemma \ref{lem_bilmap} and the fact that $R \circ R = \text{Id}$, we have
\begin{equation}
    -R \circ \Phi^n \circ -R \circ \Phi^m = (-R \circ \Phi \circ -R)^n \circ \Phi^m = \Phi^{m-n}.
\end{equation}
Thus, on $V_m$,
\begin{equation} \label{eq6.29'}
    R \circ \Phi^n \circ -P^{\mathcal{M}_1}(q,p) = -\Phi^{m-n}(q,-p),
\end{equation}
and the first $n$ such that $-\Phi^{m-n}(q,-p) \in \mathcal{V}^+$ is $n = m$. Thus $P^{\mathcal{M}_1}(q,p) \in V_m$, and taking $n = m$ in (\ref{eq6.29'}), we obtain $P^W \circ P^W(q,p) = (q,p)$.

Finally, since $P^{\mathcal{M}_1} = R \circ \Phi^m \circ -$ on $V_m$, and negation and $R^{-1}$ pushforward $\Lambda$ to $\mu_1$, and $\Phi$ preserves $\mu_1$ (see Lemma \ref{lem_bilinvar}), we see that $P^{\mathcal{M}_1}$ preserves $\Lambda$.
\end{proof}

\begin{remark} \label{rem_extend} \normalfont
The definition (\ref{eq6.24}) of $P^{\mathcal{M}_1}$ can be extended to cases where $\partial \mathcal{M}_1$ is allowed to intersect $\partial \mathcal{M}_0$. The only difficulty is that, if we are not careful, the space $\mathcal{N}$ defined by (\ref{eq6.27}) may no longer belong to the class $\CES_0^2(\mathcal{R})$. Problems can occur if the intersection of $\partial \mathcal{M}_1$ and $\partial \mathcal{M}_0$ is a ``fat Cantor set'' for example. Nonetheless, $\mathcal{N}$ will belong to the class $\CES_0^2(\mathcal{R})$ if appropriate assumptions are introduced.

Let $\mathcal{M} \supset \mathcal{M}_0$ and assume conditions D1 and D2 are satisfied. Then $\partial \mathcal{M}_0$ decomposes as the disjoint union $A_1 \cup A_2 \cup A_3$, where 
\begin{equation}
    A_1 = \partial \mathcal{M}_0 \cap \mathcal{N} \cap \Int \mathcal{M}, \quad\quad A_2 = \partial \mathcal{M}_0 \cap \mathcal{N} \smallsetminus \Int \mathcal{M}, \quad\quad A_3 = \partial \mathcal{M}_0 \smallsetminus \mathcal{N}.
\end{equation}
We replace condition D3 with 
\begin{enumerate}
    \item[D3'.] $\mathcal{M} \supset \mathcal{M}_0$, and the $(d-1)$-dimensional Hausdorff measure of $A_2$ is zero. 
\end{enumerate}
This of course reduces to condition D3 when $\Int \mathcal{M} \supset \mathcal{M}_0$ (because $A_2$ is the empty set). 

Under conditions D1, D2, and D3', $\mathcal{N}$ will belong to the class $\CES_0^2(\mathcal{R})$. To see this, replace the set $\mathcal{S}_1$ in the proof of Lemma \ref{lem_classy} with $\mathcal{S}_2 := \mathcal{S}_1 \cup A_2$. Then in analogy to (\ref{eq6.28}), we may write 
\begin{equation}
    \mathcal{N} \smallsetminus \mathcal{S}_2 = (\Int \mathcal{M} \smallsetminus \mathcal{M}_0) \cup (\partial_{\reg} \mathcal{M} \smallsetminus \mathcal{M}_0) \cup A_1.
\end{equation}
The rest of the argument proceeds as in the proof of the lemma, except that we replace $\partial_{\reg} \mathcal{M}$ with $\partial_{\reg} \mathcal{M} \smallsetminus \mathcal{M}_0$ and $\partial \mathcal{M}_0$ with $A_1$.

From the way that we have defined $\mathcal{N}$ in (\ref{eq6.27}), we see that $A_1 \subset \partial \mathcal{N}_{\reg} := \partial\mathcal{N} \smallsetminus \mathcal{S}_2$ and $A_2 \subset \partial \mathcal{N}$, but $A_3$ is not a subset of $\partial \mathcal{N}$. In fact, $A_3 \subset \partial_{\reg} \mathcal{M} \cap \partial \mathcal{M}_0$. We extend the definition of $P^{\mathcal{M}_1}$ so that the point particle reflects specularly from $\partial \mathcal{M}_0$ whenever it hits $A_3$. More precisely, we define $P^{\mathcal{M}_1} : \mathcal{V}^+ \to \mathcal{V}^+ \cup \{\Delta\}$ as follows:
\begin{equation}
    P^{\mathcal{M}_1}(q,p) = \begin{cases}
    R \circ \Phi^{M(q,-p)}(q,-p) & \text{ if } q \in A_1 \text{ and } M(q,-p) < \infty, \\
    -p + 2\langle p, k(q) \rangle_q k(q) & \text{ if } q \in A_3, \\
    \Delta & \text{ otherwise}.
    \end{cases}
\end{equation}
This reduces to (\ref{eq6.24}) when $A_1 = \partial \mathcal{M}_0$ (in which case $A_3 = \emptyset$). 

Let 
\begin{equation}
    \mathcal{V}_{A_i}^\pm = \{(q,p) \in \mathcal{V}^\pm : q \in A_i\}, \quad i = 1,2,3.
\end{equation}
Under conditions D1, D2, D3', D4, and D5, Proposition \ref{prop_reflproperties} still holds word for word. The proof gets modified as follows: We let $\mathcal{V}_1^+ = \mathcal{V}^+_{A_3} \cup \bigcup_{m = 1}^\infty V_m$, where now $V_m := \{(q,p) \in \mathcal{V}_{A_1}^+: M(q,-p) = m\}$. For essentially the same reason as before $\mathcal{V}_1^+$ is open, and $\bigcup_{m = 1}^\infty V_m$ is a $\Lambda$-full measure subset of $\mathcal{V}_{A_1}^+$, and by condition D3', $\mathcal{V}^+_{A_1} \cup \mathcal{V}^+_{A_3}$ is a full measure subset of $\mathcal{V}_1^+$. Therefore $\mathcal{V}^+$ is a full measure subset of $\mathcal{V}_1^+$. The rest of the proposition follows simply by noting that specular reflection from $\partial \mathcal{M}_0$ is involutive and preserves $\Lambda$.
\end{remark}

\subsubsection{Rough reflections} \label{sssec_genrfref}

From now on we assume that $\mathcal{M}_1$ satisfies conditions D1, D2, D3', D4, and D5, stated above.

The macro-reflection law $P^{\mathcal{M}_1}$ is associated with the deterministic Markov kernel on $\mathcal{V}_1^+$,  
\begin{equation}
    \mathbb{P}^\mathcal{M}(q,p; \dd q' \dd p') := \delta_{P^{\mathcal{M}_1}(q,p)}(\dd q' \dd p').
\end{equation}
We call a Markov kernel $\mathbb{P}(q,p; \dd q' \dd p')$ on $\mathcal{V}^+$ a \textit{rough collision law} if there exists a sequence $\mathcal{M}_i \supset \mathcal{M}_0$ (satisfying hypotheses 1' and 2-5) and a sequence of positive numbers $\epsilon_i \to 0$ such that
\begin{equation} \label{eq6.36}
    \partial \mathcal{M}_i \subset \overline{N}_{\epsilon_i}(\partial \mathcal{M}_0) := \{q \in \mathcal{R} : \text{dist}(q,\partial \mathcal{M}_0) \leq \epsilon_i\}
\end{equation} 
and such that the following limit holds:
\begin{equation} \label{eq6.29}
    \mathbb{P}^{\mathcal{M}_i}(q,p;\dd q' \dd p') \Lambda(\dd q \dd p) \to \mathbb{P}(q,p; \dd q' \dd p')\Lambda(\dd q \dd p),
\end{equation}
weakly in the space of measures on $\mathcal{V}^+ \times \mathcal{V}^+$. 

The limit (\ref{eq6.29}) means that for any function $h \in C_c(\mathcal{V}^+ \times \mathcal{V}^+)$, 
\begin{equation} \label{eq6.30}
\begin{split}
    &\lim_{i \to \infty} \int_{\mathcal{V}^+ \times \mathcal{V}^+} h(q,p;\dd q' \dd p') \mathbb{P}^{\mathcal{M}_i}(q,p;\dd q' \dd p') \Lambda(\dd q \dd p) \\
    & \quad\quad = \int_{\mathcal{V}^+ \times \mathcal{V}^+} h(q,p;\dd q' \dd p') \mathbb{P}(q,p;\dd q' \dd p') \Lambda(\dd q \dd p).
\end{split}
\end{equation}
\begin{remark} \normalfont
Since with respect to the uniform norm $C_c^\infty(\mathcal{V}^+ \times \mathcal{V}^+)$ is dense in $C_c(\mathcal{V}^+ \times \mathcal{V}^+)$, and the tensor product $C_c^\infty(\mathcal{V}^+) \otimes C_c^\infty(\mathcal{V}^+)$ is dense in $C_c^\infty(\mathcal{V}^+ \times \mathcal{V}^+)$, it is sufficient to verify (\ref{eq6.30}) for functions $h$ of form $f(q,p) g(q',p')$, where $f, g \in C_c^\infty(\mathcal{V}^+)$.
\end{remark}

\begin{proposition} \label{prop_gensymm}
A rough reflection law $\mathbb{P}(q,p;\dd q' \dd p')$ is symmetric with respect to the measure $\Lambda$, in the sense that, for any $h \in C_c(\mathcal{V}^+ \times \mathcal{V}_+)$,
\begin{equation} \label{eq6.33}
\begin{split}
&\int_{\mathcal{V}^+ \times \mathcal{V}^+} h(q,p, q',p') \mathbb{P}(q,p; \dd q' \dd p')\Lambda(\dd q \dd p) \\
& \quad\quad = \int_{\mathcal{V}^+ \times \mathcal{V}^+} h(q',p',q,p) \mathbb{P}(q,p; \dd q' \dd p')\Lambda(\dd q \dd p).
\end{split}
\end{equation}
\end{proposition}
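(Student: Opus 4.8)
The plan is to show that symmetry is a property already enjoyed by each deterministic macro-reflection kernel $\mathbb{P}^{\mathcal{M}_i}$, and that this property survives the weak limit (\ref{eq6.29}). Throughout, write $\iota$ for the swap homeomorphism $\iota(q,p,q',p') := (q',p',q,p)$ of $\mathcal{V}^+ \times \mathcal{V}^+$; then (\ref{eq6.33}) is precisely the assertion that the Radon measure $\mathbb{P}(q,p;\dd q'\dd p')\Lambda(\dd q\dd p)$ is $\iota$-invariant, meaning $\int h\,\dd m = \int (h\circ\iota)\,\dd m$ for all $h \in C_c(\mathcal{V}^+\times\mathcal{V}^+)$.

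\textbf{Step 1: symmetry of the pre-limit kernels.} Fix a wall $\mathcal{M}_i$ from the approximating sequence and abbreviate $P = P^{\mathcal{M}_i}$. By Proposition \ref{prop_reflproperties} there is a $\Lambda$-full measure open set $\mathcal{V}_1^+ \subset \mathcal{V}^+$ on which $P$ is a $C^1$ diffeomorphism, an involution ($P\circ P = \mathrm{Id}_{\mathcal{V}_1^+}$), and $\Lambda$-measure preserving. Since $\mathbb{P}^{\mathcal{M}_i}(q,p;\cdot) = \delta_{P(q,p)}$ and $\mathcal{V}^+\smallsetminus\mathcal{V}_1^+$ is $\Lambda$-null, for $h \in C_c(\mathcal{V}^+\times\mathcal{V}^+)$ we have
\begin{equation}
\int_{(\mathcal{V}^+)^2} h(q,p,q',p')\,\mathbb{P}^{\mathcal{M}_i}(q,p;\dd q'\dd p')\Lambda(\dd q\dd p) = \int_{\mathcal{V}_1^+} h(q,p,P(q,p))\,\Lambda(\dd q\dd p),
\end{equation}
and likewise with $h(q,p,q',p')$ replaced by $h(q',p',q,p)$ we get $\int_{\mathcal{V}_1^+} h(P(q,p),q,p)\,\Lambda(\dd q\dd p)$. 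In this last integral substitute $(q,p)\mapsto P(q,p)$: this preserves $\Lambda$, and by involutivity it sends $P(q,p)$ back to $(q,p)$, turning the integral into $\int_{\mathcal{V}_1^+} h(q,p,P(q,p))\,\Lambda(\dd q\dd p)$, which is the previous display. Hence $\mathbb{P}^{\mathcal{M}_i}(q,p;\dd q'\dd p')\Lambda(\dd q\dd p)$ is $\iota$-invariant. (Note the integrands on the right are supported in $\pi_1(\supp h)$, where $\pi_1$ is the projection to the first factor, a compact set; so all integrals here are finite even if $\Lambda$ is infinite, and the change of variables is legitimate on the full-measure set $\mathcal{V}_1^+$.)

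\textbf{Step 2: pass to the limit.} Since $\iota$ is a homeomorphism of $\mathcal{V}^+\times\mathcal{V}^+$, composition with $\iota$ preserves $C_c(\mathcal{V}^+\times\mathcal{V}^+)$. Thus for any $h \in C_c(\mathcal{V}^+\times\mathcal{V}^+)$, using the weak convergence (\ref{eq6.29}) at both ends and the $\iota$-invariance of the pre-limit measures from Step 1 in the middle,
\begin{equation}
\int h\,\dd\big(\mathbb{P}\,\Lambda\big) = \lim_{i\to\infty} \int h\,\dd\big(\mathbb{P}^{\mathcal{M}_i}\Lambda\big) = \lim_{i\to\infty} \int (h\circ\iota)\,\dd\big(\mathbb{P}^{\mathcal{M}_i}\Lambda\big) = \int (h\circ\iota)\,\dd\big(\mathbb{P}\,\Lambda\big),
\end{equation}
which is exactly (\ref{eq6.33}).

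\textbf{Main obstacle.} There is no genuine difficulty here; the entire substance is Proposition \ref{prop_reflproperties} (involutivity and $\Lambda$-preservation of $P^{\mathcal{M}_i}$), and symmetry is merely transported through the weak limit. The only points requiring any care are bookkeeping ones — that $P^{\mathcal{M}_i}$ is defined only off a $\Lambda$-null set, and that $\Lambda$ and the product measure $\mathbb{P}^{\mathcal{M}_i}\Lambda$ may be infinite — and both are handled by working on $\mathcal{V}_1^+$ and observing that every integral in the argument is tested against a compactly supported $h$, hence effectively over a compact set.
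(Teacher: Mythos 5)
Your proposal is correct and follows essentially the same route as the paper: establish the symmetry identity for each deterministic kernel $\mathbb{P}^{\mathcal{M}_i}$ via the change of variables $(q,p)\mapsto P^{\mathcal{M}_i}(q,p)$, using involutivity and $\Lambda$-preservation from Proposition \ref{prop_reflproperties}, then pass to the weak limit (\ref{eq6.29}). Your added bookkeeping about the full-measure set $\mathcal{V}_1^+$ and compact supports is a harmless elaboration of the same argument.
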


\begin{proof}
Suppose $\mathbb{P}^{\mathcal{M}_i} \to \mathbb{P}$, as in (\ref{eq6.29}). We claim that an equality of form (\ref{eq6.33}) holds if we replace $\mathbb{P}$ with $\mathbb{P}^{\mathcal{M}_i}$. This is equivalent to showing that
\begin{equation}
    \int_{\mathcal{V}^+} h(P^{\mathcal{M}_i}(q,p),q,p)\Lambda(\dd q \dd p) = \int_{\mathcal{V}^+} h(q,p, P^{\mathcal{M}_i}(q,p))\Lambda(\dd q \dd p).
\end{equation}
But making the change of variables $(q',p') = P^{W_i}(q,p)$, the above equality is a consequence of the fact that $P^{\mathcal{M}_i}$ is an involution which preserves $\Lambda$. The result follows by taking the weak limit as $i \to \infty$.
\end{proof}

\begin{corollary}
A rough collision law $\mathbb{P}(q,p;\dd q' \dd p')$ preserves the measure $\Lambda$ in the sense that, for all $f \in C_c(\mathcal{V}^+)$,
\begin{equation}
    \int_{\mathcal{V}^+} f(q',p') \mathbb{P}(q,p;\dd q' \dd p') \Lambda(\dd q \dd p) = \int_{\mathcal{V}^+} f(q,p) \Lambda(\dd q \dd p).
\end{equation}
\end{corollary}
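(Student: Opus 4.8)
The plan is to deduce this directly from the symmetry of $\mathbb{P}$ established in Proposition \ref{prop_gensymm}. Formally one wants to apply that proposition to $h(q,p,q',p') = f(q',p')$, but this $h$ is not compactly supported on $\mathcal{V}^+ \times \mathcal{V}^+$ (it does not decay in $(q,p)$), so a cutoff is needed. By splitting $f = f^+ - f^-$ with $f^{\pm} := \max(\pm f,0) \in C_c(\mathcal{V}^+)$ and using linearity of both sides, it suffices to treat $f \ge 0$. I would then fix a sequence $\varphi_k \in C_c(\mathcal{V}^+)$ with $0 \le \varphi_k \le \varphi_{k+1}$ and $\varphi_k \uparrow 1$ pointwise (such a sequence exists since $\mathcal{V}^+$, being a manifold, is $\sigma$-compact). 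For each $k$ the function $h_k(q,p,q',p') := \varphi_k(q,p)\,f(q',p')$ has support contained in $\supp\varphi_k \times \supp f$ and hence lies in $C_c(\mathcal{V}^+ \times \mathcal{V}^+)$, so Proposition \ref{prop_gensymm} gives
\[
\int_{\mathcal{V}^+ \times \mathcal{V}^+} \varphi_k(q,p)\, f(q',p')\, \mathbb{P}(q,p; \dd q' \dd p')\,\Lambda(\dd q \dd p) = \int_{\mathcal{V}^+ \times \mathcal{V}^+} \varphi_k(q',p')\, f(q,p)\, \mathbb{P}(q,p; \dd q' \dd p')\,\Lambda(\dd q \dd p).
\]

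Next I would let $k \to \infty$ on each side by monotone convergence. On the right-hand side: since $\mathbb{P}(q,p;\cdot)$ is a probability measure and $\varphi_k \uparrow 1$, the inner integral increases to $f(q,p)\,\mathbb{P}(q,p;\mathcal{V}^+) = f(q,p)$ for every $(q,p)$; integrating against $\Lambda$ and applying monotone convergence once more, the right-hand side increases to $\int_{\mathcal{V}^+} f(q,p)\,\Lambda(\dd q \dd p)$, which is finite because $f$ is continuous with compact support and $\Lambda$ is Radon. On the left-hand side: writing $F(q,p) := \int_{\mathcal{V}^+} f(q',p')\,\mathbb{P}(q,p;\dd q'\dd p') \ge 0$, the integrand is $\varphi_k(q,p)\,F(q,p) \uparrow F(q,p)$, so the left-hand side increases to $\int_{\mathcal{V}^+} F(q,p)\,\Lambda(\dd q\dd p)$ (a priori possibly $+\infty$). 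Equating the two limits yields $\int_{\mathcal{V}^+} F\,\Lambda(\dd q\dd p) = \int_{\mathcal{V}^+} f\,\Lambda(\dd q\dd p)$, which is exactly the asserted identity for $f \ge 0$; the general case follows by linearity.

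I do not expect a serious obstacle here: the argument is essentially $\sigma$-finiteness bookkeeping layered on top of Proposition \ref{prop_gensymm}. The only points needing a line of justification are that $\mathcal{V}^+$ admits an exhaustion by the $\varphi_k$ (so that each $h_k$ is genuinely compactly supported and Proposition \ref{prop_gensymm} is applicable), and that $\mathbb{P}$ is a Markov kernel rather than merely sub-Markovian — it is this that makes the right-hand limit equal to $\int f\,\Lambda(\dd q\dd p)$ rather than $\int f(q,p)\,\mathbb{P}(q,p;\mathcal{V}^+)\,\Lambda(\dd q\dd p)$.
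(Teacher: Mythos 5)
Your argument is correct and is essentially the paper's own proof, which is stated in one line as ``let $h(q,p,q',p') \uparrow f$ in the symmetry identity'': your cutoff functions $\varphi_k$, the reduction to $f \ge 0$, and the monotone convergence step (using that $\mathbb{P}(q,p;\cdot)$ is a genuine probability measure on $\mathcal{V}^+$) are exactly the details implicit in that line. No gaps.
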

\begin{proof}
Let $h(q,p,q',p') \uparrow f(q,p)$ in (\ref{eq6.33}).
\end{proof}

\subsubsection{Pseudometric topology} \label{sssec_pseudo}

In (\ref{eq6.30}), there is a sense in which $\mathbb{P}^{\mathcal{M}_i}$ converges to $\mathbb{P}$ as a limit with respect to a pseudometric topology. To describe this topology, let $X = T^1\mathcal{R}$, let $\mathcal{X}$ denote the Borel $\sigma$-algebra on $X$, and let $\mathcal{G}$ denote the set of all Markov kernels on $(X,\mathcal{X})$, i.e. the set of all functions  $\mathbb{G} : X \times \mathcal{X} \to \mathbb{R}$ satisfying: (i) for each $x \in X$, $A \mapsto \mathbb{G}(x,A)$ is a Borel probability measure on $X$, and (ii) for each $A \in \mathcal{X}$, $x \mapsto \mathbb{G}(x,A)$ is a measurable function.
 
Equip $C_c(X \times X)$ with the uniform topology. Since $X \times X$ is locally compact and $\sigma$-compact, the space $C_c(X \times X)$ is separable. Let $\{f_i\}_{i \geq 1} \subset C_c(X \times X)$ be a countable dense subset, and let $\mathcal{N}$ denote the space of all signed Borel measures on $X \times X$ which are finite on compact sets. (Since $X \times X$ is a locally compact Hausdorff space, this also implies that the measures in $\mathcal{N}$ are Radon -- see [\ref{Folland}, Theorem 7.8].) Equip $\mathcal{N}$ with the metric 
\begin{equation}
     d(\mu_1, \mu_2) := \sum_{i = 1}^\infty 2^{-i} \frac{\left|\int f_i \dd(\mu_1 -\mu_2) \right|}{1 + \left|\int f_i \dd(\mu_1 -\mu_2) \right|}, \quad\quad \mu_1, \mu_2 \in \mathcal{N}.
\end{equation}
With respect to this metric, $\mu_k \to \mu$ in $\mathcal{N}$ if and only if for all $i \geq 1$, $\lim_{k \to \infty}\int f_i \dd\mu_k = \int f_i \dd\mu$ if and only if $\lim_{k \to \infty}\int f \dd\mu_k = \int f \dd\mu$ for all $f \in C_c(X \times X)$. In particular, $d$ restricted to the dual space $C_c(X \times X)^*$ induces the weak topology.
 
Given a Borel measure $\nu$ on $X$ which is finite on compact sets and an element $\mathbb{G} \in \mathcal{G}$, let $\mathbb{G}\nu$ denote the Borel measure on $X \times X$ defined by $\int f \dd(\mathbb{G}\nu) = \int (\int f(x,y) \mathbb{G}(y,\dd x))\nu(\dd y)$, $f \in C_c(X \times X)$. We observe that $\mathbb{G}\nu \in \mathcal{N}$; in particular if $K$ is any compact subset of $X$, then since $\mathbb{G}(x,\cdot)$ is a probability measure
\begin{equation}
\mathbb{G}\nu(K \times X) = \nu(K) < \infty.
\end{equation}
The measure $\nu$ together with the metric $d$ induce a pseudometric on $\mathcal{G}$ defined by 
\begin{equation}
    d_{\mathcal{G}}^{\nu}(\mathbb{G}_1,\mathbb{G}_2) = d(\mathbb{G}_1\nu,\mathbb{G}_2\nu).
\end{equation}
With respect to the induced pseudometric topology, $\mathbb{G}_j \to \mathbb{G}$ if and only if $\mathbb{G}_j \nu \to \mathbb{G} \nu$ in duality to $C_c(X \times X)$. Hence taking $\nu = \Lambda$ yields the desired sense of convergence. 

\begin{sloppypar}
Note that $d_{\mathcal{G}}^{\nu}$ fails to satisfy the non-degeneracy condition of a metric. If $d_{\mathcal{G}}^{\nu}(\mathbb{G}_1, \mathbb{G}_2) = 0$, the Markov kernels $\mathbb{G}_1(\cdot, \dd x)$ and $\mathbb{G}_2(\cdot, \dd x)$ are still allowed to disagree on a $\nu$-null set.
\end{sloppypar}

\subsubsection{Some special cases} \label{sssec_special}

Here we discuss how conditions D1, D2, D3 (or D3'), D4, and D5 are verified for particular cases. In all of our examples, the ambient space is $\mathcal{R} = \mathbb{R}^d$ or $\mathbb{T}^{d-1} \times \mathbb{R}$ with the Euclidean metric.  

\textit{Example 1: Bounded, convex bodies.} Let $B_0$ be a closed, bounded, strictly convex subset of $\mathbb{R}^d$, and let $B$ be some closed, connected subset of $B_0$ such that $B \subset \Int B_0$. Assume that both $B_0$ and $B$ have smooth or piecewise smooth boundary; the essential conditions are that $\mathcal{M}_0 := \mathbb{R}^d \smallsetminus \Int B_0$ should be an embedded $C^2$ submanifold of $\mathbb{R}^d$ with boundary, and $\mathcal{M}_1 := \mathbb{R}^d \smallsetminus \Int B$ should belong to the class $\CES_0^2(\mathbb{R}^d)$. Then conditions D1-D3 of \S\ref{sssec_detref} are clearly satisfied.

Condition D4 holds because a linear trajectory of a point particle must eventually leave the bounded region $B_0$. Condition D5 holds because, by boundedness, the invariant measure on $\mathcal{U}^-$ (the set of pairs $(y,w) \in \partial B_0 \times \mathbb{S}^2$ such that $w$ points out of $B_0$) is finite. Thus we may apply the Poincar\'{e} Recurrence Theorem to conclude that the point particle returns almost surely.

Hence one can define rough reflection laws on $B_0$ by taking weak limits, as in \S\ref{sssec_genrfref}. Plakhov's work on scattering laws, reviewed in \S\ref{sssec_Plakhov}, falls within this setting.

\textit{Example 2: Half-space billiards.} Consider the following subset of $\mathbb{R}^d$:
\begin{equation}
    W_0 = \{(x_1,...,x_d) : x_d \leq 0\}.
\end{equation}
Let $W$ be a subset of $W_0$ such that 
\begin{equation}
    \{(x_1,...,x_d) : x_d \leq -1\} \subset W \subset \{(x_1,...,x_d) : x_d < 0\}.
\end{equation}
Assume that $W$ has a piecewise smooth boundary; at minimum $\mathcal{M}_1 := \mathbb{R}^d \smallsetminus W$ should belong to the class $\CES^2_0(\mathbb{R}^d)$. Let $\mathcal{M}_0 = \{(x_1,...,x_d) : x_d \geq 0\}$. Then conditions D1-D3 are satisfied.

To verify condition D4, note that the only way for a billiard trajectory in $\mathcal{N} = \overline{\mathcal{M}_1 \smallsetminus \mathcal{M}_0}$ to escape to infinity without returning to $\mathcal{M}_0$ or hitting $\partial \mathcal{M}_1$ is if the trajectory is parallel to plane $x_d = 0$. The set of states $(y,w)$ such that $w$ is parallel to this plane is a $\Lambda$-null.

For condition D5 to hold, additional assumptions must be imposed. One condition which implies D5 is the following:
\begin{enumerate}
    \item[D5a.] There exist linearly independent vectors $v_1, \dots, v_{d-1}$ spanning the plane $x_d = 0$ such that $W$ is invariant under translation by the vectors $v_i$, i.e. $W + v_i = W$ for $1 \leq i \leq d-1$.
\end{enumerate}
Indeed, in this case we may reduce to the quotient space obtained by identifying points which are translates of each other by integer combinations of the $v_i$. We can thereby view the billiard domain as a subset of the ambient space $\mathbb{T}^{d-1} \times \mathbb{R}$. The space $\mathcal{N}$ is a compact subset of $\mathbb{T}^{d-1} \times \mathbb{R}$, since it is bounded in the direction of the $x_d$-axis. Consequently, the invariant measure is finite, and as in the previous case we can use Poincar\'{e}'s Theorem to verify condition D5.

The reflections laws considered in \S\ref{sssec_uphalf} are rough reflections in an upper half-space billiard of dimension $d = 2$. The rough collision laws defined in this book are examples of rough reflection laws in an upper half-space of dimension $d = 3$.

\textit{Extensions.} We can generalize both of the above examples to allow $\partial \mathcal{M}_1$ to intersect $\partial \mathcal{M}_0$ (thus $B$ can intersect $\partial B_0$ in Example 1, and $W$ can intersect the plane $x_d = 0$ in Example 2). However, some assumption must be imposed to ensure that condition D3' holds. One simple condition is the following: \textit{The $(d-1)$-dimensional Hausdorff measure of $\mathcal{M}_1 \cap \partial \mathcal{M}_0$ is zero.} Since $A_2 \subset \mathcal{M}_1 \cap \partial \mathcal{M}_0$, this implies condition D3'.

In this book, both the configuration space $\mathcal{M}$ and its cylindrical approximation $\mathcal{M}_{\cyl}$ satisfy condition D3' (with $\mathcal{M}_0 = \{(x_1,x_2,\alpha) : x_2 \geq 0\}$). In the former case, it is because $\mathcal{M}$ satisfies the condition stated in the previous paragraph. In the latter case, the 2-dimensional Hausdorff measure of $A_2$ is zero because it is a countable union of lines. (See Propositions \ref{prop_detcol} and \ref{prop_detcolcyl} respectively.)

In the upper half-space case, if we allow $W$ to touch the plane $x_d = 0$, then there is another condition we can impose which implies condition D5.
\begin{enumerate}
    \item[D5b.] There exists a countable collection $\{R_j\}_{j \geq 1}$ of disjoint bounded open subsets of $\mathbb{R}^d$ such that $\Int W_0 \smallsetminus W = \bigcup_{j = 1}^\infty R_j$. 
\end{enumerate}
If this condition holds, then the point particle will get trapped in one of the regions $R_j$ when it enters $\mathcal{N}$. By boundedness, the restriction of the invariant measure to any one of the regions $R_j$ is finite, and we can once again apply the Poincar\'{e} Recurrence Theorem.

\section{Index of Notation}

{
\setlength\parskip{1em plus 0.1em minus 0.2em}
\setlength\parindent{0pt}

$\langle \cdot, \cdot \rangle$ -- Kinetic energy inner product, \S\ref{sssec_phasespace}

$||\cdot||$ -- Norm induced by the kinetic energy inner product, \S\ref{sssec_phasespace}

$||\cdot||_{L^p} = ||\cdot||_{L^p_\mu(X)}$ -- $L^p$-norm on $L^p_{\mu}(X)$, \S\ref{sssec_noteconv}

$Y \subset\subset X$ -- The closure of $Y$ lies in $X$, \S\ref{sssec_bilmap}

$\mathcal{A}_0$ -- A special subclass of rough collision laws, \S\ref{sssec_main}

$\widehat{B} = \overline{W^c} + e_2$ -- Base of the cylinder $\mathcal{M}_{\cyl}$, \S\ref{ssec_cylconfig}

$C_c(X)$ -- Continuous, compactly supported functions, \S\ref{sssec_noteconv}

$C^k(X)$ -- $k$-times continuously differentiable functions, \S\ref{sssec_noteconv}

$C^\infty(X)$ -- Infinitely differentiable functions, \S\ref{sssec_noteconv}

$C_c^k(X) = C_c(X) \cap C^k(X)$, \S\ref{sssec_noteconv}

$C_c^\infty(X) = C_c(X) \cap C^\infty(X)$, \S\ref{sssec_noteconv}

$\CES_0^2(\mathcal{R})$ -- A special class of billiard domains with singularities, \S\ref{sss_sing}

$\widetilde{\chi} = (-1,0,1)$ -- Rolling velocity, \S\ref{sssec_cylapprox}

$\chi = (m + J)^{-1/2}(-1,0,1)$ -- Normalized rolling velocity, \S\ref{sssec_tilted}

$d_\mathcal{G}^\nu$ -- A pseudometric on a space of Markov kernels $\mathcal{G}$, \S\ref{sssec_pseudo}

$D$ -- Disk with satellites, \S\ref{sssec_rigid}

$D_0$ -- Inner body of the disk $D$, \S\ref{sssec_rigid}

$D(y)$ -- Subset of $\mathbb{R}^2$ occupied by $D$ in configuration $y$, \S\ref{sssec_rigid}

$\partial$ -- Topological boundary operator, \S\ref{sssec_noteconv}

$\partial_{\reg} \mathcal{M}$ -- Set of regular points of $\partial \mathcal{M}$, \S\ref{sssec_config}, \S\ref{ssec_configprops}

$\partial_{\reg} \Sigma$ -- Set of regular points of $\partial \Sigma$, \S\ref{sssec_rigid}

$\partial_s \Sigma$ -- Set of singular points $\partial \Sigma$, \S\ref{sssec_rigid}

$\partial_{\reg} W$ -- Set of regular points of $\partial W$, \S\ref{sssec_rigid}

$\partial_s W$ -- Set of singular points of $\partial W$, \S\ref{sssec_rigid}

$\delta_0 = \delta_0(\epsilon)$ -- Radius of a ``gap region'' \S\ref{ssec_configprops}

$e_j$ -- $j$'th member of the standard basis for $\mathbb{R}^3$, \S\ref{sssec_config}

$\widehat{e}_j = e_j/||e_j||$, \S\ref{sssec_tilted}

$\eta$ -- ``Correction'' diffeomorphism, \S\ref{sssec_modcol}

$\mathcal{F}$ -- Full-measure open set on which $K^{\Sigma,\epsilon}$ is defined, \S\ref{sssec_collaws}, \S\ref{sssec_coldef}

$\mathcal{F}_{\cyl}$ -- Full-measure open set on which $K^{\Sigma,\epsilon}_{\cyl}$ is defined, \S\ref{sssec_cylcol}

$\widetilde{\mathcal{F}}$ -- Open set on which $\widetilde{K}^{\Sigma,\epsilon}$ is defined, \S\ref{sssec_modcol}

$G_\epsilon$ -- ``Perturbed'' projection onto $\mathbf{Q}_0$, \S\ref{sssec_cylproject}

$\Gamma_{\roll} = \partial \mathcal{M} \cap \widehat{\mathcal{Z}}$, \S\ref{ssec_configprops}

$H_1$ -- Smooth perturbation of configuration space, \S\ref{ssec_cylconfig}

$\overline{H}_1$ -- Smooth perturbation of phase space, \S\ref{sssec_modcol}

$H_\epsilon$ -- Zoomed version of $H_1$, \S\ref{sssec_zoomnote}

$\overline{H}_\epsilon$ -- Zoomed version of $\overline{H}_1$, (\ref{eq5.212})

$\mathcal{H}^d$ -- $d$-dimensional Hausdorff measure, \S\ref{ssec_configprops}, \S\ref{sss_sing}

$\Int$ -- Topological interior operator, \S\ref{sssec_noteconv}

$J$ -- Moment of inertia of the disk $D$, \S\ref{sssec_rigid}

$K^{\Sigma,\epsilon}$ -- Collision law associated with the wall $W(\Sigma,\epsilon)$, \S\ref{sssec_collaws}, \S\ref{sssec_coldef}

$\mathbb{K}^{\Sigma,\epsilon}$ -- Markov kernel representation of $K^{\Sigma,\epsilon}$, \S\ref{sssec_collaws}, \S\ref{sssec_coldef}

$\mathbb{K}$ -- Rough reflection law, \S\ref{sssec_collaws},\S\ref{sssec_coldef}

$K^{\Sigma,\epsilon}_{\cyl}$ -- Cylindrical collision law associated with $W(\Sigma,\epsilon)$, \S\ref{sssec_cylcol}

$\mathbb{K}^{\Sigma,\epsilon}_{\cyl}$ -- Markov kernel representation of $K^{\Sigma,\epsilon}_{\cyl}$, \S\ref{sssec_cylcol}

$\widetilde{K}^{\Sigma,\epsilon}$ -- Modified collision law associated with $W(\Sigma,\epsilon)$, \S\ref{sssec_modcol}

$\kappa(p)$ -- Unsigned curvature of $\partial W$ at point $p$, \S\ref{sssec_rigid}

$\kappa_{\max} = \sup\{\kappa(p) : p \in \partial_{\reg} W\}$, \S\ref{sssec_rigid}

$\overline{\kappa}$ -- Max curvature with lower cutoff, (\ref{eq5.101})

$\overline{L}$ -- Max chord length with lower cutoff, (\ref{eq5.101})

$\Lip(f)$ -- Lipschitz constant of the function $f$, (\ref{eq5.61})

$L^p = L_{\mu}^p(X)$ -- Set of $p$-integrable real functions on measure space $(X,\mu)$, \S\ref{sssec_noteconv}

$\Lambda^1$ -- Billiard invariant measure on $\mathbb{R} \times \mathbb{S}^1_+$, \S\ref{sssec_uphalf}

$\Lambda^2$ -- Billiard invariant measure on $\mathbf{P} \times \mathbb{S}^2_+$, \S\ref{sssec_collaws}

$m$ -- Mass of disk $D$, \S\ref{sssec_rigid}

$\mathcal{M}$ -- Configuration space of the disk and wall system, \S\ref{sssec_config}, \S\ref{ssec_configprops}

$\mathcal{M}_{\cyl}$ -- Cylindrical configuration space, \S\ref{sssec_cylapprox} \S\ref{ssec_cylconfig}

$\mathcal{M}_{\reg} = \mathcal{M} \smallsetminus \mathcal{S}$, \S\ref{sssec_config}, \S\ref{ssec_configprops}

$\mathcal{M}_{\roll} = \mathcal{M} \cap \widehat{\mathcal{Z}}$, \S\ref{ssec_configprops}

$N$ -- Number of satellites of disk $D$, \S\ref{sssec_rigid}

$\mathcal{O}_{\pm} = H_1(\mathbb{R}^3_{\pm} \cap \widehat{\mathcal{Z}})$, \S\ref{sssec_modcol}

$\Omega = \Omega(\epsilon)$ -- \S\ref{ssec_pure_scaling}, \S\ref{sssec_omegadef}
    
$P^W$ -- Macro-reflection law associated with wall $W$, \S\ref{sssec_uphalf}

$P^{\Sigma,\epsilon} = P^{W(\Sigma,\epsilon)}$, \S\ref{sssec_periodic}

$\mathbb{P}^{\Sigma,\epsilon} = \mathbb{P}^{W(\Sigma,\epsilon)}$, \S\ref{sssec_periodic}

$\mathbb{P}^W$ -- Markov kernel representation of $P^W$, \S\ref{sssec_uphalf}

$\mathbb{P}$ -- Rough reflection law, \S\ref{sssec_uphalf}, \S\ref{sssec_genrfref}

$\widetilde{\mathbb{P}}$ -- Velocity component of rough reflection law, \S\ref{sssec_periodic}

$\widetilde{\mathbb{P}}^{\Sigma,\epsilon}$ -- Macro-reflection law averaged over one period, \S\ref{ssec_lemconstruct}

$\mathbf{P} = \{(x_1,x_2,\alpha) : x_2 = 0\}$, \S\ref{sssec_config}

$\widetilde{\mathbf{P}} = H_1(\mathbf{P} \cap \widehat{\mathcal{Z}})$, \S\ref{sssec_modcol}

$\mathbf{Q}_0 = \{(x_1,x_2,\alpha) : \alpha - 0\}$, \S\ref{sssec_cylapprox}

$R, R_q$ -- Specular reflection maps,  \S\ref{sssec_phasespace}, \S\ref{sssec_bilmap}

$R_\epsilon$ -- Narrow parallelogram, \S\ref{ssec_pure_scaling}

$\mathbb{R}^2_{\pm} = \{(x_1,x_2) \in \mathbb{R}^2 : \pm x_2 > 0\}$, \S\ref{sssec_uphalf}

$\mathbb{R}^3_{\pm} = \{(x_1,x_2,\alpha) \in \mathbb{R}^3 : \pm x_2 > 0\}$, \S\ref{sssec_collaws}

$\rho = \rho(\epsilon)$ -- Angle between consecutive satellites in $D$, \S\ref{sssec_rigid}

$\widecheck{\rho}^{-1}$ -- Generalized inverse of $\rho$, (\ref{eq5.102})

$S_k$ -- $k$'th satellite in $D$, \S\ref{sssec_rigid}

$S_k(y)$ -- Point in $\mathbb{R}^2$ occupied by $S_k$ in $D(y)$, \S\ref{ssec_configprops}

$\mathbb{S}^1$ -- Unit circle in $\mathbb{R}^2$, \S\ref{sssec_uphalf}

$\mathbb{S}^1_{\pm} = \mathbb{S}^1 \cap \mathbb{R}^2_{\pm}$, \S\ref{sssec_uphalf}

$\mathbb{S}^2$ -- Unit sphere in $\mathbb{R}^3$ with respect to the norm $||\cdot||$, \S\ref{sssec_collaws}

$\mathbb{S}^2_{\pm} = \mathbb{S}^2 \cap \mathbb{R}^3_{\pm}$, \S\ref{sssec_collaws}

$\mathcal{S}$ -- Set of singular points of $\mathcal{M}$, \S\ref{ssec_configprops}

$\sigma_r$ -- Scaling map, \S\ref{sssec_zoomnote}

$\Sigma$ -- Cell for building a periodic wall, \S\ref{sssec_periodic}

$\widetilde{\Sigma}$ -- Foreshortened cell, \S\ref{sssec_main}

$\tau_{jk}, \overline{\tau}_{jk}$ -- Translation maps, \S\ref{sssec_coldef}

$\tau_{jk}^{(s)}, \overline{\tau}_{jk}^{(s)}$ -- Translation maps, \S\ref{sssec_cylcol}

$W$ -- Wall of billiard table in $\mathbb{R}^2$, \S\ref{sssec_uphalf}

$W(\Sigma,\epsilon)$ -- Periodic wall determined by $\Sigma$ and $\epsilon$,
    \S\ref{sssec_periodic}, \S\ref{sssec_rigid}

$\widetilde{W} = W(\widetilde{\Sigma},\widetilde{\epsilon})$ -- Foreshortened wall, \S\ref{sssec_main}

$\mathcal{Z}$ -- Set of angular coordinates where cylindrical approximation is feasible, \S\ref{ssec_configprops}

$\widehat{\mathcal{Z}} = \mathbb{R}^2 \times \mathcal{Z}$, \S\ref{ssec_configprops}

$\mathcal{Z}^0$ -- Component of $\mathcal{Z}$ containing $\alpha = 0$, (\ref{eq4.34})

$\widehat{\mathcal{Z}}^0 = \mathbb{R}^2 \times \mathcal{Z}^0$, (\ref{eq4.34})

}

\vfill

\section{References}

\begin{enumerate}
    \item O. Angel, K. Burdzy, S. Sheffield, \textit{Deterministic approximations of random reflectors}, Trans. Amer. Math. Soc. \textbf{365}(2013), no. 12, 6367–83. MR3105755\label{ABS_refl}
    
    \item V. I. Arnold, \textit{Mathematical Methods of Classical Mechanics}, 2nd ed., Springer Science+Business, Inc., New York, 1989. MR1345386 (96c:70001) \label{Arnold}
    
    \item P. Ballard, \textit{The dynamics of discrete mechanical systems with perfect unilateral constraints}, Arch. Ration. Mech. Anal. \textbf{154}(2000), no. 3, 199-274. MR1785473 (2002a:70011) \label{Ballard}
    
    \item D. S. Broomhead, E. Gutkin, \textit{The dynamics of billiards with no-slip collisions}, Phys. D \textbf{67}(1993), no. 1-3, 188-197. MR1234441 (94g:58162) \label{GutkinBroomhead}
    
    \item A. Champneys and P. V\'{a}rkonyi, \textit{The Painlev\'{e} paradox in contact mechanics}, IMA J. Appl. Math. \textbf{81}(2016), no. 3, 538-588. MR3564666\label{PainleveParadox_review}
    
    \item N. Chernov and R. Markarian, \textit{Chaotic Billiards}, Mathematical Surveys and Monographs, 127. American Mathematical Society, Providence, RI, 2006. MR2229799 (2007f:37050) \label{Chernov&Markarian} 
    
    \item S. Cook and R. Feres, \textit{Random billiards with wall temperature and associated Markov chains}, Nonlinearity \textbf{25}(2012), no. 9, 2503-2541. MR2967115 \label{CookFeres_temp}
    
    \item I. P. Cornfield, S. V. Fomin, and Ya. G. Sinai, \textit{Ergodic Theory}, tr. A. B. Sossinski, Springer-Verlag, Inc., New York, 1982. MR0832433 (87f:28019) \label{CFS_ergodic}
    
    \item \begin{sloppypar}
    C. Cox, R. Feres, \textit{No-slip billiards in dimension two}, Contemp. Math. \textbf{698}(2017), 91-110. MR3716087
    \end{sloppypar}\label{NoSlip}
    
    \item C. Cox, R. Feres and W. Ward, \textit{Differential geometry of rigid body collisions and non-standard billiards}, Discrete Contin. Dyn. Syst. \textbf{36}(2016), no. 11, 6065-6099. MR3543581 \label{diffgeo_rbcollisions}
    
    \item E. D. Demaine and J. O'Rourke, \textit{Geometric Folding Algorithms: Linkages, Origami, Polyhedra}, Cambridge University Press, New York, 2007. MR2354878 (2008g:52001) \label{DemaineORourke}
    
    \item R. Feres and G. Yablonsky, \textit{Knudsen's cosine law and random billiards}, Chemical Engineering Science, \textbf{59}(2004), 1541-1556. \label{FeresYab}

    \item R. Feres, Random walks derived from billiards. \textit{Dynamics, ergodic theory, and geometry}, 179-222, \textit{Math. Sci. Res. Inst. Publ.} \textbf{54}, Cambridge University Press, Cambridge, 2007. MR2369447 (2009c:37033) \label{feres_rw}
    
    \item R. Feres and H-K. Zhang, \textit{The spectrum of the billiard Laplacian of a family of random billiards}, J. Stat. Phys. \textbf{141}(2010), no. 6, 1030-1054. MR2740402 (2011h:37052) \label{FeresZhang_spectrum1}
    
    \item R. Feres and H-K. Zhang, \textit{Spectral gap for a class of random billiards}, Commun. Math. Phys. \textbf{313}(2012), no. 2, 479-515. MR2942958 \label{FeresZhang_spectrum2}
    
    \item R. C. Fetecau, J. E. Marsden, M. Ortiz, and  M. West, \textit{Nonsmooth Lagrangian mechanics and variational collision integrators}, SIAM J. Appl. Dyn. Syst. \textbf{2}(2003), no. 3, 381-416. MR2031279 (2005e:37197) \label{FMOW}

    \item G. Folland, \textit{Real Analysis: Modern Techniques and Their Applications}, 2nd Ed., John Wiley \& Sons, 1999. MR1681462 (2000c:00001) \label{Folland}
    
    \item C. Hahlweg, B. Mei{\ss}ner, W. Zhao, and H. Rothe, \textit{The idea of the Lambertian surface: history, idealization, and system theoretical aspects and part 1 of a lost chapter on multiple reflection}, Proc. SPIE 7792, Reflection, Scattering, and Diffraction from Surfaces II, 779202 (2010).\label{Lambert_history}
    
    \item M. Knudsen, \textit{Kinetic Theory of Gases: Some Modern Aspects}, Methuen’s Monographs on Physical Subjects, London, 1952. \label{Knudsen}
    
    \item M. Lapidus and R. Niemeyer, \textit{Towards the Koch snowflake fractal billiard: computer experiments and mathematical conjectures}, Gems in experimental mathematics, 231–263, Contemp. Math., \textbf{517}, Amer. Math. Soc., Providence, RI, 2010. MR2731085 (2012b:37101) \label{LapNiem}
    
    \item M. Mabrouk, \textit{A unified variational model for the dynamics of perfect unilateral constraints}, Eur. J. Mech. A Solids, \textbf{17}(1998), no. 5, 819–842. MR1650957 (99j:70019) \label{Mab1}
    
    \item  D. P. Monteiro Marques, \textit{Chocs in\'{e}lastiques standards: un r\`{e}sultat d'existence}, S\`{e}m. Anal. Convexe, \textbf{15}(1985), no. 4, 1-32. MR0857776 (88c:70010) \label{Marq1}
    
    \item D. P. Monteiro Marques, \textit{Differential inclusions in nonsmooth mechanical problems: shocks and dry friction}, Progr. Nonlinear Differential Equations Appl. \textbf{9}(1993), Birkh\"{a}user Verlag, Basel. MR1231975 (94g:34003)\label{Marq2}
    
    \item J.-J. Moreau, \textit{Standard inelastic shocks and dynamics of unilateral constraints}, in Unilateral Problems in Structural Analysis, G. Del Piero and F. Maceri, eds., 1985. \label{Mor1}
    
    \item J.-J. Moreau, \textit{Unilateral contact and dry friction in finite freedom dynamics}, in Nonsmooth Mechanics and Applications, J.-J. Moreau and P. G. Panagiotopoulos, eds., CISM Courses and Lectures 302, Springer-Verlag, Vienna, 1988. \label{Mor3}
    
    \item P. Palffy-Muhoray, E. G. Virga, M. Wilkinson and X. Zheng, \textit{On a paradox in the impact dynamics of smooth rigid bodies}, Math. Mech. Solids, \textbf{24}(2019), no. 3, 573-597. MR3935008 \label{impactparadox}
    
    \item L. Paoli and M. Schatzman, \textit{Sch\'{e}ma num\'{e}rique pour un mod\`{e}le de vibrations avec constraints unilat\'{e}rale et perte d’ \'{e}nergie aux impacts, en dimension finie}, C. R. Acad. Sci. Paris S\'{e}r. I Math. \textbf{317}(1993), no. 2, 211–215. MR1231424 (94f:70022) \label{PaoSch1}
    
    \item L. Paoli and M. Schatzman, \textit{Mouvement \`{a} un nombre fini de degr\'{e}s de libert\'{e} avec contraintes unilat\'{e}rales: Cas avec perte d'\'{e}nergie}, RAIRO Mod\'{e}l. Math. Anal. Num\'{e}r. \textbf{27}(1993), no. 6, 673–717. MR1246995 (94m:34038) \label{PaoSch2}
    
     \item A. Yu. Plakhov, \textit{Newton's problem of the body of minimal mean resistance}, Sb. Math. \textbf{195}(2004), no. 7-8, 1017-1037. MR2101335 (2005g:49073) \label{Plakhov1}
    
    \item A. Yu. Plakhov, \textit{Billiards and two-dimensional problems of optimal resistance}, Arch. Ration. Mech. \textbf{194}(2009), no. 2, 349-382. MR2563633 (2010m:49067) \label{Plakhov2}
    
    \item A. Yu. Plakhov, \textit{Billiard scattering on rough sets: two-dimensional case}, SIAM J. Math. Anal. \textbf{40}(2009), no. 6, 2155-2178. MR2481290 (2010i:37081) \label{Plakhov3}
    
    \item A. Yu. Plakhov, \textit{Scattering in billiards and problems of Newtonian aerodynamics}, Russ. Math. Surv. \textbf{64}(2009), no. 5, 873-938. MR2588685 (2011b:37067)\label{Plakhov4}
    
    \item A. Yu. Plakhov, \textit{Exterior Billiards: Systems with Impacts Outside Bounded Domains}, Springer, 2013. MR2931647 \label{Plakhov5}
    
    \item L. Saint-Raymond and M. Wilkinson, \textit{On Collision Invariants for Linear Scattering}, Comm. Pure Appl. Math., \textbf{71}(2018), no. 8, 1494-1534. MR3847748\label{SrW_colinv}
    
    \item S. Sasaki, \textit{On the differential geometry of tangent bundles of Riemannian manifolds II}, T\^{o}hoku Math. J., \textbf{14}(1962), no. 2, 146-155. MR0145456 (26 \#2987)\label{Sasaki}
    
    \item J.-M. Strelcyn, \textit{Plane billiards as smooth dynamical systems with singularities}, in Invariant Manifolds, Entropy and Billiards: Smooth Maps with Singularities, Lecture Notes in Mathematics, 1222, A. Dold and B. Eckmann, eds. Springer-Verlag, Berlin, 1986. MR0872698 (88k:58075) \label{Strelcyn}
    
    \item S. Tabachnikov, \textit{Geometry and Billiards}, Student Mathematical Library, 30. American Mathematical Society, Providence, RI; Mathematics Advanced Study Semesters, University Park, PA, 2005. MR2168892 (2006h:51001) \label{Tabachnikov}
    
    \item M. Wilkinson, \textit{On the non-uniqueness of physical scattering for hard non-spherical particles}, Arch. Rational Mech. Anal. \textbf{235}(2020), no. 3, 2055-2083. MR4065657\label{Wilk_nonunique}
    
    \item M. Wilkinson, \textit{On the initial boundary value problem in the kinetic theory of hard particles I: Non-existence}, 2018. arXiv:1805.04611
    
    \item Wikipedia, \textit{Retroreflector}, \url{https://en.wikipedia.org/wiki/Retroreflector} Online; accessed 5 Nov 2021. \label{retroref_wiki}
\end{enumerate}

\end{document}